\documentclass[a4paper,openright,twoside,11pt]{report}

\usepackage[top=5.35cm,bottom=5.35cm,left=4cm,right=4cm]{geometry}

\parindent12pt
\parskip6pt

\usepackage{amsfonts}
\usepackage{amsmath}
\usepackage{amssymb}
\usepackage{amsthm}

\usepackage{indentfirst}
\usepackage[english,greek]{babel}

\newcommand{\en}{\latintext}


\theoremstyle{plain}
\newtheorem{thm}{Θεώρημα}[section]
\newtheorem{lem}[thm]{Λήμμα}

\newtheorem{cor}[thm]{Πόρισμα}
\newtheorem{defn}[thm]{Ορισμός}
\newtheorem{exmp}[thm]{Παράδειγμα}

\theoremstyle{remark}
\newtheorem*{rem}{Παρατήρηση}

\usepackage{booktabs}

\usepackage{multirow}

\usepackage{boldline}

\usepackage[section]{algorithm}
\usepackage{algorithmic}

\makeatletter
\newenvironment{breakablealgorithm}
  {
   \begin{center}
     \refstepcounter{algorithm}
     \hrule height.8pt depth0pt \kern2pt
     \renewcommand{\caption}[2][\relax]{
       {\raggedright\textbf{\fname@algorithm~\thealgorithm} ##2\par}%
       \ifx\relax##1\relax 
         \addcontentsline{loa}{algorithm}{\protect\numberline{\thealgorithm}##2}%
       \else 
         \addcontentsline{loa}{algorithm}{\protect\numberline{\thealgorithm}##1}%
       \fi
       \kern2pt\hrule\kern2pt
     }
  }{
     \kern2pt\hrule\relax
   \end{center}
  }
\makeatother

\makeatletter
\renewcommand{\ALG@name}{Αλγόριθμος}
\makeatother

\usepackage{subfig}
\usepackage{graphicx}

\usepackage{xcolor}

\usepackage{diagbox}

\usepackage{fancyhdr}

\fancypagestyle{toc}{%
    \fancyhf{}
    \rhead{\it Περιεχόμενα}
    \renewcommand{\headrulewidth}{0.4pt}
    \fancyfoot[C]{\thepage}
}
\fancypagestyle{main}{%
    \fancyhf{}
    \lhead{\textit{\chaptername\ \thechapter}}
    \rhead{\textit{\nouppercase{\rightmark}}}
    \fancyfoot[C]{\thepage}
    \renewcommand{\headrulewidth}{0.4pt}        
}
\fancypagestyle{syn}{%
    \fancyhf{}
    \rhead{\it Σύνοψη}
    \renewcommand{\headrulewidth}{0.4pt}
    \fancyfoot[C]{\thepage}
}
\fancypagestyle{bib}{%
    \fancyhf{}
    \rhead{\it Βιβλιογραφία}
    \renewcommand{\headrulewidth}{0.4pt}
    \fancyfoot[C]{\thepage}
}

\usepackage{tikz}
\usetikzlibrary{calc,arrows}

\usepackage[bookmarks=false]{hyperref}

\renewcommand{\headrulewidth}{0pt}

\newcommand{\uoiauthor}{Γρηγόριος Ταχυρίδης}

\newcommand{\uoititle}{\LARGE\textsc{Μέθοδοι Υποχώρων {\en Krylov} για την Επίλυση Γραμμικών Συστημάτων {\en Toeplitz}}}

\newcommand{\thesisdedication}{Στη μνήμη του\\ πατέρα μου}

\newcommand{\advisor}{Δημήτριος Νούτσος}
\newcommand{\rankadvisor}{Επιβλέπων, Ομότιμος Καθηγητής, Πανεπιστήμιο Ιωαννίνων, Ελλάδα}
\newcommand{\examinera}{Ευστράτιος Γαλλόπουλος}
\newcommand{\ranka}{Μέλος συμβουλευτικής επιτροπής, Καθηγητής, Πανεπιστήμιο Πατρών, Ελλάδα}
\newcommand{\examinerb}{Παρασκευάς Βασσάλος}
\newcommand{\rankb}{Μέλος συμβουλευτικής επιτροπής, Αναπληρωτής Καθηγητής, Οικονομικό Πανεπιστήμιο Αθηνών, Ελλάδα}
\newcommand{\examinerd}{Φωτεινή Καρακατσάνη}
\newcommand{\rankd}{Επίκουρη Καθηγήτρια, Πανεπιστήμιο Ιωαννίνων, Ελλάδα}
\newcommand{\examinere}{Μιχαήλ Τσατσόμοιρος}
\newcommand{\ranke}{Καθηγητής, {\en Washington State University, USA}}
\newcommand{\examinerc}{Μιχαήλ Βραχάτης}
\newcommand{\rankc}{Καθηγητής, Πανεπιστήμιο Πατρών, Ελλάδα}
\newcommand{\examinerf}{Παναγιώτης Ψαρράκος}
\newcommand{\rankf}{Καθηγητής, Εθνικό Μετσόβιο Πολυτεχνείο, Ελλάδα}

\pagenumbering{gobble}
\setlength{\headheight}{13.6pt}

\begin{document}
\thispagestyle{empty}

\noindent\begin{tabular}{l c r}
{\includegraphics[width=1.3cm]{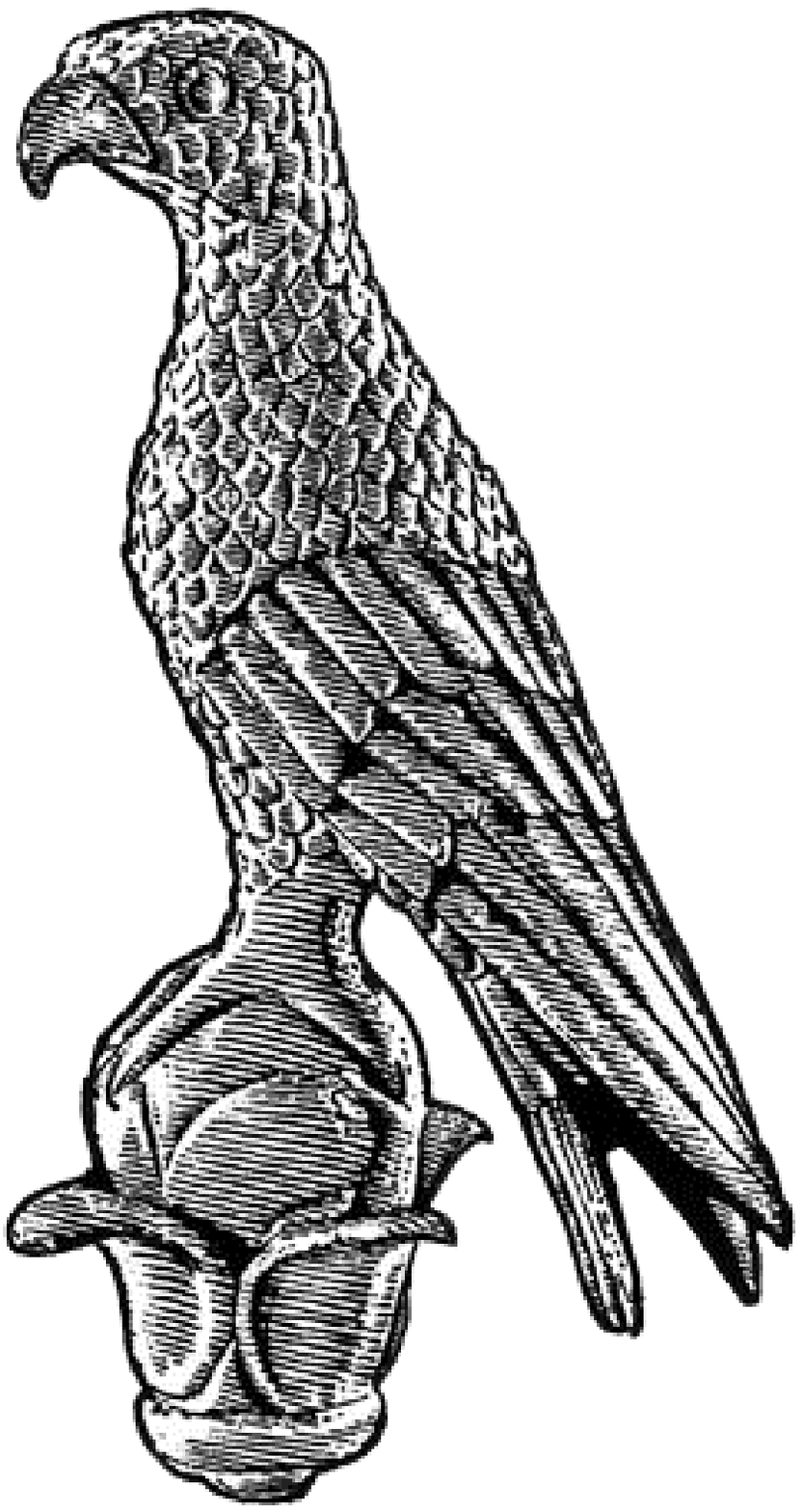}} & \raisebox{4.7\height}
{\bf \Large ΠΑΝΕΠΙΣΤΗΜΙΟ ΙΩΑΝΝΙΝΩΝ} & 
{\hfill\includegraphics[width=2.46cm]{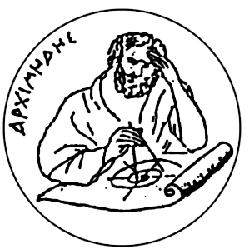}}\\[-1.3cm]
& {\textsc{\textbf{\Large ΤΜΗΜΑ ΜΑΘΗΜΑΤΙΚΩΝ}}}
\end{tabular}

\vspace*{\stretch{.3}}

\begin{center}
{\Large\bf\uoiauthor}
\end{center}

\vspace*{\stretch{.2}}

\noindent\rule{\linewidth}{0.5mm}
\begin{center}
\uoititle
\end{center}
\noindent\rule{\linewidth}{0.5mm}

\vspace*{\stretch{.3}}

\begin{center}
{\Large ΔΙΔΑΚΤΟΡΙΚΗ ΔΙΑΤΡΙΒΗ}
\end{center}

\vspace*{\fill}

\begin{center}
{\Large Ιωάννινα, 2022}
\end{center}

\newpage\thispagestyle{empty}\mbox{}\newpage

\thispagestyle{empty}

\vspace*{0.1\paperheight}

\begin{flushright}
{\em \thesisdedication}
\end{flushright}

\newpage\thispagestyle{empty}\mbox{}\newpage

\thispagestyle{empty}
\thispagestyle{empty}
\parindent0pt
\thispagestyle{empty}

Η παρούσα Διδακτορική Διατριβή εκπονήθηκε στο πλαίσιο των σπουδών για την
απόκτηση του Διδακτορικού Διπλώματος στα Μαθηματικά που απονέμει το Τμήμα Μαθηματικών του
Πανεπιστημίου Ιωαννίνων.

\vskip1cm

Εγκρίθηκε την 19/01/2022 από την εξεταστική επιτροπή: \bigskip

\begin{itemize}
\item \advisor\hspace{0.4pc}(\rankadvisor)
\item \examinera\hspace{0.4pc}(\ranka)
\item \examinerb\hspace{0.4pc}(\rankb)
\item \examinerc\hspace{0.4pc}(\rankc)
\item \examinerd\hspace{0.4pc}(\rankd)
\item \examinere\hspace{0.4pc}(\ranke)
\item \examinerf\hspace{0.4pc}(\rankf)
\end{itemize}


\vspace{0.3cm}

Η έγκριση της διδακτορικής διατριβής από το Τμήμα Μαθηματικών της Σχολής Θετικών Επιστημών του Πανεπιστημίου Ιωαννίνων δεν υποδηλώνει την αποδοχή γνωμών του συγγραφέα (Νόμος 5343/1932, άρθρο 202, παρ. 2 και Νόμος 1268/1982, άρθρο 50, παρ. 8).

\vspace{0.3cm}

\begin{center} ΥΠΕΥΘΥΝΗ ΔΗΛΩΣΗ
\end{center}

{\en ``}Δηλώνω υπεύθυνα ότι η παρούσα διατριβή εκπονήθηκε κάτω από τους
διεθνείς ηθικούς και ακαδημαϊκούς κανόνες δεοντολογίας και προστασίας της
πνευματικής ιδιοκτησίας. Σύμφωνα με τους κανόνες αυτούς, δεν έχω προβεί σε
ιδιοποίηση ξένου επιστημονικού έργου και έχω πλήρως αναφέρει τις πηγές που
χρησιμοποί\-ησα στην εργασία αυτή.{\en "}

\vspace{0.2cm}

\begin{center}
\uoiauthor
\end{center}
\parindent12pt

\newpage\thispagestyle{empty}\mbox{}\newpage 

\chapter*{Ευχαριστίες}
\thispagestyle{empty}

Θα ήθελα να ευχαριστήσω εκ βαθέων τον επιβλέποντα καθηγητή της διδακτορικής μου διατριβής, Δημήτριο Νούτσο, για την αφοσίωση, τη στήριξη, τη συνεισφορά και την αστείρευτη καθοδήγηση κατά τη διάρκεια της έρευνας και υλοποίησης της παρούσας διδακτορικής διατριβής. Θα ήθελα επίσης να ευχαριστήσω τα δύο μέλη της Συμβουλευτικής Επιτροπής, καθηγητές Ευστράτιο Γαλλόπουλο και Παρασκευά Βασσάλο, για τις χρήσιμες υποδείξεις τους και τη βοήθεια που μου παρείχαν. Θα ήθελα να αποδώσω ιδιαίτερες ευχαριστίες στον καθηγητή Απόστολο Χατζηδήμο για τη συνεργασία και τις επισημάνσεις του, καθώς επίσης και στη δόκτορα {\en Chaysri Thaniporn} για την αμφίδρομη αλληλεπίδραση. Τέλος, θα ήθελα να ευχαριστήσω το Τμήμα Μαθηματικών του Πανεπιστημίου Ιωαννίνων και ιδιαιτέρως τον Τομέα Εφαρμοσμένων και Υπολογιστικών Μαθηματικών για την παροχή χώρου εργασίας και ηλεκτρονικού υπολογιστή, καθώς και για τις ποικίλες διευκολύνσεις σε επιστημονικό και τεχνικό επίπεδο, που οδήγησαν στην ομαλότερη συνεργασία και διευκόλυνση της έρευνας.

\vspace{6pc}
Μέρος της έρευνας συγχρηματοδοτήθηκε από την Ελλάδα και την Ευρωπαϊκή `Ενωση (Ευρωπαϊκό Κοινωνικό Ταμείο) μέσω του Επιχειρησιακού Προγράμματος «Ανάπτυξη Ανθρώπινου Δυναμικού, Εκπαίδευση και Διά Βίου Μάθηση», στα πλαίσια του έργου με τίτλο ````\textlatin{Krylov subspace methods and Perron-Frobenius theory}'''' (\textlatin{MIS 5047643}).

\newpage\thispagestyle{empty}\mbox{}\newpage 
\newpage

\pagenumbering{roman}
\addcontentsline{toc}{chapter}{Περίληψη}
\chapter*{Περίληψη}

Στην παρούσα διδακτορική διατριβή γίνεται μελέτη της προρρύθμισης τετραγωνικών, μη-συμμετρικών και πραγματικών συστημάτων {\en Toeplitz}. Αποδεικνύονται θεωρητικά αποτελέσματα, τα οποία αποτελούν ικανές συνθήκες για την αποτελεσματικότητα των προτεινόμενων προρρυθμιστών και την ταχεία σύγκλιση στη λύση του συστήματος, με μεθόδους όπως η Προρρυθμισμένη Γενικευμένη μέθοδος Ελαχίστων Υπολοίπων {\en (PGMRES)} και η Προρρυθμισμένη μέθοδος Συζυγών Κλίσεων για το σύστημα των Κανονικών Εξισώσεων {\en (PCGN)}.

Στο πρώτο κεφάλαιο παραθέτουμε βασικές εισαγωγικές έννοιες, ορισμούς και θεωρητικά αποτελέσματα, τα οποία χρησιμοποιήσαμε για να αποδείξουμε τα θεωρητικά αποτελέσματα της διατριβής. Αυτά έχουν να κάνουν κυρίως με τη συσσώρευση του φάσματος, αλλά και των ιδιαζουσών τιμών, αφού αυτή αποτελεί κριτήριο για το πόσο αποτελεσματικός είναι κάποιος προρρυθμιστής.

Στο δεύτερο κεφάλαιο κατασκευάζουμε έναν ταινιωτό {\en Toeplitz} προρρυθμιστή, για συστήματα με καλή, αλλά και κακή κατάσταση. Η τεχνική προρρύθμισης βασίζεται στην εύρεση ενός κατάλληλου τριγωνομετρικού πολυωνύμου για την άρση των ριζών της γεννήτριας συνάρτησης (αν υπάρχουν), σε συνδυασμό με προσέγγιση από κάποιο άλλο τριγωνομετρικό πολυώνυμο. Αποδείχθηκε η συσσώρευση των ιδιοτιμών, καθώς και των ιδιαζουσών τιμών του προρρυθμισμένου συστήματος.

Στο τρίτο κεφάλαιο κατασκευάζουμε έναν κυκλοειδή {\en (circulant)} προρρυθμιστή για συστήματα {\en Toeplitz} με καλή κατάσταση, καθώς κι έναν ταινιωτό-επί-κυκλοειδή {\en (band-times-circulant)} προρρυθμιστή για συστήματα με κακή κατάσταση. Αποδεικνύονται αντίστοιχα θεωρητικά αποτελέσματα, ενώ γίνεται και σύγκριση με τον προρρυθμιστή του προηγούμενου κεφαλαίου στα αριθμητικά αποτελέσματα, της τελευταίας ενότητας.

Στο τέταρτο και τελευταίο κεφάλαιο της διατριβής μελετάμε συστήματα {\en Toeplitz}, των οποίων η γεννήτρια συνάρτηση υπάρχει, αλλά δεν είναι γνωστή εκ των προτέρων. Γίνεται κατάλληλη προσαρμογή των προρρυθμιστών που κατασκευάστηκαν στα προηγούμενα κεφάλαια. Με τεχνικές εκτίμησης της γεννήτριας συνάρτησης, των ριζών αυτής και της πολλαπλότητας των εν λόγω ριζών, κατασκευάζουμε αντίστοιχους προρρυθμιστές.


\chapter*{{\en Abstract}}
\setcounter{page}{3}
\addcontentsline{toc}{chapter}{{\en Abstract}}

{\en%
In this thesis we study the preconditioning of square, non-symmetric and real Toeplitz systems. We prove theoretical results, which constitute sufficient conditions for the efficiency of the proposed preconditioners and the fast convergence to the solution of the system, by the Preconditioned Generalized Minimal Residual method (PGMRES) as well as by the Preconditioned Conjugate Gradient method applied to the system of Normal Equations (PCGN).

As introduction, in the first chapter, we give the basic definitions and theorems/lemmas that we use to prove the theoretical results of the thesis. These are dealing with the clustering of the eigenvalues, as well as of the singular values, which is a criterion for the efficiency of the preconditioner.

In the second chapter we construct a band Toeplitz preconditioner for well-conditioned, as well as for ill-conditioned systems. The preconditioning technique is based on the elimination of the roots of the generating function (if there exist), by a trigonometric polynomial, and on a further approximation. The clustering of the eigenvalues and the singular values of the preconditioned system has been proven.

In the next chapter we construct a circulant preconditioner dealing with well-conditioned Toeplitz systems and a band-times-circulant preconditioner for ill-conditioned ones. We prove analogous theoretical results and we give a comparison with the preconditioner proposed previously at the numerical results of the last section.

In the fourth and last chapter of the thesis we study Toeplitz systems, having an unknown generating function. We adapt the preconditioners constructed at the previous chapters. After estimating the generating function, its roots and the multiplicities of them, we construct the corresponding preconditioners.
}

\newpage\thispagestyle{empty}\mbox{}\newpage 

\pagenumbering{Roman}
\pagestyle{toc}
\tableofcontents

\renewcommand{\headrulewidth}{0.4pt}


\clearpage
\pagenumbering{arabic}
\pagestyle{main}

\chapter{Εισαγωγή}

Στην παρούσα διδακτορική διατριβή ασχολούμαστε με την προρρύθμιση τετραγωνικών, μη-συμμετρικών και πραγματικών συστημάτων {\en Toeplitz} διάστασης $n$. Πίνακες {\en Toeplitz} εμφανίζονται σε πληθώρα εφαρμογών όπως στην επεξεργασία σήματος, επεξεργασία εικόνας \cite{Chan_Jin, Ng_book}, σε εφαρμογές που προκύπτουν από τη διακριτοποίηση διαφορικών και ολοκληρωτικών εξισώσεων \cite{bai2011sinc, bai2009preconditioned, sachs}, καθώς κι εφαρμογές που σχετίζονται με οικονομικά, μηχανική και πιθανότητες \cite{duffy,higham}. Σε πολλές εξ αυτών μπορεί να συναντήσουμε ακόμα πιο συγκεκριμένες μορφές πινάκων {\en Toeplitz}, όπως τριδιαγώνιους μη-συμμετρικούς πίνακες, που εμφανίζονται στην ανάλυση χρονοσειρών και στην επίλυση μερικών διαφορικών εξισώσεων (\cite{noschese2013tridiagonal} και αναφορές εντός αυτής). Θα μπορούσαμε να πούμε ότι η ταχεία επίλυση συστημάτων {\en Toeplitz} είναι επιτακτική ανάγκη και αποτελεί πρόκληση, διότι πολλές από τις παραπάνω εφαρμογές αναζητούν επίλυση σε πραγματικό χρόνο {\en (real time)}. 

Τα στοιχεία ενός πίνακα {\en Toeplitz} είναι οι συντελεστές του αναπτύγματος {\en Fourier} μιας συνάρτησης $f$, οι οποία καλείται γεννήτρια συνάρτηση του πίνακα. `Ετσι, αν $t_{jk}$ συμβολίζει το στοιχείο που βρίσκεται στη $j$-οστή γραμμή και $k$-οστή στήλη, ισχύει:
\begin{equation}\label{eq:Toeplitz entries}
t_{jk}=\frac{1}{2\pi}\int\limits_{-\pi}^{\pi}f(x)\mathrm{e}^{-\mathrm{i}(j-k)x}\mathrm{d}x,~1\leq j,k\leq n,~\mathrm{i}=\sqrt{-1}.
\end{equation}

Η γεννήτρια συνάρτηση του πίνακα {\en Toeplitz} παίζει σημαντικό ρόλο στην επιλογή του προρρυθμιστή. `Οταν αυτή δεν έχει ρίζες λαμβάνουμε συστήματα με καλή κατάσταση {\en (well-conditioned)}, δηλαδή συστήματα των οποίων ο δείκτης κατάστασης {\en (condition number)} είναι φραγμένος από σταθερά ανεξάρτητη της διάστασης $n$, ενώ όταν μηδενίζεται σε κάποια σημεία (ή και διαστήματα) του πεδίου ορισμού της λαμβάνουμε συστήματα με κακή κατάσταση {\en (ill-conditioned)}. Περισσότερες λεπτομέρειες θα δοθούν στα επόμενα κεφάλαια.

Από τη σχέση (\ref{eq:Toeplitz entries}), γίνεται κατανοητό ότι τα στοιχεία ενός πίνακα {\en Toeplitz} $T_n$, εξαρτώνται μόνο από τη διαφορά $j-k$ κι επομένως, ένας πίνακας {\en Toeplitz} έχει τα ίδια στοιχεία κατά μήκος των διαγωνίων του, δηλαδή είναι της μορφής:
\begin{equation*}
T_{n}=
\begin{pmatrix}
t_{0} & t_{-1} & t_{-2} & \cdots & t_{-(n-2)} & t_{-(n-1)} \\
t_{1} & t_{0} & t_{-1} & \cdots & t_{-(n-3)} & t_{-(n-2)} \\
t_{2} & t_{1} & t_{0} & \cdots & t_{-(n-4)} & t_{-(n-3)} \\
\vdots & \vdots & \vdots & \ddots & \vdots & \vdots \\
t_{n-2} & t_{n-3} & t_{n-4} & \cdots & t_{0} & t_{-1} \\
t_{n-1} & t_{n-2} & t_{n-3} & \cdots & t_{1} & t_{0}
\end{pmatrix}.
\end{equation*}

Σκοπός μας είναι η ταχεία και αποτελεσματική λύση ενός πραγματικού και μη συμμετρικού συστήματος {\en Toeplitz} της μορφής:
\begin{equation}\label{eq:Toeplitz system}
T_n(f)x=b,
\end{equation}
όπου $f=f_1+\mathrm{i}f_2$, με $f_1$ μια άρτια, $2\pi$-περιοδική συνάρτηση και $f_2$ περιττή κι επίσης $2\pi$-περιοδική. Τόσο η $f_1$, όσο και η $f_2$ ορίζονται στο διάστημα $\left(-\pi,\pi\right]$.

Για την αρτιότητα της διατριβής, δίνουμε τον ορισμό του δείκτη κατάστασης ενός αντιστρέψιμου πίνακα:
\begin{defn}
Δείκτης κατάστασης ενός αντιστρέψιμου πίνακα $A\in\mathbb{C}^{n,n}$, ως προς μία φυσική νόρμα $\Vert\cdot\Vert$, καλείται ο αριθμός $\kappa\left(A\right)=\Vert A\Vert\Vert A^{-1}\Vert$.
\end{defn}
`Οσο πιο κοντά στη μονάδα είναι ο δείκτης κατάστασης του πίνακα συντελεστών, τόσο καλύτερη κατάσταση μπορούμε να πούμε ότι έχει το αντίστοιχο σύστημα.

Στη βιβλιογραφία υπάρχουν γνωστοί αλγόριθμοι για την επίλυση συστημάτων {\en Toeplitz}, όπως ο αλγόριθμος του {\en N.~Levinson} \cite{levinson}, του {\en J.~Durbin} \cite{durbin} και του {\en W.~Trench} \cite{trench}. Αυτοί αν και αποτελούν βελτίωση των κλασικών άμεσων μεθόδων, όπως της Απαλοιφής {\en Gauss} (η οποία αν εφαρμοστεί αυτούσια χρειάζεται $\mathcal{O}\left(n^3\right)$ πράξεις), μπορούν να αντιμετωπίσουν ειδικές μορφές συστημάτων {\en Toeplitz} \cite{golub}, απαιτούν $\mathcal{O}\left(n^2\right)$ πράξεις και είναι αποτελεσματικοί για συστήματα που έχουν καλή κατάσταση, χωρίς όμως να γνωρίζουμε κάτι για συστήματα με κακή κατάσταση \cite{bunch2, bunch}. Αναφέρουμε ότι αναπτύχθηκαν και πιο γρήγορες άμεσες μέθοδοι, οι οποίες μας δίνουν τη λύση του συστήματος σε $\mathcal{O}\left(n\log^2{n}\right)$ πράξεις (\cite{chan_ng} και αναφορές εντός αυτής). Ωστόσο, οι επαναληπτικές μέθοδοι όπως η μέθοδος Συζυγών Κλίσεων {\en (CG)} \cite{stiefel, reid}, η Γενικευμένη μέθοδος Ελαχίστων Υπολοίπων {\en (GMRES)} \cite{saad1986gmres} και άλλες μέθοδοι υποχώρων {\en Krylov}, υπερτερούν των άμεσων αφού ο πολλαπλασιασμός πίνακα {\en Toeplitz} επί διάνυσμα μπορεί να γίνει σε $\mathcal{O}(n\log{n})$ πράξεις με χρήση του Ταχέως Μετασχηματισμού {\en Fourier (Fast Fourier Transform)}, γνωστό ως {\en FFT} \cite{Strang}. Η διαφορά (στο σύνολο των πράξεων) ανάμεσα στις άμεσες κι επαναληπτικές μεθόδους είναι πολύ μεγαλύτερη σε {\en block Toeplitz} συστήματα, όπου το κάθε {\en block} είναι πίνακας {\en Toeplitz}. Πιο συγκεκριμένα, με αλγορίθμους τύπου {\en Levinson} χρειάζονται $\mathcal{O}\left(N^2M^3\right)$ πράξεις, αντί $\mathcal{O}\left(NM\log{NM}\right)$ που χρειάζονται οι επαναληπτικές μέθοδοι \cite{NSV_2006}, όπου $M$ είναι η διάσταση του κάθε {\en block} και $N$ είναι το πλήθος των {\en blocks} σε κάθε γραμμή του πίνακα.

Αν και οι επαναληπτικές μέθοδοι χρειάζονται $\mathcal{O}(n\log{n})$ πράξεις σε κάθε επανάληψη, δεν είναι ιδιαίτερα αποτελεσματικές για συστήματα με κακή κατάσταση, με την έννοια ότι χρειάζονται πολλές επαναλήψεις (ίσως όσες και η διάσταση του συστήματος). Αυτό το φαινόμενο μπορεί να εξαλειφθεί με την τεχνική της προρρύθμισης. Ουσιαστικά προσπαθούμε να βρούμε έναν αντιστρέψιμο πίνακα ο οποίος όχι μόνο κατασκευάζεται γρήγορα, αλλά δίνει και τη λύση του αντίστοιχου συστήματος επίσης γρήγορα. `Οσον αφορά στα συστήματα {\en Toeplitz}, θέλουμε/απαιτούμε η κατασκευή και επίλυση του προρρυθμισμένου συστήματος να γίνεται το πολύ σε $\mathcal{O}(n\log{n})$ πράξεις, διότι όπως προαναφέραμε για τον πολλαπλασιασμό πίνακα {\en Toeplitz} επί διάνυσμα απαιτούνται $\mathcal{O}(n\log{n})$ πράξεις. Θα θέλαμε να σχολιάσουμε ότι για να γίνει χρήση του {\en FFT}, θα πρέπει αρχικά να εμβαπτίσουμε τον πίνακα $T_n$, σε μια συγκεκριμένη κυκλοειδή {\en (circulant)} μορφή διπλάσιας διάστασης. Περισσότερες λεπτομέρειες μπορούν να βρεθούν στο κλασικό βιβλίο του {\en M. Ng} \cite{Ng_book}.

\section{Βασική θεωρία}
Η προρρύθμιση συμμετρικών και θετικά ορισμένων συστημάτων {\en Toeplitz} έχει μελετηθεί εκτενώς από πολλούς ερευνητές και είναι πλέον ευρέως γνωστό ότι αυτά μπορούν να επιλυθούν αποτελεσματικά με την Προρρυθμισμένη μέθοδο Συζυγών Κλίσεων {\en (PCG)}. Οι προρρυθμιστές που εισήχθησαν τα τελευταία χρόνια είναι ιδιαιτέρως αποτελεσματικοί και κάποιοι από αυτούς, όπως για παράδειγμα αυτός στην \cite{NV_2008}, οδηγούν στην υπεργραμμική {\en (superlinear)} σύγκλιση της {\en PCG}, παρέχοντας τη λύση του συστήματος σε λίγες επαναλήψεις, ανεξάρτητες της διάστασης $n$.

Ειδικότερα, το 1991 ο {\en R.~Chan} \cite{chan1991toeplitz} εισήγαγε έναν ταινιωτό {\en Toeplitz} προρρυθμιστή για συμμετρικά συστήματα {\en Toeplitz} με κακή κατάσταση. Η γεννήτρια συνάρτηση του προτεινόμενου προρρυθμιστή ήταν ένα τριγωνομετρικό πολυώνυμο $g$, το οποίο είχε τις ίδιες ρίζες, καθώς επίσης και την ίδια πολλαπλότητα ριζών, με τη γεννήτρια συνάρτηση (του αρχικού συστήματος) $f$. Αυτή η τεχνική οδηγεί στην άρση της κακής κατάστασης του συστήματος, αφού η $\frac{f}{g}$ λαμβάνει μόνο θετικές τιμές. Το 1994 οι {\en R.~Chan} και {\en P.~Tang} \cite{Chan_Tang} εισήγαγαν έναν ταινιωτό {\en Toeplitz} προρρυθμιστή, ο οποίος προέκυψε από το τριγωνομετρικό πολυώνυμο $g$, το οποίο ελαχιστοποιούσε το μέγιστο σχετικό σφάλμα $\Vert\frac{f-g}{f}\Vert_\infty$. Το 1997 ο {\en S.~Serra-Capizzano} \cite{Serra} πρότεινε εναλλακτικούς τρόπους για την ελαχιστοποίηση του $\Vert\frac{f-g}{f}\Vert_\infty$ και εισήγαγε με τη σειρά του έναν ταινιωτό {\en Toeplitz} προρρυθμιστή, η κατασκευή του οποίου συνδυάζει την άρση της κακής κατάστασης (με κάποιο τριγωνομετρικό πολυώνυμο $z_k$) και την προσέγγιση της συνάρτησης $\frac{f}{z_k}$, με παρεμβολή στα σημεία {\en Chebyshev} πρώτου είδους \cite{rivlin1981introduction} ή με βέλτιστη ομοιόμορφη προσέγγιση λαμβάνοντας ως κόμβους τα ίδια σημεία. Το 2002 οι {\en D.~Noutsos} και {\en P.~Vassalos} \cite{NV_2002} πρότειναν έναν προρρυθμιστή, ο οποίος είναι το αποτέλεσμα της άρσης των ρίζών της $f$, με το $z_k$, και της ρητής προσέγγισης της συνάρτησης $\sqrt{\frac{f}{z_k}}$. Οι ίδιοι συγγραφείς το 2008 \cite{NV_2008} πρότειναν έναν προρρυθμιστή, ο οποίος κατασκευάζεται ως το γινόμενο ενός τριγωνομετρικού πολυωνύμου $g$, που αίρει τις ρίζες της μη-αρνητικής γεννήτριας συνάρτησης $f$, και πίνακες που ανήκουν σε τριγωνομετρική άλγεβρα \cite{Ng_book} και αντιστοιχούν στη θετική συνάρτηση $\frac{f}{g}$. Το 2016 οι {\en D.~Noutsos, S.~Serra-Capizzano} και {\en P.~Vassalos} \cite{NSV_2016} πρότειναν έναν προρρυθμιστή ο οποίος ανήκει στην $\tau$-άλγεβρα \cite{Bini_Benedetto,Benedetto_Serra}, για συστήματα {\en Toeplitz} των οποίων η γεννήτρια συνάρτηση έχει ρίζες μη-ακέραιας τάξης. 

Σε πολλές εφαρμογές εμφανίζονται {\en Block Toeplitz} πίνακες με {\en Toeplitz Blocks (BTTB)} \cite{Chan_Jin, Ng_book, Jin}. Αυτοί αποτελούν μια γενίκευση της κλάσης των πινάκων {\en Toeplitz}. Οι {\en BTTB} πίνακες καλούνται δι-διάστατοι {\en (two-level)} πίνακες {\en Toeplitz} και συμβολίζονται ως $T_{nm}(f)$, όπου $m$ είναι η διάσταση του κάθε {\en block} και $n$ είναι το πλήθος των {\en blocks} σε κάθε γραμμή του πίνακα, που σημαίνει ότι $T_{nm}(f)\in\mathbb{R}^{nm\times nm}$. Η γεννήτρια συνάρτηση αυτών είναι η $2\pi$-περιοδική συνάρτηση δύο μεταβλητών $f=f(x,y):[-\pi,\pi]^2\rightarrow\mathbb{C}$. Κάθε στοιχείο $t_{rs}$ χαρακτηρίζεται από τις παραμέτρους $r$ και $s$, όπου $r$ συμβολίζει τη {\en block} διαγώνιο και $s$ τη διαγώνιο εντός των {\en blocks}. `Οπως και στη μονοδιάστατη περίπτωση οι τιμές του πίνακα {\en BTTB} είναι οι συντελεστές του αναπτύγματος {\en Fourier} της $f$: 

\begin{equation*}
t_{rs}=\frac{1}{4\pi^2}\int\limits_{-\pi}^{\pi}\int\limits_{-\pi}^{\pi}f(x,y)\mathrm{e}^{-\mathrm{i}(rx+sy)}\mathrm{d}x\mathrm{d}y.
\end{equation*}

Συμμετρικά και θετικά ορισμένα {\en BTTB} συστήματα, έχουν μελετηθεί επίσης από πολλούς ερευνητές. Αποτελεσματικοί {\en BTTB} προρρυθμιστές προτάθηκαν από τον {\en M.~Ng} \cite{Ng}, καθώς επίσης και από τους {\en D.~Noutsos, S.~Serra-Capizzano} και {\en P.~Vassalos} \cite{NSV_2005, NSV_2006, NSV_2006i}. Σημειώνουμε ότι οι τελευταίοι προρρυθμιστές κατασκευάστηκαν μετά από την προσέγγιση της γεννήτριας συνάρτησης, προσαρμόζοντας την ιδέα της \cite{Serra_1999} που αφορούσε στη μονοδιάστατη περίπτωση. Προρρυθμιστές που ανήκουν σε άλγεβρα πινάκων προτάθηκαν στην \cite{NV_2011} για συστήματα με καλή κατάσταση. Σχολιάζουμε ότι για συστήματα με κακή κατάσταση έχουν αποδειχθεί ορισμένα αρνητικά αποτελέσματα \cite{NSV_2003, NSV_2004, Serra_2002, ST_2000, ST_2003}. Εν ολίγοις, προρρυθμιστές από οποιαδήποτε τριγωνομετρική άλγεβρα πινάκων δεν είναι ικανοί να οδηγήσουν σε υπεργραμμική σύγκλιση, συστήματα με κακή κατάσταση από μόνοι τους, που σημαίνει ότι δεν είναι αποτελεσματικοί χωρίς κάποια τεχνική άρσης της κακής κατάστασης. Από την άλλη, ο συνδυασμός ταινιωτών {\en BTTB} πινάκων με πίνακες οι οποίοι ανήκουν σε κάποια άλγεβρα (πινάκων), είναι ιδιαιτέρως αποτελεσματικός όπως φαίνεται στην \cite{NV_2011}, αφού επιτεύχθηκε η συσσώρευση των ιδιοτιμών γύρω από το 1.

Παραπάνω παρουσιάσαμε συνοπτικά ποικίλες τεχνικές προρρύθμισης από τη βιβλιογραφία, συμπεραίνοντας ότι η συμμετρική περίπτωση συστημάτων {\en Toeplitz} μελετήθηκε εκτενώς. Ωστόσο, η περίπτωση μη-συμμετρικών συστημάτων {\en Toeplitz} χρήζει μελέτης, αφού υπάρχουν πολλά σημεία περαιτέρω ανάλυσης. Ο αναγνώστης μπορεί να βρει διάφορα θεωρητικά εποτελέσματα για τη συσσώρευση των ιδιοτιμών και ιδιαζουσών τιμών στις \cite{Parter, Tilli, Tilli1999, tyrt, Tyrtyshnikov1999}. Σε αυτές δίνονται γενικεύσεις ενός βασικού θεωρήματος για την ισοκατανομή των ιδιοτιμών. Αυτό είναι το θεώρημα ισοκατανομής {\en (equally distribution)} του {\en G.~Szeg{\" o}} \cite{Grenander}. Θα το δώσουμε, αφού αρχικά ορίσουμε το ουσιώδες άνω και κάτω φράγμα μιας συνάρτησης.

\begin{defn}
Ο μέγιστος αριθμός $m$ για τον οποίο ισχύει η ανισότητα $f\left(x\right)\geq m$, με εξαίρεση ένα σύνολο μέτρου {\en Lebesgue} μηδέν, λέγεται ουσιώδες κάτω φράγμα ({\en essential lower bound}) της συνάρτησης $f$.
\end{defn}

\begin{defn}
Ο ελάχιστος αριθμός $M$ για τον οποίο ισχύει η ανισότητα $f\left(x\right)\leq M$, με εξαίρεση ένα σύνολο μέτρου {\en Lebesgue} μηδέν, λέγεται ουσιώδες άνω φράγμα ({\en essential upper bound}) της συνάρτησης $f$.
\end{defn}

\begin{thm}[{\en G.~Szeg{\" o}}]
Ας είναι $f$ μία πραγματική συνάρτηση, ολοκληρώσιμη κατά {\en Lebesgue} και $\lambda_{1},\lambda_{2},\dots,\lambda_{n}$ οι ιδιοτιμές του πίνακα $T_n(f)$. Συμβολίζουμε με $m$ και $M$ το ουσιώδες κάτω και ουσιώδες άνω φράγμα της $f$, αντίστοιχα. Αν $F(\lambda)$ είναι μία οποιαδήποτε συνεχής συνάρτηση ορισμένη στο διάστημα $\left[m,M\right]$ ισχύει:
\begin{center}
$\displaystyle\lim\limits_{n\rightarrow\infty}\frac{1}{n}\sum\limits_{k=1}^nF(\lambda_{k})=\frac{1}{2\pi}\int\limits_{-\pi}^{\pi}F\left(f(x)\right)\mathrm{d}x$.
\end{center}
\end{thm}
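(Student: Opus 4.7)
{\en
The plan is to follow the classical three-step route to Szeg\"o's theorem. \textbf{First}, I would reduce the statement to the case of monomials $F(\lambda)=\lambda^{p}$. Since $f$ is real, the Toeplitz entries satisfy $t_{jk}=\overline{t_{kj}}$, so $T_{n}(f)$ is Hermitian with real eigenvalues, and a Rayleigh-quotient argument places them inside $[m,M]$ (more precisely, inside $[m-\varepsilon,M+\varepsilon]$ up to an exceptional set of size $o(n)$, if one uses only $m\leq f\leq M$ a.e.). Since also $f(x)\in[m,M]$ a.e., for any continuous $F$ on $[m,M]$ and any $\varepsilon>0$ the Weierstrass approximation theorem supplies a polynomial $P$ with $\sup_{[m,M]}|F-P|<\varepsilon$; both sides of the desired identity then differ from their $P$-analogues by at most $\varepsilon$. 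By linearity it then suffices to prove the identity for $F(\lambda)=\lambda^{p}$, $p\in\mathbb{N}$.

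\textbf{Second}, I would compute the asymptotics of the power-sum via the trace. Writing $f_{k}$ for the $k$-th Fourier coefficient of $f$ (so that $t_{jk}=f_{j-k}$), I expand
\begin{equation*}
\frac{1}{n}\sum_{k=1}^{n}\lambda_{k}^{\,p}=\frac{1}{n}\operatorname{tr}\bigl(T_{n}(f)^{p}\bigr)=\frac{1}{n}\sum_{i_{1},\dots,i_{p}=1}^{n} f_{i_{1}-i_{2}}\,f_{i_{2}-i_{3}}\cdots f_{i_{p}-i_{1}}.
\end{equation*}
Changing to the difference variables $j_{r}=i_{r}-i_{r+1}$ (cyclic indices, which automatically sum to zero), the number of index tuples $(i_{1},\dots,i_{p})\in\{1,\dots,n\}^{p}$ realising a prescribed $(j_{1},\dots,j_{p})$ equals $n-O(\max_{r}|j_{r}|)$. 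Dividing by $n$ and passing to the limit yields
\begin{equation*}
\lim_{n\to\infty}\frac{1}{n}\operatorname{tr}\bigl(T_{n}(f)^{p}\bigr)=\sum_{j_{1}+\cdots+j_{p}=0} f_{j_{1}}\cdots f_{j_{p}}=\widehat{f^{\,p}}(0)=\frac{1}{2\pi}\int_{-\pi}^{\pi}f(x)^{p}\,dx,
\end{equation*}
the middle equality being the $p$-fold convolution rule for Fourier coefficients. Combined with the first step this gives the theorem.

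\textbf{The hard part will be} the second step: making the $O(\max_{r}|j_{r}|)$ boundary error rigorous and justifying the exchange of an infinite summation with the limit $n\to\infty$, since the coefficients $f_{k}$ of a merely integrable $f$ need not decay. The cleanest route is to truncate at $|j_{r}|\leq N$, control the tail by a Cauchy--Schwarz argument using $f\in L^{2}$ (which here follows from essential boundedness of $f$), send $n\to\infty$ with $N$ fixed, and only afterwards send $N\to\infty$. An equivalent alternative is to replace $T_{n}(f)$ by a circulant of size $2n$ obtained via Strang's embedding, whose eigenvalues are exact samples of a smoothed version of $f$, and to recognise $\frac{1}{n}\sum_{k}F(\lambda_{k})$ as a Riemann sum for $\frac{1}{2\pi}\int F(f)\,dx$; in either approach, the combinatorial bookkeeping of the index differences remains the core of the argument.
}
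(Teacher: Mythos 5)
{\en
The paper quotes this theorem as a classical result of Szeg\"o (citing Grenander--Szeg\"o) and supplies no proof of its own, so there is no in-paper argument to compare against; your outline is the standard Grenander--Szeg\"o proof and its structure is sound. Two points are worth tightening. First, the hedge about the eigenvalues lying in $[m-\varepsilon,M+\varepsilon]$ ``up to an exceptional set of size $o(n)$'' is unnecessary: since $f$ is real, $T_n(f)$ is Hermitian and the identity $x^HT_n(f)x=\frac{1}{2\pi}\int_{-\pi}^{\pi}f(t)\bigl\vert\sum_jx_j\mathrm{e}^{\mathrm{i}jt}\bigr\vert^2\mathrm{d}t$ places \emph{every} eigenvalue in $[m,M]$ exactly; this is what makes the Weierstrass reduction legitimate, because $F$ is only assumed defined on $[m,M]$ and you need $F(\lambda_k)$ and $P(\lambda_k)$ to be compared on that interval. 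Second, you correctly identify the crux as the interchange of the infinite sum over difference indices with the limit $n\to\infty$; your truncation-plus-Cauchy--Schwarz plan does work, precisely because the hypothesis that $m$ and $M$ are finite forces $f\in L^\infty\subset L^2$, so Parseval bounds the tails $\sum_{\vert k\vert>N}\vert f_k\vert^2$ and also guarantees $f^p\in L^1$ so that the right-hand side makes sense. As written the proposal is a correct plan rather than a finished proof --- the boundary-count estimate and the double limit in $N$ and $n$ are only sketched --- but every step is the standard one and none of them fails.
}
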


Αν στο παραπάνω θεώρημα επιλέξουμε ως $F(x)=x$, προφανώς: $$\displaystyle\lim\limits_{n\rightarrow\infty}\frac{\lambda_{1}+\lambda_{2}+\dots+\lambda_{n}}{n}=\frac{1}{2\pi}\int\limits_{-\pi}^{\pi}f(x)\mathrm{d}x.$$ Αυτό σημαίνει ότι ο μέσος όρος των ιδιοτιμών του $T_n(f)$, συγκλίνει στο στοιχείο $t_{0}=\displaystyle\frac{1}{2\pi}\int\limits_{-\pi}^{\pi}f(x)\mathrm{d}x$. Γενικότερα, η κατάλληλη επιλογή της $F$ είναι αυτή που μπορεί να μας οδηγήσει στο επιθυμητό αποτέλεσμα (βλ. απόδειξη Θεωρήματος \ref{thm:eig_clustering}).

Καταλαβαίνουμε ότι ο τρόπος κατανομής/συγκέντρωσης των ιδιοτιμών παίζει σημαντικό ρόλο για την αποτελεσματικότητα μεθόδων {\en Krylov}. Είναι μάλιστα γνωστό ότι η αποτελεσματικότητα της μεθόδου {\en GMRES} εξαρτάται από τον τρόπο συσσώρευσης των ιδιοτιμών του πίνακα συντελεστών, ενώ αυτή της {\en CGN} από τη συσσώρευση των ιδιαζουσών τιμών \cite{trefethen}. Αναφερόμαστε σε αυτές τις δύο μεθόδους, διότι μπορούν να επιλύσουν ένα μη-συμμετρικό σύστημα, σε αντίθεση με τη {\en PCG}, η οποία ναι μεν επιλύει με αποτελεσματικότητα συμμετρικά και θετικά ορισμένα συστήματα, αλλά δε μπορεί να εφαρμοστεί σε μη-συμμετρικούς πίνακες. Παρακάτω ορίζουμε δύο είδη συσσώρευσης των ιδιοτιμών, όπως δόθηκαν από τον {\en E.~Tyrtyshnikov} στην \cite{tyrt}:

\begin{defn}
`Ενα σύνολο $\Phi\subset\mathbb{C}$ καλείται σύνολο γενικής συσσώρευσης {\en (general cluster)} των ιδιοτιμών μιας ακολουθίας πινάκων $\{A_n\}$, $A_n\in\mathbb{C}^{n\times n}$, αν και μόνο αν για κάθε $\varepsilon>0$:
\begin{equation*}
\lim_{n\rightarrow\infty}\frac{\gamma_n(\varepsilon)}{n}=0,
\end{equation*}
ενώ αν:
\begin{equation*}
\gamma_n(\varepsilon)\leq c(\varepsilon),
\end{equation*}
όπου $c(\varepsilon)$ είναι σταθερά ανεξάρτητη της διάστασης $n$, το $\Phi$ καλείται σύνολο κύριας συσσώρευσης {\en (proper cluster)} των ιδιοτιμών.
\end{defn}

Παραπάνω, με $\gamma_n(\varepsilon)$ συμβολίζουμε τον αριθμό των ιδιοτιμών που κυμαίνονται εκτός του πεδίου $\Phi_\varepsilon$ {\en (outliers)}, δηλαδή εκτός της $\varepsilon$-επέκτασης του $\Phi$ (ένωση του $\Phi$ με όλες τις $\varepsilon$-μπάλες, που έχουν ως κέντρο τα σημεία του $\Phi$).

\begin{rem}
Προσαρμόσαμε τον ορισμό από την \cite{tyrt} για μη-Ερμιτιανούς πίνακες, διότι οι ιδιοτιμές του μη-συμμετρικού πίνακα {\en Toeplitz} είναι μιγαδικές. Υπάρχει ανάλογος ορισμός για τη συσσώρευση των ιδιαζουσών τιμών \cite{Serra1}.
\end{rem}


Δίνουμε ένα θεώρημα, το οποίο θα φανεί χρήσιμο στην απόδειξη της συσσώρευσης των ιδιαζουσών τιμών του προρρυθμισμένου συστήματος, στο επόμενο κεφάλαιο. Αυτό είναι το Θεώρημα 4.5 της \cite{Tilli}, το οποίο δόθηκε από τον {\en P.~Tilli}:
\begin{thm}[{\en P.~Tilli}]\label{thm: Tilli}
`Εστω $f\in\mathrm{L}^2(Q,\mathbb{C}^{h\times k})$ και ας είναι $\lbrace T_n\rbrace$ ένα σύνολο {\en block} πινάκων {\en Toeplitz} με γεννήτρια συνάρτηση $f$. Τότε, το $[\sigma_{\operatorname{min}}(f),\sigma_{\operatorname{max}}(f)]$ αποτελεί διάστημα συσσώρευσης για τις ιδιάζουσες τιμές του $\lbrace T_n\rbrace$.
\end{thm}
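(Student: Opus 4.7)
Για κάθε $\varepsilon>0$, αρκεί να δείξω ότι ο αριθμός $\gamma_n(\varepsilon)$ των ιδιαζουσών τιμών του $T_n$ που βρίσκονται έξω από την $\varepsilon$-επέκταση του διαστήματος $[\sigma_{\operatorname{min}}(f),\sigma_{\operatorname{max}}(f)]$ είναι $o(n)$. Το βασικό εργαλείο που θα χρησιμοποιήσω είναι η εκδοχή του θεωρήματος τύπου {\en Szeg{\" o}} για ιδιάζουσες τιμές {\en block Toeplitz} πινάκων, η οποία αποδεικνύεται στην ίδια εργασία \cite{Tilli}: για κάθε συνεχή συνάρτηση $F$ με συμπαγή φορέα ισχύει
\begin{equation*}
\lim_{n\rightarrow\infty}\frac{1}{d_n}\sum_{i=1}^{d_n}F\left(\sigma_i(T_n)\right)=\frac{1}{\min(h,k)\,\mu(Q)}\int_Q\sum_{j=1}^{\min(h,k)}F\left(\sigma_j(f(x))\right)\mathrm{d}x,
\end{equation*}
όπου $d_n$ συμβολίζει το πλήθος των ιδιαζουσών τιμών του $T_n$, το οποίο είναι ανάλογο του $n$.

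Πρώτα θα κατασκευάσω μια κατάλληλη συνεχή, μη-αρνητική δοκιμαστική συνάρτηση $F_\varepsilon:[0,\infty)\to[0,1]$ με συμπαγή φορέα, η οποία μηδενίζεται στο $[\sigma_{\operatorname{min}}(f),\sigma_{\operatorname{max}}(f)]$ και λαμβάνει την τιμή $1$ έξω από την $\varepsilon$-επέκτασή του (με γραμμική παρεμβολή στις δύο ενδιάμεσες ζώνες πλάτους $\varepsilon$ και αποκοπή στο μηδέν μακριά από το $[\sigma_{\operatorname{min}}(f),\sigma_{\operatorname{max}}(f)]$). Από τον ορισμό των $\sigma_{\operatorname{min}}(f)$ και $\sigma_{\operatorname{max}}(f)$ ως ουσιώδες κάτω και άνω φράγμα των $\sigma_j(f(x))$ αντιστοίχως, για σχεδόν κάθε $x\in Q$ και κάθε $j$ ισχύει $\sigma_j(f(x))\in[\sigma_{\operatorname{min}}(f),\sigma_{\operatorname{max}}(f)]$, οπότε $F_\varepsilon(\sigma_j(f(x)))=0$ και το ολοκλήρωμα στο δεξί μέλος μηδενίζεται ταυτοτικά.

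Από την άλλη, από την κατασκευή της $F_\varepsilon$ έχω $\sum_{i=1}^{d_n}F_\varepsilon(\sigma_i(T_n))\geq\gamma_n(\varepsilon)$, αφού κάθε ιδιάζουσα τιμή εκτός της $\varepsilon$-επέκτασης συνεισφέρει τουλάχιστον $1$ στο άθροισμα. Συνδυάζοντας τα παραπάνω παίρνω:
\begin{equation*}
0\leq\limsup_{n\rightarrow\infty}\frac{\gamma_n(\varepsilon)}{d_n}\leq\lim_{n\rightarrow\infty}\frac{1}{d_n}\sum_{i=1}^{d_n}F_\varepsilon\left(\sigma_i(T_n)\right)=0,
\end{equation*}
άρα $\gamma_n(\varepsilon)=o(d_n)=o(n)$, που είναι ακριβώς η ζητούμενη γενική συσσώρευση.

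\emph{Κύρια δυσκολία}: Η πραγματικά απαιτητική τεχνική δουλειά κρύβεται στο θεώρημα ισοκατανομής για τις ιδιάζουσες τιμές, το οποίο χρησιμοποιώ ως μαύρο κουτί. Η αυστηρή απόδειξή του απαιτεί προσέγγιση της $f\in\mathrm{L}^2(Q,\mathbb{C}^{h\times k})$ από τριγωνομετρικά πολυώνυμα $p$ με τιμές σε πίνακες, εκτιμήσεις της νόρμας {\en Schatten} της διαφοράς $T_n(f)-T_n(p)$, καθώς και ανισότητες διαταραχής για τις ιδιάζουσες τιμές. Η δική μου συνεισφορά από κει και πέρα (κατάλληλη επιλογή της $F_\varepsilon$ και σύγκριση με το ολοκλήρωμα) είναι τυπική και άμεση.
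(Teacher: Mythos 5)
Η διατριβή δεν δίνει δική της απόδειξη για το θεώρημα αυτό· το παραθέτει αυτούσιο ως το Θεώρημα 4.5 της \cite{Tilli} και περιορίζεται στην παρατήρηση ότι η συσσώρευση είναι γενική, με $\operatorname{o}(n)$ ιδιάζουσες τιμές κάτω από το $\sigma_{\operatorname{min}}(f)$ και όλες κάτω από το $\sigma_{\operatorname{max}}(f)$. Το σχέδιό σου είναι σωστό και ακολουθεί ακριβώς τον δρόμο της πηγής: αναγωγή της συσσώρευσης στο θεώρημα ισοκατανομής τύπου {\en Szeg{\" o}} για τις ιδιάζουσες τιμές, μέσω κατάλληλης συνεχούς δοκιμαστικής συνάρτησης που μηδενίζεται στο διάστημα και ισούται με 1 εκτός της $\varepsilon$-επέκτασης· είναι μάλιστα η ίδια ακριβώς τεχνική που εφαρμόζει η διατριβή αργότερα, στην απόδειξη του Θεωρήματος \ref{thm:Me} και του Θεωρήματος \ref{thm:2}. Μία μόνο λεπτομέρεια χρήζει ρητής αναφοράς: η αποκοπή της $F_\varepsilon$ στο μηδέν για μεγάλα ορίσματα, ώστε να έχει συμπαγή φορέα, είναι αβλαβής μόνο εφόσον καμία ιδιάζουσα τιμή του $T_n$ δε διαφεύγει πάνω από τον φορέα· αυτό εξασφαλίζεται από το στοιχειώδες φράγμα $\sigma_{\operatorname{max}}(T_n)\leq\sigma_{\operatorname{max}}(f)$ όταν το τελευταίο είναι πεπερασμένο (και είναι κενό ζήτημα όταν $\sigma_{\operatorname{max}}(f)=\infty$), δηλαδή ακριβώς το περιεχόμενο της παρατήρησης της διατριβής. Ορθώς επίσης επισημαίνεις ότι το ουσιαστικό τεχνικό βάρος βρίσκεται στο ίδιο το θεώρημα ισοκατανομής, το οποίο χρησιμοποιείς ως δεδομένο.
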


Σημειώνουμε ότι $Q=(-\pi,\pi)$ και $f\in\mathrm{L}^2(Q,\mathbb{C}^{h\times k})$, σημαίνει ότι η $f$ είναι μια συνάρτηση πινάκων στο $\mathbb{C}^{h\times k}$ με οποιοδήποτε στοιχείο $f_{ij}\in\mathrm{L}^2(-\pi,\pi)$.

\begin{rem}
Η συσσώρευση στο παραπάνω θεώρημα χαρακτηρίζεται ως γενική, αφού στην \cite{Tilli} περιγράφεται ότι $\operatorname{o}(n)$ ιδιάζουσες τιμές του $T_n$ είναι μικρότερες από $\sigma_{\operatorname{min}}(f)$ και όλες τους μικρότερες από $\sigma_{\operatorname{max}}(f)$.
\end{rem}


Χρήσιμο επίσης είναι και το παρακάτω λήμμα (Λήμμα 2.1 της \cite{Tyrtyshnikov1999}), το οποίο δόθηκε από τους {\en E.~Tyrtyshnikov} και {\en N.~Zamarashkin}:
\begin{lem}[{\en E.~Tyrtyshnikov, N.~Zamarashkin}]\label{lem:tyrt_zamar}
Δοσμένων δύο ακολουθιών πινάκων $\{\mathcal{A}_n\}$ και $\{\mathcal{B}_n\}$, υποθέτουμε ότι $\Vert\mathcal{A}_n-\mathcal{B}_n\Vert_F^2=o(n)$ και $\Vert\mathcal{B}_n\Vert_2\leq M$ ομοιόμορφα ως προς το $n$. Τότε, το $\lbrace z:\vert z\vert\leq M\rbrace$ αποτελεί σύνολο γενικής συσσώρευσης των ιδιοτιμών του $\mathcal{A}_n$.
\end{lem}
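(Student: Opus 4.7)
Το σχέδιό μου είναι να ανάγω τη συσσώρευση των ιδιοτιμών του $\mathcal{A}_n$ σε μία σύγκριση διαγωνίων στοιχείων, μέσω μοναδιαίας αναλύσεως {\en Schur}. Συγκεκριμένα, γράφω $\mathcal{A}_n=U_n(D_n+N_n)U_n^*$, όπου $U_n$ είναι μοναδιαίος, $D_n$ διαγώνιος πίνακας με τις ιδιοτιμές $\lambda_1,\lambda_2,\ldots,\lambda_n$ του $\mathcal{A}_n$ στη διαγώνιο και $N_n$ αυστηρά άνω τριγωνικός. Ορίζω $\hat{\mathcal{B}}_n=U_n^*\mathcal{B}_nU_n$. Αφού τόσο η φασματική, όσο και η {\en Frobenius} νόρμα είναι αναλλοίωτες υπό μοναδιαίους μετασχηματισμούς, παίρνω αμέσως $\Vert\hat{\mathcal{B}}_n\Vert_2\leq M$ και $\Vert D_n+N_n-\hat{\mathcal{B}}_n\Vert_F^2=\Vert\mathcal{A}_n-\mathcal{B}_n\Vert_F^2=o(n)$.

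Στη συνέχεια αξιοποιώ δύο στοιχειώδεις παρατηρήσεις. Πρώτον, τα διαγώνια στοιχεία οποιουδήποτε πίνακα φράσσονται κατά μέτρο από τη φασματική νόρμα του, επομένως $|(\hat{\mathcal{B}}_n)_{ii}|\leq M$ για κάθε $i$. Δεύτερον, το άθροισμα των τετραγώνων των διαγωνίων στοιχείων ενός πίνακα φράσσεται από το τετράγωνο της {\en Frobenius} νόρμας του. Εφαρμόζοντας τη δεύτερη παρατήρηση στον πίνακα $D_n+N_n-\hat{\mathcal{B}}_n$, του οποίου τα διαγώνια στοιχεία είναι οι διαφορές $\lambda_i-(\hat{\mathcal{B}}_n)_{ii}$, προκύπτει
\begin{equation*}
\sum_{i=1}^n\bigl|\lambda_i-(\hat{\mathcal{B}}_n)_{ii}\bigr|^2\leq\Vert D_n+N_n-\hat{\mathcal{B}}_n\Vert_F^2=o(n).
\end{equation*}

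Τέλος, για κάθε $\varepsilon>0$, αν κάποια ιδιοτιμή ικανοποιεί $|\lambda_i|>M+\varepsilon$, τότε αναγκαστικά $|\lambda_i-(\hat{\mathcal{B}}_n)_{ii}|>\varepsilon$, διότι διαφορετικά η τριγωνική ανισότητα σε συνδυασμό με $|(\hat{\mathcal{B}}_n)_{ii}|\leq M$ θα έδινε $|\lambda_i|\leq M+\varepsilon$. Άρα, με μία ανισότητα τύπου {\en Markov} λαμβάνω
\begin{equation*}
\gamma_n(\varepsilon)\leq\#\{i:|\lambda_i-(\hat{\mathcal{B}}_n)_{ii}|>\varepsilon\}\leq\frac{1}{\varepsilon^2}\sum_{i=1}^n\bigl|\lambda_i-(\hat{\mathcal{B}}_n)_{ii}\bigr|^2=\frac{o(n)}{\varepsilon^2},
\end{equation*}
οπότε $\gamma_n(\varepsilon)/n\to 0$, που είναι ακριβώς ο ορισμός της γενικής συσσώρευσης των ιδιοτιμών του $\mathcal{A}_n$ στον δίσκο $\{z:|z|\leq M\}$.

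Το κομβικό σημείο της απόδειξης και, ουσιαστικά, το μόνο που απαιτεί προσοχή, είναι η επιλογή του κατάλληλου μοναδιαίου μετασχηματισμού: η ανάλυση {\en Schur} τοποθετεί τις ιδιοτιμές στη διαγώνιο, ενώ η αναλλοιωτικότητα των δύο νορμών επιτρέπει τη μεταφορά τόσο του φασματικού φράγματος για τον $\mathcal{B}_n$, όσο και της υπόθεσης $o(n)$ στο μετασχηματισμένο πλαίσιο, όπου μπορούν πλέον να συγκριθούν στοιχείο προς στοιχείο. Όλα τα υπόλοιπα ανάγονται σε στοιχειώδεις ανισότητες.
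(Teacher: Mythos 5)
Η απόδειξή σου είναι ορθή: η διατριβή παραθέτει το λήμμα χωρίς απόδειξη (ως Λήμμα 2.1 της εργασίας των {\en Tyrtyshnikov} και {\en Zamarashkin}), και το επιχείρημά σου --- τριγωνοποίηση {\en Schur}, μεταφορά των δύο νορμών μέσω της μοναδιαίας αναλλοιωτικότητας, σύγκριση διαγωνίων στοιχείων και ανισότητα τύπου {\en Markov} --- είναι ουσιαστικά η τυπική απόδειξη της πρωτότυπης αναφοράς. Αξίζει μόνο να σημειωθεί ότι το φράγμα $\gamma_n(\varepsilon)\leq\varepsilon^{-2}\Vert\mathcal{A}_n-\mathcal{B}_n\Vert_F^2$ που προκύπτει από το επιχείρημά σου δίνει αμέσως και την κύρια συσσώρευση όταν $\Vert\mathcal{A}_n-\mathcal{B}_n\Vert_F^2=\mathcal{O}(1)$, σε πλήρη συμφωνία με την παρατήρηση που ακολουθεί το λήμμα στη διατριβή.
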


\begin{rem}
Αν $\Vert\mathcal{A}_n-\mathcal{B}_n\Vert_F^2=\mathcal{O}(1)$ (και $\Vert\mathcal{B}_n\Vert_2\leq M$ ομοιόμορφα ως προς το $n$), εύκολα βλέπουμε, από την απόδειξη του αντίστοιχου λήμματος της \cite{Tyrtyshnikov1999}, ότι το $\lbrace z:\vert z\vert\leq M\rbrace$ αποτελεί σύνολο κύριας συσσώρευσης των ιδιοτιμών του $\{\mathcal{A}_n\}$.
\end{rem}

Είναι προφανές ότι η συσσώρευση των ιδιοτιμών και ιδιαζουσών τιμών αναφέρεται σε ακολουθία πινάκων. Ωστόσο, για λόγους απλούστευσης, στην πορεία της διατριβής όταν αναφερόμαστε στη συσσώρευση πίνακα, θα εννοούμε συσσώρευση της αντίστοιχης ακολουθίας πινάκων.

\newpage\thispagestyle{empty}\mbox{}\newpage 

\pagestyle{main}

\chapter{Ταινιωτοί Προρρυθμιστές}

Σε αυτό το κεφάλαιο θα μελετήσουμε την προρρύθμιση $n\times n$ πραγματικών και μη-συμμετρικών συστημάτων {\en Toeplitz}, χρησιμοποιώντας ως προρρυθμιστές ταινιωτούς πίνακες {\en Toeplitz}. Η γεννήτρια συνάρτηση που αντιστοιχεί σε αυτά είναι μιγαδική, της μορφής $f=f_1+{\rm i}f_2$, όπου $\mathrm{i}=\sqrt{-1}$. Σημειώνουμε επίσης ότι η $f_1$ είναι $2\pi$-περιοδική και άρτια συνάρτηση ενώ η $f_2$ είναι μεν $2\pi$-περιοδική, αλλά περιττή. Η λύση των προρρυθμισμένων συστημάτων λαμβάνεται με την Προρρυθμισμένη μέθοδο Γενικευμένων Ελαχίστων Υπολοίπων {\en (PGMRES)} \cite{Saad, Saad_Schultz} και την Προρρυθμισμένη μέθοδο Συζυγών Κλίσεων για το σύστημα των Κανονικών Εξισώσεων {\en (PCGN)}. Θα παρουσιάσουμε μια τεχνική προρρύθμισης η οποία συνδυάζει την άρση των ριζών της γεννήτριας συνάρτησης (αν υπάρχουν) και βέλτιστη ομοιόμορφη προσέγγιση ή παρεμβολή με τριγωνομετρικά πολυώνυμα. Ο προρρυθμιστής που προκύπτει είναι ο πίνακας $T_n(p)$, όπου $p=gq$, με $g$ να είναι το τριγωνομετρικό πολυώνυμο το οποίο έχει τις ίδιες ρίζες με τη γεννήτρια συνάρτηση $f$ και $q$ το τριγωνομετρικό πολυώνυμο που προκύπτει κατόπιν προσέγγισης της $\frac{f}{g}$. Με χρήση του προαναφερθέντος προρρυθμιστή επιτυγχάνεται συσσώρευση των ιδιοτιμών και ιδιαζουσών τιμών του προρρυθμισμένου συστήματος σε μια μικρή περιοχή μακριά από το 0 και σε ένα μικρό διάστημα κοντά στο 1, αντίστοιχα. Αυτό σημαίνει ότι μπορούμε να λάβουμε τη λύση του συστήματος με λίγες επαναλήψεις των μεθόδων που αναφέραμε \cite{trefethen}, γεγονός το οποίο επιβεβαιώνεται και στα διάφορα αριθμητικά παραδείγματα που δίνουμε στο τέλος του κεφαλαίου.

\section{Θεωρητικά αποτελέσματα}

Αρχικά θα δώσουμε τα θεωρητικά αποτελέσματα, που αφορούν στη σύγκλιση των μεθόδων {\en PGMRES} και {\en PCGN}, δηλαδή στη συσσώρευση του φάσματος των ιδιοτιμών και ιδιαζουσών τιμών του προρρυθμισμένου συστήματος. Όπως αναφέραμε, στόχος μας είναι να βρούμε έναν κατάλληλο ταινιωτό πίνακα {\en Toeplitz} $T_n(p)$, τον οποίο θα χρησιμοποιήσουμε ως προρρυθμιστή, για την αποτελεσματική επίλυση του συστήματος $T_n(f)x=b$. Για να επιτύχουμε τον στόχο μας, θα πρέπει να επιλέξουμε ένα τριγωνομετρικό πολυώνυμο $p=p_1+\mathrm{i}p_2$ έτσι ώστε $\operatorname{Re}\left(\frac{f}{p}\right)>0$ και το εύρος {\en (range)} της $\left|\frac{f}{p}\right|$, ορισμένο ως:

\begin{equation*}
\operatorname{range}\left(\left|\frac{f}{p}\right|\right)=\left[\min\limits_{-\pi\leq x\leq\pi}{\left|\frac{f(x)}{p(x)}\right|},\max\limits_{-\pi\leq x\leq\pi}{\left|\frac{f(x)}{p(x)}\right|}\right],
\end{equation*}
να είναι ένα θετικό διάστημα μακριά από το 0.

Ο τρόπος επιλογής του τριγωνομετρικού πολυωνύμου $p$, θα περιγραφεί λεπτομερώς στην επόμενη ενότητα, όπου δίνεται επίσης και ο τρόπος κατασκευής του προρρυθμιστή. Στη συνέχεια αυτής της ενότητας, θεωρούμε ότι το $p$ έχει ήδη βρεθεί και θα μελετήσουμε τις ιδιότητες που αφορούν στη συσσώρευση των ιδιοτιμών και ιδιαζουσών τιμών του προρρυθμισμένου πίνακα. `Ετσι, δίνουμε τα ακόλουθα θεωρήματα με τις αποδείξεις τους:

\begin{thm}\label{thm:gen_cluster}
`Εστω $f\in\mathrm{L}^2(-\pi,\pi)$ και $p$ ένα τριγωνομετρικό πολυώνυμο τέτοιο ώστε $\operatorname{Re}\left(\frac{f}{p}\right)>0$ και $0<\alpha=\operatorname*{ess~inf}\limits_{-\pi\leq x\leq\pi}{\left|\frac{f(x)}{p(x)}\right|}\leq\operatorname*{ess~sup}\limits_{-\pi\leq x\leq\pi}{\left|\frac{f(x)}{p(x)}\right|}=\beta<\infty$. Τότε, το διάστημα $\left[\alpha,\beta\right]$ αποτελεί ένα σύνολο γενικής συσσώρευσης για τις ιδιάζουσες τιμές του προρρυθμισμένου πίνακα $T_n^{-1}(p)T_n(f)$.
\end{thm}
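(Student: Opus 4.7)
Το σχέδιο είναι να συγκρίνω τον προρρυθμισμένο πίνακα $T_n^{-1}(p)T_n(f)$ με τον πίνακα {\en Toeplitz} $T_n(h)$, όπου $h=f/p$, και να ανάγω τη ζητούμενη συσσώρευση στη συσσώρευση των ιδιαζουσών τιμών του δεύτερου. Από τις υποθέσεις έχουμε $\alpha\leq|h(x)|\leq\beta$ σχεδόν παντού, οπότε η $h$ είναι ουσιωδώς φραγμένη με $\operatorname*{ess~inf}|h|=\alpha$ και $\operatorname*{ess~sup}|h|=\beta$. Εφαρμόζοντας το Θεώρημα \ref{thm: Tilli} στη βαθμωτή περίπτωση ($h=k=1$ στους συμβολισμούς του εν λόγω θεωρήματος), συμπεραίνω άμεσα ότι οι ιδιάζουσες τιμές της ακολουθίας $\lbrace T_n(h)\rbrace$ έχουν ως σύνολο γενικής συσσώρευσης το διάστημα $[\alpha,\beta]$.

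Στη συνέχεια, θα γράψω τον προρρυθμισμένο πίνακα υπό τη μορφή $T_n^{-1}(p)T_n(f)=T_n(h)+E_n$, με τον $E_n$ πίνακα τάξης φραγμένης από σταθερά ανεξάρτητη του $n$. Η ιδέα-κλειδί είναι ότι, αφού το $p$ είναι τριγωνομετρικό πολυώνυμο πεπερασμένου βαθμού $k$, ο $T_n(p)$ είναι ταινιωτός πίνακας με εύρος ταινίας $2k+1$. Ένας άμεσος υπολογισμός των στοιχείων του γινομένου δείχνει ότι ο πίνακας $R_n:=T_n(p)T_n(h)-T_n(ph)$ έχει μη μηδενικά στοιχεία μόνο στις πρώτες $k$ και στις τελευταίες $k$ γραμμές, άρα $\operatorname{rank}(R_n)\leq 2k$. Αφού $ph=f$, έπεται ότι $T_n(p)T_n(h)=T_n(f)+R_n$ και, πολλαπλασιάζοντας από αριστερά με $T_n^{-1}(p)$ (ο οποίος υπάρχει από την υποκείμενη υπόθεση αντιστρεψιμότητας),
\begin{equation*}
T_n^{-1}(p)T_n(f)=T_n(h)-T_n^{-1}(p)R_n,
\end{equation*}
με $\operatorname{rank}\bigl(T_n^{-1}(p)R_n\bigr)\leq\operatorname{rank}(R_n)\leq 2k$, χωρίς να απαιτείται έλεγχος της νόρμας $\Vert T_n^{-1}(p)\Vert_2$.

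Τέλος, θα κλείσω την απόδειξη με την ανισότητα τύπου {\en Weyl}/{\en Mirsky} για τις ιδιάζουσες τιμές υπό διαταραχές φραγμένης τάξης: αν $A=B+E$ με $\operatorname{rank}(E)\leq r$, τότε $\sigma_{i+r}(B)\leq\sigma_i(A)\leq\sigma_{i-r}(B)$ για κάθε επιτρεπτό $i$. Συνεπώς, αν για δοσμένο $\varepsilon>0$ ο $T_n(h)$ έχει $\operatorname{o}(n)$ ιδιάζουσες τιμές εκτός της $\varepsilon$-επέκτασης του $[\alpha,\beta]$, το ίδιο θα ισχύει και για τον $T_n^{-1}(p)T_n(f)$, αφού μπορούν να εισαχθούν το πολύ $4k$ επιπλέον {\en outliers}, αριθμός ανεξάρτητος του $n$.

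Η κύρια τεχνική δυσκολία θεωρώ ότι είναι η τεκμηρίωση του φράγματος $\operatorname{rank}(R_n)\leq 2k$, δηλαδή ο προσεκτικός υπολογισμός των στοιχείων του $T_n(p)T_n(h)-T_n(ph)$ και η απομόνωση του σφάλματος μόνο στις γωνιακές ζώνες, εκμεταλλευόμενος την ταινιωτή δομή του $T_n(p)$. Σημειώνω ακόμη ότι η υπόθεση $\operatorname{Re}(f/p)>0$ δεν χρησιμοποιείται άμεσα στην παρούσα πρόταση (η οποία εξαρτάται μόνο από το μέτρο $|h|$)· θα είναι όμως ουσιώδης για τη συσσώρευση των ιδιοτιμών του προρρυθμισμένου πίνακα, η οποία αναμένω να αποδειχθεί στη συνέχεια.
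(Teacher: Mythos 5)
{\en Your proposal is correct, and its skeleton coincides with the paper's: both arguments rest on the same two ingredients, namely the low-rank identity $T_n(f)=T_n(p)T_n\left(\frac{f}{p}\right)+E_n$ with $\operatorname{rank}(E_n)\leq 2k=d-1$ (exactly the corner-support computation you flag as the main technical point), and Tilli's Theorem \ref{thm: Tilli} applied to the bounded symbol $h=\frac{f}{p}\in L^\infty\subset L^2$. Where you genuinely diverge is in how the low-rank perturbation is transferred to the singular values. The paper passes to the normal equations: it forms $\left(T_n^{-1}(p)T_n(f)\right)^TT_n^{-1}(p)T_n(f)$, replaces it by the similar matrix $A_n(f,p)=T_n^{-1}(p)T_n(f)T_n(\bar f)T_n^{-1}(\bar p)$, and expands this as $T_n\left(\frac{f}{p}\right)T_n\left(\frac{\bar f}{\bar p}\right)+R_n$ with $\operatorname{rank}(R_n)\leq 2d-2$. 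You instead perturb the preconditioned matrix itself, $T_n^{-1}(p)T_n(f)=T_n(h)-T_n^{-1}(p)R_n$, and invoke the Weyl--Mirsky interlacing $\sigma_{i+r}(B)\leq\sigma_i(B+E)\leq\sigma_{i-r}(B)$ for $\operatorname{rank}(E)\leq r$. Your route is shorter (no similarity transformation, no squaring up) and it makes explicit the step the paper leaves implicit: the paper asserts that at most $2d-2$ extra singular values can escape $[\alpha,\beta]$ without naming the interlacing inequality that justifies it. Your bound of at most $4k$ extra outliers is the correct two-sided consequence of interlacing with $r=2k$, and is in any case a constant independent of $n$, so the general-cluster conclusion is unaffected. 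Your closing remarks are also accurate: the hypothesis $\operatorname{Re}\left(\frac{f}{p}\right)>0$ is not needed for the singular values, only for invertibility of the preconditioner and for the eigenvalue theorem that follows.}
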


\begin{proof}
Είναι ευρέως γνωστό ότι οι ιδιάζουσες τιμές ενός μη-συμμετρικού και πραγματικού πίνακα $A$ είναι οι τετραγωνικές ρίζες των ιδιοτιμών του $A^TA$. Επομένως, θα μελετήσουμε τη συμπεριφορά των ιδιοτιμών του πίνακα που αντιστοιχεί στις κανονικές εξισώσεις του προρρυθμισμένου συστήματος.

\begin{equation*}
\begin{split}
\left(T_n^{-1}(p)T_n(f)\right)^T T_n^{-1}(p)T_n(f)&=T_n^T(f)T_n^{-T}(p)T_n^{-1}(p)T_n(f)\\&=T_n(\bar{f})T_n^{-1}(\bar{p})T_n^{-1}(p)T_n(f).
\end{split}
\end{equation*}
Ο τελευταίος πίνακας είναι όμοιος με τον:

\begin{equation*}
A_n(f,p)=T_n^{-1}(p)T_n(f)T_n(\bar{f})T_n^{-1}(\bar{p}).
\end{equation*}
Για απλούστευση θα μελετήσουμε τη συμπεριφορά των ιδιοτιμών του $A_n(f,p)$. Επίσης, θα κάνουμε χρήση της παρακάτω σχέσης:

\begin{equation}\label{eq:low rank E}
T_n(f)=T_n(p)T_n\left(\frac{f}{p}\right)+E_n,
\end{equation}
όπου $E_n$ είναι ένας πίνακας που έχει βαθμίδα ίση με $d-1$, $d$ είναι το πλάτος ταινίας {\en (bandwidth)} του $T_n(p)$. Εφόσον ο $T_n(p)$ είναι ένας ταινιωτός πίνακας, είναι εύκολο να παρατηρήσουμε ότι οι πίνακες $T_n(f)$ και $T_n(p)T_n\left(\frac{f}{p}\right)$ διαφέρουν μόνο στις $\frac{d-1}{2}$ πρώτες και τελευταίες γραμμές, γεγονός το οποίο αποδεικνύει τη σχέση (\ref{eq:low rank E}). Ομοίως, ισχύει ότι:

\begin{equation*}
T_n(\bar{f})=T_n\left(\frac{\bar{f}}{\bar{p}}\right)T_n(\bar{p})+E_n^T.
\end{equation*}
Τώρα θα μελετήσουμε το φάσμα του $A_n(f,p)$.
\begin{equation}\label{eq:main proof}
\begin{split}
A_n(f,p)&=T_n^{-1}(p)T_n(f)T_n(\bar{f})T_n^{-1}(\bar{p})\\
&=T_n^{-1}(p)\left(T_n(p)T_n\left(\frac{f}{p}\right)+E_n\right)\\
&\phantom{A_n(f,p)}\cdot\left(T_n\left(\frac{\bar{f}}{\bar{p}}\right)T_n(\bar{p})+E_n^T\right)T_n^{-1}(\bar{p})\\
&=T_n\left(\frac{f}{p}\right)T_n\left(\frac{\bar{f}}{\bar{p}}\right)+T_n^{-1}(p)E_nT_n\left(\frac{\bar{f}}{\bar{p}}\right)\\
&\phantom{A_n(f,p)}+T_n\left(\frac{f}{p}\right)E_n^T T_n^{-1}(\bar{p})+T_n^{-1}(p)E_nE_n^T T_n^{-1}\left(\bar{p}\right)\\
&=T_n\left(\frac{f}{p}\right)T_n\left(\frac{\bar{f}}{\bar{p}}\right)+R_n
\end{split}
\end{equation}
όπου $R_n$ είναι πίνακας βαθμίδας ίσης το πολύ με $2d-2$.


Από το Θεώρημα \ref{thm: Tilli} (Θεώρημα 4.5 της \cite{Tilli}), λαμβάνουμε ότι οι ιδιάζουσες τιμές του $T_n\left(\frac{f}{p}\right)$ ή ισοδύναμα οι τετραγωνικές ρίζες των ιδιοτιμών του $T_n\left(\frac{f}{p}\right)T_n\left(\frac{\bar{f}}{\bar{p}}\right)$, συσσωρεύονται στο διάστημα $[\alpha,\beta]$. Η φύση της συσσώρευσης (στο $[\alpha,\beta]$) μέσω του Θεωρήματος \ref{thm: Tilli} είναι ότι $\operatorname{o}(n)$ ιδιάζουσες τιμές του $T_n\left(\frac{f}{p}\right)$ είναι μικρότερες από $\alpha$ και όλες τους μικρότερες από $\beta$. Από την ($\ref{eq:main proof}$) έχουμε ότι:

\begin{equation*}
A_n(f,p)=T_n\left(\frac{f}{p}\right)T_n\left(\frac{\bar{f}}{\bar{p}}\right)+R_n.
\end{equation*}
Επομένως, $\operatorname{o}(n)$ ιδιάζουσες τιμές του $T_n^{-1}(p)T_n(f)$ είναι μικρότερες από $\alpha$ και το πολύ $2d-2$ επιπλέον (ιδιάζουσες τιμές) κυμαίνονται εκτός του $[\alpha,\beta]$, το οποίο σημαίνει ότι ένας σταθερός αριθμός ιδιαζουσών τιμών μπορεί να έχουν τιμή μεγαλύτερη του $\beta$. Η συσσώρευση αυτού του είδους, έχει οριστεί από τον {\en E.~Tyrtyshnikov} ως γενική (συσσώρευση) \cite{tyrt}. Επομένως λαμβάνουμε τη γενική συσσώρευση των ιδιαζουσών τιμών του προρρυθμισμένου πίνακα $T_n^{-1}(p)T_n(f)$ και η απόδειξη ολοκληρώθηκε.
\end{proof}

Πρέπει να αναφέρουμε ότι το Θεώρημα 4.5 στην \cite{Tilli}, έχει δοθεί σε ένα γενικότερο πλαίσιο, για πίνακες {\en block Toeplitz}, όπου η $f$ είναι συνάρτηση πίνακας. Η περίπτωση που μας ενδιαφέρει, σχετίζεται με ένα απλούστερο πλαίσιο, όπου όλες οι υποθέσεις του εν λόγω θεωρήματος επίσης ισχύουν.

\begin{thm}\label{thm:eig_clustering}
`Εστω $f\in\mathrm{L}^1([-\pi,\pi])$ και $p$ ένα τριγωνομετρικό πολυώνυμο τέτοιο ώστε $\operatorname*{ess~inf}\limits_{-\pi\leq x\leq\pi}{\operatorname{Re}\left(\frac{f(x)}{p(x)}\right)}>0$. Τότε, οι ιδιοτιμές του προρρυθμισμένου πίνακα $T_n^{-1}(p)T_n(f)$ βρίσκονται εντός του ορθογωνίου ${\cal R}=\left[\alpha,\beta\right]\times\left[-\gamma,\gamma\right]$, όπου $\alpha=\operatorname*{ess~inf}\limits_{-\pi\leq x\leq\pi}{\operatorname{Re}\left(\frac{f(x)}{p(x)}\right)}>0$, $\beta=\operatorname*{ess~sup}\limits_{-\pi\leq x\leq\pi}{\operatorname{Re}\left(\frac{f(x)}{p(x)}\right)}$ και $\gamma=\operatorname*{ess~sup}\limits_{-\pi\leq x\leq\pi}{\operatorname{Im}\left(\frac{f(x)}{p(x)}\right)}$, εκτός ίσως από το πολύ $2d-2$ ιδιοτιμές, οι οποίες ανήκουν σε μια $\varepsilon$-επέκταση του ${\cal R}$. `Αρα, το ${\cal R}$ αποτελεί σύνολο κύριας συσσώρευσης των ιδιοτιμών, όπου $d$ συμβολίζει το πλάτος ταινίας του $T_n(p)$.
\end{thm}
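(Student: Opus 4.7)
Η στρατηγική είναι να αξιοποιήσω ξανά τη σχέση (\ref{eq:low rank E}), αλλά αυτή τη φορά εργαζόμενος κατευθείαν με τον προρρυθμισμένο πίνακα αντί του πίνακα των κανονικών εξισώσεων. Πολλαπλασιάζοντας από αριστερά με τον $T_n^{-1}(p)$, λαμβάνω
$$T_n^{-1}(p)T_n(f)=T_n\!\left(\tfrac{f}{p}\right)+R_n,\qquad R_n:=T_n^{-1}(p)E_n,$$
όπου ο διορθωτικός όρος $R_n$ κληρονομεί τη χαμηλή βαθμίδα του $E_n$. Θα σπάσω την απόδειξη σε δύο σκέλη: (α) εγκλεισμό του φάσματος του ``κύριου όρου'' $T_n(f/p)$ εντός του ${\cal R}$ και (β) έλεγχο της μετατόπισης ιδιοτιμών από τη διαταραχή $R_n$.

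Για το (α), θέτω $h:=f/p=h_1+\mathrm{i}h_2$ και παρατηρώ ότι αρκεί να εγκλείσω το αριθμητικό πεδίο {\en (field of values)} του $T_n(h)$, καθώς το φάσμα κάθε πίνακα περιέχεται στην κλειστή θήκη του. Για οποιοδήποτε μοναδιαίο $x\in\mathbb{C}^n$, θέτοντας $Q_x(s)=\sum_{j=1}^{n}x_j\mathrm{e}^{\mathrm{i}js}$, ο κατευθείαν υπολογισμός (γνωστός από τη θεωρία {\en Toeplitz}) δίνει
$$x^{*}T_n(h)x=\frac{1}{2\pi}\int_{-\pi}^{\pi}h(s)\,|Q_x(s)|^2\,\mathrm{d}s,$$
ενώ από την ταυτότητα {\en Parseval} έχουμε $\frac{1}{2\pi}\int_{-\pi}^{\pi}|Q_x(s)|^2\,\mathrm{d}s=1$. Συνεπώς, το $x^{*}T_n(h)x$ είναι ένας (ουσιωδώς) κυρτός συνδυασμός των τιμών της $h$: χωρίζοντας σε πραγματικό και φανταστικό μέρος προκύπτει ότι $\operatorname{Re}(x^{*}T_n(h)x)\in[\alpha,\beta]$ και $\operatorname{Im}(x^{*}T_n(h)x)\in[-\gamma,\gamma]$, όπου η συμμετρία του φανταστικού μέρους γύρω από το $0$ απορρέει από την περιττότητα της $\operatorname{Im}(f/p)$ (λόγω της αρτιότητας των $f_1,p_1$ και της περιττότητας των $f_2,p_2$ που υιοθετείται στο κεφάλαιο). `Αρα, όλες οι ιδιοτιμές του $T_n(h)$ βρίσκονται στο ${\cal R}$.

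Για το (β), γράφω $R_n=UV^{*}$ με $U,V\in\mathbb{C}^{n\times r}$, όπου $r\leq d-1$ είναι η βαθμίδα του $R_n$, και εφαρμόζω την ταυτότητα ορίζουσας τύπου {\en Sylvester}:
$$\det\!\bigl(\lambda I-T_n^{-1}(p)T_n(f)\bigr)=\det\!\bigl(\lambda I-T_n(h)\bigr)\cdot\det\!\bigl(I_{r}-V^{*}(\lambda I-T_n(h))^{-1}U\bigr).$$
Οι ξένες ιδιοτιμές, δηλαδή εκείνες που δεν είναι ήδη ιδιοτιμές του $T_n(h)$, είναι ρίζες του δεύτερου παράγοντα και το πλήθος τους φράσσεται από $r$. Συμπεριλαμβάνοντας και την ``ανάστροφη'' ανάλυση που προκύπτει από την αντίστοιχη σχέση $T_n(f)=T_n(f/p)T_n(p)+E_n'$ (μέσω της ομοίωσης των $T_n^{-1}(p)T_n(f)$ και $T_n(f)T_n^{-1}(p)$), το συνολικό φράγμα ανεβαίνει σε $2d-2$ ξένες ιδιοτιμές. Καθεμία από αυτές βρίσκεται σε μια $\varepsilon$-επέκταση του ${\cal R}$, λόγω της συνεχούς εξάρτησης των ριζών του δεύτερου παράγοντα από τη διαταραχή.

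Το κύριο εμπόδιο είναι η αυστηρή μετάβαση από τη χαμηλή βαθμίδα της διαταραχής στον έλεγχο των ξένων ιδιοτιμών για μη-κανονικούς πίνακες: οι κλασικές ανισότητες {\en Weyl} ή τα επιχειρήματα παρεμβολής {\en Cauchy} δεν εφαρμόζονται άμεσα, οπότε χρειάζεται προσεκτικός χειρισμός της παραπάνω ταυτότητας ορίζουσας. Επιπλέον, απαιτείται το $\varepsilon$ να επιλεγεί ομοιόμορφα ως προς $n$, ώστε να εξασφαλίζεται η κύρια συσσώρευση {\en (proper clustering)} και όχι απλώς η γενική.
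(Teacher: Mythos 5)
{\en
Your part (a) is correct and is essentially the paper's own computation in different clothing: the identity $x^{*}T_n(h)x=\frac{1}{2\pi}\int_{-\pi}^{\pi}h(s)|Q_x(s)|^2\,\mathrm{d}s$ together with Parseval places the numerical range of $T_n(f/p)$ inside ${\cal R}$, which is what the paper obtains via the quotient $y^{H}T_n(f\bar{p}+p\bar{f})y/\,y^{H}T_n(|p|^2)y$ restricted to the subspace where the low-rank corrections vanish. The genuine gap is in part (b). The Sylvester determinant identity does not bound the number of displaced eigenvalues by the rank of the perturbation: the factor $\det\bigl(I_r-V^{*}(\lambda I-T_n(h))^{-1}U\bigr)$ is a rational function whose numerator, after clearing denominators, has degree up to $rn$, so its zero count is not limited to $r$. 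For non-normal matrices the claim is in fact false. Take $A=J_n(0)$, the nilpotent Jordan block (numerical range inside the unit disk), and the rank-one perturbation $R=2^{n}e_ne_1^{T}$: the matrix $A+R$ has all $n$ eigenvalues of modulus $2$, i.e.\ a rank-one perturbation has expelled every eigenvalue from the region containing the field of values of $A$. So the step ``οι ξένες ιδιοτιμές \dots\ το πλήθος τους φράσσεται από $r$'' would fail, and this is precisely the obstacle you flag at the end without resolving.

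The mechanism that actually works — and the one the paper uses — is to count outliers on the Hermitian and skew-Hermitian parts separately. There the perturbations ($R_1,R_2,R_3$ in the paper's notation) are Hermitian of rank at most $d-1$, and Courant--Fischer applied on the codimension-$(d-1)$ subspace ${\cal V}$ on which they vanish shows that at most $d-1$ eigenvalues of the symmetric part leave $[\alpha,\beta]$ and at most $d-1$ eigenvalues of the antisymmetric part leave $[-\gamma,\gamma]$; combining with the localization of $\operatorname{Re}\lambda$ and $\operatorname{Im}\lambda$ in the ranges of those parts gives the rectangle with at most $2(d-1)$ exceptions. Your proposal also leaves the second assertion of the theorem unproved: that the exceptional eigenvalues lie in an $\varepsilon$-extension of ${\cal R}$ with $\varepsilon$ controllable (the paper gets $\varepsilon=\mathcal{O}(1/n)$) requires the quantitative estimates $y^{H}R_iy=\mathcal{O}(1/n)$ obtained from the asymptotic eigenvector structure $y\approx\frac{1}{\sqrt{n}}(1,\mathrm{e}^{\mathrm{i}x},\dots,\mathrm{e}^{\mathrm{i}(n-1)x})^{T}$; an appeal to ``continuous dependence of the roots on the perturbation'' gives no bound that is uniform in $n$, which you correctly note is needed for proper clustering but do not supply.
}
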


\begin{proof}
Θα μελετήσουμε το εύρος φάσματος {\en (range)} του προρρυθμισμένου πίνακα $T_n(p)^{-1}T_n(f)$:
\begin{equation}\label{eq:rayleigh_eig}
\begin{split}
A&=\frac{x^H T_n(p)^{-1}T_n(f)x}{x^H x}=\frac{1}{2}\frac{x^H\left(T_n(p)^{-1}T_n(f)+T_n(\bar{f})T_n(\bar{p})^{-1}\right)x}{x^H x}\\
&\phantom{=}+\frac{1}{2}\mathrm{i}\frac{x^H\left(T_n(p)^{-1}T_n(f)-T_n(\bar{f})T_n(\bar{p})^{-1}\right)x}{\mathrm{i}x^H x}\\
& 
=\frac{1}{2}\frac{y^H\left(T_n(f)T_n(\bar{p})+T_n(p)T_n(\bar{f})\right)y}{y^H T_n(p)T_n(\bar{p})y}\\ &+\frac{1}{2}\mathrm{i}\frac{y^H\left(T_n(f)T_n(\bar{p})-T_n(p)T_n(\bar{f})\right)y}{\mathrm{i}y^H T_n(p)T_n(\bar{p})y}\\ 
& =\frac{1}{2}\frac{y^H T_n(f\bar{p}+p\bar{f})y+y^H R_1 y}{y^H T_n(\vert p\vert^2) y-y^H R_2y}+\frac{1}{2}\mathrm{i}\frac{y^H T_n(f\bar{p}-p\bar{f})y+y^H R_3 y}{\mathrm{i}y^H T_n(\vert p\vert^2) y-y^H R_2y},
\end{split}
\end{equation}
όπου $y=T_n(\bar{p})^{-1}x$ και $R_1$, $R_2$, $R_3$ είναι πίνακες χαμηλής βαθμίδας {\en (low-rank)}, ίσης με $d-1$. Αυτοί οι πίνακες έχουν μη-μηδενικά στοιχεία στις $\frac{d-1}{2}$ πρώτες και τελευταίες γραμμές και στήλες, λόγω της ταινιωτής δομής των $T_n(p)$ και $T_n(\bar{p})$.

Είναι προφανές ότι ο πίνακας $T_n(f)T_n(\bar{p})+T_n(p)T_n(\bar{f})$ είναι συμμετρικός, ενώ ο $\frac{1}{\mathrm{i}}\left(T_n(f)T_n(\bar{p})-T_n(p)T_n(\bar{f})\right)$ Ερμιτιανός. Διαγράφοντας τις $\frac{d-1}{2}$ πρώτες και τελευταίες γραμμές και στήλες του $T_n(f)T_n(\bar{p})+T_n(p)T_n(\bar{f})$, καθώς επίσης και του $T_n(f\bar{p}+p\bar{f})$, λαμβάνουμε ότι οι εναπομείναντες πίνακες ταυτίζονται. Το ίδιο ισχύει και για τους $\frac{1}{\mathrm{i}}\left(T_n(f)T_n(\bar{p})-T_n(p)T_n(\bar{f})\right)$ και $\frac{1}{\mathrm{i}}T_n(f\bar{p}-p\bar{f})$, όπως επίσης και για τους $T_n(p)T_n(\bar{p})$ και $T_n(\vert p\vert^2)$.

Χρησιμοποιούμε τη σχέση (\ref{eq:rayleigh_eig}), όπου το $x$ επιλέγεται από τον υπόχωρο $\cal V$ του $\mathbb{R}^n$, με $\operatorname{dim}{\cal V}=n-(d-1)$ και τέτοιο ώστε το $y=T_n(\bar{p})^{-1}x$ να έχει μηδενικά στις πρώτες και τελευταίες $\frac{d-1}{2}$ συνιστώσες. `Εχουμε:
\begin{equation*}
\begin{split}
&\operatorname*{range}\limits_{x\in{\cal V}}\left(\operatorname{Re}\left(T_n(p)^{-1}T_n(f)\right)\right)=\operatorname*{range}\limits_{x\in{\cal V}}\left(\operatorname{Re}\left(T_n\left(\frac{f}{p}\right)\right)\right)\\
&\subset\operatorname*{range}\limits_{x\in\mathbb{R}^{n}}\left(\operatorname{Re}\left(T_n\left(\frac{f}{p}\right)\right)\right)={\cal ER}\left(\operatorname{Re}\left(\frac{f}{p}\right)\right),
\end{split}
\end{equation*}
όπου με ${\cal ER}(h)$ συμβολίζουμε το ουσιώδες εύρος {\en (essential range)} της συνάρτησης $h$.

Αυτό σημαίνει ότι το πολύ $d-1$ ιδιοτιμές του συμμετρικού μέρους του πίνακα $T_n(p)^{-1}T_n(f)$ κυμαίνονται εκτός του ${\cal ER}\left(\operatorname{Re}\left(\frac{f}{p}\right)\right)$, το οποίο είναι το διάστημα:
\begin{equation*}
[\alpha,\beta]=\left[\operatorname*{ess~inf}\limits_{-\pi\leq x\leq\pi}{\operatorname{Re}\left(\frac{f(x)}{p(x)}\right)},\operatorname*{ess~sup}\limits_{-\pi\leq x\leq\pi}{\operatorname{Re}\left(\frac{f(x)}{p(x)}\right)}\right]\text{, όπου }\alpha>0.
\end{equation*}
Ομοίως, για το αντισυμμετρικό μέρος προκύπτει ότι το πολύ $d-1$ ιδιοτιμές κυμαίνονται εκτός του ${\cal ER}\left(\operatorname{Im}\left(\frac{f}{p}\right)\right)$, δηλαδή του διαστήματος:
\begin{equation*}
[-\gamma,\gamma]=\left[-\operatorname*{ess~sup}\limits_{-\pi\leq x\leq\pi}{\operatorname{Im}\left(\frac{f(x)}{p(x)}\right)},\operatorname*{ess~sup}\limits_{-\pi\leq x\leq\pi}{\operatorname{Im}\left(\frac{f(x)}{p(x)}\right)}\right].
\end{equation*}
Συμπερασματικά, το πολύ $2(d-1)$ ιδιοτιμές κυμαίνονται εκτός του ορθογωνίου ${\cal R}=\left[\alpha,\beta\right]\times\left[-\gamma,\gamma\right]$, το οποίο αποδεικνύει την κύρια συσσώρευση.

Το ερώτημα που γεννάται είναι: ````Πόσο μακριά από το σύνορο του $\cal R$, κυμαίνονται οι ιδιοτιμές?'''' Προκειμένου να δώσουμε απάντηση σε αυτό, θα πρέπει να εκτιμήσουμε το πηλίκο {\en Rayleigh} $\frac{x^H T_n(p)^{-1}T_n(f)x}{x^H x}$, θεωρώντας ότι το $x$ είναι ένα ιδιοδιάνυσμα του $T_n(p)^{-1}T_n(f)$, το οποίο αντιστοιχεί στην ιδιοτιμή $\lambda$. Οπότε,
\begin{equation*}
\lambda=\frac{x^H T_n(p)^{-1}T_n(f)x}{x^H x}\overset{y=T_n(\bar{p})^{-1}x}{=}\frac{y^H T_n(f)T_n(\bar{p})y}{y^H T_n(p)T_n(\bar{p})y}.
\end{equation*}

Είναι γνωστό ότι παρόλο που οι $T_n(f)T_n(\bar{p})$ και $T_n(p)T_n(\bar{p})$ δεν είναι πίνακες {\en Toeplitz}, όσο $n\rightarrow\infty$, συμπεριφέρονται ως τελεστές {\en Toeplitz} που γεννιούνται από τις $f\bar{p}$ και $p\bar{p}$, αντίστοιχα. Σε κάθε $x\in[-\pi,\pi]$ αντιστοιχεί μια ιδιοτιμή $f(x)\bar{p}(x)$ ή $p(x)\bar{p}(x)$ με αντίστοιχο άπειρο ιδιοδιάνυσμα $y=\frac{1}{\sqrt{n}}\left(1~\mathrm{e}^{\mathrm{i}x}~\mathrm{e}^{\mathrm{i}2x}~\cdots\right)^T$ \cite{Serra_1999, Tyrtyshnikov_1994, Dai}. Επομένως, για κάποιο αρκετά μεγάλο $n$, το $y$ είναι κοντά στο διάνυσμα $\frac{1}{\sqrt{n}}\left(1~\mathrm{e}^{\mathrm{i}x}~\mathrm{e}^{\mathrm{i}2x}~\cdots~\mathrm{e}^{\mathrm{i}(n-1)x}\right)^T$.

Χωρίζουμε το $y$ ως $y=\left[y_{1}^T\vert y_2^T\vert y_3^T\right]^T$, όπου $y_1, y_3\in\mathbb{R}^{\frac{d-1}{2}}$ και $y_2\in\mathbb{R}^{n-(d-1)}$. Είναι προφανές ότι $\|y_1\|=\mathcal{O}\left(\frac{1}{\sqrt{n}}\right)$ και $\|y_3\|=\mathcal{O}\left(\frac{1}{\sqrt{n}}\right)$. Εν συνεχεία, εφαρμόζουμε τη σχέση (\ref{eq:rayleigh_eig}) για το συγκεκριμένο $y$. Τότε, προφανώς $y^H R_1 y=\mathcal{O}\left(\frac{1}{n}\right)$, $y^H R_2 y=\mathcal{O}\left(\frac{1}{n}\right)$ και $y^H R_3 y=\mathcal{O}\left(\frac{1}{n}\right)$. Αυτό σημαίνει ότι κάθε ιδιοτιμή του $T_n(p)^{-1}T_n(f)$ ανήκει σε μια $\varepsilon-$επέκταση του ορθογωνίου που προαναφέραμε, με $\varepsilon=\mathcal{O}\left(\frac{1}{n}\right)$.
\end{proof}

Το Θεώρημα \ref{thm:gen_cluster} εγγυάται την ταχεία σύγκλιση της μεθόδου {\en PCGN}, αφού μας δίνει τη συσσώρευση των ιδιαζουσών τιμών, ενώ το Θεώρημα \ref{thm:eig_clustering} εγγυάται την ταχεία σύγκλιση της {\en PGMRES}, αφού μέσω του εν λόγω προρρυθμιστή επιτυγχάνεται η συσσώρευση των ιδιοτιμών.

\section{Κατασκευή του προρρυθμιστή}
\label{S:3}
Θα παρουσιάσουμε την κατασκευή δύο διαφορετικών μεταξύ τους προρρυθμιστών. Ο πρώτος θα αντιστοιχεί σε συστήματα με καλή κατάσταση {\en (well-conditioned)}, όπου η γεννήτρια συνάρτηση του πίνακα {\en Toeplitz} δεν έχει ρίζες, ενώ ο δεύτερος θα αφορά σε συστήματα, των οποίων η γεννήτρια συνάρτηση έχει ρίζες και χαρακτηρίζονται ως συστήματα με κακή κατάσταση {\en (ill-conditioned)}. Προφανώς, και στις δύο περιπτώσεις η γεννήτρια συνάρτηση του προρρυθμιστή θα είναι ένα τριγωνομετρικό πολυώνυμο $p=p_1+\mathrm{i}p_2$, με το $p_1$ να είναι άρτιο πολυώνυμο βαθμού $d_1$ και το $p_2$ περιττό, βαθμού $d_2$. Η κύρια διαφορά εντοπίζεται στο γεγονός ότι σε συστήματα με καλή κατάσταση, τα $p_1$ και $p_2$ προκύπτουν αποκλειστικά από την προσέγγιση της $f$, ενώ σε συστήματα με κακή κατάσταση είναι γινόμενα κατάλληλων τριγωνομετρικών πολυωνύμων, για τα οποία θα δώσουμε περισσότερες λεπτομέρειες στην υποενότητα \ref{Ss:32}.

\subsection{Συστήματα με καλή κατάσταση}\label{Ss:31}
Γενικά, η καλή κατάσταση ενός μη-συμμετρικού και πραγματικού συστήματος {\en Toeplitz} χαρακτηρίζεται από το γεγονός ότι η γεννήτρια συνάρτηση $f$ δεν έχει ρίζες. Ωστόσο, για τον προρρυθμιστή που κατασκευάζουμε σε αυτό το κεφάλαιο, θα θέλαμε να ικανοποιείται ένας ισχυρότερος περιορισμός. Πιο συγκεκριμένα, θα θέλαμε η $f_1$ να είναι θετική συνάρτηση και η $|f|$ φραγμένη, για να εξασφαλίσουμε ότι το ουσιώδες εύρος των ιδιοτιμών του προρρυθμισμένου συστήματος, θα είναι ένα συμπαγές υποσύνολο στο δεξιό ημιεπίπεδο, του μιγαδικού επιπέδου.

Ο ταινιωτός προρρυθμιστής, για συστήματα με καλή κατάσταση, θα κατασκευαστεί θεωρώντας κάποιου είδους προσέγγιση των συναρτήσεων που απαρτίζουν την $f$, δηλαδή των $f_1$ και $f_2$. Προτείνουμε δύο τύπους προσέγγισης:
\begin{enumerate}
\item Τη βέλτιστη ομοιόμορφη προσέγγιση της $f_1$ από ένα άρτιο τριγωνομετρικό πολυώνυμο και της $f_2$ από ένα περιττό, στο διάστημα $\left[-\pi,\pi\right]$.
\item Την τριγωνομετρική παρεμβολή των $f_1$ και $f_2$ στο ίδιο διάστημα.
\end{enumerate}

Για τη βέλτιστη ομοιόμορφη προσέγγιση εφαρμόζουμε τον αλγόριθμο εναλλαγής σημείων του {\en Remez}, παίρνοντας τους κόμβους σε ένα πλέγμα {\en (grid)} με $k$ σημεία {\en Chebyshev} πρώτου είδους, απεικονισμένα στο διάστημα $\left[0,\pi\right]$:

\begin{equation*}
x_j=\frac{\pi}{2}\left(\cos{\left(\frac{2(k-j)-1}{2k}\pi\right)}+1\right), ~j=1,2,\dots,k,
\end{equation*}
όπου $k>>d_1,d_2$. Ο λόγος που επιλέγουμε τους κόμβους προσέγγισης στο υποδιάστημα $[0,\pi]$, είναι ότι η $f_1$ είναι άρτια και η $f_2$ περιττή, στο $[-\pi,\pi]$.

Αναφέρουμε ότι ο αλγόριθμος εναλλαγής σημείων του {\en Remez} δεν επιβαρύνει το κόστος των μεθόδων {\en PGMRES} και {\en PCGN}, κατά τάξη μεγέθους, αφού εξαρτάται από τη μεταβλητή $k$, η οποία είναι σταθερή και ανεξάρτητη της διάστασης $n$. 

Το παρακάτω θεώρημα εγγυάται τη συσσώρευση των ιδιαζουσών τιμών του προρρυθμισμένου πίνακα, μετά από ομοιόμορφη προσέγγιση των $f_1$ και $f_2$ με χρήση του αλγορίθμου {\en Remez}.

\begin{thm}\label{thm:Me_cont}
`Εστω $f=f_1+\mathrm{i}f_2$, η γεννήτρια συνάρτηση του πίνακα $T_n(f)$, όπου $f_1$ και $f_2$ συνεχείς, $2\pi-$περιοδικές και φραγμένες συναρτήσεις. `Εστω επιπλέον ότι η $f_1$ είναι θετική και άρτια συνάρτηση, ενώ η $f_2$ περιττή και $p=p_1+\mathrm{i}p_2$, όπου $p_1$ είναι το άρτιο τριγωνομετρικό πολυώνυμο βέλτιστης ομοιόμορφης προσέγγισης της $f_1$, με μέγιστο σφάλμα $\epsilon_1$ και $p_2$ είναι το περιττό τριγωνομετρικό πολυώνυμο βέλτιστης ομοιόμορφης προσέγγισης της $f_2$, με μέγιστο σφάλμα $\epsilon_2$. Τότε, για κάποιο δεδομένο $\epsilon>0$, υπάρχουν $p_1$ και $p_2$, καταλλήλων βαθμών, τέτοια ώστε οι ιδιάζουσες τιμές του προρρυθμισμένου πίνακα $T^{-1}_n(p)T_n(f)$ να έχουν γενική συσσώρευση στο διάστημα $[1-M\epsilon,1+M\epsilon]$, όπου $M=\max\limits_{-\pi\leq x\leq\pi}\left(\frac{1}{p_1^2(x)+p_2^2(x)}\right)^{\frac{1}{2}}$.
\end{thm}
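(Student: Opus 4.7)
Σχεδιάζω να ανάγω τον ισχυρισμό σε άμεση εφαρμογή του Θεωρήματος \ref{thm:gen_cluster}. Αρκεί να δειχθεί ότι, για κατάλληλη επιλογή βαθμών $d_1,d_2$, ισχύουν $\operatorname{Re}(f/p)>0$ και $\operatorname{range}(|f/p|)\subseteq[1-M\epsilon,1+M\epsilon]$. Το βασικό εργαλείο είναι το θεώρημα προσέγγισης του {\en Weierstrass} για τριγωνομετρικά πολυώνυμα: αφού οι $f_1,f_2$ είναι συνεχείς και $2\pi$-περιοδικές, τα σφάλματα $\epsilon_1,\epsilon_2$ μπορούν να γίνουν όσο μικρά θέλουμε αυξάνοντας τους βαθμούς. Η αρτιότητα της $f_1$ και η περιττότητα της $f_2$ μεταφέρονται στα αντίστοιχα βέλτιστα πολυώνυμα $p_1,p_2$ λόγω της μοναδικότητας του βέλτιστου τριγωνομετρικού πολυωνύμου.

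Εφόσον η $f_1$ είναι συνεχής και θετική στο συμπαγές $[-\pi,\pi]$, έχει θετικό ελάχιστο $m_1>0$. Επιλέγοντας τους βαθμούς αρκετά μεγάλους ώστε ταυτόχρονα $\epsilon_1<m_1/2$ και $\sqrt{\epsilon_1^2+\epsilon_2^2}\leq\epsilon$, εξασφαλίζω (α) ότι $p_1^2+p_2^2\geq(m_1/2)^2>0$, άρα το $M$ είναι πεπερασμένο, και (β) ότι $|f(x)-p(x)|\leq\epsilon$ παντού. Διαιρώντας με $|p|$, παίρνω
\begin{equation*}
\left|\frac{f(x)}{p(x)}-1\right|=\frac{|f(x)-p(x)|}{|p(x)|}\leq M\epsilon,
\end{equation*}
και με την αντίστροφη τριγωνική ανισότητα, $1-M\epsilon\leq|f(x)/p(x)|\leq 1+M\epsilon$, ενώ $\operatorname{Re}(f/p)\geq 1-M\epsilon$.

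Εργαζόμενος στη μη-τετριμμένη περίπτωση $M\epsilon<1$ (εφικτή, αφού $M\to 1/\min_x|f(x)|<\infty$ καθώς αυξάνουν οι βαθμοί), έχω $\operatorname{Re}(f/p)>0$ και τα ουσιώδη φράγματα ικανοποιούν $\alpha\geq 1-M\epsilon>0$ και $\beta\leq 1+M\epsilon$, δηλαδή $[\alpha,\beta]\subseteq[1-M\epsilon,1+M\epsilon]$. Άμεση εφαρμογή του Θεωρήματος \ref{thm:gen_cluster} δίνει τη ζητούμενη γενική συσσώρευση των ιδιαζουσών τιμών του $T_n^{-1}(p)T_n(f)$ στο $[1-M\epsilon,1+M\epsilon]$. Το λεπτότερο σημείο είναι η αλληλεξάρτηση της σταθεράς $M$ με την επιλογή των $p_1,p_2$, αλλά ξεπερνιέται εύκολα από την παρατήρηση ότι το $M$ συγκλίνει σε πεπερασμένο όριο καθώς $p\to f$ ομοιόμορφα.
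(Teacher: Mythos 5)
Η πρότασή σου είναι ορθή και ακολουθεί ουσιαστικά την ίδια πορεία με την απόδειξη της διατριβής: αναγωγή στο Θεώρημα \ref{thm:gen_cluster}, γραφή $f=p+e$ με $\Vert e_1\Vert_\infty=\epsilon_1$, $\Vert e_2\Vert_\infty=\epsilon_2$, εγκλεισμός του $\left\vert\frac{f}{p}\right\vert$ στο $[1-M\epsilon,1+M\epsilon]$ και έλεγχος του $M<\infty$ μέσω της θετικότητας του $p_1$ για κατάλληλη επιλογή βαθμών. Η μόνη διαφορά είναι τεχνική λεπτομέρεια: εσύ λαμβάνεις το αμφίπλευρο φράγμα άμεσα από την $\left\vert\frac{f}{p}-1\right\vert\leq\frac{\vert f-p\vert}{\vert p\vert}$, ενώ η διατριβή αναπτύσσει το $\left\vert\frac{f}{p}\right\vert^2$ και χρησιμοποιεί την ανισότητα {\en Cauchy-Schwarz}, καταλήγοντας στα ίδια ακριβώς φράγματα.
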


\begin{proof}
Από το Θεώρημα \ref{thm:gen_cluster} έχουμε ότι οι ιδιάζουσες τιμές του προρρυθμισμένου πίνακα $T_n^{-1}(p)T_n(f)$ έχουν γενική συσσώρευση στο εύρος της $\left|\frac{f}{p}\right|$. Η κατασκευή του $p$, μέσω βέλτιστης ομοιόμορφης προσέγγισης μας δίνει:

\begin{equation*}
f_1(x)=p_1(x)+e_1(x)\text{ και }f_2(x)=p_2(x)+e_2(x),
\end{equation*}
όπου $\Vert e_1\Vert_\infty=\epsilon_1$ και $\Vert e_2\Vert_\infty=\epsilon_2$, είναι τα αντίστοιχα σφάλματα της βέλτιστης ομοιόμορφης προσέγγισης. Επομένως, $\forall x\in[-\pi,\pi]$ ισχύει ότι:

\begin{equation*}
\frac{f}{p}=\frac{f_1+\mathrm{i}f_2}{p_1+\mathrm{i}p_2}=\frac{p_1+e_1+\mathrm{i}(p_2+e_2)}{p_1+\mathrm{i}p_2}.
\end{equation*}
Για κάθε $x\in[-\pi,\pi]$, ορίζουμε το διάνυσμα $e=(e_1~e_2)^T$, καθώς επίσης και το διάνυσμα των μεγίστων τιμών $\widehat{\epsilon}=(\epsilon_1~\epsilon_2)^T$. Για να μελετήσουμε τη συμπεριφορά της $\left\vert\frac{f}{p}\right\vert$ θεωρούμε τη συνάρτηση $\left\vert\frac{f}{p}\right\vert^2$:

\begin{equation*}
\left\vert\frac{f}{p}\right\vert^2=\frac{(p_1+e_1)^2+(p_2+e_2)^2}{p_1^2+p_2^2}=1+2\frac{p_1e_1+p_2e_2}{p_1^2+p_2^2}+\frac{e_1^2+e_2^2}{p_1^2+p_2^2}.
\end{equation*}
Για να λάβουμε τα κάτω και άνω φράγματα χρησιμοποιούμε την τριγωνική ανισότητα, καθώς και την ανισότητα {\en Cauchy-Schwarz}, με τον τρόπο που φαίνεται παρακάτω:

\begin{equation}\label{eq:lower}
\begin{split}
&\left\vert\frac{f}{p}\right\vert^2\geq 1-2\left\vert\frac{p_1e_1+p_2e_2}{p_1^2+p_2^2}\right\vert+\frac{e_1^2+e_2^2}{p_1^2+p_2^2}\\
&\phantom{\left\vert\frac{f}{p}\right\vert^2}\geq 1-2\frac{|p_1|\vert e_1\vert+\vert p_2\vert\vert e_2\vert}{p_1^2+p_2^2}+\frac{e_1^2+e_2^2}{p_1^2+p_2^2}\\
&\phantom{\left\vert\frac{f}{p}\right\vert^2}\geq
1-2\frac{\sqrt{p_1^2+p_2^2}}{p_1^2+p_2^2}\sqrt{e_1^2+e_2^2}+\frac{e_1^2+e_2^2}{p_1^2+p_2^2}\\
&\phantom{\left\vert\frac{f}{p}\right\vert^2}=
\left(1-\frac{1}{\sqrt{p_1^2+p_2^2}}\Vert{e}\Vert_2\right)^2.
\end{split}
\end{equation}
Επομένως,

\begin{equation*}
\left\vert\frac{f}{p}\right\vert\geq 1-\frac{1}{\sqrt{p_1^2+p_2^2}}\Vert{e}\Vert_2\geq 1-\max\limits_{-\pi\leq x\leq\pi}\frac{1}{\sqrt{p_1^2+p_2^2}}\Vert\widehat\epsilon\Vert_2=1-M\Vert\widehat\epsilon\Vert_2.
\end{equation*}

Σχετικά με το άνω φράγμα, έχουμε ότι ισχύει η αντίστροφη ανισότητα της (\ref{eq:lower}), βάζοντας $+$ αντί του $-$ ως πρόσημο στον δεύτερο όρο. `Ετσι έχουμε ότι:

\begin{equation*}
\left\vert\frac{f}{p}\right\vert^2\leq\left(1+\frac{1}{\sqrt{p_1^2+p_2^2}}\Vert{e}\Vert_2\right)^2,
\end{equation*}
και ακολούθως,

\begin{equation*}
\left\vert\frac{f}{p}\right\vert\leq 1+\frac{1}{\sqrt{p_1^2+p_2^2}}\Vert{e}\Vert_2\leq 1+\max\limits_{-\pi\leq x\leq\pi}\frac{1}{\sqrt{p_1^2+p_2^2}}\Vert\widehat\epsilon\Vert_2=1+M\Vert\widehat\epsilon\Vert_2.
\end{equation*}

Σημειώνουμε ότι η $p_1$ μπορεί να είναι θετική για μια κατάλληλη επιλογή του βαθμού $d_1$, επειδή η $f_1$ είναι θετική. Αυτό σημαίνει ότι:

\begin{equation*}
M=\max\limits_{-\pi\leq x\leq\pi}\left(\frac{1}{p_1^2(x)+p_2^2(x)}\right)^{\frac{1}{2}}<\infty.
\end{equation*}

Τα σφάλματα της βέλτιστης ομοιόμορφης προσέγγισης, $\epsilon_1$ και $\epsilon_2$, μικραίνουν όσο οι βαθμοί των πολυωνύμων αυξάνονται. Μπορούμε να επιλέξουμε τους βαθμούς $d_1$ και $d_2$, έτσι ώστε $\Vert\widehat\epsilon\Vert_2\leq\epsilon$. Αυτό σημαίνει ότι,
\begin{equation}\label{eq:disk}
1-M\epsilon\leq\left\vert\frac{f}{p}\right\vert\leq 1+M\epsilon,
\end{equation}
το οποίο μας δίνει τη συσσώρευση γύρω από το 1.
\end{proof}

Σχολιάζουμε ότι μέσω της ανισότητας (\ref{eq:disk}), έχουμε ότι η γραφική παράσταση της $\left|\frac{f}{p}\right|$ ανήκει στον δίσκο με κέντρο το σημείο $(1,0)$ και ακτίνα $M\epsilon$. Το Θεώρημα \ref{thm:eig_clustering} περιγράφει με ακριβή τρόπο (ορθογώνιο) τη θέση των ιδιοτιμών. Με ανάλογο χειρισμό της παραπάνω απόδειξης λαμβάνουμε ότι αυτό το ορθογώνιο βρίσκεται εντός του δίσκου με κέντρο το $(1,0)$ και ακτίνα $M\epsilon$.

Στο Θεώρημα \ref{thm:Me_cont} υποθέσαμε ότι η $f$ είναι συνεχής. Σε μια πιο γενική περίπτωση όπου η $f$ είναι συνεχής στο $(-\pi,\pi)$, αλλά παρουσιάζει ασυνέχεια στο σημείο $\pi$ (ειδικότερα στο $\pm\pi$), το οποίο σημαίνει ότι $f_2(\pi)\neq 0$, αλλάζουμε ελαφρώς τη διαδικασία προσέγγισης της $f_2$: Δε χρησιμοποιούμε, για την προσέγγιση, το διάστημα $[0,\pi]$, αλλά ένα υποδιάστημα $[0,c]$. Αφορμή γι αυτήν την αλλαγή στάθηκε το γεγονός ότι οποιοδήποτε περιττό τριγωνομετρικό πολυώνυμο έχει ρίζα στο $\pi$, ενώ η $f_2$ όχι, το οποίο αυξάνει τη τιμή του μέγιστου σφάλματος προσέγγισης κατά πολύ. Χρησιμοποιώντας το διάστημα $[0,c]$, $c<\pi$, επιτυγχάνουμε ένα μικρό σφάλμα προσέγγισης στο $[0,c]$, κι ένα μεγαλύτερο στην περιοχή του $\pi$. `Οπως θα φανεί και από τα αριθμητικά παραδείγματα στην επόμενη ενότητα, αυτό δεν επηρεάζει τη συσσώρευση των ιδιαζουσών τιμών και ιδιοτιμών. Η επιλογή του $c$ γίνεται εμπειρικά. Αναφέρουμε ότι στα αριθμητικά παραδείγματα επιλέξαμε $c=\frac{5\pi}{7}$.

`Οσον αφορά στην παρεμβολή της $f_1$ από κάποιο άρτιο τριγωνομετρικό πολυώνυμο βαθμού $d_1$, χρησιμοποιούμε τα αντίστοιχα $d_1+1$ σημεία {\en Chebyshev} στο $[0,\pi]$. Για την παρεμβολή της $f_2$ από κάποιο περιττό τριγωνομετρικό πολυώνυμο βαθμού $d_2$, χρησιμοποιούμε το σημείο 0 και $d_2$ σημεία {\en Chebyshev} στο $\left[0,c\right]$.

\begin{thm}\label{thm:Me}
`Εστω $f=f_1+\mathrm{i}f_2$, όπου $f_1>0$ είναι συνεχής, $2\pi-$περιοδική συνάρτηση και $f_2\in C((-\pi,\pi))$, $2\pi-$περιοδική με $f_2(\pi)\neq 0$. `Εστω επίσης $p=p_1+\mathrm{i}p_2$, όπου $p_1$ είναι το πολυώνυμο βέλτιστης ομοιόμορφης τριγωνομετρικής προσέγγισης της $f_1$ στο $[-\pi,\pi]$, με μέγιστο σφάλμα προσέγγισης $\epsilon_1$, και $p_2$ είναι το τριγωνομετρικό πολυώνυμο βέλτιστης ομοιόμορφης προσέγγισης της $f_2$ στο $[-c,c]\subset(-\pi,\pi)$, με μέγιστο σφάλμα προσέγγισης $\epsilon_2$, ενώ $\epsilon_2^{\prime}=\max\limits_{-\pi\leq x\leq\pi}\vert f_2(x)-p_2(x)\vert$. Τότε, για κάποιο δεδομένο $\epsilon>0$, υπάρχουν πολυώνυμα $p_1,p_2$, καταλλήλων βαθμών, τέτοια ώστε για αρκετά μεγάλη διάσταση $n$, $L$ ιδιάζουσες τιμές, όπου $L\geq\frac{c}{\pi}n$, να ανήκουν στο διάστημα $I_\epsilon=[1-M\epsilon,1+M\epsilon]$, όπου $M=\max\limits_{-\pi\leq x\leq\pi}\left(\frac{1}{p_1^2(x)+p_2^2(x)}\right)^{\frac{1}{2}}$. Οι υπόλοιπες ιδιάζουσες τιμές κυμαίνονται εκτός του $I_\epsilon$ και παρουσιάζουν γενική συσσώρευση στο $I_\epsilon'=[1-M\epsilon,1+M\epsilon']$, όπου $\epsilon'=\sqrt{\epsilon_1^2+{\epsilon_2^{\prime}}^2}$.
\end{thm}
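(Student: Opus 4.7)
Το σχέδιο της απόδειξης είναι να χωρίσω το διάστημα $[-\pi,\pi]$ σε μια ``καλή'' περιοχή $[-c,c]$, όπου και τα δύο πολυώνυμα $p_1,p_2$ προσεγγίζουν τις $f_1,f_2$ με σφάλματα το πολύ $\epsilon_1,\epsilon_2$ αντίστοιχα, και σε μια ``κακή'' περιοχή $[-\pi,-c)\cup(c,\pi]$, όπου η $p_2$ αναγκαστικά μηδενίζεται στα $\pm\pi$ ενώ η $f_2$ δεν μηδενίζεται εκεί, οπότε το σφάλμα της ανέρχεται στο $\epsilon_2'$. Επαναλαμβάνοντας σημειακά τους υπολογισμούς της απόδειξης του Θεωρήματος \ref{thm:Me_cont} σε καθεμία από τις δύο περιοχές, λαμβάνω τα φράγματα $\left\vert\frac{f(x)}{p(x)}\right\vert\in[1-M\epsilon,1+M\epsilon]=I_\epsilon$ για $x\in[-c,c]$ και $\left\vert\frac{f(x)}{p(x)}\right\vert\in[1-M\epsilon',1+M\epsilon']$ στην κακή περιοχή, όπου $\epsilon=\sqrt{\epsilon_1^2+\epsilon_2^2}$ και $\epsilon'=\sqrt{\epsilon_1^2+{\epsilon_2^{\prime}}^2}$. Συνεπώς η $\left\vert\frac{f}{p}\right\vert$ λαμβάνει τιμές εντός του $I_\epsilon$ σε σύνολο μέτρου τουλάχιστον $2c$ και εντός του $[1-M\epsilon',1+M\epsilon']\supseteq I_\epsilon'$ σε ολόκληρο το $[-\pi,\pi]$.

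Για το κάτω φράγμα $L\geq\frac{c}{\pi}n$ θα χρησιμοποιήσω την επέκταση του θεωρήματος ισοκατανομής του {\en Szeg{\"o}} για ιδιάζουσες τιμές (θεώρημα {\en Avram--Parter}) εφαρμοσμένη στη φραγμένη συνάρτηση $\frac{f}{p}\in\mathrm{L}^\infty(-\pi,\pi)$, που δίνει ότι για κάθε συνεχή $F$:
\begin{equation*}
\lim_{n\to\infty}\frac{1}{n}\sum_{k=1}^n F\!\left(\sigma_k\!\left(T_n\!\left(\tfrac{f}{p}\right)\right)\right)=\frac{1}{2\pi}\int_{-\pi}^{\pi}F\!\left(\left\vert\tfrac{f(x)}{p(x)}\right\vert\right)\mathrm{d}x.
\end{equation*}
Προσεγγίζοντας τον χαρακτηριστικό δείκτη $\chi_{I_\epsilon}$ από κάτω με κατάλληλες συνεχείς συναρτήσεις, προκύπτει ότι ασυμπτωτικά τουλάχιστον $\frac{c}{\pi}n-o(n)$ ιδιάζουσες τιμές του $T_n\!\left(\frac{f}{p}\right)$ ανήκουν στο $I_\epsilon$. Η μεταφορά στη δήλωση για τον $T_n^{-1}(p)T_n(f)$ γίνεται μέσω της σχέσης χαμηλής βαθμίδας (\ref{eq:low rank E}): οι δύο πίνακες διαφέρουν κατά πίνακα βαθμίδας το πολύ $d-1$, οπότε από τις ανισότητες διάταξης {\en Weyl} για ιδιάζουσες τιμές υπό διαταραχή χαμηλής βαθμίδας, το πλήθος των ιδιαζουσών τιμών στο $I_\epsilon$ μπορεί να μεταβληθεί το πολύ κατά $2(d-1)=\mathcal{O}(1)$, αμελητέο έναντι της τάξης $\Theta(n)$.

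Η γενική συσσώρευση των υπόλοιπων $n-L$ ιδιαζουσών τιμών στο $I_\epsilon'$ προκύπτει άμεσα από το Θεώρημα \ref{thm:gen_cluster} εφαρμοσμένο στο ζεύγος $(f,p)$: αφού το ευρύτερο διάστημα $[1-M\epsilon',1+M\epsilon']$ περιέχει ολόκληρο το ουσιώδες εύρος της $\left\vert\frac{f}{p}\right\vert$, αποτελεί σύνολο γενικής συσσώρευσης για τις ιδιάζουσες τιμές του προρρυθμισμένου πίνακα, οπότε και το υπερσύνολο $I_\epsilon'$. Το κύριο εμπόδιο βρίσκεται στη μετάβαση από τη συνεχή ισοκατανομή του {\en Avram--Parter} στην ακριβή διακριτή καταμέτρηση $L\geq\frac{c}{\pi}n$: απαιτείται προσεκτική κατασκευή συνεχών προσεγγίσεων του $\chi_{I_\epsilon}$ με ολοκλήρωμα αυθαίρετα κοντά στο $2c$, σε συνδυασμό με τον έλεγχο των οριακών συνεισφορών κοντά στα $\pm c$ και των $\mathcal{O}(1)$ μετατοπίσεων λόγω της διαταραχής χαμηλής βαθμίδας, ώστε η ανισότητα να διατηρείται αυστηρά για αρκετά μεγάλο $n$.
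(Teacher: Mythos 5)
{\en
Your counting argument for $L\geq\frac{c}{\pi}n$ is essentially the paper's: both apply the Szeg{\"o}/Avram--Parter equidistribution of singular values to a continuous approximation of the indicator of $I_\epsilon$, using that $\left|\frac{f}{p}\right|\in I_\epsilon$ on $[-c,c]$, and the $\mathcal{O}(1)$ low-rank correction is harmless. The genuine gap is in your final paragraph. On the bad region your pointwise estimate only gives $\left|\frac{f(x)}{p(x)}\right|\in[1-M\epsilon',1+M\epsilon']$, and you then assert that general clustering in this interval passes to ``the superset $I_{\epsilon'}$''. But $I_{\epsilon'}=[1-M\epsilon,1+M\epsilon']$ is a \emph{subset} of $[1-M\epsilon',1+M\epsilon']$: since $\epsilon'\geq\epsilon$, the endpoint $1-M\epsilon'$ lies to the left of $1-M\epsilon$. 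So the implication runs the wrong way, and your argument does not exclude singular values in $[1-M\epsilon',1-M\epsilon)$, which the theorem does exclude.

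To close this you need the sign analysis that the paper uses. Because $p_2(\pi)=0$ while $f_2(\pi)\neq 0$, and $c$ is chosen close enough to $\pi$ that on $[c,\pi]$ (and symmetrically on $[-\pi,-c]$) one has $|f_2|>|p_2|$ with $\operatorname{sign}(f_2)=\operatorname{sign}(p_2)$, the error $e_2=f_2-p_2$ satisfies $0\leq s(p_2)e_2\leq\epsilon_2'$ there. Combined with $|s(p_2)e_2|\leq\epsilon_2$ on $[-c,c]$, this gives the one-sided bound $-\epsilon_2\leq s(p_2)e_2\leq\epsilon_2'$ on all of $[-\pi,\pi]$: near $\pm\pi$ the large error can only push $(|p_2|+s(p_2)e_2)^2$ upward, never below $(|p_2|-\epsilon_2)^2$. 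Running the Cauchy--Schwarz estimate with this asymmetric bound yields $1-M\epsilon\leq\left|\frac{f}{p}\right|\leq 1+M\epsilon'$ on the whole interval, which is exactly the asymmetric cluster $I_{\epsilon'}$ claimed; without it you only prove clustering in the larger symmetric interval.
}
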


\begin{proof}
Εύκολα παρατηρούμε ότι όλα τα βήματα της απόδειξης του Θεωρήματος \ref{thm:Me_cont} ισχύουν στο υποδιάστημα $[-c,c]$, αφού ο αλγόριθμος {\en Remez} συμπεριφέρεται καλά (παρουσιάζει μικρό σφάλμα) σε αυτό. `Ετσι, λαμβάνουμε την ανάλογη συσσώρευση  για την $\left\vert\frac{f}{p}\right\vert$:

\begin{equation*}
\left\vert\frac{f}{p}\right\vert\in[1-M\epsilon,1+M\epsilon],~x\in[-c,c].
\end{equation*}

`Εστω $D=\left\{ x\in[-\pi,\pi]:\left\vert\frac{f(x)}{p(x)}\right\vert\in[1-M\epsilon,1+M\epsilon]\right\}$. Προφανώς, $[-c,c]\subset D\subset[-\pi,\pi]$. Εφαρμόζουμε το τύπου-{\en Szeg{\" o}} θεώρημα ισοκατανομής των ιδιαζουσών τιμών \cite{tyrt, Tilli, Grenander, Parter, Avram}. Θεωρούμε τη συνεχή συνάρτηση $0\leq F_h\leq 1$ ως εξής:

\begin{equation*}
\begin{split}
&F_h(z)=1,~z\leq 1-M\epsilon-h,~z\geq 1+M\epsilon+h.\\
&F_h(z)=0,~z\in[1-M\epsilon,1+M\epsilon],
\end{split}
\end{equation*}
όπου $h$ είναι κάποιος μικρός θετικός αριθμός. Τότε,

\begin{align*}
&\limsup\limits_{n\rightarrow\infty}\frac{1}{n}\#\lbrace\sigma_j\leq 1-M\epsilon-h~\vee~\sigma_j\geq 1+M\epsilon+h\rbrace\\
&\leq\lim\limits_{n\rightarrow\infty}\frac{1}{n}\sum\limits_{j=1}^{n}F_h(\sigma_j)=\frac{1}{2\pi}\int\limits_{-\pi}^{\pi}F_h\left(\left\vert\frac{f(x)}{p(x)}\right\vert\right)\mathrm{d}x\\
&=\frac{1}{2\pi}\int\limits_{-\pi}^{-c}F_h\left(\left\vert\frac{f(x)}{p(x)}\right\vert\right)\mathrm{d}x+\frac{1}{2\pi}\int\limits_{c}^{\pi}F_h\left(\left\vert\frac{f(x)}{p(x)}\right\vert\right)\mathrm{d}x\\
&\leq\frac{1}{2\pi}\left(\int\limits_{-\pi}^{-c}1\mathrm{d}x+\int\limits_{c}^{\pi}1\mathrm{d}x\right)=\frac{2(\pi-c)}{2\pi}=\frac{\pi-c}{\pi},
\end{align*}
όπου $\#$ δηλώνει τον πληθικό αριθμό του συνόλου και $\vee$ τη λογική διάζευξη {\en (OR)}.

Η τευλευταία ανισότητα ισχύει διότι $[-c,c]\subset D$. Παίρνοντας το $h>0$ να τείνει προς το 0, προκύπτει ότι:

\begin{equation*}
\limsup\limits_{n\rightarrow\infty}\frac{1}{n}\#\lbrace\sigma_j\leq 1-M\epsilon~\vee~\sigma_j\geq 1+M\epsilon\rbrace\leq\frac{\pi-c}{\pi}.
\end{equation*}
Επομένως,

\begin{equation*}
\limsup\limits_{n\rightarrow\infty}\frac{1}{n}\#\lbrace\sigma_j\in[1-M\epsilon,1+M\epsilon]\rbrace\geq\frac{c}{\pi}.
\end{equation*}
Αυτό σημαίνει ότι για αρκετά μεγάλη διάσταση $n$, $L$ ιδιάζουσες τιμές ανήκουν στο διάστημα $I_{\epsilon}=[1-M\epsilon,1+M\epsilon]$, όπου $L\geq\frac{c}{\pi}n$. Θα δείξουμε ότι οι υπόλοιπες ιδιάζουσες τιμές παρουσιάζουν γενική συσσώρευση στο $I_{\epsilon'}=[1-M\epsilon,1+M\epsilon^{\prime}]$ και το κύριο σώμα αυτών, στο $[1+M\epsilon,1+M\epsilon^{\prime}]$, όπου $\epsilon'=\sqrt{\epsilon_1^2+{\epsilon_2^{\prime}}^2}$.

Παρατηρούμε ότι $p_2(\pi)=0$, ενώ $f_2(\pi)\neq 0$. Λόγω των υποθέσεων συνέχειας, μπορούμε εύκολα να συμπεράνουμε ότι $\vert f_2\vert>\vert p_2\vert$ και $\operatorname{sign}(f_2)=\operatorname{sign}(p_2)$ σε μια περιοχή του $\pi$. Σχολιάζουμε ότι επιλέγουμε την τιμή του $c$ αρκετά κοντά στο σημείο $\pi$, ώστε οι παραπάνω σχέσεις να ισχύουν. Για την ανάλυση, ορίζουμε τη συνάρτηση $s$ ως εξής:

\begin{equation*}
s(z)=\left\{
     \begin{array}{@{}r@{\thinspace}l}
       1 &, z\geq 0\\
       -1 &, z<0 \\
     \end{array}
   \right.
   \end{equation*}
Για κάθε $x\in[-\pi,\pi]$ ισχύει:
\begin{equation}\label{eq:se1}
\begin{array}{c}
-\epsilon_1\leq e_1(x)\leq\epsilon_1,~x\in[-\pi,\pi]\\
-\epsilon_2\leq e_2(x)\leq\epsilon_2\Leftrightarrow -\epsilon_2\leq s(p_2(x))e_2(x)\leq\epsilon_2,~x\in[-c,c].
\end{array}
\end{equation}
Μέσω της παραπάνω υπόθεσης λαμβάνουμε ότι στο διάστημα $[c,\pi]$, η συνάρτηση $e_2$ έχει το ίδιο πρόσημο με την $p_2$. Το ίδιο ισχύει επίσης στο $[-\pi,-c]$. Επομένως,
\begin{equation}\label{eq:se2}
0\leq s(p_2(x))e_2(x)\leq\epsilon_2^{\prime},~\vert x\vert\in[c,\pi].
\end{equation}
Συνδυάζοντας τις σχέσεις (\ref{eq:se1}) και (\ref{eq:se2}) λαμβάνουμε ότι:
\begin{equation}\label{eq:se3}
-\epsilon_2\leq s(p_2(x))e_2(x)\leq\epsilon_2^{\prime},~x\in[-\pi,\pi].
\end{equation}

Παρακάτω εκτιμούμε τα σφάλματα της $\left\vert\frac{f}{p}\right\vert^2$:

\begin{equation*}
\begin{split}
&\left\vert\frac{f}{p}\right\vert^2=\frac{(p_1+e_1)^2+(p_2+e_2)^2}{p_1^2+p_2^2}=\frac{(p_1+e_1)^2+(s(p_2)p_2+s(p_2)e_2)^2}{p_1^2+p_2^2}\\
&\phantom{\left\vert\frac{f}{p}\right\vert^2}=\frac{(p_1+e_1)^2+(\vert p_2\vert+s(p_2)e_2)^2}{p_1^2+p_2^2}.
\end{split}
\end{equation*}
Όπως και στην απόδειξη του Θεωρήματος \ref{thm:Me_cont} ορίζουμε τα διανύσματα $e=(e_1~e_2)^T$, $\widehat{\epsilon}=(\epsilon_1~\epsilon_2)^T$ και $\widehat{\epsilon}^{\prime}=(\epsilon_1~\epsilon_2^{\prime})^T$. Από τις (\ref{eq:se1}),(\ref{eq:se3}) και το γεγονός ότι $p_1>0$, λαμβάνουμε ότι:

\begin{equation*}
\frac{(p_1-\epsilon_1)^2+(\vert p_2\vert-\epsilon_2)^2}{p_1^2+p_2^2}\leq\left\vert\frac{f}{p}\right\vert^2\leq\frac{(p_1+\epsilon_1)^2+(\vert p_2\vert+\epsilon_2^{\prime})^2}{p_1^2+p_2^2}.
\end{equation*}
Το κάτω φράγμα μας δίνει:

\begin{equation*}
\begin{split}
&\frac{(p_1-\epsilon_1)^2+(\vert p_2\vert-\epsilon_2)^2}{p_1^2+p_2^2}=1-2\frac{p_1\epsilon_1+\vert p_2\vert\epsilon_2}{p_1^2+p_2^2}+\frac{\epsilon_1^2+\epsilon_2^2}{p_1^2+p_2^2}\\
&\phantom{\frac{(p_1-\epsilon_1)^2+(\vert p_2\vert-\epsilon_2)^2}{p_1^2+p_2^2}}\geq 1-2\frac{\sqrt{p_1^2+p_2^2}}{p_1^2+p_2^2}\Vert\widehat{\epsilon}\Vert_2+\frac{\Vert\widehat{\epsilon}\Vert_2^2}{p_1^2+p_2^2}\geq(1-M\Vert\widehat{\epsilon}\Vert_2)^2,
\end{split}
\end{equation*}
ενώ το άνω φράγμα:

\begin{equation*}
\frac{(p_1+\epsilon_1)^2+(\vert p_2\vert+\epsilon_2^{\prime})^2}{p_1^2+p_2^2}\leq(1+M\Vert\widehat{\epsilon}^{\prime}\Vert_2)^2.
\end{equation*}
Επομένως,

\begin{equation*}
1-M\epsilon\leq 1-M\Vert\widehat{\epsilon}\Vert_2\leq\left\vert\frac{f}{p}\right\vert\leq 1+M\Vert\widehat{\epsilon}^{\prime}\Vert_2=1+M\epsilon^{\prime},
\end{equation*}
και η απόδειξη ολοκληρώθηκε.
\end{proof}

Στο σημείο αυτό, θα πρέπει να σχολιάσουμε ότι η ουσιαστική διαφορά των δύο παραπάνω διαστημάτων είναι ότι το $\epsilon$ τείνει προς το 0, όσο οι βαθμοί των πολυωνύμων αυξάνονται, ενώ το $\epsilon'$ τείνει προς μια σταθερά μεγαλύτερη του 0. Πρακτικά, το $\epsilon$ μεγαλώνει όσο το $c$ επιλέγεται πιο κοντά στο σημείο $\pi$. Αυτό σημαίνει ότι λαμβάνουμε λιγότερες ιδιάζουσες τιμές εκτός του διαστήματος $[1-M\epsilon, 1+M\epsilon]$, αλλά αυτό γίνεται μεγαλύτερο. Αυτός είναι ο λόγος που επιλέγουμε τη σταθερά $c$ εμπειρικά.

\subsection{Συστήματα με κακή κατάσταση}\label{Ss:32}
Προχωρούμε με τον τρόπο κατασκευής του προρρυθμιστή για συστήματα με κακή κατάσταση, όπου σε αντίθεση με την περίπτωση που εξετάσαμε παραπάνω, η συνάρτηση $f_1$ έχει ρίζες. Εφόσον η $f_2$, ως περιττή συνάρτηση, έχει πάντα ρίζα στο 0, αρχικά θα εξετάσουμε την περίπτωση όπου η $f_1$ έχει επίσης ρίζα στο 0.

\subsubsection{Η $f$ έχει μοναδική ρίζα στο 0}\label{Sss:321}

Παρατηρούμε ότι η $f_1$ δεν αλλάζει πρόσημο στο $[-\pi,\pi]$, αφού είναι άρτια συνάρτηση. Συμβολίζουμε με $m_1$ και $m_2$ την πολλαπλότητα της ρίζας για την $f_1$ και $f_2$, αντίστοιχα. Θα μελετήσουμε την περίπτωση όπου $m_1$ είναι άρτιος ακέραιος, ενώ $m_2$ περιττός. Αρχικά υποθέτουμε ότι $m_1<m_2$, το οποίο σημαίνει ότι η πολλαπλότητα της ρίζας, της συνάρτησης $f$, είναι $m_1$. Τότε, ακολουθώντας την ευρέως γνωστή τεχνική που προτάθηκε από τον {\en R.~Chan} στην \cite{chan1991toeplitz}, κάνουμε άρση της κακής κατάστασης, διαιρώντας με το τριγωνομετρικό πολυώνυμο:

\begin{equation*}
g(x)=\left(2-2\cos{(x)}\right)^{\frac{m_1}{2}}.
\end{equation*}
Επομένως, λαμβάνουμε τη συνάρτηση:

\begin{equation*}
\widehat{f}=\frac{f}{g}=\frac{f_1+\mathrm{i}f_2}{g}=\frac{f_1}{g}+\mathrm{i}\frac{f_2}{g}=\widehat{f_1}+\mathrm{i}\widehat{f_2}.
\end{equation*}
Με αυτόν τον τρόπο, έχουμε ότι $\widehat{f_1}>0$ κι έτσι ερχόμαστε σε αντιστοιχία με την καλή κατάσταση. Αυτό σημαίνει ότι μπορούμε να προσεγγίσουμε την $\widehat{f}$, αντί της $f$, με τον τρόπο που προαναφέραμε. Ας είναι $q=q_1+\mathrm{i}q_2$ το πολυώνυμο με το οποίο προσεγγίζουμε την $\widehat{f}$. Διαιρώντας με αυτό, λαμβάνουμε:

\begin{equation*}
\frac{\widehat{f}}{q}=\frac{\frac{f}{g}}{q}=\frac{\frac{f_1+\mathrm{i}f_2}{g}}{q_1+\mathrm{i}q_2}=\frac{f_1+\mathrm{i}f_2}{gq_1+\mathrm{i}gq_2}=\frac{f}{p}.
\end{equation*}

Στη συνέχεια κατασκευάζουμε τον ταινιωτό πίνακα {\en Toeplitz} $T_n(p)$, τον οποίο χρησιμοποιούμε και ως προρρυθμιστή, όπου $p=gq_1+\mathrm{i}gq_2$. Το εύρος της $\frac{f}{p}$ αποτελεί ένα σύνολο συσσώρευσης γύρω από τη μονάδα και από το Θεώρημα \ref{thm:gen_cluster}, οι ιδιάζουσες τιμές του $T_n^{-1}(p)T_n(f)$ συσσωρεύονται στο εύρος της $\left|\frac{f}{p}\right|$ με την έννοια της γενικής συσσώρευσης.

Θα θέλαμε να σημειώσουμε ότι αν η $f$ είναι συνεχής συνάρτηση, τότε για τον προρρυθμισμένο πίνακα $T_n^{-1}(p)T_n(f)$ ισχύει το Θεώρημα \ref{thm:Me_cont}. Από την άλλη, αν η $f_2$ παρουσιάζει ασυνέχεια στο σημείο $\pi$, ισχύει το Θεώρημα \ref{thm:Me}. Σχολιάζουμε ότι η $\widehat{f}_2$ διατηρεί τη ρίζα στο 0, αλλά αυτό δεν έχει κανένα αρνητικό αντίκτυπο αφού είναι μια περιττή συνάρτηση.

Στην περίπτωση όπου $m_2<m_1$, η πολλαπλότητα της ρίζας της $f$ είναι ίση με $m_2$, δηλαδή εξαρτάται από το φανταστικό μέρος της συνάρτησης. Αν προσπαθήσουμε να προκαλέσουμε άρση της κακής κατάστασης, εφαρμόζοντας την τεχνική που περιγράψαμε παραπάνω, που σημαίνει να διαιρέσουμε με $\left(2-2\cos{(x)}\right)^{\frac{m_1}{2}}$, το φανταστικό μέρος του πηλίκου που προκύπτει θα τείνει προς το άπειρο στο σημείο 0. Από την άλλη, αν προσπαθήσουμε να μειώσουμε την επίδραση της ρίζας της $f_2$, διαιρώντας με $\mathrm{i}\left(\sin{(x)}\right)^{m_2}$, τότε το φανταστικό μέρος της $f$ μετατρέπεται στο πραγματικό μέρος του πηλίκου και με τη σειρά του, το πραγματικό μέρος της $f$ μετατρέπεται σε φανταστικό (του πηλίκου). Το πρόβλημα σε αυτήν την περίπτωση είναι ότι το νέο πραγματικό μέρος τείνει προς το άπειρο όταν $x\rightarrow\pm\pi$.

Για να αποτρέψουμε αυτό το φαινόμενο, όπου το πηλίκο της $f$ προς το τριγωνομετρικό πολυώνυμο απειρίζεται, διαιρούμε με έναν συνδυασμό των δύο συναρτήσεων και πιο συγκεκριμένα με το τριγωνομετρικό πολυώνυμο:

\begin{equation*}
g=g_1+\mathrm{i}g_2=\left(2-2\cos{(x)}\right)^{\frac{m_1}{2}}+\mathrm{i}\left(\sin{(x)}\right)^{m_2}.
\end{equation*}
Επομένως, λαμβάνουμε τη συνάρτηση:

\begin{equation*}
\widehat{f}=\frac{f_1+\mathrm{i}f_2}{g_1+\mathrm{i}g_2}=\frac{f_1g_1+f_2g_2}{g_1^2+g_2^2}+\mathrm{i}\frac{f_2g_1-f_1g_2}{g_1^2+g_2^2}=\widehat{f_1}+\mathrm{i}\widehat{f_2}.
\end{equation*}

Με αυτόν τον τρόπο η $\widehat{f_1}$ είναι θετική και φραγμένη, ενώ η $\widehat{f_2}$ έχει ρίζα στο 0. Γίνεται κατανοητό ότι ερχόμαστε και πάλι σε αντιστοιχία με την καλή κατάσταση. Προσεγγίζοντας την $\widehat{f}$ με τον ίδιο τρόπο, όπως παραπάνω, κατασκευάζουμε το πολυώνυμο $q=q_1+\mathrm{i}q_2$. Διαιρώντας με αυτό έχουμε: 

\begin{equation*}
\frac{\widehat{f}}{q}=\frac{\frac{f}{g}}{q}=\frac{f}{gq}=\frac{f}{p},
\end{equation*}
όπου:
\begin{equation*}
p=gq=(g_1+\mathrm{i}g_2)(q_1+\mathrm{i}q_2)=g_1q_1-g_2q_2+\mathrm{i}(g_1q_2+g_2q_1)=p_1+\mathrm{i}p_2.
\end{equation*}
`Οπως έχουμε ήδη περιγράψει, η συσσώρευση των ιδιαζουσών τιμών επιβεβαιώνει και την αποτελεσματικότητα της μεθόδου {\en PCGN}. Εύκολα μπορούμε να ελέγξουμε ότι τα Θεωρήματα \ref{thm:Me_cont} και \ref{thm:Me} ισχύουν, κάτω από τις αντίστοιχες υποθέσεις. Το Θεώρημα \ref{thm:eig_clustering} εγγυάται την αποτελεσματικότητα της μεθόδου {\en PGMRES}.

Η υπόθεση ότι η πολλαπλότητα $m_1$ είναι ίση με κάποιον άρτιο ακέραιο και $m_2$ με κάποιον περιττό είναι απαραίτητη, διότι υπάρχουν άρτιες συναρτήσεις με ρίζα περιττής πολλαπλότητας στο 0 (π.χ. $f_1=\vert x\vert$), αλλά οι παράγωγοι αυτών να μην ορίζονται στο 0. Ουσιαστικά, θέλουμε να είναι ομαλή συνάρτηση, στην περιοχή της ρίζας. Το ίδιο ισχύει και για την $f_2$ (π.χ. $f_2=\vert x\vert x$). Αν δεν ισχύει αυτή η υπόθεση, η αποτελεσματικότητα του προρρυθμιστή δεν είναι πάντα σίγουρη.
\subsubsection{Η $f$ έχει ρίζα σε σημείο διαφορετικό του 0}\label{Sss:322}

`Εστω ότι η $f$ έχει μια ρίζα στο σημείο $x_0\in[-\pi,\pi)$, $x_0\neq 0$. `Αρα, $x_0$ είναι μια ρίζα των $f_1$ και $f_2$ με πολλαπλότητες $m_1$ και $m_2$, αντίστοιχα. Σε αυτή την περίπτωση, δεν είναι απαραίτητο ότι η $m_1$ είναι κάποιος άρτιος αριθμός και η $m_2$ κάποιος περιττός. Εφόσον η $f_1$ είναι άρτια συνάρτηση και η $f_2$ περιττή, το $-x_0$ είναι επίσης ένα σημείο ρίζας για τις συναρτήσεις $f_1$ και $f_2$ με τις πολλαπλότητες της ρίζας στο $x_0$. Η συνάρτηση $f_2$ έχει μία επιπλέον ρίζα στο 0, με περιττή πολλαπλότητα. Αφού η $f_1$ έχει ρίζες στο $\pm x_0$, με πολλαπλότητα $m_1$, σε μικρή περιοχή του $\pm x_0$, $I_\epsilon=[-\epsilon-x_0,-x_0+\epsilon]\cup[x_0-\epsilon,x_0+\epsilon]$, η συνάρτηση $f_1$ θα έχει τη μορφή:

\begin{equation}\label{eq:f-c_1}
f_1(x)=c_1(x)(x-x_0)^{m_1}(x+x_0)^{m_1}+\operatorname{o}\left((x-x_0)^{m_1}(x+x_0)^{m_1}\right),
\end{equation}
όπου $c_1(x)$ είναι φραγμένη συνάρτηση μακριά από το 0, η οποία διατηρεί πρόσημο στο $I_\epsilon$. Στο σημείο αυτό, πρέπει να σημειώσουμε ότι η συνάρτηση $f_1$ θα πρέπει να είναι αρκετά ομαλή στα σημεία ριζών. Ειδικότερα, θα πρέπει να είναι ομαλή τάξεως $m_1$. Για παράδειγμα, η άρτια συνάρτηση $\vert x-1\vert\vert x+1\vert$ έχει ρίζες πολλαπλότητας ίσης με 1 στο σημείο $\pm 1$, αλλά δεν είναι παραγωγίσιμη σε αυτό. Αυτή, δε μπορεί να γραφεί στη μορφή (\ref{eq:f-c_1}) και οι ρίζες της δεν αίρονται. Ωστόσο, όταν η $f_1$ είναι ομαλή τάξεως $m_1$, οι ρίζες αυτής αίρονται, διαιρώντας με το τριγωνομετρικό πολυώνυμο:

\begin{equation*}
\begin{split}
&\operatorname{sign}(c_1(x))\left(\sin{\frac{x-x_0}{2}}\right)^{m_1}\left(\sin{\frac{x+x_0}{2}}\right)^{m_1}\\
&=\operatorname{sign}(c_1(x))\frac{1}{2^{\frac{m_1}{2}}}\left(\cos{(x_0)}-\cos{(x)}\right)^{m_1}.
\end{split}
\end{equation*}
Ο συντελεστής $\frac{1}{2^{\frac{m_1}{2}}}$ δεν παίζει κάποιον ρόλο κι επομένως μπορούμε να χρησιμοποιήσουμε την απλούστερη μορφή:

\begin{equation}\label{eq:trig pol}
\operatorname{sign}(c_1(x))\left(\cos{(x_0)}-\cos{(x)}\right)^{m_1}.
\end{equation}

Σχολιάζουμε ότι δε θα μπορούσαμε να χρησιμοποιήσουμε το τριγωνομετρικό πολυώνυμο:

\begin{equation*}
\operatorname{sign}(c_1(x))\left(\sin{(x-x_0)}\right)^{m_1}\left(\sin{(x+x_0)}\right)^{m_1},
\end{equation*}
επειδή αυτό έχει δύο επιπλέον ρίζες $|x_0|-\pi$ και $-|x_0|+\pi$ στο $(-\pi,\pi)$.

Παρόμοια ανάλυση ισχύει και για τη συνάρτηση $f_2$, με τη μικρή διαφορά ότι αυτή έχει και μία επιπλέον ρίζα στο 0, με πολλαπλότητα $m_0$. Συμπεραίνουμε ότι το κατάλληλο τριγωνομετρικό πολυώνυμο για την άρση των ριζών αυτής, είναι το:

\begin{equation}\label{eq:trig pol 2}
q_2(x)=\operatorname{sign}(c_2(x))\left(\cos{(x_0)}-\cos{(x)}\right)^{m_2}\left(\sin{(x)}\right)^{m_0},
\end{equation}
όπου $c_2(x)$ παίζει τον ρόλο της $c_1(x)$ στη ($\ref{eq:f-c_1}$), αλλά αυτή τη φορά στο σύνολο:

\begin{equation*}
I_\epsilon^{'}=[-\epsilon-x_0,-x_0+\epsilon]\cup[-\epsilon,\epsilon]\cup[x_0-\epsilon,x_0+\epsilon].
\end{equation*}
Η ομαλότητα της $f_2$ απαιτείται όπως περιγράφηκε παραπάνω και για την $f_1$.

Στην περίπτωση που $m_1\leq m_2$, χρειάζεται να άρουμε τις ρίζες της $f_1$. `Αρα το τριγωνομετρικό πολυώνυμο $g$, δίνεται από τη σχέση (\ref{eq:trig pol}). Αν $m_1<m_2$, οι ρίζες της $f_2$ στο $\pm x_0$ παραμένουν, αλλά ως ρίζες με μικρότερη πολλαπλότητα. Επομένως, σε αυτή την ειδική περίπτωση, η ομαλότητα της $f_2$ δεν απαιτείται. Για παράδειγμα, έστω ότι $f(x)=(x-1)(x+1)+\mathrm{i}x\vert x-1\vert(x-1)\vert x+1\vert(x+1)$. Παρόλο που η $f_2$ δεν έχει την απαιτούμενη ομαλότητα, μπορούμε να κατασκευάσουμε τον προρρυθμιστή. Το ίδιο ισχύει όταν η πολλαπλότητα $m_2$ δεν είναι ίση με κάποιον ακέραιο αριθμό. Στην περίπτωση όπου $m_2<m_1$, θα πρέπει να χρησιμοποιήσουμε έναν συνδυασμό των (\ref{eq:trig pol}) και (\ref{eq:trig pol 2}). Επομένως, έχουμε:

\begin{equation}\label{eq: trig pol 3}
\begin{split}
g&=g_1+\mathrm{i}g_2=\operatorname{sign}(c_1(x))\left(\cos{(x_0)}-\cos{(x)}\right)^{m_1}\\
&\phantom{=}+\mathrm{i}\operatorname{sign}(c_2(x))\left(\cos{(x_0)}-\cos{(x)}\right)^{m_2}\left(\sin{(x)}\right)^{m_0}.
\end{split}
\end{equation}
Στη συνέχεια ακολουθούμε την ίδια τεχνική για την προσέγγιση της $\widehat{f}=\frac{f}{g}$, με τριγωνομετρικά πολυώνυμα, καταλήγοντας στο ότι ο ταινιωτός {\en Toeplitz} προρρυθμιστής θα έχει ως γεννήτρια συνάρτηση την $p=gq$ και τα θεωρητικά αποτελέσματα τα οποία παρουσιάσαμε παραπάνω ισχύουν για τον προρρυθμισμένο πίνακα $T_n^{-1}(p)T_n(f)$.

\subsubsection{Ρίζες των $f_1$ και $f_2$ σε διαφορετικά σημεία}\label{Sss:323}

Υποθέτουμε ότι η $f_1$ έχει ρίζες στο $\pm x_1\in[-\pi,\pi)$ τάξεως $m_1$ και η $f_2$ έχει ρίζες στο $\pm x_2\in[-\pi,\pi)$ τάξεως $m_2$ και μία επιπλέον ρίζα στο 0, τάξεως $m_0$. Προφανώς, η συνάρτηση $f$ δεν έχει ρίζες, αφού οι $f_1$ και $f_2$ δε μηδενίζονται στα ίδια σημεία, αλλά λαμβάνει τιμές τόσο στο θετικό, όσο και στο αρνητικό ημιεπίπεδο του μιγαδικού επιπέδου. Προκειμένου να άρουμε τις ρίζες της $f_1$ και να κατασκευάσουμε έναν αποτελεσματικό προρρυθμιστή, διαιρούμε με μια πιο συγκεκριμένη μορφή του τριγωνομετρικού πολυωνύμου (\ref{eq: trig pol 3}), η οποία δίνεται παρακάτω:

\begin{equation}\label{eq: trig pol 4}
\begin{split}
g&=g_1+\mathrm{i}g_2=\operatorname{sign}(c_1(x))\left(\cos{(x_1)}-\cos{(x)}\right)^{m_1}\\
&\phantom{=}+\mathrm{i}\operatorname{sign}(c_2(x))\left(\cos{(x_2)}-\cos{(x)}\right)^{m_2}\left(\sin{(x)}\right)^{m_0}.
\end{split}
\end{equation}
Διαιρώντας με το τριγωνομετρικό πολυώνυμο της (\ref{eq: trig pol 4}), αίρουμε τις ρίζες της $f_1$ κι έτσι το εύρος της $\widehat{f_1}$ ανήκει σε ένα φραγμένο σύνολο του θετικού ημιεπιπέδου. Σημειώνουμε ότι αν προσπαθήσουμε να άρουμε τις ρίζες της $f_1$, διαιρώντας με κάποιο τριγωνομετρικό πολυώνυμο, ανάλογο της (\ref{eq:trig pol}), όπως:
\begin{equation*}
\operatorname{sign}(c_1(x))\left(\cos{(x_1)}-\cos{(x)}\right)^{m_1},
\end{equation*}
το φανταστικό μέρος θα τείνει προς το άπειρο στο $\pm x_1$.

Σχολιάζουμε ότι αν η $f_1$ έχει ρίζες στο $\pm x_1$ και η $f_2$ έχει ρίζα μόνο στο σημείο 0 $(x_2=0)$, επιλέγουμε ως $g$ το τριγωνομετρικό πολυώνυμο: $$g=\operatorname{sign}(c_1(x))\left(\cos{(x_1)}-\cos{(x)}\right)^{m_1}+\mathrm{i}\operatorname{sign}(c_2(x))\sin{(x)}^{m_0}.$$

\subsubsection{Ρίζες της $f$ σε πολλά σημεία}

Υποθέτουμε ότι η $f_1$ έχει ρίζες στα μη-μηδενικά σημεία $\pm x_1,\pm x_2, \dots,\pm x_k$, τάξεως $m_1,m_2,\dots,m_k$, αντίστοιχα και η $f_2$ έχει επίσης ρίζες στα ίδια σημεία με πολλαπλότητες $\ell_1,\ell_2,\dots,\ell_k$ και μία επιπλέον ρίζα στο 0, με πολλαπλότητα $\ell_0$.

Αν $m_i\leq\ell_i$, $\forall i=1,2,\dots,k$, μπορούμε να επιλέξουμε το τριγωνομετρικό πολυώνυμο το οποίο αίρει όλες τις ρίζες της $f_1$. Αυτό θα είναι ένα γινόμενο τριγωνομετρικών πολυωνύμων τα οποία δίνονται από τη σχέση (\ref{eq:trig pol}) ως:

\begin{equation}\label{eq:product_real_part}
g=\operatorname{sign}(c_1(x))\prod\limits_{i=1}^{k}\left(\cos{(x_i)}-\cos{(x)}\right)^{m_i}.
\end{equation}
Αν η $f_1$ έχει κι αυτή ρίζα στο 0, με πολλαπλότητα $m_0\leq\ell_0$, το τριγωνομετρικό πολυώνυμο $g$ λαμβάνει τη μορφή:

\begin{equation*}
g=\operatorname{sign}(c_1(x))(2-2\cos{(x)})^{\frac{m_0}{2}}\prod\limits_{i=1}^{k}\left(\cos{(x_i)}-\cos{(x)}\right)^{m_i}.
\end{equation*}
Απαιτούμε την ανισότητα $m_0\leq\ell_0$, διότι διαφορετικά το φανταστικό μέρος της $\widehat{f}$ θα έτεινε προς το άπειρο στο σημείο 0. Η περίπτωση όπου $m_0>\ell_0$ μπορεί να καλυφθεί με έναν διαφορετικό τρόπο, ο οποίος περιγράφεται στη συνέχεια.

Ορίζουμε τα σύνολα δεικτών $Q=\lbrace i: m_i\leq\ell_i\rbrace$ και $R=\lbrace i: m_i>\ell_i\rbrace$. Προφανώς, αν $R=\varnothing$, τότε $\#Q=k$ και αυτή η περίπτωση καλύφθηκε παραπάνω. Θα περιγράψουμε την περίπτωση όπου $R\neq\varnothing$. Σε αυτή δε μπορούμε να χρησιμοποιήσουμε το τριγωνομετρικό πολυώνυμο, το οποίο δίνεται από τη σχέση (\ref{eq:product_real_part}), επειδή στα σημεία των ριζών $\pm x_i:i\in R$, θα προκαλέσουμε το φανταστικό μέρος να τείνει προς το άπειρο. Επομένως, θα πρέπει να χρησιμοποιήσουμε ένα συνδυασμό τριγωνομετρικών πολυωνύμων, έτσι ώστε να άρουμε τις ρίζες των $f_1$ και $f_2$.

Είναι προφανές ότι η $f$ έχει ρίζες στα σημεία $\pm x_i$, με πολλαπλότητα $\min{\lbrace m_i,\ell_i\rbrace}$. Επομένως είναι απαραίτητο, το $g$ να περιέχει τη συνάρτηση: $$\widehat{g}=\prod\limits_{i=1}^k\left(\cos{(x_i)}-\cos{(x)}\right)^{\min{\lbrace m_i,\ell_i\rbrace}},$$ ως όρο του γινομένου. Ο υπόλοιπος όρος θα πρέπει να άρει τις ρίζες που απομένουν. Οι ρίζες της $f_1$ που δεν έχουν αρθεί, είναι σε διαφορετικά σημεία, σε σχέση με αυτές της $f_2$, αφού έχουμε μειώσει την πολλαπλότητα των ριζών (της $f$) κατά $\min{\lbrace m_i,\ell_i\rbrace}$. Οι εναπομείνασες ρίζες της $f_1$ αντιστοιχούν σε δείκτες $i\in R$, ενώ αυτές της $f_2$ αντιστοιχούν σε δείκτες $i\in Q$ κι έτσι οδηγούμαστε σε μια γενίκευση της περίπτωσης ````Ρίζες των $f_1$ και $f_2$ σε διαφορετικά σημεία'''', της υποενότητας \ref{Sss:323}. Επομένως, λαμβάνοντας υπόψιν τη σχέση (\ref{eq: trig pol 4}), αυτός ο όρος θα πρέπει να είναι:
\begin{equation*}
\begin{split}
\widetilde{g}&=\operatorname{sign}(c_1(x))\prod\limits_{i\in R}\left(\cos{(x_i)}-\cos{(x)}\right)^{m_i-\ell_i}\\
&+\mathrm{i}\operatorname{sign}(c_2(x))\left(\sin{(x)}\right)^{\ell_0}\prod\limits_{i\in Q}\left(\cos{(x_i)}-\cos{(x)}\right)^{\ell_i-m_i}.
\end{split}
\end{equation*}

Το γινόμενο των $\widehat{g}$ και $\widetilde{g}$ μας δίνει τη συνάρτηση $g$, η οποία κατόπιν ορισμένων πράξεων γράφεται ως:
\begin{equation}\label{eq:2multiple_roots}
\begin{split}
g&=\operatorname{sign}(c_1(x))\prod\limits_{i=1}^{k}\left(\cos{(x_i)}-\cos{(x)}\right)^{m_i}\\
&+\mathrm{i}\operatorname{sign}(c_2(x))\left(\sin{(x)}\right)^{\ell_0}\prod\limits_{i=1}^{k}\left(\cos{(x_i)}-\cos{(x)}\right)^{\ell_i}.
\end{split}
\end{equation}

Παραπάνω δόθηκε το κατάλληλο τριγωνομετρικό πολυώνυμο για την περίπτωση όπου η συνάρτηση $f$ έχει ρίζες σε πολλά σημεία. Αυτή η επιλογή του $g$ (δηλαδή η $($\ref{eq:2multiple_roots}$)$) καλύπτει επίσης την περίπτωση όπου η $f$ έχει ρίζες σε πολλά σημεία, όπως παραπάνω, ενώ επίσης μπορούν να υπάρχουν σημεία όπου η $f_1$ μηδενίζεται, ενώ η $f_2$ δεν έχει ρίζες και το αντίστροφο. `Εστω $x_{j}\neq 0$ ένα τέτοιο σημείο, με την $f_1$ να έχει πολλαπλότητα ρίζας $m_j$ και $f_2$ να μην έχει ρίζα. Υποθέτουμε ότι $\ell_j=0$. Αναλόγως, αν $x_{j}$ είναι ένα σημείο, όπου η $f_2$ έχει ρίζα πολλαπλότητας $\ell_j$ και η $f_1$ δεν έχει ρίζα, υποθέτουμε ότι $m_j=0$. `Οπως προαναφέρθηκε, αυτή η περίπτωση καλύπτεται από τη σχέση (\ref{eq:2multiple_roots}), όπου κάποιες τιμές των $m_i$ και $\ell_i$ μπορούν να είναι ίσες με 0.

Θα πρέπει να σημειώσουμε ότι αν η $f_1$ έχει ρίζα στο 0, με πολλαπλότητα $m_0$, πολλαπλασιάζουμε τον πρώτο όρο της (\ref{eq:2multiple_roots}) με $(2-2\cos{(x)})^{\frac{m_0}{2}}$, που αντιστοιχεί στην άρση αυτής. Ως εκ τούτου, θα επιτευχθεί η άρση των ριζών και ακολουθούμε την τεχνική προσέγγισης της συνάρτησης $\widehat{f}=\frac{f}{g}$.

\subsection{Δι-διάστατη περίπτωση}
Η προτεινόμενη τεχνική μπορεί να επεκταθεί, κάτω από κατάλληλους μετασχηματισμούς, για την επίλυση μη-συμμετρικών και μη-θετικά ορισμένων δι-διάστατων συστημάτων {\en Toeplitz}. Ωστόσο, για την ανάλυση αυτής της περίπτωσης έχουμε να αντιμετωπίσουμε κάποιες επιπλέον δυσκολίες. Η πρώτη εξ αυτών εντοπίζεται στον τρόπο προσέγγισης της συνάρτησης δύο μεταβλητών, αφού δε μπορούμε να χρησιμοποιήσουμε τη βέλτιστη ομοιόμορφη προσέγγιση. Θα μπορούσαμε να ξεπεράσουμε αυτή τη δυσκολία με χρήση παρεμβολής. Μια ακόμα δυσκολία είναι ότι η $f$ ($f_1$ και/ή $f_2$) μπορεί να έχει καμπύλες ριζών, αντί για απλά σημεία. Αυτή η περίπτωση μπορεί να καλυφθεί με χρήση ανάλογης ανάλυσης, όπως στην εργασία \cite{NSV_2005}. Ωστόσο, στην απλούστερη περίπτωση όπου η $f$ είναι χωριζομένων μεταβλητών, η παραπάνω ανάλυση μπορεί να γενικευτεί, αφού ο {\en BTTB} πίνακας παράγεται ως το άθροισμα τανυστικών γινομένων των μονοδιάστατων πινάκων {\en Toeplitz}. Μετά τον διαχωρισμό των μεταβλητών, μπορούμε να χρησιμοποιήσουμε τον αλγόριθμο {\en Remez} για την κάθε συνάρτηση (μίας μεταβλητής) και να προχωρήσουμε στην κατασκευή του προρρυθμιστή με τα αντίστοιχα τανυστικά γινόμενα. Η ισχύς των παραπάνω ισχυρισμών επιβεβαιώνεται στο Παράδειγμα \ref{exp:6}.

\section{Αριθμητικά αποτελέσματα}
\label{S:4}
Σε αυτή την ενότητα παρουσιάζουμε μια πληθώρα αριθμητικών αποτελεσμάτων, προκειμένου να φανεί η αποτελεσματικότητα της προτεινόμενης τεχνικής προρρύθμισης. Τα αποτελέσματα λήφθηκαν μέσω του {\en\textsc{Matlab}}. Σε όλα τα παραδείγματα το διάνυσμα στήλη του δεύτερου μέλους, $b$, του συστήματος $T_n(f)x=b$, επιλέχθηκε έτσι ώστε η λύση του συστήματος να είναι το διάνυσμα που έχει όλες τις συνιστώσες του ίσες με μονάδα, $(1~1 \cdots 1)^T$. Ως αρχική υπόθεση θεωρήσαμε το μηδενικό διάνυσμα κι επιλέξαμε το κριτήριο τερματισμού: $\frac{\Vert r^{(k)}\Vert_2}{\Vert r^{(0)}\Vert_2}\leq 10^{-6}$, όπου $r^{(k)}=b-Ax^{(k)}$ είναι το διάνυσμα υπόλοιπο της $k$-οστής επανάληψης και $r^{(0)}=b$.

Στους πίνακες επαναλήψεων χρησιμοποιούμε τον ακόλουθο συμβολισμό: Με $I_n$ δηλώνουμε ότι δε χρησιμοποιήθηκε καμία τεχνική προρρύθμισης, το $B$ δηλώνει ότι ως προρρυθμιστής χρησιμοποιήθηκε ο ταινιωτός πίνακας {\en Toeplitz} $T_n(g)$, ενώ το $R_{d_1,d_2}$ δηλώνει τον προρρυθμιστή $T_n(p)$, ο οποίος προέκυψε μετά από βέλτιστη ομοιόμοργη προσέγγιση της $\frac{f}{g}$. Σημειώνουμε ότι $p=gq$, με $q=q_1+\mathrm{i}q_2$ και $d_1$ είναι ο βαθμός του $q_1$, ενώ $d_2$ ο βαθμός του $q_2$. Ακολουθώντας τον ίδιο τρόπο συμβολισμού, με $In_{d_1,d_2}$ δηλώνουμε ότι ο προρρυθμιστής προέκυψε από παρεμβολή στις $\widehat{f}_1$ και $\widehat{f}_2$, με τριγωνομετρικά πολυώνυμα βαθμού $d_1$ και $d_2$, αντίστοιχα. Δίνουμε επίσης και τον αριθμό των ιδιαζουσών τιμών που κυμαίνονται εκτός του διαστήματος $[1-M\epsilon,1+M\epsilon]$ (βλ. Θεώρημα \ref{thm:Me_cont} και \ref{thm:Me}), ως ${\rm SV-out}$.

\begin{exmp}\label{exmp:231}\normalfont
`Εστω $\mathfrak{f}_1(x)=x^2+1+\mathrm{i}\mathfrak{h}_1(x)$, όπου: $$\mathfrak{h}_1(x)=\left\{
     \begin{array}{@{}c@{\thinspace}l}
       -\pi-x &,~-\pi\leq x< -\frac{\pi}{2}\\
       x &,~ -\frac{\pi}{2}\leq x<\frac{\pi}{2} \\
       \pi-x &,~\phantom{-}\frac{\pi}{2}\leq x\leq\pi\\

     \end{array}
   \right..$$
Είναι προφανές ότι η $\mathfrak{h}_1$ είναι μια $2\pi$-περιοδική και συνεχής συνάρτηση. Παρατηρούμε επίσης ότι η $f_1=x^2+1$ είναι μια θετική συνάρτηση. Ο αριθμός των επαναλήψεων που χρειάζονται μέχρι τη σύγκλιση των μεθόδων {\en PGMRES} και {\en PCGN}, δίνεται στον Πίνακα \ref{tab:x^2+1+ig}.

\begin{table}[H]
\centering
\begin{tabular}{ccccc|cccc}
\toprule
\multirow{2}{*}{$n$} & \multicolumn{4}{c|} {\en PGMRES} & \multicolumn{4}{c} {\en PCGN}\\
 & $I_n$ & $R_{4,4}$ & $R_{6,6}$ & $R_{8,6}$ & $I_n$ & $R_{4,4}$ & $R_{6,6}$ & $R_{8,6}$\\\midrule
\phantom{0}256 & 31 & 8 & 7 & 6 & 72 & 37 & 34 & 38\\
\phantom{0}512 & 30 & 8 & 7 & 6 & 74 & 31 & 30 & 31\\
1024 & 29 & 8 & 7 & 6 & 73 & 30 & 30 & 29\\
2048 & 29 & 8 & 6 & 6 & 72 & 30 & 29 & 30\\\bottomrule
\end{tabular}
\caption{\label{tab:x^2+1+ig} Επαναλήψεις ($\mathfrak{f}_1$).}
\end{table}
Παρατηρούμε ότι η {\en PGMRES} συγκλίνει σε πολύ λιγότερες επαναλήψεις σε σύγκριση με την {\en PCGN}. Αυτό εξηγείται από τη διαφορά στον τρόπο συσσώρευσης των ιδιοτιμών και ιδιαζουσών τιμών. `Εχουμε γενική συσσώρευση για τις ιδιάζουσες τιμές του προρρυθμισμένου συστήματος, μερικές εκ των οποίων μπορεί να είναι κοντά στο 0, ενώ έχουμε κύρια συσσώρευση των ιδιοτιμών, εντός ενός ορθογωνίου μακριά από την αρχή των αξόνων.

Το Σχήμα \ref{fig:spectra_x2+1+ig} δείχνει τη συσσώρευση των ιδιοτιμών και ιδιαζουσών τιμών, όταν $n=2048$. Οι μπλε γραμμές στο Σχήμα \ref{fig:spectra_x2+1+iga} συμβολίζουν τις τιμές $1-M\epsilon$ και $1+M\epsilon$. Σημειώνουμε ότι η $f_1$ έχει προσεγγιστεί από το τριγωνομετρικό πολυώνυμο $p_1$, βαθμού $8$ και η $\mathfrak{h}_1$ από το $p_2$, το οποίο έχει βαθμό ίσο με $6$.

`Οπως μπορούμε να δούμε όλες οι ιδιάζουσες τιμές βρίσκονται ανάμεσα στα $1-M\epsilon$ και $1+M\epsilon$ και το Θεώρημα \ref{thm:Me_cont} ισχύει. Στο Σχήμα \ref{fig:spectra_x2+1+igb} μπορούμε να παρατηρήσουμε ότι οι ιδιοτιμές του προρρυθμισμένου συστήματος κυμαίνονται εντός του ορθογωνίου $[0.822,1.178]\times[-0.125,0.125]$, το οποίο έχει πλευρές που αποτελούν φράγματα του πραγματικού και φανταστικού μέρους της $\frac{\mathfrak{f}_1}{p}$, επομένως το Θεώρημα \ref{thm:eig_clustering} ισχύει.

\begin{figure}
    \centering
    \subfloat[Ιδιοτιμές.]{{\label{fig:spectra_x2+1+igb}\includegraphics[width=0.45\linewidth]{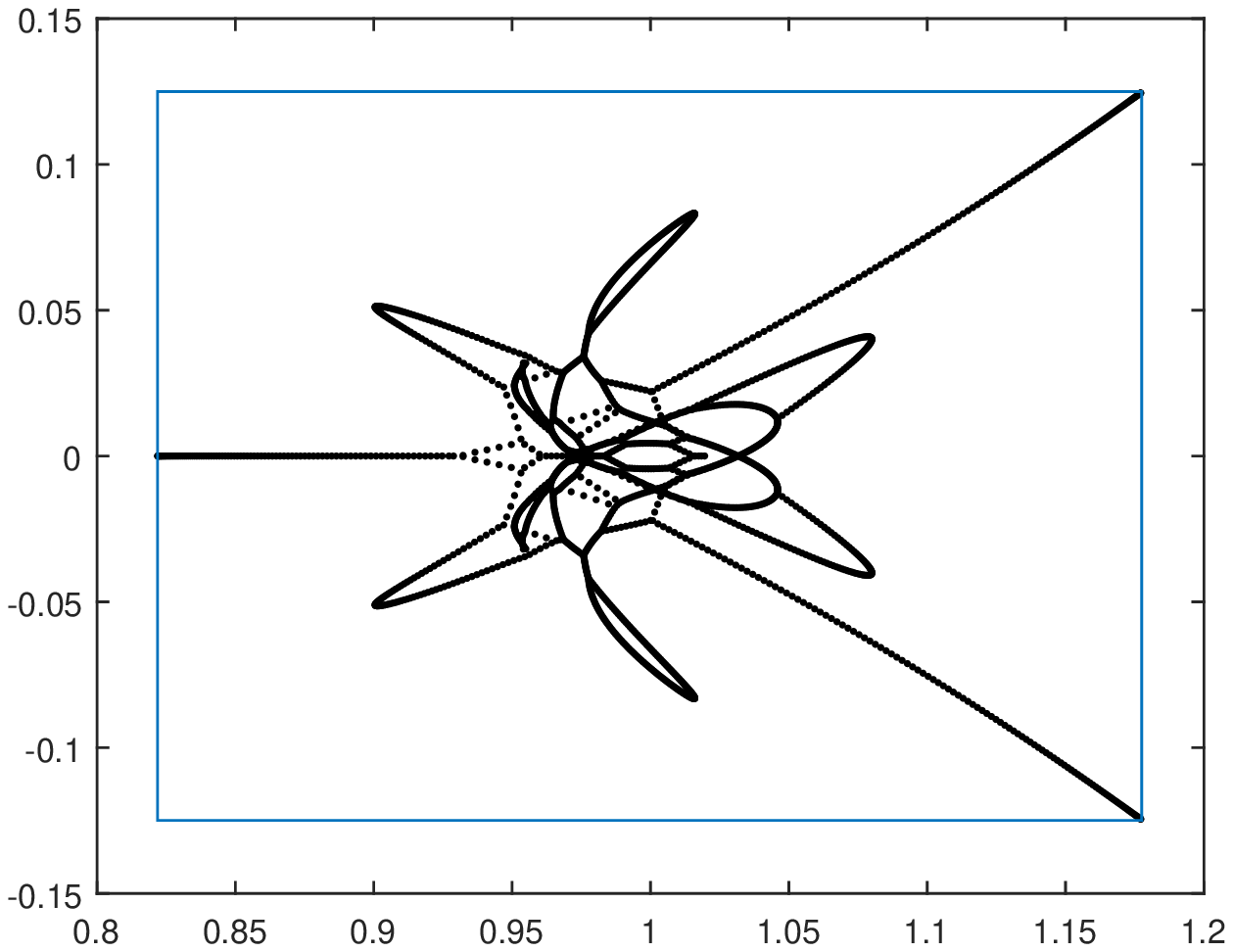}}}%
    \qquad
    \subfloat[Ιδιάζουσες τιμές.]{{\label{fig:spectra_x2+1+iga}\includegraphics[width=0.45\linewidth]{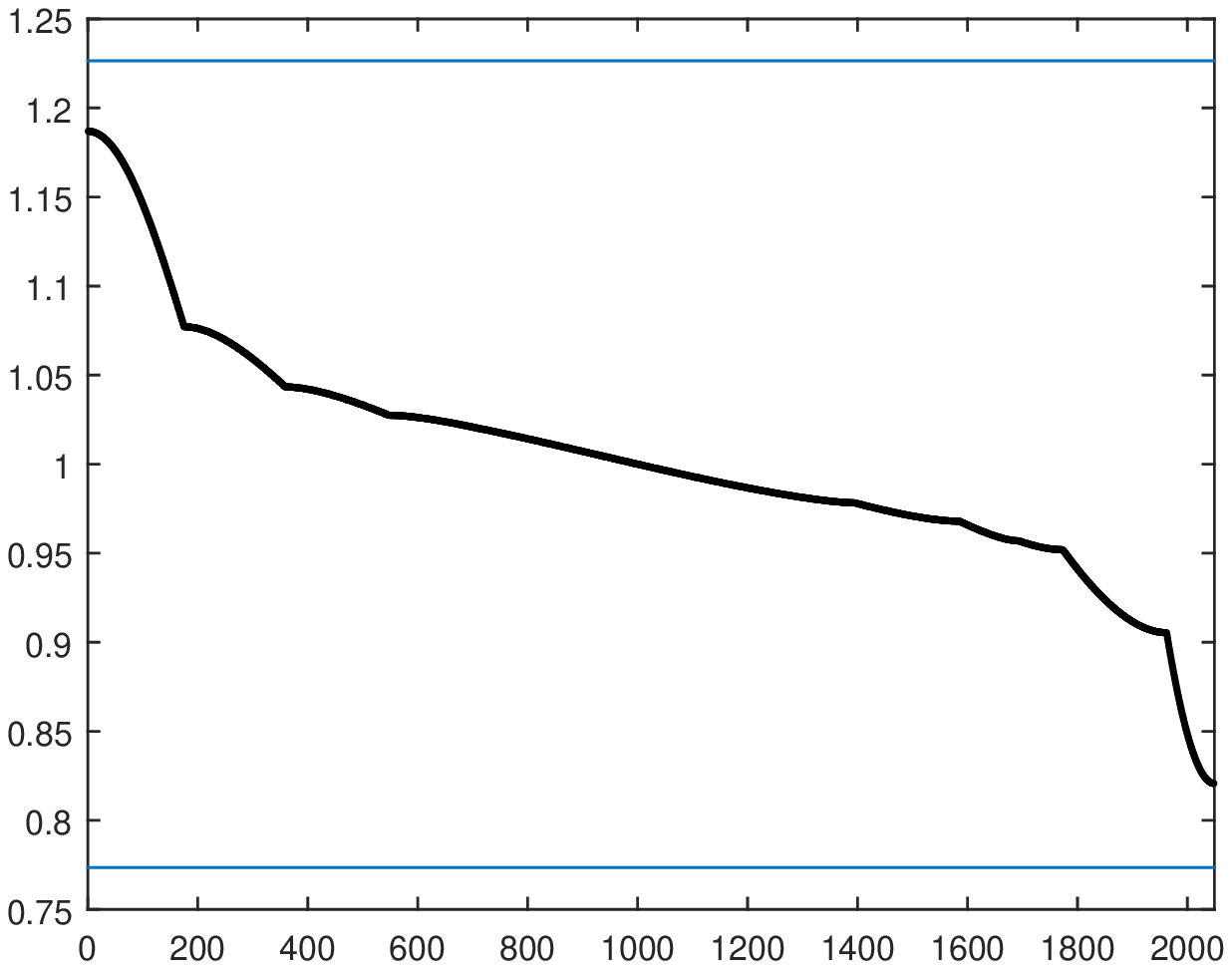}}}%
    \caption{Ιδιοτιμές και ιδιάζουσες τιμές ($\mathfrak{f}_1$).}%
    \label{fig:spectra_x2+1+ig}%
\end{figure}
\end{exmp}

\begin{exmp}\label{exmp:232}\normalfont
`Εστω $\mathfrak{f}_2(x)=x^2+\mathrm{i}x^3$. Προφανώς $f_1=x^2$, $f_2=x^3$ και τόσο η $f_1$, όσο και η $f_2$ έχουν ρίζα στο 0, με πολλαπλότητα $m_1=2$ και $m_2=3$, αντίστοιχα. Επομένως, θα άρουμε τη ρίζα της $\mathfrak{f}_2$, διαιρώντας με το τριγωνομετρικό πολυώνυμο $g(x)=2-2\cos{(x)}$, όπως περιγράφηκε στην υποενότητα \ref{Sss:321}. Τότε, προσεγγίζουμε τη συνάρτηση $\frac{\mathfrak{f}_2}{g}$ με τον τρόπο που περιγράψαμε στην ενότητα \ref{S:3} και κατασκευάζουμε τον προρρυθμιστή.

\begin{figure}
    \centering
    \subfloat[Πραγματικό μέρος.]{{\includegraphics[width=0.45\linewidth]{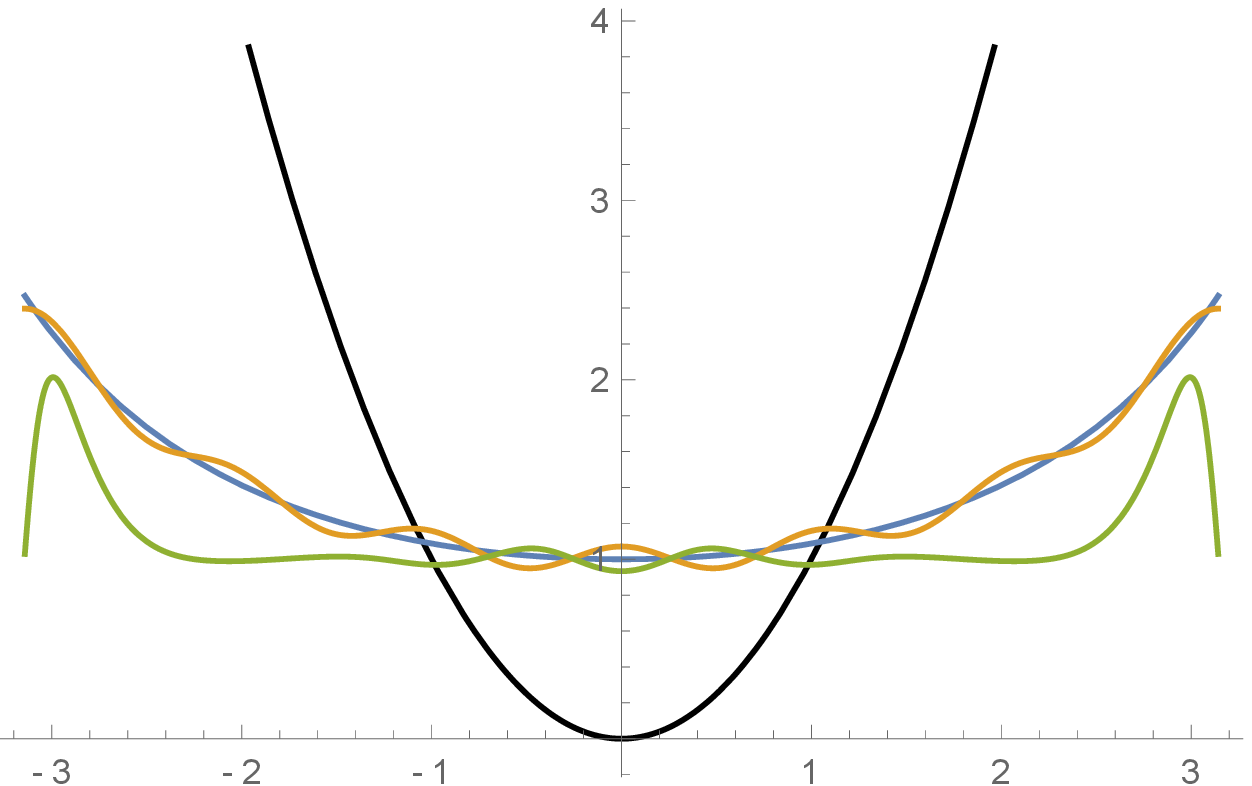}}}%
    \qquad
    \subfloat[Φανταστικό μέρος.]{{\includegraphics[width=0.45\linewidth]{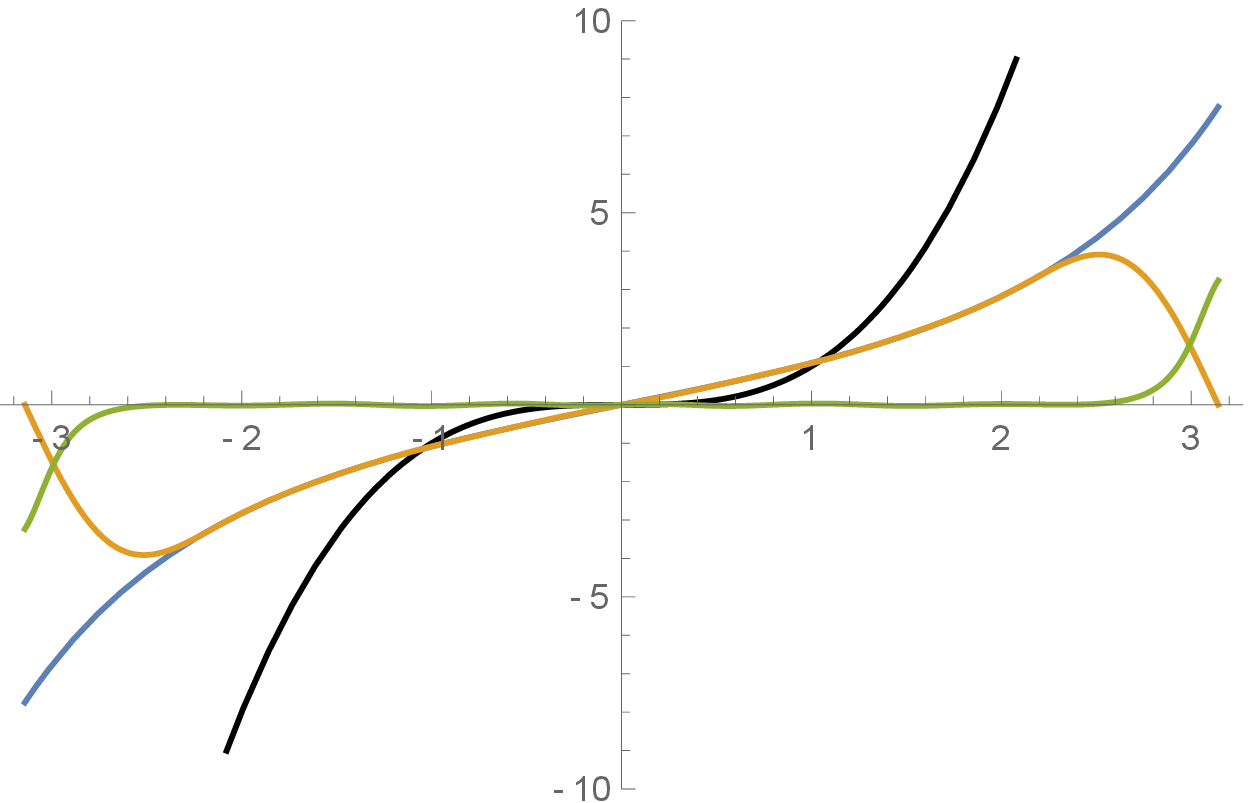}}}%
    \caption{$x^2+\mathrm{i}x^3$.}%
    \label{fig:Re_Im_x2+ix3}%
\end{figure}

Το Σχήμα \ref{fig:Re_Im_x2+ix3} δείχνει το πραγματικό και φανταστικό μέρος των συναρτήσεων $\mathfrak{f}_2(x)=x^2+\mathrm{i}x^3$ (μαύρο), $\widehat{\mathfrak{f}_2}=\frac{\mathfrak{f}_2}{g}$ (μπλε), $q$: βέλτιστη ομοιόμορφη τριγωνομετρική προσέγγιση με πολυώνυμα 6ου βαθμού για τις $\widehat{\mathfrak{f}_2}_1$ και $\widehat{\mathfrak{f}_2}_2$ (πορτοκαλί), και $\frac{\mathfrak{f}_2}{gq}$ (πράσινο).

Ο αριθμός επαναλήψεων, καθώς και το πλήθος ιδιαζουσών τιμών που κυμαίνονται εκτός του $[1-M\epsilon,1+M\epsilon]$, όταν χρησιμοποιούμε τον $R_{6,6}$ ως προρρυθμιστή, δίνονται στον Πίνακα \ref{tab:x^2+ix^3}. Προσεγγίζουμε την $\widehat{\mathfrak{f}_2}_2$ στο διάστημα $[-\frac{5\pi}{7},\frac{5\pi}{7}]$ (που σημαίνει ότι επιλέγουμε $c=\frac{5\pi}{7}$).

\begin{table}[H]
\centering
\begin{tabular}{ccccc|cccc|c}
\toprule
\multirow{2}{*}{$n$} & \multicolumn{4}{c|} {\en PGMRES} & \multicolumn{4}{c|} {\en PCGN} &\\
 & $I_n$ & $B$ & $R_{4,4}$ & $R_{6,6}$ & $I_n$ & $B$ & $R_{4,4}$ & $R_{6,6}$ & ${\rm SV-out}$\\\midrule
\phantom{0}256 & \phantom{$>$}256 & 67 & 24 & 22 & - & 80 & 37 & 35 & 58\\
\phantom{0}512 & $>$500 & 70 & 27 & 26 & - & 93 & 43 & 41 & 114\\
1024 & $>$500 & 69 & 28 & 27 & - & 104 & 47 & 44 & 226\\
2048 & $>$500 & 68 & 28 & 27 & - & 115 & 52 & 48 & 450\\\bottomrule
\end{tabular}
\caption{\label{tab:x^2+ix^3} Επαναλήψεις ($\mathfrak{f}_2$).}
\end{table}
Παρατηρούμε ότι ο αριθμός ιδιαζουσών τιμών που κυμαίνονται εκτός του $I_{\epsilon}=[1-M\epsilon,1+M\epsilon]=[0.931,1.069]$, είναι μικρότερος από $n-\frac{c}{\pi}n$, όπως αναμένονταν από το Θεώρημα \ref{thm:Me}. Για παράδειγμα, όταν $n=2048$, έχουμε 450 ιδιάζουσες τιμές εκτός του $I_{\epsilon}$, ενώ $[n-\frac{c}{\pi}n]=585$.

Η συσσώρευση των ιδιοτιμών και ιδιαζουσών τιμών, του προρρυθμισμένου συστήματος, όταν $n=2048$ δίνεται στο Σχήμα \ref{fig:spectra_x2+ix3}: Το Σχήμα \ref{fig:spectra_x2+ix3a} δείχνει τη συσσώρευση των ιδιαζουσών τιμών, το Σχήμα \ref{fig:spectra_x2+ix3b} δείχνει τις τελευταίες 50 ιδιάζουσες τιμές και το Σχήμα \ref{fig:spectra_x2+ix3c} τη συσσώρευση των ιδιοτιμών, εντός του ορθογωνίου $[0.933,2.015]\times[-3.236,3.236]$. Στα Σχήματα \ref{fig:spectra_x2+ix3a}, \ref{fig:spectra_x2+ix3b} οι πορτοκαλί γραμμές οριοθετούν του διάστημα $I_{\epsilon}$ και η μπλε δείχνει την τιμή $1+M\epsilon^{\prime}$. Τα αστέρια κόκκινου χρώματος συμβολίζουν τις ιδιάζουσες τιμές που κυμαίνονται εκτός του $I_{\epsilon}^{\prime}=[1-M\epsilon,1+M\epsilon^{\prime}]=[0.931,8.535]$ και χαρακτηρίζουν τη γενική συσσώρευση του Θεωρήματος \ref{thm:gen_cluster}. Το Σχήμα \ref{fig:spectra_x2+ix3c} επιβεβαιώνει την ισχύ του Θεωρήματος \ref{thm:eig_clustering}. Αξίζει να σημειωθεί ότι η κύρια μάζα τον ιδιοτιμών συσσωρεύεται πολύ κοντά στο σημείο $(1,0)$.

\begin{figure}[H]%
    \centering
    \subfloat[Ιδιάζουσες τιμές.]{{\label{fig:spectra_x2+ix3a}\includegraphics[width=0.45\linewidth]{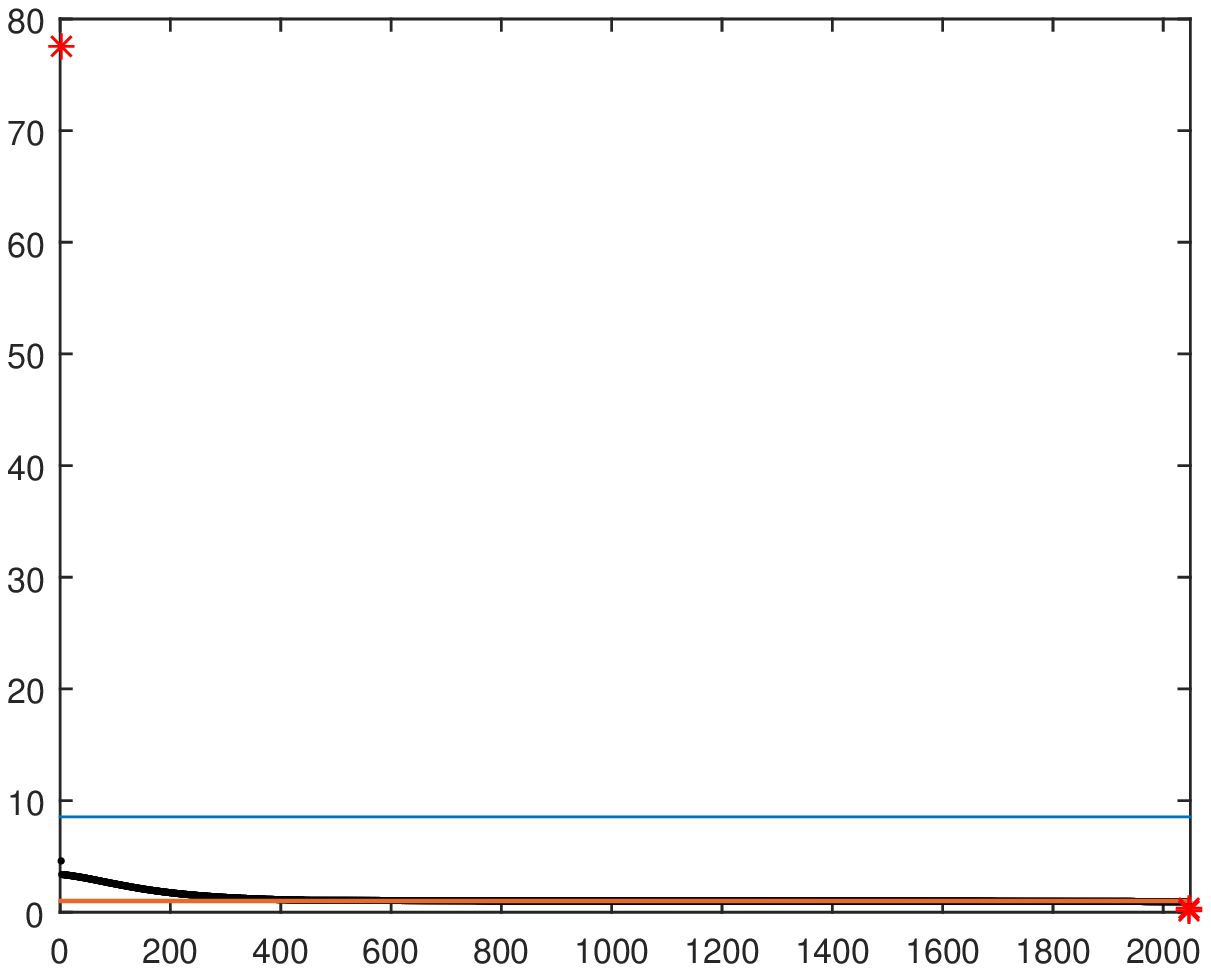}}}%
    \qquad
    \subfloat[Τελευταίες 50 ιδιάζουσες τιμές.]{{\label{fig:spectra_x2+ix3b}\includegraphics[width=0.45\linewidth]{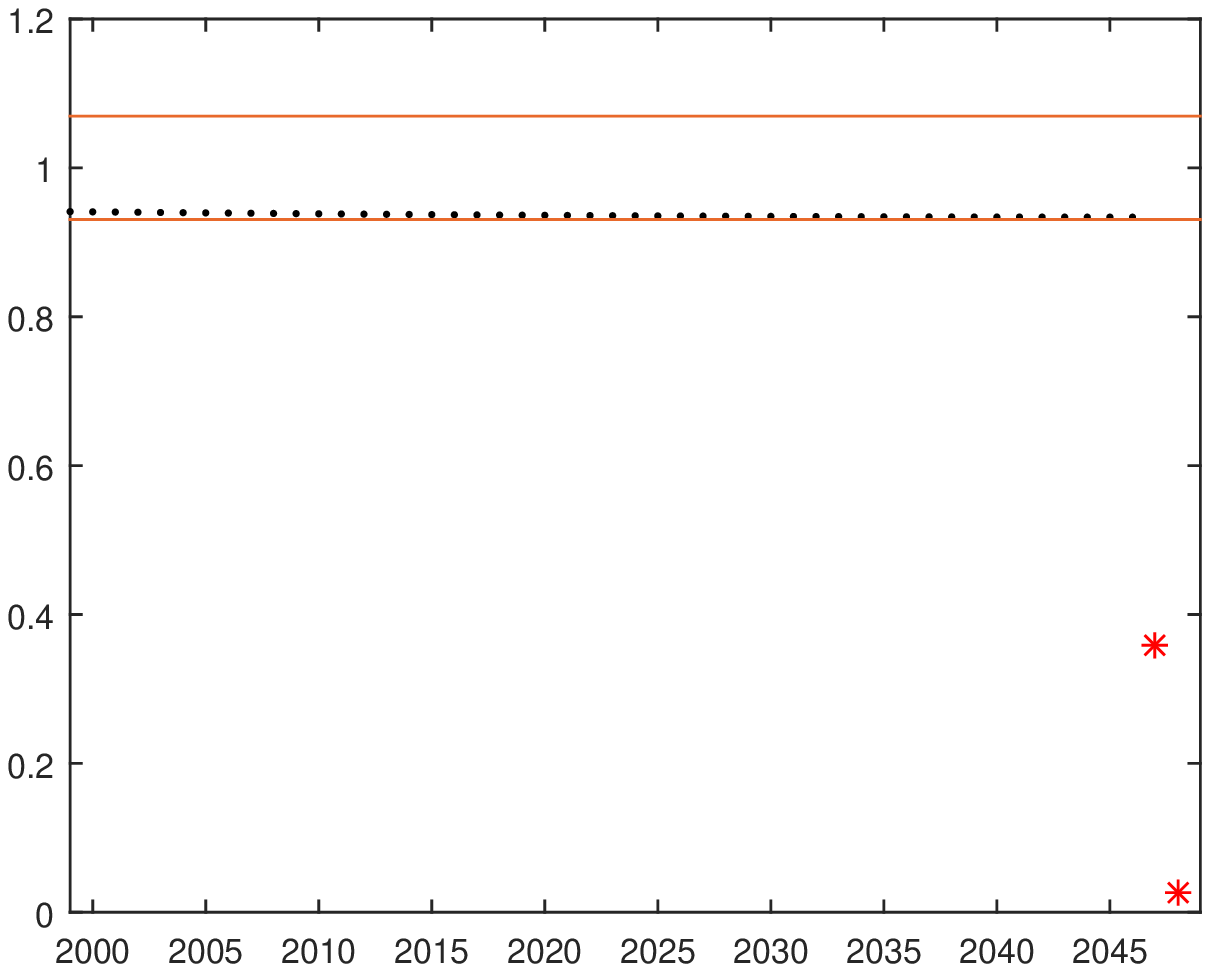}}}%
	\\
	\subfloat[Ιδιοτιμές.]{{\label{fig:spectra_x2+ix3c}\includegraphics[width=0.45\linewidth]{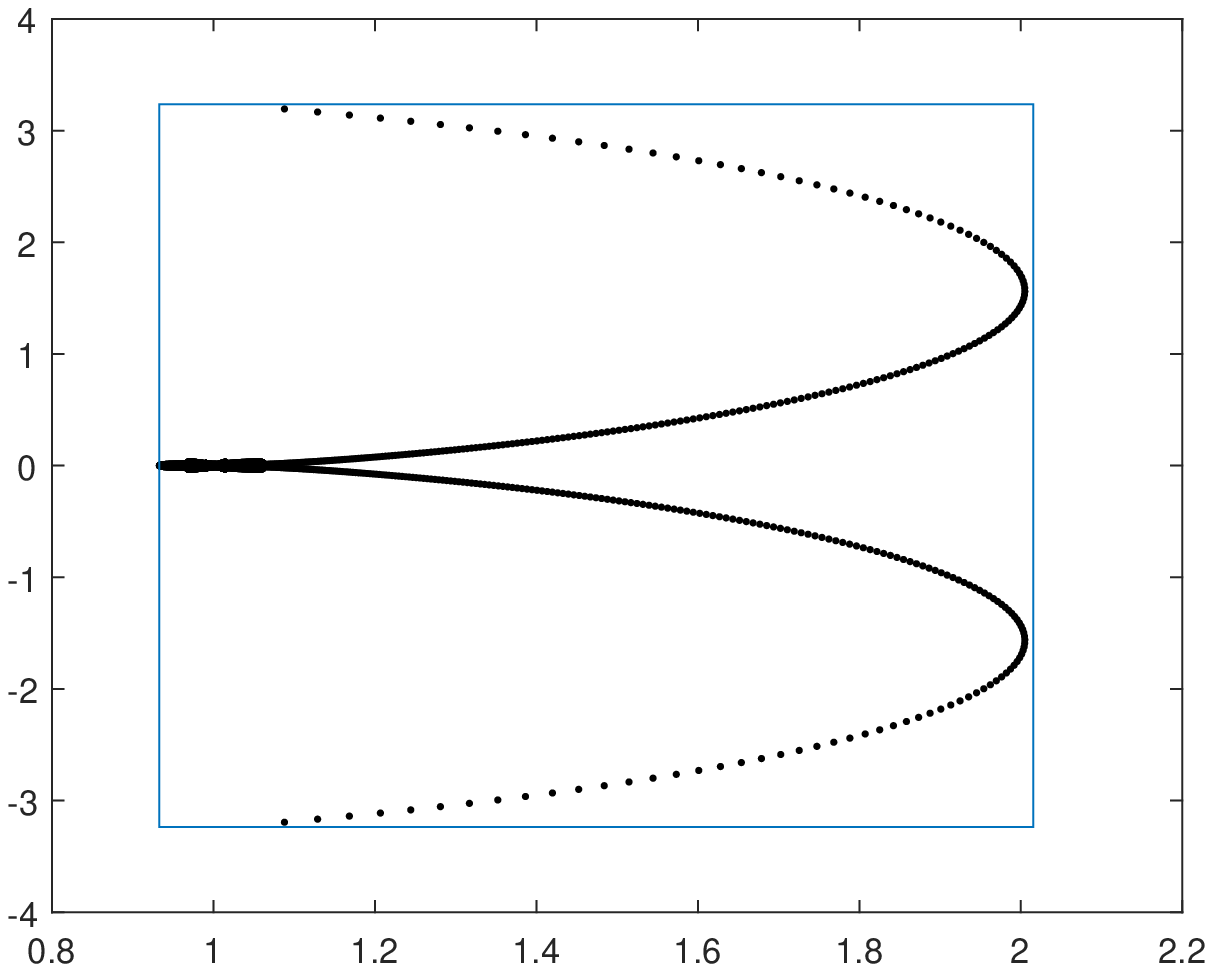}}}%
    \caption{Ιδιοτιμές και ιδιάζουσες τιμές ($\mathfrak{f}_2$).}%
    \label{fig:spectra_x2+ix3}%
\end{figure}
\end{exmp}

Στα παραδείγματα που ακολουθούν επικεντρωνόμαστε μόνο στη συμπεριφορά της μεθόδου {\en PGMRES}, αφού είδαμε ότι αυτή συγκλίνει (στη λύση) σε πολύ λιγότερες επαναλήψεις, σε σχέση με τη μέθοδο {\en PCGN}.

\begin{exmp}\label{exmp:233}\normalfont
`Εστω ότι $\mathfrak{f}_3(x)=x^2+\mathrm{i}x$. Σε αυτό το παράδειγμα $f_1=x^2$, $f_2=x$ και τόσο η $f_1$, όσο και η $f_2$ έχουν ρίζα στο 0, με πολλαπλότητα $m_1=2$ και $m_2=1$, αντίστοιχα. Επομένως, θα άρουμε τις ρίζες της γεννήτριας συνάρτησης χρησιμοποιώντας το τριγωνομετρικό πολυώνυμο $g(x)=2-2\cos{(x)}+\mathrm{i}\sin{(x)}$. Στη συνέχεια, προσεγγίζουμε τη συνάρτηση $\frac{\mathfrak{f}_3}{g}$ και κατασκευάζουμε τον προρρυθμιστή. Σε αυτό το παράδειγμα δίνουμε και τον αριθμό επαναλήψεων όταν χρησιμοποιούμε βέλτιστη ομοιόμορφη προσέγγιση, καθώς επίσης και παρεμβολή με τριγωνομετρικά πολυώνυμα.

Το Σχήμα \ref{fig:Re_Im_x2+ix} δείχνει το πραγματικό και φανταστικό μέρος των $\mathfrak{f}_3$, $\widehat{\mathfrak{f}_3}=\frac{\mathfrak{f}_3}{g}$, $q$ που αφορά στον προρρυθμιστή $R_{4,4}$ και $\frac{\mathfrak{f}_3}{gq}$, όπως έγινε και στο προηγούμενο παράδειγμα.
\begin{figure}
    \centering
    \subfloat[Πραγματικό μέρος.]{{\includegraphics[width=0.45\linewidth]{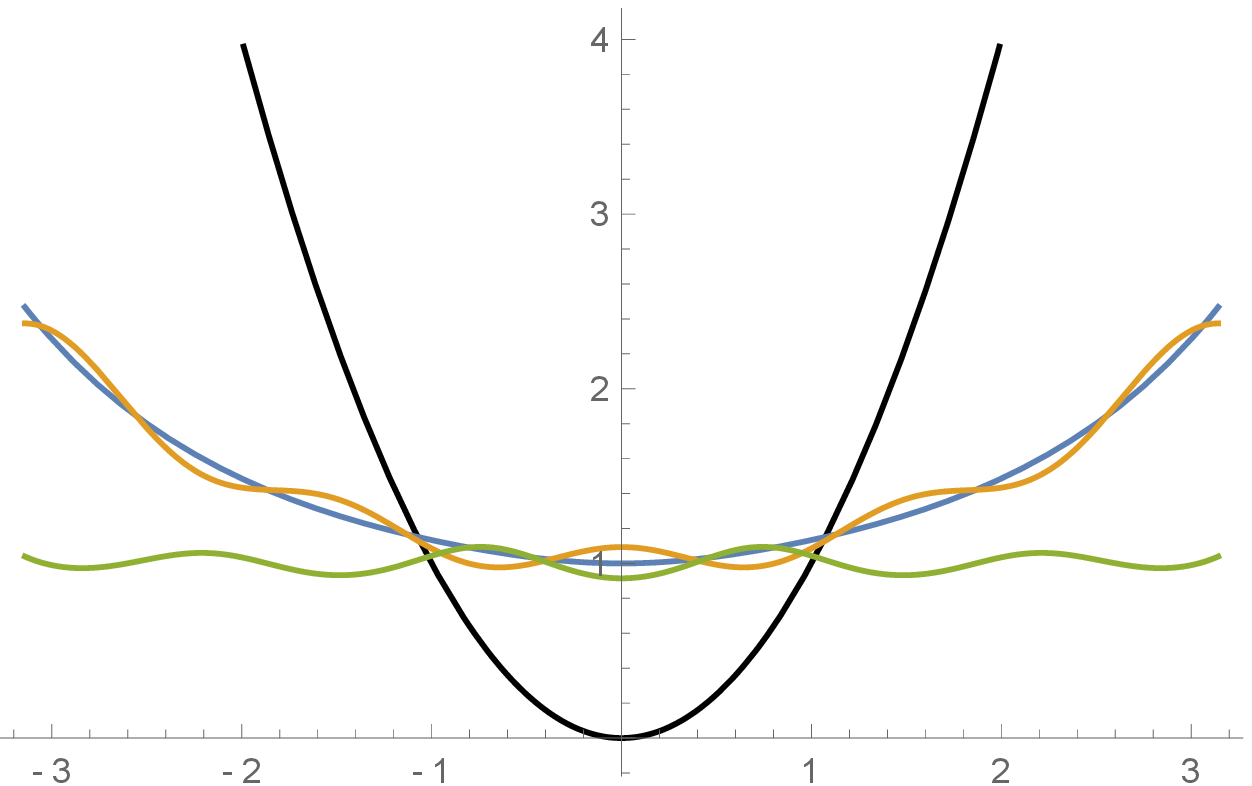}}}%
    \qquad
    \subfloat[Φανταστικό μέρος.]{{\includegraphics[width=0.45\linewidth]{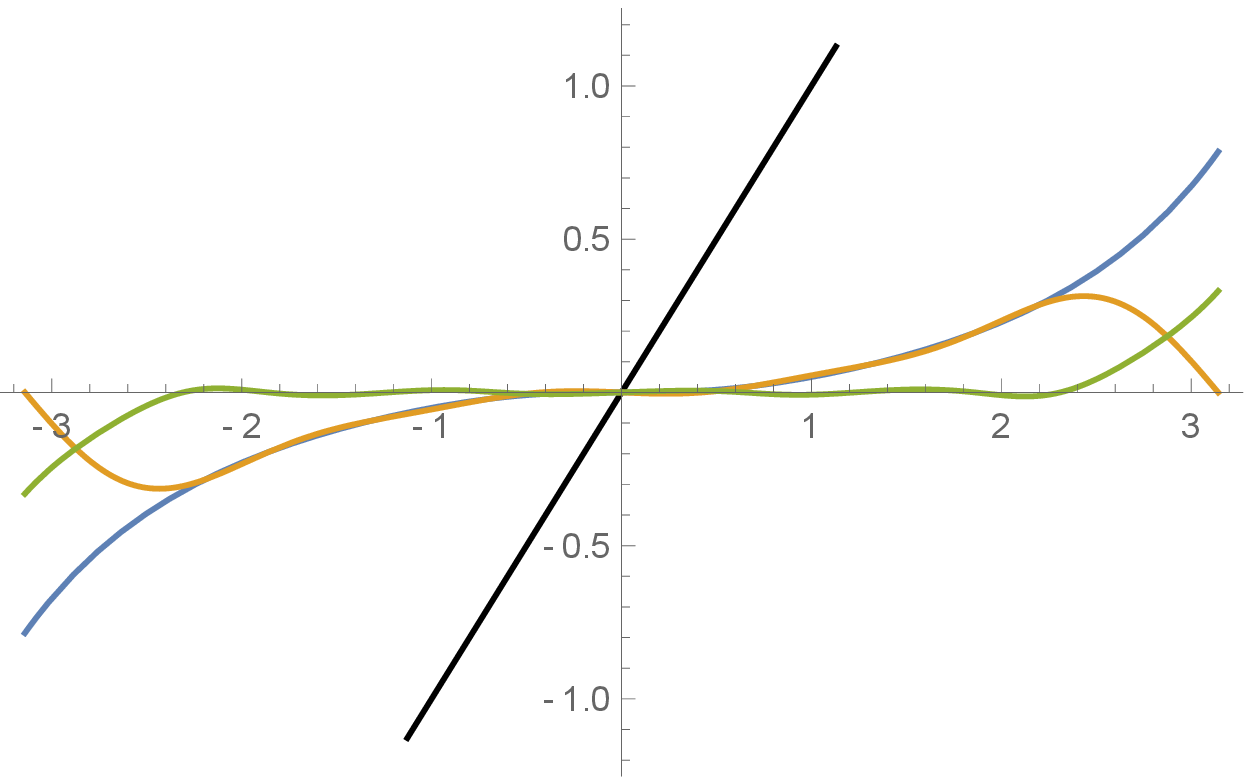}}}%
    \caption{$x^2+\mathrm{i}x$.}%
    \label{fig:Re_Im_x2+ix}%
\end{figure}

Ο αριθμός επαναλήψεων δίνεται στον Πίνακα \ref{tab:x^2+ix}. Σε αυτόν δίνεται επίσης και ο αριθμός των ιδιαζουσών τιμών που κυμαίνονται εκτός του διαστήματος συσσώρευσης και αφορούν στον $R_{4,4}$.

\begin{table}[H]
\centering
\begin{tabular}{cccccccc}
\toprule
 $n$ & $I_n$ & $B$ & $R_{4,4}$ & $In_{4,4}$ & $R_{10,10}$ & $In_{10,10}$ & ${\rm SV-out}$\\\midrule
\phantom{0}256 & \phantom{$>$}256 & 11 & 6 & 6 & 6 & 12 & 2\\
\phantom{0}512 & $>$500 & 11 & 6 & 6 & 6 & 12 & 2\\
1024 & $>$500 & 10 & 6 & 6 & 5 & 12 & 2\\
2048 & $>$500 & 10 & 6 & 5 & 5 & 11 & 2\\\bottomrule
\end{tabular}
\caption{\label{tab:x^2+ix} Επαναλήψεις ($\mathfrak{f}_3$).}
\end{table}

Παρατηρούμε ότι οι προρρυθμιστές που κατασκευάστηκαν από βέλτιστη ομοιόμορφη προσέγγιση και παρεμβολή, έχουν σχεδόν τους ίδιους αριθμούς επαναλήψεων, όταν οι βαθμοί των πολυωνύμων $q_1$ και $q_2$ είναι μικροί. Ωστόσο, όσο οι βαθμοί μεγαλώνουν, ο προρρυθμιστής που προκύπτει μέσω βέλτιστης ομοιόμορφης προσέγγισης έχει καλύτερη συμπεριφορά, δηλαδή είναι πιο αποτελεσματικός από αυτόν που προέκυψε μέσω παρεμβολής. Αυτό οφείλεται στην ταλάντωση του τριγωνομετρικού πολυωνύμου παρεμβολής, μέσω της οποίας δεν εξασφαλίζεται η μείωση του σφάλματος, όσο ο βαθμός του πολυωνύμου αυξάνεται. `Αρα, επιβεβαιώνεται ότι η βέλτιστη ομοιόμορφη προσέγγιση δίνει πιο αποτελεσματικούς προρρυθμιστές.

Η συσσώρευση των ιδιοτιμών και ιδιαζουσών τιμών όταν $n=2048$, χρησιμοποιώντας τον $R_{4,4}$, δίνεται στο Σχήμα \ref{fig:spectra_x2+ix}: το Σχήμα \ref{fig:spectra_x2+ixb} τη συσσώρευση των ιδιοτιμών, ενώ το Σχήμα \ref{fig:spectra_x2+ixa} τη συσσώρευση των ιδιαζουσών τιμών. 

\begin{figure}
    \centering
    \subfloat[Ιδιοτιμές.]{{\label{fig:spectra_x2+ixb}\includegraphics[width=0.45\linewidth]{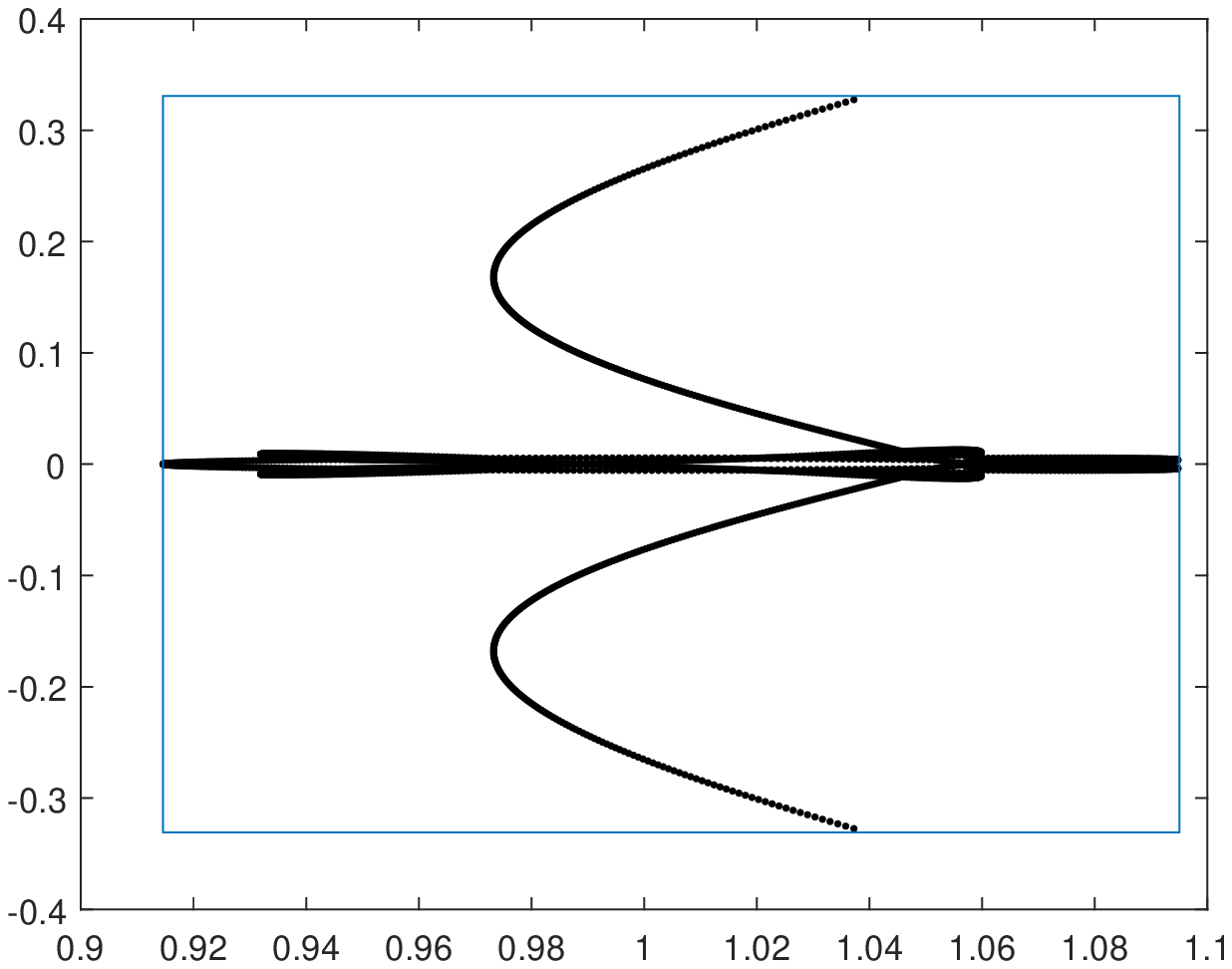}}}%
    \qquad
    \subfloat[Ιδιάζουσες τιμές.]{{\label{fig:spectra_x2+ixa}\includegraphics[width=0.45\linewidth]{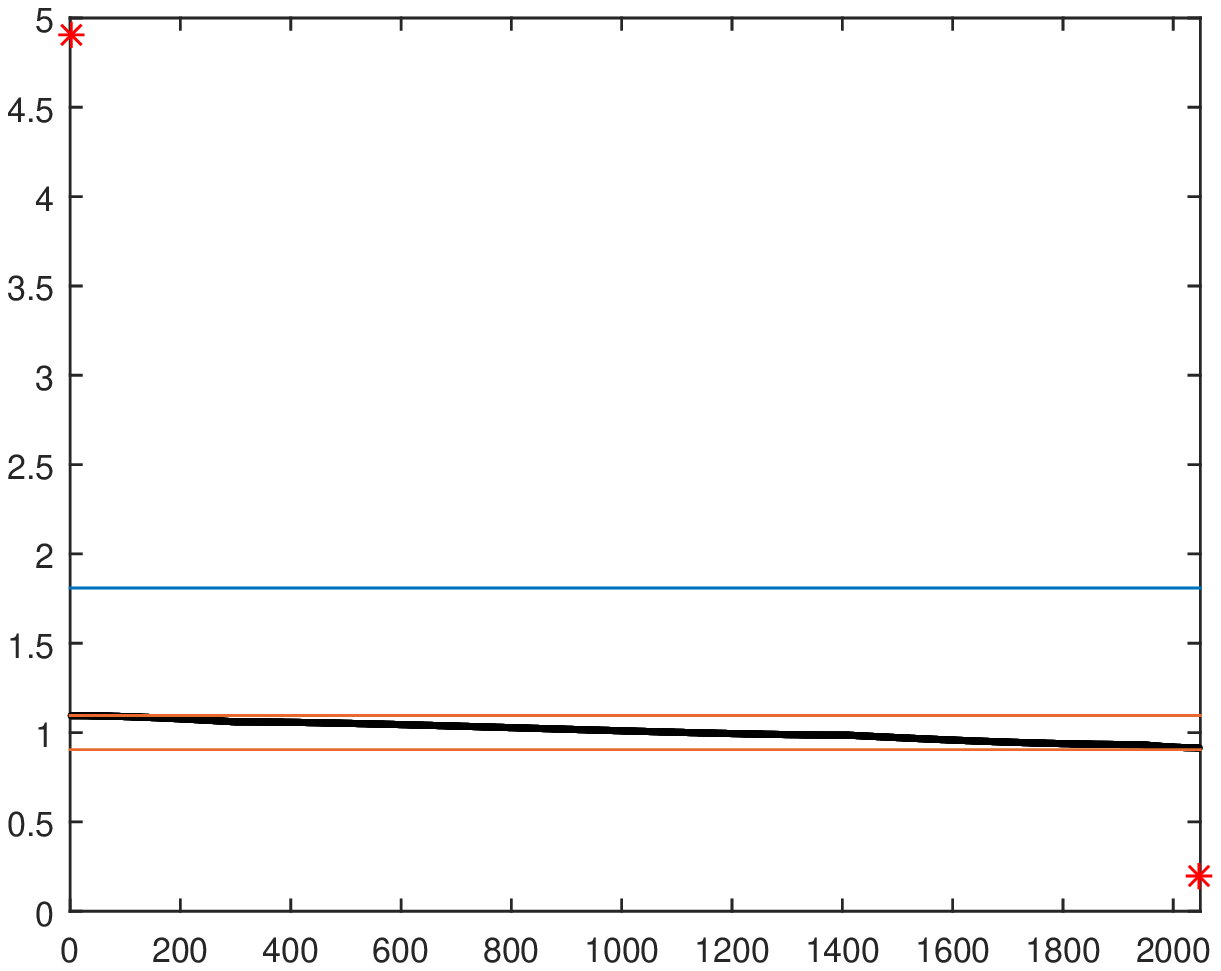}}}%
    \caption{Ιδιοτιμές και ιδιάζουσες τιμές ($\mathfrak{f}_3$).}%
    \label{fig:spectra_x2+ix}%
\end{figure}
Στο Σχήμα \ref{fig:spectra_x2+ixa}, οι πορτοκαλί γραμμές είναι τα άκρα του διαστήματος $I_{\epsilon}=[0.904,1.096]$ και η μπλε γραμμή λαμβάνει την τιμή $1+M\epsilon^{\prime}=1.809$. `Οπως και στο προηγούμενο παράδειγμα, με κόκκινα αστέρια συμβολίζουμε τις ιδιάζουσες τιμές που κυμαίνονται εκτός του διαστήματος γενικής συσσώρευσης. Το σχήμα \ref{fig:spectra_x2+ixb} δείχνει τη συσσώρευση των ιδιοτιμών εντός του ορθογωνίου $[0.915,1.095]\times[-0.331,0.331]$.
\end{exmp}

\begin{exmp}\normalfont
`Εστω $\mathfrak{f}_4(x)=x^2-1+\mathrm{i}\mathfrak{h}_2(x)$, όπου: $$\mathfrak{h}_2(x)=\left\{
     \begin{array}{@{}c@{\thinspace}l}
       -1-x &, -\pi\leq x< -\frac{1}{2}\\
       x &, -\frac{1}{2}\leq x<\frac{1}{2} \\
       1-x &,\phantom{-}\frac{1}{2}\leq x\leq\pi\\

     \end{array}
   \right..$$
Η συνάρτηση $f_1$ έχει μια απλή ρίζα στο $\pm 1$ (με πολλαπλότητα $m_1=1$) και η $f_2=\mathfrak{h}_2$ έχει επίσης ρίζα στο $\pm 1$ με πολλαπλότητα $m_2=1$ και μία ρίζα στο 0, με πολλαπλότητα $m_0=1$. Επομένως, επιλέγουμε ως $g(x)=\cos{(1)}-\cos{(x)}$ και προσεγγίζουμε την $\frac{\mathfrak{f}_4}{g}$ με τριγωνομετρικά πολυώνυμα 4ου βαθμού, έτσι ώστε στη συνέχεια να κατασκευάσουμε τον προρρυθμιστή.

\begin{table}
\centering
\begin{tabular}{ccccc}
\toprule
 $n$ & $I_n$ & $B$ & $R_{4,4}$ & ${\rm SV-out}$\\\midrule
\phantom{0}256 & \phantom{$>$}256 & 15 & 6 & 4\\
\phantom{0}512 & $>$500 & 15 & 6 & 4\\
1024 & $>$500 & 16 & 6 & 4\\
2048 & $>$500 & 15 & 6 & 4\\\bottomrule
\end{tabular}
\caption{\label{tab:x^2-1+ig} Επαναλήψεις ($\mathfrak{f}_4$).}
\end{table}

Ο αριθμός επαναλήψεων και ιδιαζουσών τιμών που κυμαίνονται εκτός του διαστήματος συσσώρευσης δίνονται στον Πίνακα \ref{tab:x^2-1+ig}. Η συσσώρευση των ιδιοτιμών και ιδιαζουσών τιμών, όταν $n=2048$, δίνεται στα Σχήματα \ref{fig:spectra_x2-1+iga} και \ref{fig:spectra_x2-1+igb}, αντίστοιχα. Παρατηρούμε ότι δύο ιδιοτιμές έχουν πραγματικό μέρους μεγαλύτερο από $\operatorname*{ess~sup}\limits_{-\pi\leq x\leq\pi}{\operatorname{Re}\left(\frac{\mathfrak{f}_4(x)}{p(x)}\right)}$, χαρακτηρίζοντας την κύρια συσσώρευση στο ορθογώνιο $[0.888,1.088]\times[-0.253,0.253]$.  
\begin{figure}[H]%
    \centering
    \subfloat[Ιδιοτιμές.]{{\label{fig:spectra_x2-1+iga}\includegraphics[width=0.45\linewidth]{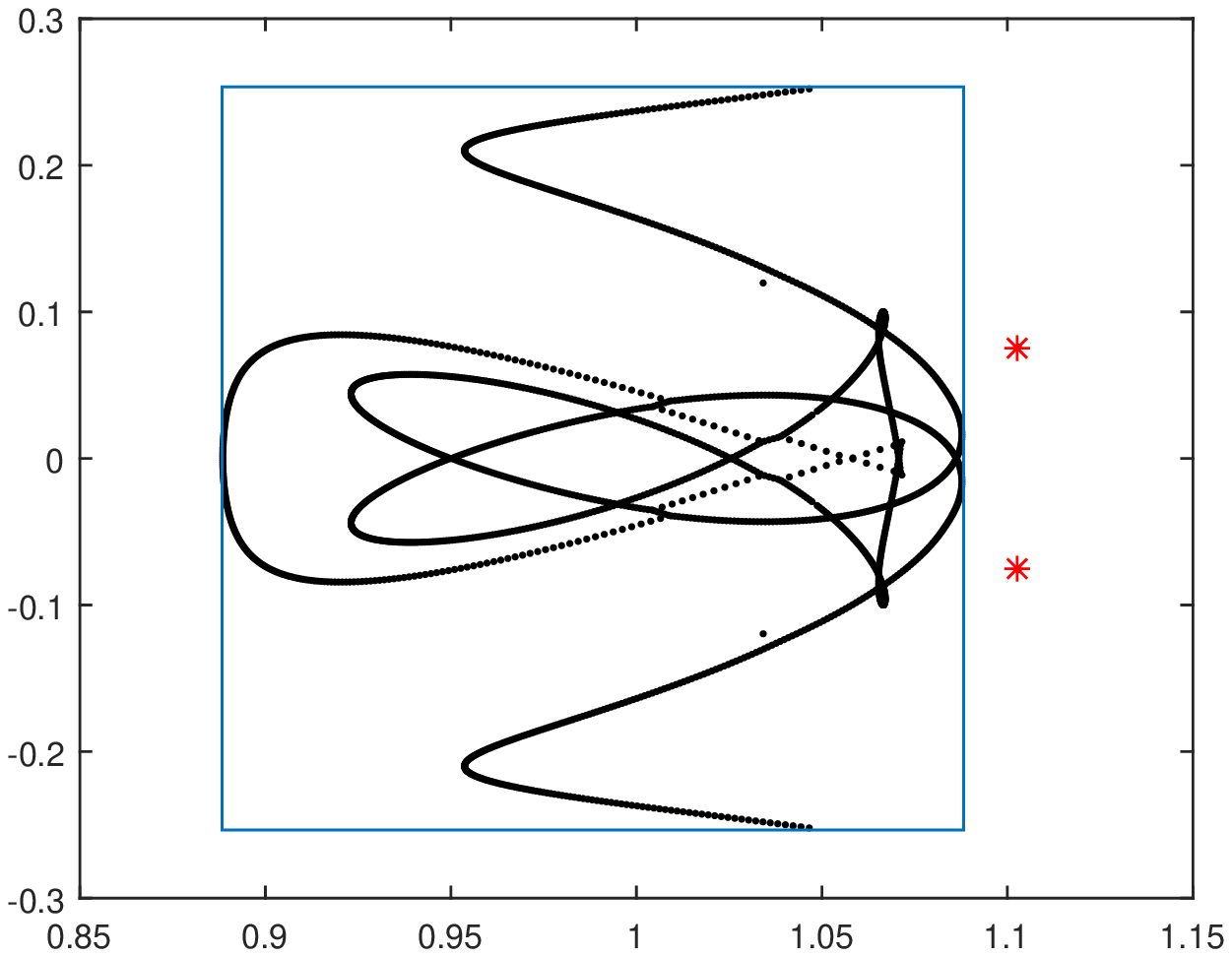}}}   	\qquad
    \subfloat[Ιδιάζουσες τιμές.]{{\label{fig:spectra_x2-1+igb}\includegraphics[width=0.45\linewidth]{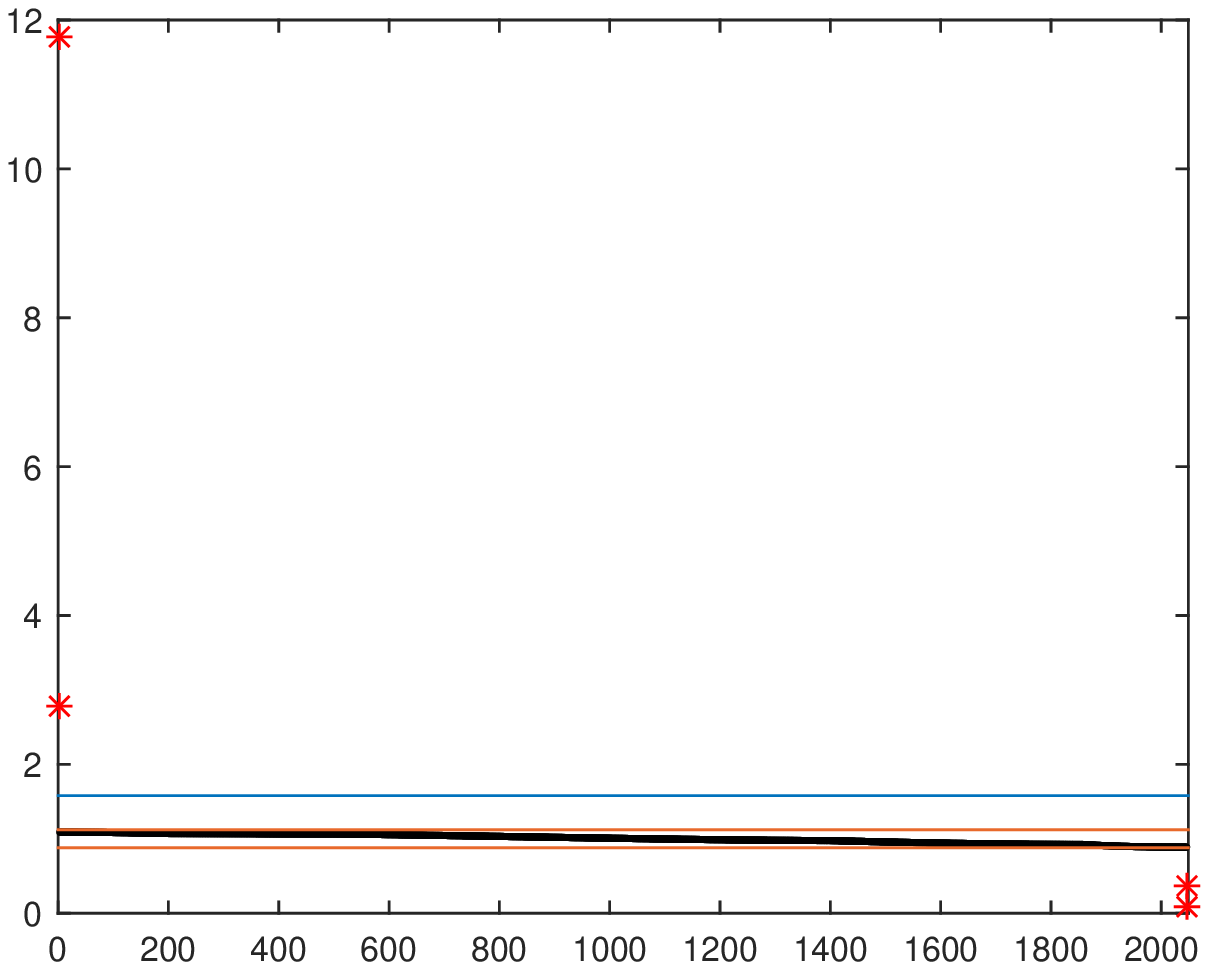}}}%
    \caption{Ιδιοτιμές και ιδιάζουσες τιμές ($\mathfrak{f}_4$).}%
    \label{fig:spectra_x2-1+ig}%
\end{figure}
\end{exmp}

\begin{exmp}\normalfont
`Εστω ότι $\mathfrak{f}_5(x)=(x^2-1)^2+\mathrm{i}x(x^2-4)$. Σε αυτό το παράδειγμα προφανώς το πραγματικό και φανταστικό μέρος της γεννήτριας συνάρτησης έχουν ρίζες σε διαφορετικά σημεία, αφού $f_1=(x^2-1)^2$ και $f_2=x(x^2-4)$. Ειδικότερα, η $f_1$ έχει ρίζα στο $\pm 1$ με πολλαπλότητα $m_1=1$, ενώ η $f_2$ έχει ρίζα στο $0$ και στο $\pm 2$, με πολλαπλότητες $m_0=m_2=1$. Επομένως, επιλέγουμε ως $g(x)=\left(\cos{(1)}-\cos{(x)}\right)^2+\mathrm{i}\sin{(x)}\left(\cos{(2)}-\cos{(x)}\right)$, σύμφωνα με όσα περιγράψαμε στον τρόπο κατασκευής του προρρυθμιστή. Τελικά, κατασκευάζουμε τον $R_{8,6}$.

Ο αριθμός επαναλήψεων με χρήση της μεθόδου {\en PGMRES} δίνεται στον Πίνακα \ref{tab:different_points}. Η κύρια συσσώρευση των ιδιοτιμών εντός του ορθογωνίου $[0.684,1.953]\times[-0.246,0.246]$ φαίνεται στο Σχήμα \ref{fig:spectra_different_points}.

\begin{table}
\centering
\begin{tabular}{cccc}
\toprule
 $n$ & $I_n$ & $B$ & $R_{8,6}$\\\midrule
\phantom{0}256 & 151 & 25 & 12\\
\phantom{0}512 & 197 & 25 & 11\\
1024 & 223 & 25 & 11\\
2048 & 229 & 24 & 11\\\bottomrule
\end{tabular}
\caption{\label{tab:different_points} Επαναλήψεις ($\mathfrak{f}_5$).}
\end{table}

\begin{figure}
    \centering
    \includegraphics[width=0.45\linewidth]{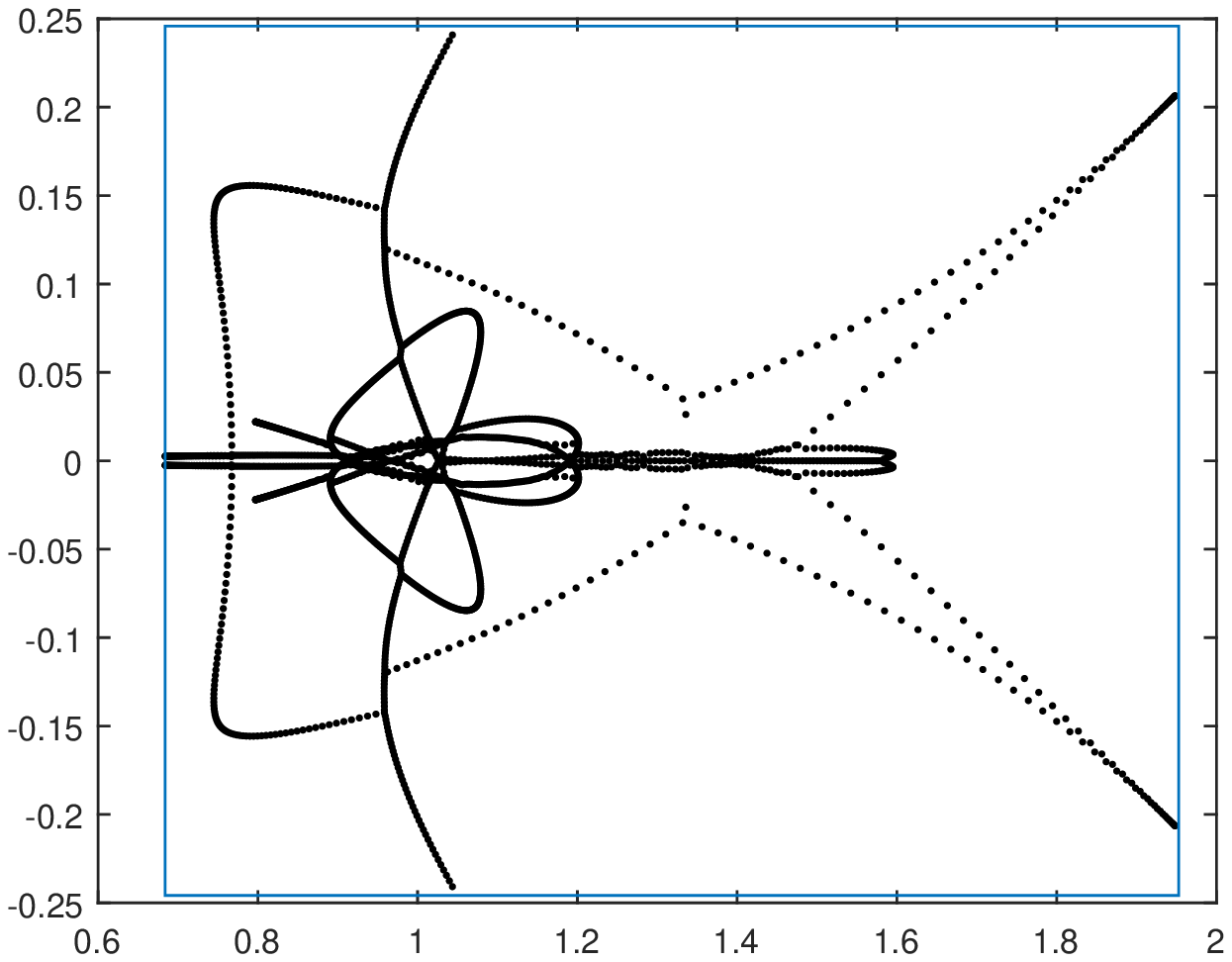}
    \caption{Ιδιοτιμές ($\mathfrak{f}_5$).}%
    \label{fig:spectra_different_points}%
\end{figure}
\end{exmp}

\begin{exmp}\label{exp:6}\normalfont
`Εστω η δι-διάστατη συνάρτηση χωριζομένων μεταβλητών $\mathfrak{f}_6(x,y)=x^2+y^2+\mathrm{i}(x+y)$. Προφανώς, $f_1(x,y)=x^2+y^2$ και $f_2(x,y)=x+y$. Η $f_1$ έχει μια ρίζα στο $(0,0)$ με πολλαπλότητα ίση με 2, ενώ η $f_2$ έχει μια ρίζα στο ίδιο σημείο, αλλά με πολλαπλότητα 1. Ας είναι $\mathfrak{h}(z)=z^2+\mathrm{i}z,~z\in[-\pi,\pi]$, τότε η συνάρτηση $\mathfrak{f}_6$ χωρίζεται ως: $\mathfrak{f}_6(x,y)=\mathfrak{h}(x)+\mathfrak{h}(y)$. Θα άρουμε τη ρίζα της $\mathfrak{h}(z)$, διαιρώντας με $g(z)=2-2\cos{(z)}+\mathrm{i}\sin{(z)}$. Κατόπιν, προσεγγίζουμε την $\frac{h}{g}$ με το τριγωνομετρικό πολυώνυμο $q=q_1+\mathrm{i}q_2$, όπου $q_1$ και $q_2$ είναι 4ου βαθμού. Στη συνέχεια κατασκευάζουμε τον μονοδιάστατο ταινιωτό πίνακα {\en Toeplitz}, $T_n(p)=T_n(gq)$, ο οποίος είναι ο αντίστοιχος προρρυθμιστής για τον $T_n(\mathfrak{h})$. Τελικά, ο δι-διάστατος ταινιωτός {\en Toeplitz} προρρυθμιστής κατασκευάζεται από το τανυστικό γινόμενο $T_{nm}(\widehat{p})=I_n\otimes T_m(p)+T_n(p)\otimes I_m$, όπου $\widehat{p}(x,y)=p(x)+p(y)$.

Ο αριθμός επαναλήψεων για διάφορες διαστάσεις των {\en blocks} δίνεται στον Πίνακα \ref{tab:BTTB}. Η συσσώρευση των ιδιοτιμών και ιδιαζουσών τιμών δίνεται στο Σχήμα \ref{fig:spectra_BTTB}.

\begin{table}
\centering
\begin{tabular}{ccccc}
\hlineB{2}
\diagbox{$n$}{$m$} & 16 & 32 & 64 & 128\\
\hlineB{1.2}
\phantom{0}16 & 6 & 6 & 6 & 6\\
\phantom{0}32 & 6 & 6 & 6 & 6\\
\phantom{0}64 & 6 & 6 & 6 & 6\\
128 & 6 & 6 & 6 & 6\\
\hlineB{2}
\end{tabular}
\caption{\label{tab:BTTB} Επαναλήψεις ($\mathfrak{f}_6$).}
\end{table}

\begin{figure}
    \centering
    \subfloat[Ιδιοτιμές.]{{\label{fig:spectra_BTTBa}\includegraphics[width=0.45\linewidth]{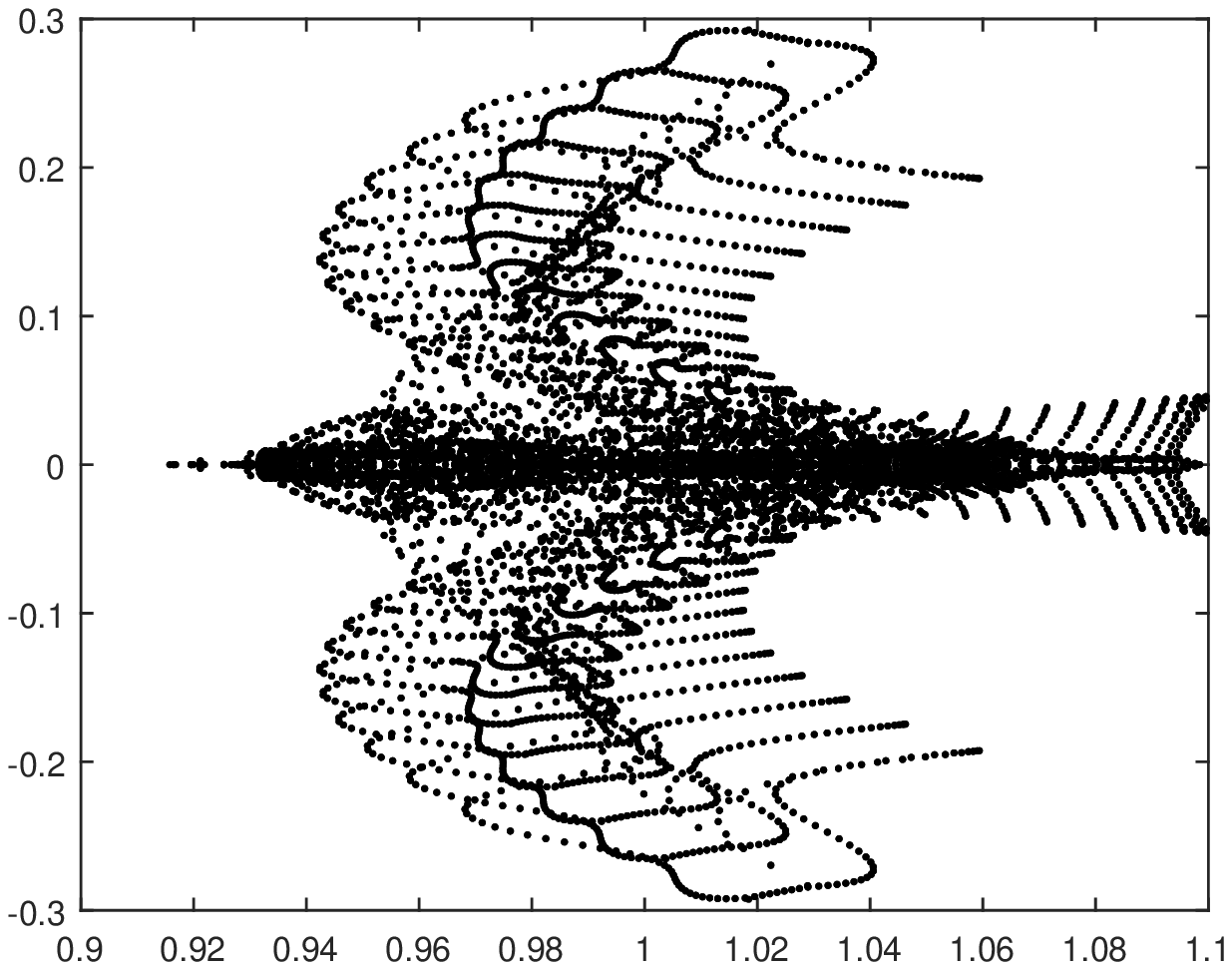}}}%
    \qquad
    \subfloat[Ιδιάζουσες τιμές.]{{\label{fig:spectra_BTTBb}\includegraphics[width=0.45\linewidth]{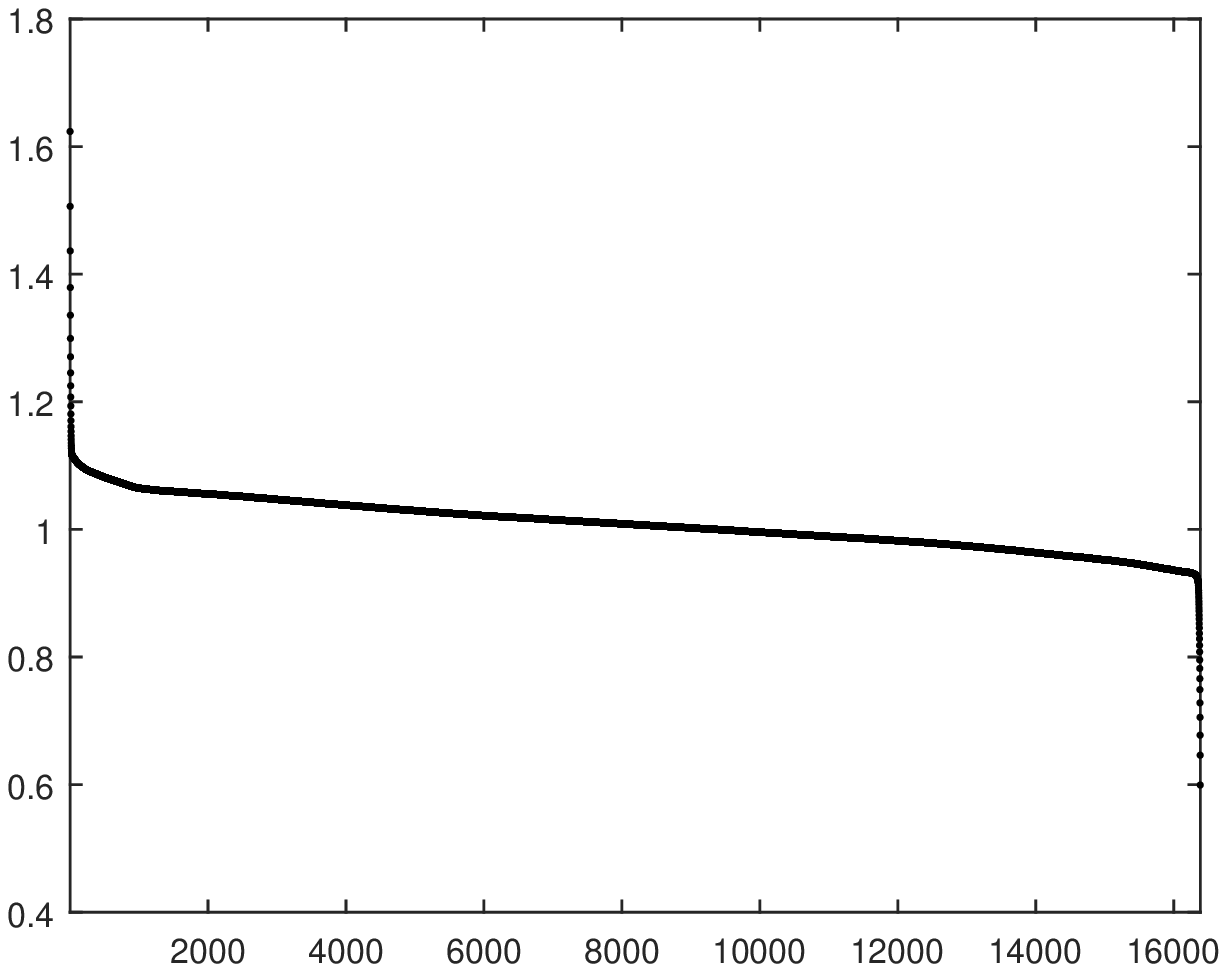}}}%
    \caption{Ιδιοτιμές και ιδιάζουσες τιμές ($\mathfrak{f}_6$).}%
    \label{fig:spectra_BTTB}%
\end{figure}

Το Σχήμα \ref{fig:spectra_BTTBa} δείχνει τη συσσώρευση των ιδιοτιμών του προρρυθμισμένου συστήματος, όταν $n=m=128$ και το Σχήμα \ref{fig:spectra_BTTBb}, την αντίστοιχη συσσώρευση των ιδιαζουσών τιμών.
\end{exmp}

\newpage\thispagestyle{empty}\mbox{}\newpage 

\pagestyle{main}

\chapter{Κυκλοειδείς Προρρυθμιστές}

Μια ευρέως γνωστή κατηγορία προρρυθμιστών αποτελούν οι κυκλοειδείς πίνακες \cite{davis}. `Ενας πίνακας καλείται κυκλοειδής όταν είναι {\en Toeplitz} και κάθε επόμενη γραμμή/στήλη αυτού προκύπτει από μια κυκλική μετατόπιση της προηγούμενης. `Ετσι οι κυκλοειδείς πίνακες έχουν την παρακάτω μορφή:
\begin{equation*}
C_{n}=
\begin{pmatrix}
c_{0} & c_{-1} & c_{-2} & \cdots & c_{2} & c_{1} \\
c_{1} & c_{0} & c_{-1} & \cdots & c_{3} & c_{2} \\
c_{2} & c_{1} & c_{0} & \cdots & c_{4} & c_{3} \\
\vdots & \vdots & \vdots & \ddots & \vdots & \vdots \\
c_{-2} & c_{-3} & c_{-4} & \cdots & c_{0} & c_{-1} \\
c_{-1} & c_{-2} & c_{-3} & \cdots & c_{1} & c_{0}
\end{pmatrix}.
\end{equation*}

Στη βιβλιογραφία κάποιος μπορεί να βρει πολλά είδη κυκλοειδών προρρυθμιστών για συστήματα {\en Toeplitz}, όπως είναι για παράδειγμα αυτός που εισήγαγε ο {\en G.~Strang} το 1986, ο βέλτιστος κυκλοειδής προρρυθμιστής που πρότεινε ο {\en T.~Chan} το 1988 κ.α. Οι προρρυθμιστές του {\en G.~Strang} και του {\en T.~Chan}, κατασκευάζονται από τις τιμές του αρχικού πίνακα {\en Toeplitz} σύμφωνα με τις παρακάτω σχέσεις, αντίστοιχα:
\begin{equation}\label{eq:strang}
s_{k}=
\begin{cases}
t_{k}, &0\leq k\leq \lfloor n/2 \rfloor \\
t_{k-n}, &\lfloor n/2 \rfloor<k\leq n-1 \\
s_{n+k}, &0<-k\leq n-1
\end{cases},
\end{equation}

\begin{equation*}
c_{k}=\begin{cases}
\frac{\left(n-k\right)t_{k}+k t_{k-n}}{n}, &0\leq k\leq n-1\\
c_{n+k}, &0<-k\leq n-1
\end{cases}.
\end{equation*}

Βλέπουμε ότι ο κυκλοειδής προρρυθμιστής του {\en G.~Strang} κατασκευάζεται αναδιπλώνοντας τις κεντρικές διαγωνίους του πίνακα των συντελεστών $T_{n}$, σύμφωνα με τη σχέση (\ref{eq:strang}). Σχολιάζουμε επίσης ότι ο κυκλοειδής προρρυθμιστής του {\en T.~Chan} ονομάζεται βέλτιστος επειδή ελαχιστοποιεί τη νόρμα {\en Frobenius} $\Vert C_{n}-T_{n}\Vert_{F}$, ως προς οποιονδήποτε κυκλοειδή πίνακα $C_{n}$.

`Ολοι οι κυκλοειδείς προρρυθμιστές έχουν μια κοινή χαρακτηριστική ιδιότητα, να διαγωνοποιούνται από τον πίνακα διακριτού μετασχηματισμού {\en Fourier}, ο οποίος δίνεται ως:
\begin{equation*}
\mathcal{F}_n=\frac{1}{\sqrt{n}}\begin{pmatrix}
1 & 1 & 1 & \cdots & 1 \\
1 & \omega & \omega^{2} & \cdots & \omega^{n-1} \\
1 & \omega^{2} & \omega^{4} & \cdots & \omega^{2(n-1)} \\
\vdots & \vdots & \vdots & \ddots & \vdots \\
1 & \omega^{n-1} & \omega^{2(n-1)} & \cdots & \omega^{(n-1)^{2}}
\end{pmatrix},
\end{equation*}
όπου $\omega=\mathrm{e}^{2\pi\mathrm{i}/n}$ και προφανώς $\omega^{j}$, $j=0,\dots,n-1$ είναι οι διακεκριμένες λύσεις της μιγαδικής εξίσωσης $z^{n}-1=0$, δηλαδή οι $n$-οστές ρίζες της μονάδας. Αναφέρουμε ότι ο πίνακας $\mathcal{F}_{n}$ είναι ορθομοναδιαίος (επομένως και αντιστρέψιμος) και συμμετρικός \cite{Chan_Jin,kra,zhan}. Λόγω της προαναφερθείσας ιδιότητας μπορούμε να γράψουμε έναν οποιονδήποτε κυκλοειδή προρρυθμιστή $C_n$, ως:
\begin{equation*}
C_n=\mathcal{F}_n^H\Lambda_n\mathcal{F}_n,
\end{equation*}
όπου $\Lambda_n$ είναι διαγώνιος πίνακας, ο οποίος έχει ως στοιχεία της κυρίας διαγωνίου του τις ιδιοτιμές του $C_n$. Ως εκ τούτου, είναι προφανές ότι μπορούμε να ορίσουμε έναν κυκλοειδή προρρυθμιστή με ιδιοτιμές της αρεσκείας μας. Λόγω του ότι θέλουμε να προρρυθμίσουμε ένα σύστημα {\en Toeplitz}, το οποίο έχει ως γεννήτρια συνάρτηση την $f=f_1+\mathrm{i}f_2$ (με τις ιδιότητες που αναφέραμε στο εισαγωγικό κεφάλαιο), είναι συνετό να κατασκευάσουμε τον κυκλοειδή προρρυθμιστή:
\begin{equation*}
C_n(f)=\mathcal{F}_n^H\Lambda_n(f)\mathcal{F}_n,
\end{equation*}
ο οποίος έχει ως ιδιοτιμές τις τιμές που λαμβάνει η γεννήτρια συνάρτηση $f$ στους κόμβους:
\begin{equation}\label{eq:circulant}
\frac{2(j-1)\pi}{n},~j=1,2,\dots,n.
\end{equation}

Προφανώς, λόγω περιοδικότητας της γεννήτριας συνάρτησης τα διαστήματα $(-\pi,\pi]$ και $[0,2\pi)$ περιέχουν τις ίδιες τιμές, για τους αντίστοιχους κόμβους.

\section{Κατασκευή του προρρυθμιστή}
\label{3S:2}

`Εστω $f=f_1+{\rm i}f_2$, όπου $f_1$ είναι άρτια, $2\pi$-περιοδική συνάρτηση και $f_2$ περιττή κι επίσης $2\pi$-περιοδική, ορισμένες στο $\left(-\pi,\pi\right]$. Είναι προφανές, όπως είδαμε και στο προηγούμενο κεφάλαιο, ότι η $f_2$ έχει ρίζα στο 0, ως περιττή συνάρτηση. Αυτή η ιδιότητα δεν ισχύει πάντα για την $f_1$, η οποία μπορεί να μην έχει και καμία ρίζα στο διάστημα $(-\pi,\pi]$. Επομένως, θα διακρίνουμε και πάλι δύο περιπτώσεις προρρύθμισης του αρχικού συστήματος, βάσει της φύσης της $f$, δηλαδή το αν έχει ρίζες ή όχι.

\subsection{Συστήματα με καλή κατάσταση}\label{3Ss:2.1}

Θα ξεκινήσουμε την περιγραφή κατασκευής του προρρυθμιστή για συστήματα με καλή κατάσταση, δηλαδή συστήματα των οποίων η γεννήτρια συνάρτηση $f$, δεν έχει ρίζες στο πεδίο ορισμού της. Σε αυτή την περίπτωση, ο προτεινόμενος προρρυθμιστής είναι ο κυκλοειδής πίνακας $\mathcal{C}_n(f)$, ο οποίος έχει ως ιδιοτιμές, τις τιμές $f\left(\frac{2\pi(k-1)}{n}\right)$, $k=1,\dots,n$.

Σημειώνουμε ότι η $f$ μπορεί να μην έχει ρίζες, ωστόσο δεν είναι απαραίτητο για την $f_1$ να είναι θετική. Αυτό είναι προφανές αφού οι συναρτήσεις $f_1$ και $f_2$, μπορούν να μηδενίζονται σε διαφορετικά σημεία. Σημειώνουμε επίσης ότι η χρήση του $\mathcal{C}_n(f)$ ως προρρυθμιστή είναι επιτρεπτή, διότι είναι αντιστρέψιμος πίνακας. Η κατασκευή αυτού, γίνεται με $\mathcal{O}(n\log{n})$ πράξεις, μέσω του ταχύ μετασχηματισμού {\en Fourier (FFT)} \cite{Strang}.

\subsection{Συστήματα με κακή κατάσταση}
\label{3Ss:2.2}

Συνεχίζουμε με την περίπτωση όπου η $f$ έχει ρίζες. Σε αυτή, προτείνουμε την εύρεση ενός τριγωνομετρικού πολυωνύμου $g$, έτσι ώστε το πραγματικό μέρος της $\frac{f}{g}$ να είναι θετικό, όπως ακριβώς και στην τεχνική προρρύθμισης του προηγούμενου κεφαλαίου. Η μορφή αυτού δόθηκε στην εργασία \cite{NT_2019} (βλ. ενότητα \ref{S:3}). Ο προτεινόμενος προρρυθμιστής γι αυτή την περίπτωση είναι ο πίνακας $T_n(g)\mathcal{C}_n\left(\frac{f}{g}\right)$, όπου $\mathcal{C}_n\left(\frac{f}{g}\right)$ είναι ο κυκλοειδής πίνακας, που έχει ιδιοτιμές ίσες με $\frac{f}{g}\left(\frac{2\pi(k-1)}{n}\right)$, $k=1,\dots,n$ και $T_n(g)$ είναι ο ταινιωτός πίνακας {\en Toeplitz}, ο οποίος έχει ως γεννήτρια συνάρτηση το τριγωνομετρικό πολυώνυμο $g$. Η κατασκευή του προρρυθμιστή γίνεται επίσης γρήγορα, σε $\mathcal{O}(n\log{n})$ πράξεις, χρησιμοποιώντας και πάλι τον ταχύ μετασχηματισμό {\en Fourier}. Σημειώνουμε ότι ο $T_n(g)\mathcal{C}_n\left(\frac{f}{g}\right)$ μπορεί να χρησιμοποιηθεί ως προρρυθμιστής και στην περίπτωση όπου η $f$ δεν έχει ρίζες, αλλά αντιθέτως η $f_1$ μηδενίζεται στο πεδίο ορισμού της.

\section{Θεωρητικά αποτελέσματα}
\label{3S:3}

Σε αυτή την ενότητα δίνουμε τα θεωρητικά αποτελέσματα, που αφορούν στη συσσώρευση των ιδιοτιμών και ιδιαζουσών τιμών του προρρυθμισμένου συστήματος. Αυτά αποτελούν ικανές συνθήκες για την ταχεία σύγκλιση των μεθόδων {\en PGMRES} και {\en PCGN}, με χρήση του προτεινόμενου προρρυθμιστή. Αρχικά θα μελετήσουμε την περίπτωση όπου η γεννήτρια συνάρτηση του συστήματος είναι συνεχής και στη συνέχεια θα επεκταθούμε και στην περίπτωση όπου αυτή έχει σημεία ασυνέχειας.

Επειδή $T_n(f)=T_n(f_1)+\mathrm{i}T_n(f_2)$, είναι ευρέως γνωστό \cite{chan1993circulant} ότι:
\begin{equation}\label{eq:norm2T}
\Vert T_n(f)\Vert_2\leq 2\Vert f\Vert_\infty.
\end{equation}

Συμβολίζουμε με $\lambda_k(\mathcal{C}_n(f)),~k=1,\dots,n$, τις ιδιοτιμές του κυκλοειδή πίνακα $\mathcal{C}_n(f)$. Από την κατασκευή αυτού, γνωρίζουμε ότι ισχύει:
\begin{equation*}
\lambda_k(\mathcal{C}_n(f))=f\left(\frac{2\pi(k-1)}{n}\right),~k=1,\dots,n.
\end{equation*}

\begin{lem}\label{Lem:1}
`Εστω μια $2\pi$-περιοδική και μιγαδική συνάρτηση $f$. Τότε:
\begin{equation}\label{eq:norm2C}
\Vert \mathcal{C}_n(f)\Vert_2\leq 2\Vert f\Vert_\infty.
\end{equation}
Επιπλέον, αν η $f$ δεν έχει ρίζες στο $(-\pi,\pi]$, ισχύει ότι:
\begin{equation}\label{eq:norm2invC}
\Vert \mathcal{C}^{-1}_n(f)\Vert_2\leq2\left\Vert\frac{1}{f}\right\Vert_\infty.
\end{equation}
\end{lem}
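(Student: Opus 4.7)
Το σχέδιο είναι να αξιοποιήσω τη διαγωνιοποίηση $\mathcal{C}_n(f)=\mathcal{F}_n^H\Lambda_n(f)\mathcal{F}_n$, που έχει ήδη περιγραφεί στην εισαγωγή του κεφαλαίου, σε συνδυασμό με το γεγονός ότι ο $\mathcal{F}_n$ είναι ορθομοναδιαίος. Αυτά ανάγουν και τα δύο φράγματα σε απλά ερωτήματα για το μέγιστο μέτρο των διαγώνιων στοιχείων του $\Lambda_n(f)$, τα οποία είναι οι τιμές δειγματοληψίας της $f$ (ή της $1/f$) στους κόμβους $\frac{2\pi(k-1)}{n}$, $k=1,\dots,n$.

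Για το πρώτο φράγμα, θα ακολουθούσα ακριβώς τη λογική που αποδεικνύει τη σχέση (\ref{eq:norm2T}): γράφω $\mathcal{C}_n(f)=\mathcal{C}_n(f_1)+\mathrm{i}\mathcal{C}_n(f_2)$ και εφαρμόζω την τριγωνική ανισότητα, οπότε $\Vert\mathcal{C}_n(f)\Vert_2\leq\Vert\mathcal{C}_n(f_1)\Vert_2+\Vert\mathcal{C}_n(f_2)\Vert_2$. Επειδή οι $f_1,f_2$ είναι πραγματικές συναρτήσεις, οι πίνακες $\Lambda_n(f_1),\Lambda_n(f_2)$ είναι πραγματικοί διαγώνιοι, άρα οι $\mathcal{C}_n(f_j)=\mathcal{F}_n^H\Lambda_n(f_j)\mathcal{F}_n$ είναι Ερμιτιανοί. Κατά συνέπεια, η 2-νόρμα καθενός ισούται με τη φασματική του ακτίνα, δηλαδή με $\max_k|f_j(2\pi(k-1)/n)|\leq\Vert f_j\Vert_\infty\leq\Vert f\Vert_\infty$, και η άθροιση δίνει τον παράγοντα 2.

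Για το δεύτερο φράγμα, η υπόθεση ότι η $f$ δεν έχει ρίζες στο $(-\pi,\pi]$ εξασφαλίζει ότι κάθε διαγώνιο στοιχείο $f(2\pi(k-1)/n)$ είναι μη-μηδενικό. Άρα ο $\Lambda_n(f)$, και συνεπώς ο $\mathcal{C}_n(f)$, είναι αντιστρέψιμος με $\mathcal{C}^{-1}_n(f)=\mathcal{F}_n^H\Lambda_n^{-1}(f)\mathcal{F}_n$. Ο τελευταίος, όμως, είναι ακριβώς ο κυκλοειδής $\mathcal{C}_n(1/f)$, δηλαδή ο πίνακας με ιδιοτιμές τις τιμές της $1/f$ στους ίδιους κόμβους. Εφαρμόζοντας το ήδη αποδεδειγμένο πρώτο σκέλος στη συνάρτηση $1/f$ (η οποία είναι και αυτή $2\pi$-περιοδική), προκύπτει άμεσα $\Vert\mathcal{C}^{-1}_n(f)\Vert_2\leq 2\Vert 1/f\Vert_\infty$.

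Η απόδειξη δεν παρουσιάζει κάποιο ουσιαστικό τεχνικό εμπόδιο· η δουλειά γίνεται ολοκληρωτικά από την κανονικότητα των κυκλοειδών, που δίνει πλήρη φασματικό έλεγχο της 2-νόρμας. Η μόνη λεπτομέρεια που χρήζει προσοχής είναι η ανισότητα $|f(2\pi(k-1)/n)|\leq\Vert f\Vert_\infty$, η οποία ισχύει χωρίς κόπο υπό ήπιες υποθέσεις ομαλότητας της $f$ (π.χ. συνέχεια ή οι κόμβοι να είναι σημεία {\en Lebesgue}), προϋποθέσεις που καλύπτονται από τις γεννήτριες συναρτήσεις τις οποίες εξετάζουμε.
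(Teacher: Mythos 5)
Η πρότασή σου είναι σωστή και ακολουθεί ουσιαστικά την ίδια πορεία με την απόδειξη της διατριβής: διάσπαση $\mathcal{C}_n(f)=\mathcal{C}_n(f_1)+\mathrm{i}\mathcal{C}_n(f_2)$, χρήση της Ερμιτιανότητας των δύο όρων ώστε η 2-νόρμα να ισούται με τη φασματική ακτίνα, τριγωνική ανισότητα για τον παράγοντα 2, και η ταυτότητα $\mathcal{C}_n^{-1}(f)=\mathcal{C}_n\left(\frac{1}{f}\right)$ για το δεύτερο φράγμα. Δεν υπάρχει ουσιαστική διαφορά από το κείμενο της διατριβής.
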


\begin{proof}
Παρατηρούμε ότι $\mathcal{C}_n(f)=\mathcal{C}_n(f_1)+\mathrm{i}\mathcal{C}_n(f_2)$, όπου $\mathcal{C}_n(f_1)$ και $\mathcal{C}_n(f_2)$ είναι Ερμιτιανοί πίνακες. Εύκολα βλέπουμε ότι:
\begin{equation*}
\Vert \mathcal{C}_n(f_1)\Vert_2=\displaystyle\max_k\vert\lambda_k(\mathcal{C}_n(f_1))\vert=\displaystyle\max_k\left\vert f_1\left(\frac{2\pi(k-1)}{n}\right)\right\vert \leq \Vert f_1\Vert_\infty,
\end{equation*}
\begin{equation*}
\Vert \mathcal{C}_n(f_2)\Vert_2=\displaystyle\max_k\vert\lambda_k(\mathcal{C}_n(f_2))\vert=\displaystyle\max_k \left\vert f_2\left(\frac{2\pi(k-1)}{n}\right)\right\vert \leq \Vert f_2\Vert_\infty.
\end{equation*}
`Ετσι, λαμβάνουμε την παρακάτω σχέση:
\begin{equation*}
\Vert \mathcal{C}_n(f)\Vert_2\leq \Vert \mathcal{C}_n(f_1)\Vert_2+\Vert \mathcal{C}_n(f_2)\Vert_2\leq \Vert f_1\Vert_\infty + \Vert f_2\Vert_\infty \leq 2\Vert f\Vert_\infty.
\end{equation*}
Για το άνω φράγμα της $\Vert \mathcal{C}^{-1}_n(f)\Vert_2$, έχουμε:
\begin{equation*}
\Vert \mathcal{C}^{-1}_n(f)\Vert_2=\left\Vert \mathcal{C}_n\left(\frac{1}{f}\right)\right\Vert_2\leq2\left\Vert\frac{1}{f}\right\Vert_\infty.
\end{equation*}
Σημειώνουμε ότι αν η $f$ δεν έχει ρίζες στο $(-\pi,\pi]$: $\vert f(x)\vert\neq 0$, $\forall x\in(-\pi,\pi]$.
\end{proof}

\subsection{Συνεχής περίπτωση}

\begin{thm}\label{Thm:1}
`Εστω $f$ μια $2\pi$-περιοδική και συνεχής, μιγαδική συνάρτηση. Τότε, για κάθε $\varepsilon>0$, υπάρχει σταθερά $M$, τέτοια ώστε για κάθε $n>2M$, $T_n(f)-\mathcal{C}_n(f)=S_n+L_n$, όπου $\Vert S_n\Vert_2\leq\varepsilon$ και ο πίνακας $L_n$ έχει βαθμίδα το πολύ ίση με $2M$.
\end{thm}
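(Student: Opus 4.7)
Η στρατηγική μου θα είναι να εκμεταλλευτώ τη συνέχεια της $f$ ώστε να την προσεγγίσω ομοιόμορφα από τριγωνομετρικό πολυώνυμο χαμηλού βαθμού, και στη συνέχεια να διασπάσω τη διαφορά $T_n(f)-\mathcal{C}_n(f)$ σε δύο όρους: έναν όρο χαμηλής βαθμίδας που προέρχεται από το πολυώνυμο, και έναν όρο μικρής νόρμας που προέρχεται από το υπόλοιπο της προσέγγισης.

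Πρώτα θα επικαλεστώ το θεώρημα ομοιόμορφης προσέγγισης του {\en Weierstrass} για συνεχείς $2\pi$-περιοδικές συναρτήσεις: για το δοσμένο $\varepsilon>0$, υπάρχει $M\in\mathbb{N}$ και τριγωνομετρικό πολυώνυμο $p_M(x)=\sum\limits_{k=-M}^{M}c_k\mathrm{e}^{\mathrm{i}kx}$ βαθμού $M$ τέτοιο ώστε $\Vert f-p_M\Vert_\infty\leq\frac{\varepsilon}{4}$. Θέτοντας $r_M:=f-p_M$, θα γράψω τη διάσπαση
\begin{equation*}
T_n(f)-\mathcal{C}_n(f)=\underbrace{\bigl(T_n(p_M)-\mathcal{C}_n(p_M)\bigr)}_{=:L_n}+\underbrace{\bigl(T_n(r_M)-\mathcal{C}_n(r_M)\bigr)}_{=:S_n}.
\end{equation*}
Για την εκτίμηση της $\Vert S_n\Vert_2$ θα συνδυάσω την τριγωνική ανισότητα με τις σχέσεις (\ref{eq:norm2T}) και (\ref{eq:norm2C}) του Λήμματος \ref{Lem:1}, εφαρμοσμένες στην $r_M$:
\begin{equation*}
\Vert S_n\Vert_2\leq\Vert T_n(r_M)\Vert_2+\Vert\mathcal{C}_n(r_M)\Vert_2\leq 2\Vert r_M\Vert_\infty+2\Vert r_M\Vert_\infty\leq\varepsilon.
\end{equation*}

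Το κύριο εμπόδιο θα είναι η απόδειξη του φράγματος $\operatorname{rank}(L_n)\leq 2M$. Εδώ θα χρησιμοποιήσω την ακριβή δομή των δύο πινάκων για $n>2M$: ο $T_n(p_M)$ είναι ταινιωτός με $[T_n(p_M)]_{jk}=c_{j-k}$ όταν $\vert j-k\vert\leq M$ και μηδέν διαφορετικά, ενώ υπολογίζοντας τον $\mathcal{C}_n(p_M)$ μέσω του αντιστρόφου διακριτού μετασχηματισμού {\en Fourier} προκύπτει ότι αυτός συμπίπτει με τον $T_n(p_M)$ στην κύρια ταινία $\vert j-k\vert\leq M$, ενώ στις γωνιακές θέσεις $j-k\geq n-M$ ή $j-k\leq -(n-M)$ αποκτά τις επιπλέον τιμές $c_{j-k\mp n}$ λόγω της κυκλικής αναδίπλωσης. Συνεπώς ο $L_n$ είναι μη-μηδενικός μόνο εντός δύο τριγωνικών γωνιακών μπλοκ διάστασης $M\times M$ (στην άνω-δεξιά και κάτω-αριστερά γωνία), οπότε η βαθμίδα του φράσσεται από $M+M=2M$, ολοκληρώνοντας το επιχείρημα.
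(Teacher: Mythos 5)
Η απόδειξή σου είναι σωστή και ακολουθεί ουσιαστικά την ίδια πορεία με αυτήν της διατριβής: ίδια προσέγγιση {\en Weierstrass} με $\Vert f-p_M\Vert_\infty\leq\varepsilon/4$, ίδια διάσπαση σε $S_n=T_n(f-p_M)-\mathcal{C}_n(f-p_M)$ και $L_n=T_n(p_M)-\mathcal{C}_n(p_M)$, και ίδια εκτίμηση νόρμας μέσω του Λήμματος \ref{Lem:1}. Η μόνη διαφορά είναι ότι αποδεικνύεις ρητά το φράγμα $\operatorname{rank}(L_n)\leq 2M$ μέσω της κυκλικής αναδίπλωσης των συντελεστών στα δύο γωνιακά μπλοκ, ενώ η διατριβή το παραπέμπει σε βιβλιογραφική αναφορά — ο υπολογισμός σου είναι ορθός.
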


\begin{proof}
`Εστω $f$ η συνάρτηση με τις ιδιότητες που περιγράψαμε παραπάνω. Από το θεώρημα προσέγγισης {\en Stone-Weierstrass} \cite{sohrab2003basic}, για κάποιο δεδομένο $\epsilon>0$, υπάρχει ένα τριγωνομετρικό πολυώνυμο
\begin{equation*}
p_M(x)=\displaystyle\sum_{k=-M}^M \rho_k\mathrm{e}^{\mathrm{i}kx},
\end{equation*}
τέτοιο ώστε:
\begin{equation}\label{eq:normfp}
\Vert f-p_M\Vert_\infty\leq\epsilon.
\end{equation}
Για κάθε $n>2M$:
\begin{equation*}
T_n(f)-\mathcal{C}_n(f)=T_n(f-p_M)-\mathcal{C}_n(f-p_M)+T_n(p_M)-\mathcal{C}_n(p_M).
\end{equation*}
Εύκολα παρατηρούμε ότι οι πίνακες $T_n(p_M)$ και $\mathcal{C}_n(p_M)$ διαφέρουν μόνο κατά έναν πίνακα χαμηλής βαθμίδας $L_n$, το πολύ ίσης με $2M$ \cite{potts1999preconditioners}. Χρησιμοποιώντας τις (\ref{eq:norm2T}), (\ref{eq:norm2C}) και (\ref{eq:normfp}), καταλήγουμε στο ότι για τους δύο πρώτους όρους του δεξιού μέλους, της παραπάνω εξίσωσης ισχύει:
\begin{equation*}
\begin{split}
\Vert T_n(f-p_M)-\mathcal{C}_n(f-p_M)\Vert_2&\leq\Vert T_n(f-p_M)\Vert_2+\Vert\mathcal{C}_n(f-p_M)\Vert_2\\
&\leq 2\Vert f-p_M\Vert_\infty+2\Vert f-p_M\Vert_\infty\leq 4\epsilon.
\end{split}
\end{equation*}
Επομένως, $S_n=T_n(f-p_M)-\mathcal{C}_n(f-p_M)$ είναι ένας πίνακας με μικρή νόρμα κι επιλέγοντας $\epsilon=\frac{\varepsilon}{4}$ λαμβάνουμε το ζητούμενο αποτέλεσμα.
\end{proof}

`Οπως αποδείχθηκε στο Λήμμα \ref{Lem:1}, όταν η γεννήτρια συνάρτηση $f$ δεν έχει ρίζες στο $(-\pi,\pi]$, ισχύει η σχέση (\ref{eq:norm2invC}). Συνδυάζοντάς την με το Θεώρημα \ref{Thm:1} και το γεγονός ότι
\begin{equation*}
\mathcal{C}_n^{-1}(f)T_n(f)-I_n=\mathcal{C}_n^{-1}(f)(T_n(f)-\mathcal{C}_n(f))=\mathcal{C}_n^{-1}(f)S_n+\mathcal{C}_n^{-1}(f)L_n,
\end{equation*}
μπορούμε να δώσουμε το παρακάτω πόρισμα.

\begin{cor}\label{cor:S_and_L}
`Εστω $f$ μια $2\pi$-περιοδική και συνεχής, μιγαδική συνάρτηση, η οποία δεν έχει ρίζες στο $(-\pi,\pi]$. Τότε, για κάθε $\varepsilon>0$, υπάρχει σταθερά $M$, τέτοια ώστε για κάθε $n>2M$, $\mathcal{C}_n^{-1}(f)T_n(f)-I_n=\widehat{S}_n+\widehat{L}_n$, όπου $\Vert\widehat{S}_n\Vert_2\leq\varepsilon$ και ο πίνακας $\widehat{L}_n$ έχει βαθμίδα το πολύ ίση με $2M$.
\end{cor}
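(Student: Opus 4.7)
Το σχέδιο είναι να αποδείξουμε το πόρισμα ως απευθείας συνέπεια του Θεωρήματος \ref{Thm:1}, σε συνδυασμό με το ομοιόμορφο ως προς $n$ φράγμα της $\Vert\mathcal{C}_n^{-1}(f)\Vert_2$ που παρέχει η σχέση (\ref{eq:norm2invC}) του Λήμματος \ref{Lem:1}. Αφετηρία αποτελεί η ήδη καταγεγραμμένη ταυτότητα
\begin{equation*}
\mathcal{C}_n^{-1}(f)T_n(f)-I_n=\mathcal{C}_n^{-1}(f)\left(T_n(f)-\mathcal{C}_n(f)\right),
\end{equation*}
στην οποία θα αντικαταστήσουμε την ανάλυση $T_n(f)-\mathcal{C}_n(f)=S_n+L_n$ του Θεωρήματος \ref{Thm:1}.

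Πιο συγκεκριμένα, δοθέντος $\varepsilon>0$, θα εφαρμόσω το Θεώρημα \ref{Thm:1} με επιλογή παραμέτρου ακρίβειας $\varepsilon'=\frac{\varepsilon}{2\Vert 1/f\Vert_\infty}$. Η ποσότητα αυτή είναι καλά ορισμένη, αφού η υπόθεση ότι η $f$ είναι συνεχής, $2\pi$-περιοδική και δε μηδενίζεται στο $(-\pi,\pi]$ συνεπάγεται ότι η $\vert f\vert$ λαμβάνει θετικό ελάχιστο στο συμπαγές $[-\pi,\pi]$, κι επομένως $\Vert 1/f\Vert_\infty<\infty$. Λαμβάνω τότε σταθερά $M$ (εξαρτώμενη αποκλειστικά από το $\varepsilon'$, άρα από το $\varepsilon$) και, για κάθε $n>2M$, ανάλυση $T_n(f)-\mathcal{C}_n(f)=S_n+L_n$ με $\Vert S_n\Vert_2\leq\varepsilon'$ και $\operatorname{rank}(L_n)\leq 2M$. Ορίζω
\begin{equation*}
\widehat{S}_n=\mathcal{C}_n^{-1}(f)S_n,\qquad \widehat{L}_n=\mathcal{C}_n^{-1}(f)L_n,
\end{equation*}
οπότε το ζητούμενο φράγμα στη νόρμα προκύπτει ως
\begin{equation*}
\Vert\widehat{S}_n\Vert_2\leq\Vert\mathcal{C}_n^{-1}(f)\Vert_2\Vert S_n\Vert_2\leq 2\Vert 1/f\Vert_\infty\cdot\varepsilon'=\varepsilon,
\end{equation*}
ενώ η ιδιότητα $\operatorname{rank}(\widehat{L}_n)\leq\operatorname{rank}(L_n)\leq 2M$ είναι άμεση, καθώς ο πολλαπλασιασμός με τον αντιστρέψιμο πίνακα $\mathcal{C}_n^{-1}(f)$ διατηρεί τη βαθμίδα.

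Δε βλέπω κάποιο ουσιώδες εμπόδιο στην απόδειξη· το μοναδικό σημείο που απαιτεί προσοχή είναι η σωστή βαθμονόμηση του $\varepsilon'$ ως προς το $\varepsilon$, κάτι που είναι εφικτό ακριβώς επειδή το φράγμα της $\Vert\mathcal{C}_n^{-1}(f)\Vert_2$ που δίνει το Λήμμα \ref{Lem:1} είναι ανεξάρτητο της διάστασης $n$. Αν η υπόθεση μη ύπαρξης ριζών της $f$ παραλειπόταν, αυτό το ομοιόμορφο φράγμα θα απουσίαζε και η ανάλυση θα απαιτούσε ουσιαστικά διαφορετική προσέγγιση, μέσω της τεχνικής άρσης της κακής κατάστασης που περιγράφηκε στην υποενότητα \ref{3Ss:2.2}.
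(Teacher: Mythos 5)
Η πρότασή σου είναι ορθή και ακολουθεί ακριβώς τον δρόμο της διατριβής: η ταυτότητα $\mathcal{C}_n^{-1}(f)T_n(f)-I_n=\mathcal{C}_n^{-1}(f)S_n+\mathcal{C}_n^{-1}(f)L_n$ σε συνδυασμό με το Θεώρημα \ref{Thm:1} και το ομοιόμορφο φράγμα (\ref{eq:norm2invC}) του Λήμματος \ref{Lem:1} είναι ακριβώς το επιχείρημα που προηγείται της διατύπωσης του πορίσματος στο κείμενο. Η ρητή βαθμονόμηση $\varepsilon'=\varepsilon/(2\Vert 1/f\Vert_\infty)$ που προσθέτεις είναι σωστή και απλώς καθιστά σαφές ένα βήμα που η διατριβή αφήνει υπονοούμενο.
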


\begin{thm}\label{Thm:2}
`Εστω $f$ μια $2\pi$-περιοδική, συνεχής και μιγαδική συνάρτηση, η οποία δεν έχει ρίζες στο $(-\pi,\pi]$. Τότε, για κάθε $\varepsilon>0$, το διάστημα $[1-\varepsilon,1+\varepsilon]$ αποτελεί ένα σύνολο κύριας συσσώρευσης των ιδιαζουσών τιμών του $\mathcal{C}_n^{-1}(f)T_n(f)$. 
\end{thm}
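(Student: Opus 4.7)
Η στρατηγική μου είναι να αξιοποιήσω απευθείας τη διάσπαση του Πορίσματος \ref{cor:S_and_L}. Δοσμένου $\varepsilon>0$, εφαρμόζω το εν λόγω πόρισμα με παράμετρο $\varepsilon/2$ στη θέση του $\varepsilon$, λαμβάνοντας μία σταθερά $M$ (εξαρτώμενη μόνο από τα $\varepsilon$ και $f$) ώστε για κάθε $n>2M$ να ισχύει
\[
\mathcal{C}_n^{-1}(f)T_n(f)=I_n+\widehat{S}_n+\widehat{L}_n,
\]
όπου $\Vert\widehat{S}_n\Vert_2\leq\varepsilon/2$ και ο πίνακας $\widehat{L}_n$ έχει βαθμίδα το πολύ ίση με $2M$. Θέτοντας $M_n:=I_n+\widehat{S}_n$, από την κλασική ανισότητα $|\sigma_k(A+E)-\sigma_k(A)|\leq\Vert E\Vert_2$ (ανισότητα {\en Mirsky/Weyl} για ιδιάζουσες τιμές) συμπεραίνω ότι όλες οι ιδιάζουσες τιμές του $M_n$ ανήκουν στο διάστημα $[1-\varepsilon/2,1+\varepsilon/2]$.

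Στη συνέχεια χρησιμοποιώ δύο φορές την ανισότητα του {\en Weyl} για ιδιάζουσες τιμές, $\sigma_{i+j-1}(A+B)\leq\sigma_i(A)+\sigma_j(B)$, όπου η βαθμίδα του $\widehat{L}_n$ δίνει $\sigma_{2M+1}(\widehat{L}_n)=0$. Με επιλογή $A=M_n$, $B=\widehat{L}_n$ και $j=2M+1$ παίρνω $\sigma_{i+2M}(\mathcal{C}_n^{-1}(f)T_n(f))\leq\sigma_i(M_n)\leq 1+\varepsilon/2<1+\varepsilon$ για κάθε $i\geq 1$, οπότε το πολύ $2M$ ιδιάζουσες τιμές του προρρυθμισμένου πίνακα μπορεί να υπερβαίνουν το $1+\varepsilon$. Αντίστοιχα, γράφοντας $M_n=\mathcal{C}_n^{-1}(f)T_n(f)+(-\widehat{L}_n)$ και εφαρμόζοντας την ίδια ανισότητα με $j=2M+1$, καταλήγω σε $\sigma_{i+2M}(M_n)\leq\sigma_i(\mathcal{C}_n^{-1}(f)T_n(f))$· εφόσον $\sigma_{i+2M}(M_n)\geq 1-\varepsilon/2>1-\varepsilon$, προκύπτει $\sigma_i(\mathcal{C}_n^{-1}(f)T_n(f))>1-\varepsilon$ για $i=1,2,\dots,n-2M$, δηλαδή το πολύ $2M$ ιδιάζουσες τιμές υπολείπονται του $1-\varepsilon$.

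Συνολικά, το πολύ $4M$ ιδιάζουσες τιμές του $\mathcal{C}_n^{-1}(f)T_n(f)$ κυμαίνονται εκτός του διαστήματος $[1-\varepsilon,1+\varepsilon]$, και ο αριθμός $4M$ εξαρτάται αποκλειστικά από το $\varepsilon$ (και την $f$), όχι από τη διάσταση $n$, το οποίο είναι ακριβώς ο ορισμός της κύριας συσσώρευσης που δόθηκε στο πρώτο κεφάλαιο. Το βασικό τεχνικό σημείο εντοπίζεται στην προσεκτική εφαρμογή της ανισότητας {\en Weyl} και προς τις δύο κατευθύνσεις: η ίδια διάσπαση ``μικρός όρος $\widehat{S}_n$ συν όρος χαμηλής βαθμίδας $\widehat{L}_n$'' απομονώνει τόσο τις ενδεχομένως μεγάλες, όσο και τις ενδεχομένως μικρές ιδιάζουσες τιμές, σε πλήθος $2M$ κάθε φορά, ανεξάρτητα της $n$. Αν επιχειρούσαμε να ελέγξουμε απευθείας τον $\mathcal{C}_n^{-1}(f)T_n(f)-I_n$ μέσω νόρμας, δε θα μπορούσαμε να διαχωρίσουμε τη συνεισφορά του $\widehat{L}_n$, αφού η φασματική του νόρμα δεν τείνει κατ' ανάγκην στο μηδέν.
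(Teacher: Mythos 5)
Η πρότασή σου είναι ορθή και καταλήγει στο ίδιο ποσοτικό συμπέρασμα με τη διατριβή (το πολύ $4M$ ιδιάζουσες τιμές εκτός του $[1-\varepsilon,1+\varepsilon]$, με το $M$ ανεξάρτητο της διάστασης $n$), ακολουθεί όμως διαφορετική τεχνική διαδρομή. Στη διατριβή περνάμε στον Ερμιτιανό πίνακα των κανονικών εξισώσεων $(\mathcal{C}_n^{-1}(f)T_n(f))^H\mathcal{C}_n^{-1}(f)T_n(f)$ και ακολουθούμε την απόδειξη του Θεωρήματος 2 της \cite{chan1993circulant}, όπου το γινόμενο αναπτύσσεται εκ νέου σε άθροισμα μοναδιαίου πίνακα, πίνακα μικρής νόρμας και πίνακα χαμηλής βαθμίδας (η βαθμίδα διπλασιάζεται σε $4M$ λόγω των σταυρωτών όρων) και εφαρμόζεται θεωρία διαταραχών ιδιοτιμών Ερμιτιανών πινάκων. Εσύ δουλεύεις απευθείας με τις ιδιάζουσες τιμές του προρρυθμισμένου πίνακα: η ανισότητα {\en Mirsky} απορροφά τον όρο $\widehat{S}_n$ μικρής νόρμας και η ανισότητα {\en Weyl} $\sigma_{i+j-1}(A+B)\leq\sigma_i(A)+\sigma_j(B)$, εφαρμοσμένη και προς τις δύο κατευθύνσεις με $j=2M+1$, απομονώνει το πολύ $2M$ μεγάλες και $2M$ μικρές ιδιάζουσες τιμές λόγω του $\widehat{L}_n$. Και οι δύο δρόμοι στηρίζονται στην ίδια διάσπαση του Πορίσματος \ref{cor:S_and_L}· ο δικός σου αποφεύγει το τετραγωνικό ανάπτυγμα των κανονικών εξισώσεων και είναι πιο άμεσος, ενώ ο δρόμος της διατριβής έχει το πλεονέκτημα ότι παραπέμπει σε ήδη δημοσιευμένη απόδειξη. Σημείωσε μόνο ότι η εφαρμογή της ανισότητας {\en Weyl} με $j=2M+1$ προϋποθέτει $i+2M\leq n$, κάτι που καλύπτεται από την υπόθεση $n>2M$ του πορίσματος, ενώ για $n\leq 2M$ ο ισχυρισμός της κύριας συσσώρευσης είναι τετριμμένος αφού ο πίνακας έχει λιγότερες από $2M$ ιδιάζουσες τιμές συνολικά.
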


\begin{proof}
Είναι ευρέως γνωστό ότι οι ιδιάζουσες τιμές του $\mathcal{C}_n^{-1}(f)T_n(f)$ είναι οι τετραγωνικές ρίζες των ιδιοτιμών του πίνακα που σχετίζεται με τις κανονικές εξισώσεις, δηλαδή του $(\mathcal{C}_n^{-1}(f)T_n(f))^H\mathcal{C}_n^{-1}(f)T_n(f)$.

Για να μελετήσουμε τη συσσώρευση των ιδιοτιμών του προαναφερθέντος πίνακα, ακολουθούμε την απόδειξη του Θεωρήματος 2, της \cite{chan1993circulant} και καταλήγουμε στο ότι το πολύ $4M$ ιδιοτιμές του $(\mathcal{C}_n^{-1}(f)T_n(f))^H\mathcal{C}_n^{-1}(f)T_n(f)-I_n$, έχουν απόλυτη τιμή μεγαλύτερη του $\varepsilon$. Το τελευταίο αποτέλεσμα είναι ισοδύναμο με την κύρια συσσώρευση των ιδιαζουσών τιμών του προρρυθμισμένου συστήματος $\mathcal{C}_n^{-1}(f)T_n(f)$ στο $[1-\varepsilon,1+\varepsilon]$.
\end{proof}

Παρακάτω, με $\mathcal{T}_n$ συμβολίζουμε τον βέλτιστο κυκλοειδή προρρυθμιστή που προτάθηκε από τον {\en T.~Chan} στην \cite{chan1988optimal}.

\begin{thm}\label{sing val_cont}
`Εστω $f$ μια $2\pi$-περιοδική και συνεχής, μιγαδική συνάρτηση, η οποία έχει ρίζες στο $(-\pi,\pi]$. `Εστω επίσης $g$ ένα τριγωνομετρικό πολυώνυμο τέτοιο ώστε η $\frac{f}{g}$ να μην έχει ρίζες στο $(-\pi,\pi]$. Τότε, για κάθε $\varepsilon>0$, το διάστημα $[1-\varepsilon,1+\varepsilon]$ αποτελεί σύνολο κύριας συσσώρευσης των ιδιαζουσών τιμών του $\mathcal{P}_n^{-1}\left(\frac{f}{g}\right)T_n^{-1}(g)T_n(f)$, όπου $\mathcal{P}_n\left(\frac{f}{g}\right)$ είναι είτε ο κυκλοειδής πίνακας $\mathcal{C}_n\left(\frac{f}{g}\right)$, είτε ο βέλτιστος κυκλοειδής προρρυθμιστής $\mathcal{T}_n$, που προέκυψε από τον $T_n\left(\frac{f}{g}\right)$.
\end{thm}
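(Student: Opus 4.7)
Το σχέδιο είναι να αναγάγουμε το ζητούμενο αποτέλεσμα στην περίπτωση χωρίς ρίζες, που έχει ήδη μελετηθεί στο Θεώρημα \ref{Thm:2}, αξιοποιώντας την τεχνική άρσης των ριζών μέσω του τριγωνομετρικού πολυωνύμου $g$, σε συνδυασμό με την αποσύνθεση χαμηλής βαθμίδας του προηγούμενου κεφαλαίου. Η βασική στρατηγική θα είναι να γράψουμε τον πίνακα $\mathcal{P}_n^{-1}\left(\frac{f}{g}\right)T_n^{-1}(g)T_n(f)-I_n$ ως άθροισμα ενός πίνακα με αυθαίρετα μικρή φασματική νόρμα και ενός πίνακα με βαθμίδα σταθερή ως προς τη διάσταση $n$, και στη συνέχεια να εφαρμόσουμε την τεχνική του {\en R.~Chan} \cite{chan1993circulant} για τον αντίστοιχο πίνακα των κανονικών εξισώσεων, όπως ακριβώς έγινε στην απόδειξη του Θεωρήματος \ref{Thm:2}.

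Πρώτα θα χρησιμοποιήσω τη σχέση $T_n(f)=T_n(g)T_n\left(\frac{f}{g}\right)+E_n$ (σχέση (\ref{eq:low rank E})), όπου η βαθμίδα του $E_n$ είναι το πολύ $d-1$ με $d$ το πλάτος ταινίας του $T_n(g)$. Πολλαπλασιάζοντας από αριστερά με $T_n^{-1}(g)$ και κατόπιν με $\mathcal{P}_n^{-1}\left(\frac{f}{g}\right)$, αφαιρώντας την ταυτότητα $I_n$, καταλήγω στην ισότητα:
\begin{equation*}
\mathcal{P}_n^{-1}\left(\frac{f}{g}\right)T_n^{-1}(g)T_n(f)-I_n=\mathcal{P}_n^{-1}\left(\frac{f}{g}\right)\left[T_n\left(\frac{f}{g}\right)-\mathcal{P}_n\left(\frac{f}{g}\right)\right]+\mathcal{P}_n^{-1}\left(\frac{f}{g}\right)T_n^{-1}(g)E_n.
\end{equation*}
Παρατηρώ ότι ο δεύτερος όρος, $\mathcal{P}_n^{-1}\left(\frac{f}{g}\right)T_n^{-1}(g)E_n$, διατηρεί βαθμίδα το πολύ $d-1$, δηλαδή ανεξάρτητη της διάστασης $n$.

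Για τον πρώτο όρο, εφόσον η $\frac{f}{g}$ είναι $2\pi$-περιοδική, συνεχής και δε μηδενίζεται στο $(-\pi,\pi]$, μπορώ να εφαρμόσω είτε το Θεώρημα \ref{Thm:1} στην περίπτωση $\mathcal{P}_n=\mathcal{C}_n$, είτε τα αντίστοιχα αποτελέσματα της βιβλιογραφίας \cite{chan1988optimal, chan1989circulant} για τον βέλτιστο κυκλοειδή $\mathcal{T}_n$, ώστε να λάβω αποσύνθεση $T_n\left(\frac{f}{g}\right)-\mathcal{P}_n\left(\frac{f}{g}\right)=S_n+L_n$, με $\Vert S_n\Vert_2\leq\varepsilon$ και $\operatorname{rank}(L_n)\leq 2M$. Η νόρμα $\left\Vert\mathcal{P}_n^{-1}\left(\frac{f}{g}\right)\right\Vert_2$ είναι ομοιόμορφα φραγμένη ως προς $n$, είτε απευθείας από το Λήμμα \ref{Lem:1} (αφού $\frac{g}{f}$ είναι συνεχής και φραγμένη), είτε από τα αντίστοιχα αποτελέσματα για τον $\mathcal{T}_n$. Συνδυάζοντας τα παραπάνω, λαμβάνω αποσύνθεση της μορφής $\mathcal{P}_n^{-1}\left(\frac{f}{g}\right)T_n^{-1}(g)T_n(f)-I_n=\widetilde{S}_n+\widetilde{L}_n$, όπου $\Vert\widetilde{S}_n\Vert_2$ αυθαίρετα μικρή και $\operatorname{rank}(\widetilde{L}_n)$ φραγμένη από σταθερά ανεξάρτητη της $n$. Ακολουθώντας την επιχειρηματολογία που χρησιμοποιήθηκε στην απόδειξη του Θεωρήματος \ref{Thm:2}, η αποσύνθεση αυτή μεταφέρεται στον πίνακα $A^H A - I_n$, όπου $A=\mathcal{P}_n^{-1}\left(\frac{f}{g}\right)T_n^{-1}(g)T_n(f)$, και δίνει την κύρια συσσώρευση των ιδιαζουσών τιμών του προρρυθμισμένου πίνακα στο διάστημα $[1-\varepsilon,1+\varepsilon]$.

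Ο κρισιμότερος τεχνικός φραγμός αναμένεται να εμφανιστεί στην περίπτωση του βέλτιστου κυκλοειδή $\mathcal{T}_n$, όπου χρειάζεται να πιστοποιηθούν τόσο η ομοιόμορφη φραγμένη αντιστρεψιμότητα του $\mathcal{T}_n$ όσο και η αντίστοιχη αποσύνθεση μικρής-νόρμας-συν-χαμηλής-βαθμίδας για τη συνεχή μιγαδική συνάρτηση $\frac{f}{g}$ χωρίς ρίζες, ιδιότητες που δε διατυπώθηκαν ρητά στα προηγούμενα αποτελέσματα αυτού του κεφαλαίου. Επιπλέον, θα απαιτηθεί προσοχή ώστε η βαθμίδα του $E_n$ (που εξαρτάται από το πλάτος ταινίας του $T_n(g)$) και η βαθμίδα του $L_n$ (που προκύπτει από το θεώρημα προσέγγισης {\en Stone-Weierstrass} και εξαρτάται από την ομαλότητα της $\frac{f}{g}$) να συνδυαστούν αρμονικά, ώστε ο τελικός πίνακας $\widetilde{L}_n$ να έχει βαθμίδα φραγμένη από σταθερά ανεξάρτητη της $n$, όπως απαιτείται για να λάβουμε την κύρια (και όχι απλώς γενική) συσσώρευση.
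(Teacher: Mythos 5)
Η πρότασή σου είναι σωστή και ακολουθεί ουσιαστικά την ίδια πορεία με την απόδειξη του κειμένου: και οι δύο αναγωγές στηρίζονται στη σχέση $T_n(f)=T_n(g)T_n\left(\frac{f}{g}\right)+E_n$ με διόρθωση χαμηλής βαθμίδας, στη φραγμένη αντιστρεψιμότητα του κυκλοειδούς παράγοντα για τη χωρίς ρίζες συνάρτηση $\frac{f}{g}$, και στη μεταφορά της αποσύνθεσης «μικρή νόρμα συν χαμηλή βαθμίδα» στον πίνακα των κανονικών εξισώσεων κατά το Θεώρημα \ref{Thm:2} (και το Πόρισμα 1 της \cite{chan1993circulant} για τον $\mathcal{T}_n$). Η μόνη διαφορά είναι οργανωτική — εσύ αποσυνθέτεις πρώτα τον $\mathcal{P}_n^{-1}\left(\frac{f}{g}\right)T_n^{-1}(g)T_n(f)-I_n$ και μετά τετραγωνίζεις, ενώ το κείμενο αναπτύσσει απευθείας το γινόμενο των κανονικών εξισώσεων — χωρίς αυτό να αλλάζει το επιχείρημα ή τις σταθερές κατά τάξη μεγέθους.
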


\begin{proof}
Παρακάτω δίνουμε την απόδειξη για την περίπτωση όπου $\mathcal{P}_n\left(\frac{f}{g}\right)=\mathcal{C}_n\left(\frac{f}{g}\right)$. Η απόδειξη για τη χρήση του $\mathcal{T}_n$ ως προρρυθμιστή θα είναι ακριβώς ίδια, λαμβάνοντας υπόψιν το Πόρισμα 1 της \cite{chan1993circulant}, το οποίο είναι το αντίστοιχο του Πορίσματος \ref{cor:S_and_L}.

`Εστω $g$ το τριγωνομετρικό πολυώνυμο που έχει βαθμό $\operatorname{deg}(g)=d$. Τότε, ο $T_n(g)$ θα είναι ένας ταινιωτός πίνακας {\en Toeplitz} με πλάτος ταινίας $2d+1$. Ακολουθώντας την ίδια τεχνική με το Θεώρημα \ref{Thm:2} και το Θεώρημα \ref{thm:gen_cluster}, θα αναλύσουμε το αντίστοιχο προρρυθμισμένο σύστημα των κανονικών εξισώσεων. `Εχουμε:
\begin{equation*}
\begin{split}
&\mathcal{C}_n^{-1}\left(\frac{f}{g}\right)T_n^{-1}(g)T_n(f)T_n(\bar{f})T_n^{-1}(\bar{g})\mathcal{C}_n^{-1}\left(\frac{\bar{f}}{\bar{g}}\right)=\\
&\mathcal{C}_n^{-1}\left(\frac{f}{g}\right)T_n^{-1}(g)\left[T_n(g)T_n\left(\frac{f}{g}\right)+L_{1}\right]\\
&\phantom{\mathcal{C}_n^{-1}\left(\frac{f}{g}\right)T_n^{-1}(g)}
\left[T_n\left(\frac{\bar{f}}{\bar{g}}\right)T_n(\bar{g})+L_{1}^H\right]T_n^{-1}(\bar{g})\mathcal{C}_n^{-1}\left(\frac{\bar{f}}{\bar{g}}\right)=\\
&\left[\mathcal{C}_n^{-1}\left(\frac{f}{g}\right)T_n\left(\frac{f}{g}\right)+L_2\right]\left[T_n\left(\frac{\bar{f}}{\bar{g}}\right)\mathcal{C}_n^{-1}\left(\frac{\bar{f}}{\bar{g}}\right)+L_2^H\right]=\\
&\mathcal{C}_n^{-1}\left(\frac{f}{g}\right)T_n\left(\frac{f}{g}\right)T_n\left(\frac{\bar{f}}{\bar{g}}\right)\mathcal{C}_n^{-1}\left(\frac{\bar{f}}{\bar{g}}\right)+L_3.
\end{split}
\end{equation*}
Οι πίνακες $L_1$ και $L_1^H$ παραπάνω είναι χαμηλής βαθμίδας, το πολύ ίσης με $2d$ ($d$ είναι ο βαθμός του $g$). Το ίδιο ισχύει και για τους $L_2$ και $L_2^H$. `Ετσι, καταλήγουμε στο ότι ο $L_3$ είναι Ερμιτιανός πίνακας με βαθμίδα το πολύ ίση με $4d$.

`Εστω ότι $p_M$ είναι ένα τριγωνομετρικό πολυώνυμο προσέγγισης της $\frac{f}{g}$ (όπως στο Θεώρημα \ref{Thm:1}). Επειδή η $\frac{f}{g}$ δεν έχει ρίζες, από το Θεώρημα \ref{Thm:2} έχουμε ότι $\forall\varepsilon>0$, το πολύ $4M$ ιδιάζουσες τιμές του $\mathcal{C}_n^{-1}(\frac{f}{g})T_n(\frac{f}{g})$ θα κυμαίνονται εκτός του $[1-\varepsilon,1+\varepsilon]$. Λαμβάνοντας υπόψιν και αυτές που κυμαίνονται εκτός του διαστήματος, λόγω του πίνακα $L_3$, έχουμε ότι $\forall\varepsilon>0$, το πολύ $4M+4d$ ιδιάζουσες τιμές του προρρυθμισμένου συστήματος $\mathcal{C}_n^{-1}(\frac{f}{g})T_n^{-1}(g)T_n(f)$ θα κυμαίνονται εκτός του διαστήματος $[1-\varepsilon,1+\varepsilon]$ κι έτσι η απόδειξη ολοκληρώνεται.
\end{proof}

\begin{thm}\label{Thm:5}
`Εστω $f$ μια $2\pi$-περιοδική, συνεχής και μιγαδική συνάρτηση, η οποία δεν έχει ρίζες στο $(-\pi,\pi]$. Τότε, οι ιδιοτιμές του $\mathcal{C}_n^{-1}(f)T_n(f)$ συσσωρεύονται, με την έννοια της κύριας συσσώρευσης, γύρω από το σημείο $(1,0)$. 
\end{thm}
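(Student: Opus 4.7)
Η στρατηγική είναι να εφαρμόσουμε την Παρατήρηση που ακολουθεί το Λήμμα \ref{lem:tyrt_zamar} (στη μορφή για κύρια συσσώρευση, δηλαδή με $\Vert\cdot\Vert_F^2=\mathcal{O}(1)$) στη μετατοπισμένη ακολουθία $\mathcal{A}_n=\mathcal{C}_n^{-1}(f)T_n(f)-I_n$. Από το Πόρισμα \ref{cor:S_and_L} διαθέτουμε, για κάθε $\varepsilon>0$, διάσπαση της μορφής $\mathcal{A}_n=\widehat{S}_n+\widehat{L}_n$ με $\Vert\widehat{S}_n\Vert_2\leq\varepsilon$ και $\operatorname{rank}(\widehat{L}_n)\leq 2M$, όπου το $M$ είναι ανεξάρτητη του $n$ σταθερά που εξαρτάται από το $\varepsilon$. Επιλέγουμε $\mathcal{B}_n=\widehat{S}_n$, οπότε $\mathcal{A}_n-\mathcal{B}_n=\widehat{L}_n$ και $\Vert\mathcal{B}_n\Vert_2\leq\varepsilon$.

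Το κρίσιμο βήμα είναι η εκτίμηση $\Vert\widehat{L}_n\Vert_F^2=\mathcal{O}(1)$ ως προς $n$. Εφόσον η βαθμίδα του $\widehat{L}_n$ δεν υπερβαίνει το $2M$, ισχύει $\Vert\widehat{L}_n\Vert_F^2\leq 2M\Vert\widehat{L}_n\Vert_2^2$, οπότε αρκεί να φραχθεί ομοιόμορφα η φασματική νόρμα $\Vert\widehat{L}_n\Vert_2$. Από την ανισότητα τριγώνου και τις σχέσεις (\ref{eq:norm2T}), (\ref{eq:norm2invC}) του Λήμματος \ref{Lem:1}:
\begin{equation*}
\Vert\widehat{L}_n\Vert_2\leq\Vert\mathcal{C}_n^{-1}(f)\Vert_2\Vert T_n(f)\Vert_2+\Vert I_n\Vert_2+\Vert\widehat{S}_n\Vert_2\leq 4\left\Vert\tfrac{1}{f}\right\Vert_\infty\Vert f\Vert_\infty+1+\varepsilon.
\end{equation*}
Η υπόθεση της συνέχειας της $f$ και η απουσία ριζών στο $(-\pi,\pi]$ εξασφαλίζουν ότι $\Vert f\Vert_\infty<\infty$ και $\Vert 1/f\Vert_\infty<\infty$, οπότε η $\Vert\widehat{L}_n\Vert_2$ (και συνεπώς και η $\Vert\widehat{L}_n\Vert_F$) φράσσεται από σταθερά ανεξάρτητη του $n$.

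Συνεπώς, η Παρατήρηση μετά το Λήμμα \ref{lem:tyrt_zamar} δίνει ότι το σύνολο $\lbrace z:\vert z\vert\leq\varepsilon\rbrace$ αποτελεί σύνολο κύριας συσσώρευσης των ιδιοτιμών του $\mathcal{A}_n$, ή ισοδύναμα, το σύνολο $\lbrace z:\vert z-1\vert\leq\varepsilon\rbrace$ αποτελεί σύνολο κύριας συσσώρευσης των ιδιοτιμών του $\mathcal{C}_n^{-1}(f)T_n(f)$. Εφόσον το $\varepsilon$ είναι αυθαίρετο, οι ιδιοτιμές παρουσιάζουν κύρια συσσώρευση γύρω από το σημείο $(1,0)$. Το δύσκολο σημείο της απόδειξης είναι η εξαγωγή του ομοιόμορφου φράγματος για την $\Vert\widehat{L}_n\Vert_2$, που εξαρτάται αποφασιστικά από την απουσία ριζών της $f$ μέσω της (\ref{eq:norm2invC})· αξίζει να σημειωθεί ότι, σε αντίθεση με την απόδειξη του Θεωρήματος \ref{Thm:2} για τις ιδιάζουσες τιμές (η οποία στηρίζεται σε Ερμιτιανές κανονικές εξισώσεις), εδώ η μη-Ερμιτιανή φύση του πίνακα αντιμετωπίζεται απευθείας μέσω της {\en Frobenius}-εκτίμησης του Λήμματος \ref{lem:tyrt_zamar}.
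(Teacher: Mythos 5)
Your proof is correct, but it follows a genuinely different route from the paper. The paper never invokes Lemma \ref{lem:tyrt_zamar} here: it writes $\mathcal{C}_n^{-1}(f)T_n(f)=I_n+\mathcal{C}_n^{-1}(f)S_n+\mathcal{C}_n^{-1}(f)L_n$ (via Theorem \ref{Thm:1}), splits the preconditioned matrix into its Hermitian and skew-Hermitian parts, and uses the Bendixson-type localization $\operatorname{Re}(\lambda(A))\in\operatorname{range}(H(A))$, $\operatorname{Im}(\lambda(A))\in\operatorname{range}(SH(A))$: the low-rank terms account for at most $4M$ outliers in each direction (rank counting for Hermitian perturbations), while the small-norm terms are bounded by $\Vert\mathcal{C}_n^{-1}(f)S_n\Vert_2\leq2\Vert 1/f\Vert_\infty\epsilon$, yielding proper clustering of the real parts in $[1-\varepsilon,1+\varepsilon]$ and of the imaginary parts in $[-\varepsilon,\varepsilon]$. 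You instead shift by $I_n$, take the splitting of Corollary \ref{cor:S_and_L}, and apply the proper-cluster variant (the Remark) of the Tyrtyshnikov--Zamarashkin lemma; your one genuinely new ingredient is the uniform Frobenius bound $\Vert\widehat{L}_n\Vert_F^2\leq 2M\Vert\widehat{L}_n\Vert_2^2=\mathcal{O}(1)$, obtained from the rank bound together with (\ref{eq:norm2T}) and (\ref{eq:norm2invC}), which is valid since $f$ is continuous, periodic and root-free, so $\Vert f\Vert_\infty,\Vert 1/f\Vert_\infty<\infty$. Your argument is shorter and stays entirely within the Chapter 1 toolkit, at the price of delivering a disk around $(1,0)$ rather than the separate control of real and imaginary parts; note that the paper's Hermitian/skew-Hermitian machinery is not just stylistic, since the proof of Theorem \ref{thm:Theorem 6} reuses precisely that intermediate information (clustering of the range of the Hermitian part of $\mathcal{C}_n^{-1}(f/g)T_n(f/g)$), which your proof does not furnish directly.
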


\begin{proof}
Γνωρίζουμε ότι ένας πίνακας $A$ μπορεί να γραφεί ως το άθροισμα του Ερμιτιανού του και αντι-Ερμιτιανού του μέρους, $A=H(A)+SH(A)=\frac{A+A^H}{2}+\frac{A-A^H}{2}$. Για να μελετήσουμε τη συσσώρευση των ιδιοτιμών του $A$, μπορούμε να μελετήσουμε τη συσσώρευση των ιδιοτιμών του $H(A)$, καθώς και του $SH(A)$, λαμβάνοντας υπόψιν ότι $\operatorname{Re}(\lambda(A))\in\operatorname{range}(H(A))$ και $\operatorname{Im}(\lambda(A))\in\operatorname{range}(SH(A))$ \cite{bendixson,garren1968bounds,hirsch}.

Από το Θεώρημα \ref{Thm:1}, μπορούμε να αντικαταστήσουμε το $T_n(f)$ με $\mathcal{C}_n(f)+S_n+L_n$, όπου $\Vert S_n\Vert_2<\epsilon$ και $\operatorname{rank}(L_n)\leq2M$. Επομένως, 
\begin{equation*}
\mathcal{C}_n^{-1}(f)T_n(f)=\mathcal{C}_n^{-1}(f)\left[\mathcal{C}_n(f)+S_n+L_n\right]=I_n+\mathcal{C}_n^{-1}(f)S_n+\mathcal{C}_n^{-1}(f)L_n.
\end{equation*}
Το Ερμιτιανό του μέρος είναι:
\begin{equation*}
\begin{split}
H(\mathcal{C}_n^{-1}(f)T_n(f))&=H(I_n+\mathcal{C}_n^{-1}(f)S_n+\mathcal{C}_n^{-1}(f)L_n)\\
&=I_n+H(\mathcal{C}_n^{-1}(f)S_n)+H(\mathcal{C}_n^{-1}(f)L_n).
\end{split}
\end{equation*}
Σύμφωνα με την ανάλυση του Θεωρήματος \ref{Thm:1}, ο τελευταίος όρος είναι ένας πίνακας χαμηλής βαθμίδας, το πολύ ίσης με $4M$ και σχετίζεται με το ότι το πολύ $4M$ ιδιοτιμές του προρρυθμισμένου πίνακα έχουν πραγματικό μέρος εκτός του διαστήματος συσσώρευσης. Μένει να μελετήσουμε το φάσμα του $H(\mathcal{C}_n^{-1}(f)S_n)$.
\begin{equation*}
\begin{split}
\vert\lambda(H(\mathcal{C}_n^{-1}(f)S_n))\vert&=\left\vert\lambda\left(\frac{\mathcal{C}_n^{-1}(f)S_n+S_n^H\mathcal{C}^{-1}(\bar{f})}{2}\right)\right\vert\\
&\leq\left\Vert\frac{\mathcal{C}_n^{-1}(f)S_n+S_n^H\mathcal{C}^{-1}(\bar{f})}{2}\right\Vert_2\\
&\leq\left\Vert\mathcal{C}_n^{-1}(f)S_n\right\Vert_2\leq\left\Vert\mathcal{C}_n^{-1}(f)\right\Vert_2\epsilon\leq2\left\Vert\frac{1}{f}\right\Vert_\infty\epsilon.
\end{split}
\end{equation*}
Επιλέγοντας ως $\varepsilon=2\left\Vert\frac{1}{f}\right\Vert_\infty\epsilon$, έχουμε ότι τα πραγματικά μέρη των ιδιοτιμών, του προρρυθμισμένου πίνακα, συσσωρεύονται στο $[1-\varepsilon,1+\varepsilon]$.

Το αντι-Ερμιτιανό μέρος του προρρυθμισμένου πίνακα είναι:
\begin{equation*}
SH(\mathcal{C}_n^{-1}(f)T_n(f))=SH(\mathcal{C}_n^{-1}(f)S_n)+SH(\mathcal{C}_n^{-1}(f)L_n).
\end{equation*}
Μέσω της ίδιας ανάλυσης λαμβάνουμε ότι ο τελευταίος όρος σχετίζεται με το ότι το πολύ $4M$ ιδιοτιμές του προρρυθμισμένου πίνακα έχουν φανταστικό μέρος εκτός του διαστήματος συσσώρευσης. Μελετώντας ανάλογα το $SH(\mathcal{C}_n^{-1}(f)S_n)$ έχουμε:
\begin{equation*}
\vert\lambda(SH(\mathcal{C}_n^{-1}(f)S_n))\vert\leq\left\Vert\mathcal{C}_n^{-1}(f)S_n\right\Vert_2\leq2\left\Vert\frac{1}{f}\right\Vert_\infty\epsilon.
\end{equation*}
Επομένως, τα φανταστικά μέρη των ιδιοτιμών του προρρυθμισμένου πίνακα συσσωρεύονται στο $[-\varepsilon,\varepsilon]$, και το θεώρημα αποδείχθηκε.
\end{proof}

\begin{thm}\label{thm:Theorem 6}
`Εστω $f$ μια $2\pi$-περιοδική, συνεχής και μιγαδική συνάρτηση, η οποία έχει ρίζες στο $(-\pi,\pi]$. `Εστω επίσης $g$ ένα τριγωνομετρικό πολυώνυμο τέτοιο ώστε η $\frac{f}{g}$ να μην έχει ρίζες στο $(-\pi,\pi]$. Τότε, οι ιδιοτιμές του $\mathcal{C}_n^{-1}\left(\frac{f}{g}\right)T_n^{-1}(g)T_n(f)$ συσσωρεύονται, με την έννοια της κύριας συσσώρευσης, γύρω από το σημείο $(1,0)$.
\end{thm}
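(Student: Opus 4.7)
Η στρατηγική μου είναι να γενικεύσω το Θεώρημα \ref{Thm:5} στην περίπτωση όπου η $f$ έχει ρίζες, ακολουθώντας την τεχνική του Θεωρήματος \ref{sing val_cont} όπου αποδείχθηκε το αντίστοιχο αποτέλεσμα για τις ιδιάζουσες τιμές. Το κλειδί είναι η παραγοντοποίηση $T_n(f)=T_n(g)T_n\!\left(\frac{f}{g}\right)+L_1$, με $L_1$ πίνακα χαμηλής βαθμίδας το πολύ ίσης με $2d$, όπου $d=\operatorname{deg}(g)$, λόγω της ταινιωτής δομής του $T_n(g)$.

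Πρώτα, πολλαπλασιάζοντας από αριστερά με $\mathcal{C}_n^{-1}\!\left(\frac{f}{g}\right)T_n^{-1}(g)$ θα λάβω
\[
\mathcal{C}_n^{-1}\!\left(\frac{f}{g}\right)T_n^{-1}(g)T_n(f)=\mathcal{C}_n^{-1}\!\left(\frac{f}{g}\right)T_n\!\left(\frac{f}{g}\right)+\widetilde{L},
\]
όπου $\widetilde{L}=\mathcal{C}_n^{-1}\!\left(\frac{f}{g}\right)T_n^{-1}(g)L_1$ έχει βαθμίδα το πολύ ίση με $2d$, ανεξάρτητη του $n$. Εφόσον η $\frac{f}{g}$ δεν έχει ρίζες στο $(-\pi,\pi]$, εφαρμόζω το Πόρισμα \ref{cor:S_and_L} για να γράψω $\mathcal{C}_n^{-1}\!\left(\frac{f}{g}\right)T_n\!\left(\frac{f}{g}\right)=I_n+\widehat{S}_n+\widehat{L}_n$, με $\Vert\widehat{S}_n\Vert_2\leq\epsilon$ και $\operatorname{rank}(\widehat{L}_n)\leq 2M$. Αντικαθιστώντας, ο προρρυθμισμένος πίνακας λαμβάνει τη μορφή $I_n+\widehat{S}_n+(\widehat{L}_n+\widetilde{L})$, όπου το άθροισμα $\widehat{L}_n+\widetilde{L}$ έχει βαθμίδα το πολύ $2M+2d$, ανεξάρτητη της διάστασης $n$.

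Στη συνέχεια θα διασπάσω τον παραπάνω πίνακα στο Ερμιτιανό και αντι-Ερμιτιανό του μέρος, επαναλαμβάνοντας το επιχείρημα της απόδειξης του Θεωρήματος \ref{Thm:5}. Χρησιμοποιώντας τα φράγματα των {\en Bendixson}-{\en Hirsch}, τα πραγματικά μέρη των ιδιοτιμών του όρου $H(\mathcal{C}_n^{-1}(f/g)\widehat{S}_n)$ φράσσονται από $2\Vert g/f\Vert_\infty\,\epsilon$, ενώ ο πίνακας $\widehat{L}_n+\widetilde{L}$ μετακινεί έως $4M+4d$ ιδιοτιμές εκτός του αντίστοιχου διαστήματος. Αντίστοιχο αποτέλεσμα ισχύει και για τα φανταστικά μέρη μέσω της ανάλυσης του $SH(\cdot)$. Επιλέγοντας κατάλληλα το $\epsilon$ ως $\varepsilon=2\Vert g/f\Vert_\infty\,\epsilon$, προκύπτει η κύρια συσσώρευση των ιδιοτιμών γύρω από το σημείο $(1,0)$.

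Το κύριο τεχνικό σημείο που αναμένω να χρήζει προσοχής είναι η επαλήθευση ότι ο πίνακας $\widetilde{L}$ διατηρεί βαθμίδα ανεξάρτητη του $n$, παρότι ο $T_n^{-1}(g)$ δεν είναι κατ' ανάγκη αραιός. Ωστόσο, αφού ο πολλαπλασιασμός οποιουδήποτε πίνακα με έναν πίνακα βαθμίδας το πολύ $r$ δίνει πίνακα βαθμίδας το πολύ $r$, η βαθμίδα του $\widetilde{L}$ παραμένει φραγμένη από $2d$ ανεξάρτητα της διάστασης, γεγονός που εξασφαλίζει την κύρια συσσώρευση.
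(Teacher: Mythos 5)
Η πρότασή σου είναι ορθή και ακολουθεί ουσιαστικά την ίδια πορεία με την απόδειξη της διατριβής: η ίδια παραγοντοποίηση $T_n(f)=T_n(g)T_n\left(\frac{f}{g}\right)+L_1$ ανάγει το πρόβλημα στην περίπτωση χωρίς ρίζες για την $\frac{f}{g}$, με τον ίδιο διαχωρισμό σε Ερμιτιανό και αντι-Ερμιτιανό μέρος και τον ίδιο τελικό αριθμό $4M+4d$ ιδιοτιμών εκτός της συσσώρευσης. Σημειώνουμε μόνο ότι, αφού επικαλείσαι το Πόρισμα \ref{cor:S_and_L} για την $\frac{f}{g}$, ο όρος $\widehat{S}_n$ περιέχει ήδη τον παράγοντα $\mathcal{C}_n^{-1}\left(\frac{f}{g}\right)$, οπότε στο τελευταίο βήμα αρκεί το φράγμα $\Vert H(\widehat{S}_n)\Vert_2\leq\Vert\widehat{S}_n\Vert_2\leq\epsilon$ χωρίς τον επιπλέον πολλαπλασιασμό με $\mathcal{C}_n^{-1}\left(\frac{f}{g}\right)$ — η μικρή αυτή ασυνέπεια στον συμβολισμό δεν επηρεάζει το συμπέρασμα.
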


\begin{proof}
`Εστω ότι $h$ συμβολίζει το πηλίκο {\en Rayleigh} του Ερμιτιανού μέρους του προρρυθμισμένου συστήματος $\mathcal{C}_n^{-1}\left(\frac{f}{g}\right)T_n^{-1}(g)T_n(f)$. `Εχουμε:
\begin{equation*}
\begin{split}
h&=\frac{1}{2}\frac{x^H\left[\mathcal{C}_n^{-1}\left(\frac{f}{g}\right)T_n^{-1}(g)T_n(f)+T_n(\bar{f})T_n^{-1}(\bar{g})\mathcal{C}_n^{-1}\left(\frac{\bar{f}}{\bar{g}}\right)\right]x}{x^Hx}\\
&=\frac{1}{2}\frac{x^H \mathcal{C}_n^{-1}\left(\frac{f}{g}\right)T_n^{-1}(g)\left[T_n(g)T_n\left(\frac{f}{g}\right)+L_{1}\right]x}{x^Hx}\\
&\phantom{hspace{2pc}}+\frac{1}{2}\frac{x^H\left[T_n\left(\frac{\bar{f}}{\bar{g}}\right)T_n(\bar{g})+L_{1}^H\right]T_n^{-1}(\bar{g})\mathcal{C}_n^{-1}\left(\frac{\bar{f}}{\bar{g}}\right)x}{x^Hx}\\
&=\frac{1}{2}\frac{x^H\left[\mathcal{C}_n^{-1}\left(\frac{f}{g}\right)T_n\left(\frac{f}{g}\right)+T_n\left(\frac{\bar{f}}{\bar{g}}\right)\mathcal{C}_n^{-1}\left(\frac{\bar{f}}{\bar{g}}\right)\right]x}{x^Hx}+\frac{1}{2}\frac{x^H L_2 x}{x^Hx},
\end{split}
\end{equation*}
όπου $L_2=\mathcal{C}_n^{-1}\left(\frac{f}{g}\right)T_n^{-1}(g)L_1+L_1^HT_n^{-1}(\bar{g})\mathcal{C}_n^{-1}\left(\frac{\bar{f}}{\bar{g}}\right)$ είναι ένας Ερμιτιανός πίνακας χαμηλής βαθμίδας, το πολύ ίσης με $4d$ ($d$ είναι ο βαθμός του $g$).

Το πρώτο πηλίκο {\en Rayleigh} μας δίνει το εύρος του $H\left(\mathcal{C}_n^{-1}\left(\frac{f}{g}\right)T_n\left(\frac{f}{g}\right)\right)$, για όλα τα $x\in\mathbb{R}^n$. Επομένως, τα πραγματικά μέρη των ιδιοτιμών του $\mathcal{C}_n^{-1}\left(\frac{f}{g}\right)T_n\left(\frac{f}{g}\right)$ συσσωρεύονται στο $\operatorname{range}\left(H\left(\mathcal{C}_n^{-1}\left(\frac{f}{g}\right)T_n\left(\frac{f}{g}\right)\right)\right)$. Λόγω του ότι η συνάρτηση $\frac{f}{g}$ δεν έχει ρίζες, το Θεώρημα \ref{Thm:5} ισχύει για την $\frac{f}{g}$, αντί της $f$. Αυτό σημαίνει ότι τα πραγματικά μέρη των ιδιοτιμών του $\mathcal{C}_n^{-1}\left(\frac{f}{g}\right)T_n\left(\frac{f}{g}\right)$ συσσωρεύονται στο $[1-\varepsilon,1+\varepsilon]$, με το πολύ $4M$ ιδιοτιμές εκτός του διαστήματος (συσσώρευσης). Λαμβάνοντας υπόψιν και τον πίνακα χαμηλής βαθμίδας $L_2$, συμπεραίνουμε ότι τα πραγματικά μέρη των ιδιοτιμών του προρρυθμισμένου συστήματος συσσωρεύονται στο $[1-\varepsilon,1+\varepsilon]$ κι έχουμε το πολύ $4M+4d$ ιδιοτιμές εκτός του διαστήματος.

Μελετώντας με τον ίδιο τρόπο το αντι-Ερμιτιανό μέρος του προρρυθμισμένου πίνακα $\mathcal{C}_n^{-1}\left(\frac{f}{g}\right)T_n^{-1}(g)T_n(f)$, έχουμε ότι τα φανταστικά μέρη των ιδιοτιμών του προρρυθμισμένου συστήματος συσσωρεύονται στο $[-\varepsilon,\varepsilon]$ με το πολύ $4M+4d$ ιδιοτιμές εκτός του διαστήματος.
\end{proof}

\begin{rem}
Είναι προφανές ότι τα Θεωρήματα \ref{Thm:5} και \ref{thm:Theorem 6}, ισχύουν και για τον βέλτιστο κυκλοειδή προρρυθμιστή. Οι αποδείξεις είναι ακριβώς οι ίδιες.
\end{rem}

\subsection{Κατά τμήματα συνεχής περίπτωση}

Στην προηγούμενη υποενότητα μελετήσαμε την περίπτωση όπου η $f=f_1+\mathrm{i}f_2$ είναι συνεχής συνάρτηση. `Ενα λογικό ερώτημα το οποίο γεννιέται είναι: ````Τι ισχύει όταν η $f_2$ παρουσιάζει ασυνέχεια στο $-\pi$ και $\pi$?''''. Γενικότερα, ````Τι ισχύει αν η $f$ είναι κατά τμήματα συνεχής συνάρτηση στο $(-\pi,\pi]$?''''. Θα αποδείξουμε ανάλογα θεωρήματα, προκειμένου να απαντήσουμε σε αυτά τα ερωτήματα. Από εδώ και στο εξής, υποθέτουμε ότι η $f_1$ είναι πραγματική, άρτια και $2\pi$-περιοδική συνάρτηση, η $f_2$ πραγματική, περιττή κι επίσης $2\pi$-περιοδική, και υποθέτουμε επίσης ότι η $f$ είναι κατά τμήματα συνεχής στο $(-\pi,\pi]$.

Στην υποενότητα αυτή, χρησιμοποιούμε τον $\mathcal{C}_n(f)$ ή τον $\mathcal{C}_n\left(\frac{f}{g}\right)$ (όταν χρειάζεται η άρση των ριζών) για να κατασκευάσουμε τον προρρυθμιστή. Τα θεωρητικά αποτελέσματα, τα οποία θα αποδείξουμε παρακάτω, ισχύουν και για τον βέλτιστο κυκλοειδή προρρυθμιστή $\mathcal{T}_n(f)$ ή $\mathcal{T}_n\left(\frac{f}{g}\right)$. Οι αποδείξεις μπορούν να γίνουν με τον ίδιο τρόπο, αν παρακάτω χρησιμοποιήσουμε το Λήμμα 8 της \cite{yeung1993circulant}, αντί του Λήμματος \ref{lem:g function}.

Σημειώνουμε ότι για κατά τμήματα συνεχείς, καθώς επίσης και για συνεχείς συναρτήσεις, ο βέλτιστος κυκλοειδής προρρυθμιστής $\mathcal{T}_n\left(\frac{f}{g}\right)$, σε συνδυασμό με τον ταινιωτό πίνακα {\en Toeplitz} $T_n(g)$, χρησιμοποιείται για πρώτη φορά. Όσον αφορά στην κατασκευή του προρρυθμιστή $\mathcal{T}_n\left(\frac{f}{g}\right)$, ο υπολογισμός του $T_n\left(\frac{f}{g}\right)$ απαιτείται, ενώ ο $\mathcal{C}_n\left(\frac{f}{g}\right)$ κατασκευάζεται εύκολα από τη συνάρτηση $\frac{f}{g}$.

\begin{thm}\label{thm:T-C_piecewise}
`Εστω $f$ μια συνάρτηση χωρίς ρίζες, η οποία γράφεται ως $f=f_1+\mathrm{i}f_2$, $f_1$ $2\pi$-περιοδική και άρτια, ενώ $f_2$ $2\pi$-περιοδική και περιττή συνάρτηση, έχοντας σημεία ασυνέχειας $\xi_1,\xi_2,\dots,$ $\xi_{\nu}\in(0,2\pi]$ με εύρος ασυνέχειας {\en (jump of discontinuity)}, τη φραγμένη ποσότητα:
\begin{equation*}
\alpha_k=\lim\limits_{x\rightarrow\xi_{k}^+}f(x)-\lim\limits_{x\rightarrow\xi_{k}^-}f(x).
\end{equation*}
Τότε, $\mathcal{O}\left(\log{n}\right)$ ιδιοτιμές του $\Delta_n=T_n(f)-\mathcal{C}_n(f)$ κυμαίνονται εκτός του ορθογωνίου $[-\varepsilon,\varepsilon]^2$ του μιγαδικού επιπέδου.
\end{thm}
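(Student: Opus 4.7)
Η κεντρική ιδέα είναι να απομονώσουμε τις ασυνέχειες της $f$ μέσω τυποποιημένων συναρτήσεων πριονιού. Για κάθε σημείο ασυνέχειας $\xi_k\in(0,2\pi]$, ορίζουμε μια σταθερή $2\pi$-περιοδική συνάρτηση $\psi_k$ με μοναδικό άλμα μεγέθους $1$ στο $\xi_k$ και αναλυτική αλλού (π.χ. κατάλληλη μετατόπιση της κλασικής συνάρτησης πριονιού $\psi(x)=(\pi-x)/(2\pi)$ στο $(0,2\pi]$). Γράφουμε τότε
\begin{equation*}
f(x)=f_c(x)+\sum_{k=1}^{\nu}\alpha_k\,\psi_k(x),
\end{equation*}
όπου, λόγω της κατασκευής των $\psi_k$, η $f_c$ είναι $2\pi$-περιοδική και \emph{συνεχής}. Εκμεταλλευόμενοι τη γραμμικότητα των απεικονίσεων $f\mapsto T_n(f)$ και $f\mapsto\mathcal{C}_n(f)$, λαμβάνουμε τη διάσπαση
\begin{equation*}
\Delta_n=\bigl[T_n(f_c)-\mathcal{C}_n(f_c)\bigr]+\sum_{k=1}^{\nu}\alpha_k\bigl[T_n(\psi_k)-\mathcal{C}_n(\psi_k)\bigr].
\end{equation*}

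Στον πρώτο όρο το Θεώρημα \ref{Thm:1} εφαρμόζεται άμεσα και δίνει διάσπαση σε πίνακα μικρής φασματικής νόρμας συν πίνακα σταθερής (ανεξάρτητης του $n$) βαθμίδας. Έτσι, η συνεισφορά του συνεχούς μέρους στο ``μεγάλο'' τμήμα του $\Delta_n$ είναι ασυμπτωτικά αμελητέα.

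Το κύριο τεχνικό εμπόδιο εντοπίζεται στην ανάλυση καθενός από τους όρους $T_n(\psi_k)-\mathcal{C}_n(\psi_k)$: οι συντελεστές {\en Fourier} της $\psi_k$ φθίνουν μόνο ως $\mathcal{O}(1/j)$, οπότε η διαφορά αυτή \emph{δεν} είναι $\ell^2$-φραγμένη με σταθερά ανεξάρτητη του $n$, και από εδώ προκύπτει ο λογαριθμικός παράγοντας. Η προσδοκώμενη εκτίμηση δίνεται από το Λήμμα \ref{lem:g function}, το οποίο, προσαρμοσμένο στη συνάρτηση πριονιού $\psi_k$, παρέχει διάσπαση $T_n(\psi_k)-\mathcal{C}_n(\psi_k)=U_n^{(k)}+V_n^{(k)}$ με $\|U_n^{(k)}\|_2\le\varepsilon$ και $\operatorname{rank}(V_n^{(k)})=\mathcal{O}(\log n)$. Η απόδειξη αυτού του βήματος απαιτεί προσεκτική ανάλυση της άθροισης των ``ουρών'' της σειράς {\en Fourier} της $\psi_k$ και των αντίστοιχων διαγωνίων του κυκλοειδούς πίνακα.

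Συνθέτοντας τα παραπάνω, καταλήγουμε ότι $\Delta_n=S_n+L_n$ με $\|S_n\|_2\le\bigl(1+\sum_k|\alpha_k|\bigr)\varepsilon$ και $\operatorname{rank}(L_n)=\mathcal{O}(\log n)$. Ακολουθώντας το επιχείρημα που χρησιμοποιήθηκε στην απόδειξη του Θεωρήματος \ref{Thm:5}, μελετούμε ξεχωριστά το Ερμιτιανό μέρος $H(\Delta_n)=H(S_n)+H(L_n)$ και το αντι-Ερμιτιανό μέρος $SH(\Delta_n)=SH(S_n)+SH(L_n)$. Σε καθένα απ' αυτά η φασματική νόρμα του ``μικρού'' μέρους είναι το πολύ $\|S_n\|_2$, ενώ η βαθμίδα του χαμηλής βαθμίδας τμήματος είναι το πολύ $2\operatorname{rank}(L_n)=\mathcal{O}(\log n)$. Από ανισότητα τύπου {\en Weyl} για Ερμιτιανές διαταραχές χαμηλής βαθμίδας, το πολύ $\mathcal{O}(\log n)$ ιδιοτιμές καθενός από τα $H(\Delta_n)$ και $SH(\Delta_n)/\mathrm{i}$ κυμαίνονται εκτός του $[-\varepsilon',\varepsilon']$. Κατ' αντιστοιχία με το επιχείρημα {\en Bendixson} του Θεωρήματος \ref{Thm:5}, αυτό συνεπάγεται ότι το πολύ $\mathcal{O}(\log n)$ ιδιοτιμές του $\Delta_n$ έχουν πραγματικό ή φανταστικό μέρος εκτός του $[-\varepsilon,\varepsilon]$· κατάλληλη αναρύθμιση του $\varepsilon$ ολοκληρώνει την απόδειξη.
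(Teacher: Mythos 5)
Η πρότασή σου είναι ορθή και ακολουθεί ουσιαστικά την ίδια πορεία με την απόδειξη του κειμένου: ίδια διάσπαση της $f$ σε συνεχές μέρος συν γραμμικό συνδυασμό πριονωτών συναρτήσεων με συντελεστές τα (μιγαδικά) εύρη ασυνέχειας, χρήση του Θεωρήματος \ref{Thm:1} για το συνεχές μέρος και του Λήμματος \ref{lem:g function} για τους όρους των ασυνεχειών. Πρόσεξε μόνο ότι το Λήμμα \ref{lem:g function} δεν παρέχει κατά γράμμα διάσπαση σε πίνακα μικρής νόρμας συν πίνακα βαθμίδας $\mathcal{O}(\log n)$, αλλά $\Vert A_n\Vert_F=\mathcal{O}(1)$ και $\Vert B_n\Vert_F=\mathcal{O}(\log n)$ με τον $B_n$ ορθομοναδιαία όμοιο με τα μπλοκ $\pm\mathcal{H}_{n/2}$· η διατύπωση που χρησιμοποιείς προκύπτει από εκεί μέσω του φασματικού αποτελέσματος του {\en Widom} για τον πίνακα {\en Hilbert} (το πολύ $\mathcal{O}(\log n)$ ιδιοτιμές με απόλυτη τιμή μεγαλύτερη του $\varepsilon$), οπότε, εφόσον οι πίνακες $T_n(\psi_k)-\mathcal{C}_n(\psi_k)$ είναι Ερμιτιανοί, το βήμα σου είναι δικαιολογημένο.
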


Θα δώσουμε την απόδειξη του Θεωρήματος \ref{thm:T-C_piecewise}, αφού πρώτα δώσουμε το παρακάτω λήμμα, με την απόδειξή του.

\begin{lem}\label{lem:g function}
`Εστω $\xi$ ένα τυχαίο σημείο στο διάστημα $(0,2\pi]$ και $g$ ορισμένη ως: 
\begin{equation*}
\mathfrak{g}(x)=
\begin{cases}
x+\pi-\xi,~&0< x\leq\xi,\\
x-\pi-\xi,~&\xi<x\leq2\pi.
\end{cases}
\end{equation*}
Τότε, $T_n(\mathfrak{g})-\mathcal{C}_n(\mathfrak{g})=A_n+B_n$, όπου $\Vert A_n\Vert_F$ είναι φραγμένη από μια σταθερά ανεξάρτητη της διάστασης $n$ και $\Vert B_n\Vert_F$ τείνει στο άπειρο όπως ο $\log{n}$ $\left(\Vert B_n\Vert_F=\mathcal{O}(\log{n})\right)$.
\end{lem}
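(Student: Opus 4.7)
Το σχέδιο είναι το ακόλουθο. Πρώτα, αναγνωρίζουμε την $\mathfrak{g}$ ως μετατοπισμένο πριονωτό κύμα με ένα μόνο άλμα πλάτους $-2\pi$ στο σημείο $\xi$, απ' όπου με άμεση ολοκλήρωση οι συντελεστές {\en Fourier} υπολογίζονται ρητά ως $t_k=\frac{\mathrm{i}}{k}\mathrm{e}^{-\mathrm{i}k\xi}$ για $k\neq 0$. Χρησιμοποιώντας τη γνωστή σχέση $c_m=\sum_{r\in\mathbb{Z}}t_{m+rn}$ για τα στοιχεία του κυκλοειδούς πίνακα, τα στοιχεία της διαφοράς γράφονται ως
\begin{equation*}
(T_n(\mathfrak{g})-\mathcal{C}_n(\mathfrak{g}))_{jl}=-\sum_{r\neq 0}t_{m+rn}=-\mathrm{i}\,\mathrm{e}^{-\mathrm{i}m\xi}\sum_{r\neq 0}\frac{\mathrm{e}^{-\mathrm{i}rn\xi}}{m+rn},\qquad m=j-l.
\end{equation*}

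Διασπάμε $T_n(\mathfrak{g})-\mathcal{C}_n(\mathfrak{g})=A_n+B_n$, όπου ο $A_n$ κρατά τα στοιχεία με $|m|\leq n/2$ («κυρίως σώμα») και ο $B_n$ τα γωνιακά στοιχεία με $|m|>n/2$. Για τον $A_n$, ομαδοποιούμε τους όρους $r$ και $-r$ μέσω της ταυτότητας
\begin{equation*}
\frac{\mathrm{e}^{-\mathrm{i}rn\xi}}{m+rn}+\frac{\mathrm{e}^{\mathrm{i}rn\xi}}{m-rn}=\frac{2m\cos(rn\xi)+2\mathrm{i}rn\sin(rn\xi)}{m^2-r^2n^2}.
\end{equation*}
Επειδή για $|m|\leq n/2$ ισχύει $r^2n^2-m^2\geq 3r^2n^2/4$, ο συνημιτονικός όρος φράσσεται απολύτως από $\mathcal{O}(|m|/n^2)=\mathcal{O}(1/n)$. Για τον ημιτονικό όρο γράφουμε $\frac{2rn}{r^2n^2-m^2}=\frac{2}{rn}+\mathcal{O}(m^2/(r^3n^3))$ και χρησιμοποιούμε τη φραγμένη συγκλίνουσα σειρά $\sum_{r\geq 1}\sin(r\theta)/r=(\pi-\theta)/2$ για $\theta\in(0,2\pi)$, οπότε συμπεραίνουμε ότι $|t_m-c_m|=\mathcal{O}(1/n)$ ομοιόμορφα στο κυρίως σώμα. Επομένως $\|A_n\|_F^2\leq n^2\cdot\mathcal{O}(1/n^2)=\mathcal{O}(1)$.

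Για τον $B_n$, έστω $m=n-k$ με $k\in\{1,\ldots,\lfloor n/2\rfloor\}$ (η συμμετρική περίπτωση $m<-n/2$ αντιμετωπίζεται ανάλογα). Απομονώνουμε τον κυρίαρχο όρο $r=-1$, ο οποίος δίνει $-t_{m-n}=(\mathrm{i}/k)\mathrm{e}^{\mathrm{i}k\xi}$, μέτρου $1/k$. Με αλλαγή δείκτη $r'=r+1$, το υπόλοιπο άθροισμα μετατρέπεται στο άθροισμα «κυρίως σώματος» για τον δείκτη $m'=-k$ (με $|m'|\leq n/2$), μείον τον όρο $r'=1$ που έχει μέτρο $\leq 1/(n-k)=\mathcal{O}(1/n)$. Συνεπώς το υπόλοιπο είναι επίσης $\mathcal{O}(1/n)$, άρα $|t_m-c_m|\leq 1/k+\mathcal{O}(1/n)$. Τελικά,
\begin{equation*}
\|B_n\|_F^2\leq 2\sum_{k=1}^{\lfloor n/2\rfloor}k\!\left(\frac{1}{k^2}+\mathcal{O}\!\left(\frac{1}{kn}\right)+\mathcal{O}\!\left(\frac{1}{n^2}\right)\right)=\mathcal{O}(\log n),
\end{equation*}
οπότε $\|B_n\|_F=\mathcal{O}(\sqrt{\log n})=\mathcal{O}(\log n)$, όπως απαιτείται. Το βασικότερο σημείο προσοχής είναι ότι οι απλοί φραγμοί κατά μέτρο στις σειρές των υπολοίπων οδηγούν σε αποκλίνουσες εκφράσεις, οπότε χρειάζεται η εκμετάλλευση της ομοιόμορφης φραγμένης οσιλλοντικής σειράς $\sum\sin(r\theta)/r$ και της απόλυτης σύγκλισης του $\sum 1/r^2$, ώστε να απομονωθούν οι πραγματικά μεγάλες συνεισφορές, οι οποίες εντοπίζονται αποκλειστικά στις γωνιακές περιοχές του πίνακα.
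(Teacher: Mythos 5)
Η πρότασή σου είναι ορθή, αλλά ακολουθεί ουσιωδώς διαφορετική διαδρομή από αυτήν της εργασίας. Η απόδειξη της εργασίας υπολογίζει τα στοιχεία $c_k$ του κυκλοειδούς απευθείας από τον ορισμό $c_k=\frac{1}{n}\sum_{j}\mathrm{e}^{-\mathrm{i}kj2\pi/n}\mathfrak{g}\left(\frac{2\pi j}{n}\right)$, κλείνει τα τριγωνομετρικά αθροίσματα σε κλειστή μορφή $c_k=\frac{\mathrm{i}\pi}{n\sin(k\pi/n)}\mathrm{e}^{-\mathrm{i}k\zeta}$ με $\zeta\simeq\xi$, απομονώνει στη διαφορά $t_k-c_k$ τον κυρίαρχο όρο $-\frac{1}{n-k}$ και αναγνωρίζει τα γωνιακά μπλοκ ως ορθομοναδιαίως όμοια με $\pm\mathcal{H}_{n/2}$, ώστε να επικαλεστεί τις ιδιότητες του πίνακα {\en Hilbert}. Εσύ αντικαθιστάς όλο αυτό με τον τύπο αναδίπλωσης $c_m=\sum_{r}t_{m+rn}$ και με απευθείας εκτίμηση σειρών: η ζεύξη $r\leftrightarrow-r$ μαζί με την ομοιόμορφα φραγμένη σειρά $\sum_{r\geq1}\sin(r\theta)/r$ δίνει $\vert t_m-c_m\vert=\mathcal{O}(1/n)$ στο κυρίως σώμα, ενώ η απομόνωση του όρου $r=-1$ στις γωνίες δίνει $\frac{1}{k}+\mathcal{O}(1/n)$ για $m=n-k$. Η διάσπαση σε «σώμα» και «γωνίες» ταυτίζεται κατ' ουσίαν με αυτήν της εργασίας, και το φράγμα $\Vert B_n\Vert_F^2=\mathcal{O}(\log n)$ (άρα $\Vert B_n\Vert_F=\mathcal{O}(\sqrt{\log n})$, συμβατό με το $\Vert\mathcal{H}_{n/2}\Vert_F^2\asymp\log n$) αρκεί τόσο για τη διατύπωση του λήμματος όσο και για τις εφαρμογές του, αφού το πλήθος των ιδιοτιμών μέτρου μεγαλύτερου του $\varepsilon$ φράσσεται απευθείας από $\Vert B_n\Vert_F^2/\varepsilon^2=\mathcal{O}(\log n)$, χωρίς αναφορά στο αποτέλεσμα του {\en Widom}. Το κέρδος της διαδρομής σου είναι η αποφυγή του μακροσκελούς κλεισίματος των αθροισμάτων· το κέρδος της εργασίας είναι η ακριβέστερη φασματική ταυτοποίηση μέσω του πίνακα {\en Hilbert}.

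Ένα σημείο χρειάζεται ρητή δικαιολόγηση: ο τύπος $c_m=\sum_{r}t_{m+rn}$ δεν είναι ο ορισμός του $\mathcal{C}_n(\mathfrak{g})$ στην εργασία (εκεί ο κυκλοειδής ορίζεται από τις τιμές της $\mathfrak{g}$ στο πλέγμα) και η σειρά συγκλίνει μόνο υπό συνθήκη. Για τη συγκεκριμένη $\mathfrak{g}$, που είναι φραγμένης κύμανσης, ο τύπος ισχύει με την ερμηνεία των συμμετρικών μερικών αθροισμάτων (ακριβώς αυτή που χρησιμοποιείς στη ζεύξη $r\leftrightarrow-r$) σε κάθε κόμβο συνέχειας. Αν όμως το $\xi$ συμπίπτει με κόμβο του πλέγματος, η σειρά {\en Fourier} εκεί συγκλίνει στο μέσο του άλματος και όχι στην τιμή $\mathfrak{g}(\xi)=\pi$, οπότε κάθε $c_m$ διαταράσσεται κατά $\mathcal{O}(1/n)$· η αντίστοιχη διόρθωση είναι πίνακας με νόρμα {\en Frobenius} $\mathcal{O}(1)$ και απορροφάται στον $A_n$. Με αυτή τη συμπλήρωση η απόδειξή σου είναι πλήρης.
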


\begin{proof}
Αρχικά, είναι εύκολο να βρει κανείς \cite{yeung1993circulant} ότι οι διαγώνιοι του πίνακα $T_n(\mathfrak{g})$ δίνονται ως:
\begin{equation}\label{eq:Toeplitz_g}
t_k=
\begin{cases}
0,~&k=0,\\
\frac{\mathrm{i}}{k}\mathrm{e}^{-\mathrm{i}k\xi},~&k\neq0.
\end{cases}
\end{equation}

\begin{figure}[h]
\centering
\begin{tikzpicture}[axis/.style={very thick, ->, >=stealth'}]
\draw[axis] (0,0) --(10,0);
\draw[axis] (3,-4) --(3,4);
\draw (4,-3.14) --(3+6.28,3.14-1);
\draw[dotted] (0,-0.86) --(3,3.14-1);
\draw (3,3.14-1) --(4,3.14);
\node[below] at (-3.14+4,-0.15) {$-\pi+\xi$};
\node[below] at (2.75,-0.15) {0};
\node[below] at (4.2,-0.15) {$\xi$};
\node[below] at (3.25+4,-0.15) {$\pi+\xi$};
\node[below] at (3+6.28,-0.15) {$2\pi$};
\node[below] at (2.75,3.29) {$\pi$};
\node[below] at (2.45,3.42-1) {$\pi-\xi$};
\node[below] at (2.55,-2.9) {$-\pi$};
\draw[dashed] (3,-3.14) --(4,-3.14);
\draw[dashed] (3,3.14-1) --(3+6.28,3.14-1);
\draw[dashed] (3,3.14) --(4,3.14);
\draw[dashed] (3+6.28,3.14-1) --(3+6.28,0);
\draw[dashed] (4,3.14) --(4,-3.14);
\end{tikzpicture}
\caption{Γραφική παράσταση της $\mathfrak{g}(x)$.} \label{fig:g(x)}
\end{figure}

Χωρίς βλάβη της γενικότητας, επιτρέψτε μας να υποθέσουμε ότι $\frac{2\pi m}{n}\leq\xi<\frac{2\pi(m+1)}{n}<\pi$, όπου $m\in\mathbb{N}$. Θα υπολογίσουμε τις τιμές $c_k$ του κυκλοειδούς πίνακα $\mathcal{C}_n(\mathfrak{g})=\mathcal{F}_n^H\Delta(\mathfrak{g})\mathcal{F}_n$. `Εχουμε:
\begin{equation}\label{eq:Circulant entries}
c_k=\frac{1}{n}\displaystyle\sum\limits_{j=0}^{n-1}\mathrm{e}^{-\mathrm{i}kj2\pi/n}\mathfrak{g}\left(\frac{2\pi j}{n}\right)
\end{equation}

Για $k=0$ είναι προφανές, από τη σχέση (\ref{eq:Circulant entries}), ότι η κύρια διαγώνιος του $\mathcal{C}_n(\mathfrak{g})$ είναι το άθροισμα των ιδιοτιμών του, διαιρεμένο με $n$:
\begin{equation*}
c_0=\frac{1}{n}\displaystyle\sum\limits_{j=0}^{n-1}\mathfrak{g}\left(\frac{2\pi j}{n}\right).
\end{equation*}

Αναλυτικότερα, για τις τιμές του $\mathcal{C}_n(\mathfrak{g})$ έχουμε:
\begin{equation*}
\begin{split}
c_k&=\frac{1}{n}\displaystyle\sum\limits_{j=0}^{m}\left[\frac{2\pi\left(j+\frac{n}{2}\right)}{n}-\xi\right]\mathrm{e}^{-\mathrm{i}kj2\pi/n}+\frac{1}{n}\displaystyle\sum\limits_{j=m+1}^{n-1}\left[\frac{2\pi\left(j-\frac{n}{2}\right)}{n}-\xi\right]\mathrm{e}^{-\mathrm{i}kj2\pi/n}\\
&=\frac{1}{n}\displaystyle\sum\limits_{j=0}^{m}\left[\frac{2\pi\left(j+\frac{n}{2}\right)}{n}-\xi\right]\mathrm{e}^{-\mathrm{i}kj2\pi/n}\\
&\phantom{2pc}+\frac{1}{n}\displaystyle\sum\limits_{j=m+1-n}^{-1}\left[\frac{2\pi\left(j+\frac{n}{2}\right)}{n}-\xi\right]\mathrm{e}^{-\mathrm{i}k(j+n)2\pi/n}\\
&=\frac{1}{n}\displaystyle\sum\limits_{j=m+1-n}^{m}\left[\frac{2\pi\left(j+\frac{n}{2}\right)}{n}-\xi\right]\mathrm{e}^{-\mathrm{i}kj2\pi/n}\\
&=\frac{1}{n}\displaystyle\sum\limits_{j=0}^{n-1}\left[\frac{2\pi\left(j+m+1-\frac{n}{2}\right)}{n}-\xi\right]\mathrm{e}^{-\mathrm{i}k(j+m+1-n)2\pi/n}\\
&=\frac{1}{n}\left(\frac{2\pi\left(m+\frac{1}{2}\right)}{n}-\xi\right)\displaystyle\sum\limits_{j=0}^{n-1}\mathrm{e}^{-\mathrm{i}k(j+m+1)2\pi/n}\\
&\phantom{2pc}+\frac{1}{n}\displaystyle\sum\limits_{j=0}^{n-1}\frac{2\pi\left(j+\frac{1}{2}-\frac{n}{2}\right)}{n}\mathrm{e}^{-\mathrm{i}k(j+m+1)2\pi/n}\\
&=S_k^{(1)}+S_k^{(2)}.
\end{split}
\end{equation*}
\begin{itemize}
\item Για $k=0$, $S_0^{(1)}=\frac{1}{n}\left(\frac{2\pi\left(m+\frac{1}{2}\right)}{n}-\xi\right)\displaystyle\sum\limits_{j=0}^{n-1}1=\theta$, όπου: $$\theta=\frac{2\pi\left(m+\frac{1}{2}\right)}{n}-\xi$$ και από τον τρόπο που ορίσαμε το $m$, ισχύει $\vert\theta\vert\leq\frac{\pi}{n}$.
\item Για $k\neq 0$, $S_k^{(1)}=0$, επειδή οι όροι του αθροίσματος είναι ισαπέχοντα σημεία στον μοναδιαίο κύκλο (οι $n$-οστές ρίζες της μονάδας) και το άθροισμα αυτών είναι ίσο με μηδέν.
\end{itemize}
Τώρα θα υπολογίσουμε το άθροισμα $S_k^{(2)}$:

\begin{equation*}
\begin{split}
S_k^{(2)}&=\frac{1}{n}\displaystyle\sum\limits_{j=0}^{n-1}\frac{2\pi\left(j+\frac{1}{2}-\frac{n}{2}\right)}{n}\mathrm{e}^{-\mathrm{i}k(j+m+1)2\pi/n}\\
&=\frac{1}{n}\displaystyle\sum\limits_{j=0}^{\frac{n}{2}-1}\frac{2\pi\left(j+\frac{1}{2}-\frac{n}{2}\right)}{n}\mathrm{e}^{-\mathrm{i}k(j+m+1)2\pi/n}\\
&\phantom{2pc}+\frac{1}{n}\displaystyle\sum\limits_{j=\frac{n}{2}}^{n-1}\frac{2\pi\left(j+\frac{1}{2}-\frac{n}{2}\right)}{n}\mathrm{e}^{-\mathrm{i}k(j+m+1)2\pi/n}\\
&=\frac{1}{n}\displaystyle\sum\limits_{j=1}^{\frac{n}{2}}\frac{2\pi\left(j-\frac{1}{2}-\frac{n}{2}\right)}{n}\mathrm{e}^{-\mathrm{i}k(j+m)2\pi/n}\\
&\phantom{2pc}+\frac{1}{n}\displaystyle\sum\limits_{j=1}^{\frac{n}{2}}\frac{2\pi\left(\frac{n}{2}-j+\frac{1}{2}\right)}{n}\mathrm{e}^{-\mathrm{i}k(n-j+m+1)2\pi/n}\\
&=\frac{1}{n}\displaystyle\sum\limits_{j=1}^{n/2}\frac{2\pi\left(\frac{n}{2}-j+\frac{1}{2}\right)}{n}\left(\mathrm{e}^{-\mathrm{i}k(m+1-j)2\pi/n}-\mathrm{e}^{-\mathrm{i}k(m+j)2\pi/n}\right).
\end{split}
\end{equation*}
Από την παραπάνω ανάλυση, είναι ξεκάθαρο ότι $c_0=S_0^{(1)}+S_0^{(2)}=\theta$. Επιπλέον, συμπεραίνουμε ότι οι τιμές $c_k$, για $k\neq 0$ δίνονται ως $c_k=S_k^{(2)}$. Οπότε, έχουμε:

\begin{equation*}
\begin{split}
c_k&=\frac{1}{n}\mathrm{e}^{-\mathrm{i}k\left(m+\frac{1}{2}\right)2\pi/n}\displaystyle\sum\limits_{j=1}^{n/2}\frac{2\pi}{n}\left(\frac{n}{2}-j+\frac{1}{2}\right)\left(\mathrm{e}^{\mathrm{i}k\left(j-\frac{1}{2}\right)2\pi/n}-\mathrm{e}^{-\mathrm{i}k\left(j-\frac{1}{2}\right)2\pi/n}\right)\\
&=\frac{2\mathrm{i}}{n}\mathrm{e}^{-\mathrm{i}k\left(m+\frac{1}{2}\right)2\pi/n}\displaystyle\sum\limits_{j=1}^{n/2}\frac{2\pi}{n}\left(\frac{n}{2}-j+\frac{1}{2}\right)\sin{\left[k\left(j-\frac{1}{2}\right)\frac{2\pi}{n}\right]}.
\end{split}
\end{equation*}
Θα υπολογίσουμε την ποσότητα:
\begin{equation}\label{eq:circ_entries}
\frac{2}{n}\displaystyle\sum\limits_{j=1}^{n/2}\frac{2\pi}{n}\left(\frac{n}{2}-j+\frac{1}{2}\right)\sin{\left[k\left(j-\frac{1}{2}\right)\frac{2\pi}{n}\right]}.
\end{equation}
Πολλαπλασιάζοντας με $\sin{\left(\frac{k\pi}{n}\right)}$, έχουμε:
\begin{equation*}
\begin{split}
&\frac{2}{n}\displaystyle\sum\limits_{j=1}^{n/2}\frac{2\pi}{n}\left(\frac{n}{2}-j+\frac{1}{2}\right)\sin{\left[k\left(j-\frac{1}{2}\right)\frac{2\pi}{n}\right]}\sin{\left(\frac{k\pi}{n}\right)}\\
&=\frac{1}{n}\displaystyle\sum\limits_{j=1}^{n/2}\frac{2\pi}{n}\left(\frac{n}{2}-j+\frac{1}{2}\right)\left\lbrace\cos{\left[k\left(j-1\right)\frac{2\pi}{n}\right]}-\cos{\left(kj\frac{2\pi}{n}\right)}\right\rbrace\\
&=\frac{1}{n}\displaystyle\sum\limits_{j=0}^{n/2-1}\frac{2\pi}{n}\left(\frac{n}{2}-j-\frac{1}{2}\right)\cos{\left(kj\frac{2\pi}{n}\right)}\\
&\phantom{2pc}-\frac{1}{n}\displaystyle\sum\limits_{j=1}^{n/2}\frac{2\pi}{n}\left(\frac{n}{2}-j+\frac{1}{2}\right)\cos{\left(kj\frac{2\pi}{n}\right)}\\
&=\frac{1}{n}\frac{2\pi}{n}\frac{n-1}{2}-\frac{1}{n}\frac{2\pi}{n}\displaystyle\sum\limits_{j=1}^{n/2-1}\cos{\left(kj\frac{2\pi}{n}\right)}-\frac{1}{n}\frac{\pi}{n}\cos{\left(k\pi\right)}.
\end{split}
\end{equation*}
`Οταν $k$ είναι περιττός αριθμός, η παραπάνω ποσότητα είναι ίση με $\frac{1}{n}\frac{2\pi}{n}\frac{n-1}{2}+\frac{1}{n}\frac{\pi}{n}=\frac{\pi}{n}$. Αντιστοίχως, όταν $k$ είναι άρτιος, $\frac{1}{n}\frac{2\pi}{n}\frac{n-1}{2}-\frac{1}{n}\frac{2\pi}{n}\displaystyle\sum\limits_{j=0}^{n/2-1}\cos{\left(kj\frac{2\pi}{n}\right)}+\frac{1}{n}\frac{2\pi}{n}-\frac{1}{n}\frac{\pi}{n}=\frac{\pi}{n}$.

Επομένως, η σχέση (\ref{eq:circ_entries}) μπορεί να γραφεί ως $\frac{\pi}{n\sin{\left(\frac{k\pi}{n}\right)}}$ κι έτσι οι τιμές $c_k$, όταν $k\neq0$ δίνονται ως:
\begin{equation}\label{eq:circ_entries2}
c_k=\frac{\mathrm{i}\pi}{n\sin{\left(\frac{k\pi}{n}\right)}}\mathrm{e}^{-\mathrm{i}k\left(m+\frac{1}{2}\right)2\pi/n}.
\end{equation}

Μένει να υπολογίσουμε τον πίνακα (διαφορά πινάκων) $T_n(\mathfrak{g})-\mathcal{C}_n(\mathfrak{g})$. Η κύρια διαγώνιός του δίνεται ως $d_0=-c_0=-\theta$ και λαμβάνοντας υπόψη τις (\ref{eq:Toeplitz_g}) και (\ref{eq:circ_entries2}) παρατηρούμε ότι οι $k$-οστές διαγώνιοί του $d_k$, όταν $k\neq 0$ δίνονται ως:

\begin{equation}\label{eq:Tng-Cng}
\begin{split}
d_k&=t_k-c_k=\frac{\mathrm{i}}{k}\mathrm{e}^{-\mathrm{i}k\xi}-\frac{\mathrm{i}\pi}{n\sin{\left(\frac{k\pi}{n}\right)}}\mathrm{e}^{-\mathrm{i}k\left(m+\frac{1}{2}\right)2\pi/n}\\
&=\frac{\mathrm{i}}{k}\left(\mathrm{e}^{-\mathrm{i}k\xi}-\mathrm{e}^{-\mathrm{i}k\left(m+\frac{1}{2}\right)2\pi/n}\right)+\mathrm{i}\left(\frac{1}{k}-\frac{\pi}{n\sin{\left(\frac{k\pi}{n}\right)}}\right)\mathrm{e}^{-\mathrm{i}k\left(m+\frac{1}{2}\right)2\pi/n}.
\end{split}
\end{equation}

Θα αποδείξουμε ότι ο πρώτος όρος της παραπάνω σχέσης είναι της τάξεως $\frac{1}{n}$. `Εχουμε:
\begin{equation*}
\begin{split}
\frac{\mathrm{i}}{k}\left(\mathrm{e}^{-\mathrm{i}k\xi}-\mathrm{e}^{-\mathrm{i}k\left(m+\frac{1}{2}\right)2\pi/n}\right)&=\frac{\mathrm{i}}{k}\mathrm{e}^{-\mathrm{i}k\xi}\left(1-\mathrm{e}^{-\mathrm{i}k\left(\left(m+\frac{1}{2}\right)2\pi/n-\xi\right)}\right)\\
&=\frac{\mathrm{i}}{k}\mathrm{e}^{-\mathrm{i}k\xi}\left(1-\mathrm{e}^{-\mathrm{i}k\theta}\right).
\end{split}
\end{equation*}
Αναπτύσοντας κατά {\en Taylor} την ποσότητα $\mathrm{e}^{-\mathrm{i}k\theta}$ γύρω από το 0, λαμβάνουμε:
\begin{equation*}
\frac{\mathrm{i}}{k}\mathrm{e}^{-\mathrm{i}k\xi}\left(1-\mathrm{e}^{-\mathrm{i}k\theta}\right)=\frac{\mathrm{i}}{k}\mathrm{e}^{-\mathrm{i}k\xi}\left(-\mathrm{i}k\theta\mathrm{e}^{-\mathrm{i}k\widehat{\theta}}\right)=\theta\mathrm{e}^{-\mathrm{i}k(\xi+\widehat{\theta})},~\vert\widehat{\theta}\vert<\vert\theta\vert.
\end{equation*}
Προφανώς, $\left\vert\theta\mathrm{e}^{-\mathrm{i}k(\xi+\widehat{\theta})}\right\vert=\vert\theta\vert=\mathcal{O}\left(\frac{1}{n}\right)$.

Γράφουμε τη διαφορά πινάκων $T_n(\mathfrak{g})-\mathcal{C}_n(\mathfrak{g})$ ως το άθροισμα δύο πινάκων $T_n(\mathfrak{g})-\mathcal{C}_n(\mathfrak{g})=\widetilde{A}_n+\widetilde{B}_n$, όπου οι τιμές του $\widetilde{A}_n$ είναι της τάξεως $\mathcal{O}\left(\frac{1}{n}\right)$ και αυτές του $\widetilde{B}_n$ είναι $\Omega\left(\frac{1}{n}\right)$. Επομένως, η $k$-οστή διαγώνιος του $\widetilde{A}_n$ αποτελείται από τις τιμές $\frac{\mathrm{i}}{k}\left(\mathrm{e}^{-\mathrm{i}k\xi}-\mathrm{e}^{-\mathrm{i}k\left(m+\frac{1}{2}\right)2\pi/n}\right)$.

Στη συνέχεια, θα εκτιμήσουμε την τάξη του δεύτερου όρου της (\ref{eq:Tng-Cng}). Αφού $\mathrm{e}^{-\mathrm{i}k\left(m+\frac{1}{2}\right)2\pi/n}$ είναι μέτρου 1, προσπαθούμε να εκτιμήσουμε την τάξη των όρων $\frac{1}{k}-\frac{\pi}{n\sin{\left(\frac{k\pi}{n}\right)}}$ για τις διάφορες τιμές του $k=1,2,\dots,n-1$.

Διακρίνουμε τρεις περιπτώσεις, λαμβάνοντας υπόψη το μέγεθος του $k$ σε σχέση με το $n$.\\
1) $k\sim n$ και $n-k\sim n$: Αυτό σημαίνει ότι $k=\alpha n$, όπου $0<\alpha<1$ είναι μια σταθερά ανεξάρτητη του $n$. Τότε:
\begin{equation*}
\begin{split}
&\frac{1}{k}-\frac{\pi}{n\sin{\left(\frac{k\pi}{n}\right)}}=\frac{1}{k}-\frac{1}{\sin{\left(\alpha\pi\right)}}\frac{\pi}{n}=\frac{1}{k}-\beta\frac{\pi}{n}=-\frac{1}{n-k}+\left(\frac{1}{k}-\beta\frac{\pi}{n}+\frac{1}{n-k}\right)\\
&\phantom{\frac{1}{k}-\frac{\pi}{n\sin{\left(\frac{k\pi}{n}\right)}}}=-\frac{1}{n-k}+\left(\frac{1}{\alpha n}-\beta\frac{\pi}{n}+\frac{1}{(1-\alpha)n}\right)=-\frac{1}{n-k}+\mathcal{O}\left(\frac{1}{n}\right).
\end{split}
\end{equation*}
2) $n-k=o(n)\Leftrightarrow k=n-o(n)$: Παρατηρούμε ότι, $\sin{\left(\frac{k\pi}{n}\right)}=\sin{\left((n-k)\frac{\pi}{n}\right)}$. Εφαρμόζοντας ανάπτυγμα {\en Taylor} έχουμε:
\begin{equation*}
\sin{\left((n-k)\frac{\pi}{n}\right)}=(n-k)\frac{\pi}{n}-\frac{1}{6}(n-k)^3\frac{\pi^3}{n^3}\cos{\widetilde{\theta}},~0<\widetilde{\theta}<(n-k)\frac{\pi}{n}.
\end{equation*}
Επομένως:
\begin{equation*}
\begin{split}
&\frac{1}{k}-\frac{\pi}{n\sin{\left(\frac{k\pi}{n}\right)}}=\frac{1}{k}-\frac{1}{(n-k)-\frac{1}{6}(n-k)^3\frac{\pi^2}{n^2}\cos{\widetilde{\theta}}}\\
&\phantom{\frac{1}{k}-\frac{\pi}{n\sin{\left(\frac{k\pi}{n}\right)}}}\simeq\frac{1}{k}-\frac{1}{(n-k)}+\frac{1}{6}(n-k)\frac{\pi^2}{n^2}\cos{\widetilde{\theta}}=-\frac{1}{n-k}+\mathcal{O}\left(\frac{1}{n}\right).
\end{split}
\end{equation*}
3) $k=o(n)$: Με παρόμοιο τρόπο
\begin{equation*}
\begin{split}
&\frac{1}{k}-\frac{\pi}{n\sin{\left(\frac{k\pi}{n}\right)}}=\frac{1}{k}-\frac{1}{k-\frac{1}{6}k^3\frac{\pi^2}{n^2}\cos{\widehat{\theta}}}\simeq\frac{1}{k}-\frac{1}{k}+\frac{1}{6}k\frac{\pi^2}{n^2}\cos{\widehat{\theta}}\\
&\phantom{\frac{1}{k}-\frac{\pi}{n\sin{\left(\frac{k\pi}{n}\right)}}}=-\frac{1}{n-k}+\frac{1}{n-k}+\frac{1}{6}k\frac{\pi^2}{n^2}\cos{\widehat{\theta}}\\
&\phantom{\frac{1}{k}-\frac{\pi}{n\sin{\left(\frac{k\pi}{n}\right)}}}=-\frac{1}{n-k}+\mathcal{O}\left(\frac{1}{n}\right),~0<\widehat{\theta}<k\frac{\pi}{n}.
\end{split}
\end{equation*}

Χωρίζουμε τον πίνακα $\widetilde{B}_n$ ως $\widetilde{B}_n=\widehat{A}_n+\widehat{B}_n$, όπου ο $\widehat{A}_n$ έχει τιμές που χαρακτηρίζονται ως $\mathcal{O}\left(\frac{1}{n}\right)$ (στις τρεις περιπτώσεις που προαναφέραμε) και $\left(\widehat{B}_n\right)_k=-\frac{\mathrm{i}}{n-k}\mathrm{e}^{-\mathrm{i}k\left(m+\frac{1}{2}\right)2\pi/n}$, $k>0$. Επειδή ο πίνακας $\widehat{B}_n$ είναι Ερμιτιανός, οι τιμές των διαγωνίων με αρνητικό δείκτη $k$, θα είναι οι συζυγείς των αντίστοιχων τιμών για $k>0$. `Αρα, $\left(\widehat{B}_n\right)_{-k}=\overline{\left(\widehat{B}_n\right)}_{k}=\frac{\mathrm{i}}{n-k}\mathrm{e}^{\mathrm{i}k\left(m+\frac{1}{2}\right)2\pi/n}$, $k>0$.

Τότε, χωρίζουμε επίσης τον $\widehat{B}_n$ ως:
\begin{equation*}
\widehat{B}_n=
\begin{pmatrix}
V_{n/2} & U_{n/2}\\
U_{n/2}^H & V_{n/2}
\end{pmatrix}=
\begin{pmatrix}
V_{n/2} & 0\\
0 & V_{n/2}
\end{pmatrix}+
\begin{pmatrix}
0 & U_{n/2}\\
U_{n/2}^H & 0
\end{pmatrix}=A_n^{\prime}+B_n.
\end{equation*}
Από την παραπάνω ανάλυση, συμπεραίνουμε ότι $A_n^{\prime}=\mathcal{O}\left(\frac{1}{n}\right)$. Συνοψίζοντας, ο πίνακας $T_n(\mathfrak{g})-\mathcal{C}_n(\mathfrak{g})$ γράφεται ως $A_n+B_n$, όπου $A_n=\widetilde{A}_n+\widehat{A}_n+A_n^{\prime}$ είναι ένας Ερμιτιανός πίνακας {\en Toeplitz}, με τιμές τάξεως $\mathcal{O}\left(\frac{1}{n}\right)$. Επομένως, η νόρμα {\en Frobenius} αυτού φράσσεται από μια σταθερά ανεξάρτητη του $n$ κι έτσι τόσο οι ιδιοτιμές του, όσο και οι ιδιάζουσες τιμές αυτού, συσσωρεύονται κατά κύριο τρόπο γύρω από το μηδέν \cite{Benedetto_Serra,tyrtyshnikov1995circulant}.

Μένει να μελετήσουμε τον πίνακα $B_n$. Για να εκπληρώσουμε αυτόν τον σκοπό, ακολουθούμε την ίδια τεχνική που παρουσιάζεται στο Λήμμα 8 της \cite{yeung1993circulant}. Θεωρούμε τον πίνακα:
\begin{equation*}
J_n=\begin{pmatrix}
0 &\cdots & 0 & 1\\
\vdots &  & 1 &0\\
0 &\reflectbox{$\ddots$} &  &\vdots\\
1 & 0 &\cdots & 0
\end{pmatrix}
\end{equation*}
και τους ορθομοναδιαίους πίνακες $P_n$ και $Q_n$, όπως φαίνεται παρακάτω:
\begin{equation*}
P_n=\operatorname{diag}\left(-1,-\mathrm{e}^{\mathrm{i}\zeta},\dots,-\mathrm{e}^{\mathrm{i}(\frac{n}{2}-1)\zeta}\right),
\end{equation*}
\begin{equation*}
Q_n=\operatorname{diag}\left(-\mathrm{i}\mathrm{e}^{\mathrm{i}\frac{n}{2}\zeta},-\mathrm{i}\mathrm{e}^{\mathrm{i}(\frac{n}{2}+1)\zeta},\dots,-\mathrm{i}\mathrm{e}^{\mathrm{i}(n-1)\zeta}\right),
\end{equation*}
όπου $\zeta=\left(m+\frac{1}{2}\right)\frac{2\pi}{n}$. Τότε,
$U_{n/2}=P_{n/2}^H\mathcal{H}_{n/2}J_{n/2}Q_{n/2}$ όπου $\mathcal{H}_{n/2}$ είναι ο $\frac{n}{2}\times\frac{n}{2}$ πίνακας {\en Hilbert}. `Ετσι, ο $B_n$ μπορεί να γραφεί ως:
\begin{equation*}
\begin{split}
&\begin{pmatrix}
0 & U_{n/2}\\
U_{n/2}^H & 0
\end{pmatrix}=
\begin{pmatrix}
P_{n/2}^H & 0\\
0 & Q_{n/2}^H
\end{pmatrix}
\begin{pmatrix}
0 & \mathcal{H}_{n/2}J_{n/2}\\
J_{n/2}\mathcal{H}_{n/2} & 0
\end{pmatrix}
\begin{pmatrix}
P_{n/2} & 0\\
0 & Q_{n/2}
\end{pmatrix}\\
&=\frac{1}{2}
\begin{pmatrix}
P_{n/2}^H & 0\\
0 & Q_{n/2}^H
\end{pmatrix}
\begin{pmatrix}
I_{n/2} & I_{n/2}\\
J_{n/2} & -J_{n/2}
\end{pmatrix}
\begin{pmatrix}
\mathcal{H}_{n/2} & 0\\
0 & -\mathcal{H}_{n/2}
\end{pmatrix}\cdot\\
&\hspace{2pc}\cdot
\begin{pmatrix}
I_{n/2} & J_{n/2}\\
I_{n/2} & -J_{n/2}
\end{pmatrix}
\begin{pmatrix}
P_{n/2} & 0\\
0 & Q_{n/2}
\end{pmatrix}\\
&=\frac{1}{2}
\begin{pmatrix}
P_{n/2}^H & P_{n/2}^H\\
Q_{n/2}^HJ_{n/2} & -Q_{n/2}^HJ_{n/2}
\end{pmatrix}
\begin{pmatrix}
\mathcal{H}_{n/2} & 0\\
0 & -\mathcal{H}_{n/2}
\end{pmatrix}
\begin{pmatrix}
P_{n/2} & J_{n/2}Q_{n/2}\\
P_{n/2} & -J_{n/2}Q_{n/2}
\end{pmatrix}.
\end{split}
\end{equation*}
Παρατηρούμε ότι ο $B_n$ είναι όμοιος με τον μεσαίο πίνακα του παραπάνω γινομένου, δηλαδή του
\begin{equation*}
\begin{pmatrix}
\mathcal{H}_{n/2} & 0\\
0 & -\mathcal{H}_{n/2}
\end{pmatrix},
\end{equation*}
αφού ο πίνακας
\begin{equation*}
\frac{1}{\sqrt{2}}
\begin{pmatrix}
P_{n/2}^H & P_{n/2}^H\\
Q_{n/2}^HJ_{n/2} & -Q_{n/2}^HJ_{n/2}
\end{pmatrix}
\end{equation*}
είναι ορθογώνιος. Επειδή ο $\mathcal{H}_{n/2}$ είναι ο $\frac{n}{2}\times\frac{n}{2}$ πίνακας {\en Hilbert}, συμπεραίνουμε ότι η νόρμα $\Vert B_n\Vert_F$ τείνει στο άπειρο όπως ο $\log{\left(\frac{n}{2}\right)}$, το οποίο ισοδύναμα δηλώνει ότι το πλήθος των ιδιοτιμών του $B_n$, που έχουν απόλυτη τιμή μεγαλύτερη του $\varepsilon>0$, είναι της τάξεως $\mathcal{O}\left(\log{\left(\frac{n}{2}\right)}\right)$ \cite{widom1966hankel,yeung1993circulant}.
\end{proof}

{\it Απόδειξη }του Θεωρήματος \ref{thm:T-C_piecewise}. Θεωρούμε ότι οι συναρτήσεις $f_1$ και $f_2$ έχουν σημεία ασυνέχειας $\xi_1,\xi_2,\dots,\xi_{\nu}\in(0,2\pi]$ με εύρη ασυνέχειας, τη φραγμένη ποσότητα $\alpha_k$ για την $f_1$ και $\beta_k$ για την $f_2$ στο $\xi_k,~k=1,2,\dots,\nu$. Σε περίπτωση που η $f_1$ έχει ασυνέχεια στο σημείο $\xi_k$, όπου η $f_2$ είναι συνεχής, θέτουμε $\beta_k=0$. Φυσικά, ακολουθούμε την ίδια λογική για την αντίστροφη περίπτωση.

Στη συνέχεια ακολουθούμε την τεχνική που εφάρμοσαν οι {\en R.~Chan} και {\en M-C.~Yeung} στην \cite{yeung1993circulant}, αλλά στην περίπτωσή μας, έχουμε να μελετήσουμε μιγαδικές συναρτήσεις, αντί για πραγματικές. Θεωρούμε τις συναρτήσεις $\mathfrak{g}_k$ για κάθε αντίστοιχο σημείο $\xi_k$, όπως κάναμε στο Λήμμα \ref{lem:g function}:
\begin{equation*}
\mathfrak{g}_k(x)=
\begin{cases}
x+\pi-\xi_k,~&0< x\leq\xi_k,\\
x-\pi-\xi_k,~&\xi_k<x\leq2\pi.
\end{cases}~k=1,2,\dots,\nu,
\end{equation*}
η οποία είναι ασυνεχής στα σημεία $\xi_k$, με αντίστοιχο εύρος ασυνέχειας ίσο με $-2\pi$.

Ακολούθως, προσθαφαιρούμε στην $f$ τη συνάρτηση,
\begin{equation*}
\widehat{\mathfrak{g}}(x)=\displaystyle\sum\limits_{k=1}^{\nu}\left(\frac{\alpha_k}{2\pi}+\mathrm{i}\frac{\beta_k}{2\pi}\right)\mathfrak{g}_k(x),
\end{equation*}
\begin{equation*}
\text{οπότε }f(x)=f_1(x)+\mathrm{i}f_2(x)=f_1(x)+\displaystyle\sum\limits_{k=1}^{\nu}\frac{\alpha_k}{2\pi}\mathfrak{g}_k(x)+\mathrm{i}f_2(x)+\mathrm{i}\sum\limits_{k=1}^{\nu}\frac{\beta_k}{2\pi}\mathfrak{g}_k(x)-\widehat{\mathfrak{g}}(x).
\end{equation*}
`Οπως στην \cite{yeung1993circulant}, είναι προφανές ότι η $h_1(x)=f_1(x)+\displaystyle\sum\limits_{k=1}^{\nu}\frac{\alpha_k}{2\pi}\mathfrak{g}_k(x)$ και η $h_2(x)=f_2(x)+\displaystyle\sum\limits_{k=1}^{\nu}\frac{\beta_k}{2\pi}\mathfrak{g}_k(x)$ είναι και οι δύο συνεχείς στο $(0,2\pi]$. Σχηματίζοντας τη διαφορά $\Delta_n=T_n(f)-\mathcal{C}_n(f)$ λαμβάνουμε ότι:
\begin{equation}\label{Delta_n}
\Delta_n=T_n(f)-\mathcal{C}_n(f)=T_n(h)-\mathcal{C}_n(h)-\left(T_n(\widehat{\mathfrak{g}})-\mathcal{C}_n(\widehat{\mathfrak{g}})\right),
\end{equation}
όπου $h(x)=h_1(x)+\mathrm{i}h_2(x)$. Επειδή η $h$ είναι μια συνεχής συνάρτηση, από το Θεώρημα \ref{Thm:1}, ο πίνακας $T_n(h)-\mathcal{C}_n(h)$ έχει κύρια συσσώρευση των ιδιοτιμών στο 0. Πρέπει να εκτιμήσουμε τη συμπεριφορά του $T_n(\widehat{\mathfrak{g}})-\mathcal{C}_n(\widehat{\mathfrak{g}})$. `Εχουμε:
\begin{equation*}
T_n(\widehat{\mathfrak{g}})-\mathcal{C}_n(\widehat{\mathfrak{g}})=\displaystyle\sum\limits_{k=1}^{\nu}\frac{\alpha_k}{2\pi}\left(T_n(\mathfrak{g}_k)-\mathcal{C}_n(\mathfrak{g}_k)\right)+\mathrm{i}\displaystyle\sum\limits_{k=1}^{\nu}\frac{\beta_k}{2\pi}\left(T_n(\mathfrak{g}_k)-\mathcal{C}_n(\mathfrak{g}_k)\right).
\end{equation*}
Χρησιμοποιώντας το Λήμμα \ref{lem:g function}, λαμβάνουμε ότι κάθε πίνακας $T_n(\mathfrak{g}_k)-\mathcal{C}_n(\mathfrak{g}_k)=A_{n,k}+B_{n,k}$, όπου $\Vert A_{n,k}\Vert_F\leq c_k<\infty$ ($c_k:$ σταθερά ανεξάρτητη της $n$) και $\Vert B_{n,k}\Vert_F=\mathcal{O}\left(\log{n}\right)$. Επειδή $T_n(\widehat{\mathfrak{g}})-\mathcal{C}_n(\widehat{\mathfrak{g}})$ είναι ένας γραμμικός συνδυασμός των πινάκων $T_n(\mathfrak{g}_k)-\mathcal{C}_n(\mathfrak{g}_k)$, $k=1,2,\dots,\nu$ ισχύει ότι $\Vert T_n(\widehat{\mathfrak{g}})-\mathcal{C}_n(\widehat{\mathfrak{g}})\Vert_F=\mathcal{O}\left(\log{n}\right)$. Ως εκ τούτου, $\mathcal{O}\left(\log{n}\right)$ ιδιοτιμές του $\Delta_n=T_n(f)-\mathcal{C}_n(f)$ κυμαίνονται εκτός του ορθογωνίου $[-\varepsilon,\varepsilon]^2$, του μιγαδικού επιπέδου.\qed

Στη συνέχεια θα μελετήσουμε τη συσσώρευση των ιδιοτιμών και ιδιαζουσών τιμών του προρρυθμισμένου πίνακα $\mathcal{C}_n^{-1}(f)T_n(f)$, όταν η $f$ είναι κατά τμήματα συνεχής, με πεπερασμένο πλήθος σημείων ασυνέχειας.

\begin{thm}\label{sing_val_discont}
`Εστω $f$ μια μιγαδική συνάρτηση, όπως στο Θεώρημα \ref{thm:T-C_piecewise}. Τότε, οι ιδιάζουσες τιμές του προρρυθμισμένου πίνακα $\mathcal{C}_n^{-1}(f)T_n(f)$ συσσωρεύονται γύρω από το 1, με την έννοια της γενικής συσσώρευσης, δηλαδή για κάθε $\varepsilon>0$, $\mathcal{O}(\log{n})$ ιδιάζουσες τιμές κυμαίνονται εκτός του $[1-\varepsilon,1+\varepsilon]$.
\end{thm}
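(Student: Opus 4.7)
Η στρατηγική μου είναι να επεκτείνω την προσέγγιση του Θεωρήματος \ref{Thm:2}, προσαρμόζοντάς την στο πλαίσιο των κατά τμήματα συνεχών συναρτήσεων. Θα γράψω $\mathcal{C}_n^{-1}(f)T_n(f)-I_n=\mathcal{C}_n^{-1}(f)\Delta_n$, όπου $\Delta_n=T_n(f)-\mathcal{C}_n(f)$, και θα ανάγω το πρόβλημα στο να δείξω ότι το πολύ $\mathcal{O}(\log n)$ ιδιάζουσες τιμές του προρρυθμισμένου πίνακα κυμαίνονται εκτός του διαστήματος $[1-\varepsilon,1+\varepsilon]$.

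Το κρίσιμο βήμα είναι η διάσπαση του $\Delta_n$ σε ένα κομμάτι μικρής φασματικής νόρμας και ένα κομμάτι χαμηλής βαθμίδας τάξεως $\mathcal{O}(\log n)$. Από τη σχέση (\ref{Delta_n}) έχουμε $\Delta_n=(T_n(h)-\mathcal{C}_n(h))-(T_n(\widehat{\mathfrak{g}})-\mathcal{C}_n(\widehat{\mathfrak{g}}))$. Το πρώτο κομμάτι, μέσω του Θεωρήματος \ref{Thm:1}, γράφεται ως άθροισμα πίνακα φασματικής νόρμας $\leq\varepsilon$ και πίνακα σταθερής βαθμίδας. Το δεύτερο κομμάτι, από την ανάλυση του Λήμματος \ref{lem:g function}, γράφεται ως άθροισμα ενός πίνακα φραγμένης νόρμας {\en Frobenius} (άρα με σταθερό πλήθος ιδιαζουσών τιμών μεγαλύτερων του $\varepsilon$) και ενός πίνακα $B_n$ ομοιωματικού του $\operatorname{diag}(\mathcal{H}_{n/2},-\mathcal{H}_{n/2})$, όπου $\mathcal{H}_{n/2}$ είναι ο πίνακας {\en Hilbert}. Λόγω του κλασικού αποτελέσματος του {\en Widom} \cite{widom1966hankel}, ο $B_n$ έχει μόνο $\mathcal{O}(\log n)$ ιδιάζουσες τιμές μεγαλύτερες του $\varepsilon$. Μέσω κατάλληλης αποκοπής στην αποσύνθεση ιδιαζουσών τιμών κάθε κομματιού, θα καταλήξω στη γραφή $\Delta_n=\Delta_n^{(1)}+\Delta_n^{(2)}$, με $\|\Delta_n^{(1)}\|_2\leq c\varepsilon$ για κάποια σταθερά $c$ ανεξάρτητη του $n$, και $\operatorname{rank}(\Delta_n^{(2)})=\mathcal{O}(\log n)$.

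Στη συνέχεια, αξιοποιώντας το φράγμα $\|\mathcal{C}_n^{-1}(f)\|_2\leq 2\|1/f\|_\infty$ από το Λήμμα \ref{Lem:1} (το οποίο ισχύει αφού η $f$ δεν έχει ρίζες), θα γράψω:
\begin{equation*}
\mathcal{C}_n^{-1}(f)T_n(f)=I_n+E_1+E_2,
\end{equation*}
με $\|E_1\|_2\leq 2\|1/f\|_\infty c\varepsilon$ και $\operatorname{rank}(E_2)\leq\operatorname{rank}(\Delta_n^{(2)})=\mathcal{O}(\log n)$. Εφαρμόζοντας την ανισότητα του {\en Weyl} για ιδιάζουσες τιμές, $\sigma_{i+j-1}(A+B)\leq\sigma_i(A)+\sigma_j(B)$, στη διάσπαση $(I_n+E_1)+E_2$ και θέτοντας $r=\operatorname{rank}(E_2)$, λαμβάνω $\sigma_{i+r}(\mathcal{C}_n^{-1}(f)T_n(f))\leq\sigma_i(I_n+E_1)\leq 1+\|E_1\|_2$, και συμμετρικά $\sigma_i(\mathcal{C}_n^{-1}(f)T_n(f))\geq 1-\|E_1\|_2$ για $i\leq n-r$. Επομένως, το πολύ $2r=\mathcal{O}(\log n)$ ιδιάζουσες τιμές κυμαίνονται εκτός του $[1-2\|1/f\|_\infty c\varepsilon,1+2\|1/f\|_\infty c\varepsilon]$, το οποίο, λόγω του αυθαίρετου $\varepsilon$, δίνει τη ζητούμενη γενική συσσώρευση γύρω από το 1.

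Το βασικότερο τεχνικό σημείο είναι η σωστή σύνθεση των αποκοπών στην αποσύνθεση ιδιαζουσών τιμών, ώστε η $\mathcal{O}(\log n)$ εκτίμηση για τα μεγάλα συστατικά του {\en Hankel/Hilbert} κομματιού $B_n$ (βάσει του θεωρήματος του {\en Widom}) να μεταφέρεται αδιατάρακτη στο συνολικό $\Delta_n$ και, μέσω του ομοιόμορφα φραγμένου $\mathcal{C}_n^{-1}(f)$, στον τελικό προρρυθμισμένο πίνακα χωρίς απώλεια της λογαριθμικής τάξης στο πλήθος των εκτός-διαστήματος ιδιαζουσών τιμών.
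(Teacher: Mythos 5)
Η πρότασή σου είναι ορθή και ακολουθεί ουσιαστικά την ίδια πορεία με την απόδειξη της διατριβής: ίδια διάσπαση $\Delta_n=\Delta_n(h)-\Delta_n(\widehat{\mathfrak{g}})$ μέσω της (\ref{Delta_n}), ίδια επίκληση του Λήμματος \ref{lem:g function} και του αποτελέσματος του {\en Widom} για το κομμάτι {\en Hilbert} που δίνει την τάξη $\mathcal{O}(\log n)$, ίδιο φράγμα για τον $\mathcal{C}_n^{-1}(f)$ από το Λήμμα \ref{Lem:1} και ίδια εφαρμογή της ανισότητας {\en Weyl} στο τέλος. Η μόνη (επουσιώδης) διαφορά είναι ότι οργανώνεις πιο ρητά την αποκοπή σε «μικρή νόρμα συν χαμηλή βαθμίδα» πριν την ανισότητα {\en Weyl}, ενώ η διατριβή την εφαρμόζει απευθείας στους όρους $A$ και $B$.
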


\begin{proof}
Είναι προφανές ότι για τον προρρυθμισμένο πίνακα ισχύει:
\begin{equation*}
\begin{split}
\mathcal{C}_n^{-1}(f)T_n(f)&=\mathcal{C}_n^{-1}(f)(T_n(f)-\mathcal{C}_n(f))+I_n=\mathcal{C}_n^{-1}(f)\Delta_n+I_n\\
&\hspace{-6pt}\overset{(\ref{Delta_n})}{=}\mathcal{C}_n^{-1}(f)\Delta_n(h)-\mathcal{C}_n^{-1}(f)\Delta_n(\widehat{\mathfrak{g}})+I_n,
\end{split}
\end{equation*}
όπου $\Delta_n(h)=T_n(h)-\mathcal{C}_n(h)$ και $\Delta_n(\widehat{\mathfrak{g}})=T_n(\widehat{\mathfrak{g}})-\mathcal{C}_n(\widehat{\mathfrak{g}})$. Από την απόδειξη του Θεωρήματος \ref{thm:T-C_piecewise} έχουμε ότι $\Delta_n(\widehat{\mathfrak{g}})=A_n+B_n$, όπου $\Vert A_n\Vert_F<\infty$, ανεξάρτητη της διάστασης $n$ και $\Vert B_n\Vert_F=\mathcal{O}\left(\log{n}\right)$. Επομένως, η παραπάνω σχέση γράφεται ως:
\begin{equation*}
\mathcal{C}_n^{-1}(f)T_n(f)-I_n=\mathcal{C}_n^{-1}(f)\left(\Delta_n(h)-A_n\right)-\mathcal{C}_n^{-1}(f)B_n.
\end{equation*}

Υποθέτουμε ότι $A=\mathcal{C}_n^{-1}(f)\left(\Delta_n(h)-A_n\right)$ και $B=-\mathcal{C}_n^{-1}(f)B_n$. Γνωρίζουμε ότι οι ιδιάζουσες τιμές του $A$ συσσωρεύονται κατά κύριο τρόπο γύρω από το 0 και ο $B_n$, ο οποίος είναι όμοιος με τον $\mathcal{H}_n$, έχει γενική συσσώρευση των ιδιαζουσών τιμών γύρω από το 0. `Εστω ότι για κάποιο συγκεκριμένο $\varepsilon>0$, $k$ ιδιάζουσες τιμές είναι μεγαλύτερες από $\varepsilon$. Από την ανισότητα του {\en Weyl} \cite{horn1994topics} έχουμε ότι:
\begin{equation*}
\sigma_{j+k}(A+B)\leq\sigma_{k+1}(A)+\sigma_j(B)\leq\varepsilon+\sigma_j(B),~1\leq j+k\leq n.
\end{equation*}

Συνεπώς, $\mathcal{O}(\log{n})$ ιδιάζουσες τιμές του προρρυθμισμένου πίνακα κυμαίνονται εκτός του $[1-\varepsilon,1+\varepsilon]$ και η απόδειξη ολοκληρώθηκε.
\end{proof}

\begin{lem}\label{lem: Frobenius of product}
`Εστω $\lbrace\mathcal{A}_n\rbrace$ και $\lbrace\mathcal{B}_n\rbrace$ δύο ακολουθίες πινάκων, με $\mathcal{A}_n$ να είναι αντιστρέψιμος για κάθε $n\in\mathbb{N}$. `Εστω επίσης ότι οι $\lbrace\mathcal{A}_n\rbrace$ και $\lbrace\mathcal{A}_n^{-1}\rbrace$ έχουν φραγμένη νόρμα $\Vert\cdot\Vert_2$, το οποίο σημαίνει ότι υπάρχουν θετικές σταθερές $c$ και $d$ τέτοιες ώστε $\Vert \mathcal{A}_n\Vert_2\leq c$ και $\Vert \mathcal{A}_n^{-1}\Vert_2\leq d$, για οποιονδήποτε $n\in\mathbb{N}$. Τότε, για τη νόρμα {\en Frobenius} ισχύει ότι, $\Vert \mathcal{A}_n\mathcal{B}_n\Vert_F\sim\Vert \mathcal{B}_n\Vert_F$, που σημαίνει ότι υπάρχουν $\alpha,\beta>0$ έτσι ώστε, $\alpha\Vert \mathcal{B}_n\Vert_F\leq\Vert \mathcal{A}_n\mathcal{B}_n\Vert_F\leq\beta\Vert \mathcal{B}_n\Vert_F$.
\end{lem}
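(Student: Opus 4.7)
Το σχέδιο απόδειξης θα βασιστεί στη γνωστή ανισότητα συμβατότητας μεταξύ της φασματικής νόρμας και της νόρμας {\en Frobenius}, δηλαδή στη σχέση $\Vert AB\Vert_F\leq\Vert A\Vert_2\Vert B\Vert_F$ για οποιουσδήποτε συμβατούς πίνακες $A,B$. Αυτή προκύπτει από τη σχέση $\Vert AB\Vert_F^2=\operatorname{tr}(B^HA^HAB)\leq\Vert A^HA\Vert_2\operatorname{tr}(B^HB)=\Vert A\Vert_2^2\Vert B\Vert_F^2$ και αποτελεί την κεντρική αλγεβρική ιδιότητα που θα χρησιμοποιηθεί στις δύο κατευθύνσεις του ζητούμενου διπλού φράγματος.

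Για το άνω φράγμα, θα εφαρμόσω απευθείας την παραπάνω ανισότητα στους πίνακες $\mathcal{A}_n$ και $\mathcal{B}_n$, λαμβάνοντας $\Vert\mathcal{A}_n\mathcal{B}_n\Vert_F\leq\Vert\mathcal{A}_n\Vert_2\Vert\mathcal{B}_n\Vert_F\leq c\Vert\mathcal{B}_n\Vert_F$, οπότε η επιλογή $\beta=c$ κάνει τη δουλειά για κάθε $n\in\mathbb{N}$. Για το κάτω φράγμα, η στρατηγική είναι να γραφτεί ο $\mathcal{B}_n$ ως $\mathcal{B}_n=\mathcal{A}_n^{-1}\left(\mathcal{A}_n\mathcal{B}_n\right)$, εκμεταλλευόμενος την αντιστρεψιμότητα του $\mathcal{A}_n$ που εξασφαλίζει η υπόθεση. Εφαρμόζοντας πάλι την ίδια ανισότητα συμβατότητας παίρνω $\Vert\mathcal{B}_n\Vert_F\leq\Vert\mathcal{A}_n^{-1}\Vert_2\Vert\mathcal{A}_n\mathcal{B}_n\Vert_F\leq d\Vert\mathcal{A}_n\mathcal{B}_n\Vert_F$, από όπου $\Vert\mathcal{A}_n\mathcal{B}_n\Vert_F\geq\frac{1}{d}\Vert\mathcal{B}_n\Vert_F$, δηλαδή $\alpha=\frac{1}{d}$.

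Δε διακρίνω κανένα ουσιαστικό εμπόδιο: όλα τα βήματα είναι στοιχειώδη και απορρέουν αποκλειστικά από την υπόθεση ομοιόμορφου φράγματος των $\Vert\mathcal{A}_n\Vert_2$ και $\Vert\mathcal{A}_n^{-1}\Vert_2$, σε συνδυασμό με την κλασική ανισότητα συμβατότητας νορμών. Το μοναδικό «λεπτό» σημείο είναι η παρατήρηση ότι η ποσοτική υπόθεση $\Vert\mathcal{A}_n^{-1}\Vert_2\leq d$ (και όχι απλώς η αντιστρεψιμότητα του $\mathcal{A}_n$) είναι απαραίτητη προκειμένου η σταθερά $\alpha$ στο κάτω φράγμα να είναι ανεξάρτητη της διάστασης $n$· χωρίς αυτή, το πηλίκο $\Vert\mathcal{A}_n\mathcal{B}_n\Vert_F/\Vert\mathcal{B}_n\Vert_F$ θα μπορούσε να προσεγγίζει το 0 όσο $n\rightarrow\infty$, καταρρίπτοντας τη σχέση ισοδυναμίας $\Vert\mathcal{A}_n\mathcal{B}_n\Vert_F\sim\Vert\mathcal{B}_n\Vert_F$.
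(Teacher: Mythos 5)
Η πρότασή σου είναι σωστή, αλλά ακολουθεί γνήσια διαφορετικό δρόμο από την απόδειξη της διατριβής. Εσύ στηρίζεσαι αποκλειστικά στην ανισότητα συμβατότητας $\Vert AB\Vert_F\leq\Vert A\Vert_2\Vert B\Vert_F$, εφαρμόζοντάς την μία φορά απευθείας για το άνω φράγμα και μία φορά στη γραφή $\mathcal{B}_n=\mathcal{A}_n^{-1}\left(\mathcal{A}_n\mathcal{B}_n\right)$ για το κάτω, οπότε λαμβάνεις τις σταθερές $\alpha=\frac{1}{d}$ και $\beta=c$. Η διατριβή, αντιθέτως, εργάζεται στο επίπεδο των ιδιοτιμών, χρησιμοποιώντας το {\en min-max} θεώρημα των {\en Courant-Fischer} για να συγκρίνει κάθε ιδιοτιμή $\lambda_j$ του $\mathcal{A}_n^H\mathcal{B}_n^H\mathcal{B}_n\mathcal{A}_n$ με την αντίστοιχη $\mu_j$ του $\mathcal{B}_n^H\mathcal{B}_n$, αποδεικνύοντας ότι $\lambda_j=\widetilde{c}_j\mu_j$ με $\frac{1}{d^2}\leq\widetilde{c}_j\leq c^2$, και στη συνέχεια αθροίζει μέσω του ίχνους. Η δική σου προσέγγιση είναι πιο στοιχειώδης και συντομότερη και δίνει ακριβώς τα ίδια τελικά φράγματα, ενώ η προσέγγιση της διατριβής αποδεικνύει στην πραγματικότητα κάτι ισχυρότερο, δηλαδή τη σύγκριση των ιδιαζουσών τιμών μία προς μία και όχι μόνο του αθροίσματος των τετραγώνων τους, αν και στη συνέχεια της διατριβής αξιοποιείται μόνο η ισοδυναμία των νορμών {\en Frobenius}. Τέλος, η παρατήρησή σου ότι το ομοιόμορφο φράγμα της $\Vert\mathcal{A}_n^{-1}\Vert_2$ (και όχι απλώς η αντιστρεψιμότητα) είναι απαραίτητο ώστε η σταθερά $\alpha$ να είναι ανεξάρτητη της διάστασης $n$ είναι εύστοχη.
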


\begin{proof}
Γνωρίζουμε ότι $\Vert \mathcal{A}_n\mathcal{B}_n\Vert_F=\Vert \mathcal{B}_n\mathcal{A}_n\Vert_F=\left(\operatorname{tr}\left(\mathcal{A}_n^H\mathcal{B}_n^H\mathcal{B}_n\mathcal{A}_n\right)\right)^{\frac{1}{2}}$.

`Εστω $\lambda_j$, $j=1,2,\dots,n$ οι ιδιοτιμές του $\mathcal{A}_n^H\mathcal{B}_n^H\mathcal{B}_n\mathcal{A}_n$ για κάποιο σταθερό $n$ και $\mu_j$ οι ιδιοτιμές του $\mathcal{B}_n^H\mathcal{B}_n$, ταξινομημένες σε μη-αύξουσα σειρά. `Εστω επίσης ότι με $w_j$, $j=1,2,\dots,n$ συμβολίζουμε τα ιδιοδιανύσματα του $\mathcal{B}_n^H\mathcal{B}_n$ και $W_j=\operatorname{span}\lbrace w_j,w_{j+1},\dots,w_n\rbrace$, $\widetilde{W}_j=\operatorname{span}\lbrace w_1,w_2,\dots,w_j\rbrace$. Χρησιμοποιούμε το {\en min-max} θεώρημα των {\en Courant-Fisher}, για να συσχετίσουμε τις ιδιοτιμές $\lambda_j$ με τις $\mu_j$.

\begin{equation*}
\begin{split}
\lambda_j&=\min_{\mathcal{V}:\operatorname{dim}(\mathcal{V})=n-j+1}{\max_{x\in\mathcal{V}}{\frac{x^H\mathcal{A}_n^H\mathcal{B}_n^H\mathcal{B}_n\mathcal{A}_nx}{x^Hx}}}\leq\max_{x\in \mathcal{A}_n^{-1}W_j}{\frac{x^H\mathcal{A}_n^H\mathcal{B}_n^H\mathcal{B}_n\mathcal{A}_nx}{x^Hx}}\\
&=\max_{y\in W_j}{\frac{y^H\mathcal{B}_n^H\mathcal{B}_ny}{y^H\mathcal{A}_n^{-H}\mathcal{A}_n^{-1}y}}=\max_{y\in W_j}{\frac{y^H\mathcal{B}_n^H\mathcal{B}_ny}{y^Hy}\cdot\frac{y^Hy}{y^H\mathcal{A}_n^{-H}\mathcal{A}_n^{-1}y}}\\
&\leq\max_{y\in W_j}{\frac{y^H\mathcal{B}_n^H\mathcal{B}_ny}{y^Hy}}\cdot\max_{y\in W_j}{\frac{y^Hy}{y^H\mathcal{A}_n^{-H}\mathcal{A}_n^{-1}y}}\\
&=\min_{\mathcal{V}:\operatorname{dim}(\mathcal{V})=n-j+1}{\max_{y\in\mathcal{V}}{\frac{y^H\mathcal{B}_n^H\mathcal{B}_ny}{y^Hy}}}\cdot\bar{c}_j=\bar{c}_j\mu_j,\text{ όπου }\frac{1}{d^2}\leq\bar{c}_j\leq c^2.
\end{split}
\end{equation*}

Από την άλλη,
\begin{equation*}
\begin{split}
\lambda_j&=\max_{\mathcal{V}:\operatorname{dim}(\mathcal{V})=j}{\min_{x\in\mathcal{V}}{\frac{x^H\mathcal{A}_n^H\mathcal{B}_n^H\mathcal{B}_n\mathcal{A}_nx}{x^Hx}}}\geq\min_{x\in \mathcal{A}_n^{-1}\widetilde{W}_j}{\frac{x^H\mathcal{A}_n^H\mathcal{B}_n^H\mathcal{B}_n\mathcal{A}_nx}{x^Hx}}\\
&=\min_{y\in\widetilde{W}_j}{\frac{y^H\mathcal{B}_n^H\mathcal{B}_ny}{y^H\mathcal{A}_n^{-H}\mathcal{A}_n^{-1}y}}\geq\min_{y\in\widetilde{W}_j}{\frac{y^H\mathcal{B}_n^H\mathcal{B}_ny}{y^Hy}}\cdot\min_{y\in\widetilde{W}_j}{\frac{y^Hy}{y^H\mathcal{A}_n^{-H}\mathcal{A}_n^{-1}y}}\\
&=\max_{\mathcal{V}:\operatorname{dim}(\mathcal{V})=j}{\min_{y\in\mathcal{V}}{\frac{y^H\mathcal{B}_n^H\mathcal{B}_ny}{y^Hy}}}\cdot\underline{c}_j=\underline{c}_j\mu_j,\text{ όπου }\frac{1}{d^2}\leq\underline{c}_j\leq c^2.
\end{split}
\end{equation*}

Επομένως, από το θεώρημα ενδιαμέσων τιμών υπάρχουν $\widetilde{c}_j$ ($\underline{c}_j\leq\widetilde{c}_j\leq\bar{c}_j$), έτσι ώστε $\lambda_j=\widetilde{c}_j\mu_j$, $j=1,2,\dots,n$.

Παίρνοντας τη νόρμα {\en Frobenius}, λαμβάνουμε:
\begin{equation*}
\begin{split}
\Vert \mathcal{A}_n\mathcal{B}_n\Vert_F&=\left(\operatorname{tr}\left(\mathcal{A}_n^H\mathcal{B}_n^H\mathcal{B}_n\mathcal{A}_n\right)\right)^{\frac{1}{2}}=\bigg(\sum_{j=1}^n\lambda_j\bigg)^{\frac{1}{2}}=\bigg(\sum_{j=1}^n\widetilde{c}_j\mu_j\bigg)^{\frac{1}{2}}\\
&=\bigg(\widetilde{c}\sum_{j=1}^n\mu_j\small\bigg)^{\frac{1}{2}}=\sqrt{\widetilde{c}}\bigg(\sum_{j=1}^n\mu_j\bigg)^{\frac{1}{2}}=c^\prime\left(\operatorname{tr}\left(\mathcal{B}_n^H\mathcal{B}_n\right)\right)^{\frac{1}{2}}=c^\prime\Vert \mathcal{B}_n\Vert_F,
\end{split}
\end{equation*}
όπου, από το θεώρημα ενδιαμέσων τιμών, $\frac{1}{d^2}\leq\widetilde{c}\leq c^2$ και $\frac{1}{d}\leq c^{\prime}\leq c$. `Ετσι, η απόδειξη ολοκληρώθηκε.
\end{proof}

\begin{thm}\label{eig_discont}
`Εστω $f$ μια μιγαδική συνάρτηση, όπως περιγράφηκε στο Θεώρημα \ref{thm:T-C_piecewise}. Τότε, οι ιδιοτιμές του $\mathcal{C}_n^{-1}(f)T_n(f)$ συσσωρεύονται γύρω από το $(1,0)$, με την έννοια της γενικής συσσώρευσης, που σημαίνει ότι για κάθε $\varepsilon>0$, $\mathcal{O}(\log{n})$ ιδιοτιμές κυμαίνονται εκτός του ορθογωνίου $[1-\varepsilon,1+\varepsilon]\times[-\varepsilon,\varepsilon]$ του μιγαδικού επιπέδου.
\end{thm}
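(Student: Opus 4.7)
Το σχέδιό μου είναι να παραχθεί το θεώρημα ως συνέπεια της αντίστοιχης εκτίμησης ιδιαζουσών τιμών μέσω της κλασικής ανισότητας του {\en Weyl} $|\lambda_j^\downarrow(A)|\leq\sigma_j^\downarrow(A)$, εφαρμοσμένης στον πίνακα υπολοίπου $X=\mathcal{C}_n^{-1}(f)T_n(f)-I_n$. Παρατηρώ ότι κάθε ιδιοτιμή $\lambda$ του $\mathcal{C}_n^{-1}(f)T_n(f)$ εκτός του ορθογωνίου $[1-\varepsilon,1+\varepsilon]\times[-\varepsilon,\varepsilon]$ ικανοποιεί
\[
|\lambda-1|\geq\max\bigl(|\operatorname{Re}(\lambda)-1|,|\operatorname{Im}(\lambda)|\bigr)>\varepsilon,
\]
οπότε αντιστοιχεί σε ιδιοτιμή του $X$ με μέτρο μεγαλύτερο του $\varepsilon$. Από την ανισότητα του {\en Weyl}, ο αριθμός τέτοιων ιδιοτιμών του $X$ φράσσεται από τον αριθμό των ιδιαζουσών τιμών του $X$ που υπερβαίνουν το $\varepsilon$. Συνεπώς αρκεί να αποδείξω ότι $\#\{j:\sigma_j(X)>\varepsilon\}=\mathcal{O}(\log n)$.

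Η εκτίμηση αυτή για τον $X=\mathcal{C}_n^{-1}(f)\Delta_n$ γίνεται ακολουθώντας τη γραμμή της απόδειξης του Θεωρήματος \ref{sing_val_discont}, δηλαδή μέσω της διάσπασης $\Delta_n=\Delta_n(h)-A_n-B_n$ από την απόδειξη του Θεωρήματος \ref{thm:T-C_piecewise}. Μετράω χωριστά τις μεγάλες ιδιάζουσες τιμές κάθε όρου: ο $\Delta_n(h)=S_n+L_n$ από το Θεώρημα \ref{Thm:1} έχει μόνο $2M=\mathcal{O}(1)$ ιδιάζουσες τιμές πάνω από οποιοδήποτε σταθερό κατώφλι, αφού $\Vert S_n\Vert_2$ είναι μικρή και $\operatorname{rank}(L_n)\leq 2M$. Η φραγμένη νόρμα {\en Frobenius} του $A_n$ δίνει επίσης $\mathcal{O}(1)$ μεγάλες ιδιάζουσες τιμές, μέσω της ισότητας $\sum\sigma_j^2(A_n)=\Vert A_n\Vert_F^2$. Τέλος, από το Λήμμα \ref{lem:g function} ο $B_n$ είναι ορθομοναδιαία όμοιος με τον $\operatorname{diag}(\mathcal{H}_{n/2},-\mathcal{H}_{n/2})$, και το κλασικό αποτέλεσμα του {\en Widom} \cite{widom1966hankel} για τον πίνακα {\en Hilbert} εξασφαλίζει ότι ακριβώς $\mathcal{O}(\log n)$ από τις ιδιάζουσες τιμές του $B_n$ υπερβαίνουν κάθε σταθερό κατώφλι. Εφαρμόζοντας διαδοχικά την αθροιστική ανισότητα του {\en Weyl} για ιδιάζουσες τιμές, $\sigma_{i+j-1}(A+B)\leq\sigma_i(A)+\sigma_j(B)$, συμπεραίνω ότι $\#\{\sigma_j(\Delta_n)>\varepsilon\}=\mathcal{O}(\log n)$. Το φράγμα $\Vert\mathcal{C}_n^{-1}(f)\Vert_2\leq 2\Vert 1/f\Vert_\infty$ του Λήμματος \ref{Lem:1}, που εξασφαλίζεται ακριβώς από την υπόθεση ότι η $f$ δεν έχει ρίζες στο $(-\pi,\pi]$, σε συνδυασμό με την πολλαπλασιαστική ανισότητα $\sigma_j(\mathcal{C}_n^{-1}(f)\Delta_n)\leq\Vert\mathcal{C}_n^{-1}(f)\Vert_2\sigma_j(\Delta_n)$, μεταφέρει τη λογαριθμική εκτίμηση στον $X$, κλείνοντας τον συλλογισμό.

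Το πιο λεπτό σημείο της απόδειξης θα είναι ο ακριβής χειρισμός του όρου $B_n$: μια άμεση εκτίμηση μέσω μόνο της νόρμας {\en Frobenius} (που είναι $\mathcal{O}(\log n)$) θα έδινε, με χρήση της ανισότητας $\#\{\sigma_j>\varepsilon\}\leq\Vert B_n\Vert_F^2/\varepsilon^2$, το χαλαρότερο φράγμα $\mathcal{O}((\log n)^2)$, το οποίο δεν αρκεί για το θεώρημα. Χάρη όμως στη λεπτή ορθομοναδιαία ομοιότητα του $B_n$ με τον $\operatorname{diag}(\mathcal{H}_{n/2},-\mathcal{H}_{n/2})$ που αποκαλύφθηκε στο Λήμμα \ref{lem:g function}, και στο αποτέλεσμα του {\en Widom} για τη λογαριθμική συσσώρευση του φάσματος του πίνακα {\en Hilbert}, λαμβάνεται το βελτιωμένο φράγμα $\mathcal{O}(\log n)$. Τα υπόλοιπα βήματα---αθροιστική ανισότητα {\en Weyl} για ιδιάζουσες τιμές, πολλαπλασιασμός με τον καλά-ρυθμισμένο $\mathcal{C}_n^{-1}(f)$, και τελικά η μετάβαση $\sigma\to\lambda$ μέσω της $|\lambda_j|\leq\sigma_j$---αποτελούν τυπικές εφαρμογές γνωστών ανισοτήτων και δεν αναμένεται να παρουσιάσουν επιπλέον τεχνική δυσκολία.
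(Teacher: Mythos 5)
{\en
Your overall engine --- decomposing $\Delta_n$ via Theorem \ref{thm:T-C_piecewise} into a bounded--Frobenius--norm part plus the Hilbert--type block $B_n$, invoking Widom's result to count $\mathcal{O}(\log n)$ singular values of $B_n$ above any fixed threshold, combining the pieces with the additive Weyl inequality for singular values, and pushing the count through $\mathcal{C}_n^{-1}(f)$ using its bounded spectral norm --- is exactly the machinery the paper uses, and you correctly observe that the crude estimate $\#\{\sigma_j>\varepsilon\}\leq\Vert B_n\Vert_F^2/\varepsilon^2$ would only yield $\mathcal{O}((\log n)^2)$.

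The gap is in your very first reduction. The termwise inequality $|\lambda_j^\downarrow(A)|\leq\sigma_j^\downarrow(A)$ is not a classical fact: it holds only for $j=1$ (spectral radius $\leq$ spectral norm). Weyl's majorant theorem gives only the multiplicative majorization $\prod_{j\leq k}|\lambda_j^\downarrow|\leq\prod_{j\leq k}\sigma_j^\downarrow$, which implies neither the termwise bound nor the counting statement you need. Concretely, for $X=\left(\begin{smallmatrix}0&1\\ \delta&0\end{smallmatrix}\right)$ the eigenvalues are $\pm\sqrt{\delta}$ while the singular values are $1$ and $\delta$; taking the threshold $\varepsilon=\delta^{3/4}$, two eigenvalues exceed $\varepsilon$ in modulus but only one singular value does, so ``number of eigenvalue outliers $\leq$ number of singular-value outliers'' fails for non-normal matrices, and your $X=\mathcal{C}_n^{-1}(f)T_n(f)-I_n$ is non-normal. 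This is precisely why the paper does not argue on $X$ directly: it splits the preconditioned matrix into its Hermitian part $\frac{1}{2}\left[\mathcal{C}_n^{-1}(f)\Delta_n+\Delta_n^H\mathcal{C}_n^{-1}(\bar{f})\right]+I_n$ and the analogous anti-Hermitian part, where eigenvalue-outlier counting coincides with singular-value counting (Hermitian case), applies the singular-value machinery (Lemma \ref{lem: Frobenius of product} together with the underlying Hilbert-matrix structure) to each part separately, and then localizes $\operatorname{Re}(\lambda)$ and $\operatorname{Im}(\lambda)$ of the full matrix through the ranges of these two parts (Bendixson--Hirsch). You need to replace your $\sigma\rightarrow\lambda$ step by this (or an equivalent) symmetrization argument; everything preceding it can be kept.
}
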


\begin{proof}
Για να μελετήσουμε το φάσμα των ιδιοτιμών διαχωρίζουμε τον προρρυθμισμένο πίνακα στο Ερμιτιανό και αντι-Ερμιτιανό του μέρος. Το Ερμιτιανό μέρος γράφεται ως:
\begin{equation*}
\frac{1}{2}\left[\mathcal{C}_n^{-1}(f)T_n(f)+T_n(\bar{f})\mathcal{C}_n^{-1}(\bar{f})\right]=\frac{1}{2}\left[\mathcal{C}_n^{-1}(f)\Delta_n+\Delta_n^H\mathcal{C}_n^{-1}(\bar{f})\right]+I_n,
\end{equation*}
κι έτσι για να αποδείξουμε ότι έχει γενική συσσώρευση γύρω από το 1 με $\mathcal{O}(\log{n})$ ιδιοτιμές εκτός του διαστήματος συσσώρευσης, πρέπει να αποδείξουμε ότι ο Ερμιτιανός πίνακας $\mathcal{C}_n^{-1}(f)\Delta_n+\Delta_n^H\mathcal{C}_n^{-1}(\bar{f})$ έχει γενική συσσώρευση γύρω από το 0 με $\mathcal{O}(\log{n})$ ιδιοτιμές εκτός του διαστήματος συσσώρευσης. Επειδή η $f$ δεν έχει ρίζες, έχουμε ότι $c\leq\Vert\mathcal{C}_n^{-1}(f)\Vert_2\leq C$, όπου $c, C$ είναι θετικές σταθερές. Στο Θεώρημα \ref{thm:T-C_piecewise} αποδείξαμε ότι $\Vert\Delta_n\Vert_F=\mathcal{O}(\log{n})$. Χρησιμοποιώντας το Λήμμα \ref{lem: Frobenius of product} με $\mathcal{A}_n=\mathcal{C}_n^{-1}(f)$ και $\mathcal{B}_n=\Delta_n$, λαμβάνουμε ότι $\Vert\mathcal{C}_n^{-1}(f)\Delta_n\Vert_F=\mathcal{O}(\log{n})$. Παίρνοντας τη νόρμα {\en Frobenius} έχουμε
\begin{equation*}
\Vert\mathcal{C}_n^{-1}(f)\Delta_n+\Delta_n^H\mathcal{C}_n^{-1}(\bar{f})\Vert_F\leq\Vert\mathcal{C}_n^{-1}(f)\Delta_n\Vert_F+\Vert\Delta_n^H\mathcal{C}_n^{-1}(\bar{f})\Vert_F=\mathcal{O}(\log{n}).
\end{equation*}

Επομένως, για κάποιο $\varepsilon>0$, το πολύ $\mathcal{O}(\log{n})$ ιδιοτιμές του $\mathcal{C}_n^{-1}(f)\Delta_n+\Delta_n^H\mathcal{C}_n^{-1}(\bar{f})$ κυμαίνονται εκτός του διαστήματος $[-\varepsilon,\varepsilon]$, το οποίο ισοδύναμα μας δίνει τη γενική συσσώρευση του πραγματικού μέρους των ιδιοτιμών (του προρρυθμισμένου συστήματος) στο $[1-\varepsilon,1+\varepsilon]$.

Θεωρούμε το αντι-Ερμιτιανό μέρος του προρρυθμισμένου πίνακα:
\begin{equation*}
\frac{1}{2}\left[\mathcal{C}_n^{-1}(f)T_n(f)-T_n(\bar{f})\mathcal{C}_n^{-1}(\bar{f})\right]=\frac{1}{2}\left[\mathcal{C}_n^{-1}(f)\Delta_n-\Delta_n^H\mathcal{C}_n^{-1}(\bar{f})\right].
\end{equation*}

Ομοίως έχουμε ότι
\begin{equation*}
\Vert\mathcal{C}_n^{-1}(f)\Delta_n-\Delta_n^H\mathcal{C}_n^{-1}(\bar{f})\Vert_F\leq\Vert\mathcal{C}_n^{-1}(f)\Delta_n\Vert_F+\Vert\Delta_n^H\mathcal{C}_n^{-1}(\bar{f})\Vert_F=\mathcal{O}(\log{n}).
\end{equation*}
Η παραπάνω σχέση μας δίνει τη γενική συσσώρευση του φανταστικού μέρους των ιδιοτιμών στο $[-\varepsilon,\varepsilon]$.

Από την άλλη:
\begin{equation*}
\begin{split}
\mathcal{O}(\log{n})=\Vert\mathcal{C}_n^{-1}(f)\Delta_n\Vert_F&\leq\frac{1}{2}\Vert\mathcal{C}_n^{-1}(f)\Delta_n+\Delta_n^H\mathcal{C}_n^{-1}(\bar{f})\Vert_F\\
&\phantom{hspace{2pc}}+\frac{1}{2}\Vert\mathcal{C}_n^{-1}(f)\Delta_n-\Delta_n^H\mathcal{C}_n^{-1}(\bar{f})\Vert_F.
\end{split}
\end{equation*}

Αυτό σημαίνει ότι είτε το Ερμιτιανό, είτε το αντι-Ερμιτιανό μέρος (είτε και τα δύο) έχουν νόρμα {\en Frobenius} της τάξης $\mathcal{O}(\log{n})$. Λαμβάνοντας υπόψη τις ιδιότητες ισοδυναμίας των όρων $\Vert\mathcal{C}_n^{-1}(f)\Delta_n\Vert_F$, $\Vert\Delta_n\Vert_F$ και $\Vert \mathcal{H}_{n/2}\Vert_F$, λαμβάνουμε ότι $\mathcal{O}(\log{n})$ ιδιοτιμές κυμαίνονται εκτός του ορθογωνίου $[1-\varepsilon,1+\varepsilon]\times[-\varepsilon,\varepsilon]$ του μιγαδικού επιπέδου κι έτσι η απόδειξη ολοκληρώθηκε.
\end{proof}

\begin{rem}
Λόγω της περιοδικότητας της συνάρτησης $f$, όλα τα παραπάνω θεωρήματα ισχύουν και στο $(-\pi,\pi]$.
\end{rem}

Συνεχίζουμε με τη μελέτη της περίπτωσης όπου η γεννήτρια συνάρτηση $f$ έχει πεπερασμένα σημεία ριζών, καθώς επίσης και πεπερασμένα σημεία ασυνέχειας. Αρχικά, μελετάμε τη συμπεριφορά των ιδιαζουσών τιμών.

\begin{thm}\label{sing_val_discont_jumps}
`Εστω $f=f_1+\mathrm{i}f_2$, όπου $f_1$ είναι άρτια και $f_2$ περιττή, $2\pi$-περιοδική, έχοντας $k$ ρίζες $x_1,x_2,\dots,x_k$ στο $(-\pi,\pi]$ και $\nu$ σημεία ασυνέχειας $\xi_1,\xi_2,\dots,\xi_\nu$, επίσης στο $(-\pi,\pi]$, με αντίστοιχα εύρη ασυνέχειας
\begin{equation*}
\alpha_j=\lim\limits_{x\rightarrow\xi_{j}^+}f(x)-\lim\limits_{x\rightarrow\xi_{j}^-}f(x),
\end{equation*}
και υποθέτουμε ότι τα σημεία $x_j$ είναι διαφορετικά από τα $\xi_j$. `Εστω επίσης $g$, το τριγωνομετρικό πολυώνυμο τέτοιο ώστε η $\frac{f}{g}$ να μην έχει ρίζες στο $(-\pi,\pi]$. Τότε, για κάθε $\varepsilon>0$, το διάστημα $[1-\varepsilon,1+\varepsilon]$ αποτελεί σύνολο γενικής συσσώρευσης των ιδιαζουσών τιμών του προρρυθμισμένου πίνακα $\mathcal{C}_n^{-1}\left(\frac{f}{g}\right)T_n^{-1}(g)T_n(f)$, με $\mathcal{O}(\log{n})$ ιδιάζουσες τιμές εκτός του διαστήματος.
\end{thm}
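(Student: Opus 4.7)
Η στρατηγική μου θα ήταν να συνδυάσω τις τεχνικές των Θεωρημάτων \ref{sing val_cont} (ταινιωτός-επί-κυκλοειδής προρρυθμιστής για ρίζες στη συνεχή περίπτωση) και \ref{sing_val_discont} (αμιγώς κυκλοειδής προρρυθμιστής στην κατά τμήματα συνεχή περίπτωση χωρίς ρίζες). Πρώτα, θα χρησιμοποιούσα τη γνωστή αποσύνθεση
\begin{equation*}
T_n(f)=T_n(g)T_n\left(\frac{f}{g}\right)+L_{1},
\end{equation*}
όπου ο $L_1$ είναι πίνακας χαμηλής βαθμίδας, το πολύ $2d$, με $d=\operatorname{deg}(g)$. Η σχέση αυτή ισχύει διότι το $g$ είναι τριγωνομετρικό πολυώνυμο σταθερού βαθμού, όπως και στην απόδειξη του Θεωρήματος \ref{sing val_cont}. Αντικαθιστώντας στον προρρυθμισμένο πίνακα λαμβάνω
\begin{equation*}
\mathcal{C}_n^{-1}\left(\frac{f}{g}\right)T_n^{-1}(g)T_n(f)=\mathcal{C}_n^{-1}\left(\frac{f}{g}\right)T_n\left(\frac{f}{g}\right)+R_n,
\end{equation*}
με $R_n=\mathcal{C}_n^{-1}\left(\frac{f}{g}\right)T_n^{-1}(g)L_1$ πίνακα βαθμίδας το πολύ ίσης με $2d$.

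Στη συνέχεια, παρατηρώ ότι η $\frac{f}{g}$ είναι κατά τμήματα συνεχής συνάρτηση στο $(-\pi,\pi]$, χωρίς ρίζες (εξ υποθέσεως), με σημεία ασυνέχειας υποσύνολο του $\lbrace\xi_1,\dots,\xi_{\nu}\rbrace$ και φραγμένα εύρη ασυνέχειας (τα νέα εύρη είναι $\frac{\alpha_j}{g(\xi_j)}$, εφόσον το $g$, ως τριγωνομετρικό πολυώνυμο, είναι συνεχές και μη μηδενικό στα $\xi_j$, καθώς αυτά είναι διαφορετικά από τα σημεία ριζών $x_j$). `Ετσι, η $\frac{f}{g}$ ικανοποιεί τις υποθέσεις του Θεωρήματος \ref{sing_val_discont}, το οποίο εφαρμόζεται απευθείας και μου δίνει γενική συσσώρευση των ιδιαζουσών τιμών του $\mathcal{C}_n^{-1}\left(\frac{f}{g}\right)T_n\left(\frac{f}{g}\right)$ στο $[1-\varepsilon,1+\varepsilon]$ με $\mathcal{O}(\log n)$ εκτοπίσματα.

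Τέλος, για να συνδυάσω τις δύο συνεισφορές, θα εφαρμόσω την ανισότητα Weyl για τις ιδιάζουσες τιμές $\sigma_{j+k}(A+B)\leq\sigma_{k+1}(A)+\sigma_j(B)$, όπως ακριβώς και στην απόδειξη του Θεωρήματος \ref{sing_val_discont}. Εφόσον ο $R_n$ έχει το πολύ $2d$ μη-μηδενικές ιδιάζουσες τιμές, εισάγει μόνο ένα σταθερό (ανεξάρτητο του $n$) πλήθος επιπλέον εκτοπισμάτων, το οποίο απορροφάται από το $\mathcal{O}(\log n)$. Το κύριο εμπόδιο εκτιμώ ότι είναι η προσεκτική επαλήθευση ότι η $\frac{f}{g}$ είναι φραγμένη και κατά τμήματα συνεχής με τον ακριβή τρόπο που απαιτεί το Θεώρημα \ref{sing_val_discont}, δηλαδή ότι το $g$ (με τη μορφή που δόθηκε στην ενότητα \ref{Ss:32}) άρει όντως όλες τις ρίζες του $f$ με τις σωστές πολλαπλότητες, χωρίς να εισάγει νέες ανωμαλίες, και ότι τα εναπομείναντα σημεία ασυνέχειας παραμένουν πεπερασμένα και με φραγμένα εύρη, ώστε η αρχική ανάλυση τύπου Hilbert-πίνακα (μέσω του Λήμματος \ref{lem:g function}) να παραμένει έγκυρη.
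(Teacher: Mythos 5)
Η πρότασή σου είναι ορθή και ακολουθεί ουσιαστικά την ίδια πορεία με την απόδειξη της διατριβής: αποσύνθεση $T_n(f)=T_n(g)T_n\left(\frac{f}{g}\right)+L_1$ με $L_1$ χαμηλής βαθμίδας, παρατήρηση ότι η $\frac{f}{g}$ είναι κατά τμήματα συνεχής χωρίς ρίζες με πεπερασμένα εύρη ασυνέχειας $\frac{\alpha_j}{g(\xi_j)}$, και εφαρμογή του Θεωρήματος \ref{sing_val_discont}. Η μόνη (επουσιώδης) διαφορά είναι ότι η διατριβή περνά μέσω του πίνακα των κανονικών εξισώσεων με διόρθωση βαθμίδας το πολύ $4d$, ενώ εσύ εφαρμόζεις απευθείας την ανισότητα Weyl στις ιδιάζουσες τιμές του αθροίσματος, κάτι που οδηγεί στο ίδιο συμπέρασμα.
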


\begin{proof}
Ακολουθούμε την απόδειξη του Θεωρήματος \ref{sing val_cont} προκειμένου να λάβουμε το αντίστοιχο αποτέλεσμα για τον πίνακα των κανονικών εξισώσεων,
\begin{equation*}
\begin{split}
&\mathcal{C}_n^{-1}\left(\frac{f}{g}\right)T_n^{-1}(g)T_n(f)T_n(\bar{f})T_n^{-1}(\bar{g})\mathcal{C}_n^{-1}\left(\frac{\bar{f}}{\bar{g}}\right)=\\
&\mathcal{C}_n^{-1}\left(\frac{f}{g}\right)T_n\left(\frac{f}{g}\right)T_n\left(\frac{\bar{f}}{\bar{g}}\right)\mathcal{C}_n^{-1}\left(\frac{\bar{f}}{\bar{g}}\right)+L,
\end{split}
\end{equation*}
όπου $L$ είναι πίνακας χαμηλής βαθμίδας, το πολύ ίσης με $4d$ ($d$ είναι ο βαθμός του τριγωνομετρικού πολυωνύμου $g$). Επομένως, οι ιδιάζουσες τιμές του προρρυθμισμένου πίνακα $\mathcal{C}_n^{-1}\left(\frac{f}{g}\right)T_n^{-1}(g)T_n(f)$ συμπεριφέρονται όπως αυτές του $\mathcal{C}_n^{-1}\left(\frac{f}{g}\right)T_n^{-1}\left(\frac{f}{g}\right)$ με τη διαφορά $4d$ επιπλέον ιδιαζουσών τιμών εκτός του διαστήματος συσσώρευσης, που προκύπτουν από τον $L$.

Η μόνη διαφορά από το Θεώρημα \ref{sing val_cont} εντοπίζεται στο ότι η συνάρτηση $\frac{f}{g}$ παρουσιάζει ασυνέχεια στα σημεία $\xi_j$, $j=1,2,\dots,\nu$ με πεπερασμένο εύρος ασυνέχειας $\beta_j=\frac{\alpha_j}{g(\xi_j)}$, $j=1,2,\dots,\nu$. Στη συνέχεια, εφαρμόζοντας το Θεώρημα \ref{sing_val_discont} για την $\frac{f}{g}$ καταλήγουμε στο επιθυμητό αποτέλεσμα.
\end{proof}

Η συμπεριφορά των ιδιοτιμών δίνεται στο επόμενο θεώρημα.

\begin{thm}
`Εστω $f$ μια μιγαδική συνάρτηση και $g$ τριγωνομετρικό πολυώνυμο, όπως περιγράφηκε στο Θεώρημα \ref{sing_val_discont_jumps}. Τότε, οι ιδιοτιμές του πίνακα $\mathcal{C}_n^{-1}(\frac{f}{g})T_n^{-1}(g)T_n(f)$ συσσωρεύονται γύρω από το $(1,0)$, με την έννοια της γενικής συσσώρευσης. Ισοδύναμα, για κάθε $\varepsilon>0$, $\mathcal{O}(\log{n})$ ιδιοτιμές κυμαίνονται εκτός του ορθογωνίου $[1-\varepsilon,1+\varepsilon]\times[-\varepsilon,\varepsilon]$ του μιγαδικού επιπέδου.
\end{thm}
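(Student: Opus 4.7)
Η στρατηγική μου είναι να ανάγω την υπό απόδειξη πρόταση στο Θεώρημα \ref{eig_discont}, εκμεταλλευόμενος τη δομή χαμηλής βαθμίδας που εισάγει η διαίρεση με το τριγωνομετρικό πολυώνυμο $g$. Θα χρησιμοποιήσω πρώτα τη σχέση (\ref{eq:low rank E}) για να γράψω $T_n(f)=T_n(g)T_n\left(\frac{f}{g}\right)+L_1$, όπου $L_1$ είναι πίνακας με $\operatorname{rank}(L_1)\leq 2d$ ($d=\operatorname{deg}(g)$). Πολλαπλασιάζοντας από αριστερά με $\mathcal{C}_n^{-1}\left(\frac{f}{g}\right)T_n^{-1}(g)$ προκύπτει
\begin{equation*}
\mathcal{C}_n^{-1}\left(\frac{f}{g}\right)T_n^{-1}(g)T_n(f)=\mathcal{C}_n^{-1}\left(\frac{f}{g}\right)T_n\left(\frac{f}{g}\right)+R_n,
\end{equation*}
όπου $R_n=\mathcal{C}_n^{-1}\left(\frac{f}{g}\right)T_n^{-1}(g)L_1$ έχει βαθμίδα το πολύ $2d$, ανεξάρτητη της διάστασης $n$.

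Εν συνεχεία, θα επιβεβαιώσω ότι η συνάρτηση $\frac{f}{g}$ ικανοποιεί τις υποθέσεις του Θεωρήματος \ref{eig_discont}: εξ υποθέσεως η $\frac{f}{g}$ δεν έχει ρίζες στο $(-\pi,\pi]$, ενώ κληρονομεί όλες τις ασυνέχειες της $f$ στα σημεία $\xi_j$ με νέα πεπερασμένα εύρη ασυνέχειας $\frac{\alpha_j}{g(\xi_j)}$ (εδώ χρησιμοποιείται ουσιωδώς η υπόθεση ότι τα $\xi_j$ είναι διαφορετικά από τα $x_j$, ώστε $g(\xi_j)\neq 0$, αφού το $g$ κατασκευάστηκε για να μηδενίζεται στα $x_j$). Η άρτια/περιττή δομή της $\frac{f}{g}$ κληρονομείται επίσης από την $f$. Επομένως, το Θεώρημα \ref{eig_discont} εφαρμοσμένο στην $\frac{f}{g}$ εξασφαλίζει ότι οι ιδιοτιμές του $\mathcal{C}_n^{-1}\left(\frac{f}{g}\right)T_n\left(\frac{f}{g}\right)$ συσσωρεύονται γύρω από το $(1,0)$ με $\mathcal{O}(\log n)$ εξαιρέσεις εκτός του ορθογωνίου $[1-\varepsilon,1+\varepsilon]\times[-\varepsilon,\varepsilon]$.

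Το βασικό τεχνικό βήμα είναι η διαχείριση του πίνακα $R_n$. Θα μιμηθώ τη διάσπαση σε Ερμιτιανό και αντι-Ερμιτιανό μέρος, όπως στο Θεώρημα \ref{eig_discont}, γράφοντας το Ερμιτιανό μέρος του προρρυθμισμένου πίνακα ως $I_n+H(S_n)+H(R_n)$, με $S_n=\mathcal{C}_n^{-1}\left(\frac{f}{g}\right)T_n\left(\frac{f}{g}\right)-I_n$. Από την ανάλυση του Θεωρήματος \ref{eig_discont} για την $\frac{f}{g}$, ο Ερμιτιανός πίνακας $H(S_n)$ έχει το πολύ $\mathcal{O}(\log n)$ ιδιοτιμές εκτός του $[-\varepsilon,\varepsilon]$, ενώ ο $H(R_n)$ είναι Ερμιτιανός με βαθμίδα το πολύ $4d$. Από την εναλλαγή {\en (Cauchy interlacing)} για Ερμιτιανά αθροίσματα ενός πίνακα με έναν πίνακα χαμηλής βαθμίδας, το άθροισμα $H(S_n)+H(R_n)$ έχει το πολύ $\mathcal{O}(\log n)+8d=\mathcal{O}(\log n)$ ιδιοτιμές εκτός του $[-\varepsilon,\varepsilon]$. Άρα τα πραγματικά μέρη των ιδιοτιμών του προρρυθμισμένου πίνακα συσσωρεύονται στο $[1-\varepsilon,1+\varepsilon]$ με $\mathcal{O}(\log n)$ εξαιρέσεις, και η εντελώς ανάλογη μεταχείριση του αντι-Ερμιτιανού μέρους $SH(S_n)+SH(R_n)$ δίνει το αντίστοιχο φράγμα για τα φανταστικά μέρη.

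Το κυριότερο εμπόδιο που προβλέπω είναι ο συνδυασμός δύο ετερογενών πηγών ``εξαιρέσεων'': οι $\mathcal{O}(\log n)$ εξαιρέσεις από τη δομή τύπου {\en Hilbert} που παρουσιάζει η συνεισφορά των ασυνεχειών (Λήμμα \ref{lem:g function}) και οι $\mathcal{O}(1)$ εξαιρέσεις από τη χαμηλή βαθμίδα του $R_n$. Η ομοιόμορφη σύζευξή τους απαιτεί να εξασφαλιστεί ότι οι σταθερές δεν εξαρτώνται από το $n$, κάτι που ακολουθεί από το ότι $\|\mathcal{C}_n^{-1}(f/g)\|_2$ και $\|T_n^{-1}(g)L_1\|_2$ παραμένουν ομοιόμορφα φραγμένες (η πρώτη μέσω του Λήμματος \ref{Lem:1} εφαρμοσμένου στην $\frac{f}{g}$ η οποία δεν έχει ρίζες, η δεύτερη λόγω της σταθερής βαθμίδας του $L_1$ σε συνδυασμό με τη συμπεριφορά του $T_n^{-1}(g)$ στο εύρος του $L_1$). Με αυτά τα εργαλεία, η απόδειξη ολοκληρώνεται κομψά.
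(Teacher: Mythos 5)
Η πρόταση είναι σωστή και ακολουθεί ουσιαστικά την ίδια πορεία με την απόδειξη της διατριβής: χρήση της διάσπασης $T_n(f)=T_n(g)T_n\left(\frac{f}{g}\right)+L_1$ για αναγωγή στον $\mathcal{C}_n^{-1}\left(\frac{f}{g}\right)T_n\left(\frac{f}{g}\right)$ συν διόρθωση χαμηλής βαθμίδας, εφαρμογή του Θεωρήματος \ref{eig_discont} στην $\frac{f}{g}$ (με την ορθή παρατήρηση ότι $g(\xi_j)\neq0$) και ξεχωριστή μεταχείριση Ερμιτιανού και αντι-Ερμιτιανού μέρους. Η μόνη επιφανειακή διαφορά είναι ότι πρώτα απομονώνετε τον $R_n$ και μετά παίρνετε τα Ερμιτιανά μέρη, ενώ η διατριβή κάνει τη διάσπαση απευθείας στο Ερμιτιανό μέρος· τα δύο είναι ισοδύναμα.
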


\begin{proof}
Για το Ερμιτιανό μέρος του $\mathcal{C}_n^{-1}(\frac{f}{g})T_n^{-1}(g)T_n(f)$ ισχύει:
\begin{equation*}
\begin{split}
H&=\frac{1}{2}\left[\mathcal{C}_n^{-1}\left(\frac{f}{g}\right)T_n^{-1}(g)T_n(f)+T_n(\bar{f})T_n^{-1}(\bar{g})\mathcal{C}_n^{-1}\left(\frac{\bar{f}}{\bar{g}}\right)\right]\\
&=\frac{1}{2}\left[\mathcal{C}_n^{-1}\left(\frac{f}{g}\right)T_n^{-1}(g)\left(T_n(g)T_n\left(\frac{f}{g}\right)+L_1\right)\right]\\
&\phantom{2pc}+\frac{1}{2}\left[\left(T_n\left(\frac{\bar{f}}{\bar{g}}\right)T_n(\bar{g})+L_1^H\right)T_n^{-1}(\bar{g})\mathcal{C}_n^{-1}\left(\frac{\bar{f}}{\bar{g}}\right)\right]\\
&=\frac{1}{2}\left[\mathcal{C}_n^{-1}\left(\frac{f}{g}\right)T_n\left(\frac{f}{g}\right)+L_2+T_n\left(\frac{\bar{f}}{\bar{g}}\right)\mathcal{C}_n^{-1}\left(\frac{\bar{f}}{\bar{g}}\right)+L_2^H\right]\\
&=\frac{1}{2}\left[\mathcal{C}_n^{-1}\left(\frac{f}{g}\right)T_n\left(\frac{f}{g}\right)+T_n\left(\frac{\bar{f}}{\bar{g}}\right)\mathcal{C}_n^{-1}\left(\frac{\bar{f}}{\bar{g}}\right)\right]+L_3,
\end{split}
\end{equation*}
όπου $L_3$ είναι πίνακας χαμηλής βαθμίδας, το πολύ ίσης με $4d$ ($d$ είναι ο βαθμός του τριγωνομετρικού πολυωνύμου $g$). Παρατηρούμε ότι ο $H$, διαφέρει από το Ερμιτιανό μέρος του πίνακα $\mathcal{C}_n^{-1}\left(\frac{f}{g}\right)T_n\left(\frac{f}{g}\right)$, μόνο κατά τον $L_3$.

Επειδή ο $\mathcal{C}_n^{-1}\left(\frac{f}{g}\right)$ έχει φραγμένη νόρμα $\Vert\cdot\Vert_2$ και η $\frac{f}{g}$ έχει σημεία ασυνέχειας, μπορούμε να χρησιμοποιήσουμε το Λήμμα \ref{lem: Frobenius of product} και το Θεώρημα \ref{eig_discont} για να λάβουμε ότι το Ερμιτιανό μέρος του $\mathcal{C}_n^{-1}\left(\frac{f}{g}\right)T_n\left(\frac{f}{g}\right)$ έχει γενική συσσώρευση των ιδιοτιμών, γύρω από το 1 με $\mathcal{O}(\log{n})$ ιδιοτιμές εκτός του διαστήματος συσσώρευσης.

Εργαζόμενοι ανάλογα για το αντι-Ερμιτιανό μέρος λαμβάνουμε τη γενική συσσώρευση, γύρω από το 0 με $\mathcal{O}(\log{n})$ ιδιοτιμές εκτός του διαστήματος συσσώρευσης και η απόδειξη ολοκληρώθηκε.
\end{proof}


\begin{rem}
Αποδείξαμε τη γενική συσσώρευση των ιδιοτιμών και ιδιαζουσών τιμών, του προρρυθμισμένου πίνακα, όταν η γεννήτρια συνάρτηση $f$ έχει σημεία ασυνέχειας. Αυτό, εκ πρώτης όψεως είναι ένα αρνητικό αποτέλεσμα σε σχέση με τη συνεχή περίπτωση, όπου αποδείχθηκε κύρια συσσώρευση. Ωστόσο, η συνάρτηση του λογαρίθμου, η οποία χαρακτηρίζει τη γενική συσσώρευση, τείνει πολύ αργά προς το άπειρο και στα αριθμητικά αποτελέσματα η γενική συσσώρευση δε γίνεται αισθητή. Αντιθέτως, η συσσώρευση των ιδιοτιμών και ιδιαζουσών τιμών μοιάζει να είναι κύρια.
\end{rem}

\section{Αριθμητικά αποτελέσματα}
\label{sec:experiments}

Σε αυτή την ενότητα δίνουμε διάφορα αριθμητικά παραδείγματα, τα οποία έρχονται σε συμφωνία με τα θεωρητικά αποτελέσματα τα οποία αποδείξαμε, σχετικά με την προτεινόμενη τεχνική προρρύθμισης. Στα αριθμητικά πειράματα το διάνυσμα $b$, του δεξιού μέλους του αρχικού συστήματος, επιλέχθηκε και πάλι έτσι ώστε η λύση του συστήματος να είναι το διάνυσμα, του οποίου όλες οι συνιστώσες είναι ίσες με μονάδα, δηλαδή το $(1~1~\cdots~1)^T$. Ως αρχική προσέγγιση επιλέξαμε το μηδενικό διάνυσμα και ως κριτήριο τερματισμού: $\frac{\Vert r(k)\Vert_2}{\Vert r(0)\Vert_2}\leq 10^{-6}$, όπου $r(k)$ συμβολίζει, όπως και στο προηγούμενο κεφάλαιο, το διάνυσμα υπόλοιπο της $k$-οστής επανάληψης και $r(0)=b$.

Στους πίνακες δίνουμε τον αριθμό επαναλήψεων των μεθόδων {\en PGMRES} και {\en PCGN}, έως ότου να έχουμε την επιθυμητή σύγκλιση στη λύση του συστήματος. Χρησιμοποιούμε τον εξής συμβολισμό: $n$ είναι η διάσταση του συστήματος, με $I_n$ δηλώνουμε ότι δε χρησιμοποιήθηκε κανένας προρρυθμιστής, το $\mathcal{C}_n$ συμβολίζει τον προτεινόμενο προρρυθμιστή, ενώ το $\mathcal{T}_n$ συμβολίζει τον βέλτιστο κυκλοειδή προρρυθμιστή \cite{chan1988optimal,chan1993circulant,yeung1993circulant}. Με χρήση παρόμοιου συμβολισμού, όταν γίνεται άρση των ριζών της γεννήτριας συνάρτησης, με $B\mathcal{C}_n$ και $B\mathcal{T}_n$ θα δηλώνουμε τον αντίστοιχο ````ταινιωτο-επί-κυκλοειδή'''' {\en (Band-times-Circulant)} προρρυθμιστή.

\begin{exmp}\label{exp: ex1}\normalfont
Ως πρώτο παράδειγμα αυτού του κεφαλαίου επιλέγουμε τη $2\pi$-περιοδική και συνεχή συνάρτηση που είδαμε και στο Παράδειγμα \ref{exmp:231}, $\mathfrak{f}_1(x)=x^2+1+\mathrm{i}\mathfrak{h}_1(x)$, όπου: $$\mathfrak{h}_1(x)=\left\{
     \begin{array}{@{}c@{\thinspace}l}
       -\pi-x &,~-\pi\leq x< -\frac{\pi}{2}\\
       x &,~ -\frac{\pi}{2}\leq x<\frac{\pi}{2} \\
       \pi-x &,~\phantom{-}\frac{\pi}{2}\leq x\leq\pi\\
     \end{array}
   \right..$$

Προφανώς, το πραγματικό μέρος της $\mathfrak{f}_1$ είναι μια θετική συνάρτηση στο $(-\pi,\pi]$. Συνεπώς, για την επίλυση του συστήματος χρησιμοποιούμε τους κυκλοειδείς προρρυθμιστές $\mathcal{C}_n$ και $\mathcal{T}_n$. Ο Πίνακας \ref{tab:g1} δείχνει τον αριθμό επαναλήψεων, μέχρι τη σύγκλιση των μεθόδων {\en PGMRES} και {\en PCGN}. Παρατηρούμε ότι οι προρρυθμιστές συγκλινουν, στη λύση του συστήματος, με τον ίδιο αριθμό επαναλήψεων. `Οπως θα δούμε στα παραδείγματα που ακολουθούν, τόσο ο $\mathcal{C}_n$, όσο και ο $\mathcal{T}_n$ είναι αποτελεσματικοί, όμως σημειώνουμε ότι στα περισσότερα εξ αυτών, ο $\mathcal{C}_n$ συγκλίνει στη λύση με λιγότερες επαναλήψεις σε σύγκριση με τον $\mathcal{T}_n$. Επιπλέον, στο Σχήμα \ref{fig:x2+ix_eig} παρατηρούμε ότι η συσσώρευση των ιδιοτιμών είναι πολύ πιο πυκνή όταν χρησιμοποιούμε τον $B\mathcal{C}_n$, αντί του $B\mathcal{T}_n$.

\begin{table}
\centering
\begin{tabular}{cccc|ccc}
\toprule
\multirow{2}{*}{$n$} & \multicolumn{3}{c|} {\en PGMRES} & \multicolumn{3}{c} {\en PCGN} \\
 & $I_n$ & $\mathcal{C}_n$ & $\mathcal{T}_n$ & $I_n$ & $\mathcal{C}_n$ & $\mathcal{T}_n$ \\\midrule
\phantom{0}256 & 31 & 5 & 5 & 72 & 6 & 6 \\
\phantom{0}512 & 30 & 5 & 5 & 74 & 6 & 6 \\
1024 & 29 & 5 & 5 & 73 & 6 & 6 \\
2048 & 29 & 4 & 4 & 72 & 6 & 6 \\\bottomrule
\end{tabular}
\caption{Επαναλήψεις ($\mathfrak{f}_1$).}\label{tab:g1}
\end{table}
\end{exmp}

\begin{exmp}\label{exp: ex4}\normalfont
Σε αυτό το παράδειγμα η γεννήτρια συνάρτηση του πίνακα {\en Toeplitz} είναι η $\mathfrak{f}_2(x)=x^2+\mathrm{i}x^3$, βλ. επίσης Παράδειγμα \ref{exmp:232}. Αυτή έχει μια ρίζα στο 0 και το φανταστικό της μέρος έχει σημείο ασυνέχειας στο $\pi$. Η πολλαπλότητα της ρίζας εξαρτάται από το πραγματικό μέρος της $\mathfrak{f}_2$, γεγονός το οποίο μας οδηγεί στην επιλογή $g(x)=2-2\cos(x)$, για την άρση των ριζών. Η αναγκαιότητα, καθώς και τα πλεονεκτήματα της προρρύθμισης είναι εξώφθαλμα, όπως φαίνεται στον Πίνακα \ref{tab:x2+ix3}.

Η αποτελεσματικότητα της προτεινόμενης τεχνικής προρρύθμισης φαίνεται στον Πίνακα \ref{tab:x2+ix3}. Εκεί παρατηρούμε μια ελαφρώς καλύτερη συμπεριφορά του $B\mathcal{C}_n$ σε σχέση με τον ````ταινιωτό-επί-βέλτιστο κυκλοειδή'''' {\en (Band-times-optimal Circulant)}, όταν παίρνουμε τη λύση μέσω της μεθόδου {\en PCGN} (βλ. επίσης \cite{tyrtyshnikov1995circulant}).

\begin{table}[H]
\centering
\begin{tabular}{ccccc|cccc}
\toprule
\multirow{2}{*}{$n$} & \multicolumn{4}{c|} {\en PGMRES} & \multicolumn{4}{c} {\en PCGN} \\
 & $I_n$ & $\mathcal{T}_n$ & $B\mathcal{C}_n$ & $B\mathcal{T}_n$ & $I_n$ & $\mathcal{T}_n$ & $B\mathcal{C}_n$ & $B\mathcal{T}_n$ \\\midrule
\phantom{0}256 & \phantom{$>$}256 & 22 & 7 & 7 & - & \phantom{0}61 & 13 & 14 \\
\phantom{0}512 & $>$500 & 28 & 7 & 7 & - & \phantom{0}90 & 14 & 17 \\
1024 & $>$500 & 36 & 7 & 7 & - & 140 & 15 & 17 \\
2048 & $>$500 & 39 & 7 & 8 & - & 273 & 16 & 19 \\\bottomrule
\end{tabular}
\caption{Επαναλήψεις ($\mathfrak{f}_2$).}\label{tab:x2+ix3}
\end{table}

\begin{figure}
    \centering
    \subfloat[Ιδιοτιμές όταν $n=256$.]{{\label{fig:4a}\includegraphics[width=0.45\linewidth]{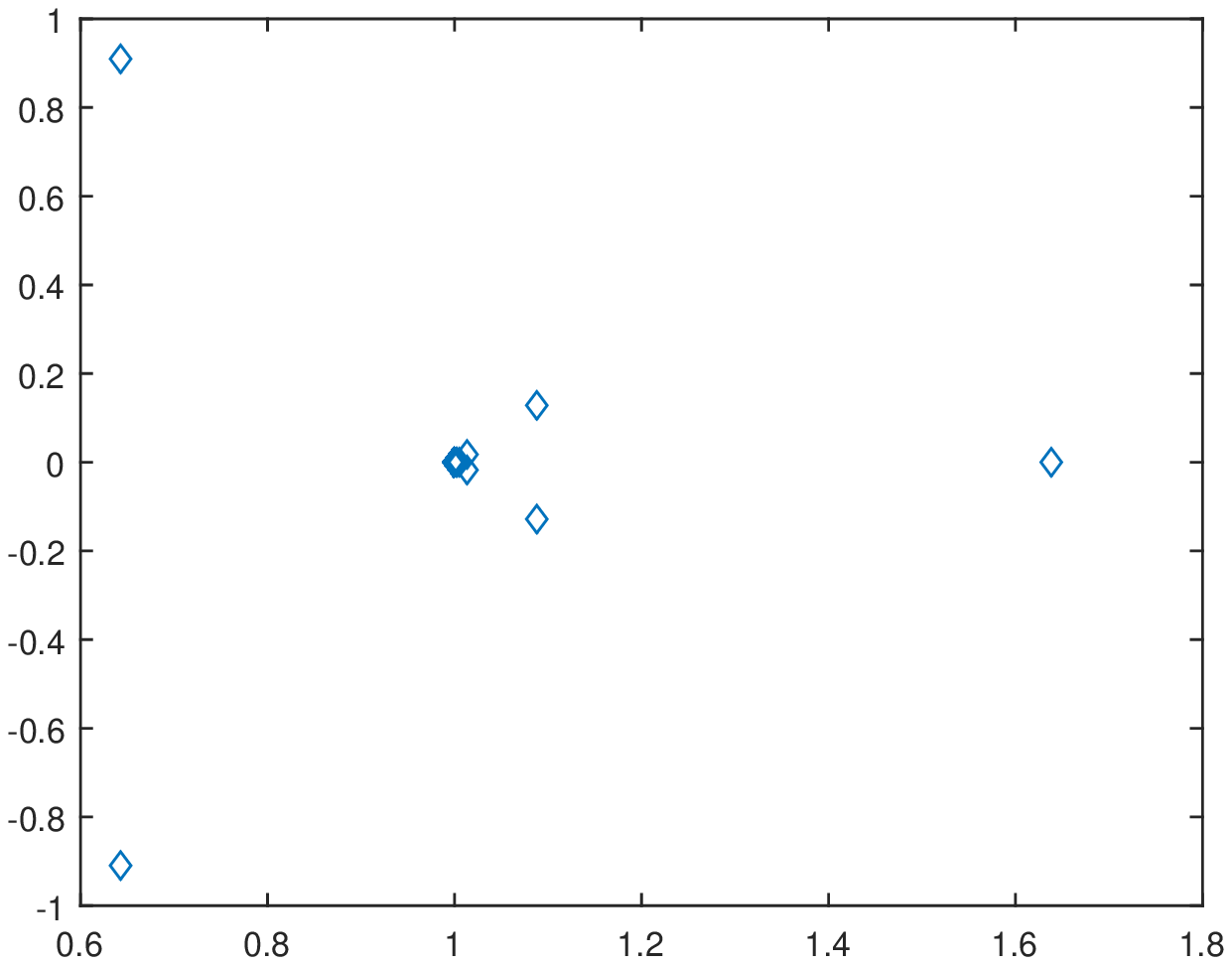}}}%
    \qquad
    \subfloat[Ιδιοτιμές για διάφορες διαστάσεις.]{{\label{fig:4b}\includegraphics[width=0.45\linewidth]{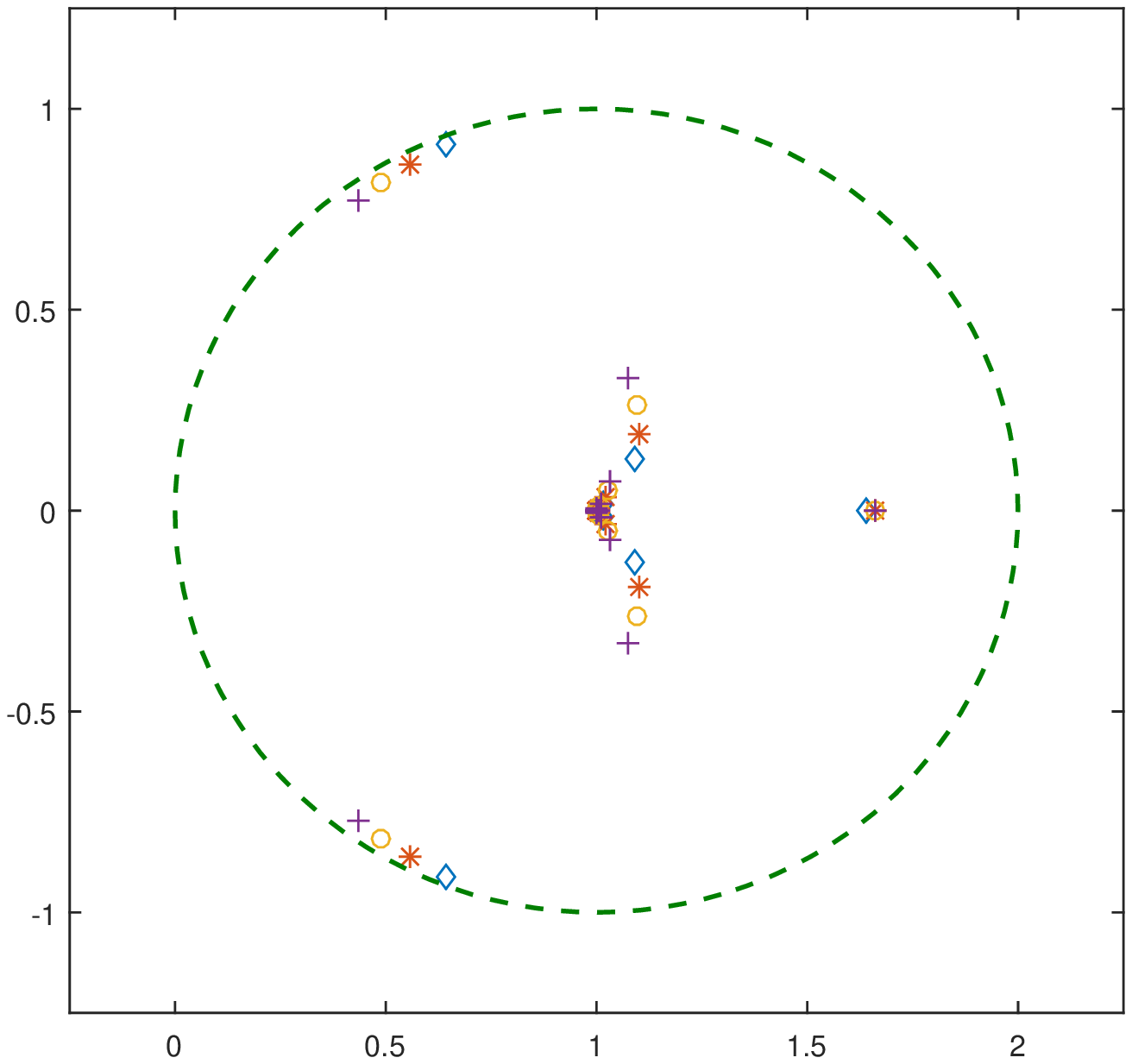}}}%
    \caption{Ιδιοτιμές ($\mathfrak{f}_2$).}
    \label{fig:x2+ix3}
\end{figure}

Στο Σχήμα \ref{fig:4b} δίνουμε τη συσσώρευση των ιδιοτιμών του προρρυθμισμένου πίνακα $\mathcal{C}_n^{-1}\left(\frac{\mathfrak{f}_2}{g}\right)T_n^{-1}(g)T_n(\mathfrak{f}_2)$ από τη διάσταση $n=256$ (μπλε διαμάντια), έως $n=2048$ (μωβ σταυροί). Με πορτοκαλί αστέρια και κίτρινους κύκλους συμβολίζουμε τις αντίστοιχες ιδιοτιμές για $n=512$ και $n=1024$, αντίστοιχα. Ο κύκλος με την πράσινη διακεκομμένη γραμμή, έχει κέντρο το $(1,0)$ και ακτίνα ίση με 1. Παρατηρούμε ότι όλες οι ιδιοτιμές είναι εντός του κύκλου.
\end{exmp}

\begin{exmp}\label{exp: ex3}\normalfont
Σε αυτό το παράδειγμα η γεννήτρια συνάρτηση του πίνακα {\en Toeplitz} είναι η $\mathfrak{f}_3(x)=x^2+\mathrm{i}x$, που μελετήθηκε με διαφορετική τεχνική προρρύθμισης στο Παράδειγμα \ref{exmp:233}. Αυτή, όπως και στο προηγούμενο παράδειγμα, έχει ρίζα στο 0 και το φανταστικό της μέρος παρουσιάζει ασυνέχεια στο $\pi$. Η διαφορά εντόπίζεται στο ότι η πολλαπλότητα της ρίζας εξαρτάται από το φανταστικό μέρος και όχι από το πραγματικό. Η άρση των ριζών είναι απαραίτητη για να επιτύχουμε ταχύτερη σύγκλιση στη λύση του συστήματος. Ο ταινιωτός πίνακας {\en Toeplitz} θα έχει ως γεννήτρια συνάρτηση την $g(x)=2-2\cos(x)+\mathrm{i}\sin(x)$ (βλ. υποενότητα \ref{Ss:32}).

Ο Πίνακας \ref{tab:x2+ix} δείχνει τον αριθμό επαναλήψεων για τις μεθόδους {\en PGMRES} και {\en PCGN}. Στα Σχήματα \ref{fig:x2+ix_sv} και \ref{fig:x2+ix_eig} παρουσιάζουμε τη συσσώρευση ιδιαζουσών τιμών και των ιδιοτιμών, αντίστοιχα, όταν $n=256$. Πιο συγκεκριμένα, αυτές που αφορούν στον προρρυθμιστή $B\mathcal{C}_n$ σημειώνονται με μπλε διαμάντια, ενώ αυτές που αφορούν στον $B\mathcal{T}_n$, με πορτοκαλί αστέρια. Σε αυτό το παράδειγμα είναι ολοφάνερη η αναγκαιότητα προρρύθμισης, διότι χωρίς αυτή, η λύση του συστήματος δίνεται σε επαναλήψεις ίσες με τη διάσταση $n$.

\begin{table}
\centering
\begin{tabular}{ccccc|cccc}
\toprule
\multirow{2}{*}{$n$} & \multicolumn{4}{c|} {\en PGMRES} & \multicolumn{4}{c} {\en PCGN} \\
 & $I_n$ & $\mathcal{T}_n$ & $B\mathcal{C}_n$ & $B\mathcal{T}_n$ & $I_n$ & $\mathcal{T}_n$ & $B\mathcal{C}_n$ & $B\mathcal{T}_n$ \\\midrule
\phantom{0}256 & \phantom{$>$}256 & 9 & 5 & 5 & - & 10 & 7 & 7 \\
\phantom{0}512 & $>$500 & 9 & 5 & 5 & - & 11 & 7 & 7 \\
1024 & $>$500 & 9 & 5 & 5 & - & 11 & 8 & 7 \\
2048 & $>$500 & 9 & 5 & 5 & - & 11 & 8 & 8 \\\bottomrule
\end{tabular}
\caption{Επαναλήψεις ($\mathfrak{f}_3$).}\label{tab:x2+ix}
\end{table}

\begin{figure}[htbp]
    \centering
    \includegraphics[width=0.75\linewidth]{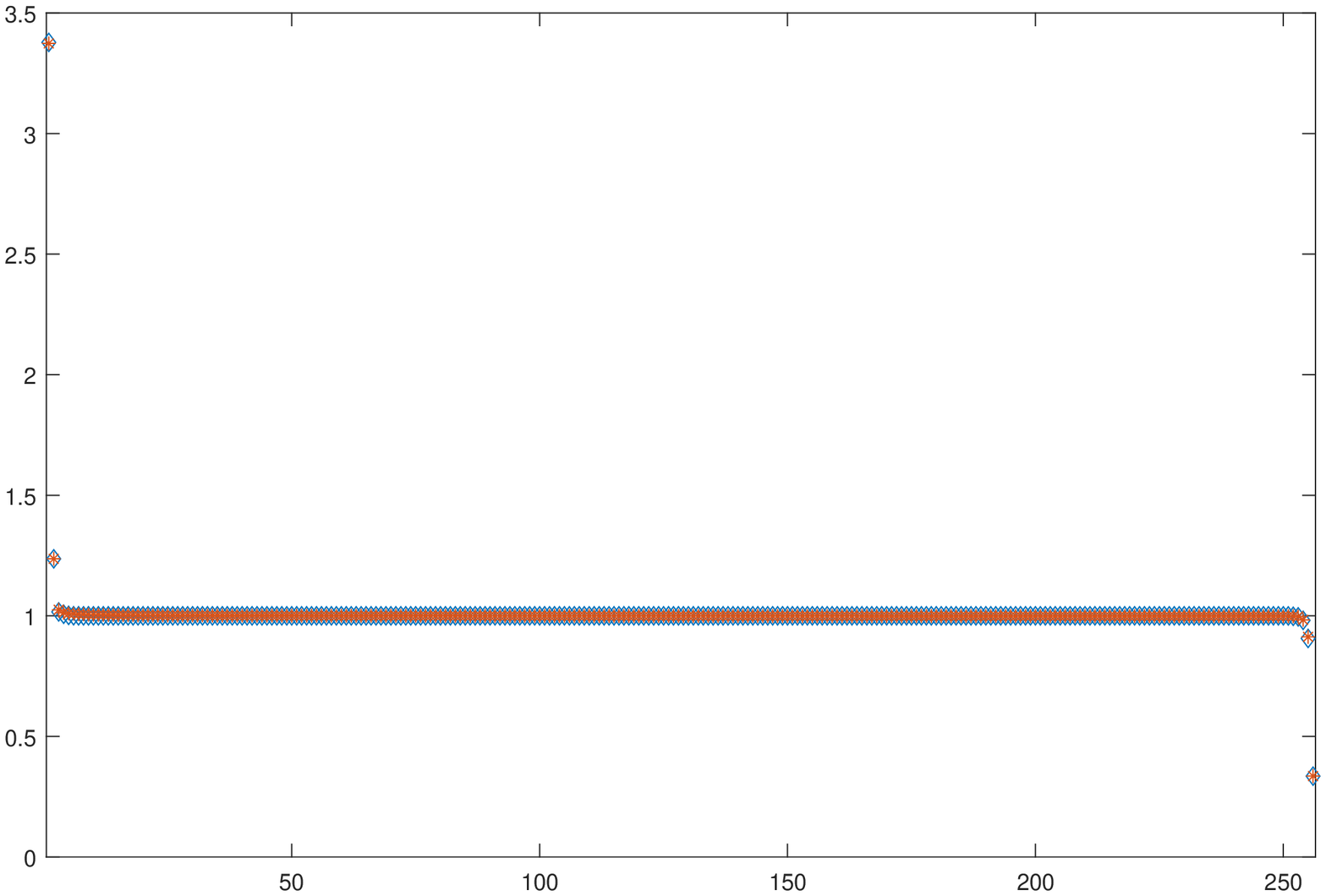}%
    \caption{Ιδιάζουσες τιμές ($\mathfrak{f}_3$).}
    \label{fig:x2+ix_sv}
\end{figure}

\begin{figure}[htbp]%
    \centering
	\subfloat[Ιδιοτιμές.]{{\label{fig:3c}\includegraphics[width=0.45\linewidth]{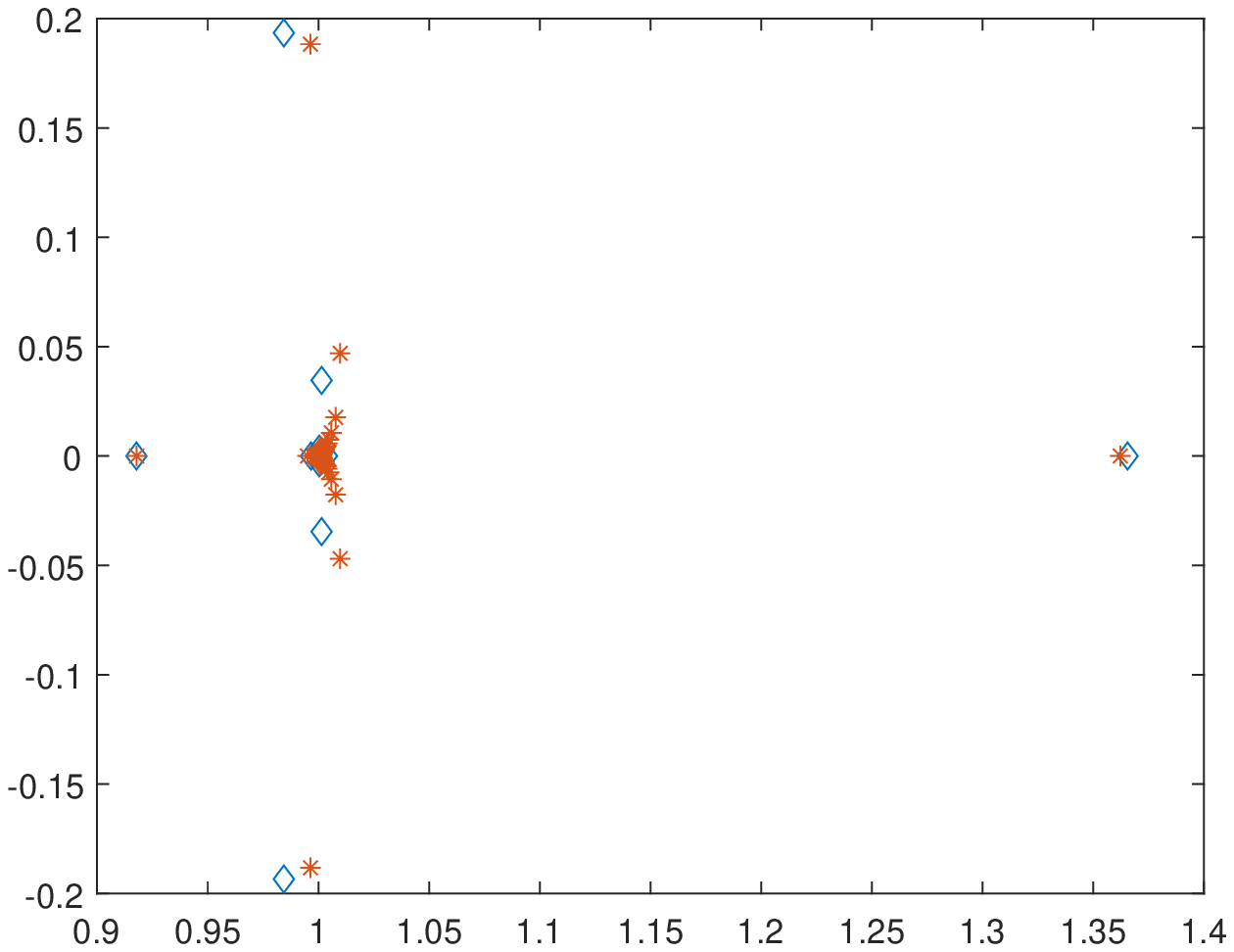}}}%
    \qquad
    \subfloat[Ιδιοτιμές κοντά στο $(1,0)$.]{{\label{fig:3d}\includegraphics[width=0.45\linewidth]{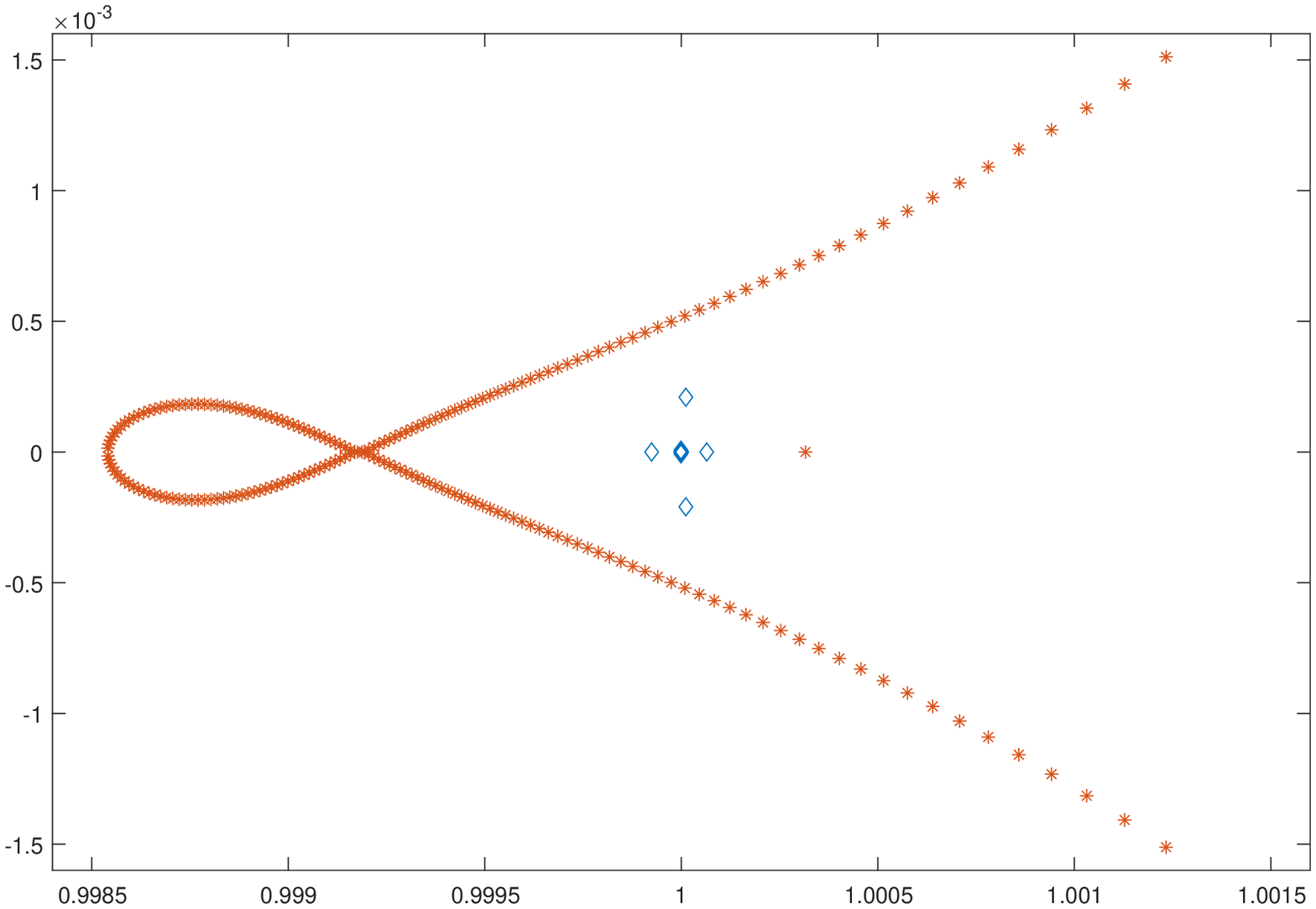}}}    
    \caption{Ιδιοτιμές ($\mathfrak{f}_3$).}
    \label{fig:x2+ix_eig}
\end{figure}

Θα θέλαμε να σημειώσουμε ότι οι αριθμοί επαναλήψεων μεταξύ του $\mathcal{T}_n$ και $B\mathcal{T}_n$ δε διαφέρουν σε μεγάλο βαθμό, γεγονός το οποίο δεν ισχύει στο προηγούμενο παράδειγμα. Εκεί, η διαφορά είναι αισθητή και αυτό οφείλεται στην τάξη της ρίζας, η οποία ήταν ίση με 2, ενώ στο παρόν παράδειγμα ίση με 1. `Οσο μεγαλύτερη δηλαδή είναι η τάξη της ρίζας, τόσο μεγαλύτερη είναι και η διαφορά στην αποτελεσματικότητα των προρρυθμιστών $\mathcal{T}_n$ και $B\mathcal{T}_n$.

Αν και φαίνεται ότι οι ιδιάζουσες τιμές των προρρυθμισμένων συστημάτων ταυτίζονται, στο Σχήμα \ref{fig:x2+ix_eig} παρατηρούμε ότι η χρήση του $B\mathcal{C}_n$, αντί του $B\mathcal{T}_n$ οδηγεί σε ένα πιο πυκνό σύνολο συσσώρευσης των ιδιοτιμών, σε μια μικρή περιοχή κοντά στο $(1,0)$.
\end{exmp}

\begin{exmp}\label{exp: ex5}\normalfont
Δίνουμε ένα παράδειγμα, στο οποίο η γεννήτρια συνάρτηση του πίνακα {\en Toeplitz} είναι η $\mathfrak{f}_7(x)=x^2-1+\mathrm{i}x^3$. Αυτή δεν έχει ρίζες στο $(-\pi,\pi]$, αλλά το πραγματικό της μέρος παίρνει τόσο θετικές, όσο και αρνητικές τιμές. Προφανώς, το φανταστικό της μέρος έχει ασυνέχεια στο $\pi$. Χρησιμοποιούμε τους κυκλοειδείς προρρυθμιστές $\mathcal{C}_n$ και $\mathcal{T}_n$.

\begin{table}[htbp]
\centering
\begin{tabular}{cccc|ccc}
\toprule
\multirow{2}{*}{$n$} & \multicolumn{3}{c|} {\en PGMRES} & \multicolumn{3}{c} {\en PCGN} \\
 & $I_n$ & $\mathcal{C}_n$ & $\mathcal{T}_n$ & $I_n$ & $\mathcal{C}_n$ & $\mathcal{T}_n$ \\\midrule
\phantom{0}256 & 160 & 9 & \phantom{0}9 & - & 11 & 13 \\
\phantom{0}512 & 248 & 9 & \phantom{0}9 & 362 & 12 & 13 \\
1024 & 326 & 9 & \phantom{0}9 & 397 & 12 & 13 \\
2048 & 370 & 9 & 10 & 415 & 12 & 13 \\\bottomrule
\end{tabular}
\caption{Επαναλήψεις ($\mathfrak{f}_7$).}\label{tab:x2-1+ix3}
\end{table}

\begin{figure}[htbp]%
    \centering
    \includegraphics[width=0.75\linewidth]{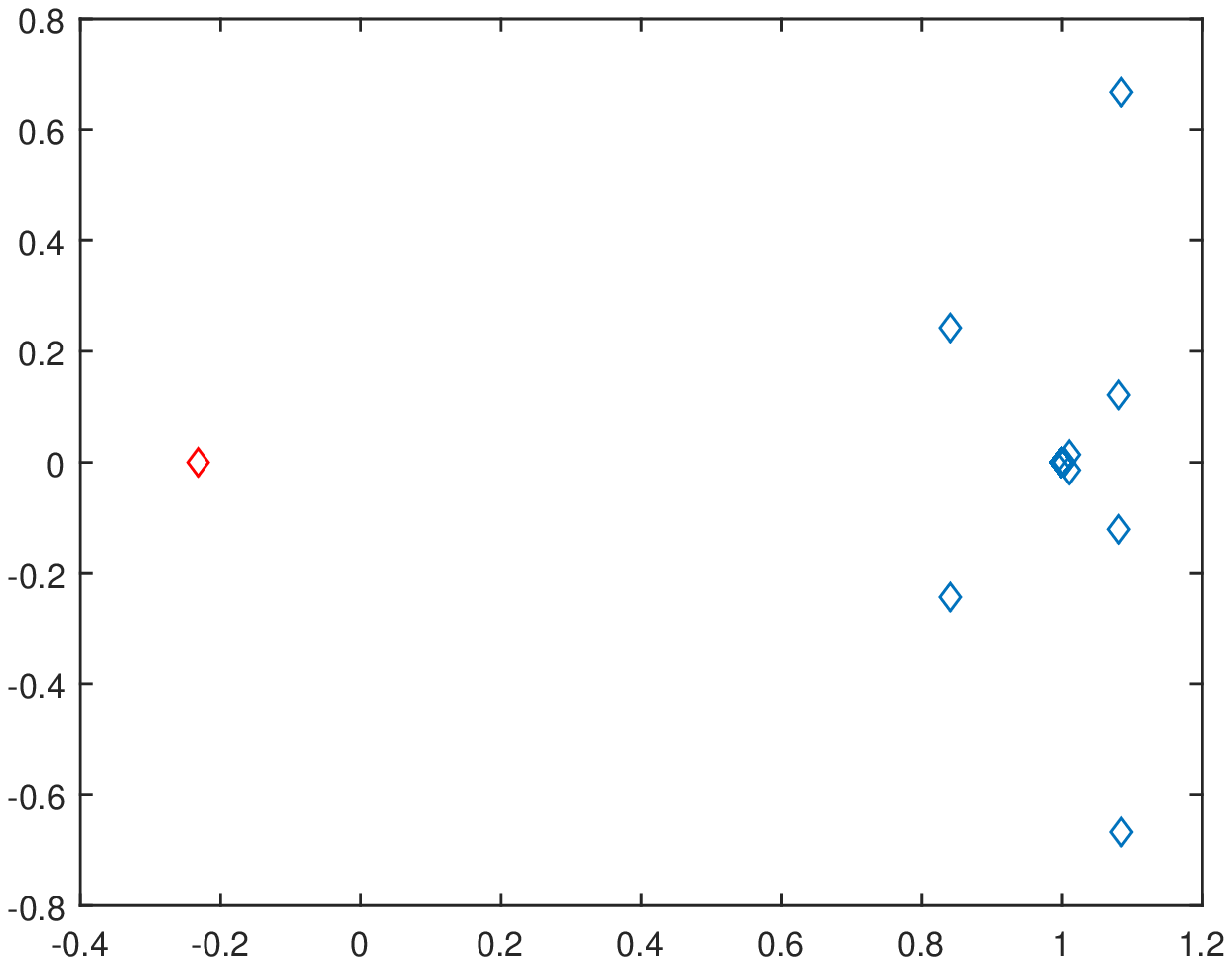}%
    \caption{Ιδιοτιμές ($\mathfrak{f}_7$).}
    \label{fig:x2-1+ix3_eig}
\end{figure}

Οι αριθμοί επαναλήψεων φαίνονται στον Πίνακα \ref{tab:x2-1+ix3}. Στο Σχήμα \ref{fig:x2-1+ix3_eig} δίνουμε τη συσσώρευση των ιδιοτιμών του $\mathcal{C}_n^{-1}(f)T_n(\mathfrak{f}_7)$, όταν $n=256$. Βλέπουμε ότι και πάλι αυτές συσσωρεύονται γύρω από το $(1,0)$. Σχολιάζουμε ότι αν και υπάρχει μια αρνητική ιδιοτιμή, η οποία συμβολίζεται ως κόκκινο διαμάντι, η μέθοδος {\en PGMRES} είναι αποτελεσματική. Γνωρίζουμε ότι αν μια ιδιοτιμή του προρρυθμισμένου πίνακα δεν ανήκει στον δίσκο με κέντρο το $(1,0)$ και ακτίνα ίση με 1, ο αριθμός επαναλήψεων αυξάνεται κατά 1 \cite{gmati2007comment}.
\end{exmp}

\begin{rem}
Διάφορες τεχνικές προρρύθμισης, με χρήση κυκλοειδών πινάκων, μπορούν να βρεθούν στις \cite{honwathen,pestanawathen} και \cite{potts}. Στις πρώτες δύο οι συγγραφείς συμμετρικοποιούν τον αρχικό πίνακα συντελεστών και λύνουν το σύστημα που προκύπτει με την Προρρυθμισμένη μέθοδο Ελαχίστων Υπολοίπων {\en (PMINRES)} \cite{minres}. Στην τελευταία, οι συγγραφείς αναλύουν προρρυθμιστές οι οποίοι ανήκουν σε τριγωνομετρική άλγεβρα, για μη-συμμετρικά συστήματα {\en Toeplitz}. Επικεντρώνονται στην επίλυση του συστήματος με τη μέθοδο {\en PCGN}, αλλά χρησιμοποιούν και {\en PGMRES}. Σημειώνουμε ότι στις παραπάνω εργασίες δεν έγινε χρήση ταινιωτών-επί-άλγεβρα {\en (Band-times-Algebra)} προρρυθμιστών. Θεωρητικά αποτελέσματα για τη συσσώρευση των ιδιαζουσών τιμών μπορούν επίσης να βρεθούν στην \cite{barakitis}.
\end{rem}

Στη συνέχεια, θα δώσουμε τον αριθμό επαναλήψεων χρησιμοποιώντας τη μέθοδο {\en PGMRES}, για τα Παραδείγματα 1 και 3 της \cite{honwathen}. Προσαρμόσαμε τις επιλογές στα αριθμητικά πειράματα, έτσι ώστε να είναι ίδιες με αυτές της \cite{honwathen}, για να έχουμε αμεσότερη σύγκριση. Πιο συγκεκριμένα, αλλάξαμε το διάνυσμα $b$, έτσι ώστε αυτό να έχει όλες τις συνιστώσες του ίσες με 1. Επιπλέον αλλάξαμε το κριτήριο τερματισμού σε $\frac{\Vert r(k)\Vert_2}{\Vert r(0)\Vert_2}< 10^{-7}$ (ακριβώς όπως στην \cite{honwathen}).

\begin{exmp}\label{exp: ex6}\normalfont
Σε αυτό το παράδειγμα, πίνακας συντελεστών είναι ο πίνακας {\en Grcar}:
\begin{equation*}
G_n=\begin{bmatrix}
1 & 1 & 1 & 1 & 0 &\cdots & 0\\
-1 &\ddots &\ddots  &\ddots &\ddots &\ddots &\vdots\\
0 &\ddots &\ddots  &\ddots &\ddots &\ddots & 0\\
\vdots &\ddots &\ddots  &\ddots &\ddots &\ddots &1\\
\vdots &\ddots &\ddots  &\ddots &\ddots &\ddots &1\\
\vdots &\ddots &\ddots  &\ddots &\ddots &\ddots &1\\
0 &\cdots &\cdots  &\cdots  & 0 & -1 & 1\\
\end{bmatrix}.
\end{equation*}

Ο $G_n$ έχει ως γεννήτρια συνάρτηση το τριγωνομετρικό πολυώνυμο $\mathfrak{f}_8(x)=1+\cos{(2x)}+\cos{(3x)}+\mathrm{i}\left[-2\sin{(x)}-\sin{(2x)}-\sin{(3x)}\right]$. Οι αριθμοί επαναλήψεων εφαρμόζοντας {\en PGMRES} και τους προρρυθμιστές $\mathcal{C}_n$ και $\mathcal{T}_n$, καθώς επίσης και {\en PMINRES} με τον προρρυθμιστή που προτάθηκε στην \cite{honwathen} (αυτός είναι ο $\vert\mathcal{T}_n\vert$) δίνεται στον Πίνακα \ref{tab:Grcar}. Δίνουμε επίσης και τους αριθμούς επαναλήψεων, με χρήση του $\vert\mathcal{C}_n\vert$ ως προρρυθμιστή, του οποίου η αποτελεσματικότητα αποδείχθηκε στην \cite{hon_simax}.

\begin{table}[H]
\centering
\begin{tabular}{cccc|cc}
\toprule
\multirow{2}{*}{$n$} & \multicolumn{3}{c|} {\en PGMRES} & \multicolumn{2}{c} {\en PMINRES} \\
 & $I_n$ & $\mathcal{C}_n$ & $\mathcal{T}_n$ & $\vert\mathcal{C}_n\vert$ & $\vert\mathcal{T}_n\vert$\\\midrule
\phantom{0}128 & \phantom{0}94 & 4 & 6 & 9 & 13\\
\phantom{0}256 & 158 & 4 & 6 & 9 & 12\\
\phantom{0}512 & 218 & 4 & 6 & 9 & 11\\
1024 & 213 & 4 & 5 & 9 & 11\\\bottomrule
\end{tabular}
\caption{Επαναλήψεις ($\mathfrak{f}_8$).}
\label{tab:Grcar}
\end{table}

Αν και το κόστος ανά επανάληψη της μεθόδου {\en PMINRES}, είναι λιγότερο από αυτό της {\en PGMRES}, διότι το κόστος της τελευταίας αυξάνεται από επανάληψη σε επανάληψη, παρατηρούμε ότι οι αριθμοί επαναλήψεων που δίνονται από τον $\mathcal{C}_n$ είναι επαρκώς μικρότεροι από αυτούς της {\en PMINRES}. Οι 4 επαναλήψεις είναι πολύ λίγες κι έτσι το κόστος ανά επανάληψη είναι περίπου το ίδιο με αυτό της {\en PMINRES}.

Παρατηρούμε ότι ο $\mathcal{C}_n$ είναι πιο αποτελεσματικός από τον βέλτιστο κυκλοειδή προρρυθμιστή. Αυτό ισχύει κι όταν ο πίνακας συντελεστών του συστήματος δίνεται από τη συνάρτηση υπερβολικού ημιτόνου του $G_n$. Χρησιμοποιούμε τους προρρυθμιστές $\sinh{\mathcal{C}_n}$ και $\sinh{\mathcal{T}_n}$ και δίνουμε τα αντίστοιχα αποτελέσματα στον Πίνακα \ref{tab:sinhGrcar}.

\begin{table}[H]
\centering
\begin{tabular}{cccc}
\toprule
$n$ & $I_n$ & $\sinh{\mathcal{C}_n}$ & $\sinh{\mathcal{T}_n}$\\\midrule
\phantom{0}64 & \phantom{0}64 & 8 & 14 \\
128 & 124 & 9 & 13\\
256 & 240 & 9 & 11\\
512 & 486 & 9 & 10\\\bottomrule
\end{tabular}
\caption{Επαναλήψεις για τον $\sinh{G_n}$.}
\label{tab:sinhGrcar}
\end{table}
\end{exmp}

\begin{exmp}\label{exp: ex8}\normalfont
Κατόπιν διακριτοποίησης ολοκληρω-διαφορικών {\en (integro-differential)} εξισώσεων, υπάρχει περίπτωση ο πίνακας συντελεστών να σχετίζεται με την εκθετική συνάρτηση ενός πίνακα {\en Toeplitz} \cite{kressner}. `Εστω $\mathfrak{f}_2(x)=x^2+\mathrm{i}x^3$ η γεννήτρια συνάρτηση του $T_n(\mathfrak{f}_2)$. Μπορούμε να χρησιμοποιήσουμε τον $\mathrm{e}^{B\mathcal{C}_n}$ ως προρρυθμιστή για τον $\mathrm{e}^{T_n(\mathfrak{f}_2)}$. Στον Πίνακα \ref{tab:exp} δίνουμε τους αριθμούς επαναλήψεων, εφαρμόζοντας {\en PGMRES}. Σημειώνεται ότι αν και ο πίνακας $\mathrm{e}^{T_n(\mathfrak{f}_2)}$ δεν είναι {\en Toeplitz}, ο προτεινόμενος προρρυθμιστής επιτυγχάνει την ταχεία σύγκλιση στη λύση του συστήματος.

\begin{table}[H]
\centering
\begin{tabular}{ccc}
\toprule
$n$ & $I_n$ & $\mathrm{e}^{B\mathcal{C}_n}$\\\midrule
\phantom{0}256 & \phantom{$>$}255 & 11 \\
\phantom{0}512 & \phantom{$>$}497 & 12 \\
1024 & $>$500 & 12 \\
2048 & $>$500 & 13 \\\bottomrule
\end{tabular}
\caption{Επαναλήψεις για τον $\mathrm{e}^{T_n(\mathfrak{f}_2)}$.}
\label{tab:exp}
\end{table}
\end{exmp}

\newpage\thispagestyle{empty}\mbox{}\newpage 

\pagestyle{main}

\chapter{Συστήματα {\en Toeplitz} με `Αγνωστη Γεννήτρια Συνάρτηση}

Σε αυτό το κεφάλαιο μελετάμε την προρρύθμιση $n\times n$ μη συμμετρικών, πραγματικών συστημάτων {\en Toeplitz}, όταν η γεννήτρια συνάρτηση του πίνακα συντελεστών $T_n$ δεν είναι γνωστή εκ των προτέρων, όμως γνωρίζουμε ότι μια γεννήτρια συνάρτηση $f$, η οποία σχετίζεται με την ακολουθία πινάκων $\{T_n\}$, $T_n=T_n(f)$, όντως υπάρχει. Γίνεται κατάλληλη προσαρμογή, τόσο των ταινιωτών προρρυθμιστών του δευτέρου κεφαλαίου, όσο και των κυκλοειδών/ταινιωτών-επί-κυκλοειδών προρρυθμιστών του προηγούμενου κεφαλαίου. Αναλύεται ο τρόπος κατασκευής των προρρυθμιστών, από τις τιμές του πίνακα συντελεστών και μελετάται η συσσώρευση των ιδιοτιμών και ιδιαζουσών τιμών του προρρυθμισμένου συστήματος.

\section{Ταινιωτοί προρρυθμιστές}

Θα ξεκινήσουμε από την παρουσίαση των ταινιωτών {\en Toeplitz} προρρυθμιστών, δίνοντας τον τρόπο κατασκευής αυτών και μελετώντας τόσο τη συνεχή, όσο και την ασυνεχή περίπτωση. Λόγω του ότι δεν είναι γνωστό σε ποια από τις δύο περιπτώσεις βρισκόμαστε, θα δώσουμε έναν τρόπο εύρεσης πιθανών σημείων ασυνέχειας. Στο τέλος της ενότητας θα δώσουμε ορισμένα αριθμητικά παραδείγματα, τα οποία υποδεικνύουν την αποτελεσματικότητα του προτεινόμενου προρρυθμιστή.

\subsection{Κατασκευή του προρρυθμιστή}\label{Sss:411}
Αρχικά, θα θέλαμε να σημειώσουμε ότι αφού ο πίνακας $T_n$ είναι πραγματικός και μη-συμμετρικός, προκύπτει από μια συνάρτηση $f=f_1+\mathrm{i}f_2$, όπου $f_1$ είναι άρτια, $f_2$ περιττή και $\mathrm{i}$ είναι η φανταστική μονάδα. Για να εκτιμήσουμε τις ρίζες της $f$, θα πρέπει να προσεγγίσουμε τις συναρτήσεις $f_1$ και $f_2$, που την απαρτίζουν, χρησιμοποιώντας τις τιμές του αρχικού πίνακα συντελεστών. Αφού επιλέξουμε ένα ισοκατανεμημένο πλέγμα $G_n=\lbrace \theta_j\rbrace$, όπου
\begin{equation*}
\theta_j=-\pi+\frac{2j\pi}{n+1},~j=1,\dots,n,
\end{equation*}
θα προσεγγίσουμε τις συναρτήσεις $f_1$ και $f_2$ (στο $G_n$). Μια προφανής προσέγγιση αποτελεί το ανάπτυγμα {\en Fourier}, διότι η γεννήτρια συνάρτηση του πίνακα $T_n$ είναι εξ ορισμού το ίδιο το ανάπτυγμα {\en Fourier}, αναλόγως με τη διάσταση $n$. Το πηλίκο {\en Rayleigh} με χρήση κατάλληλων διανυσμάτων αποτελεί ακόμη ένα μαθηματικό εργαλείο για την προσέγγιση της $f$ \cite{Serra_1999}. Ωστόσο, αν αυτό εφαρμοστεί σε όλο το πλέγμα $G_n$, το υπολογιστικό κόστος υπερβαίνει το $\mathcal{O}(n\log{n})$ κι έτσι η χρήση του γίνεται πρακτικά απαγορευτική. Αυτός είναι ο λόγος που προτιμάμε τον υπολογισμό του αναπτύγματος {\en Fourier} στα σημεία του $G_n$, ο οποίος μπορεί να γίνει σε $\mathcal{O}(n\log{n})$, χρησιμοποιώντας τον ταχύ μετασχηματισμό {\en Fourier}. $\forall j=1,\dots,n$ έχουμε:
\begin{equation}\label{Fourier exp}
f(\theta_j)\simeq F_{n-1}(\theta_j)=\sum\limits_{k=-n+1}^{n-1}t_k\mathrm{e}^{\mathrm{i}k\theta_j}.
\end{equation}
Χωρίζοντας το πραγματικό και φανταστικό μέρος, των λαμβανόμενων τιμών, προσεγγίζουμε τις συναρτήσεις $f_1$ και $f_2$, αντίστοιχα. Παρακάτω θα αναλύσουμε τη διαδικασία επιλογής μιας πιθανής ρίζας για τη συνάρτηση $f_1$. Παρόμοια ανάλυση ισχύει και για τη συνάρτηση $f_2$.

Γνωρίζοντας ότι η $f_1$ είναι άρτια συνάρτηση, συμπεραίνουμε ότι αυτή θα μπορούσε να έχει ρίζες είτε ανάμεσα σε δύο διαδοχικά σημεία $\theta_j$ και $\theta_{j+1}$, όπου λαμβάνει διαφορετικό πρόσημο (σε αυτή την περίπτωση η συνάρτηση $f_1$ τέμνει τον άξονα), είτε ανάμεσα σε δύο σημεία $\theta_{j-1}$ και $\theta_{j+1}$, με $f_1(\theta_j)$ να λαμβάνει μια πολύ μικρή τιμή, σχεδόν ίση με μηδέν κι επιπλέον η ακολουθία $\lbrace f_1(\theta_i),~i=1,\dots,n\rbrace$ να αλλάζει τοπικά μονοτονία στο $\theta_j$ (σε αυτή την περίπτωση η $f_1$ εφάπτεται στον άξονα). Λαμβάνοντας υπόψη τη λεπτότητα του πλέγματος  $G_n$, οι παραπάνω περιπτώσεις μπορούν να ενοποιηθούν ως εξής: Επιλέγουμε το $\theta_i$, $1\leq i\leq n$ ως σημείο πιθανής ρίζας αν $\vert\operatorname{Re}\left(F_{n-1}(\theta_i)\right)\vert$ λαμβάνει πολύ μικρές τιμές κοντά στο μηδέν, π.χ. $\vert\operatorname{Re}\left(F_{n-1}(\theta_i)\right)\vert<10^{-6}$. Επισημαίνουμε ότι σε περίπτωση που $\vert\operatorname{Re}\left(F_{n-1}(\theta_i)\right)\vert=\vert \operatorname{Re}\left(F_{n-1}(\theta_{i+1})\right)\vert\simeq 0$, θέτουμε ως σημείο πιθανής ρίζας τον μέσο όρο των τιμών $\theta_i$ και $\theta_{i+1}$.

Σχολιάζουμε ότι η παραπάνω διαδικασία εφαρμόζεται σε συστήματα {\en Toeplitz} μεγάλης διάστασης. Για συστήματα μικρής διάστασης θα ήταν καλό να προχωρήσουμε και σε μια τεχνική εκλέπτυνσης κοντά στα σημεία πιθανών ριζών, με χρήση του πηλίκου {\en Rayleigh}, όπως έγινε στις \cite{korovkin, NSV_2005, Serra_1999, capizzano2000some}).

Για την εύρεση του κατάλληλου τριγωνομετρικού πολυωνύμου, το οποίο αίρει την κακή κατάσταση του αρχικού συστήματος, είναι αναγκαία και η εύρεση της πολλαπλότητας κάθε ρίζας. `Εστω $m_i^1$ και $m_i^2$ οι πολλαπλότητες των ριζών της $f_1$ και $f_2$, αντίστοιχα, οι οποίες αφορούν στο σημείο πιθανής ρίζας $x_i$, $i=1,2,\dots,\rho$. Από εδώ και στο εξής με $m_0^1$ και $m_0^2$ θα συμβολίζουμε τις πολλαπλότητες της ρίζας στο $x_0=0$, για την $f_1$ και $f_2$, αντίστοιχα.

Θα περιγράψουμε την εκτίμηση της πολλαπλότητας των ριζών για τη συνάρτηση $f_1$. Αρχικά, υποθέτουμε για απλούστευση ότι $\operatorname{Re}(F_{n-1}(\theta_j))\geq0$, $\forall j=1,2,\dots,n$. Επιλέγουμε τον άνω αριστερά κύριο τετραγωνικό υποπίνακα του $T_n$, μιας συγκεκριμένης μικρής διάστασης, π.χ. $64\times 64$. Σκοπός μας είναι να μελετήσουμε τη συμπεριφορά της ιδιοτιμής που αντιστοιχεί στη ρίζα που εξετάζουμε πηγαίνοντας από κάποια μικρή διάσταση στη διπλάσιά της. Λαμβάνοντας τον $k\times k$ κύριο υποπίνακα του $T_n$, π.χ. $k=16$, υπολογίζουμε το συμμετρικό μέρος του πίνακα, $S_k^1$, το οποίο αντιστοιχεί στην $f_1$. Σημειώνουμε ότι για κάθε ρίζα υπάρχει μια αντίστοιχη ιδιοτιμή, η οποία τείνει στο 0, όσο η μεταβλητή $k$ παίρνει μεγαλύτερες τιμές. `Εστω $x_i$ το σημείο όπου το $\operatorname{Re}\left(F_{n-1}(\theta_j)\right)$, $j=1,2,\dots,n$ λαμβάνει την ελάχιστη τιμή. Εφαρμόζουμε τη μέθοδο των Αντιστρόφων Δυνάμεων {\en (Inverse Power)} \cite{demmel,trefethen} στον $S_k^1$, με αρχικό διάνυσμα $\Theta_{i,k}$, ορισμένο ως:
\begin{equation*}
\Theta_{i,k}=\frac{1}{\sqrt{k}}\left(1,\mathrm{e}^{\mathrm{i}x_i},\dots,\mathrm{e}^{\mathrm{i}(k-1)x_i}\right)^T.
\end{equation*}
Εκτιμούμε την πολλαπλότητα $m_i^1$ εφαρμόζοντας την τεχνική που προτάθηκε στην \cite{NSV_2005}. Με πιο απλά λόγια, εξετάζουμε κατά πόσο η απόσταση μεταξύ δύο διαδοχικών ιδιοτιμών (κατ" αναλογία με το $k$) γίνεται όλο και πιο μικρή. Διπλασιάζοντας τη μεταβλητή $k$ ξανά και ξανά (ας πούμε από $16$ σε $32$ και τέλος σε $64$), εκτιμούμε το λόγο:
\begin{equation*}
s_i^1=\frac{\widetilde{\lambda^1}_{i,k}-\widetilde{\lambda^1}_{i,2k}}{\widetilde{\lambda^1}_{i,2k}-\widetilde{\lambda^1}_{i,4k}}.
\end{equation*}
Με $\widetilde{\lambda^1}_{i,k}$ συμβολίζουμε την πλησιέστερη στο 0 ιδιοτιμή (που αντιστοιχεί στη ρίζα $x_i$) του συμμετρικού μέρους του $k\times k$ κύριου υποπίνακα του $T_n$, υπολογισμένη με λίγες επαναλήψεις της μεθόδου Αντιστρόφων Δυνάμεων.

Εκτιμούμε την πολλαπλότητα $m_i^1$, ως τον κοντινότερο ακέραιο στο $\log_2{(s_i^1)}$. Αυτό αποδεικνύεται ωε εξής:
Είναι γνωστό ότι η ιδιοτιμή $\lambda^1_{i,k}$ που αντιστοιχεί στη ρίζα $x_i$ με πολλαπλότητα $m_i^1$, τείνει στο 0 με ταχύτητα $\mathcal{O}\left(\frac{1}{k^{m_i^1}}\right)$. Επομένως, αυτή γράφεται ως:
\begin{equation*}
\lambda^1_{i,k}=c\frac{1}{k^{m_i^1}}+o\left(\frac{1}{k^{m_i^1}}\right).
\end{equation*}
Τότε, η προσέγγιση $\widetilde{\lambda^1}_{i,k}$ της $\lambda^1_{i,k}$ είναι η $\widetilde{\lambda^1}_{i,k}\simeq c\frac{1}{k^{m_i^1}}$ και ο λόγος $s_i^1$ προσεγγίζεται ως:
\begin{equation*}
s_i^1\simeq\frac{c\frac{1}{k^{m_i^1}}-c\frac{1}{\left(2k\right)^{m_i^1}}}{c\frac{1}{\left(2k\right)^{m_i^1}}-c\frac{1}{\left(4k\right)^{m_i^1}}}=\frac{c\frac{1}{k^{m_i^1}}\left(1-\frac{1}{2^{m_i^1}}\right)}{c\frac{1}{k^{m_i^1}}\frac{1}{2^{m_i^1}}\left(1-\frac{1}{2^{m_i^1}}\right)}=2^{m_i^1}.
\end{equation*}
Ως επακόλουθο, $\log\limits_2{s_i^1}\simeq m_i^1$.

Αν η γραφική παράσταση της $f_1$, λαμβάνει θετικές και αρνητικές τιμές, προφανώς υπάρχουν σημεία ριζών όπου υπάρχει τομή με τον άξονα. Για την εκτίμηση της πολλαπλότητας αυτών των ριζών, δεν μπορούμε να χρησιμοποιήσουμε τη διαδικασία που περιγράψαμε παραπάνω. Αυτή μπορεί να εφαρμοστεί μόνο για ρίζες που αντιστοιχούν σε τοπικά ελάχιστα. `Ετσι, θα προσπαθήσουμε να εκτιμήσουμε την πολλαπλότητα αυτών των ριζών για τη συνάρτηση $\vert f_1\vert$, όπου τα σημεία τομής μετατρέπονται σε τοπικά ελάχιστα. Φυσικά, οι πολλαπλότητες των ριζών της $\vert f_1\vert$ παραμένουν ίδιες με αυτές της $f_1$.

Επομένως, πρέπει να υπολογίσουμε τον πίνακα $\widehat{S}_k^1 = T_k(\vert f_1\vert)$, για μικρές τιμές της μεταβλητής $k$ (π.χ., 16, 32 και 64), επειδή δε θέλουμε να αυξήσουμε την πολυπλοκότητα του αλγορίθμου. Θα χρησιμοποιήσουμε τις τιμές $\vert\operatorname{Re}(F_{n -1})\vert$ στο $G_n\cup\lbrace -\pi,\pi\rbrace$. Τότε,

\begin{equation}\label{eq:Simpson_integral}
\begin{split}
\left(\widehat{S}_k^1\right)_{rq}&=\frac{1}{2\pi}\int\limits_{-\pi}^{\pi}\vert f_1(x)\vert\mathrm{e}^{-\mathrm{i}(r-q)x}\mathrm{d}x\\
&\simeq\frac{1}{2\pi}\int\limits_{-\pi}^{\pi}\left\vert\operatorname{Re}\left(F_{n-1}(x)\right)\right\vert\mathrm{e}^{-\mathrm{i}(r-q)x}\mathrm{d}x.
\end{split}
\end{equation}
Μπορούμε να προσεγγίσουμε το τελευταίο ολοκλήρωμα με χρήση του σύνθετου κανόνα του {\en Simpson}, σε σημεία του $G_n\cup\lbrace -\pi,\pi\rbrace$, ή οποιαδήποτε άλλη μέθοδο αριθμητικής ολοκλήρωσης. Ακολουθώντας την τεχνική που προαναφέραμε για τον $\widehat{S}_k^1$, υπολογίζουμε τις πολλαπλότητες των ριζών της $f_1$.

Είναι προφανές ότι για την εκτίμηση των πολλαπλοτήτων που έχουν οι ρίζες της συνάρτησης $f_2$, δε μπορούμε να αποφύγουμε τον υπολογισμό του πίνακα $\widehat{S}_k^2=T_k(\vert f_2\vert)$, επειδή η $f_2$ λαμβάνει πάντα θετικές και αρνητικές τιμές ως περιττή συνάρτηση. `Ολα τα υπόλοιπα παραμένουν ίδια, με τη διαφορά ότι το φανταστικό μέρος $\operatorname{Im}(F_{n-1})$ παίρνει τη θέση του $\operatorname{Re}(F_{n-1})$. Θα θέλαμε να σχολιάσουμε ότι σε περίπτωση που υπάρχει μια ρίζα $x_i$ όπου $f_1(x_i)=0$, ενώ $f_2(x_i)\neq0$ (ή $f_2(x_i)=0$ και $f_1(x_i)\neq0$), τότε θέτουμε $m_i^2=0$ (ή $m_i^1=0$).

Αναφέρουμε ότι η τεχνική για την οποία γίνεται λόγος κι έχει να κάνει με τον υπολογισμό των πολλαπλοτήτων όταν η $f_\ell$, $\ell=1,2$ λαμβάνει θετικές και αρνητικές τιμές, αποτελεί μια βελτίωση της αντίστοιχης τεχνικής των \cite{NSV_2005, Serra_1999}, όπου οι συγγραφείς μελέτησαν μη-αρνητικές συναρτήσεις.

Λόγω της λεπτότητας του πλέγματος $G_n$, είναι εύκολο να εκτιμήσουμε τα σημεία πιθανής ασυνέχειας των συναρτήσεων $f_1$ και $f_2$. Για απλούστευση, θα περιγράψουμε αυτή την εκτίμηση για την $f_1$. Το μόνο που έχουμε να κάνουμε είναι να ελέγξουμε το μέγεθος του λόγου $\operatorname{Re}\left(F_{n-1}(\theta_j)\right)-\operatorname{Re}\left(F_{n-1}(\theta_{j+1})\right)$, $j=1,2,\dots,n$, προς $h=\frac{2\pi}{n+1}$, όπου $\theta_{n+1}=\theta_1$. Αυτό σημαίνει ότι ελέγχουμε το πόσο διαφέρουν τα πραγματικά μέρη του αναπτύγματος $F_{n-1}$, για δύο διαδοχικά σημεία, σε σχέση με το βήμα $h$. Αν
\begin{equation*}
\frac{\vert\operatorname{Re}\left(F_{n-1}(\theta_j)\right)-\operatorname{Re}\left(F_{n-1}(\theta_{j+1})\right)\vert}{h}=\Omega(n),
\end{equation*}
(αρκούντως μεγάλο), τότε υποθέτουμε ότι υπάρχει ένα σημείο ασυνέχειας ή ταχεία μεταβολή της συνάρτησης {\en (unbounded variation)}, στο διάστημα $[\theta_j,\theta_{j+1}]$. Προφανώς, ακριβώς η ίδια ανάλυση μπορεί να γίνει και για την $f_2$. Αυτό σημαίνει ότι αν αντικαταστήσουμε το $\operatorname{Re}$ με $\operatorname{Im}$, μπορούμε να εκτιμήσουμε τα σημεία ασυνέχειας της $f_2$.

Η εκτίμηση των σημείων ασυνέχειας απαιτείται για την πολυωνυμική προσέγγιση της $\frac{f}{g_n}$, όπου $g_n$ είναι το κατάλληλο τριγωνομετρικό πολυώνυμο το οποίο αίρει τις εκτιμώμενες ρίζες της συνάρτησης $f$. Όπως περιγράψαμε στο δεύτερο κεφάλαιο, ο προρρυθμιστής γίνεται ιδιαίτερα πιο αποτελεσματικός αν αποκλείσουμε μια περιοχή κοντά στο σημείο ασυνέχειας, πριν εφαρμόσουμε τον αλγόριθμο βέλτιστης ομοιόμορφης προσέγγισης του {\en Remez}. Εκεί, η απόσταση από το σημείο ασυνέχειας, επιλέχθηκε εμπειρικά να είναι ίση με $\frac{2\pi}{7}$. Ως παράδειγμα, στο Σχήμα \ref{fig:discont_appr} παρουσιάζουμε την προσέγγιση της συνάρτησης $x^3$ στο $[-\pi,\pi]$, όταν αποκλείουμε κάποια σημεία κοντά στο $\pm\pi$ (όπου εντοπίζεται η ασυνέχεια), καθώς κι όταν δεν προχωρούμε σε κάποια εξαίρεση σημείων. Η προσέγγιση έγινε με τριγωνομετρικά πολυώνυμα τετάρτου βαθμού και όπως παρατηρείται, το σφάλμα της προσέγγισης είναι κατά πολύ μεγαλύτερο στη δεύτερη περίπτωση.

\begin{center}
\begin{figure}
    \includegraphics[width=1\linewidth]{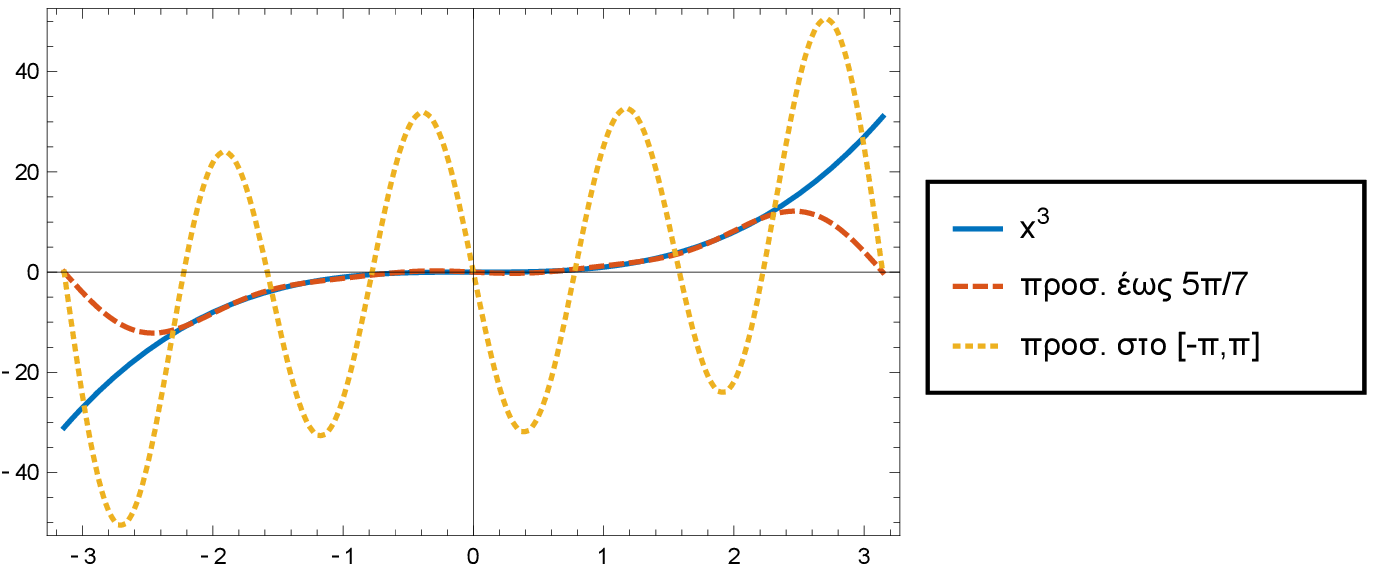}  
    \caption{Προσέγγιση της $x^3$.}
    \label{fig:discont_appr}
\end{figure}
\end{center}

Προκειμένου να αποφύγουμε την κακή κατάσταση θα πρέπει να βρούμε ένα τριγωνομετρικό πολυώνυμο, το οποίο έχει τις ίδιες ρίζες με τη γεννήτρια συνάρτηση $f$ ώστε να άρουμε τις ρίζες αυτής. `Εστω $x_i$, $i=1,2,\dots,\rho$ οι ακριβείς μη-μηδενικές ρίζες της συνάρτησης $f$ στο διάστημα $(0,\pi]$, με πολλαπλότητες $m_i^1$ για την $f_1$ και $m_i^2$ για την $f_2$, αντίστοιχα, $i=1,2,\dots,\rho$. Λόγω του ότι η $f_1$ είναι άρτια και η $f_2$ περιττή, αν $x_i$ είναι μια ρίζα στο $(0,\pi]$, το σημείο $-x_i$ είναι επίσης μια ρίζα στο $[-\pi,0)$, με την ίδια πολλαπλότητα. Τότε, όπως περιγράψαμε εκτενώς στο δεύτερο κεφάλαιο, η μορφή του τριγωνομετρικού πολυωνύμου $g$ δίνεται ως:

Αν $m_i^1\leq m_i^2$, $\forall i=1,2,\dots,\rho:$

\begin{equation*}
g=\operatorname{sign}(c_1(x))\prod\limits_{i=1}^{\rho}\left(\cos{(x_i)}-\cos{(x)}\right)^{m_i^1}.
\end{equation*}

Σε περίπτωση που η $f_1$ έχει επίσης ρίζα στο 0, με πολλαπλότητα $m_0^1\leq m_0^2$:

\begin{equation*}
g=\operatorname{sign}(c_1(x))(2-2\cos{(x)})^{\frac{m_0^1}{2}}\prod\limits_{i=1}^{\rho}\left(\cos{(x_i)}-\cos{(x)}\right)^{m_i^1}.
\end{equation*}

Ωστόσο, αν υπάρχει τουλάχιστον μια ρίζα $x_j:$ $m_j^1>m_j^2$:

\begin{equation}\label{eq:1multiple_roots}
\begin{split}
g&=\operatorname{sign}(c_1(x))\prod\limits_{i=1}^{\rho}\left(\cos{(x_i)}-\cos{(x)}\right)^{m_i^1}\\
&+\mathrm{i}\operatorname{sign}(c_2(x))\left(\sin{(x)}\right)^{m_0^2}\prod\limits_{i=1}^{\rho}\left(\cos{(x_i)}-\cos{(x)}\right)^{m_i^2}.
\end{split}
\end{equation}

Σημειώνουμε ότι αν η $f_1$ έχει μια ρίζα στο 0, με πολλαπλότητα $m_0^1$, πολλαπλασιάζουμε τον πρώτο όρο της (\ref{eq:1multiple_roots}) με $(2-2\cos{(x)})^{\frac{m_0^1}{2}}$. Οι συναρτήσεις $c_1$ και $c_2$ ορίζονται με τον τρόπο που περιγράψαμε στο δεύτερο κεφάλαιο. Πρακτικά, τα πρόσημα αυτών $\operatorname{sign}(c_1)$ και $\operatorname{sign}(c_2)$ επιλέγονται έτσι ώστε $\operatorname{Re}\left(\frac{f}{g}\right)>0$.

Θα δώσουμε ένα απλό παράδειγμα, ώστε να γίνει περισσότερο κατανοητός στον αναγνώστη ο τρόπος με τον οποίο επιλέγουμε το τριγωνομετρικό πολυώνυμο $g$. `Εστω $f(x)=(x-x_1)(x+x_1)+\mathrm{i}x(x-x_1)^2(x+x_1)^2$, $x_1\in(0,\pi]$, $x\in[-\pi,\pi]$. `Οπως φαίνεται η $f$ έχει ρίζες στο $\pm x_1$ με πολλαπλότητα ίση με 1. Παρατηρούμε ότι η πολλαπλότητα της ρίζας για την $f_1$ είναι $m_1^1=1$, ενώ για την $f_2$ είναι $m_1^2=2$ και $m_0^2=1$. Επειδή $m_1^1<m_1^2$, επιλέγουμε το τριγωνομετρικό πολυώνυμο $g(x)=\cos{(x_1)}-\cos{(x)}$, για την άρση της κακής κατάστασης. Τότε, η $\frac{f}{g}$ γράφεται:
\begin{equation*}
\frac{f(x)}{g(x)}=\frac{(x-x_1)(x+x_1)}{\cos{(x_1)}-\cos{(x)}}+\mathrm{i}\frac{x(x-x_1)^2(x+x_1)^2}{\cos{(x_1)}-\cos{(x)}}.
\end{equation*}
Είναι εύκολο να ελέγξει κανείς ότι το πραγματικό μέρος της παραπάνω συνάρτησης δεν έχει ρίζες και είναι θετικό μακριά από το 0. Από την άλλη, το φανταστικό μέρος έχει ρίζες στο 0 και $\pm x_1$, με πολλαπλότητα ίση με 1. Ωστόσο, το γεγονός αυτό δεν αποτελεί πρόβλημα, διότι προσπαθούμε να πετύχουμε συσσώρευση του φανταστικού μέρους των ιδιοτιμών κοντά στο 0. Τονίζουμε ότι η συνάρτηση $\frac{f}{g}$ δεν έχει ρίζες, διότι $\operatorname{Re}\left(\frac{f}{g}\right)>0$.

Σκοπός μας είναι να μελετήσουμε κατά ποιον τρόπο το σφάλμα κατά την εκτίμηση μιας ρίζας της συνάρτησης $f$ επηρεάζει τη σύγκλιση της μεθόδου {\en PGMRES}. Σχολιάζουμε ότι στην εργασία \cite{NSV_2006i} οι συγγραφείς έδωσαν αποτελέσματα για τον δείκτη κατάστασης του προρρυθμισμένου πίνακα, για δι-διάστατους {\en (two-level)} θετικά ορισμένους πίνακες {\en Toeplitz}, όπου η συνάρτηση $f$ είναι δύο μεταβλητών, μη-αρνητική και άρτια, έχοντας ρίζες άρτιας τάξης. Σημειώνουμε ότι τα ίδια αποτελέσματα, λαμβάνοντας ανάλογες υποθέσεις/θεωρήσεις, ισχύουν και στην περίπτωσή μας όταν οι ρίζες έχουν άρτιες πολλαπλότητες. Επειδή αυτό δε συμβαίνει πάντα, θα πρέπει να βρούμε κάποιον εναλλακτικό τρόπο σύγκλισης, μέσω της μελέτης του σφάλματος. Μελετούμε κι εδώ ξεχωριστά το συμμετρικό και αντισυμμετρικό μέρος του προρρυθμισμένου συστήματος.

Πρακτικά εκτιμούμε τις ρίζες στα σημεία $\widetilde{x}_i\simeq x_i$, $i=1,2,\dots,\rho$. Υποθέτουμε ότι οι πολλαπλότητες των ριζών υπολογίστηκαν με ακρίβεια, όπως επίσης και η ρίζα στο 0. Σε περίπτωση που εκτιμούμε δύο ρίζες κοντά στο 0 με απόσταση $o(1)$, θεωρούμε ότι έχουμε μία ρίζα στο 0. Επομένως, δημιουργείται ένα σφάλμα κατά την εκτίμηση των ριζών, που βρίσκονται μακριά από το 0. Το τριγωνομετρικό πολυώνυμο $g_n$, το οποίο επιτυγχάνει την άρση των εκτιμώμενων ριζών, δίνεται από τη διαδικασία που περιγράψαμε παραπάνω, με τη διαφορά ότι στις αντίστοιχες σχέσεις το $\widetilde{x}_i$ παίρνει τη θέση του $x_i$.

Για την απλούστευση της ανάλυσης, υποθέτουμε ότι η $f$ έχει δύο ρίζες στα σημεία $\pm x_1$, $x_1\neq 0$ με πολλαπλότητα κάποιον ακέραιο αριθμό $\alpha>0$. Σε περίπτωση που έχουμε περισσότερα από ένα ζεύγη ριζών, η ανάλυση γενικεύεται άμεσα. `Εστω $f=f_1+\mathrm{i}f_2$ και υποθέτουμε ότι η $f_1$ και $f_2$ έχουν ρίζες στο $\pm x_1$ τάξεως $\alpha$ και $\beta$, αντίστοιχα, με $\beta\geq\alpha$, ενώ η $f_2$ έχει και μια επιπλέον ρίζα στο 0. Σε αυτή την περίπτωση, σύμφωνα με την προαναφερθείσα ανάλυση, το τριγωνομετρικό πολυώνυμο που αίρει την κακή κατάσταση θα έπρεπε να είναι το $g(x)=c\left(\cos(x_1)-\cos(x)\right)^\alpha$. Ωστόσο, όπως τονίσαμε, πρακτικά έχουμε ένα σφάλμα στην εκτίμηση της ρίζας, $\widetilde{x}_1-x_1=\varepsilon$. Επομένως, αν χρησιμοποιήσουμε ως προρρυθμιστή τον ταινιωτό πίνακα $T_n(g_n)$, όπου $g_n(x)=c\left(\cos(\widetilde{x}_1)-\cos(x)\right)^\alpha$ \cite{serra2003practical}, θα πρέπει να μελετήσουμε τη συμπεριφορά του φάσματος του προρρυθμισμένου πίνακα $T_n^{-1}(g_n)T_n(f)=T_n^{-1}(g_n)T_n(f_1)+T_n^{-1}(g_n)T_n(\mathrm{i}f_2)$. Ο πρώτος όρος του αθροίσματος γράφεται ως:
\begin{equation*}
\begin{split}
T_n^{-1}(g_n)T_n(f_1)&=T_n^{-1}(g_n)\left[T_n(g_n)T_n\left(\frac{f_1}{g_n}\right)+L_1\right]\\
&=T_n\left(\frac{f_1}{g_n}\right)+L_2=T_n\left(h_1\frac{g}{g_n}\right)+L_2,
\end{split}
\end{equation*}
όπου $L_1$ και $L_2$ είναι πίνακες χαμηλής βαθμίδας, ίσης με $2\alpha$ και $h_1$ είναι μια θετική και φραγμένη συνάρτηση. Το μη-φραγμένο μέρος της γεννήτριας συνάρτησης του τελευταίου πίνακα είναι ο λόγος $\frac{g}{g_n}$, που έχει ρίζα στο $x_1$ και πόλο στο $\widetilde{x}_1$. Είναι εύκολο να δει κανείς ότι αυτός ο λόγος είναι φραγμένος στο σύνολο $K=[-\pi,\pi]\backslash\left[(-\widetilde{x}_1-\varepsilon,-\widetilde{x}_1+\varepsilon)\cup(\widetilde{x}_1-\varepsilon,\widetilde{x}_1+\varepsilon)\right]$ και μη-φραγμένος στα διαστήματα $(-\widetilde{x}_1-\varepsilon,-\widetilde{x}_1+\varepsilon)$ και $(\widetilde{x}_1-\varepsilon,\widetilde{x}_1+\varepsilon)$. Υποθέσαμε, χωρίς βλάβη της γενικότητας ότι $\varepsilon=\widetilde{x}_1-x_1>0$.

Λόγω της ισοκατανομής των ιδιοτιμών των πινάκων {\en Toeplitz} \cite{Grenander} το πολύ $4\varepsilon n$ ιδιοτιμές μπορούν να κυμαίνονται εκτός του $[a,b]$, όπου $a=\min\limits_{x\in K}\frac{f_1(x)}{g_n(x)}$ και $b=\max\limits_{x\in K}\frac{f_1(x)}{g_n(x)}$. Το μέγεθος του $\varepsilon$ εξαρτάται από το πόσο ομαλή είναι η συνάρτηση $f_1$. Σύμφωνα με την \cite{Serra_1999}, αν η $f_1$ είναι συνεχής το σφάλμα του αναπτύγματος {\en Fourier}, για την προσέγγιση της $f_1$, είναι $\varepsilon=\mathcal{O}\left(\frac{\log{n}}{n}\right)$, επομένως $\mathcal{O}(\log{n})$ ιδιοτιμές μπορούν να κυμαίνονται εκτός του διαστήματος $[a,b]$.

Το αντισυμμετρικό μέρος του προρρυθμισμένου πίνακα μας δίνει:
\begin{equation*}
\begin{split}
T_n^{-1}(g_n)T_n(\mathrm{i}f_2)&=T_n^{-1}(g_n)\left[T_n(g_n)T_n\left(\frac{\mathrm{i}f_2}{g_n}\right)+L_1^\prime\right]\\
&=T_n\left(\mathrm{i}h_2\frac{g}{g_n}\right)+L_2^\prime,
\end{split}
\end{equation*}
όπου $h_2$ είναι μια φραγμένη και περιττή συνάρτηση, η οποία έχει μια ρίζα στο 0 με την πολλαπλότητα που έχει και η $f_2$, καθώς επίσης και ρίζες στο $\pm x_1$ με πολλαπλότητα $\beta-\alpha$. Οι πίνακες $L_{1}^\prime$, $L_{2}^\prime$ έχουν χαμηλή βαθμίδα ίση με $2\alpha$. Ακολουθώντας την ανάλυση που περιγράψαμε για το συμμετρικό μέρος, έχουμε το πολύ $\mathcal{O}(\log{n})$ ιδιοτιμές εκτός του $[-c,c]$, όπου $c=\max\limits_{x\in K}\frac{f(x)}{g_n(x)}$.

Αν $\beta<\alpha$, τότε $g(x)=c\left(\cos(x_1)-\cos(x)\right)^\alpha+\mathrm{i}c^\prime\sin(x)^\gamma\left(\cos(x_1)-\cos(x)\right)^\beta$ κι έχουμε ότι:
\begin{equation}\label{eq:zw}
\begin{split}\frac{f}{g_n}&=\frac{z^\beta}{z_n^\beta}\frac{h_2\sin(x)^\gamma-\mathrm{i}h_1 z^{\alpha-\beta}}{c^\prime\sin(x)^\gamma-\mathrm{i}c z_n^{\alpha-\beta}}=\frac{z^\beta}{z_n^\beta}w,
\end{split}
\end{equation}
όπου $z=\cos(x_1)-\cos(x)$, $z_n=\cos(\widetilde{x}_1)-\cos(x)$, $h_1, h_2$ είναι φραγμένες, θετικές και άρτιες συναρτήσεις. Ο πρώτος λόγος, $\frac{z^\beta}{z_n^\beta}$, αποτελεί τον μη-φραγμένο παράγοντα του $\frac{f}{g_n}$, ενώ ο δεύτερος, 
$w$, είναι φραγμένος. Είναι εύκολο να δούμε ότι το πραγματικό μέρος του $w$ είναι φραγμένή, θετική και άρτια συνάρτηση, ενώ το φανταστικό φραγμένη και περιττή. Επομένως,
\begin{equation*}
T_n^{-1}(g_n)T_n(f)=T_n\left(\frac{f}{g_n}\right)+\widehat{L}_1=T_n\left(w\frac{z^\beta}{z_n^\beta}\right)+\widehat{L}_2.
\end{equation*}
Ακολουθώντας την ίδια ανάλυση, με χρήση των ιδιοτήτων ισοκατανομής των ιδιοτιμών του συμμετρικού και αντισυμμετρικού μέρους του $T_n\left(w\frac{z^\beta}{z_n^\beta}\right)$, όπως και στην πρώτη περίπτωση, καταλήγουμε σε ανάλογα αποτελέσματα. `Ετσι, το πολύ $\mathcal{O}(\log{n})$ ιδιοτιμές κυμαίνονται εκτός του ορθογωνίου $[a,b]\times[-c,c]$. Περισσότερες λεπτομέρειες δίνονται στην απόδειξη του Θεωρήματος \ref{thm:2}.

`Οπως προαναφέρθηκε, στο πρόβλημα που μελετάμε, η γεννήτρια συνάρτηση $f$ δεν είναι γνωστή εκ των προτέρων. Ωστόσο, έχουμε ήδη υπολογίσει το ανάπτυγμα {\en Fourier} στο $G_n$. Οπότε, υπολογίζουμε το λόγο $\frac{f}{g}$ προσεγγίζοντας τον από το λόγο $\widehat{f}=\frac{F_{n-1}}{g_n}$ σε κάποιο υποσύνολο του $G_n$. Μένει να προσαρμόσουμε κατάλληλα την τεχνική προσέγγισης που προτάθηκε στην \cite{NT_2019}, για τη βέλτιστη ομοιόμορφη προσέγγιση, με χρήση του αλγορίθμου {\en Remez}, της $\widehat{f_1}=\operatorname{Re}(\widehat{f})$ και $\widehat{f_2}=\operatorname{Im}(\widehat{f})$ με άρτια και περιττά τριγωνομετρικά πολυώνυμα κατάλληλων βαθμών, αντίστοιχα. Αρχικά, επιλέγουμε ένα ισοκατανεμημένο σύνολο σημείων από το πλέγμα $G_n$, έστω $G_k$, $k<\!\!<n$, στο $(0,\pi)$. Για να βελτιώσουμε την απόδοση του προρρυθμιστή, καλό θα ήταν να αποκλείσουμε κάποια σημεία του $G_k$, τα οποία ανήκουν σε περιοχές ασυνέχειας της $\widehat{f_1}$ ή της $\widehat{f_2}$ (μειώνοντας το σφάλμα προσέγγισης από τον αλγόριθμο {\en Remez}). Εξαιτίας σφαλμάτων κατά την εκτίμηση των πιθανών ριζών και λαμβάνοντας υπόψη τη σχέση (\ref{eq:zw}), η συνάρτηση $\widehat{f_1}$, ή η $\widehat{f_2}$ μπορεί να μην είναι φραγμένες σε μικρές περιοχές που περιέχουν τις ρίζες. Για να μειώσουμε το σφάλμα προσέγγισης, καλό είναι να αποκλείσουμε επίσης κάποια σημεία τα οποία ανήκουν σε τέτοιες περιοχές. Τέλος, σχηματίζουμε τα σύνολα $G_k^1$ και $G_k^2$ για την $\widehat{f_1}$ και $\widehat{f_2}$, αντίστοιχα, κι εφαρμόζουμε τον αλγόριθμο προσέγγισης {\en Remez}.

Προσεγγίζουμε τις $\widehat{f_1}$ και $\widehat{f_2}$ με τις $q_1$ και $q_2$, αντίστοιχα. Τότε, ορίζουμε την $q=q_1+\mathrm{i}q_2$ και σχηματίζουμε τον ταινιωτό πίνακα {\en Toeplitz} $T_n(p_n)$, όπου $p_n=g_nq$. Συμβολίζουμε με $d_1$ και $d_2$ τους βαθμούς των τριγωνομετρικών πολυωνύμων $q_1$ και $q_2$, αντίστοιχα. Από εδώ και στο εξής θα συμβολίζουμε τον προρρυθμιστή $T_n(p_n)$, ως $R_{d_1,d_2}$, για να φαίνονται οι βαθμοί των πολυωνύμων προσέγγισης. Με χρήση του $R_{d_1,d_2}$ ως προρρυθμιστή, μπορούμε να λύσουμε μη-συμμετρικά και πραγματικά συστήματα {\en Toeplitz}, με αποτελεσματικό τρόπο. Θεωρητικά αποτελέσματα για την περίπτωση που η $f$ είναι γνωστή εκ των προτέρων, δόθηκαν στο δεύτερο κεφάλαιο. Παρακάτω παρουσιάζουμε το αντίστοιχο του Θεωρήματος \ref{thm:eig_clustering}, το οποίο αφορά στη συσσώρευση των ιδιοτιμών, για το πρόβλημα που μας απασχολεί, δηλαδή όταν η $f$ δεν είναι γνωστή εκ των προτέρων.

\begin{thm}\label{thm:2}
Ας είναι $T_n$ ο $n\times n$, πραγματικός πίνακας {\en Toeplitz}, με άγνωστη γεννήτρια συνάρτηση. `Εστω $p_n$ το τριγωνομετρικό πολυώνυμο το οποίο προέκυψε από την προτεινόμενη διαδικασία, με σφάλματα στις εκτιμήσεις των ριζών τάξεως το πολύ $\mathcal{O}\left(\frac{\log{n}}{n}\right)$. Τότε, οι ιδιοτιμές του προρρυθμισμένου συστήματος $T_n(p_n)^{-1}T_n$ συσσωρεύονται στο ορθογώνιο $[a,b]\times[-c,c]$, όπου $a=\min\limits_{x\in K}\operatorname{Re}\left(\frac{F_{n-1}(x)}{p_n(x)}\right)$, $b=\max\limits_{x\in K}\operatorname{Re}\left(\frac{F_{n-1}(x)}{p_n(x)}\right)$, $c=\max\limits_{x\in K}\operatorname{Im}\left(\frac{F_{n-1}(x)}{p_n(x)}\right)$ και $K=[-\pi,\pi]\backslash\bigcup\limits_i\left((-\beta_i,-\alpha_i)\cup(\alpha_i,\beta_i)\right)$, με $(\alpha_i,\beta_i)$ να είναι διαστήματα που περιέχουν τις ρίζες $x_i$ και τους πόλους $\widetilde{x}_i$, $i=1,2,\dots,\rho$, έχοντας μήκος $\beta_i-\alpha_i=\mathcal{O}\left(\frac{\log{n}}{n}\right)$, καθώς επίσης και $x_i-\alpha_i$, $\widetilde{x}_i-\alpha_i$, $\beta_i-x_i$, $\beta_i-\widetilde{x}_i$ είναι της τάξεως $\mathcal{O}\left(\frac{\log{n}}{n}\right)$. Τότε, το πολύ $\mathcal{O}(\log{n})$ ιδιοτιμές κυμαίνονται εκτός του ορθογωνίου.
\end{thm}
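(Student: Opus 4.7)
Η στρατηγική μου είναι να προσαρμόσω τη μεθοδολογία της απόδειξης του Θεωρήματος \ref{thm:eig_clustering}, ενσωματώνοντας δύο νέες δυσκολίες: (α) το γεγονός ότι η γεννήτρια συνάρτηση $f$ δεν είναι γνωστή και αντικαθίσταται από το ανάπτυγμα Fourier $F_{n-1}$ στο πλέγμα $G_n$, και (β) το γεγονός ότι οι πραγματικές ρίζες $x_i$ της $f$ έχουν εκτιμηθεί με σφάλματα $\widetilde{x}_i-x_i=\mathcal{O}(\log n/n)$, δημιουργώντας ``ψευδοπόλους'' στο $p_n$. Η βασική ιδέα είναι ότι, αν και το $F_{n-1}/p_n$ δεν είναι φραγμένο, ο μη-φραγμένος του χαρακτήρας εντοπίζεται αποκλειστικά σε μικρά διαστήματα συνολικής μετρικής Lebesgue $\mathcal{O}(\log n/n)$, γεγονός το οποίο μέσω της ισοκατανομής τύπου-Szeg\H{o} μεταφράζεται σε $\mathcal{O}(\log n)$ ιδιοτιμές εκτός του επιθυμητού ορθογωνίου.

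Πρώτο βήμα: Γράφουμε το πηλίκο Rayleigh του προρρυθμισμένου πίνακα $T_n^{-1}(p_n)T_n$, χωρίζοντάς το σε Ερμιτιανό και αντι-Ερμιτιανό μέρος, ακριβώς όπως στη σχέση (\ref{eq:rayleigh_eig}). Μέσω της ίδιας αλγεβρικής μεθόδου (αλλαγή μεταβλητής $y=T_n(\bar{p}_n)^{-1}x$, χρήση των πινάκων χαμηλής βαθμίδας $R_1, R_2, R_3$ λόγω της ταινιωτής δομής του $T_n(p_n)$), ανάγουμε τη μελέτη του φάσματος στη μελέτη του εύρους των συναρτήσεων $\operatorname{Re}(F_{n-1}/p_n)$ και $\operatorname{Im}(F_{n-1}/p_n)$, με επιβάρυνση το πολύ σταθερού πλήθους ιδιοτιμών λόγω της ταινιωτής διόρθωσης.

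Δεύτερο βήμα: Μελετούμε τη συμπεριφορά της $F_{n-1}/p_n$ στο σύνολο $K$. Εκεί, από τη σχέση (\ref{eq:zw}) και την ανάλυση που προηγείται του θεωρήματος, το πηλίκο είναι φραγμένο, με $\operatorname{Re}(F_{n-1}/p_n)\in[a,b]$ και $\operatorname{Im}(F_{n-1}/p_n)\in[-c,c]$. Ο μη-φραγμένος παράγοντας $z^{m_i}/z_n^{m_i}$ εντοπίζεται αποκλειστικά στο συμπλήρωμα $[-\pi,\pi]\setminus K=\bigcup_i\left((-\beta_i,-\alpha_i)\cup(\alpha_i,\beta_i)\right)$, το οποίο έχει συνολική μετρική Lebesgue $\sum_i 2(\beta_i-\alpha_i)=\mathcal{O}(\log n/n)$.

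Τρίτο βήμα: Εφαρμόζουμε το θεώρημα ισοκατανομής τύπου-Szeg\H{o} για ιδιοτιμές πινάκων Toeplitz (όπως χρησιμοποιήθηκε στην απόδειξη του Θεωρήματος \ref{thm:Me}), χωριστά για το Ερμιτιανό και το αντι-Ερμιτιανό μέρος. Θεωρούμε μια συνεχή συνάρτηση $0\leq F_h\leq 1$ που μηδενίζεται στο $[a,b]$ (αντίστοιχα στο $[-c,c]$) και είναι ίση με $1$ έξω από μια $h$-επέκταση αυτού. Τότε ο κανονικοποιημένος πληθικός αριθμός των ιδιοτιμών εκτός του $[a,b]$ φράσσεται από το $\frac{1}{2\pi}\int_{-\pi}^{\pi}F_h\left(\operatorname{Re}(F_{n-1}/p_n)\right)\mathrm{d}x\leq\frac{1}{2\pi}\operatorname{meas}([-\pi,\pi]\setminus K)=\mathcal{O}(\log n/n)$, πολλαπλασιαζόμενο με $n$ δίνει $\mathcal{O}(\log n)$ ιδιοτιμές εκτός του διαστήματος. Προσθέτοντας τη σταθερή συνεισφορά των ταινιωτών διορθώσεων από το πρώτο βήμα, προκύπτει το ζητούμενο για κάθε ένα από τα δύο μέρη, άρα συνολικά το πολύ $\mathcal{O}(\log n)$ ιδιοτιμές κυμαίνονται εκτός του ορθογωνίου $[a,b]\times[-c,c]$.

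Το κύριο εμπόδιο που αναμένω είναι η αυστηρή αιτιολόγηση του τρίτου βήματος στο πλαίσιό μας, καθώς το $p_n$ εξαρτάται από το $n$ (οι θέσεις των ``πόλων'' $\widetilde{x}_i$ μετακινούνται με την αύξηση της διάστασης), οπότε το κλασικό θεώρημα ισοκατανομής του Szeg\H{o} δεν εφαρμόζεται άμεσα σε μία και μοναδική γεννήτρια. Θα χρειαστεί να χρησιμοποιήσω μια εκδοχή ισοκατανομής για ακολουθίες συναρτήσεων $\phi_n=F_{n-1}/p_n$ (τύπου Tilli--Tyrtyshnikov), εκμεταλλευόμενος ότι η $\phi_n$ συγκλίνει στην $f/p$ ομοιόμορφα στο $K$ με ρυθμό $\mathcal{O}(\log n/n)$ λόγω της εκτίμησης $\|f-F_{n-1}\|_\infty=\mathcal{O}(\log n/n)$ για συνεχείς συναρτήσεις που ανέφερα πριν το θεώρημα (\cite{Serra_1999}). Επιπλέον, το σφάλμα προσέγγισης $F_{n-1}-f$ μεταφράζεται σε έναν πίνακα $T_n(F_{n-1})-T_n$ του οποίου τη νόρμα $\Vert\cdot\Vert_2$ θα διαχωρίσω σε ένα μέρος μικρής νόρμας και ένα μέρος χαμηλής βαθμίδας, ώστε να μην επηρεαστεί η εκτίμηση $\mathcal{O}(\log n)$ των εξαιρούμενων ιδιοτιμών.
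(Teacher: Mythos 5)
{\en
Your outline reproduces the paper's own strategy almost step for step: the reduction (via the Rayleigh-quotient/band low-rank machinery of Theorem \ref{thm:eig_clustering}) to the range of $\operatorname{Re}\left(\frac{F_{n-1}}{p_n}\right)$ and $\operatorname{Im}\left(\frac{F_{n-1}}{p_n}\right)$ on $K$, the observation that the unbounded factor is confined to a set of measure $\mathcal{O}\left(\frac{\log n}{n}\right)$, and the Szeg{\"o}-type counting that turns this measure into $\mathcal{O}(\log n)$ outliers are exactly the paper's steps. The genuine gap sits at the point you yourself flag, and your proposed remedy does not close it. The symbol $F_{n-1}/p_n$ has true poles of order $m_i\geq1$ at the estimated roots $\widetilde{x}_i$ (since $F_{n-1}(\widetilde{x}_i)\neq0$ when $\widetilde{x}_i\neq x_i$), so it is not even in $\mathrm{L}^1$; consequently neither the classical Szeg{\"o} theorem nor the Tilli/Tyrtyshnikov--Zamarashkin results you want to invoke (which require $\mathrm{L}^1$ or $\mathrm{L}^2$ symbols) apply to the sequence $\phi_n=F_{n-1}/p_n$, and uniform convergence to $f/p$ on $K$ gives no control precisely where the difficulty lives, namely near the poles. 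Your back-up idea of splitting $T_n(F_{n-1})-T_n$ into a small-norm plus low-rank part is vacuous: the Fourier coefficients of $F_{n-1}$ up to order $n-1$ are by definition the diagonals of $T_n$, so $T_n(F_{n-1})=T_n$ exactly and nothing is gained there.

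The paper closes this hole with a regularization device that is missing from your plan: the dimension $n$ is fixed, and $\bar{f}_1=\operatorname{Re}\left(\frac{F_{n-1}}{p_n}\right)$ is replaced by a bounded continuous function $\widetilde{f}_1$ that coincides with $\bar{f}_1$ except on those grid intervals $[w_j,w_{j+1}]$ of $G_n$ containing a pole, where it is replaced by the linear interpolant of the (finite) endpoint values; by construction this bounded symbol generates the same $n\times n$ matrix. The classical Szeg{\"o} equidistribution theorem \cite{Grenander} is then applied to $T_N(\widetilde{f}_1)$ with an auxiliary dimension $N\rightarrow\infty$, the test function $F_\eta$ is majorized by the indicator of the excluded intervals so that the limiting proportion of outliers is at most $c\frac{\log n}{n}$, and one finally specializes back to $N=n$ to get at most $c\log n$ eigenvalues outside $[a,b]$, and likewise outside $[-c,c]$ for the antisymmetric part. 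Without this step, or some equivalent way of legitimizing the counting for an $n$-dependent, non-integrable symbol, your third step does not go through as stated.
}
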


\begin{proof}
Αν μπορούσαμε να προσδιορίσουμε τις ρίζες με ακρίβεια, θα χρησιμοποιούσαμε ως προρρυθμιστή τον πίνακα $T_n(p)$, όπου $p=gq$. Τότε, από το Θεώρημα \ref{thm:eig_clustering} θα προέκυπτε μια κύρια συσσώρευση στο ορθογώνιο $[a,b]\times[-c,c]$. Ωστόσο, χρησιμοποιούμε ως προρρυθμιστή τον πίνακα $T_n(p_n)$. `Εχουμε επιλέξει το μήκος του $[\alpha_i,\beta_i]$ να είναι $\mathcal{O}\left(\frac{\log{n}}{n}\right)$, επειδή η απόσταση μεταξύ της ρίζας $x_i$ και του πόλου $\widetilde{x}_i$ είναι τάξεως το πολύ $\mathcal{O}\left(\frac{\log{n}}{n}\right)$. `Εχουμε επιλέξει επίσης τις αποστάσεις $x_i-\alpha_i$, $\widetilde{x}_i-\alpha_i$, $\beta_i-x_i$, $\beta_i-\widetilde{x}_i$ να είναι της τάξεως $\mathcal{O}\left(\frac{\log{n}}{n}\right)$, έτσι ώστε η συνάρτηση $\bar{f}=\frac{F_{n-1}}{p_n}$ να είναι φραγμένη (άνω και κάτω), ανεξαρτήτως της διάστασης $n$, στο σύνολο $K$. Εύκολα βλέπουμε ότι ο λόγος $\frac{\cos{(x_i)}-\cos{(x)}}{\cos{(\widetilde{x}_i)}-\cos{(x)}}$, ο οποίος χαρακτηρίζει τη συνάρτηση $\bar{f}$ ως μη φραγμένη στο $\widetilde{x}_i$ και δηλώνει ότι μηδενίζεται στο $x_i$, είναι φραγμένος ανεξαρτήτως της διάστασης $n$, έξω από το διάστημα $(\alpha_i,\beta_i)$. Αυτό εξασφαλίζει ότι η $\bar{f}$ είναι φραγμένη σε ένα ορθογώνιο $[a,b]\times[-c,c]$, όταν ορίζεται στο σύνολο $K$. Οι ιδιοτιμές που κυμαίνονται εκτός του $[a,b]\times[-c,c]$ εξαρτώνται από αυτά τα διαστήματα. Για να εκτιμήσουμε πόσες είναι αυτές, χρησιμοποιούμε το θεώρημα ισοκατανομής των ιδιοτιμών του {\en Szeg{\"o}}. Αναλυτικότερα, θα πρέπει να ελέγξουμε ξεχωριστά το πραγματικό και φανταστικό μέρος της $\bar{f}$. Αρχικά, σταθεροποιούμε έναν ακέραιο $n$, ο οποίος είναι αρκετά μεγάλος και ουσιαστικά είναι η διάσταση του αρχικού συστήματος προς λύση $T_nx=b$. Στη συνέχεια ορίζουμε ως $N$, την ακέραια μεταβλητή που χρησιμοποιούμε στο θεώρημα του {\en Szeg{\"o}}. Θα δώσουμε την απόδειξη για το πραγματικό μέρος της $\bar{f}$, καθώς αυτή που αφορά στο φανταστικό μέρος είναι ανάλογη. 

Θέτουμε $\bar{f}_1=\operatorname{Re}(\bar{f})$. Προφανώς η $\bar{f}_1$ είναι μη-φραγμένη στο σύνολο 
\begin{equation*}
\bigcup\limits_{i}\left((-\beta_i,-\alpha_i)\cup(\alpha_i,\beta_i)\right)
\end{equation*}
και δε μπορούμε να χρησιμοποιήσουμε το θεώρημα του {\en Szeg{\"o}}. Αφού ο $n$ είναι σταθερός και η $\bar{f}_1$ προέρχεται από το πλέγμα $G_n$, μπορούμε να προσεγγίσουμε την $\bar{f}_1$ με την $\widetilde{f}_1$, η οποία είναι φραγμένη και παράγει τον ίδιο πίνακα $T_n(\bar{f}_1)$. Αυτό γίνεται ως εξής: Αν δεν υπάρχει κανένας πόλος ανάμεσα σε δύο διαδοχικά σημεία του $G_n$, η $\widetilde{f}_1$ λαμβάνει την ίδια τιμή με την $\bar{f}_1$. Αν υπάρχει κάποιος πόλος ανάμεσα σε δύο διαδοχικά σημεία $w_j$ και $w_{j+1}$, η $\widetilde{f}_1$ λαμβάνει την τιμή του ευθυγράμμου τμήματος
\begin{equation*}
\frac{\bar{f}_1(w_{j+1})-\bar{f}_1(w_{j})}{w_{j+1}-w_j}(x-w_j)+\bar{f}_1(w_{j}),~x\in[w_j,w_{j+1}].
\end{equation*}
Αυτό σημαίνει ότι η $\widetilde{f}_1(w_j)$ είναι τοπικό ελάχιστο/μέγιστο, ενώ η $\widetilde{f}_1(w_{j+1})$ είναι τοπικό μέγιστο/ελάχιστο, αντίστοιχα, αλλά φράσονται καθώς η διάσταση είναι σταθερή. Επομένως, η $\widetilde{f}_1$ είναι μια φραγμένη και συνεχής συνάρτηση. Για να εφαρμόσουμε το θεώρημα του {\en Szeg{\"o}}, θεωρούμε τη συνεχή και φραγμένη συνάρτηση $F_\eta$:
\begin{equation*}
F_\eta(z)=
\begin{cases}
1,~z\leq a-\eta, z\geq b+\eta\\
0,~z\in[a,b]\\
\end{cases},
\end{equation*}
$\eta$ αρκούντως μικρό. `Εχουμε:
\begin{equation*}
\begin{split}
&\limsup\limits_{N\rightarrow\infty}\frac{1}{N}\#\lbrace\lambda_j(T_n(\widetilde{f}_1))< a~\vee~\lambda_j(T_n(\widetilde{f}_1))> b\rbrace\\
&=\frac{1}{2\pi}\int\limits_{-\pi}^{\pi}F_\eta(\widetilde{f}_1(x))\mathrm{d}x\leq\frac{1}{2\pi}\int\limits_{\bigcup\limits_{i=1}^\rho\left((-\beta_i,-\alpha_i)\cup(\alpha_i,\beta_i)\right)}1\mathrm{d}x\\
&=\sum\limits_{i=1}^\rho 2(\beta_i-\alpha_i)=2\sum\limits_{i=1}^\rho c_i\frac{\log{n}}{n}=c\frac{\log{n}}{n},
\end{split}
\end{equation*}
όπου $\#$ δηλώνει τον πληθικό αριθμό του συνόλου και $\vee$ τη λογική διάζευξη {\en (OR)}.
`Αρα, $\limsup\limits_{N\rightarrow\infty}\#\lbrace\lambda_j(T_n(\widetilde{f}_1))< a~\vee~\lambda_j(T_n(\widetilde{f}_1))> b\rbrace\leq c\frac{\log{n}}{n}N$.

Πηγαίνοντας πίσω στο σταθερό $n$ και δεδομένου ότι επιλέγουμε κάποιο (αρκούντως) μικρό $\eta$, καταλήγουμε στο ότι ο αριθμός των ιδιοτιμών που κυμαίνονται εκτός του διαστήματος $[a,b]$ (κατεύθυνση του πραγματικού άξονα) είναι το πολύ $c\frac{\log{n}}{n}n=c\log{n}$, δηλαδή της τάξεως $\mathcal{O}(\log{n})$. Το ίδιο αποτέλεσμα λαμβάνεται για την κατεύθυνση του φανταστικού άξονα. Επομένως, $\mathcal{O}(\log{n})$ ιδιοτιμές του προρρυθμισμένου πίνακα κυμαίνονται εκτός του ορθογωνίου $[a,b]\times[-c,c]$.
\end{proof}

\begin{rem}
Αν δεν έχουν εκτιμηθεί ρίζες σε σημεία διαφορετικά του 0 ή αν όλες οι ρίζες έχουν εκτιμηθεί με ακρίβεια, τότε από το Θεώρημα \ref{thm:eig_clustering}, η συσσώρευση των ιδιοτιμών στο ορθογώνιο είναι κύρια. Επιπλέον, αν η γεννήτρια συνάρτηση του $T_n$ είναι επαρκώς ομαλή ή κάποιο τριγωνομετρικό πολυώνυμο, περιπτώσεις όπου το σφάλμα προσέγγισης του αναπτύγματος {\en Fourier} είναι της τάξεως $\mathcal{O}\left(\frac{1}{n}\right)$ \cite{Serra_1999}, ακολουθώντας ακριβώς την απόδειξη του Θεωρήματος \ref{thm:2}, καταλήγουμε σε κύρια συσσώρευση των ιδιοτιμών του προρρυθμισμένου πίνακα. 
\end{rem}

\begin{rem}
Όσον αφορά στη συσσώρευση των ιδιαζουσών τιμών, από το Θεώρημα \ref{thm:gen_cluster}, αυτή επιτυγχάνεται για τον προρρυθμισμένο πίνακα αν η $f$ είναι γνωστή εκ των προτέρων και ανήκει στην κλάση $\mathrm{L}^2([-\pi,\pi])$. Στην περίπτωσή μας, όπου η συνάρτηση είναι άγνωστη η συσσώρευση των ιδιαζουσών τιμών εξακολουθεί να έχει την ίδια φύση (γενική συσσώρευση). Ωστόσο, αν υπάρχουν σφάλματα κατά την εκτίμηση των ριζών, αυτή γίνεται ακόμα χειρότερη. Επομένως, η σύγκλιση της μεθόδου {\en PCGN}, είναι πιο αργή από αυτή της μεθόδου {\en PGMRES}. Αυτό φαίνεται και στον Πίνακα \ref{tab:it_1} του Παραδείγματος \ref{ex:1}.
\end{rem}

Ο Αλγόριθμος \ref{1algo} περιγράφει την κατασκευή του προτεινόμενου προρρυθμιστή, σε μορφή ψευδοκώδικα.

\setcounter{algorithm}{1}
\begin{breakablealgorithm}
\caption{Κατασκευή του Προρρυθμιστή.}
\label{1algo}
\textbf{Είσοδος:} $n\in\mathbb{N}$, $T_n$: $n\times n$ μη-συμμετρικός, πραγματικός πίνακας {\en Toeplitz}.
\begin{algorithmic}[1]
\STATE Κατασκευάστε το ισοκατανεμημένο πλέγμα $G_n$, με σημεία:\\ $\theta_j=-\pi+\frac{2\pi j}{n+1}$, $j=1,2,\dots,n$.
\STATE\textbf{για }{$j=1,2,\dots,n$}
\STATE\quad Υπολογίστε το ανάπτυγμα {\en Fourier}: $F_{n-1}(\theta_j)=\sum\limits_{k=-n+1}^{n-1}t_k\mathrm{e}^{\mathrm{i}k\theta_j}$, $\theta_j\in G_n$.
\STATE\quad Εκτιμήστε τις $f_1$ και $f_2$ ως τις τιμές των $F_{n-1}^1(\theta_j)=\operatorname{Re}(F_{n-1}(\theta_j))$ και\\\hspace{.75pc} $F_{n-1}^2(\theta_j)=\operatorname{Im}(F_{n-1}(\theta_j))$, αντίστοιχα.
\STATE\textbf{τέλος για}
\STATE Επιλέξτε σημεία $\theta_i\in G_n$, κοντά στα τοπικά ελάχιστα της $\vert F_{n-1}^\ell\vert$, $\ell=1,2$, τέτοια ώστε $\vert F_{n-1}^\ell(\theta_i)\vert\simeq0$ και θεωρήστε τα ως πιθανές ρίζες $x_i$, $i=1,\dots,\rho$.
\STATE Επιλέξτε διαστήματα $[\theta_j,\theta_{j+1}]$, όπου είναι πιθανόν να υπάρχουν σημεία ασυνέχειας του $F_{n-1}$.
\STATE Εκτιμήστε τις πολλαπλότητες των ριζών της $f_\ell$, $\ell=1,2$:
\STATE\quad\textbf{αν} $F_{n-1}^\ell$ λαμβάνει θετικές και αρνητικές τιμές
	\STATE\qquad Υπολογίστε το $\vert F_{n-1}^\ell\vert$ στο $G_n$.
	\STATE\qquad Υπολογίστε το $\widehat{S}_{4k}^\ell\simeq T_{4k}(\vert F_{n-1}^\ell\vert)$ με χρήση του σύνθετου κανόνα του \\\hspace{1.85pc}{\en Simpson}, $k<\!\!<n$.
	\STATE\qquad Για κάθε ρίζα $x_i\in G_n$, εκτιμήστε την αντίστοιχη ιδιοτιμή $\lambda_{i,k}^\ell$ του $\widehat{S}_k^\ell$ \\\hspace{1.8pc}χρησιμοποιώντας λίγες επαναλήψεις της μεθόδου Αντιστρόφων \\\hspace{1.8pc}Δυνάμεων με αρχικό διάνυσμα $\Theta_{i,k}=\frac{1}{\sqrt{k}}\left(1,\mathrm{e}^{\mathrm{i}x_i},\mathrm{e}^{2\mathrm{i}x_i},\dots,\mathrm{e}^{(k-1)\mathrm{i}x_i}\right)^T$.
\STATE\qquad Επαναλάβετε το 12 για τους $\widehat{S}_{2k}^\ell$ και $\widehat{S}_{4k}^\ell$ για να λάβετε τις $\lambda_{i,2k}^\ell$ και $\lambda_{i,4k}^\ell$,\\\hspace{1.8pc} αντίστοιχα.
\STATE\qquad Υπολογίστε το λόγο $s_i^\ell=\frac{\lambda_{i,k}^\ell-\lambda_{i,2k}^\ell}{\lambda_{i,2k}^\ell-\lambda_{i,4k}^\ell}$ και τις πολλαπλότητες $m_i^\ell$ ως τους\\\hspace{1.85pc} πλησιέστερους ακέραιους στον $\log_2{s_i^\ell}$.
\STATE\quad\textbf{αλλιώς}
	\STATE\qquad Υπολογίστε το συμμετρικό και αντισυμμετρικό μέρος (του $T_n$),\\\hspace{1.8pc} $S_n^1=\frac{T_n+T_n^T}{2}$ και $S_n^2=\frac{T_n-T_n^T}{2}$, αντίστοιχα.
	\STATE\qquad Επαναλάβετε τα 12 - 14 για τον $S_k^\ell$ αντί του $\widehat{S}_k^\ell$.
\STATE\quad\textbf{τέλος αν}
\STATE Επιλέξτε το τριγωνομετρικό πολυώνυμο $g_n$, ώστε $\operatorname{Re}\left(\frac{F_{n-1}(\theta_j)}{g_n(\theta_j)}\right)>0$.
\STATE Ορίστε το $G_k$, υποσύνολο του $G_n$ με $k$ ισοκατανεμημένα σημεία στο $(0,\pi)$.
\STATE Εξαιρέστε τα σημεία του $G_k$ τα οποία είναι κοντά σε ρίζες ή σημεία ασυνέχειας της $f_\ell$, και ορίστε το νέο σύνολο ως $G_k^\ell$, $\ell=1,2$.
\STATE Εκτιμήστε τις $\operatorname{Re}\left(\frac{f}{g}\right)$ και $\operatorname{Im}\left(\frac{f}{g}\right)$, ως $\widehat{f}_1$ και $\widehat{f}_2$, αντίστοιχα, υπολογισμένες στο $G_k^\ell$, $\ell=1,2$.
\STATE Προσεγγίστε την $\widehat{f}_\ell$, με ένα κατάλληλο τριγωνομετρικό πολυώνυμο $q_\ell$, με βέλτιστη ομοιόμορφη προσέγγιση, χρησιμοποιώντας ως κόμβους τα σημεία του $G_k^\ell$, $\ell=1,2$.
\STATE Κατασκευάστε τον ταινιωτό προρρυθμιστή {\en Toeplitz} $T_n(p_n)$, όπου $p_n=g_n q$ και $q=q_1+\mathrm{i}q_2$.
\end{algorithmic}
\end{breakablealgorithm}

\subsection{Αριθμητικά αποτελέσματα}

Σε αυτή την υποενότητα παρουσιάζουμε μια πληθώρα αριθμητικών παραδειγμάτων, τα οποία δείχνουν την αποτελεσματικότητα της προτεινόμενης τεχνικής προρρύθμισης. Δίνουμε τις επαναλήψεις που χρειάζονται για την επιθυμητή σύγκλιση στη λύση του συστήματος, με σφάλμα το πολύ ίσο με $10^{-6}$. Στους Πίνακες \ref{tab:it_1}, \ref{tab:it_2} και \ref{tab:it_3} με $I_n$ δηλώνουμε ότι δε χρησιμοποιήθηκε κανένας προρρυθμιστής.

\setcounter{thm}{2}
\begin{exmp}\label{ex:1}\normalfont
Παίρνοντας τον πίνακα {\en Toeplitz} ο οποίος προκύπτει από τη συνάρτηση $\mathfrak{f}_2(x)=x^2+\mathrm{i}x^3$, θα εκτιμήσουμε τις ρίζες αυτής, καθώς και την πολλαπλότητα αυτών, από τα στοιχεία του πίνακα. 

\begin{figure}[H]
\centering
    \subfloat[Εκτίμηση της $x^2$.]{{\label{fig:1a}\includegraphics[width=0.45\linewidth]{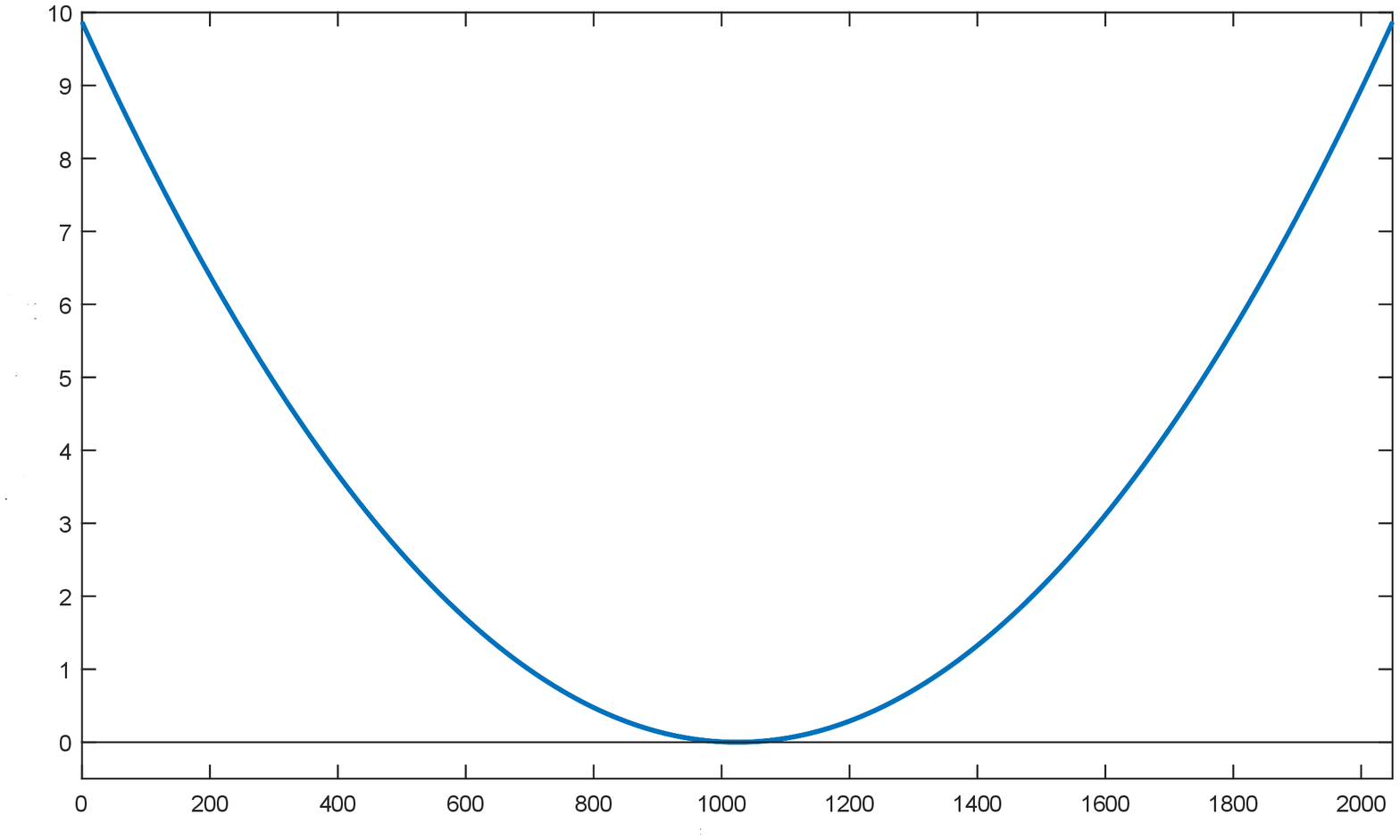}}}%
    \qquad
    \subfloat[Εκτίμηση της $x^3$.]{{\label{fig:1b}\includegraphics[width=0.45\linewidth]{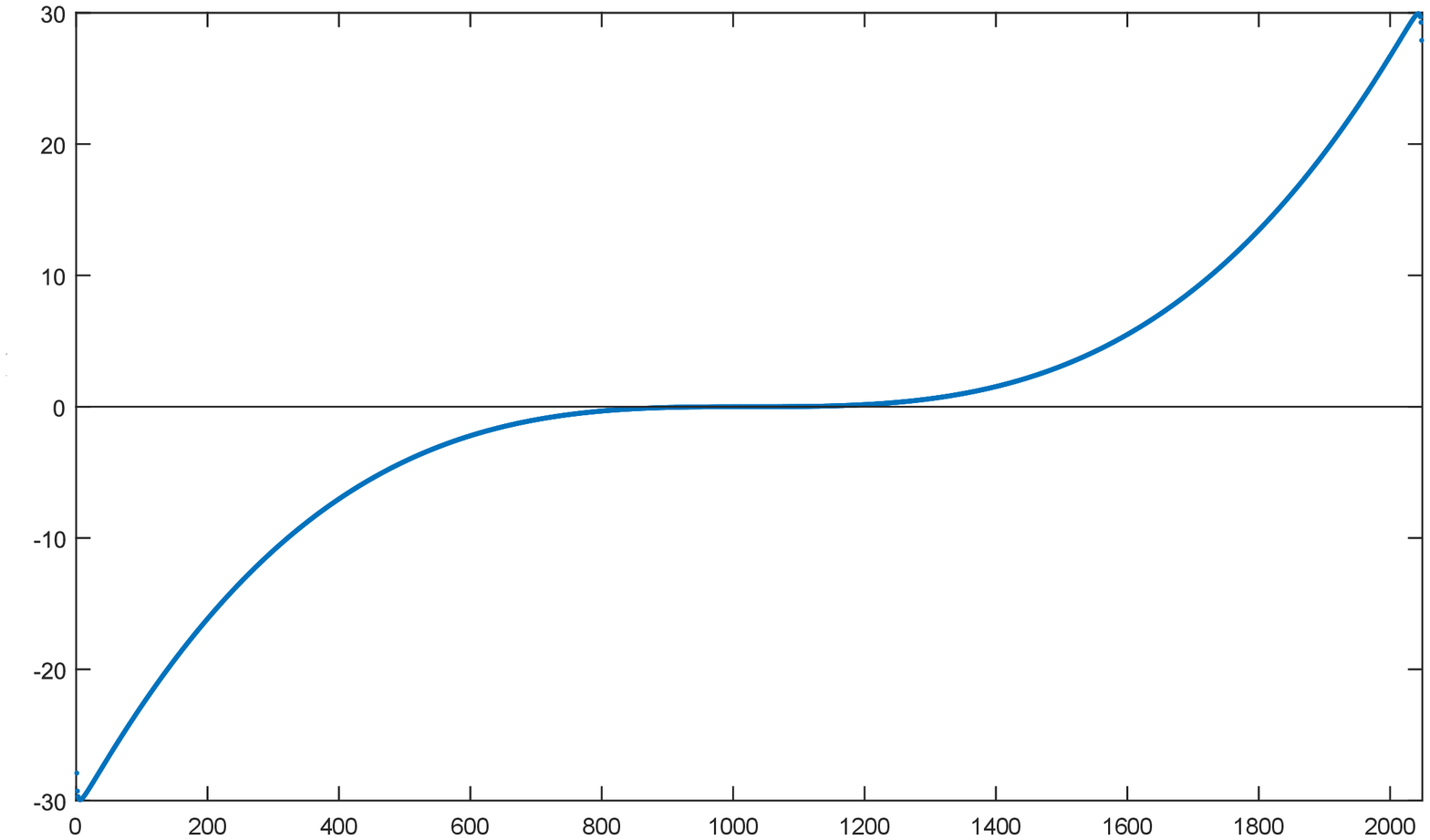}}}%
    \caption{Ανάπτυγμα {\en Fourier} στο $G_{2048}$.}
    \label{fig:example11}
\end{figure}

Τα Σχήματα \ref{fig:example11} και \ref{fig:example12} δείχνουν τις τιμές που λαμβάνει το ανάπτυγμα {\en Fourier} για το πραγματικό και φανταστικό μέρος της (άγνωστης) γεννήτριας συνάρτησης $\mathfrak{f}_2$.

\begin{figure}[H]
\centering
    \subfloat[Εκτίμηση της $x^2$ (μεγ.).]{{\includegraphics[width=0.45\linewidth]{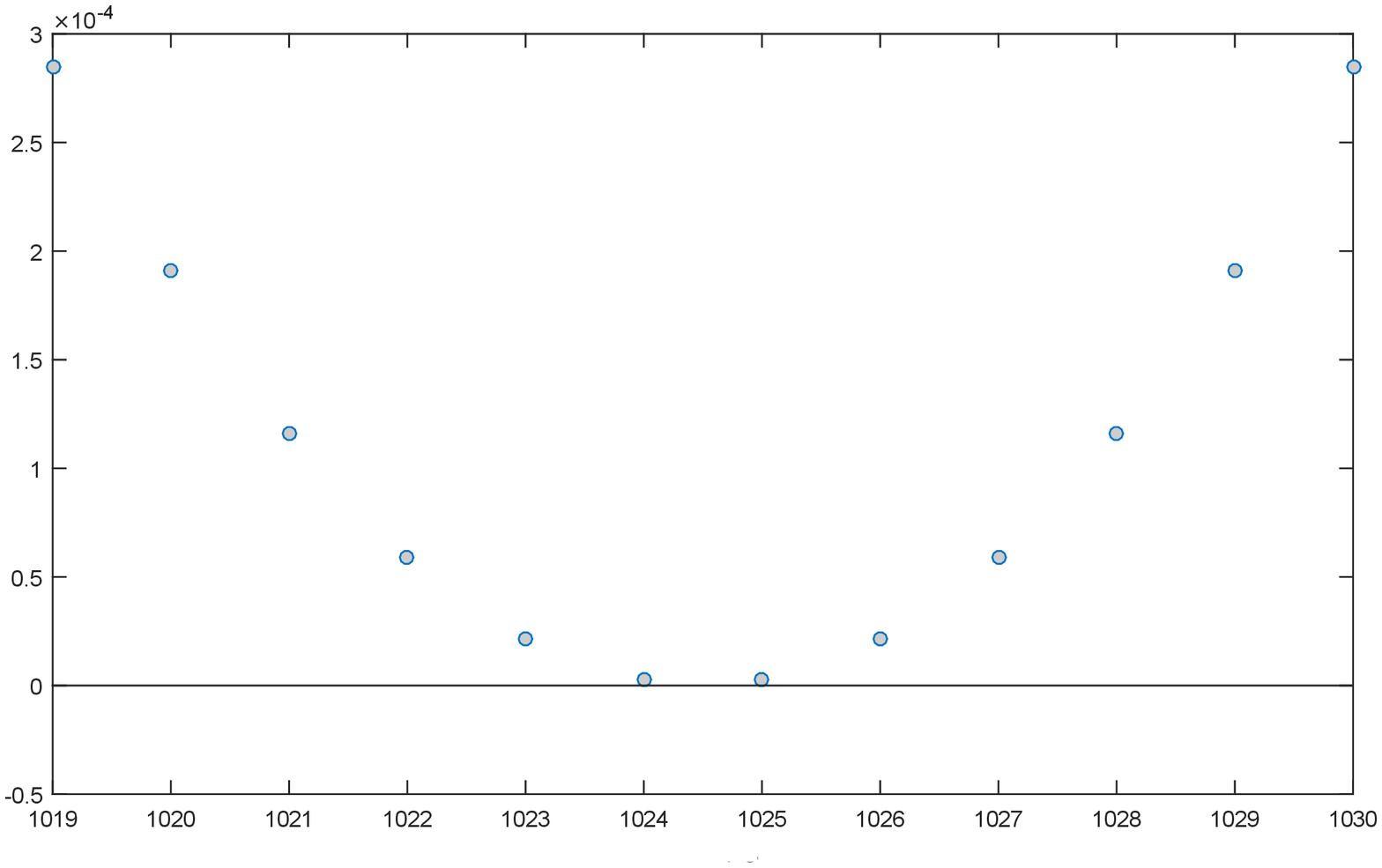}}}%
    \qquad
    \subfloat[Εκτίμηση της $x^3$ (μεγ.).]{{\includegraphics[width=0.45\linewidth]{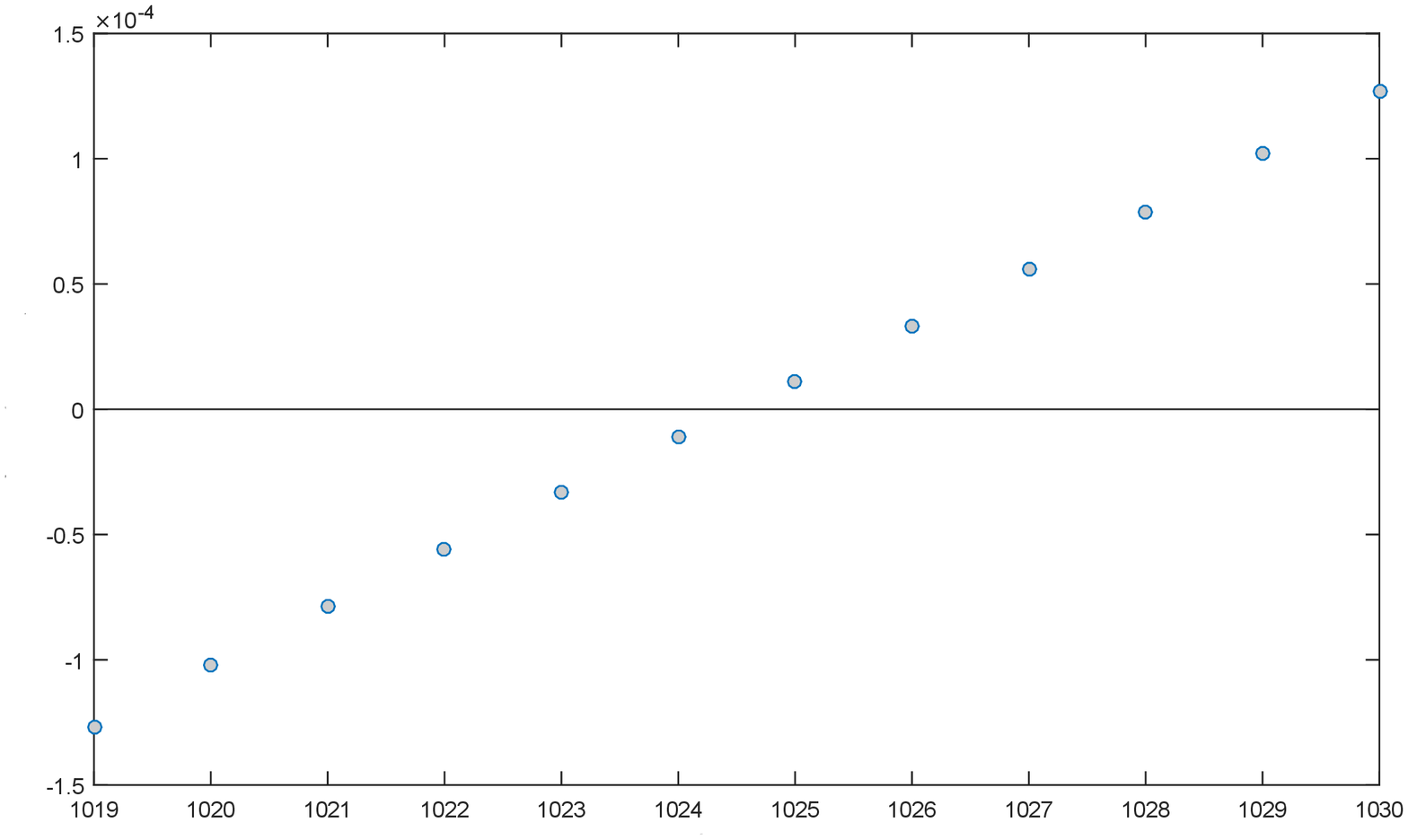}}}%
    \caption{Ανάπτυγμα {\en Fourier} κοντά στην αρχή των αξόνων.}
    \label{fig:example12}
\end{figure}

Εκτελώντας 4 επαναλήψεις της μεθόδου Αντιστρόφων Δυνάμεων καταλήγουμε στο ότι το πραγματικό μέρος της $\mathfrak{f}_2$ έχει μια ρίζα στο 0 με πολλαπλότητα $m_0^1=2$, ενώ το φανταστικό μέρος αυτής έχει ρίζα στο ίδιο σημείο με πολλαπλότητα $m_0^2=3$ (για περισσότερες λεπτομέρειες βλ. Πίνακα \ref{tab:1}). Επομένως, εκτιμήσαμε ότι η $\mathfrak{f}_2$ έχει μια ρίζα στο 0 και $m_0^1<m_0^2$. Στον Πίνακα \ref{tab:1}, με $\vert\lambda_0^2\vert$ δηλώνουμε την ελάχιστη ιδιοτιμή του $\widehat{S}_k^2$ (υπολογισμένη με τη μέθοδο των Αντιστρόφων Δυνάμεων). Προκειμένου να άρουμε την κακή κατάσταση επιλέγουμε το τριγωνομετρικό πολυώνυμο $g_n=g=2-2\cos(x)$, και προσεγγίζουμε την $\frac{F_{n-1}}{g}$ με τριγωνομετρικά πολυώνυμα τετάρτου βαθμού. Στη συνέχεια, κατασκευάζουμε τον προρρυθμιστή $R_{4,4}$ κι επιλύουμε το σύστημα με χρήση της μεθόδου {\en PGMRES} και {\en PCGN}. Οι επαναλήψεις των μεθόδων για διάφορες διαστάσεις δίνονται στον Πίνακα \ref{tab:it_1}. 

\begin{table}[H]
\centering
\begin{tabular}{ccccc}
\toprule
$k$ & $\lambda_0^1$ & $\log_2{(s_0^1)}$ & $\vert\lambda_0^2\vert$ & $\log_2{(s_0^2)}$\\\midrule
16 & 0.0351 & 1.9224 & 0.0133 & 2.6634\\
32 & 0.0092 & & 0.0023 &\\
64 & 0.0024 & & 0.0005 &\\\bottomrule
\end{tabular}
\caption{Πολλαπλότητα των ριζών ($\mathfrak{f}_2$).}
\label{tab:1}
\end{table}

Σημειώνουμε ότι η λύση του συστήματος χωρίς προρρύθμιση λαμβάνεται σχεδόν μετά από $n$ επαναλήψεις ($n$ είναι η διάσταση του συστήματος). Στο Σχήμα \ref{fig:example13} φαίνεται η συσσώρευση των ιδιοτιμών και ιδιαζουσών τιμών του προρρυθμισμένου συστήματος.

\begin{table}[H]
\centering
\begin{tabular}{ccc|cc}
\toprule
\multirow{2}{*}{$n$} & \multicolumn{2}{c|} {\en PGMRES} & \multicolumn{2}{c} {\en PCGN} \\
  & $I_n$ & $R_{4,4}$ & $I_n$ & $R_{4,4}$\\\midrule
1024 & $>$500 & 28 & - & 45\\
2048 & $>$500 & 28 & - & 49\\
4096 & $>$500 & 28 & - & 54\\
8192 & $>$500 & 27 & - & 57\\\bottomrule
\end{tabular}
\caption{Επαναλήψεις ($\mathfrak{f}_2$).}
\label{tab:it_1}
\end{table}

\begin{figure}[H]
\centering
    \subfloat[Ιδιοτιμές.]{{\label{fig:3a}\includegraphics[width=0.45\linewidth]{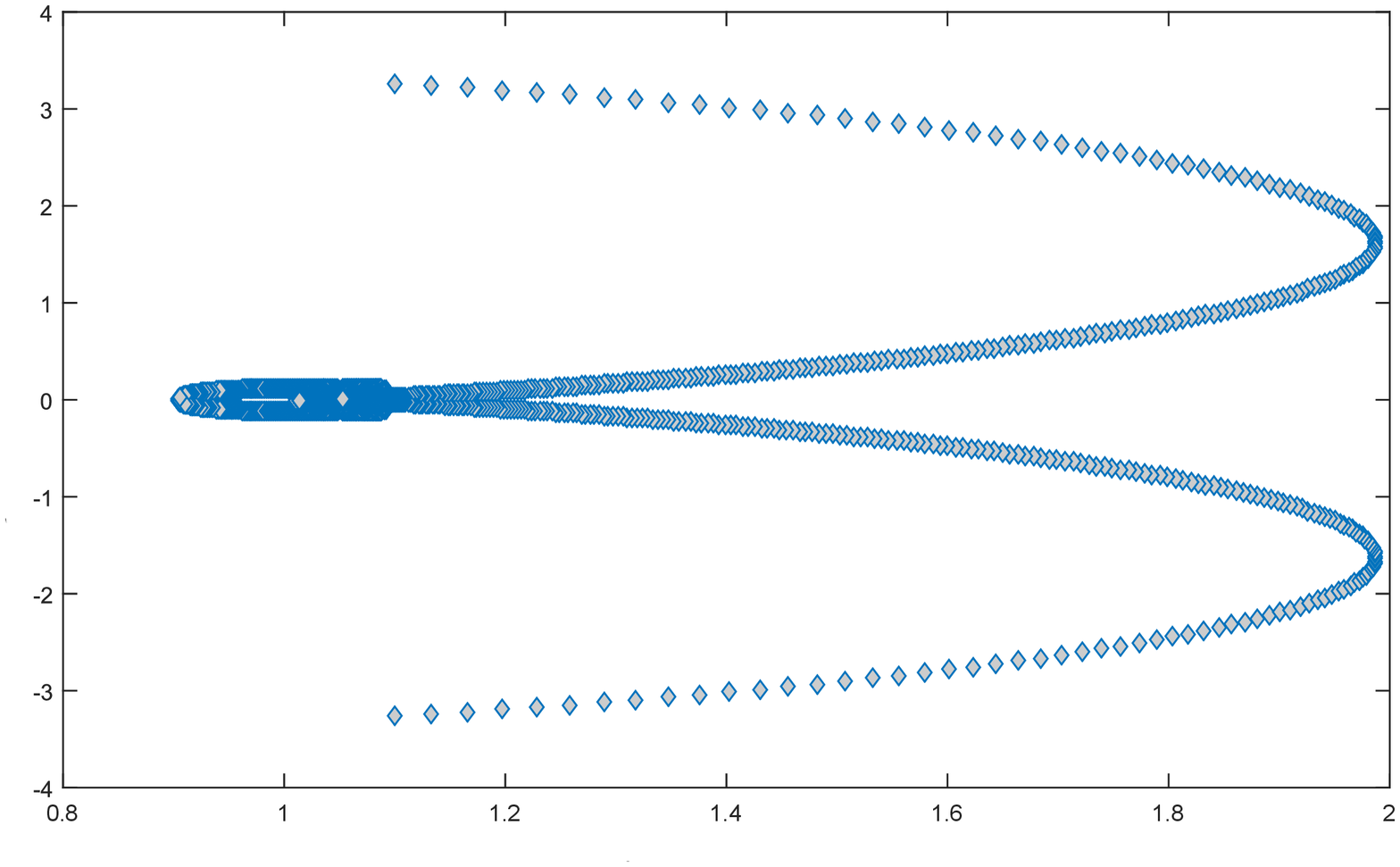}}}%
    \qquad
    \subfloat[Ιδιάζουσες τιμές.]{{\label{fig:3b}\includegraphics[width=0.45\linewidth]{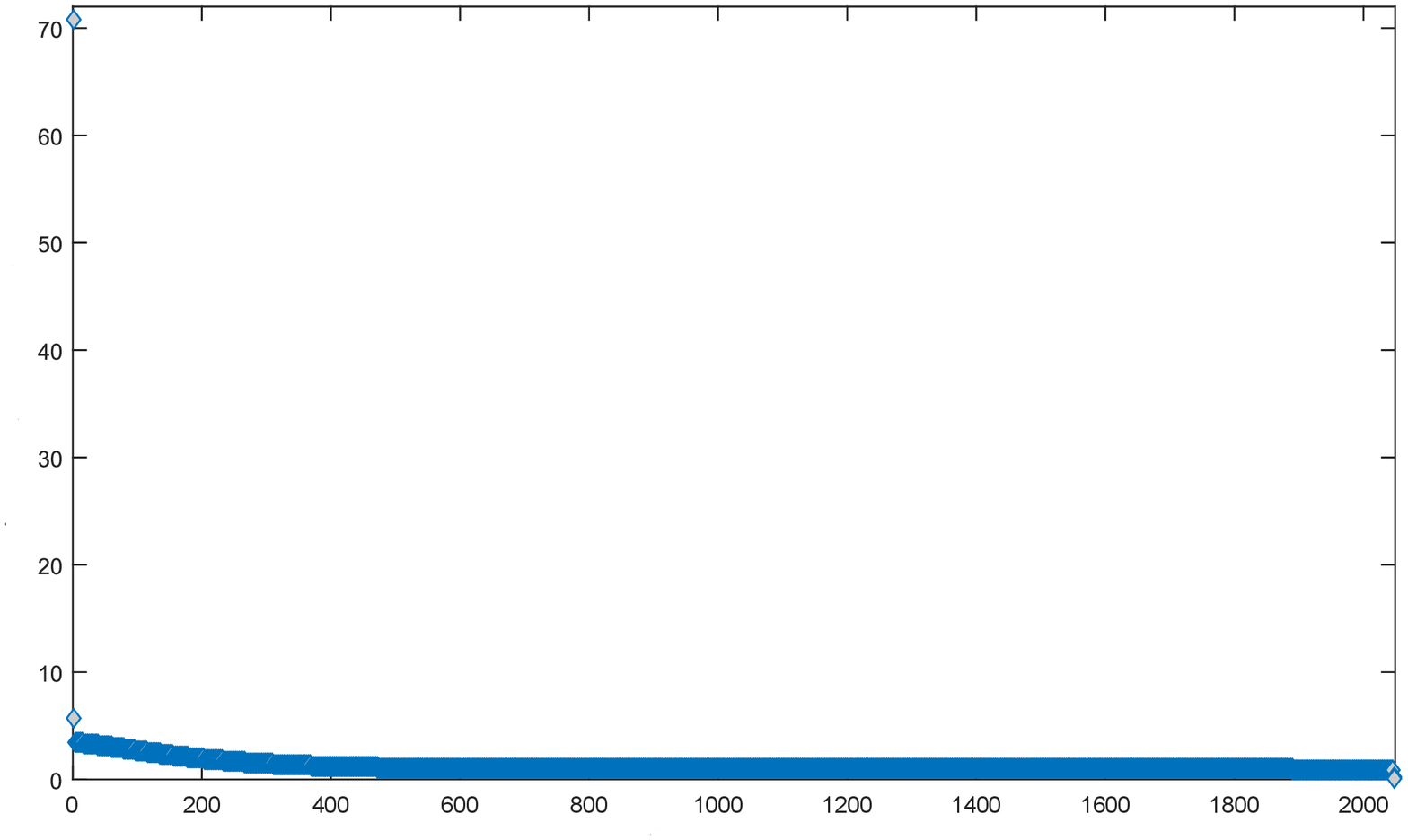}}}%
    \caption{Ιδιοτιμές και ιδιάζουσες τιμές ($\mathfrak{f}_2$).}
    \label{fig:example13}
\end{figure}
\end{exmp}

\begin{exmp}\normalfont
Θα δώσουμε εν συντομία, ένα ανάλογο παράδειγμα για τον πίνακα {\en Toeplitz}, ο οποίος έχει ως γεννήτρια συνάρτηση την $\mathfrak{f}_3(x)=x^2+\mathrm{i}x$. Εδώ, η ρίζα της $\mathfrak{f}_3$ είναι απλή. Το γεγονός αυτό επιβεβαιώθηκε δουλεύοντας ακριβώς με τον ίδιο τρόπο, όπως στο Παράδειγμα \ref{ex:1}. Πιο συγκεκριμένα, καταλήξαμε στο ότι $m_0^1=2$ και $m_0^2=1$, αφού $\log_2{(s_0^1)}=1.9224$ και $\log_2{(s_0^2)}=0.9718$. Επομένως, η $\mathfrak{f}_3$ έχει μια ρίζα στο 0 και $m_0^1>m_0^2$, που σημαίνει ότι επιλέγουμε ως $g$ το τριγωνομετρικό πολυώνυμο $2-2\cos(x)+\mathrm{i}\sin(x)$. Η λύση του συστήματος λαμβάνεται μετά από μόλις 6 επαναλήψεις, όταν  $n=2048$, και 5 επαναλήψεις όταν $n=4096$ και $8192$, με χρήση της μεθόδου {\en PGMRES} και τον $R_{4,4}$ ως προρρυθμιστή. Αξίζει να αναφερθεί ότι η λύση του συστήματος λαμβάνεται στον ίδιο αριθμό επαναλήψεων με εκείνον της τεχνικής προρρύθμισης του δευτέρου κεφαλαίου, δηλαδή στην περίπτωση όπου η $\mathfrak{f}_3$ είναι γνωστή εκ των προτέρων. Αυτό πιθανότατα συμβαίνει διότι οι ρίζες έχουν εκτιμηθεί με ακρίβεια.
\end{exmp}

\begin{exmp}\label{exmp:414}\normalfont
Σε αυτό το παράδειγμα ασχολούμαστε με την επίλυση του συστήματος {\en Toeplitz} που προκύπτει από τη γεννήτρια συνάρτηση $\mathfrak{f}_9(x)=(x^2-1)^2+\mathrm{i}x(x^2-1)$. Αυτή έχει ρίζες στο $\pm1$, σημεία τα οποία δεν ανήκουν στο πλέγμα $G_n$. Αν μπορούσαμε να εκτιμήσουμε τις τιμές της $\mathfrak{f}_9$ με ακρίβεια στο $G_n$, τότε θα αναμέναμε ένα σφάλμα, στην εκτίμηση της ρίζας, της τάξεως $\mathcal{O}\left(\frac{1}{n}\right)$. Ωστόσο, προσεγγίζουμε την $\mathfrak{f}_9$ μέσω του αναπτύγματος {\en Fourier} του $T_n$ κι έτσι το σφάλμα εξαρτάται από τη φύση της (άγνωστης) $\mathfrak{f}_9$, δηλαδή το πόσο ομαλή είναι. Στο παράδειγμα μας αναμένουμε σφάλμα της τάξεως $\mathcal{O}\left(\frac{\log{n}}{n}\right)$, καθώς εντοπίζεται ασυνέχεια για το φανταστικό μέρος στο $\pm\pi$ \cite{Serra_1999}. Φυσικά, όσο μεγαλώνει η διάσταση του αρχικού συστήματος, τόσο ακριβέστερη γίνεται η εκτίμηση της ρίζας (βλ. Πίνακα \ref{tab:est_1}).

\begin{table}[H]
\centering
\begin{tabular}{ccc}
\toprule
$n$ & $x_1^1$ & $x_1^2$\\\midrule
2048 & 1.0012 & 0.9981\\
4096 & 1.0007 & 0.9991\\
8192 & 0.9996 & 0.9996\\\bottomrule
\end{tabular}
\caption{Εκτίμηση της ρίζας ($\mathfrak{f}_9$).}
\label{tab:est_1}
\end{table}


Στον Πίνακα \ref{tab:err_1} παρουσιάζουμε τον αριθμό επαναλήψεων, χρησιμοποιώντας τη μέθοδο {\en PGMRES} και τον $R_{8,4}$, ως προρρυθμιστή, όταν $n=2048$, για διάφορα υποθετικά σφάλματα στην εκτίμηση των ριζών. `Οπως είναι φυσικό, δίνουμε επίσης τον αριθμό επαναλήψεων για τις ρίζες που εκτιμήθηκαν μέσω της διαδικασίας που περιγράψαμε, όπου τα σφάλματα ήταν $\varepsilon_1=0.0012$ και $\varepsilon_2=0.0019$. Εκτελώντας 4 επαναλήψεις της μεθόδου Αντιστρόφων Δυνάμεων υπολογίζουμε ότι $\log_2{(s_1^1)}=1.6780$, $\log_2{(s_0^2)}=0.8882$ και $\log_2{(s_1^2)}=1.0005$. Αυτό σημαίνει ότι $m_0^1=0$, $m_1^1=2$, $m_0^2=1$ και $m_1^2=1$. Επομένως, επιλέγουμε ως τριγωνομετρικό πολυώνυμο το $g_n(x)=(\cos(x_1^1)-\cos(x))^2+\mathrm{i}\sin(x)(\cos(x_1^2)-\cos(x)).$

\begin{table}[H]
\centering
\begin{tabular}{ccccccc}
\hlineB{2}
\diagbox{$\varepsilon_1$}{$\varepsilon_2$} & 0 & 0.0001 & 0.0005 & 0.0010 & 0.0019 & 0.0020\\
\hlineB{1.2}
0 & 7 & 7 & 8 & 9 & 12 & 13\\
0.0001 & 7 & 7 & 8 & 9 & 12 & 13\\
0.0005 & 7 & 7 & 8 & 9 & 12 & 13\\
0.0010 & 7 & 7 & 8 & 9 & 13 & 13\\
0.0012 & 7 & 7 & 8 & 9 & 13 & 13\\
0.0020 & 7 & 7 & 8 & 9 & 13 & 13\\
\hlineB{2}
\end{tabular}
\caption{Επαναλήψεις με υποθετικά σφάλματα ($\mathfrak{f}_9$).}
\label{tab:err_1}
\end{table}

Στον Πίνακα \ref{tab:err_1} παρατηρούμε ότι ο αριθμός επαναλήψεων είναι σχεδόν ο ίδιος κατά μήκος των στηλών. Αυτό σημαίνει ότι το σφάλμα $\varepsilon_1$ δεν παίζει τοσό καθοριστικό ρόλο για τη σύγκλιση του προρρυθμισμένου συστήματος, όσο το $\varepsilon_2$. Αυτό ισχύει διότι η πολλαπλότητα της ρίζας του φανταστικού μέρους της $\mathfrak{f}_9$ είναι μικρότερη από την πολλαπλότητα της αντίστοιχης ρίζας, του πραγματικού μέρους της $\mathfrak{f}_9$. Μπορούμε να εξηγήσουμε αυτό το φαινόμενο αναλύοντας την $\frac{\mathfrak{f}_9}{g_n}$.
\begin{equation*}
\begin{split}
\frac{\mathfrak{f}_9}{g_n}&=\frac{(x^2-1)^2+\mathrm{i}x(x^2-1)}{(\cos(x_1^1)-\cos(x))^2+\mathrm{i}\sin(x)(\cos(x_1^2)-\cos(x))}\\
&=\frac{h_1(\cos(1)-\cos(x))^2+\mathrm{i}h_2\sin(x)(\cos(1)-\cos(x))}{(\cos(x_1^1)-\cos(x))^2+\mathrm{i}\sin(x)(\cos(x_1^2)-\cos(x))}.
\end{split}
\end{equation*}
Λαμβάνοντας υπόψη ότι η μη-μηδενική ρίζα έχει εκτιμηθεί σε διαφορετικά σημεία $x_1^1$ και $x_1^2$ για το πραγματικό και φανταστικό μέρος, αντίστοιχα, η συνάρτηση $\frac{\mathfrak{f}_9}{g_n}$ δεν έχει πόλο, αλλά λαμβάνει πολύ μεγάλη τιμή για $x=x_1^1$ και $x=x_1^2$. Μελετώντας τον παρονομαστή της συνάρτησης σε μια περιοχή του 1, που περιέχει τα σημεία $x_1^1$ και $x_1^2$, παρατηρούμε ότι ο δεύτερος όρος $\sin(x)(\cos(x_1^2)-\cos(x))$ κυριαρχεί επί του πρώτου $(\cos(x_1^1)-\cos(x))^2$, στην προαναφερθείσα περιοχή, εκτός από μια μικρότερη σε μέγεθος περιοχή του $x_1^2$, με μήκος της τάξεως $\mathcal{O}\left(\frac{1}{n^2}\right)$, όπου ο πρώτος όρος υπερταιρεί επί του δεύτερου. Επομένως, το σφάλμα $\varepsilon_1$ της εκτίμησης του $x_1^1$ έχει επιρροή σε μια μικρή περιοχή της τάξεως $\mathcal{O}\left(\frac{1}{n^2}\right)$, αλλά η τεχνική μας είναι κατασκευασμένη στο πλέγμα $G_n$ με βήμα $\frac{2\pi}{n+1}$ κι αυτό σημαίνει ότι μια τόσο μικρή περιοχή δε μπορεί να ανιχνευθεί. Με άλλα λόγια το $\varepsilon_1$ δεν παίζει και τόσο καθοριστικό ρόλο στην αύξηση των επαναλήψεων.

\begin{figure}[H]
    \centering
    \includegraphics[width=0.9\linewidth]{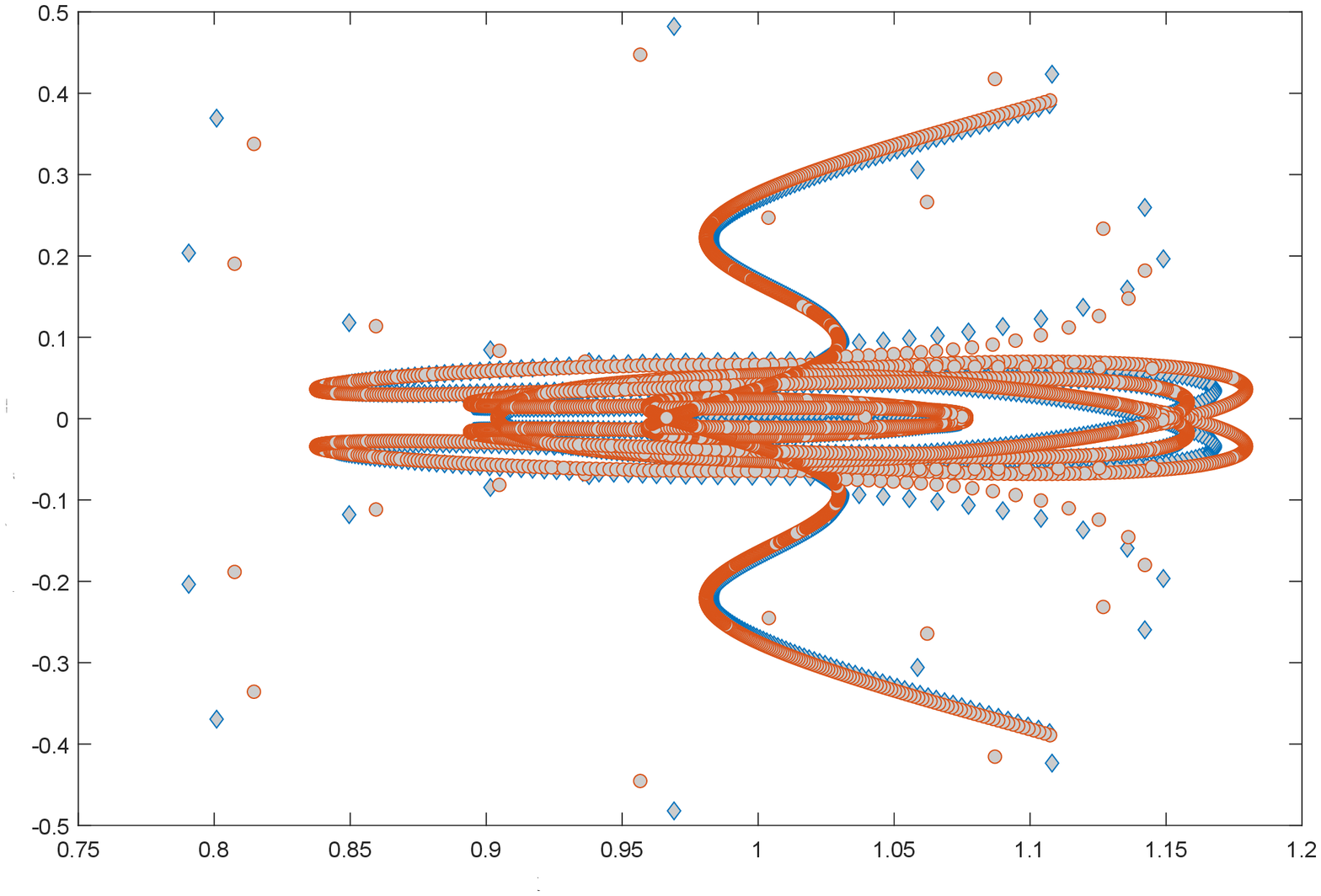}%
    \caption{Ιδιοτιμές ($\mathfrak{f}_9$).}
    \label{fig:example31}
\end{figure}

Στο σχήμα \ref{fig:example31} παρουσιάζουμε τη συσσώρευση των ιδιοτιμών, για το προρρυθμισμένο σύστημα, όταν $n=2048$ (μπλε διαμάντια) και $n=4096$ (πορτοκαλί κύκλοι). Παρατηρούμε ότι οι ιδιοτιμές μακριά από το $(1,0)$, οι οποίες εμφανίζονται ως διακεκριμένα/απομονωμένα σημεία, σχηματίζουν ζεύγη (μπλε διαμαντιών-πορτοκαλί κύκλων), τα οποία κυμαίνονται εκτός της συσσώρευσης. Το γεγονός αυτό εξηγεί τη συσσώρευση με το πολύ $\mathcal{O}(\log{n})$ ιδιοτιμές εκτός του φάσματος. Σημειώστε ότι ο ρυθμός αύξησης της τάξεως $\mathcal{O}(\log{n})$, είναι ιδιαιτέρως αργός, όταν διπλασιάζεται η διάσταση $n$. Αυτό φαίνεται επίσης στον Πίνακα \ref{tab:it_2}, όπου οι επαναλήψεις που δίνονται μέσω της μεθόδου {\en PGMRES} δεν αυξάνονται όταν διπλασιάζουμε το $n$.

\begin{table}[H]
\centering
\begin{tabular}{ccc}
\toprule
$n$ & $I_n$ & $R_{8,4}$ \\\midrule
2048 & $>$500 & 13 \\
4096 & $>$500 & 12 \\
8192 & $>$500 & 12 \\\bottomrule
\end{tabular}
\caption{Επαναλήψεις ($\mathfrak{f}_9$).}
\label{tab:it_2}
\end{table}
\end{exmp}

\begin{exmp}\normalfont
Ως το τελευταίο παράδειγμα αυτής της υποενότητας, παρουσιάζουμε την επίλυση ενός συστήματος {\en Toeplitz}, το οποίο γεννάται από μια συνεχή συνάρτηση που έχει ρίζες στο $\pm 1$ με πολλαπλότητες $m_1^1=m_1^2=1$. Πιο συγκεκριμένα, ο πίνακας {\en Toeplitz} γεννάται από την $\mathfrak{f}_{10}(x)=(x^2-1)+\mathrm{i}\mathfrak{h}_3(x)$, όπου $\mathfrak{h}_3(x)$ είναι η τεθλασμένη γραμμή, ορισμένη ως:
\begin{equation*}
\mathfrak{h}_3(x)=
\begin{cases}
x+\pi,~&x\in[-\pi,-\pi+\frac{1}{2})\\
\frac{x}{-2\pi+3}+\frac{1}{-2\pi+3},~&x\in[-\pi+\frac{1}{2},-\frac{1}{2})\\
\frac{x}{2\pi-3},~&x\in[-\frac{1}{2},\frac{1}{2})\\
\frac{x}{-2\pi+3}-\frac{1}{-2\pi+3},~&x\in[\frac{1}{2},\pi-\frac{1}{2})\\
x-\pi,~&x\in[\pi-\frac{1}{2},\pi]
\end{cases}
\end{equation*}

Το φανταστικό μέρος του αναπτύγματος {\en Fourier} της $\mathfrak{h}_3$, υπολογισμένο στο πλέγμα $G_n$ όταν $n=2048$, δίνεται στο Σχήμα \ref{fig:example41}.

\begin{figure}
    \centering
    \includegraphics[width=0.9\linewidth]{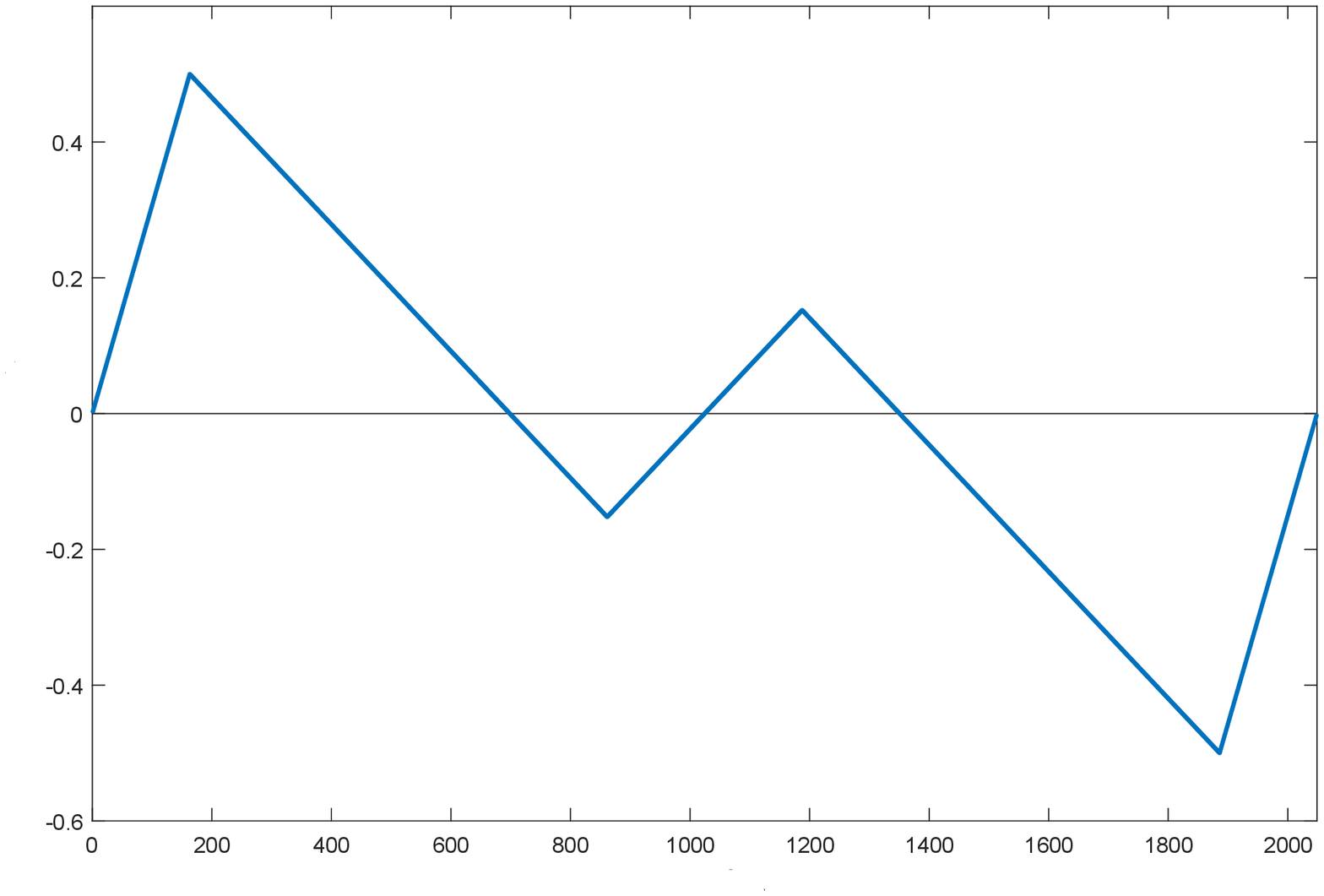}%
    \caption{Ανάπτυγμα {\en Fourier} της $\mathfrak{h}_3$.}
    \label{fig:example41}
\end{figure}

Θεωρώντας τη γεννήτρια συνάρτηση ως άγνωστη και χρησιμοποιώντας τον προτεινόμενο αλγόριθμο, εκτιμήσαμε τις μη-μηδενικές ρίζες στα (ίδια) σημεία $x_1^1=x_1^2$ και μετά από 3 επαναλήψεις της μεθόδου Αντίστροφων Δυνάμεων, συμπεράναμε ότι $m_1^1=m_1^2=1$ και $m_0^2=1$. Επομένως, το τριγωνομετρικό πολυώνυμο το οποίο αίρει τις εκτιμώμενες ρίζες της $\mathfrak{f}_{10}$, δίνεται ως $g_n(x)=\cos{(x_1^1)-\cos{(x)}}$. Στον Πίνακα \ref{tab:it_3} παρουσιάζουμε τον αριθμό επαναλήψεων, με χρήση της {\en PGMRES} όταν δε χρησιμοποιούμε κάποιον προρρυθμιστή, καθώς κι όταν γίνεται προρρύθμιση με τον $R_{4,4}$.

\begin{table}
\centering
\begin{tabular}{ccc}
\toprule
$n$ & $I_n$ & $R_{4,4}$ \\\midrule
2048 & $>$500 & 11 \\
4096 & $>$500 & 12 \\
8192 & $>$500 & 12 \\\bottomrule
\end{tabular}
\caption{Επαναλήψεις ($\mathfrak{f}_{10}$).}
\label{tab:it_3}
\end{table}

Στο Σχήμα \ref{fig:example43}, παρουσιάζουμε τη συσσώρευση των ιδιοτιμών του προρρυθμισμένου συστήματος.

\begin{figure}[htbp]%
\centering
    \subfloat[Ιδιοτιμές.]{{\includegraphics[width=0.45\linewidth]{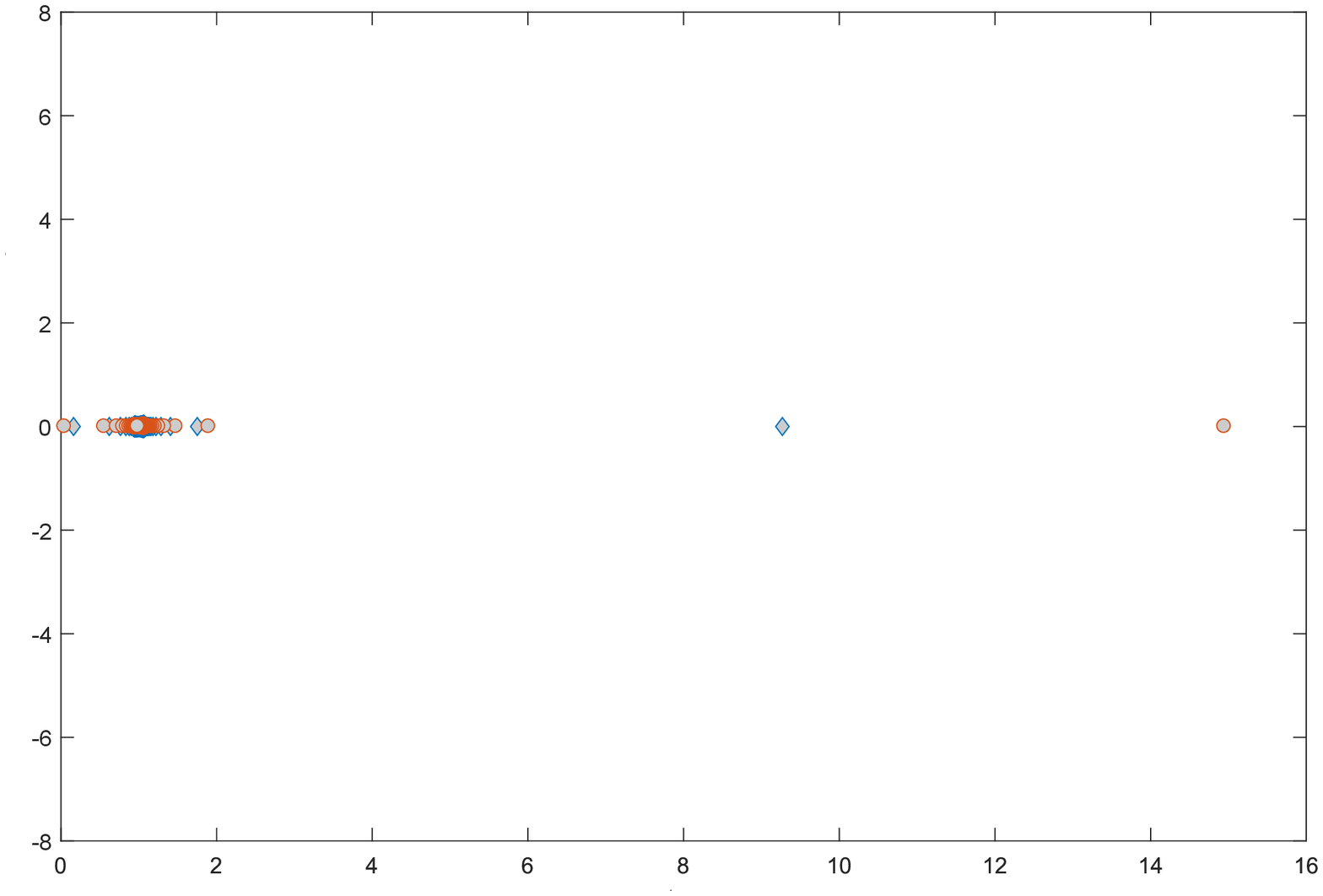}}}%
    \qquad
    \subfloat[Ιδιοτιμές κοντά στο $(1,0)$.]{{\includegraphics[width=0.45\linewidth]{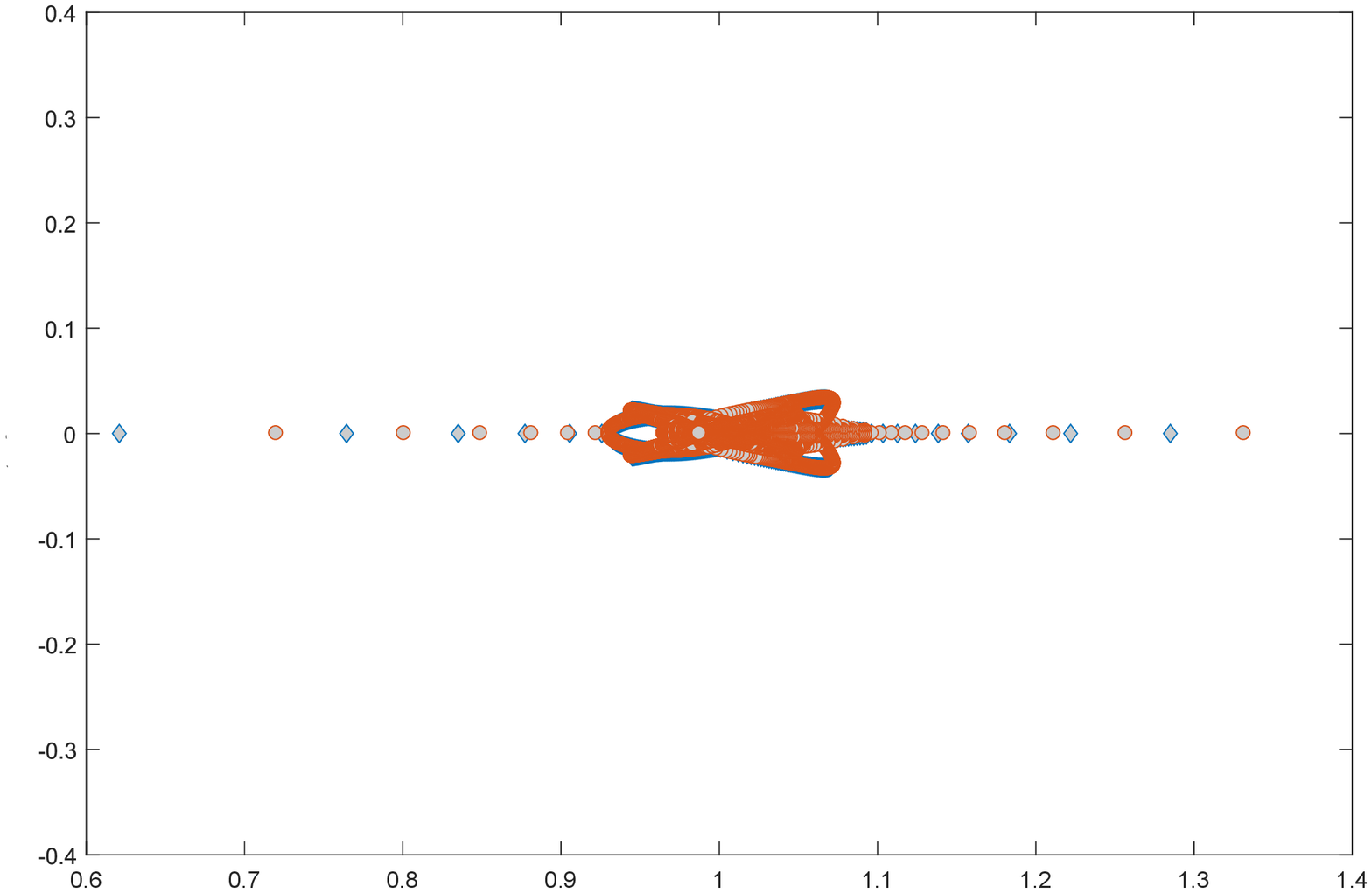}}}%
    \caption{Ιδιοτιμές ($\mathfrak{f}_{10}$).}
    \label{fig:example43}
\end{figure}
\end{exmp}

\section{Κυκλοειδείς προρρυθμιστές}

Σε αυτή την υποενότητα αρχικά θα περιγράψουμε τον τρόπο με τον οποίο κατασκευάζουμε τον προτεινόμενο προρρυθμιστή και στη συνέχεια θα δώσουμε έναν αλγόριθμο για την κατασκευή αυτού, σε μορφή ψευδοκώδικα. Ο προτεινόμενος προρρυθμιστής λαμβάνει δύο μορφές. Αυτή του κυκλοειδούς πίνακα, για συστήματα με καλή κατάσταση (η γεννήτρια συνάρτηση δεν έχει ρίζες), καθώς και αυτή του ταινιωτού-επί-κυκλοειδή πίνακα, για συστήματα με κακή κατάσταση (η γεννήτρια συνάρτηση έχει ρίζες). Αυτομάτως καταλαβαίνει κανείς ότι το πρώτο βήμα είναι να μελετήσουμε αν η γεννήτρια συνάρτηση έχει ρίζες ή όχι. Εφόσον βρεθούν ρίζες, προφανώς θα πρέπει να προσεγγιστούν. Το ίδιο θα πρέπει να γίνει και για τις πολλαπλότητες των ριζών, έτσι ώστε να βρεθεί το κατάλληλο τριγωνομετρικό πολυώνυμο που οδηγεί στην άρση της κακής κατάστασης.

\subsection{Κατασκευή του προρρυθμιστή}
`Οπως και στην προηγούμενη ενότητα, όπου προτάθηκε η χρήση ταινιωτών προρρυθμιστών, έτσι κι εδώ αρχικά θα προσεγγίσουμε τη γεννήτρια συνάρτηση του πίνακα {\en Toeplitz}, με τη βοήθεια του αναπτύγματος {\en Fourier}, $F_{n-1}$, στο ισοκατανεμημένο πλέγμα $G_n$, το οποίο έχει ως σημεία τα $\theta_j=\frac{2(j-1)\pi}{n},~j=1,\dots,n$. Επιλέξαμε αυτό το πλέγμα, το οποίο διαφέρει από το πλέγμα της προηγούμενης ενότητας (βλ. επίσης \cite{chay}), διότι τα σημεία αυτού είναι και σημεία όπου λαμβάνονται οι ιδιοτιμές του κυκλοειδή προρρυθμιστή που προσπαθούμε να κατασκευάσουμε, όπως φαίνεται στη σχέση (\ref{eq:circulant}).

Οι τιμές του αναπτύγματος $F_{n-1}$, θα μας υποδείξουν και τα σημεία πιθανών ριζών της γεννήτριας συνάρτησης, σε περίπτωση που το σύστημα είναι κακής κατάστασης. Αυτά επιλέγονται με τον τρόπο που προαναφέραμε στην υποενότητα \ref{Sss:411}, δηλαδή επιλέγοντας τα τοπικά ελάχιστα των $\vert F_{n-1}^1(\theta_j)\vert$ και $\vert F_{n-1}^2(\theta_j)\vert$, τα οποία λαμβάνουν τιμή πολύ κοντά στο 0. Σημειώνουμε για ακόμα μία φορά ότι σε περίπτωση που $\vert F_{n-1}^\ell(\theta_j)\vert$ και $\vert F_{n-1}^\ell(\theta_{j+1})\vert$, $\ell=1,2$ έχουν την ίδια τιμή για κάποιο $j$, η οποία είναι κοντά στο 0, υποθέτουμε ότι υπάρχει ρίζα στο $\frac{\theta_j+\theta_{j+1}}{2}$.

\begin{rem}
Αν το πραγματικό μέρος του αναπτύγματος {\en Fourier}, $F_{n-1}^1$ δεν έχει ρίζες, δεν προχωρούμε στην εκτίμηση των πιθανών ριζών του $F_{n-1}^2$ στο $G_n$, διότι ο προτεινόμενος προρρυθμιστής θα έχει τη μορφή κυκλοειδούς πίνακα, που προκύπτει από τις τιμές της $F_{n-1}$. Αυτό σημαίνει ότι δε γίνεται άρση των ριζών, όπως και στην περίπτωση όπου η γεννήτρια συνάρτηση είναι γνωστή εκ των προτέρων (βλ. υπενότητα \ref{3Ss:2.1}).
\end{rem}

\begin{rem}
Γνωρίζουμε ότι το φανταστικό μέρος του αναπτύγματος {\en Fourier}, $F_{n-1}^2$, έχει ρίζα στο $\pm\pi$, ως περιττό τριγωνομετρικό πολυώνυμο. Προκειμένου να εξετάσουμε αν η $f_2$ έχει όντως ρίζα στο $\pm\pi$, θα ελέγξουμε τις τιμές της $F_{n-1}^2$ σε μια περιοχή του $\pi$ (στο πλέγμα $G_n$). Αν αυτές είναι κοντά στο 0, θεωρούμε ότι υπάρχει ρίζα στο $\pm\pi$, διαφορετικά το σημείο $\pi$ είναι σημείο ασυνέχειας (όπως προφανώς και το $-\pi$).
\end{rem}

Για την άρση των ριζών της γεννήτριας συνάρτησης χρειαζόμαστε, όπως και στην προηγούμενη ενότητα, τις πολλαπλότητες $m_i^1$ και $m_i^2$, των ριζών της $f_1$ και $f_2$, αντίστοιχα, για το σημείο $x_i$, $i=1,2,\dots,\rho$. Υπενθυμίζουμε ότι $m_0^1$ και $m_0^2$ δηλώνουν τις αντίστοιχες πολλαπλότητες στο 0. Η εκτίμηση των πολλαπλοτήτων περιγράφηκε στην υποενότητα \ref{Sss:411}. Θα θέλαμε να αναφέρουμε ότι στη βιβλιογραφία μπορούν να βρεθούν κι άλλες ενδιαφέρουσες τεχνικές για την προσέγγιση των ιδιοτιμών, όπως για παράδειγμα οι \cite{bogoya, ekstrom, ekstrom3, ekstrom2}.

Για την περίπτωση όπου η $f_1$ έχει ρίζες οι οποίες τέμνουν τον άξονα, το αντίστοιχο ολοκλήρωμα της (\ref{eq:Simpson_integral}) είναι το:
\begin{equation*}
\frac{1}{2\pi}\int\limits_{0}^{2\pi}\left\vert F_{n-1}^1(x)\right\vert\mathrm{e}^{-\mathrm{i}(r-q)x}\mathrm{d}x,
\end{equation*}
το οποίο μπορεί να υπολογιστεί από τον σύνθετο κανόνα του {\en Simpson} στο $G_n\cup\lbrace 2\pi\rbrace$.


Το βήμα που έπεται της εκτίμησης των πολλαπλοτήτων είναι η εύρεση κατάλληλου τριγωνομετρικού πολυωνύμου, το οποίο αίρει τις ρίζες της $f$. Προφανώς, λόγω του ότι η $f_1$ είναι άρτια συνάρτηση και η $f_2$ περιττή, αν $x_i$ είναι ρίζα στο $(0,\pi]$, $-x_i$ θα είναι ρίζα στο $(-\pi,0)$, με την ίδια πολλαπλότητα. Παρατηρούμε ότι τα σημεία του πλέγματος, της προτεινόμενης τεχνικής προρρύθμισης, ανήκουν στο $[0,2\pi)$. Ωστόσο χρησιμοποιούμε το διάστημα $(-\pi,\pi]$ για την κατασκευή του τριγωνομετρικού πολυωνύμου, λόγω της προφανούς αντιστοιχίας μεταξύ των σημείων του $(-\pi,0)$ και $(\pi,2\pi)$ (μετατόπιση κατά $2\pi$). Το τριγωνομετρικό πολυώνυμο δίνεται όπως περιγράφεται στην υποενότητα \ref{Sss:411} (βλ. επίσης υποενότητα \ref{Ss:32}).

\begin{rem}
Προφανώς υπάρχει περίπτωση οι $F_{n-1}^1$ και $F_{n-1}^2$ να έχουν ρίζες σε διαφορετικά σημεία. Τότε, η γεννήτρια συνάρτηση δεν έχει καμία ρίζα και δεν προχωρούμε με την άρση των ριζών. `Ετσι, ο προτεινόμενος προρρυθμιστής είναι ο κυκλοειδής πίνακας $C_n(F_{n-1})$. Αυτό αποτελεί μια διαφορά ανάμεσα στην προτεινόμενη τεχνική προρρύθμισης και σε αυτήν της προηγούμενης ενότητας, που είχε να κάνει με ταινιωτούς προρρυθμιστές. Εκεί, υπενθυμίζουμε ότι η άρση των ριζών είναι απαραίτητη σε περίπτωση που η $F_{n-1}^1$ έχει κάποια ρίζα.
\end{rem}

Ο Αλγόριθμος \ref{2algo} περιγράφει την κατασκευή του προτεινόμενου προρρυθμιστή, σε μορφή ψευδοκώδικα.

\begin{breakablealgorithm}
\caption{Κατασκευή του Προρρυθμιστή.}
\label{2algo}
\textbf{Είσοδος:} $n\in\mathbb{N}$, $T_n$: $n\times n$ μη-συμμετρικός, πραγματικός πίνακας {\en Toeplitz}.
\begin{algorithmic}[1]
\STATE Κατασκευάστε το ισοκατανεμημένο πλέγμα $G_n$, με σημεία:\\ $\theta_j=\frac{2(j-1)\pi}{n}$, $j=1,2,\dots,n$.
\STATE\textbf{για }{$j=1,2,\dots,n$}
\STATE\quad Υπολογίστε το ανάπτυγμα {\en Fourier}: $F_{n-1}(\theta_j)=\sum\limits_{k=-n+1}^{n-1}t_k\mathrm{e}^{\mathrm{i}k\theta_j}$, $\theta_j\in G_n$.
\STATE\quad Εκτιμήστε τις $f_1$ και $f_2$ ως τις τιμές των $F_{n-1}^1(\theta_j)=\operatorname{Re}(F_{n-1}(\theta_j))$ και\\\hspace{.75pc} $F_{n-1}^2(\theta_j)=\operatorname{Im}(F_{n-1}(\theta_j))$, αντίστοιχα.
\STATE\textbf{τέλος για}
\STATE Επιλέξτε σημεία $\theta_i\in G_n$, κοντά στα τοπικά ελάχιστα της $\vert F_{n-1}^1\vert$, τέτοια ώστε $\vert F_{n-1}^1(\theta_i)\vert\simeq0$ και θεωρήστε τα ως πιθανές ρίζες της $\vert F_{n-1}^1\vert$. 
\STATE\textbf{αν} δεν έχει επιλεχθεί κανένα σημείο ως ρίζα \textbf{τότε}
	\STATE\quad Θέστε $g_n=1$ και \textbf{πηγαίνετε στο} 33.
\STATE\textbf{αλλιώς}
	\STATE\quad Επιλέξτε σημεία $\theta_i\in G_n$, κοντά στα τοπικά ελάχιστα της $\vert F_{n-1}^2\vert$, τέτοια \\\quad ώστε $\vert F_{n-1}^2(\theta_i)\vert\simeq0$ και θέστε τα ως πιθανές ρίζες της $\vert F_{n-1}^2\vert$.
\STATE\textbf{τέλος αν}
\STATE Σχηματίστε το σύνολο πιθανών ριζών $\lbrace x_i,~i=1,\dots,\rho\rbrace$ ως την ένωση των επιλεγμένων ριζών των $F_{n-1}^1$ και $F_{n-1}^2$.
\STATE\textbf{αν} δεν υπάρχει κοινή ρίζα για τις $F_{n-1}^1$ και $F_{n-1}^2$ \textbf{τότε}
	\STATE\quad Θέστε $g_n=1$ και \textbf{πηγαίνετε στο} 33.
\STATE\textbf{τέλος αν}
\STATE Υπολογίστε το συμμετρικό και αντι-συμμετρικό μέρος (του $T_n$), $S_n^1=\frac{T_n+T_n^T}{2}$ και $S_n^2=\frac{T_n-T_n^T}{2}$, αντίστοιχα.
\STATE\textbf{για }{$\ell=1,2$}
	\STATE\quad\textbf{αν} το $F_{n-1}^\ell$ παίρνει θετικές και αρνητικές τιμές \textbf{τότε}
	\STATE\hspace{1.85pc} Υπολογίστε την $\vert F_{n-1}^\ell\vert$ στο $G_n$.
	\STATE\hspace{1.85pc} Υπολογίστε το $\widehat{S}_{4k}^\ell\simeq T_{4k}(\vert F_{n-1}^\ell\vert)$ με χρήση του σύνθετου κανόνα του \\\hspace{1.86pc} {\en Simpson}, $k<\!\!<n$.
	\STATE\quad\textbf{αλλιώς}
	\STATE\hspace{1.85pc} Θέστε $\widehat{S}_{4k}^\ell=S_{4k}^\ell$.
	\STATE\quad\textbf{τέλος αν}
	\STATE\quad\textbf{για }{$j=1,2,\dots,\rho$}
		\STATE\hspace{1.85pc}\textbf{αν} $x_i$ είναι μια ρίζα του $F_{n-1}^\ell$ \textbf{τότε}
		\STATE\quad\hspace{1.85pc} Προσεγγίστε την ιδιοτιμή $\lambda_{i,k}^\ell$ του $\widehat{S}_k^\ell$ ως $\widetilde{\lambda}^\ell_{i,k}$ με λίγες επαναλήψεις\\\quad\hspace{1.85pc} της μεθόδου Αντιστρόφων Δυνάμεων, με αρχικό διάνυσμα:\\\quad\hspace{1.85pc} $\Theta_{i,k}=\frac{1}{\sqrt{k}}\left(1,\mathrm{e}^{\mathrm{i}x_i},\mathrm{e}^{2\mathrm{i}x_i},\dots,\mathrm{e}^{(k-1)\mathrm{i}x_i}\right)^T$.
		\STATE\quad\hspace{1.85pc} Επαναλάβετε το 26 για $\widehat{S}_{2k}^\ell$ και $\widehat{S}_{4k}^\ell$ ώστε να λάβετε τις $\widetilde{\lambda}^\ell_{i,2k}$ και $\widetilde{\lambda}^\ell_{i,4k}$,\\\quad\hspace{1.85pc} αντίστοιχα.
		\STATE\quad\hspace{1.85pc} Υπολογίστε το $\widetilde{s_i}^\ell=\frac{\widetilde{\lambda}^\ell_{i,k}-\widetilde{\lambda}^\ell_{i,2k}}{\widetilde{\lambda}^\ell_{i,2k}-\widetilde{\lambda}^\ell_{i,4k}}$ για να εκτιμήσετε την πολλαπλότητα\\\quad\hspace{1.85pc} $m_i^\ell$ ως τον πλησιέστερο ακέραιο στον $\log_2{\widetilde{s}_i^\ell}$.
		\STATE\hspace{1.85pc}\textbf{τέλος αν}	
	\STATE\quad\textbf{τέλος για}
\STATE\textbf{τέλος για}
\STATE Επιλέξτε το τριγωνομετρικό πολυώνυμο $g_n$, ώστε $\operatorname{Re}\left(\frac{F_{n-1}(\theta_j)}{g_n(\theta_j)}\right)>0$.
\STATE Κατασκευάστε τον κυκλοειδή προρρυθμιστή $C_n\left(\frac{F_{n-1}}{g_n}\right)$, ο οποίος έχει ως ιδιοτιμές, τις τιμές της $\frac{F_{n-1}}{g_n}$ στα σημεία του $G_n$.
\STATE Κατασκευάστε τον ταινιωτό-επί-κυκλοειδή προρρυθμιστή $T_n(g_n)C_n\left(\frac{F_{n-1}}{g_n}\right)$.
\end{algorithmic}
\end{breakablealgorithm}

Στην κατασκευή του ταινιωτού προρρυθμιστή, για μη-συμμετρικά συστήματος {\en Toeplitz} με άγνωστη γεννήτρια συνάρτηση, που περιγράψαμε παραπάνω, δώσαμε έναν τρόπο εκτίμησης πιθανών σημείων ασυνέχειας, διότι μας ενδιέφερε να μειώσουμε το σφάλμα προσέγγισης του αλγορίθμου {\en Remez}. Αυτή η εκτίμηση δεν είναι απαραίτητη για τον προτεινόμενο προρρυθμιστή, αφού ο κυκλοειδής προρρυθμιστής κατασκευάζεται από τις τιμές της $\frac{F_{n-1}}{g_n}$, στα σημεία του πλέγματος $G_n$. Σημειώνουμε ότι ο λόγος $\frac{F_{n-1}}{g_n}$ έχει πόλο στα σημεία του $G_n$, τα οποία είναι πιθανές ρίζες. Για την κατασκευή του $C_n\left(\frac{F_{n-1}}{g_n}\right)$, αντικαθιστούμε την τιμή $\frac{F_{n-1}(\theta_i)}{g_n(\theta_i)}$ με $\frac{F_{n-1}(\theta_{i+1})}{g_n(\theta_{i+1})}$ ή $\frac{F_{n-1}(\theta_{i-1})}{g_n(\theta_{i-1})}$. Μπορούμε επίσης να αντικαταστήσουμε με τον μέσο όρο των δύο τελευταίων όρων. Αυτή η τεχνική μετατόπισης χρησιμοποιήθηκε για συμμετρικά συστήματα {\en Toeplitz} στη \cite{chan_potts_steidl}.

\subsection{Θεωρητικά αποτελέσματα}\label{sec3}
Αρχικά, θα μελετήσουμε τη συσσώρευση των ιδιοτιμών για το προρρυθμισμένο σύστημα, όταν η $f$ δεν έχει ρίζες και είτε είναι επαρκώς ομαλή, είτε απλώς συνεχής. Η περίπτωση όπου η $f$ έχει σημεία ασυνέχειας θα καλυφθεί στη συνέχεια. Αναφέρουμε ότι στην \cite{Serra_1999}, ο συγγραφέας μελέτησε την προρρύθμιση συμμετρικών συστημάτων {\en Toeplitz} με άγνωστη γεννήτρια συνάρτηση και απέδειξε ιδιότητες για διάφορες περιπτώσεις όπου η γεννήτρια συνάρτηση είναι άγνωστη και συνεχώς παραγωγίσιμη ή συνεχής.

\setcounter{thm}{1}
\begin{thm}\label{thm:well_cond}
`Εστω $T_n$ ένας πραγματικός πίνακας {\en Toeplitz}, η γεννήτρια συνάρτηση του οποίου υπάρχει και είναι άγνωστη. Υποθέτουμε επίσης ότι δεν έχουν εντοπιστεί ρίζες, μέσω της προτεινόμενης τεχνικής. Τότε, οι ιδιοτιμές του προρρυθμισμένου πίνακα $C_n^{-1}(F_{n-1})T_n$ συσσωρεύονται γύρω από το σημείο $(1,0)$ του μιγαδικού επιπέδου και η συσσώρευση χαρακτηρίζεται ως:
\begin{enumerate}
\item Κύρια σε μια περιοχή του $(1,0)$, με ακτίνα της τάξεως $\mathcal{O}\left(\frac{1}{n}\right)$, αν η $f$ είναι επαρκώς ομαλή (συνεχώς παραγωγίσιμη).
\item Κύρια σε μια περιοχή του $(1,0)$, με ακτίνα της τάξεως $\mathcal{O}\left(\frac{\log{n}}{n}\right)$, αν η $f$ είναι συνεχής.
\end{enumerate} 
\end{thm}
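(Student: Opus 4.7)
Σκοπός είναι να προσαρμόσουμε την απόδειξη του Θεωρήματος \ref{Thm:5}, ελέγχοντας προσεκτικά δύο πηγές σφάλματος: τη διαφορά πινάκων {\en Toeplitz}-κυκλοειδούς $T_n-C_n(f)$, η οποία αντιμετωπίζεται μέσω του Θεωρήματος \ref{Thm:1}, και το σφάλμα προσέγγισης της $f$ από το ανάπτυγμα $F_{n-1}$, το οποίο εξαρτάται από την ομαλότητα της $f$. Αρχικά γράφουμε τον προρρυθμισμένο πίνακα ως
\begin{equation*}
C_n^{-1}(F_{n-1})T_n=I_n+C_n^{-1}(F_{n-1})\left[T_n(f)-C_n(F_{n-1})\right],
\end{equation*}
και διασπούμε τον δεύτερο όρο ως
\begin{equation*}
T_n(f)-C_n(F_{n-1})=\left[T_n(f)-C_n(f)\right]+C_n(f-F_{n-1}).
\end{equation*}
Από το Θεώρημα \ref{Thm:1} ο πρώτος προσθετέος γράφεται ως $S_n+L_n$, με $\Vert S_n\Vert_2\leq\varepsilon$ και $\operatorname{rank}(L_n)\leq 2M$, ενώ για τον δεύτερο ισχύει $\Vert C_n(f-F_{n-1})\Vert_2=\max\limits_k\vert f(\theta_k)-F_{n-1}(\theta_k)\vert\leq\Vert f-F_{n-1}\Vert_\infty$, αφού πρόκειται για κυκλοειδή πίνακα με ιδιοτιμές τις τιμές της $f-F_{n-1}$ στα σημεία του $G_n$.

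Στη συνέχεια, θα επικαλεστούμε τους κλασικούς ρυθμούς ομοιόμορφης σύγκλισης του αναπτύγματος {\en Fourier}: για $f$ συνεχώς παραγωγίσιμη, $\Vert f-F_{n-1}\Vert_\infty=\mathcal{O}(1/n)$ από εκτίμηση τύπου {\en Jackson} (βλ. \cite{Serra_1999}), ενώ για $f$ απλώς συνεχή, $\Vert f-F_{n-1}\Vert_\infty=\mathcal{O}(\log{n}/n)$ από την ανάλυση του πυρήνα {\en Dirichlet}. Επειδή η $f$ δεν έχει ρίζες και $F_{n-1}\to f$ ομοιόμορφα, η $F_{n-1}$ επίσης δεν έχει ρίζες για αρκούντως μεγάλο $n$, κι έτσι από το Λήμμα \ref{Lem:1} η $\Vert C_n^{-1}(F_{n-1})\Vert_2$ είναι φραγμένη ομοιόμορφα ως προς $n$.

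Ακολουθώντας τη διάσπαση της απόδειξης του Θεωρήματος \ref{Thm:5} σε Ερμιτιανό και αντι-Ερμιτιανό μέρος, ο όρος $C_n^{-1}(F_{n-1})L_n$ έχει βαθμίδα φραγμένη από $2M$ και συνεισφέρει σε $\mathcal{O}(1)$ ιδιοτιμές εκτός της συσσώρευσης (για σταθερό $\varepsilon$), ενώ οι όροι $C_n^{-1}(F_{n-1})S_n$ και $C_n^{-1}(F_{n-1})C_n(f-F_{n-1})$ περιορίζουν τα πραγματικά και φανταστικά μέρη των υπολοίπων ιδιοτιμών σε ένα ορθογώνιο γύρω από το $(1,0)$, με διαστάσεις ελεγχόμενες από $\Vert C_n^{-1}(F_{n-1})\Vert_2\bigl(\varepsilon+\Vert f-F_{n-1}\Vert_\infty\bigr)$. Επιλέγοντας $\varepsilon$ αρκούντως μικρό αλλά σταθερό, η ακτίνα του ορθογωνίου καθορίζεται ουσιαστικά από το $\Vert f-F_{n-1}\Vert_\infty$, δίνοντας τους ζητούμενους ρυθμούς $\mathcal{O}(1/n)$ και $\mathcal{O}(\log{n}/n)$ για τις δύο περιπτώσεις ομαλότητας, αντίστοιχα.

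Η κύρια δυσκολία έγκειται στον συγκερασμό των δύο παραμέτρων σφάλματος: το Θεώρημα \ref{Thm:1} δίνει αντιστάθμιση ανάμεσα στη νόρμα του $S_n$ (η οποία πρέπει να είναι μικρή) και τη βαθμίδα του $L_n$ (η οποία πρέπει να παραμείνει φραγμένη ανεξάρτητα του $n$, ώστε η συσσώρευση να χαρακτηρίζεται ως κύρια), ενώ ο ρυθμός προσέγγισης {\en Fourier} δίνει ένα άνω φράγμα εξαρτώμενο από το $n$, το οποίο μειώνεται αυτόματα. Το κομβικό σημείο είναι ότι, για κάθε σταθερή επιλογή του $\varepsilon$, η βαθμίδα του $L_n$ παραμένει φραγμένη από σταθερά ανεξάρτητη του $n$, κι έτσι ο τελικός ρυθμός της ακτίνας συσσώρευσης κληρονομείται αποκλειστικά από την ομαλότητα της $f$ μέσω του $\Vert f-F_{n-1}\Vert_\infty$, ολοκληρώνοντας την απόδειξη και των δύο περιπτώσεων του θεωρήματος.
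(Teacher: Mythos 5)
Η πρότασή σου είναι ορθή και ακολουθεί ουσιαστικά την ίδια στρατηγική με την απόδειξη της διατριβής: και οι δύο αποδείξεις απομονώνουν (α) το σφάλμα {\en Toeplitz}-κυκλοειδούς για την ακριβή $f$ (Θεώρημα \ref{Thm:1}/\ref{Thm:5}, μικρή νόρμα συν χαμηλή βαθμίδα) και (β) το σφάλμα προσέγγισης {\en Fourier} $f-F_{n-1}$ με τους ρυθμούς $\mathcal{O}(1/n)$ και $\mathcal{O}(\log{n}/n)$ της \cite{Serra_1999}, και κλείνουν με τη διάσπαση σε Ερμιτιανό/αντι-Ερμιτιανό μέρος και φραγμένη $\Vert C_n^{-1}\Vert_2$. Η μόνη επουσιώδης διαφορά είναι ότι η διατριβή ξεκινά με την πολλαπλασιαστική ταυτότητα $C_n^{-1}(F_{n-1})T_n=C_n\left(\frac{f}{F_{n-1}}\right)C_n^{-1}(f)T_n$ πριν καταλήξει στην ίδια προσθετική διάσπαση, και τυποποιεί το τελικό βήμα μέσω {\en Courant-Fischer} και του Λήμματος \ref{lem:tyrt_zamar}.
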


\begin{proof}
Ο προρρυθμισμένος πίνακας γράφεται ως:
\begin{equation}\label{eq:cont_thm}
\begin{split}
C_n^{-1}(F_{n-1})T_n&=C_n^{-1}(F_{n-1})C_n(f)C_n^{-1}(f)T_n=C_n\left(\frac{f}{F_{n-1}}\right)C_n^{-1}(f)T_n\\
&=\left(I_n+C_n\left(\frac{f-F_{n-1}}{F_{n-1}}\right)\right)C_n^{-1}(f)T_n\\
&=C_n^{-1}(f)T_n+C_n\left(\frac{f-F_{n-1}}{fF_{n-1}}\right)T_n.
\end{split}
\end{equation}

Από το Θεώρημα \ref{Thm:5} έχουμε ότι οι ιδιοτιμές του πρώτου όρου, του παραπάνω αθροίσματος, έχουν κύρια συσσώρευση στο σημείο $(1,0)$ του μιγαδικού επιπέδου αν η $f$ είναι συνεχής. 

Στη συνέχεια μελετάμε τον δεύτερο όρο του αθροίσματος της (\ref{eq:cont_thm}), παίρνοντας τη νόρμα $\Vert\cdot\Vert_2$ αυτού και χρησιμοποιώντας το Λήμμα 1 της \cite{chan1993circulant} και το Λήμμα \ref{Lem:1}:
\begin{equation*}
\begin{split}
\left\Vert C_n\left(\frac{f-F_{n-1}}{fF_{n-1}}\right)T_n\right\Vert_2&\leq\left\Vert C_n\left(\frac{f-F_{n-1}}{fF_{n-1}}\right)\right\Vert_2\Vert T_n\Vert_2\\
&\leq2\left\Vert \frac{f-F_{n-1}}{fF_{n-1}}\right\Vert_\infty 2\left\Vert f\right\Vert_\infty =4\max{\left\vert \frac{f-F_{n-1}}{fF_{n-1}}\right\vert}\max{\left\vert f\right\vert}\\
&\leq4\frac{\max{\vert f\vert}}{\min{\vert fF_{n-1}\vert}}\max{\vert f-F_{n-1}\vert}\leq c\max{\vert f-F_{n-1}\vert}.
\end{split}
\end{equation*}
Στην περίπτωση \textit{1} όπου η $f$ θεωρείται συνεχώς παραγωγίσιμη, από την \cite{Serra_1999} έχουμε ότι $\max{\vert f-F_{n-1}\vert}=\mathcal{O}\left(\frac{1}{n}\right)$. Για τις υποθέσεις της περίπτωσης \textit{2}, έχουμε επίσης από την \cite{Serra_1999} ότι $\max{\vert f-F_{n-1}\vert}=\mathcal{O}\left(\frac{\log{n}}{n}\right)$.

Για να μελετήσουμε τη συσσώρευση των ιδιοτιμών του προρρυθμισμένου πίνακα $C_n^{-1}(F_{n-1})T_n$, χρησιμοποιούμε το {\en min-max} θεώρημα των {\en Courant-Fischer} για το συμμετρικό και αντι-συμμετρικό του μέρος. Προχωρούμε με την ανάλυση του συμμετρικού μέρους. `Οσον αφορά στο αντι-συμμετρικό, αυτή είναι ανάλογη.

`Εστω $A_n=\frac{C_n^{-1}(f)T_n+T_n^TC_n^{-1}(\bar{f})}{2}$ το συμμετρικό μέρος του πρώτου όρου του αθροίσματος στην (\ref{eq:cont_thm}) και $B_n=\frac{C_n\left(\frac{f-F_{n-1}}{fF_{n-1}}\right)T_n+T_n^TC_n\left(\frac{\bar{f}-\bar{F}_{n-1}}{\bar{f}\bar{F}_{n-1}}\right)}{2}$ το συμμετρικό μέρος του δευτέρου όρου της ίδιας σχέσης. Τότε, το συμμετρικό μέρος του προρρυθμισμένου πίνακα γράφεται ως $S_n=A_n+B_n$. `Εστω ότι με $\lambda_k$ και $\widetilde{\lambda}_k$, $k=1,2,\dots,n$ συμβολίζουμε της ιδιοτιμές του $A_n$ και $S_n$, αντίστοιχα, ταξινομημένες σε μη-αύξουσα σειρά: $\lambda_1\geq\lambda_2\geq\dots\geq\lambda_n$ και $\widetilde{\lambda}_1\geq\widetilde{\lambda}_2\geq\dots\geq\widetilde{\lambda}_n$. Τότε, από το {\en min-max} θεώρημα των {\en Courant-Fischer}, έχουμε:
\begin{equation*}
\begin{split}
\widetilde{\lambda}_k&=\min\limits_{V\in\mathbb{R}^{n+1-k}}\max\limits_{x\in V}{\frac{x^H(A_n+B_n)x}{x^Hx}}\leq\max\limits_{x\in W}{\frac{x^H(A_n+B_n)x}{x^Hx}}\\
&\leq\max\limits_{x\in W}{\frac{x^HA_nx}{x^Hx}}+\max\limits_{x\in W}{\frac{x^HB_nx}{x^Hx}}\leq\lambda_k+\Vert B_n\Vert_2,
\end{split}
\end{equation*}
όπου $W$ είναι ο χώρος που επιτυγχάνεται το μέγιστο του $\frac{x^HA_nx}{x^Hx}$. Από την άλλη:
\begin{equation*}
\begin{split}
\widetilde{\lambda}_k&=\max\limits_{V\in\mathbb{R}^{k}}\min\limits_{x\in V}{\frac{x^H(A_n+B_n)x}{x^Hx}}\geq\min\limits_{x\in\widetilde{W}}{\frac{x^H(A_n+B_n)x}{x^Hx}}\\
&\geq\min\limits_{x\in\widetilde{W}}{\frac{x^HA_nx}{x^Hx}}+\min\limits_{x\in\widetilde{W}}{\frac{x^HB_nx}{x^Hx}}\geq\lambda_k-\max\limits_{x\in\widetilde{W}}{\left|\frac{x^HB_nx}{x^Hx}\right|}\\
&\geq\lambda_k-\max\limits_{x\in\mathbb{R}^n}{\left|\frac{x^HB_nx}{x^Hx}\right|}=\lambda_k-\Vert B_n\Vert_2,
\end{split}
\end{equation*}
όπου $\widetilde{W}$ είναι ο χώρος στον οποίο επιτυγχάνεται το ελάχιστο του $\frac{x^HA_nx}{x^Hx}$.
Επομένως, για κάθε ιδιοτιμή $\lambda_k$ του $A_n$, οι αντίστοιχη ιδιοτιμή $\widetilde{\lambda}_k$ του $S_n$ κυμαίνεται στο διάστημα $[\lambda_k-\Vert B_n\Vert_2,\lambda_k+\Vert B_n\Vert_2]$. Αυτό σημαίνει ότι η συσσώρευση της ακολουθίας πινάκων $\{S_n\}$ είναι μια $\Vert B_n\Vert_2$-επέκταση της συσσώρευσης του $\{A_n\}$.

Η ίδια ανάλυση για τα αντι-συμμετρικά μέρη $A_n^\prime=\frac{C_n^{-1}(f)T_n-T_n^TC_n^{-1}(\bar{f})}{2}$ και $B_n^\prime=\frac{C_n\left(\frac{f-F_{n-1}}{fF_{n-1}}\right)T_n-T_n^TC_n\left(\frac{\bar{f}-\bar{F}_{n-1}}{\bar{f}\bar{F}_{n-1}}\right)}{2}$, μας δίνει ότι για κάθε ιδιοτιμή $\mu_k$ του $A_n^\prime$, η αντίστοιχη ιδιοτιμή $\widetilde{\mu}_k$ του $S_n^\prime=A_n^\prime+B_n^\prime$, κυμαίνεται στο διάστημα $[\mu_k-\Vert B_n^\prime\Vert_2,\mu_k+\Vert B_n^\prime\Vert_2]$. Η μελέτη του αντι-συμμετρικού μέρους μας οδηγεί στον ίδιο τύπο συσσώρευσης γύρω από το 0.

Λόγω της κύριας συσσώρευσης του πρώτου όρου, οι Ερμιτιανές και αντι-Ερμιτιανές ακολουθίες πινάκων $\{A_n\}$ και $\{A_n^{\prime}\}$, αντίστοιχα, παρουσιάζουν κύρια συσσώρευση στο 1 και στο 0, αντίστοιχα. Η ιδιότητα που αναφέραμε ισχύει διότι $C_n^{-1}(f)T_n=I_n+S_n+R_n$, όπου $S_n$ είναι πίνακας με μικρή νόρμα και $R_n$ είναι πίνακας χαμηλής βαθμίδας (βλ. Πόρισμα \ref{cor:S_and_L}). Επειδή το πραγματικό/φανταστικό μέρος των ιδιοτιμών ενός πίνακα βρίσκεται εντός του εύρους του Ερμιτιανού/αντι-Ερμιτιανού μέρους του πίνακα \cite{bendixson,hirsch}, το αποτέλεσμα προκύπτει από το Λήμμα \ref{lem:tyrt_zamar} (Λήμμα 2.1 της \cite{Tyrtyshnikov1999}), όπου ως $\mathcal{A}_n$ επιλέξαμε το $C_n^{-1}(f)T_n-I_n$ και ως $\mathcal{B}_n$ το $S_n$. Επομένως, για κάθε $\varepsilon>0$, υπάρχουν ακέραιοι $M,M^{\prime}>0$, τέτοιοι ώστε $M$ ιδιοτιμές του $A_n$ να κυμαίνονται εκτός του διαστήματος $(1-\varepsilon,1+\varepsilon)$ και $M^{\prime}$ ιδιοτιμές του $A_n^{\prime}$, εκτός του $(-\varepsilon,\varepsilon)$. Υποθέτουμε ότι οι πρώτες $M_1$ ($\lambda_1,\lambda_2,\dots\lambda_{M_1}$) και οι τελευταίες $M_2$ ($\lambda_{n-M_2+1},\dots,\lambda_n$) ιδιοτιμές, όπου $M_1+M_2=M$, κυμαίνονται εκτός της συσσώρευσης του $A_n$, ενώ οι πρώτες $M_1^{\prime}$ ($\mu_1,\mu_2,\dots\mu_{M_1^\prime}$) και οι τελευταίες $M_2^\prime$ ($\mu_{n-M_2^\prime+1},\dots,\mu_n$), με $M_1^\prime+M_2^\prime=M^\prime$, κυμαίνονται εκτός της συσσώρευσης του $A_n^\prime$. Οι $A_n$ και $A_n^\prime$ γράφονται ως:
\begin{equation*}
A_n=\sum\limits_{k=1}^n\lambda_k x_k x_k^H,\text{ και }A_n^\prime=\sum\limits_{k=1}^n\mu_k y_k y_k^H,
\end{equation*}
όπου $x_k$, $k=1,2,\dots,n$ είναι τα κανονικοποιημένα ιδιοδιανύσματα του $A_n$, τα οποία σχηματίζουν ορθοκανονική βάση και $y_k$, $k=1,2,\dots,n$ είναι τα αντίστοιχα ιδιοδιανύσματα του $A_n^\prime$. Χωρίζουμε τους $A_n$ και $A_n^\prime$ ως $A_n=\widetilde{A}_n+\widehat{A}_n$ και $A_n^\prime=\widetilde{A}_n^\prime+\widehat{A}_n^\prime$, αντίστοιχα, όπου:
\begin{equation*}
\begin{split}
&\widetilde{A}_n=\sum\limits_{k=M_1+1}^{n-M_2}\lambda_k x_k x_k^H,~\widehat{A}_n=\sum\limits_{k=1}^{M_1}\lambda_k x_k x_k^H+\sum\limits_{k=n-M_2+1}^{n}\lambda_k x_k x_k^H\text{ και}\\
&\widetilde{A}_n^\prime=\sum\limits_{k=M_1^\prime+1}^{n-M_2^\prime}\mu_k y_k y_k^H,~\widehat{A}_n^\prime=\sum\limits_{k=1}^{M_1^\prime}\mu_k y_k y_k^H+\sum\limits_{k=n-M_2^\prime+1}^{n}\mu_k y_k y_k^H.
\end{split}
\end{equation*}
Είναι προφανές ότι $\Vert\widetilde{A}_n-I_n\Vert_2\leq\varepsilon$ και $\Vert\widetilde{A}_n^\prime\Vert_2\leq\varepsilon$, ενώ οι $\widehat{A}_n$ και $\widehat{A}_n^\prime$ είναι πίνακες με χαμηλή βαθμίδα.

Χρησιμοποιούμε το Λήμμα \ref{lem:tyrt_zamar}, επιλέγοντας $\mathcal{A}_n=S_n+S_n^\prime-I_n=A_n+A_n^\prime+B_n+B_n^\prime-I_n$ και $\mathcal{B}_n=\widetilde{A}_n+\widetilde{A}_n^\prime+B_n+B_n^\prime-I_n$. `Εχουμε ότι $\Vert\mathcal{B}_n\Vert_2\leq\Vert\widetilde{A}_n-I_n\Vert_2+\Vert\widetilde{A}_n^\prime\Vert_2+\Vert B_n\Vert_2+\Vert B_n^\prime\Vert_2$ και $\Vert\mathcal{A}_n-\mathcal{B}_n\Vert_F^2=\Vert\widehat{A}_n+\widehat{A}_n^\prime\Vert_F^2\leq c$, αφού $\widehat{A}_n+\widehat{A}_n^\prime$ είναι πίνακας χαμηλής βαθμίδας, σταθερής και ανεξάρτητης της διάστασης $n$. Επομένως, οι ιδιοτιμές του $\mathcal{A}_n$ συσσωρεύονται με την έννοια της κύριας συσσώρευσης στον δίσκο $\lbrace z:\Vert z\Vert\leq2\varepsilon+\Vert B_n\Vert_2+\Vert B_n^\prime\Vert_2\rbrace$, για κάθε $\varepsilon>0$.

Στην περίπτωση \textit{1} (επαρκώς ομαλή συνάρτηση) έχουμε ότι $\Vert B_n\Vert_2+\Vert B_n^\prime\Vert_2=\mathcal{O}\left(\frac{1}{n}\right)$, ενώ στην περίπτωση \textit{2} (συνεχής συνάρτηση), $\Vert B_n\Vert_2+\Vert B_n^\prime\Vert_2=\mathcal{O}\left(\frac{\log{n}}{n}\right)$. Επομένως, η προρρυθμισμένη ακολουθία πινάκων $\{\mathcal{A}_n+I_n\}$ παρουσιάζει κύρια συσσώρευση των ιδιοτιμών σε μια περιοχή του $(1,0)$, με ακτίνα $\mathcal{O}\left(\frac{1}{n}\right)$ και $\mathcal{O}\left(\frac{\log{n}}{n}\right)$, στην περίπτωση \textit{1} και \textit{2}, αντίστοιχα κι έτσι η απόδειξη ολοκληρώνεται.
\end{proof}

Για να μελετήσουμε την περίπτωση όπου η $f$ έχει σημεία ασυνέχειας, ή την περίπτωση όπου η $f$ έχει ρίζες, αρχικά θα πρέπει να αποδείξουμε κάποιες ιδιότητες που αφορούν στις ιδιοτιμές και ιδιάζουσες τιμές, γινομένων ακολουθιών πινάκων, οι οποίες θα μας φανούν χρήσιμες.

\begin{lem}\label{lem:1}
`Εστω $\lbrace \mathcal{A}_n\rbrace$ και $\lbrace \mathcal{B}_n\rbrace$ ακολουθίες πινάκων, των οποίων τα Ερμιτιανά μέρη παρουσιάζουν κύρια συσσώρευση των ιδιοτιμών στο 1 και τα αντι-Ερμιτιανά μέρη αυτών παρουσιάζουν κύρια συσσώρευση των ιδιοτιμών στο 0. Τότε, η ακολουθία $\lbrace \mathcal{C}_n\rbrace$, όπου $\mathcal{C}_n=\mathcal{A}_n\mathcal{B}_n$, έχει κύρια συσσώρευση των ιδιοτιμών στο σημείο $(1,0)$, του μιγαδικού επιπέδου.
\end{lem}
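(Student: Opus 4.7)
Η στρατηγική μου είναι να αποσυνθέσω κάθε πίνακα της μορφής $I_n+\text{(μικρή νόρμα)}+\text{(χαμηλή βαθμίδα)}$, να εκτελέσω τον πολλαπλασιασμό, και στο τέλος να εφαρμόσω το Λήμμα \ref{lem:tyrt_zamar} (καθώς και την παρατήρηση που ακολουθεί). Αρχικά, παρατηρώ ότι αφού το Ερμιτιανό μέρος $H(\mathcal{A}_n)=\frac{\mathcal{A}_n+\mathcal{A}_n^H}{2}$ παρουσιάζει κύρια συσσώρευση των ιδιοτιμών στο 1, μέσω της φασματικής του ανάλυσης μπορώ να γράψω $H(\mathcal{A}_n)-I_n=E_{A,1}+R_{A,1}$, όπου $\Vert E_{A,1}\Vert_2\leq\varepsilon$ και ο $R_{A,1}$ έχει βαθμίδα το πολύ $c(\varepsilon)$, σταθερή και ανεξάρτητη της διάστασης $n$ (αυτό προκύπτει απομονώνοντας τα $c(\varepsilon)$ ιδιοδιανύσματα των αποκλινουσών ιδιοτιμών). Εντελώς αντίστοιχα, $SH(\mathcal{A}_n)=E_{A,2}+R_{A,2}$. Προσθέτοντας, λαμβάνω $\mathcal{A}_n=I_n+E_A+R_A$ και, με τον ίδιο τρόπο, $\mathcal{B}_n=I_n+E_B+R_B$.

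Κατόπιν, υπολογίζω ευθέως το γινόμενο:
\begin{equation*}
\mathcal{C}_n=\mathcal{A}_n\mathcal{B}_n=I_n+\underbrace{E_A+E_B+E_AE_B}_{\tilde{E}_n}+\underbrace{R_A+R_B+E_AR_B+R_AE_B+R_AR_B}_{\tilde{R}_n},
\end{equation*}
όπου $\Vert\tilde{E}_n\Vert_2=\mathcal{O}(\varepsilon)$, ενώ ο $\tilde{R}_n$ έχει βαθμίδα το πολύ $4c(\varepsilon)$, αφού το γινόμενο ενός πίνακα χαμηλής βαθμίδας με οποιονδήποτε άλλον πίνακα παραμένει χαμηλής βαθμίδας. Γράφοντας λοιπόν $\mathcal{C}_n-I_n=\tilde{E}_n+\tilde{R}_n$ και επιλέγοντας στο Λήμμα \ref{lem:tyrt_zamar} ως $\mathcal{A}_n^\star$ τον $\mathcal{C}_n-I_n$ και ως $\mathcal{B}_n^\star$ τον $\tilde{E}_n$, λαμβάνω ότι $\Vert\mathcal{A}_n^\star-\mathcal{B}_n^\star\Vert_F^2=\Vert\tilde{R}_n\Vert_F^2=\mathcal{O}(1)$ και $\Vert\mathcal{B}_n^\star\Vert_2=\mathcal{O}(\varepsilon)$. Επομένως, οι ιδιοτιμές του $\mathcal{C}_n-I_n$ έχουν κύρια συσσώρευση σε δίσκο ακτίνας $\mathcal{O}(\varepsilon)$ γύρω από το 0, κι επειδή το $\varepsilon>0$ είναι αυθαίρετο, οι ιδιοτιμές του $\mathcal{C}_n$ παρουσιάζουν κύρια συσσώρευση γύρω από το σημείο $(1,0)$ του μιγαδικού επιπέδου.

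Το κύριο εμπόδιο της απόδειξης εντοπίζεται στην εξασφάλιση ότι η νόρμα Frobenius του πίνακα $\tilde{R}_n$ είναι πράγματι φραγμένη ανεξαρτήτως της διάστασης $n$. Αφού $\Vert\tilde{R}_n\Vert_F\leq\sqrt{\operatorname{rank}(\tilde{R}_n)}\Vert\tilde{R}_n\Vert_2$ και η βαθμίδα είναι φραγμένη από το $4c(\varepsilon)$, το πρόβλημα ανάγεται στον έλεγχο της νόρμας $\Vert\cdot\Vert_2$ των αποκλινουσών ιδιοτιμών των $H(\mathcal{A}_n)$, $SH(\mathcal{A}_n)$, $H(\mathcal{B}_n)$ και $SH(\mathcal{B}_n)$. Στην πλαισίωση της διατριβής, όπου οι ακολουθίες αυτές προέρχονται από την προρρύθμιση συστημάτων Toeplitz, η ομοιόμορφη φραγμένη νόρμα $\Vert\cdot\Vert_2$ προκύπτει φυσιολογικά από τις ιδιότητες φραγμένης απόκλισης των γεννητριών συναρτήσεων (βλ. Λήμμα \ref{Lem:1}), οπότε η εφαρμογή του λήμματος μπορεί να γίνει χωρίς επιπλέον υποθέσεις πέρα από όσα υπόνοει το πλαίσιο του κεφαλαίου.
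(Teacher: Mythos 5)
Η πρότασή σου είναι ορθή και ακολουθεί ουσιαστικά την ίδια πορεία με την απόδειξη της διατριβής: ανάλυση καθενός εκ των $\mathcal{A}_n$, $\mathcal{B}_n$ ως άθροισμα του $I_n$, ενός πίνακα μικρής νόρμας κι ενός πίνακα χαμηλής βαθμίδας μέσω των Ερμιτιανών και αντι-Ερμιτιανών μερών τους, εκτέλεση του γινομένου και εφαρμογή του Λήμματος \ref{lem:tyrt_zamar} στη διάσπαση $\mathcal{C}_n-I_n=\tilde{E}_n+\tilde{R}_n$. Η παρατήρησή σου ότι για το $\Vert\tilde{R}_n\Vert_F^2=\mathcal{O}(1)$ απαιτείται επιπλέον φραγμένη νόρμα $\Vert\cdot\Vert_2$ των αποκλινουσών ιδιοτιμών είναι εύστοχη, αφού η απόδειξη της διατριβής το υποθέτει σιωπηρά χωρίς σχολιασμό.
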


\begin{proof}
Το Ερμιτιανό μέρος του $\mathcal{A}_n$, για κάθε $\varepsilon_A>0$ γράφεται ως $\frac{\mathcal{A}_n+\mathcal{A}_n^H}{2}=I_n+S_n+R_n$, όπου $S_n$ είναι ένας πίνακας με μικρή νόρμα $\Vert S_n\Vert_2\leq\varepsilon_A$ και $R_n$ είναι ένας πίνακας χαμηλής βαθμίδας $\operatorname{rank}(R_n)=k$. Το αντι-Ερμιτιανό μέρος του $\mathcal{A}_n$, για κάθε $\varepsilon_A^\prime>0$ γράφεται ως $\frac{\mathcal{A}_n-\mathcal{A}_n^H}{2}=S_n^\prime+R_n^\prime$, όπου $\Vert S_n^\prime\Vert_2\leq\varepsilon_A^\prime$ και $\operatorname{rank}(R_n^\prime)=k^\prime$. Ανάλογες ιδιότητες ισχύουν επίσης και για τον $\mathcal{B}_n$. Αυτό σημαίνει ότι, για κάθε $\varepsilon_B>0$, $\frac{\mathcal{B}_n+\mathcal{B}_n^H}{2}=I_n+Q_n+P_n$, όπου $\Vert Q_n\Vert_2\leq\varepsilon_B$ και $\operatorname{rank}(P_n)=l$, καθώς επίσης και για κάθε $\varepsilon_B^\prime>0$, $\frac{\mathcal{B}_n-\mathcal{B}_n^H}{2}=Q_n^\prime+P_n^\prime$, όπου $\Vert Q_n^\prime\Vert_2\leq\varepsilon_B^\prime$ και $\operatorname{rank}(P_n^\prime)=l^\prime$. Καταλήγουμε στο ότι $\mathcal{A}_n=I_n+S_n+S_n^\prime+R_n+R_n^\prime$ και $\mathcal{B}_n=I_n+Q_n+Q_n^\prime+P_n+P_n^\prime$. Επομένως:
\begin{equation*}
\mathcal{C}_n=\mathcal{A}_n\mathcal{B}_n=(I_n+S_n+S_n^\prime+R_n+R_n^\prime)(I_n+Q_n+Q_n^\prime+P_n+P_n^\prime)=I_n+\mathcal{S}_n+\mathcal{R}_n,
\end{equation*}
όπου:
\begin{equation*}
\mathcal{S}_n=S_n+S_n^\prime+Q_n+Q_n^\prime+S_nQ_n+S_nQ_n^\prime+S_n^\prime Q_n+S_n^\prime Q_n^\prime\text{ και}
\end{equation*}
\begin{equation*}
\mathcal{R}_n=(R_n+R_n^\prime)(I_n+Q_n+Q_n^\prime+P_n+P_n^\prime)+(I_n+S_n+S_n^\prime)(P_n+P_n^\prime).
\end{equation*}

Είναι προφανές ότι ο $\mathcal{S}_n$ είναι πίνακας με μικρή νόρμα $\Vert\mathcal{S}_n\Vert_2\leq\varepsilon$, για κάθε $\varepsilon>0$ ($\varepsilon$ είναι επιλεγμένο ως $\varepsilon_A+\varepsilon_A^\prime+\varepsilon_B+\varepsilon_B^\prime+\varepsilon_A\varepsilon_B+\varepsilon_A\varepsilon_B^\prime+\varepsilon_A^\prime\varepsilon_B+\varepsilon_A^\prime\varepsilon_B^\prime$). Επίσης ισχύει ότι $\operatorname{rank}(\mathcal{R}_n)\leq(k+k^\prime+l+l^\prime)$, το οποίο σημαίνει ότι ο $\mathcal{R}_n$ είναι πίνακας χαμηλής βαθμίδας. Χρησιμοποιώντας το Λήμμα \ref{lem:tyrt_zamar}, θέτοντας ως τον $\mathcal{A}_n$ του Λήμματος \ref{lem:tyrt_zamar} τον $\mathcal{S}_n+\mathcal{R}_n$ και ως τον $\mathcal{B}_n$ του ίδιου λήμματος, τον $\mathcal{S}_n$, προκύπτει το ζητούμενο αποτέλεσμα, επειδή έχουμε ότι $\Vert\mathcal{R}_n\Vert_F^2=\mathcal{O}(1)$ κι επειδή $\Vert\mathcal{S}_n\Vert_2\leq\varepsilon$, για κάθε $\varepsilon>0$ αρκούντως μικρό, οι ιδιοτιμές του $\mathcal{S}_n+\mathcal{R}_n$ συσσωρεύονται με την έννοια της κύριας συσσώρευσης στον κλειστό δίσκο με κέντρο το $(0,0)$ και ακτίνα $\varepsilon$. Συνεπώς, οι ιδιοτιμές του $\mathcal{C}_n=I_n+\mathcal{S}_n+\mathcal{R}_n$ έχουν κύρια συσσώρευση στο $(1,0)$.
\end{proof}

\begin{lem}\label{lem:2}
`Εστω $\lbrace \mathcal{A}_n\rbrace$ και $\lbrace \mathcal{B}_n\rbrace$ ακολουθίες πινάκων, των οποίων τα Ερμιτιανά μέρη παρουσιάζουν γενική συσσώρευση των ιδιοτιμών στα διαστήματα $(1-r_A,1+r_A)$ και $(1-r_B,1+r_B)$, με $s_A(n)$ και $s_B(n)$ ιδιοτιμές εκτός των διαστημάτων, αντίστοιχα, ενώ τα αντι-Ερμιτιανά μέρη αυτών παρουσιάζουν γενική συσσώρευση των ιδιοτιμών στα διαστήματα $(-r_A,r_A)$ και $(-r_B,r_B)$, με $s_A(n)$ και $s_B(n)$ ιδιοτιμές εκτός των διαστημάτων, αντίστοιχα. Τότε, η ακολουθία $\lbrace\mathcal{C}_n\rbrace$, όπου $\mathcal{C}_n=\mathcal{A}_n\mathcal{B}_n$, έχει γενική συσσώρευση των ιδιοτιμών σε μια περιοχή του $(1,0)$, του μιγαδικού επιπέδου, με ακτίνα $2r_A+2r_B+4r_Ar_B$ και το πολύ $2s_A(n)+2s_B(n)$ ιδιοτιμές εκτός της συσσώρευσης.
\end{lem}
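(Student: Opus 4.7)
Η στρατηγική είναι να ακολουθήσουμε πιστά το πρότυπο της απόδειξης του Λήμματος \ref{lem:1}, αντικαθιστώντας όμως τις διαταραχές σταθερής βαθμίδας με διαταραχές βαθμίδας $o(n)$, και επικαλούμενοι τη μορφή γενικής συσσώρευσης του Λήμματος \ref{lem:tyrt_zamar}. Πρώτα, αξιοποιώντας τη φασματική ανάλυση του Ερμιτιανού μέρους του $\mathcal{A}_n$, γράφουμε $\frac{\mathcal{A}_n+\mathcal{A}_n^H}{2}=I_n+S_n+R_n$, όπου ο Ερμιτιανός $S_n$ αντιστοιχεί στην προβολή επί του φασματικού υποχώρου των ιδιοτιμών εντός του $(1-r_A,1+r_A)$ (άρα $\|S_n\|_2\leq r_A$) και ο Ερμιτιανός $R_n$ στην προβολή επί του υποχώρου των $s_A(n)$ {\en outliers} (άρα $\operatorname{rank}(R_n)\leq s_A(n)$). Ανάλογα, $\frac{\mathcal{A}_n-\mathcal{A}_n^H}{2}=S_n'+R_n'$ με $\|S_n'\|_2\leq r_A$ και $\operatorname{rank}(R_n')\leq s_A(n)$, ενώ με τον ίδιο τρόπο διασπάται και ο $\mathcal{B}_n=I_n+Q_n+Q_n'+P_n+P_n'$, με $\|Q_n\|_2,\|Q_n'\|_2\leq r_B$ και $\operatorname{rank}(P_n),\operatorname{rank}(P_n')\leq s_B(n)$.

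Στη συνέχεια, αναπτύσσουμε το γινόμενο $\mathcal{C}_n=\mathcal{A}_n\mathcal{B}_n$ και ομαδοποιούμε τους όρους στη μορφή $\mathcal{C}_n=I_n+\mathcal{S}_n+\mathcal{R}_n$, όπου $\mathcal{S}_n=S_n+S_n'+Q_n+Q_n'+(S_n+S_n')(Q_n+Q_n')$ συγκεντρώνει τους όρους που αποτελούνται αποκλειστικά από πίνακες μικρής νόρμας, ενώ ο $\mathcal{R}_n$ συλλέγει όλους τους υπόλοιπους όρους, καθένας από τους οποίους περιέχει τουλάχιστον έναν από τους πίνακες χαμηλής βαθμίδας $R_n,R_n',P_n,P_n'$. Από την τριγωνική ανισότητα και την υποπολλαπλασιαστικότητα της νόρμας $\|\cdot\|_2$ προκύπτει $\|\mathcal{S}_n\|_2\leq 2r_A+2r_B+4r_Ar_B$, ενώ από την υποπροσθετικότητα της βαθμίδας, $\operatorname{rank}(\mathcal{R}_n)\leq 2s_A(n)+2s_B(n)$.

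Για να λάβουμε τη γενική συσσώρευση, εφαρμόζουμε το Λήμμα \ref{lem:tyrt_zamar} θέτοντας ως πρώτη ακολουθία πινάκων τον $\mathcal{C}_n-I_n=\mathcal{S}_n+\mathcal{R}_n$ και ως δεύτερη τον $\mathcal{S}_n$. Εφόσον $\operatorname{rank}(\mathcal{R}_n)=o(n)$ και η $\|\mathcal{R}_n\|_2$ παραμένει ομοιόμορφα φραγμένη (όπως εξασφαλίζεται από την έκφραση του $\mathcal{R}_n$ μέσω των ομοιόμορφα φραγμένων όρων $S_n,S_n',Q_n,Q_n'$ και των δεδομένων $\mathcal{A}_n,\mathcal{B}_n$), έχουμε $\|\mathcal{R}_n\|_F^2\leq\operatorname{rank}(\mathcal{R}_n)\|\mathcal{R}_n\|_2^2=o(n)$. Άρα το σύνολο $\{z\in\mathbb{C}:|z|\leq 2r_A+2r_B+4r_Ar_B\}$ αποτελεί σύνολο γενικής συσσώρευσης των ιδιοτιμών του $\mathcal{S}_n+\mathcal{R}_n$, το οποίο μεταφέρεται σε γενική συσσώρευση των ιδιοτιμών του $\mathcal{C}_n$ στον δίσκο με κέντρο το $(1,0)$ και ακτίνα $2r_A+2r_B+4r_Ar_B$.

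Το κύριο εμπόδιο αναμένεται να είναι η ποσοτικοποίηση του αριθμού των {\en outliers} ως ακριβώς $2s_A(n)+2s_B(n)$, και όχι απλώς ως $o(n)$ που δίνει το Λήμμα \ref{lem:tyrt_zamar}. Για τη στενότερη αυτή εκτίμηση θα χρειαστεί να αναλύσουμε ξεχωριστά το Ερμιτιανό και το αντι-Ερμιτιανό μέρος του $\mathcal{C}_n-I_n$, εφαρμόζοντας την ανισότητα εντοπισμού του {\en Weyl} για Ερμιτιανές διαταραχές χαμηλής βαθμίδας (κάθε διαταραχή βαθμίδας $k$ εισάγει το πολύ $k$ ιδιοτιμές εκτός οποιουδήποτε διαστήματος που περιέχει το φάσμα του μη-διαταραγμένου πίνακα), σε συνδυασμό με τις εκτιμήσεις {\en Bendixson/Hirsch} που τοποθετούν τα πραγματικά και φανταστικά μέρη των ιδιοτιμών ενός μη-Ερμιτιανού πίνακα εντός του εύρους του Ερμιτιανού και αντι-Ερμιτιανού του μέρους, αντίστοιχα.
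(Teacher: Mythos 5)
Η πρότασή σου είναι ορθή και ακολουθεί ουσιαστικά την ίδια πορεία με την απόδειξη της διατριβής: ίδια διάσπαση των Ερμιτιανών/αντι-Ερμιτιανών μερών σε $I_n+S_n+R_n$ κ.λπ., ίδια ομαδοποίηση του γινομένου σε $I_n+\mathcal{S}_n+\mathcal{R}_n$ με τα ίδια φράγματα $2r_A+2r_B+4r_Ar_B$ και $2s_A(n)+2s_B(n)$, και ίδια επίκληση του Λήμματος \ref{lem:tyrt_zamar}. Η τελική σου ανησυχία για την ακριβή καταμέτρηση των εκτός συσσώρευσης ιδιοτιμών είναι εύστοχη, αλλά η διατριβή σταματά στο ίδιο επίπεδο αυστηρότητας, οπότε δεν αποτελεί κενό της δικής σου πρότασης σε σχέση με το πρωτότυπο.
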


\begin{proof}
`Οπως και στην απόδειξη του Λήμματος \ref{lem:1}, το Ερμιτιανό μέρος του $\mathcal{A}_n$ μπορεί να γραφεί ως $\frac{\mathcal{A}_n+\mathcal{A}_n^H}{2}=I_n+S_n+R_n$, όπου $\Vert S_n\Vert_2\leq r_A$ και $\operatorname{rank}(R_n)=s_A(n)$. Το αντι-Ερμιτιανό μέρος μπορεί να γραφεί ως $\frac{\mathcal{A}_n-\mathcal{A}_n^H}{2}=S_n^\prime+R_n^\prime$, όπου $\Vert S_n^\prime\Vert_2\leq r_A$ και $\operatorname{rank}(R_n^\prime)=s_A(n)$. Το ίδιο ισχύει και για τον $\mathcal{B}_n$, δηλαδή $\frac{\mathcal{B}_n+\mathcal{B}_n^H}{2}=I_n+Q_n+P_n$, όπου $\Vert Q_n\Vert_2\leq r_B$ και $\operatorname{rank}(P_n)=s_B(n)$, καθώς επίσης και $\frac{\mathcal{B}_n-\mathcal{B}_n^H}{2}=Q_n^\prime+P_n^\prime$, όπου $\Vert Q_n^\prime\Vert_2\leq r_B$ και $\operatorname{rank}(P_n^\prime)=s_B(n)$. Τότε, $\mathcal{A}_n=I_n+S_n+S_n^\prime+R_n+R_n^\prime$, ενώ $\mathcal{B}_n=I_n+Q_n+Q_n^\prime+P_n+P_n^\prime$. Επομένως:
\begin{equation*}
\begin{split}
\mathcal{C}_n&=\mathcal{A}_n\mathcal{B}_n=(I_n+S_n+S_n^\prime+R_n+R_n^\prime)(I_n+Q_n+Q_n^\prime+P_n+P_n^\prime)\\
&=I_n+S_n+S_n^\prime+(I_n+S_n+S_n^\prime)(Q_n+Q_n^\prime)\\
&\phantom{\hspace{40pt}}+(R_n+R_n^\prime)(I_n+Q_n+Q_n^\prime+P_n+P_n^\prime)+(I_n+S_n+S_n^\prime)(P_n+P_n^\prime)\\
&=I_n+\mathcal{S}_n+\mathcal{R}_n.
\end{split}
\end{equation*}
Είναι προφανές ότι ο $\mathcal{S}_n$ έχει φραγμένη νόρμα:
\begin{equation*}
\begin{split}
\Vert\mathcal{S}_n\Vert_2&=\Vert S_n+S_n^\prime+(I_n+S_n+S_n^\prime)(Q_n+Q_n^\prime)\Vert_2\\
&\leq\Vert S_n\Vert_2+\Vert S_n^\prime\Vert_2+(1+\Vert S_n\Vert_2+\Vert S_n^\prime\Vert_2)(\Vert Q_n\Vert_2+\Vert Q_n^\prime\Vert_2)\\
&\leq2r_A+2r_B+4r_Ar_B,
\end{split}
\end{equation*}
ενώ ο $\mathcal{R}_n$ είναι πίνακας χαμηλής βαθμίδας $\operatorname{rank}(\mathcal{R}_n)\leq2s_A(n)+2s_B(n)$.

Χρησιμοποιούμε το Λήμμα \ref{lem:tyrt_zamar}, θέτοντας ως $\mathcal{A}_n$ του Λήμματος \ref{lem:tyrt_zamar}, τον $\mathcal{S}_n+\mathcal{R}_n$ και ως $\mathcal{B}_n$ του ίδιου λήμματος, τον $\mathcal{S}_n$. Επειδή $\operatorname{rank}(\mathcal{R}_n)=\mathcal{O}(2s_A(n)+2s_B(n))$, λαμβάνουμε ότι $\Vert \mathcal{R}_n\Vert_F^2=\mathcal{O}(2s_A(n)+2s_B(n))$ κι επίσης ισχύει ότι $\Vert\mathcal{S}_n\Vert_2\leq2r_A+2r_B+4r_Ar_B$. Επομένως, οι ιδιοτιμές του $\mathcal{S}_n+\mathcal{R}_n$ συσσωρεύονται στον κλειστό δίσκο με κέντρο το $(0,0)$ και ακτίνα $2r_A+2r_B+4r_Ar_B$, με την έννοια της γενικής συσσώρευσης, με $\mathcal{O}(2s_A(n)+2s_B(n))$, ιδιοτιμές εκτός της συσσώρευσης. Συμπεραίνουμε ότι οι ιδιοτιμές του $\mathcal{C}_n=I_n+\mathcal{S}_n+\mathcal{R}_n$ έχουν γενική συσσώρευση στον κλειστό δίσκο με κέντρο το $(1,0)$ και ακτίνα $2r_A+2r_B+4r_Ar_B$, καθώς επίσης και $\mathcal{O}(2s_A(n)+2s_B(n))$ ιδιοτιμές εκτός της συσσώρευσης.
\end{proof}

Στο Λήμμα \ref{lem:2}, θεωρήσαμε τις ίδιες ακτίνες $r_A$ και $r_B$, για τα Ερμιτιανά και αντι-Ερμιτιανά μέρη των $\mathcal{A}_n$ και $\mathcal{B}_n$, αντίστοιχα, καθώς επίσης και τον ίδιο αριθμό ιδιοτιμών εκτός της συσσώρευσης, $s_A(n)$ και $s_B(n)$. Η απόδειξη είναι ανάλογη αν θεωρήσουμε διαφορετικές τιμές, αλλά η τάξη μεγέθους παραμένει η ίδια.

\begin{rem}
Πρέπει να σχολιάσουμε ότι αν οι ακτίνες $r_A$ και $r_B$ διαφέρουν κατά τάξη μεγέθους, τότε η ακτίνα του κύκλου θα πρέπει να είναι αυτή με τη μεγαλύτερη τάξη. Αν επιπλέον οι $s_A(n)$ και $s_B(n)$ διαφέρουν επίσης κατά τάξη μεγέθους, ο αριθμός των ιδιοτιμών που βρίσκονται εκτός της συσσώρευσης, θα εξαρτάται ομοίως από τη μεγαλύτερη τάξη. Αξίζει να σημειώσουμε επίσης ότι το Λήμμα \ref{lem:2} περιέχει το αποτέλεσμα του Λήμματος \ref{lem:1}, θέτοντας $r_A=r_B=0$ και $s_A(n)=c_1$, $s_B(n)=c_2$, όπου $c_1$ και $c_2$ είναι σταθερές ανεξάρτητες της διάστασης $n$.
\end{rem}

Προχωρούμε με τη μελέτη των ιδιοτιμών, όταν η $f$ είναι ασυνεχής και δεν έχει ρίζες.
\begin{thm}\label{thm:well_cond_dis}
`Εστω $T_n$ ένας πραγματικός πίνακας {\en Toeplitz}, η γεννήτρια συνάρτηση του οποίου υπάρχει και είναι άγνωστη. Υποθέτουμε επίσης ότι δεν έχουν βρεθεί ρίζες μέσω της προτεινόμενης μεθόδου. Αν η $f$ έχει πεπερασμένα σημεία ασυνέχειας, οι ιδιοτιμές του προρρυθμισμένου πίνακα $C_n^{-1}(F_{n-1})T_n$ συσσωρεύονται, με την έννοια της γενικής συσσώρευσης, σε μια περιοχή του $(1,0)$, με σταθερή ακτίνα, το πολύ ίση με $0.179/(1-0.179)$ και με $\mathcal{O}(\log{n})$ ιδιοτιμές εκτός της συσσώρευσης.
\end{thm}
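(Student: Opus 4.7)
\emph{Σχέδιο απόδειξης:} Η ιδέα είναι να προσαρμόσω την τεχνική του Θεωρήματος \ref{thm:well_cond} στη νέα περίπτωση, όπου το ανάπτυγμα $F_{n-1}$ δεν προσεγγίζει πλέον ομοιόμορφα τη γεννήτρια συνάρτηση $f$ κοντά στα σημεία ασυνέχειας, λόγω του φαινομένου {\en Gibbs}. Αρχικά, γράφω τον προρρυθμισμένο πίνακα ως γινόμενο δύο παραγόντων,
\begin{equation*}
C_n^{-1}(F_{n-1})T_n = C_n\!\left(\frac{f}{F_{n-1}}\right) \cdot C_n^{-1}(f) T_n,
\end{equation*}
εκμεταλλευόμενος την πολλαπλασιαστική ιδιότητα $C_n^{-1}(F_{n-1})C_n(f) = C_n(f/F_{n-1})$, η οποία ισχύει αφού όλοι οι κυκλοειδείς πίνακες διαγωνοποιούνται από τον ίδιο ορθομοναδιαίο $\mathcal{F}_n$.

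Ο δεύτερος παράγοντας $C_n^{-1}(f)T_n$ καλύπτεται πλήρως από το Θεώρημα \ref{eig_discont}, το οποίο εγγυάται γενική συσσώρευση των ιδιοτιμών του γύρω από το σημείο $(1,0)$ με αυθαίρετα μικρή ακτίνα $\varepsilon>0$ και $\mathcal{O}(\log n)$ ιδιοτιμές εκτός της συσσώρευσης. Για τον πρώτο παράγοντα $C_n(f/F_{n-1})$, ο οποίος είναι κυκλοειδής με ιδιοτιμές $f(\theta_j)/F_{n-1}(\theta_j)$, εκτιμώ τη σχετική απόκλιση $|F_{n-1}(\theta_j)/f(\theta_j)-1|$ ξεχωριστά σε δύο κατηγορίες σημείων του $G_n$: (i) για τα $\theta_j$ που απέχουν από κάθε σημείο ασυνέχειας περισσότερο από $\mathcal{O}(\log n/n)$, η κλασική θεωρία σύγκλισης σειρών {\en Fourier} για κατά τμήματα συνεχείς συναρτήσεις (\cite{Serra_1999}) δίνει $|F_{n-1}(\theta_j)-f(\theta_j)|=\mathcal{O}(\log n/n)$, άρα οι αντίστοιχες ιδιοτιμές συγκλίνουν γρήγορα στη μονάδα· (ii) για τα $\theta_j$ που βρίσκονται σε περιοχή πλάτους $\mathcal{O}(\log n/n)$ γύρω από κάποιο σημείο ασυνέχειας, εφαρμόζω το φαινόμενο {\en Gibbs}: η υπερύψωση του $F_{n-1}$ ως προς την $f$ φράσσεται από $\approx 0.0895\,J$, όπου $J$ το εύρος ασυνέχειας, δίνοντας $|F_{n-1}/f - 1| \leq \rho$ με $\rho \approx 0.179$. Ο αριθμός των $\theta_j$ σε τέτοιες γειτονιές είναι $\mathcal{O}(\log n)$, αφού το πλέγμα έχει βήμα $2\pi/n$.

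Από το παραπάνω φράγμα, εφαρμόζοντας την ανισότητα $|f/F_{n-1}-1| = |F_{n-1}-f|/|F_{n-1}| \leq \rho/(1-\rho)$, η οποία ισχύει όταν $|F_{n-1}/f-1|\leq \rho <1$, συμπεραίνω ότι όλες οι ιδιοτιμές του $C_n(f/F_{n-1})$ ανήκουν σε κλειστό δίσκο με κέντρο το $(1,0)$ και ακτίνα το πολύ $0.179/(1-0.179)$, με $\mathcal{O}(\log n)$ εξ αυτών απομακρυσμένες από τη μονάδα. Τελικά, εφαρμόζω το Λήμμα \ref{lem:2} στο γινόμενο των δύο ακολουθιών πινάκων, με $r_A$ αυθαίρετα μικρό, $r_B = 0.179/(1-0.179)$ και $s_A(n)=s_B(n)=\mathcal{O}(\log n)$, για να λάβω τη γενική συσσώρευση των ιδιοτιμών του $C_n^{-1}(F_{n-1})T_n$ στον κλειστό δίσκο με κέντρο το $(1,0)$ και ακτίνα το πολύ $0.179/(1-0.179)$, με $\mathcal{O}(\log n)$ ιδιοτιμές εκτός.

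Το κύριο εμπόδιο της απόδειξης εντοπίζεται στην ακριβή ποσοτικοποίηση του φαινομένου {\en Gibbs} ώστε να προκύψει η συγκεκριμένη σταθερά $0.179$, σε συνδυασμό με τη χρήση των Λημμάτων \ref{lem:1} και \ref{lem:2}, που απαιτούν τη μελέτη του Ερμιτιανού και αντι-Ερμιτιανού μέρους ξεχωριστά, προκειμένου να εξασφαλιστεί η κατάλληλη συσσώρευση τόσο του πραγματικού, όσο και του φανταστικού μέρους των ιδιοτιμών του προρρυθμισμένου πίνακα.
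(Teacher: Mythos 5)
Η πρότασή σου είναι ορθή και ακολουθεί ουσιαστικά την ίδια πορεία με την απόδειξη του κειμένου: ίδια παραγοντοποίηση $C_n^{-1}(F_{n-1})T_n=C_n\left(\frac{f}{F_{n-1}}\right)C_n^{-1}(f)T_n$, ίδια επίκληση της γενικής συσσώρευσης του $C_n^{-1}(f)T_n$ για την κατά τμήματα συνεχή περίπτωση, ίδιο φράγμα $0.179/(1-0.179)$ μέσω του φαινομένου {\en Gibbs} για τον κυκλοειδή παράγοντα και ίδιος συνδυασμός μέσω του Λήμματος \ref{lem:2}. Η μόνη (επουσιώδης) διαφορά είναι ότι διαχωρίζεις τα σημεία του πλέγματος σε κοντινά και μακρινά από τις ασυνέχειες, καταμετρώντας $\mathcal{O}(\log n)$ επιπλέον {\en outliers} για τον κυκλοειδή παράγοντα, ενώ το κείμενο φράσσει ομοιόμορφα ολόκληρη τη νόρμα $\left\Vert C_n\left(\frac{f-F_{n-1}}{F_{n-1}}\right)\right\Vert_2$.
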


\begin{proof}
`Οπως έχει ήδη αποδειχθεί στη σχέση (\ref{eq:cont_thm}), ο προρρυθμισμένος πίνακας γράφεται ως:
\begin{equation*}
C_n^{-1}(F_{n-1})T_n=C_n\left(\frac{f}{F_{n-1}}\right)C_n^{-1}(f)T_n.
\end{equation*}

Από το Θεώρημα \ref{thm:T-C_piecewise} λαμβάνουμε τη γενική συσσώρευση των ιδιοτιμών του $C_n^{-1}(f)T_n$, στο $(1,0)$ με $\mathcal{O}(\log{n})$ ιδιοτιμές εκτός της συσσώρευσης. Το Ερμιτιανό μέρος του πίνακα μπορεί να γραφεί ως $I_n+H(S_n)+H(R_n)$, όπου $H(S_n)=\frac{S_n+S_n^H}{2}$ και $H(R_n)=\frac{R_n+R_n^H}{2}$. Προφανώς, $\Vert H(S_n)\Vert_2\leq\varepsilon$ και $\Vert H(R_n)\Vert_F^2=\mathcal{O}(\log{n})$. Ανάλογες ιδιότητες ισχύουν και για το αντι-Ερμιτιανό του μέρος κι έτσι καταλήγουμε στην ίδια κατά τάξη μεγέθους γενική συσσώρευση του Ερμιτιανού και αντι-Ερμιτιανού μέρους του πίνακα $C_n^{-1}(f)T_n$.

Οι ιδιοτιμές του πρώτου όρου $C_n\left(\frac{f}{F_{n-1}}\right)$ είναι οι τιμές της συνάρτησης $\frac{f}{F_{n-1}}$ στα σημεία του πλέγματος $G_n$. Γνωρίζουμε ότι το ανάπτυγμα {\en Fourier} συγκλίνει ομοιόμορφα, όσο το $n$ τείνει στο άπειρο, με ταχύτητα σύγκλισης $\mathcal{O}\left(\frac{\log{n}}{n}\right)$, σε οποιοδήποτε συμπαγές υποσύνολο του $\mathbb{R}$ που δεν περιέχει σημεία ασυνέχειας. Ωστόσο, σε μικρές περιοχές των σημείων ασυνέχειας, εμφανίζεται το φαινόμενο {\en Gibbs} \cite{hewitt}, όπου το σχετικό σφάλμα συγκλίνει σε κάποιον σταθερό αριθμό, σχεδόν ίσο με $17.9\%$ της τιμής της $f$ \cite{bocher}. `Εχουμε:
\begin{equation*}
C_n\left(\frac{f}{F_{n-1}}\right)=C_n\left(1+\frac{f-F_{n-1}}{F_{n-1}}\right)=I_n+C_n\left(\frac{f-F_{n-1}}{F_{n-1}}\right)
\end{equation*}
και ισχύει ότι:
\begin{equation*}
\begin{split}
\left\Vert C_n\left(\frac{f-F_{n-1}}{F_{n-1}}\right)\right\Vert_2&=\left(\lambda_{\max}\left(C_n\left(\frac{f-F_{n-1}}{F_{n-1}}\right)^HC_n\left(\frac{f-F_{n-1}}{F_{n-1}}\right)\right)\right)^{\frac{1}{2}}\\
&=\left(\lambda_{\max}\left(C_n\left(\left\vert\frac{f-F_{n-1}}{F_{n-1}}\right\vert^2\right)\right)\right)^{\frac{1}{2}}\\
&\leq\left(\max\limits_x{\left\vert\frac{f(x)-F_{n-1}(x)}{F_{n-1}(x)}\right\vert^2}\right)^{\frac{1}{2}}\leq\left\Vert\frac{f-F_{n-1}}{F_{n-1}}\right\Vert_\infty.
\end{split}
\end{equation*}

Γράφουμε το ανάπτυγμα {\en Fourier} ως $F_{n-1}=F_{n-1}^1+\mathrm{i}F_{n-1}^2$. Ισχύει:
\begin{equation*}
\begin{split}
&F_{n-1}^1=f_1+\epsilon_1f_1=(1+\epsilon_1)f_1\text{ και}\\
&F_{n-1}^2=f_2+\epsilon_2f_2=(1+\epsilon_2)f_2,
\end{split}
\end{equation*}
όπου $\epsilon_1, \epsilon_2$ είναι οι αντίστοιχες συναρτήσεις σφάλματος. `Ετσι:
\begin{equation*}
\begin{split}
\left\Vert C_n\left(\frac{f-F_{n-1}}{F_{n-1}}\right)\right\Vert_2&\leq\left\Vert\frac{f-F_{n-1}}{F_{n-1}}\right\Vert_\infty=\max\limits_x{\left\vert\frac{f(x)-F_{n-1}(x)}{F_{n-1}(x)}\right\vert}\\
&=\max\limits_x{\frac{\left\vert f_1(x)-F_{n-1}^1(x)+\mathrm{i}\left(f_2(x)-F_{n-1}^2(x)\right)\right\vert}{\left\vert F_{n-1}^1(x)+\mathrm{i}F_{n-1}^2(x)\right\vert}}\\
&=\max\limits_x{\frac{\left\vert \epsilon_1(x)f_1(x)+\mathrm{i}\epsilon_2(x)f_2(x)\right\vert}{\left\vert (1+\epsilon_1(x))f_1(x)+\mathrm{i}(1+\epsilon_2(x))f_2(x)\right\vert}}.
\end{split}
\end{equation*}
Είναι προφανές ότι το παραπάνω μέγιστο εμφανίζεται σε μικρές περιοχές των σημείων ασυνέχειας όπου το φαινόμενο {\en Gibbs} λαμβάνει χώρα. `Εστω $y$ το σημείο (σε κάποια μικρή περιοχή σημείου ασυνέχειας) που δίνει τη μέγιστη τιμή. Τότε: 
\begin{equation*}
\begin{split}
\left\Vert C_n\left(\frac{f-F_{n-1}}{F_{n-1}}\right)\right\Vert_2&\leq\max\limits_x{\frac{\left\vert \epsilon_1(x)f_1(x)+\mathrm{i}\epsilon_2(x)f_2(x)\right\vert}{\left\vert (1+\epsilon_1(x))f_1(x)+\mathrm{i}(1+\epsilon_2(x))f_2(x)\right\vert}}\\
&=\frac{\left(\epsilon_1^2(y)f_1^2(y)+\epsilon_2^2(y)f_2^2(y)\right)^{\frac{1}{2}}}{\left((1+\epsilon_1(y))^2f_1^2(y)+(1+\epsilon_2(y))^2f_2^2(y)\right)^{\frac{1}{2}}}\\
&\leq\frac{0.179\left(f_1^2(y)+f_2^2(y)\right)^{\frac{1}{2}}}{(1-0.179)\left(f_1^2(y)+f_2^2(y)\right)^{\frac{1}{2}}}=\frac{0.179}{1-0.179}.
\end{split}
\end{equation*}

Υπολογίζοντας τη νόρμα του γινομένου $\mathcal{A}_n\mathcal{B}_n$, όπως στο Λήμμα \ref{lem:2} με $\mathcal{A}_n=C_n\left(\frac{f}{F_{n-1}}\right)$ και $\mathcal{B}_n=C_n^{-1}(f)T_n$, και λαμβάνοντας υπόψη ότι $r_A=\frac{0.179}{1-0.179}$ και $r_B=0$, προκύπτει ότι $\left\Vert C_n\left(\frac{f}{F_{n-1}}\right)C_n^{-1}(f)T_n\right\Vert_2\leq r_A=\frac{0.179}{1-0.179}\simeq0.218$. Επομένως, ο προρρυθμισμένος πίνακας έχει γενική συσσώρευση των ιδιοτιμών, σε μια περιοχή με κέντρο το $(1,0)$, ακτίνα το πολύ ίση με $0.218$ και $\mathcal{O}(\log{n})$ ιδιοτιμές εκτός της συσσώρευσης.
\end{proof}

Συνεχίζουμε με τη μελέτη των ιδιοτιμών και ιδιαζουσών τιμών, στην περίπτωση που η $f$ έχει ρίζες, δηλαδή για συστήματα με κακή κατάσταση.

\begin{thm}\label{thm:ill_cond}
`Εστω $T_n$ πραγματικός πίνακας {\en Toeplitz}, στον οποίο αντιστοιχεί μια άγνωστη και μιγαδική γεννήτρια συνάρτηση $f$, με ρίζες στο διάστημα $(-\pi,\pi]$. `Εστω $F_{n-1}$ το ανάπτυγμα {\en Fourier} του $T_n$ και $g_n$ το τριγωνομετρικό πολυώνυμο, το οποίο αίρει τις εκτιμώμενες ρίζες μέσω της τεχνικής που περιγράφηκε. Υποθέτουμε ότι οι εκτιμώμενες μη-μηδενικές ρίζες $\widetilde{x}_i$, $i=1,2,\dots,\rho$ έχουν ένα σφάλμα $\varepsilon_i=\widetilde{x}_i-x_i$ και ότι οι πολλαπλότητες αυτών έχουν εκτιμηθεί ακριβώς. Τότε, ο προρρυθμισμένος πίνακας $C_n^{-1}\left(\frac{F_{n-1}}{g_n}\right)T_n^{-1}(g_n)T_n$ έχει:
\begin{enumerate}
\item Γενική συσσώρευση των ιδιοτιμών σε μια περιοχή του $(1,0)$, με ακτίνα $o(1)$ και $o(n)$ ιδιοτιμές εκτός της συσσώρευσης, αν η $f$ είναι συνεχής.
\item Γενική συσσώρευση των ιδιοτιμών σε μια περιοχή του $(1,0)$, με ακτίνα σχεδόν ίση με $0.179/(1-0.179)$ και $o(n)$ ιδιοτιμές εκτός της συσσώρευσης, αν η $f$ είναι κατά τμήματα συνεχής.
\end{enumerate} 
\end{thm}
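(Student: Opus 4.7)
Η στρατηγική της απόδειξης ακολουθεί τη λογική των Θεωρημάτων \ref{thm:well_cond}, \ref{thm:well_cond_dis} και \ref{thm:Theorem 6}, με κατάλληλη παραγοντοποίηση του προρρυθμισμένου πίνακα και εφαρμογή του Λήμματος \ref{lem:2}. Συμβολίζουμε με $g$ το (άγνωστο) τριγωνομετρικό πολυώνυμο που αίρει ακριβώς τις πραγματικές ρίζες της $f$ και γράφουμε
\begin{equation*}
C_n^{-1}\left(\frac{F_{n-1}}{g_n}\right)T_n^{-1}(g_n)T_n = \mathcal{A}_n\cdot\mathcal{B}_n,
\end{equation*}
όπου $\mathcal{A}_n=C_n^{-1}(F_{n-1}/g_n)\,C_n(f/g)$ και $\mathcal{B}_n=C_n^{-1}(f/g)\,T_n^{-1}(g_n)T_n$. Με αυτή την παραγοντοποίηση, ο $\mathcal{A}_n$ απορροφά τη διαφορά μεταξύ της άγνωστης συνάρτησης $f$ και της προσέγγισής της $F_{n-1}$, ενώ ο $\mathcal{B}_n$ φέρει το σφάλμα που προέρχεται από την εκτίμηση των ριζών μέσω του $g_n$ αντί του $g$.

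Για τον παράγοντα $\mathcal{B}_n$, ακολουθούμε τη γραμμή της απόδειξης του Θεωρήματος \ref{thm:2}. Χρησιμοποιώντας την ταυτότητα $T_n(g_n)T_n(f/g_n)=T_n(f)+L_1$, όπου $L_1$ είναι πίνακας χαμηλής βαθμίδας φραγμένης από το πλάτος ταινίας του $T_n(g_n)$, μεταφέρουμε τη μελέτη στη γεννήτρια συνάρτηση $f/g_n=(f/g)\cdot(g/g_n)$. Ο λόγος $g/g_n$ μηδενίζεται στα ακριβή σημεία $\pm x_i$ και γίνεται μη-φραγμένος στις εκτιμήσεις $\pm\widetilde{x}_i$, αλλά παραμένει φραγμένος εκτός από διαστήματα συνολικού μήκους $\mathcal{O}\!\left(\sum_i|\varepsilon_i|\right)=o(1)$. Εφαρμόζοντας το θεώρημα ισοκατανομής του Szegő σε μια κατάλληλη φραγμένη προσέγγιση της $f/g_n$ που συμπίπτει με αυτήν εκτός των εξαιρούμενων διαστημάτων (τεχνική πανομοιότυπη με την απόδειξη του Θεωρήματος \ref{thm:2}), προκύπτει ότι το πολύ $o(n)$ ιδιοτιμές του $\mathcal{B}_n$ κυμαίνονται εκτός μιας περιοχής κύριας συσσώρευσης γύρω από το $(1,0)$.

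Για τον παράγοντα $\mathcal{A}_n$, παρατηρούμε ότι πρόκειται για τον κυκλοειδή πίνακα $C_n\!\left(\tfrac{g_n f}{F_{n-1}\,g}\right)$, με ιδιοτιμές τις αντίστοιχες τιμές της ίδιας συνάρτησης στα σημεία του $G_n$. Η ανάλυση είναι ουσιαστικά η ίδια με αυτήν των Θεωρημάτων \ref{thm:well_cond} και \ref{thm:well_cond_dis}, αφού η $f/g$ δεν έχει ρίζες και $g_n\to g$ ομοιόμορφα σε συμπαγή υποσύνολα μακριά από τα $\pm x_i$ (καθώς $\varepsilon_i=o(1)$). Στη συνεχή περίπτωση, η ομοιόμορφη σύγκλιση $F_{n-1}\to f$ δίνει $\|\mathcal{A}_n-I_n\|_2=o(1)$, άρα κύρια συσσώρευση σε περιοχή του 1 ακτίνας $o(1)$. Στην κατά τμήματα συνεχή περίπτωση, το φαινόμενο Gibbs περιορίζει το σχετικό σφάλμα $|(f-F_{n-1})/F_{n-1}|$ σε $\sim 0.179/(1-0.179)$, ακριβώς όπως στην απόδειξη του Θεωρήματος \ref{thm:well_cond_dis}, δίνοντας γενική συσσώρευση με αντίστοιχη ακτίνα.

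Συνδυάζοντας μέσω του Λήμματος \ref{lem:2}, λαμβάνουμε γενική συσσώρευση των ιδιοτιμών του γινομένου $\mathcal{A}_n\mathcal{B}_n$ γύρω από το $(1,0)$ με την απαιτούμενη ακτίνα ($o(1)$ στη συνεχή και σχεδόν ίση με $0.179/(1-0.179)$ στην κατά τμήματα συνεχή περίπτωση) και το πολύ $o(n)$ ιδιοτιμές εκτός της συσσώρευσης, καλύπτοντας και τα δύο σκέλη του θεωρήματος. Η κύρια τεχνική δυσκολία εντοπίζεται στην ορθή εφαρμογή του θεωρήματος Szegő όταν η γεννήτρια συνάρτηση $f/g_n$ παρουσιάζει τοπικούς πόλους, καθώς και στον ακριβή έλεγχο της συνεισφοράς της $g/g_n$ κοντά στα ζεύγη ρίζας-πόλου $\{x_i,\widetilde{x}_i\}$· και τα δύο αντιμετωπίζονται με προσέγγιση μέσω συνεχούς φραγμένης συνάρτησης εκτός εξαιρούμενων συνόλων μέτρου $o(1)$, μεταφράζοντας την ασθενή υπόθεση $\varepsilon_i=o(1)$ σε $o(n)$ ιδιοτιμές εκτός συσσώρευσης.
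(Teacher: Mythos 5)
Η στρατηγική σου συμπίπτει κατ' ουσίαν με αυτήν της εργασίας: η ίδια παραγοντοποίηση του προρρυθμισμένου πίνακα (η εργασία χρησιμοποιεί τρεις παράγοντες μετά από μετασχηματισμό ομοιότητας με τον $C_n\left(\frac{f}{g}\right)$, εσύ τους ομαδοποιείς σε δύο, αλλά ο $\mathcal{A}_n=C_n\left(\frac{fg_n}{F_{n-1}g}\right)$ είναι ακριβώς ο πρώτος παράγοντας της εργασίας), η ίδια χρήση του Λήμματος \ref{lem:2} για το γινόμενο, το ίδιο επιχείρημα τύπου {\en Szeg{\" o}} με εξαίρεση διαστημάτων μήκους $o(1)$ γύρω από τα ζεύγη ρίζας--πόλου, και η ίδια ανάλυση του φαινομένου {\en Gibbs} για την κατά τμήματα συνεχή περίπτωση. Σημειώνω μόνο ότι η απευθείας εφαρμογή του θεωρήματος {\en Szeg{\" o}} στο γινόμενο $C_n^{-1}\left(\frac{f}{g}\right)T_n\left(\frac{f}{g_n}\right)$ χρειάζεται ένα επιπλέον βήμα διαχωρισμού· η εργασία κρατά τους $T_n^{-1}(g_n)T_n(g)$ και $T_n\left(\frac{f}{g}\right)C_n^{-1}\left(\frac{f}{g}\right)$ ως ξεχωριστούς παράγοντες ακριβώς γι' αυτόν τον λόγο.

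Υπάρχει όμως ένα συγκεκριμένο σφάλμα στη συνεχή περίπτωση: ο ισχυρισμός $\Vert\mathcal{A}_n-I_n\Vert_2=o(1)$ δεν ισχύει. Το σύμβολο του $\mathcal{A}_n$ είναι $\frac{fg_n}{F_{n-1}g}=1+\frac{\varepsilon_f}{F_{n-1}}+\frac{\varepsilon_g}{g}+\frac{\varepsilon_f\varepsilon_g}{F_{n-1}g}$ και ο όρος $\frac{\varepsilon_g}{g}=\frac{g_n-g}{g}$ απειρίζεται στις ακριβείς ρίζες $\pm x_i$, όπου η $g$ μηδενίζεται ενώ η $g_n$ όχι· σε κόμβους του $G_n$ που απέχουν $\mathcal{O}\left(\frac{1}{n}\right)$ από το $x_i$ ο λόγος αυτός είναι τάξεως $n\varepsilon_i$, άρα όχι $o(1)$ (για $\varepsilon_i\sim\frac{\log{n}}{n}$ μάλιστα αποκλίνει). Η εργασία το αντιμετωπίζει δείχνοντας ότι το σύμβολο είναι $o(1)$ μόνο εκτός περιοχών ακτίνας $s_n=o(1)$ με $\varepsilon_i=o(s_n)$ γύρω από τις ρίζες, οπότε ο $\mathcal{A}_n$ συνεισφέρει ο ίδιος το πολύ $\frac{4s_n}{2\pi}n=o(n)$ ιδιοτιμές εκτός συσσώρευσης, δηλαδή γενική και όχι κύρια συσσώρευση. Το τελικό συμπέρασμα δεν επηρεάζεται, αφού το Λήμμα \ref{lem:2} ανέχεται $o(n)$ εξαιρέσεις από κάθε παράγοντα, αλλά το βήμα όπως το διατυπώνεις θα κατέρρεε· η διόρθωση είναι ακριβώς το επιχείρημα εξαίρεσης διαστημάτων που ήδη χρησιμοποιείς για τον $\mathcal{B}_n$.
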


\begin{proof}
Ξεκινάμε με την απόδειξη της περίπτωσης 1. Για απλούστευση, υποθέτουμε ότι οι εκτιμώμενες ρίζες είναι οι $\pm\widetilde{x}_1\neq0$ με πολλαπλότητα $m$. Η ανάλυση για περισσότερα ζεύγη ριζών, αποτελεί μια απλή γενίκευση, όπως θα γίνει εύκολα κατανοητό από την απόδειξη παρακάτω.

Αρχικά υποθέτουμε ότι η πολλαπλότητα της ρίζας αντιστοιχεί στο πραγματικό μέρος της $f$. Επομένως, χρησιμοποιώντας την προτεινόμενη τεχνική, το τριγωνομετρικό πολυώνυμο είναι το:
\begin{equation*}
g_n(x)=\left(\cos{\widetilde{x}_1}-\cos{x}\right)^m=\left(2\sin{\frac{x+\widetilde{x}_1}{2}}\sin{\frac{x-\widetilde{x}_1}{2}}\right)^m.
\end{equation*}
`Ετσι, ο προρρυθμισμένος πίνακας γράφεται ως:
\begin{equation}\label{eq:rank_cor}
\begin{split}
P_n&=C_n^{-1}\left(\frac{F_{n-1}}{g_n}\right)T_n^{-1}(g_n)T_n(f)\\
&=C_n^{-1}\left(\frac{F_{n-1}}{g_n}\right)T_n^{-1}(g_n)\left(T_n(g)T_n\left(\frac{f}{g}\right)+L\right),
\end{split}
\end{equation}
όπου $L$ είναι πίνακας χαμηλής βαθμίδας, η οποία εξαρτάται από το εύρος ταινίας του $T_n(g)$.

Θα μελετήσουμε τον πίνακα $P_n^\prime=C_n^{-1}\left(\frac{F_{n-1}}{g_n}\right)T_n^{-1}(g_n)T_n(g)T_n\left(\frac{f}{g}\right)$, ο οποίος διαφέρει από τον $P_n$, κατά πίνακα χαμηλής βαθμίδας. Αυτός είναι όμοιος με τον:
\begin{equation}\label{eq:seq_matr}
\begin{split}
\widetilde{P}_n&=C_n\left(\frac{f}{g}\right)P_n^\prime C_n^{-1}\left(\frac{f}{g}\right)\\
&=C_n\left(\frac{fg_n}{F_{n-1}g}\right)T_n^{-1}(g_n)T_n(g)T_n\left(\frac{f}{g}\right)C_n^{-1}\left(\frac{f}{g}\right).
\end{split}
\end{equation}
Χωρίζουμε αυτό το γινόμενο πινάκων στους εξής παράγοντες:\\$C_n\left(\frac{fg_n}{F_{n-1}g}\right)$, $T_n^{-1}(g_n)T_n(g)$ και $T_n\left(\frac{f}{g}\right)C_n^{-1}\left(\frac{f}{g}\right)$. Ο τελευταίος εξ αυτών είναι όμοιως με τον προρρυθμισμένο πίνακα που αντιστοιχεί στην περίπτωση συστημάτων με καλή κατάσταση. Επομένως, αν η $f$ είναι συνεχής, έχει κύρια συσσώρευση των ιδιοτιμών του στο $(1,0)$. Σημειώνουμε επίσης ότι το Ερμιτιανό και αντι-Ερμιτιανό του μέρος έχει το ίδιο είδος συσσώρευσης, όπως εύκολα μπορεί να δει κανείς στην απόδειξη του Θεωρήματος \ref{thm:well_cond}.

Πρέπει να μελετήσουμε και τη συσσώρευση των δύο παραγόντων που απομένουν. Ο πρώτος γράφεται ως:
\begin{equation}
\begin{split}
C_n\left(\frac{fg_n}{F_{n-1}g}\right)&=C_n\left(\frac{(F_{n-1}+\varepsilon_f)(g+\varepsilon_g)}{F_{n-1}g}\right)\\
&=I_n+C_n\left(\frac{\varepsilon_f}{F_{n-1}}\right)+C_n\left(\frac{\varepsilon_g}{g}\right)+C_n\left(\frac{\varepsilon_f\varepsilon_g}{F_{n-1}g}\right).
\end{split}
\end{equation}
Το σφάλμα $\varepsilon_f=f-F_{n-1}$ του αναπτύγματος {\en Fourier} εξαρτάται από την ομαλότητα της συνάρτησης $f$, ας πούμε αν η $f$ είναι συνεχώς παραγωγίσιμη, το σφάλμα είναι τάξεως $\mathcal{O}\left(\frac{1}{n}\right)$, ενώ αν η $f$ είναι απλά συνεχής, είναι τάξεως $\mathcal{O}\left(\frac{\log{n}}{n}\right)$. Αν η $f$ είναι επαρκώς ομαλή, μπορούμε να λάβουμε, κατά τάξη μεγέθους, μικρότερο σφάλμα $\varepsilon_f$. Παρατηρούμε ότι σε οποιαδήποτε από τις παραπάνω περιπτώσεις το $\varepsilon_f$ τείνει προς το 0, όσο το $n$ τείνει στο άπειρο, δηλαδή $\varepsilon_f=o(1)$. Θεωρούμε τα διαστήματα με κέντρο τα σημεία των εκτιμώμενων ριζών $-\widetilde{x}_1$ και $\widetilde{x}_1$, $(-\widetilde{x}_1-s_n,-\widetilde{x}_1+s_n)$ και $(\widetilde{x}_1-s_n,\widetilde{x}_1+s_n)$, αντίστοιχα, με ακτίνα $s_n$ η οποία εξαρτάται μεν από το $n$, ισχύει δε ότι $s_n=o(1)$, ή ισοδύναμα $\lim\limits_{n\rightarrow\infty}\frac{s_n}{1}=0$ (ο κλασικός ασυμπτωτικός ορισμός του $o$). Η συνάρτηση $\frac{\varepsilon_f}{F_{n-1}}$ στο σημείο $\widetilde{x}_1+s_n$ μας δίνει $\frac{\varepsilon_f(\widetilde{x}_1+s_n)}{F_{n-1}(\widetilde{x}_1+s_n)}=\frac{\varepsilon_f(\widetilde{x}_1+s_n)}{f(\widetilde{x}_1+s_n)-\varepsilon_f(\widetilde{x}_1+s_n)}$. Υποθέτουμε ότι $\varepsilon_f=o(s_n^m)$, ισοδύναμα $\lim\limits_{n\rightarrow\infty}\frac{\varepsilon_f}{s_n^m}=0$. Τότε, ο παραπάνω λόγος προσεγγίζεται ως:
\begin{equation*}
\frac{\varepsilon_f(\widetilde{x}_1+s_n)}{F_{n-1}(\widetilde{x}_1+s_n)}\simeq\frac{\varepsilon_f(\widetilde{x}_1+s_n)}{f(\widetilde{x}_1+s_n)}=\frac{\varepsilon_f(\widetilde{x}_1+s_n)}{cs_n^m}=o(1).
\end{equation*}
Θα θέλαμε επίσης να σχολιάσουμε ότι έχουμε τη δυνατότητα επιλογής του $s_n$. Για παράδειγμα, αν $\varepsilon_f=\mathcal{O}\left(\frac{1}{n}\right)$ και $m=1$, μπορούμε να επιλέξουμε $s_n=\mathcal{O}\left(\frac{1}{\sqrt{n}}\right)$ ή $s_n=\mathcal{O}\left(\frac{\log{n}}{n}\right)$, υπάρχουν δηλαδή άπειρες επιλογές.

Το ίδιο ισχύει και για τα σημεία $-\widetilde{x}_1-s_n$, $-\widetilde{x}_1+s_n$ και $\widetilde{x}_1-s_n$. Είναι προφανές ότι ο προαναφερθής λόγος παραμένει τάξεως $o(1)$, για τα σημεία που ανήκουν στο $(-\pi,\pi]\backslash(-\widetilde{x}_1-s_n,-\widetilde{x}_1+s_n)\cup(\widetilde{x}_1-s_n,\widetilde{x}_1+s_n)$. Επομένως, λαμβάνουμε τη συσσώρευση των ιδιοτιμών του $C_n\left(\frac{\varepsilon_f}{F_{n-1}}\right)$ σε μια περιοχή του $(0,0)$, με ακτίνα $o(1)$. Προκειμένου να βρούμε το είδος συσσώρευσης, μένει να μετρήσουμε πόσες ιδιοτιμές κυμαίνονται εκτός αυτής και αντιστοιχούν σε σημεία των παραπάνω διαστημάτων. Η απόσταση κάθε διαστήματος είναι $2s_n$, επομένως λαμβάνουμε το πολύ $\frac{4s_n}{2\pi}n=o(n)$ ιδιοτιμές εκτός της συσσώρευσης, το οποίο σημαίνει ότι έχουμε γενική συσσώρευση των ιδιοτιμών σε μια μικρή περιοχή του $(0,0)$.

Θα μελετήσουμε τον όρο $C_n\left(\frac{\varepsilon_g}{g}\right)$. Η συνάρτηση σφάλματος $\varepsilon_g$ εξαρτάται από το μέγεθος του σφάλματος κατά την εκτίμηση της ρίζας $\varepsilon_1=\widetilde{x}_1-x_1$. Ο {\en S.~Serra-Capizzano} απέδειξε στην \cite{Serra_1999}, ότι αν $f\in C^k$, αυτό το σφάλμα φράσσεται από $\vert\varepsilon_1\vert\leq C\left(\frac{\log{n}}{n^k}\omega(f;n^{-1})\right)^{\frac{1}{m}}$, οπότε $\vert\varepsilon_1\vert=\mathcal{O}\left(\frac{\log{n}}{n^k}\right)^{\frac{1}{m}}$. Στην περίπτωση όπου $f\in C$, λαμβάνουμε το σφάλμα $\vert\varepsilon_1\vert=\mathcal{O}\left(\frac{\log{n}}{n}\right)^{\frac{1}{m}}$, ενώ αν η $f$ είναι συνεχώς παραγωγίσιμη, είναι γνωστό ότι $\vert\varepsilon_1\vert=\mathcal{O}\left(\frac{1}{n}\right)^{\frac{1}{m}}$. Στη συνέχεια, μελετάμε το λόγο $\frac{\varepsilon_g}{g}$:
\begin{equation*}
\begin{split}
\frac{\varepsilon_g(x)}{g(x)}&=\frac{g_n(x)-g(x)}{g(x)}=\frac{(\cos{\widetilde{x}_1}-\cos{x})^m-(\cos{x_1}-\cos{x})^m}{(\cos{x_1}-\cos{x})^m}\\
&=\frac{(\cos{\widetilde{x}_1}-\cos{x_1})\sum\limits_{i=0}^{m-1}\left(\cos{\widetilde{x}_1}-\cos{x}\right)^{m-i-1}\left(\cos{x_1}-\cos{x}\right)^{i}}{(\cos{x_1}-\cos{x})^m}\\
&=\frac{\cos{\widetilde{x}_1}-\cos{x_1}}{\cos{x_1}-\cos{x}}\times\sum\limits_{i=0}^{m-1}\left(\frac{\cos{\widetilde{x}_1}-\cos{x}}{\cos{x_1}-\cos{x}}\right)^i\\
&=\frac{\sin{\frac{x_1+\widetilde{x}_1}{2}}\sin{\frac{x_1-\widetilde{x}_1}{2}}}{\sin{\frac{x+x_1}{2}}\sin{\frac{x-x_1}{2}}}\sum\limits_{i=0}^{m-1}\left(\frac{\sin{\frac{x+\widetilde{x}_1}{2}}\sin{\frac{x-\widetilde{x}_1}{2}}}{\sin{\frac{x+x_1}{2}}\sin{\frac{x-x_1}{2}}}\right)^i.
\end{split}
\end{equation*}
`Οπως και στη μελέτη του $\frac{\varepsilon_1}{F_{n-1}}$, έτσι κι εδώ, θεωρούμε τα διαστήματα $(-x_1-s_n,-x_1+s_n)$ και $(x_1-s_n,x_1+s_n)$, τα οποία τώρα έχουν ως κέντρο τις ακριβείς ρίζες $-x_1$ και $x_1$, αντίστοιχα και ακτίνα $s_n$ που εξαρτάται από το $n$, αλλά $s_n=o(1)$. Υπολογίζοντας τη συνάρτηση $\frac{\varepsilon_g}{g}$ στο σημείο $x_1+s_n$, έχουμε:
\begin{equation*}
\frac{\varepsilon_g(x_1+s_n)}{g(x_1+s_n)}=\frac{\sin{\frac{x_1+\widetilde{x}_1}{2}}\sin{\frac{-\varepsilon_1}{2}}}{\sin{\left(x_1+\frac{s_n}{2}\right)}\sin{\frac{s_n}{2}}}\sum\limits_{i=0}^{m-1}\left(\frac{\sin{\frac{x_1+s_n+\widetilde{x}_1}{2}}\sin{\frac{s_n-\varepsilon_1}{2}}}{\sin{\left(x_1+\frac{s_n}{2}\right)}\sin{\frac{s_n}{2}}}\right)^i.
\end{equation*}
Η ακτίνα $s_n$ έχει επιλεχθεί έτσι ώστε $\varepsilon_1=o(s_n)$. Τότε, το τελευταίο άθροισμα είναι φραγμένο και θετικό μακριά από το 0. Επομένως, ο πρώτος λόγος είναι αυτός που χαρακτηρίζει το μέγεθος του $\frac{\varepsilon_g}{g}$, στο $x_1+s_n$.
\begin{equation*}
\frac{\varepsilon_g(x_1+s_n)}{g(x_1+s_n)}=C\frac{\sin{\frac{x_1+\widetilde{x}_1}{2}}\sin{\frac{-\varepsilon_1}{2}}}{\sin{\left(x_1+\frac{s_n}{2}\right)}\sin{\frac{s_n}{2}}}\sim\frac{\varepsilon_1}{s_n}=o(1).
\end{equation*}
Υπολογίζοντας το λόγο $\frac{\varepsilon_g}{g}$ στα σημεία $-x_1-s_n$, $-x_1+s_n$ και $x_1-s_n$, καταλήγουμε στο ίδιο αποτέλεσμα. `Οπως και στην προηγούμενη περίπτωση, ο λόγος $\frac{\varepsilon_g}{g}$ παραμένει τάξεως $o(1)$ για τα σημεία που ανήκουν στο $(-\pi,\pi]\backslash(-x_1-s_n,-x_1+s_n)\cup(x_1-s_n,x_1+s_n)$. Επομένως, έχουμε γενική συσσώρευση των ιδιοτιμών του $C_n\left(\frac{\varepsilon_g}{g}\right)$ σε μια περιοχή του $(0,0)$, με ακτίνα τάξεως $o(1)$ και το πολύ $\frac{4s_n}{2\pi}n=o(n)$ ιδιοτιμές εκτός της συσσώρευσης.

Στη συνέχεια θεωρούμε ότι η πολλαπλότητα $m$ της ρίζας, αντιστοιχεί στο φανταστικό μέρος της $f$. Τότε, ο λόγος $\frac{g_n}{g}$ δίνεται ως:
\begin{equation*}
\frac{g_n(x)}{g(x)}=\frac{(\cos{\widetilde{x}_1}-\cos{x})^m}{(\cos{x_1}-\cos{x})^m}\frac{\sin{x}-\mathrm{i}(\cos{\widetilde{x}_1}-\cos{x})^l}{\sin{x}-\mathrm{i}(\cos{x_1}-\cos{x})^l}.
\end{equation*}
Αφού $\varepsilon_1=o(1)$, το δεύτερο κλάσμα είναι κοντά στο 1, για κάθε $x\in(-\pi,\pi]$. Πιο συγκεκριμένα, $\frac{\sin{x}-\mathrm{i}(\cos{\widetilde{x}_1}-\cos{x})^l}{\sin{x}-\mathrm{i}(\cos{x_1}-\cos{x})^l}=1+o(1)$. `Αρα το μέγεθος του $\frac{g_n}{g}$ χαρακτηρίζεται από το πρώτο κλάσμα, το οποίο παραπάνω μελετήθηκε εκτενώς. Προφανώς, η ακολουθία πινάκων του όρου $C_n\left(\frac{\varepsilon_f\varepsilon_g}{F_{n-1}g}\right)$, που είναι το γινόμενο $C_n\left(\frac{\varepsilon_f}{F_{n-1}}\right)C_n\left(\frac{\varepsilon_g}{g}\right)$ έχει γενική συσσώρευση των ιδιοτιμών σε μια περιοχή του $(0,0)$, με ακτίνα $o(1)$ και $o(n)$ ιδιοτιμές εκτός της συσσώρευσης.

Συμπερασματικά, η ακολουθία πινάκων $\left\{ C_n\left(\frac{fg_n}{F_{n-1}g}\right)\right\}$ έχει γενική συσσώρευση των ιδιοτιμών, σε μια περιοχή του $(1,0)$ με ακτίνα $o(1)$ και $o(n)$ ιδιοτιμές οι οποίες κυμαίνονται εκτός της συσσώρευσης. Το μέγεθος της ακτίνας χαρακτηρίζεται από το μεγαλύτερο μέγεθος εκ των $C_n\left(\frac{\varepsilon_f}{F_{n-1}}\right)$ και $C_n\left(\frac{\varepsilon_g}{g}\right)$ και ο αριθμός ιδιοτιμών που χαρακτηρίζουν τη συσσώρευση ως γενική, από τον αντίστοιχο μεγαλύτερο αριθμό. Προφανώς, το Ερμιτιανό και αντι-Ερμιτιανό του μέρος έχουν το ίδιο είδος συσσώρευσης.

Μένει να μελετήσουμε τον όρο $T_n^{-1}(g_n)T_n(g)$. Είναι γνωστό ότι το φάσμα αυτού του πίνακα είναι ισοκατανεμημένο με αυτό του $T_n\left(\frac{g}{g_n}\right)$, που σημαίνει ότι μπορούμε να εξετάσουμε τον τελευταίο πίνακα, αντί του $T_n^{-1}(g_n)T_n(g)$. `Εχουμε:
\begin{equation*}
T_n\left(\frac{g}{g_n}\right)=T_n\left(\frac{g_n-\varepsilon_g}{g_n}\right)=I_n-T_n\left(\frac{\varepsilon_g}{g_n}\right).
\end{equation*}
Θα εξετάσουμε το φάσμα του $T_n\left(\frac{\varepsilon_g}{g_n}\right)$. Προκειμένου να αποδείξουμε τη συσσώρευση των ιδιοτιμών της ακολουθίας πινάκων $\left\{ T_n\left(\frac{\varepsilon_g}{g_n}\right)\right\}$, θεωρούμε τα διαστήματα $(-\widetilde{x}_1-s_n,-\widetilde{x}_1+s_n)$ και $(\widetilde{x}_1-s_n,\widetilde{x}_1+s_n)$, τα οποία έχουν το ίδιο μέγεθος $2s_n$, που εξαρτάται από το $\varepsilon_1$ ($\varepsilon_1=o(s_n)$, $s_n=o(1)$), όπως προηγουμένως. Η ίδια ανάλυση μας δίνει ότι $\frac{\varepsilon_g(x)}{g_n(x)}\leq r_n=o(1)$ για όλα τα $x$, εκτός των παραπάνω διαστημάτων.

Σταθεροποιούμε το $n$ και χρησιμοποιούμε το θεώρημα {\en Szeg{\"o}} με:
\begin{equation}
F(z)=\begin{cases}
1,~&\vert z\vert\geq r_n+h\\
0,~&\vert z\vert\leq r_n
\end{cases},
\end{equation}
για αρκούντως μικρό $h$. Τότε, ο αριθμός των ιδιοτιμών εκτός της περιοχής με κέντρο το $(1,0)$ και ακτίνα $r_n=o(1)$, δίνεται ως:
\begin{equation*}
\lim\limits_{N\rightarrow\infty}\frac{1}{N}\#\lbrace\lambda_i: \vert\lambda_i\vert>r_n\rbrace=\frac{1}{2\pi}\int\limits_{-\pi}^\pi F\left(\frac{\varepsilon_g(x)}{g_n(x)}\right)\mathrm{d}x\leq\frac{1}{2\pi}\int\limits_{x\in I_p} 1\mathrm{d}x=\frac{4s_n}{2\pi}=\frac{2s_n}{\pi},
\end{equation*} 
όπου $I_p=(-\widetilde{x}_1-s_n,-\widetilde{x}_1+s_n)\cup(\widetilde{x}_1-s_n,\widetilde{x}_1+s_n)$. Επομένως, $\#\lbrace\lambda_i: \vert\lambda_i\vert>r_n\rbrace\leq2\frac{s_n}{\pi}N$ για αρκούντως μεγάλη τιμή του $N$. Πηγαίνοντας πίσω στο $n$, λαμβάνουμε $\#\lbrace\lambda_i: \vert\lambda_i\vert>r_n\rbrace\leq2\frac{s_n}{\pi}n=o(n)$.

Με άλλα λόγια αποδείξαμε ότι ο $T_n\left(\frac{g}{g_n}\right)$ γράφεται ως $T_n\left(\frac{g}{g_n}\right)=I_n+S_n+R_n$, όπου $\Vert S_n\Vert_2=r_n=o(1)$ και $\Vert R_n\Vert_F^2\sim2\frac{s_n}{\pi}n=o(n)$. Εφόσον ο $T_n\left(\frac{g}{g_n}\right)$ προέκυψε από τον $T_n^{-1}(g_n)T_n(g)$ προσθέτοντας έναν πίνακα χαμηλής βαθμίδας, η οποία είναι σταθερή και ανεξάρτητη της διάστασης $n$, καταλήγουμε στο ότι $T_n^{-1}(g_n)T_n(g)=I_n+S_n+R_n^\prime$, όπου ο $R_n^\prime$ είναι επίσης πίνακας χαμηλής βαθμίδας. Επομένως, $\Vert R_n^\prime\Vert_F^2\sim2\frac{s_n}{\pi}n=o(n)$. Είναι προφανές ότι οι ιδιοτιμές του Ερμιτιανού και αντι-Ερμιτιανού μέρους του $T_n^{-1}(g_n)T_n(g)$ έχουν το ίδιο είδος γενικής συσσώρευσης.

Στη συνέχεια χρησιμοποιούμε το Λήμμα \ref{lem:2} για να αποδείξουμε τη γενική συσσώρευση. Επιλέγοντας $\mathcal{A}_n=C_n\left(\frac{fg_n}{F_{n-1}g}\right)$ και $\mathcal{B}_n=T_n^{-1}(g_n)T_n(g)$, έχουμε ότι το γινόμενο $C_n\left(\frac{fg_n}{F_{n-1}g}\right)T_n^{-1}(g_n)T_n(g)$ έχει γενική συσσώρευση των ιδιοτιμών, σε μια περιοχή του $(1,0)$ με ακτίνα $o(1)$, η οποία αντιστοιχεί στη μεγαλύτερη ακτίνα των δύο παραγόντων που το απαρτίζουν, καθώς επίσης και $o(n)$ ιδιοτιμές που κυμαίνονται εκτός της συσσώρευσης, οι οποίες επίσης εξαρτόνται από τους δύο παράγοντες του γινομένου.

Τέλος, χρησιμοποιούμε για άλλη μία φορά το ίδιο λήμμα, επιλέγοντας $\mathcal{A}_n=C_n\left(\frac{fg_n}{F_{n-1}g}\right)T_n^{-1}(g_n)T_n(g)$ και $\mathcal{B}_n=T_n\left(\frac{f}{g}\right)C_n^{-1}\left(\frac{f}{g}\right)$. `Ετσι λαμβάνουμε τη γενική συσσώρευση των ιδιοτιμών του προρρυθμισμένου πίνακα σε μια περιοχή του $(1,0)$, με ακτίνα ίση με αυτή του πρώτου όρου και $o(n)$ ιδιοτιμές εκτός της συσσώρευσης.

Αποδείξαμε ότι η ακολουθία πινάκων (\ref{eq:seq_matr}) χωρίζεται ως $I_n+S_n+R_n$, όπου $\Vert S_n\Vert_2=o(1)$ είναι η μεγαλύτερη από τις ακτίνες των τριών όρων του γινομένου και η νόρμα $\Vert R_n\Vert_F^2=o(n)$ αντιστοιχεί στον μεγαλύτερο αριθμό των ιδιοτιμών που κυμαίνονται εκτός της συσσώρευσης, βάσει των όρων του γινομένου. Προφανώς, το Ερμιτιανό και αντι-Ερμιτιανό του μέρος έχουν το ίδιο είδος συσσώρευσης των ιδιοτιμών.

Συνοψίζοντας, αποδείξαμε ότι ο πίνακας $\widetilde{P}_n$, ή ο όμοιος αυτού $P_n^\prime$ χωρίζεται ως $P_n^\prime=I_n+S_n+R_n$, όπου $\Vert S_n\Vert_2=o(1)$ είναι η τελική ακτίνα της περιοχής συσσώρευσης και $\Vert R_n\Vert_F^2=o(n)$, ο τελικός αριθμός των ιδιοτιμών που κυμαίνονται εκτός της συσσώρευσης. Επειδή ο $P_n$ διαφέρει από τον $P_n^\prime$ κατά έναν πίνακα χαμηλής και σταθερής βαθμίδας (βλ. \ref{eq:rank_cor}), ο προρρυθμισμένος πίνακας γράφεται ως $P_n=I_n+S_n+R_n^\prime$, όπου $\Vert R_n^\prime\Vert_F^2=o(n)$, το οποίο αποδεικνύει τη γενική συσσώρευση των ιδιοτιμών.

Συνεχίζουμε με την απόδειξη της περίπτωσης \textit{2}. Αυτή γίνεται ακολουθώντας τα ίδια βήματα με την απόδειξη της περίπτωσης \textit{1}, αλλά εντοπίζονται δύο διαφορές. Η πρώτη έχει να κάνει με τον τρίτο όρο του γινομένου της (\ref{eq:seq_matr}), όπου ο αριθμός των ιδιοτιμών που κυμαίνονται εκτός της συσσώρευσης είναι $\mathcal{O}(\log{n})$, λόγω της ασυνέχειας της $f$, όπως αποδείχθηκε στο Θεώρημα \ref{thm:well_cond_dis}. Η δεύτερη και πιο ουσιαστική διαφορά έχει να κάνει με τη μελέτη του πρώτου όρου του γινομένου (\ref{eq:seq_matr}), δηλαδή του $C_n\left(\frac{fg_n}{F_{n-1}g}\right)$ και ειδικότερα του πίνακα $C_n\left(\frac{\varepsilon_f}{F_{n-1}}\right)$. Επειδή εμφανίζεται το φαινόμενο {\en Gibbs}, αυτός έχει συσσώρευση των ιδιοτιμών σε μια περιοχή του $(0,0)$ με ακτίνα σχεδόν ίση με $0.179/(1-0.179)$, η οποία από τη μία είναι μικρή, αλλά από την άλλη είναι $\mathcal{O}(1)$ (σταθερή). Αυτή η ακτίνα, ακολουθώντας την παραπάνω απόδειξη, είναι η μεγαλύτερη σε μέγεθος κι έτσι καταλήγουμε στη γενική συσσώρευση των ιδιοτιμών του προρρυθμισμένου πίνακα, σε μια περιοχή του $(1,0)$ με ακτίνα το πολύ ίση με $0.179/(1-0.179)$ και $o(n)$ ιδιοτιμές να κυμαίνονται εκτός της συσσώρευσης, βάσει των τριών όρων του γινομένου.
\end{proof}

\begin{rem}
Στην περίπτωση όπου το πραγματικό και φανταστικό μέρος της γεννήτριας συνάρτησης δεν έχουν τα ίδια σημεία ασυνέχειας, η περιοχή συσσώρευσης των ιδιοτιμών, γύρω από το $(1,0)$, έχει ακτίνα αρκετά μικρότερη του $0.218$.
\end{rem}

Προχωράμε με τη μελέτη της συσσώρευσης των ιδιαζουσών τιμών του προρρυθμισμένου πίνακα.
\begin{thm}
`Εστω $T_n$ ένας πραγματικός πίνακας {\en Toeplitz}, στον οποίο αντιστοιχεί μια άγνωστη και μιγαδική γεννήτρια συνάρτηση $f$, με ρίζες στο διάστημα $(-\pi,\pi]$. `Εστω $F_{n-1}$ το ανάπτυγμα {\en Fourier} του $T_n$ και $g_n$ το τριγωνομετρικό πολυώνυμο, το οποίο αίρει τις εκτιμώμενες ρίζες μέσω της τεχνικής που περιγράφηκε. Υποθέτουμε ότι οι εκτιμώμενες μη-μηδενικές ρίζες $\widetilde{x}_i$, $i=1,2,\dots,\rho$ έχουν ένα σφάλμα $\varepsilon_i=\widetilde{x}_i-x_i$ και ότι οι πολλαπλότητες αυτών έχουν εκτιμηθεί με ακρίβεια. Τότε, ο προρρυθμισμένος πίνακας $C_n^{-1}\left(\frac{F_{n-1}}{g_n}\right)T_n^{-1}(g_n)T_n$ έχει:
\begin{enumerate}
\item Γενική συσσώρευση των ιδιαζουσών τιμών σε ένα διάστημα γύρω από το 1, με μήκος $o(1)$ και $o(n)$ ιδιάζουσες τιμές εκτός της συσσώρευσης, αν η $f$ είναι συνεχής.
\item Γενική συσσώρευση των ιδιαζουσών τιμών στο διάστημα $[0.6847,1.2374]$ και $o(n)$ ιδιάζουσες τιμές εκτός της συσσώρευσης, αν η $f$ είναι κατά τμήματα συνεχής.
\end{enumerate} 
\end{thm}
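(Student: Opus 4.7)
The singular values of
$P_n = C_n^{-1}\!\left(\frac{F_{n-1}}{g_n}\right) T_n^{-1}(g_n)\, T_n$
are the square roots of the eigenvalues of $P_n^H P_n$, so the plan is to mirror the argument of Theorem~\ref{thm:ill_cond} but applied to the normal-equations matrix $P_n^H P_n$. The starting point is the decomposition $P_n = I_n + S_n + R_n$ already produced in that proof, where $\|S_n\|_2 = o(1)$ in the continuous case, $\|S_n\|_2 \leq r := 0.179/(1-0.179)\simeq 0.218$ in the piecewise continuous case (the bound inherited from the Gibbs analysis of $C_n(\varepsilon_f/F_{n-1})$), while $R_n$ has rank $o(n)$ with uniformly bounded $2$-norm, so $\|R_n\|_F^2 = o(n)$.

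Expanding the normal equations,
\[
P_n^H P_n = I_n + \bigl(S_n + S_n^H + S_n^H S_n\bigr) + \widetilde{R}_n = I_n + \widetilde{S}_n + \widetilde{R}_n,
\]
where $\widetilde{R}_n = R_n + R_n^H + R_n^H R_n + S_n^H R_n + R_n^H S_n$. Since the norm bounds of Lemma~\ref{Lem:1} and the uniform boundedness of $T_n(g_n)$ in the $2$-norm transfer to each summand, one gets $\operatorname{rank}(\widetilde{R}_n) = o(n)$ and hence $\|\widetilde{R}_n\|_F^2 = o(n)$, while $\|\widetilde{S}_n\|_2 \leq 2\|S_n\|_2 + \|S_n\|_2^2$. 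Applying Lemma~\ref{lem:tyrt_zamar} with $\mathcal{A}_n = \widetilde{S}_n + \widetilde{R}_n$ and $\mathcal{B}_n = \widetilde{S}_n$ yields general clustering of the eigenvalues of $P_n^H P_n$ around $1$ in $[1-\delta,\,1+\delta]$, with $\delta = \|\widetilde{S}_n\|_2$ and at most $o(n)$ outliers. Taking square roots then gives clustering of the singular values of $P_n$ in $[\sqrt{1-\delta},\,\sqrt{1+\delta}]$, which collapses to $[1-o(1),\,1+o(1)]$ in the continuous case.

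The hard part will be recovering the sharp constants $0.6847$ and $1.2374$ in the piecewise continuous case. The naive bound $\delta \leq 2r+r^2 \approx 0.484$ would produce an interval strictly wider than the announced one; the required $\delta \approx 2r+2r^2 \approx 0.531$, which gives $\sqrt{1-\delta}\simeq 0.6847$ and $\sqrt{1+\delta}\simeq 1.2374$, must come from a more careful accounting. I expect the right approach is to revisit the three-factor product $C_n(fg_n/(F_{n-1}g))\cdot T_n^{-1}(g_n)T_n(g)\cdot T_n(f/g)C_n^{-1}(f/g)$ from the proof of Theorem~\ref{thm:ill_cond}, isolate that the Gibbs contribution sits essentially in the first factor, and apply Lemma~\ref{lem:2} (rather than the coarser Lemma~\ref{lem:1}) to combine the three factors both before and after squaring for the normal equations; the additional cross-term generated by multiplying each factor by its adjoint and summing then supplies exactly the extra $r^2$ needed in $\delta$. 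Keeping the low-rank contribution $\widetilde{R}_n$ under control throughout (so that one still has $\|\widetilde{R}_n\|_F^2 = o(n)$) is a secondary but routine technical point, handled by the uniform $2$-norm bounds on each factor.
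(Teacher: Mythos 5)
Your proposal follows essentially the same route as the paper: pass to the normal equations $P_n^TP_n$, reuse the decomposition $P_n=I_n+\mathcal{S}_n+\mathcal{R}_n$ from the eigenvalue theorem, expand, and invoke the clustering lemma, then take square roots. The "more careful accounting" you anticipate for the constants is exactly what the paper does: it keeps the small-norm perturbation split into its symmetric part $S_n$ and antisymmetric part $S_n^\prime$, \emph{each} bounded in $2$-norm by $r$, so that $P_n^TP_n=(I_n+S_n-S_n^\prime+\cdots)(I_n+S_n+S_n^\prime+\cdots)=I_n+2S_n+S_n^2-{S_n^\prime}^2+\widehat{\mathcal{R}}_n$ and the extra $r^2$ comes from the $-{S_n^\prime}^2$ term, giving $\delta\leq 2r+2r^2$ and hence $[\sqrt{1-2r-2r^2},\sqrt{1+2r+2r^2}]=[0.6847,1.2374]$ for $r=0.218$. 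One small slip: your single-matrix bound $2r+r^2$ would yield a \emph{narrower} interval than the announced one, not a wider one; the issue is only that it does not match the stated constants, which are calibrated to the two-part bound above.
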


\begin{proof}
Όπως εύκολα παρατηρεί κανείς, οι υποθέσεις του θεωρήματος είναι ίδιες με αυτές του Θεωρήματος \ref{thm:ill_cond}. Επομένως, αποδεικνύεται ότι στην περίπτωση \textit{1} οι ιδιοτιμές του προρρυθμισμένου πίνακα $P_n$ έχουν γενική συσσώρευση σε μια περιοχή του $(1,0)$, με ακτίνα $r=o(1)$ και $r=0.218$ στην περίπτωση \textit{2}, καθώς επίσης και $s(n)=o(n)$ ιδιοτιμές εκτός της συσσώρευσης. Αποδείχθηκε ότι ο $P_n$ γράφεται ως $P_n=I_n+\mathcal{S}_n+\mathcal{R}_n$, όπου $\Vert\mathcal{S}_n\Vert_2\leq r$ και $\Vert\mathcal{R}_n\Vert_F^2=\mathcal{O}(s(n))$. Προφανώς, το συμμετρικό του μέρος γράφεται ως $\frac{P_n+P_n^T}{2}=I_n+S_n+R_n$, όπου $S_n=\frac{\mathcal{S}_n+\mathcal{S}_n^T}{2}$ και $R_n=\frac{\mathcal{R}_n+\mathcal{R}_n^T}{2}$ και το αντι-συμμετρικό του μέρος ως $\frac{P_n-P_n^T}{2}=S_n^\prime+R_n^\prime$, όπου $S_n^\prime=\frac{\mathcal{S}_n-\mathcal{S}_n^T}{2}$ και $R_n^\prime=\frac{\mathcal{R}_n-\mathcal{R}_n^T}{2}$. Ισχύει ότι $\Vert S_n\Vert_2\leq r$, $\Vert S_n^\prime\Vert_2\leq r$, $\Vert R_n\Vert_F^2=\mathcal{O}(s(n))$ και $\Vert R_n^\prime\Vert_F^2=\mathcal{O}(s(n))$.

Για να μελετήσουμε τη συσσώρευση των ιδιαζουσών τιμών, μελετάμε τη συσσώρευση των ιδιοτιμών της ακολουθίας πινάκων που αντιστοιχεί στο σύστημα των κανονικών εξισώσεων $\{Q_n\}=\{P_n^T\}\{P_n\}$. Επειδή η ακολουθία πινάκων $\lbrace Q_n\rbrace$ αποτελεί ένα γινόμενο ακολουθιών πινάκων, μπορούμε να χρησιμοποιήσουμε το Λήμμα \ref{lem:2} με:
\begin{equation*}
\begin{split}
&\mathcal{A}_n=P_n^T=I_n+S_n-S_n^\prime+R_n-R_n^\prime\text{ και}\\
&\mathcal{B}_n=P_n=I_n+S_n+S_n^\prime+R_n+R_n^\prime.
\end{split}
\end{equation*}
Τότε:
\begin{equation*}
\begin{split}
\mathcal{C}_n&\equiv Q_n=P_n^TP_n=(I_n+S_n-S_n^\prime+R_n-R_n^\prime)(I_n+S_n+S_n^\prime+R_n+R_n^\prime)\\
&=I_n+2S_n+S_n^2-{S_n^\prime}^2+(R_n-R_n^\prime)(I_n+S_n+S_n^\prime+R_n+R_n^\prime)\\
&\hspace{2pc}+(I_n+S_n-S_n^\prime)(R_n+R_n^\prime)\\
&=I_n+\widehat{\mathcal{S}}_n+\widehat{\mathcal{R}}_n,
\end{split}
\end{equation*}
όπου $\widehat{\mathcal{S}}_n=2S_n+S_n^2-{S_n^\prime}^2$ και $\widehat{\mathcal{R}}_n=(R_n-R_n^\prime)(I_n+S_n+S_n^\prime+R_n+R_n^\prime)+(I_n+S_n-S_n^\prime)(R_n+R_n^\prime)$. Είναι προφανές ότι $\Vert\widehat{\mathcal{S}}_n\Vert_2\leq2\Vert S_n\Vert_2+\Vert S_n^2\Vert_2+\Vert{S_n^\prime}^2\Vert_2\leq2\Vert S_n\Vert_2+\Vert S_n\Vert_2^2+\Vert S_n^\prime\Vert_2^2\leq2r+2r^2$, ενώ $\operatorname{rank}(\widehat{R}_n)\leq2\operatorname{rank}(R_n)+2\operatorname{rank}(R_n^\prime)=4\mathcal{O}(s(n))$.

Επομένως, η ακολουθία πινάκων των κανονικών εξισώσεων $\{Q_n\}$ έχει γενική συσσώρευση των ιδιοτιμών στο $\left(1-2r-2r^2,1+2r+2r^2\right)$ με $\mathcal{O}(s(n))$ ιδιοτιμές εκτός της περιοχής συσσώρευσης. Επειδή οι ιδιάζουσες τιμές του προρρυθμισμένου πίνακα είναι οι τετραγωνικές ρίζες των ιδιοτιμών του $Q_n$, λαμβάνουμε ότι, ο προρρυθμισμένος πίνακας έχει γενική συσσώρευση των ιδιαζουσών τιμών στο $\left(\sqrt{1-2r-2r^2},\sqrt{1+2r+2r^2}\right)$ με $\mathcal{O}(s(n))$ ιδιάζουσες τιμές εκτός του διαστήματος.

Στην περίπτωση \textit{1} από το Θεώρημα \ref{thm:ill_cond}, έχουμε ότι $r=o(1)$ και συνεπώς το παραπάνω διάστημα είναι σχεδόν το ίδιο με το $(1-r,1+r)$. Συνεπώς, το μήκος του είναι σχεδόν ίσο με $2r=o(1)$. 

Στην περίπτωση \textit{2} εμφανίζεται το φαινόμενο {\en Gibbs} και $r=0.218$. Επομένως, το παραπάνω διάστημα είναι ίσο με $[0.6847,1.2374]$.

Αυτή η παρατήρηση μας οδηγεί στο συμπέρασμα ότι η συσσώρευση των ιδιοτιμών συμβαδίζει με αυτή των ιδιαζουσών τιμών, βάσει της ακτίνας $r$ και των αριθμό των $\mathcal{O}(s(n))$ ιδιοτιμών που κυμαίνονται εκτός της συσσώρευσης.
\end{proof}

\begin{rem}
Πρακτικά, όταν η $f$ είναι κατά τμήματα συνεχής η περιοχή συσσώρευσης των ιδιοτιμών, καθώς επίσης και το διάστημα συσσώρευσης των ιδιαζουσών τιμών, είναι μικρότερη/ο όπως θα δούμε και θα εξηγήσουμε παρακάτω, στα αριθμητικά πειράματα (βλ. Παραδείγματα \ref{exp:ex3} και \ref{exp:ex4}).
\end{rem}


\subsection{Αριθμητικά αποτελέσματα}\label{sec4}

Εδώ θα δείξουμε την αποτελεσματικότητα της προτεινόμενης τεχνικής προρρύθμισης και την ισχύ των θεωρητικών αποτελεσμάτων που αποδείξαμε. Οι επιλογές υλοποίησης είναι ίδιες με αυτές των προηγούμενων κεφαλαίων, εκτός από τα δύο τελευταία παραδείγματα όπου μεταβάλαμε το κριτήριο τερματισμού των μεθόδων σε $\frac{\Vert r^{(k)}\Vert_2}{\Vert r^{(0)}\Vert_2}\leq 10^{-7}$, προκειμένου να λάβουμε μεγαλύτερη ακρίβεια. Στους πίνακες επαναλήψεων με $n$ δηλώνουμε προφανώς τη διάσταση του συστήματος, $\mathcal{C}_n(f)$ είναι ο κυκλοειδής προρρυθμιστής του προηγούμενου κεφαλαίου, όπου η γεννήτρια συνάρτηση είναι γνωστή εκ των προτέρων. Αναφέρουμε ότι αυτός προτάθηκε στην \cite{noutsos2021band}. $\mathcal{C}_n$ είναι ο προρρυθμιστής που κατασκευάστηκε με τον τρόπο που περιγράψαμε. Για τα συστήματα με κακή κατάσταση, όπου χρησιμοποιούμε ταινιωτού{ς}-επί-κυκλοειδείς προρρυθμιστές, χρησιμοποιούμε αντιστοίχως τον συμβολισμό $\mathcal{BC}_n(f)$ και $\mathcal{BC}_n$. Σε κάποια παραδείγματα γίνεται σύγκριση και με την τεχνική προρρύθμισης της προηγούμενης ενότητας, ενώ σε όλα εξ αυτών δίνονται οι χρόνοι {\en CPU}, από την κατασκευή του προρρυθμιστή έως ότου επιτευχθεί η σύγκλιση στη λύση του συστήματος. Ο κώδικας υλοποιήθηκε σε ένα σύστημα με τετραπύρηνο {\en Intel i5-3470} στα 3.2 {\en GHz} και 4 {\en GB DDR3 RAM} στα 1600 {\en MHz}.

\begin{exmp}\normalfont
Στο πρώτο παράδειγμα αυτής της ενότητας, φαίνεται η ισχύς του Θεωρήματος \ref{thm:well_cond}. Θεωρούμε τις συνεχείς συναρτήσεις, οι οποίες δεν έχουν ρίζες στο $(-\pi,\pi]$, $\mathfrak{f}_{11}(x)=x^2\sin^2{(x)}+1+\mathrm{i}\sin{(x)}x^2$ και $\mathfrak{f}_{12}(x)=x^2+1+\mathrm{i}\mathfrak{h}_1^\prime(x)$, όπου: $$\mathfrak{h}_1^\prime(x)=\begin{cases}
-\pi-x,~&x\in\big(-\pi,-\frac{\pi}{2}\big]\\
x,~&x\in\big(-\frac{\pi}{2},\frac{\pi}{2}\big]\\
\pi-x,~&x\in\big(\frac{\pi}{2},\pi\big]
\end{cases}.$$

Στους Πίνακες \ref{tab:ex1} και \ref{tab:ex1i} δίνουμε τους αριθμούς επαναλήψεων της {\en PGMRES} και τους χρόνους {\en CPU}, έως ότου επιτευχθεί η σύγκλιση στη λύση των συστημάτων που έχουν ως γεννήτρια συνάρτηση την $\mathfrak{f}_{11}$ και $\mathfrak{f}_{12}$, αντίστοιχα. `Οπως φαίνεται, δίνεται και ο αριθμός επαναλήψεων με χρήση του προρρυθμιστή $\mathcal{C}_n(f)$, όταν δηλαδή η γεννήτρια συνάρτηση είναι γνωστή εκ των προτέρων. Η σύγκλιση επιτυγχάνεται στις ίδιες επαναλήψεις και σε πολύ κοντινό χρόνο. Παρατηρούμε επίσης ότι για τις δύο μεγαλύτερες διαστάσεις, η ταχύτερη σύγκλιση επιτυγχάνεται με προρρύθμιση και όχι χωρίς καμία τεχνική προρρύθμισης. Είναι προφανές ότι οι πίνακες του συγκεκριμένου παραδείγματος έχουν καλή κατάσταση κι επομένως τα πλεονεκτήματα της προρρύθμισης δεν είναι ξεκάθαρα.

Η συσσώρευση των ιδιοτιμών για τα προρρυθμισμένα συστήματα δίνεται στο Σχήμα \ref{fig:ex1}. Το Σχήμα \ref{fig:ex1_fig2} δείχνει τη συσσώρευση των ιδιοτιμών, κοντά στο $(1,0)$, όταν $n=2048$, με χρήση του προτεινόμενου προρρυθμιστή (μπλε κύκλοι), καθώς επίσης και με χρήση του βέλτιστου κυκλοειδή προρρυθμιστή \cite{chan1988optimal} (πράσινα διαμάντια). Παρατηρούμε μια πολύ πιο συμπαγή συσσώρευση με χρήση του πρώτου, όπως άλλωστε συνέβαινε και για γνωστές εκ των προτέρων γεννήτριες συναρτήσεις.

\begin{table}[H]
\centering
\begin{tabular}{c|cc|cc|cc}
\toprule
$n$ & $I_n$ & {\en CPU} & $\mathcal{C}_n(\mathfrak{f}_{11})$ & {\en CPU} & $\mathcal{C}_n$ & {\en CPU} \\\midrule
1024 & 40 & 0.0929 & 6 & 0.0832 & 6 & 0.0959 \\
2048 & 39 & 0.1262 & 6 & 0.0842 & 6 & 0.0990 \\
4096 & 37 & 0.1708 & 6 & 0.0972 & 6 & 0.1080 \\
8192 & 36 & 0.2297 & 6 & 0.1259 & 6 & 0.1493 \\\bottomrule
\end{tabular}
\caption{{\en PGMRES:} Επαναλήψεις και χρόνοι {\en CPU} ($\mathfrak{f}_{11}$).}
\label{tab:ex1}
\end{table}

\begin{table}[H]
\centering
\begin{tabular}{c|cc|cc|cc}
\toprule
$n$ & $I_n$ & {\en CPU} & $\mathcal{C}_n(\mathfrak{f}_{12})$ & {\en CPU} & $\mathcal{C}_n$ & {\en CPU} \\\midrule
1024 & 29 & 0.0968 & 5 & 0.1002 & 5 & 0.1227 \\
2048 & 29 & 0.1116 & 4 & 0.1149 & 4 & 0.1270 \\
4096 & 28 & 0.1455 & 4 & 0.1254 & 4 & 0.1297 \\
8192 & 27 & 0.1865 & 4 & 0.1604 & 4 & 0.1651 \\\bottomrule
\end{tabular}
\caption{{\en PGMRES:} Επαναλήψεις και χρόνοι {\en CPU} ($\mathfrak{f}_{12}$).}
\label{tab:ex1i}
\end{table}

Στο Σχήμα \ref{fig:42b} παρατηρούμε επίσης ότι η συσσώρευση των ιδιοτιμών του $\mathcal{C}_n^{-1}T_n(\mathfrak{f}_{12})$ γύρω από το $(1,0)$, δεν είναι τόσο συμπαγής, όσο στο Σχήμα \ref{fig:42a}, που αφορά στον $\mathcal{C}_n^{-1}T_n(\mathfrak{f}_{11})$. Αυτό ήταν αναμενόμενο από το Θεώρημα \ref{thm:well_cond}, διότι η $\mathfrak{f}_{12}$ είναι μια συνεχής συνάρτηση, αλλά όχι συνεχώς παραγωγίσιμη όπως είναι η $\mathfrak{f}_{11}$ και η περιοχή συσσώρευσης είναι της τάξεως $\mathcal{O}\left(\frac{\log{n}}{n}\right)$, αντί $\mathcal{O}\left(\frac{1}{n}\right)$.

\begin{figure}[H]
    \centering
    \subfloat[Ιδιοτιμές.]{{\includegraphics[width=0.45\linewidth]{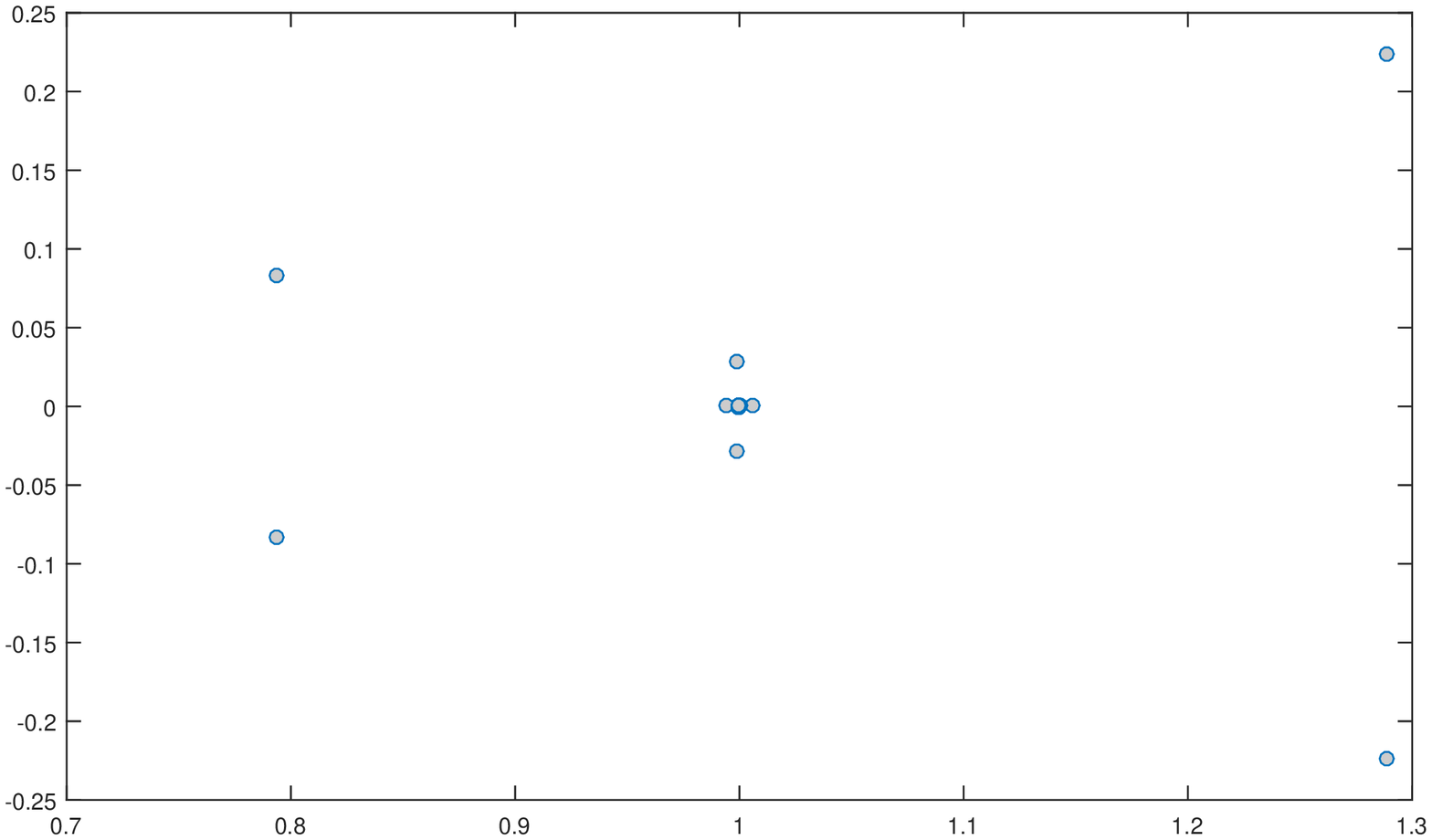}}}%
    \qquad
    \subfloat[Ιδιοτιμές.]{{\includegraphics[width=0.45\linewidth]{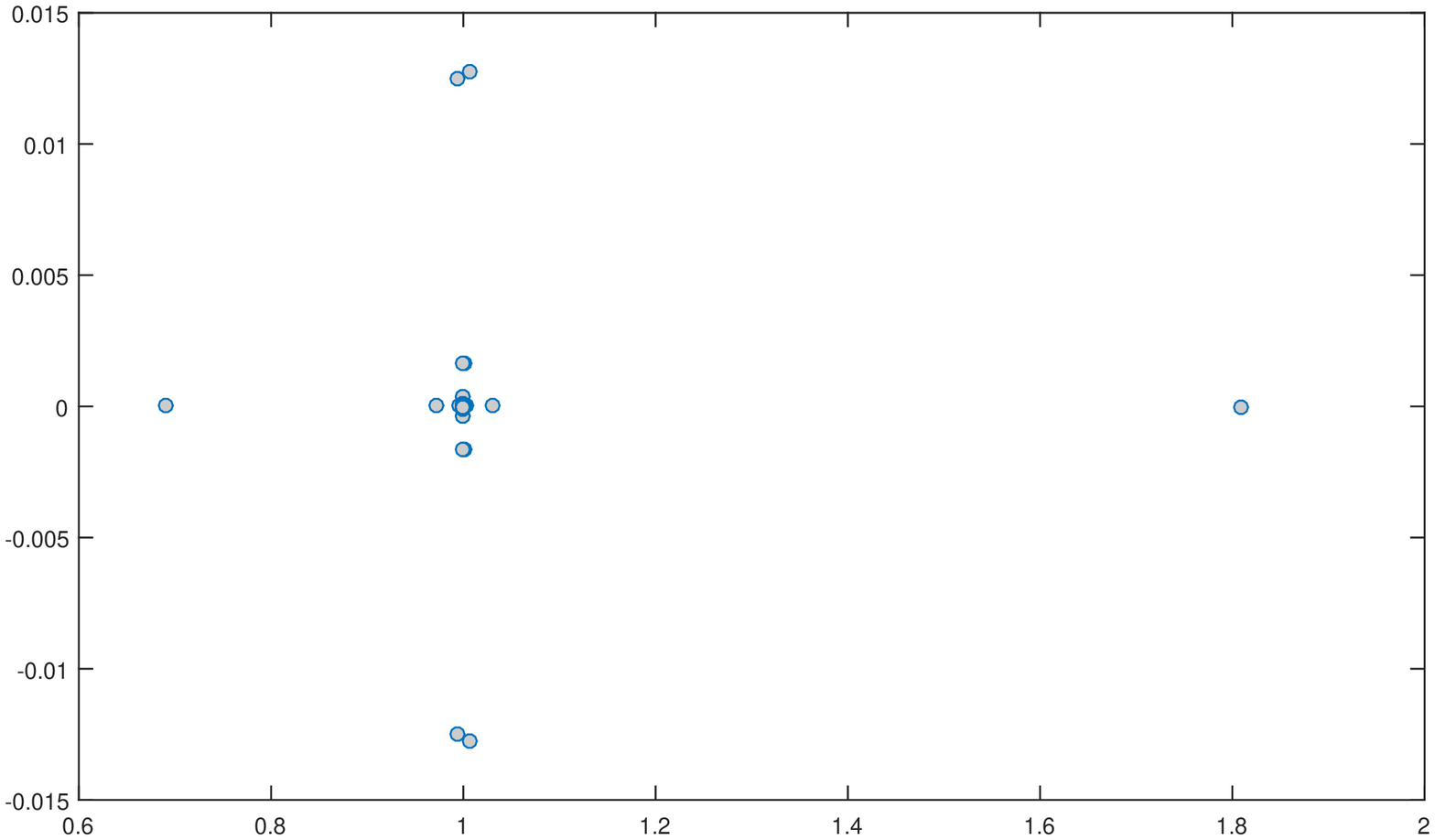}}}%
    \caption{Ιδιοτιμές (αριστερά $\mathfrak{f}_{11}$) και (δεξιά $\mathfrak{f}_{12}$).}
    \label{fig:ex1}
\end{figure}

\begin{figure}[H]
    \centering
    \subfloat[Ιδιοτιμές κοντά στο $(1,0)$.]{{\label{fig:42a}\includegraphics[width=0.45\linewidth]{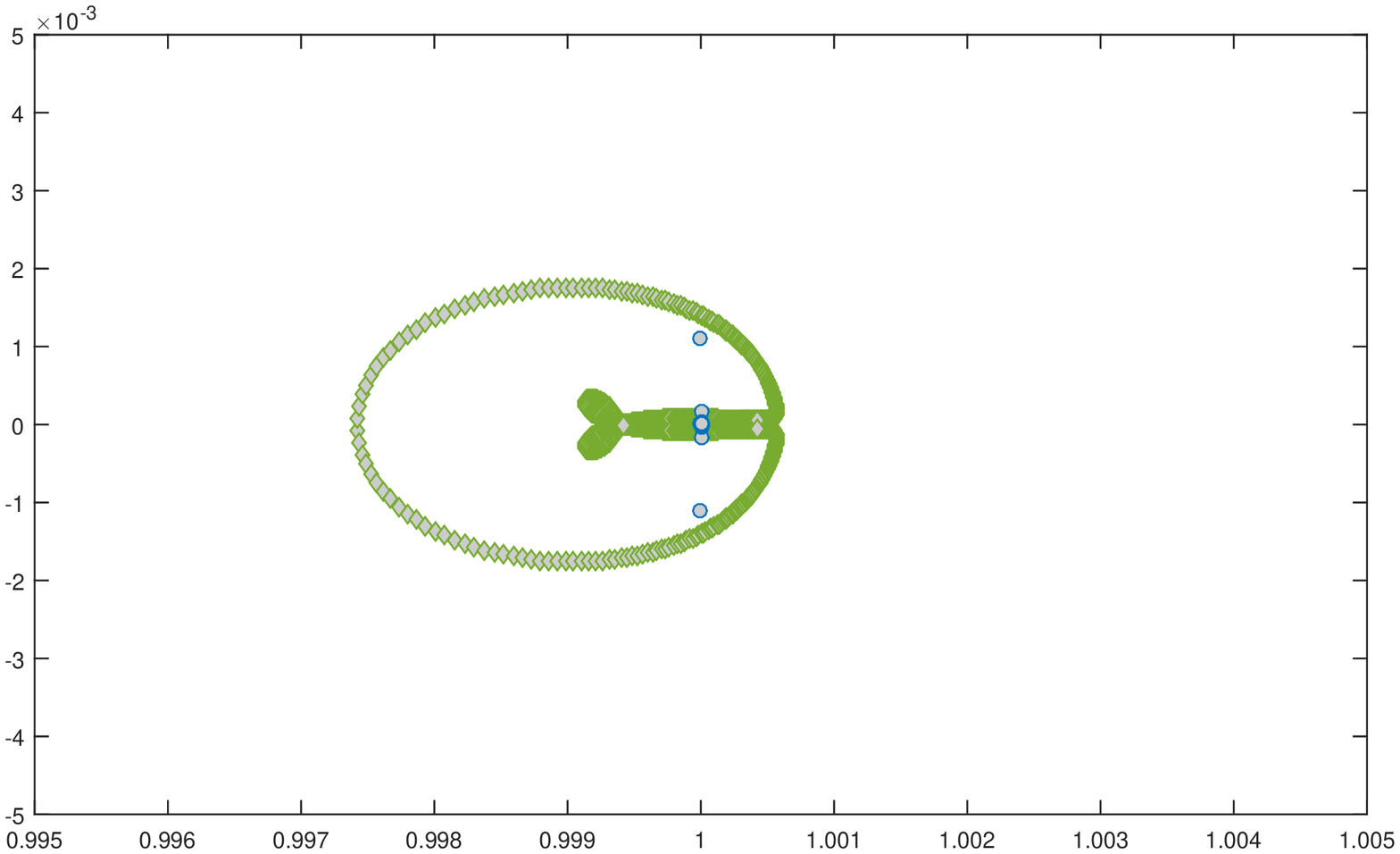}}}%
    \qquad
    \subfloat[Ιδιοτιμές κοντά στο $(1,0)$.]{{\label{fig:42b}\includegraphics[width=0.45\linewidth]{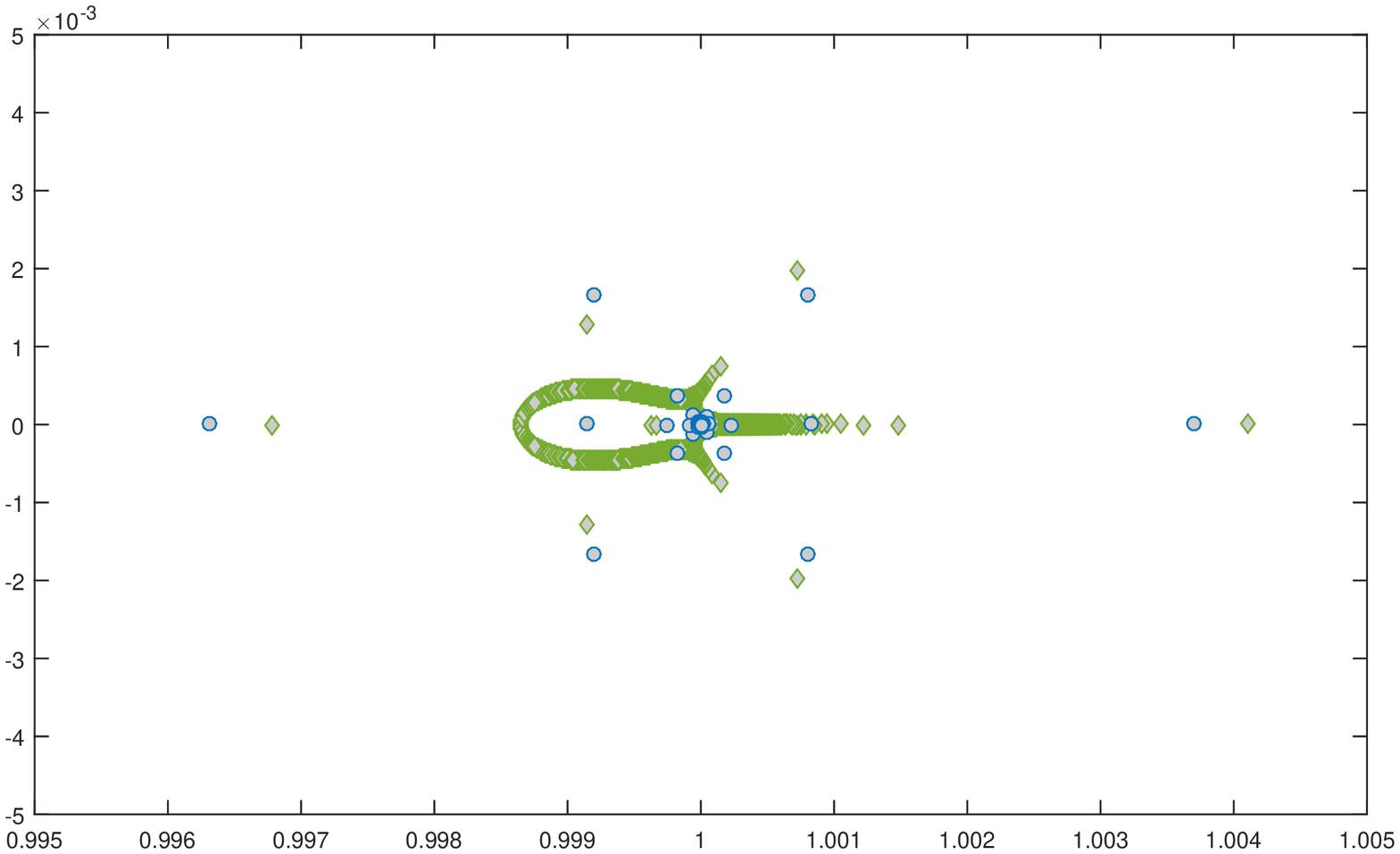}}}%
    \caption{Ιδιοτιμές (αριστερά $\mathfrak{f}_{11}$) και (δεξιά $\mathfrak{f}_{12}$).}
    \label{fig:ex1_fig2}
\end{figure}
\end{exmp}

\begin{exmp}\label{exp:ex2}\normalfont
Σε αυτό το παράδειγμα ασχολούμαστε με την προρρύθμιση του συστήματος {\en Toeplitz}, το οποίο έχει ως γεννήτρια συνάρτηση την $\mathfrak{f}_{13}(x)=x^2+1+\mathrm{i}x$, $x\in(-\pi,\pi]$. Αυτή δεν έχει ρίζες και το φανταστικό της μέρος παρουσιάζει ασυνέχεια στο $\pi$, που σημαίνει ότι αντιστοιχεί στο Θεώρημα \ref{thm:well_cond_dis}. 

\begin{table}[H]
\centering
\begin{tabular}{c|cc|cc|cc|cc}
\toprule
$n$ & $I_n$ & {\en CPU} & $\mathcal{C}_n(\mathfrak{f}_{13})$ & {\en CPU} & $\mathcal{C}_n$ & {\en CPU} & $R_{4,4}$ & {\en CPU} \\\midrule
1024 & 35 & 0.1023 & 6 & 0.0758 & 6 & 0.0855 & 7 & 0.2011 \\
2048 & 34 & 0.1183 & 6 & 0.0808 & 6 & 0.0917 & 7 & 0.2254 \\
4096 & 33 & 0.1716 & 5 & 0.0773 & 5 & 0.0816 & 7 & 0.2790 \\
8192 & 32 & 0.2142 & 5 & 0.0811 & 5 & 0.1067 & 6 & 0.3790 \\\bottomrule
\end{tabular}
\caption{{\en PGMRES:} Επαναλήψεις και χρόνοι {\en CPU} ($\mathfrak{f}_{13}$).}
\label{tab:ex2}
\end{table}

\begin{figure}[H]
    \centering
    \includegraphics[width=1\linewidth]{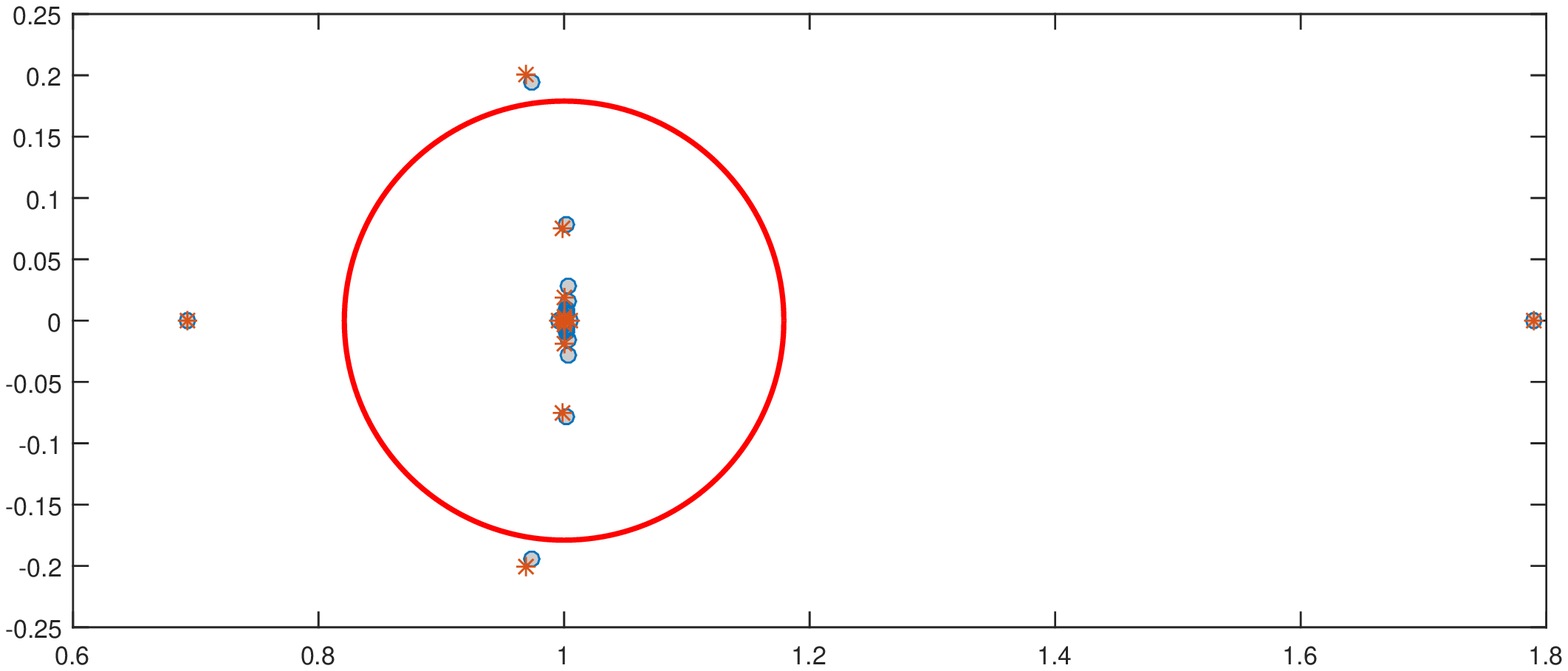}%
    \vspace{-4.5pc}\caption{Ιδιοτιμές ($\mathfrak{f}_{13}$).}
    \label{fig:ex2}
\end{figure}

Οι επαναλήψεις και ο χρόνος {\en CPU}, με χρήση των κυκλοειδών προρρυθμιστών όπως και στο προηγούμενο παράδειγμα, δίνεται στο Πίνακα \ref{tab:ex2}. Σε αυτόν δίνουμε και τον αριθμό επαναλήψεων με χρήση του ταινιωτού {\en Toeplitz} προρρυθμιστή $R_{4,4}$, της προηγούμενης ενότητας. Η υπεροχή του $\mathcal{C}_n$ επί του $R_{4,4}$ είναι προφανής. Δίνουμε επίσης τη συσσώρευση των ιδιοτιμών του $\mathcal{C}_n^{-1}T_n(\mathfrak{f}_{13})$ (μπλε κύκλοι) και του $\mathcal{C}_n^{-1}(\mathfrak{f}_{13})T_n(\mathfrak{f}_{13})$ (πορτοκαλί αστέρια) στο Σχήμα \ref{fig:ex2}. Ο κόκκινος κύκλος σε αυτό, δηλώνει την περιοχή συσσώρευσης για την ασυνεχή περίπτωση, με ακτίνα $0.179$. Παρατηρούμε ότι στην πράξη οι ιδιοτιμές έχουν μια καλύτερη συσσώρευση, απ'' ότι αναμένονταν, κοντά στο σημείο $(1,0)$. Μια εξήγηση γι αυτό το φαινόμενο θα δοθεί στο επόμενο παράδειγμα.

Το Σχήμα \ref{fig:44a} δείχνει τη συσσώρευση των ιδιοτιμών κοντά στο $(1,0)$. Σε αυτό φαίνεται η διαφορά, ανάμεσα στη συσσώρευση σε σημείο και στη συσσώρευση σε περιοχή της τάξεως $\mathcal{O}\left(\frac{\log{n}}{n}\right)$. Το Σχήμα \ref{fig:44b} δείχνει την αντίστοιχη συσσώρευση, με χρήση του προρρυθμιστή $R_{4,4}$. Η διαφορά ανάμεσα στις συσσωρεύσεις, αντιστοιχεί και στη διαφορά ανάμεσα στις επαναλήψεις. Ωστόσο, και σε αυτό το παράδειγμα μελετάμε έναν πίνακα με καλή κατάσταση και τα οφέλη της προρρύθμισης δε γίνονται ακόμη αισθητά. Η διαφορά ανάμεσα στις επαναλήψεις θα αυξηθεί στα επόμενα παραδείγματα, όπου μελετάμε την προρρύθμιση συστημάτων με κακή κατάσταση.
\begin{figure}
    \centering
    \subfloat[Ιδιοτιμές κοντά στο $(1,0)$.]{{\label{fig:44a}\includegraphics[width=0.45\linewidth]{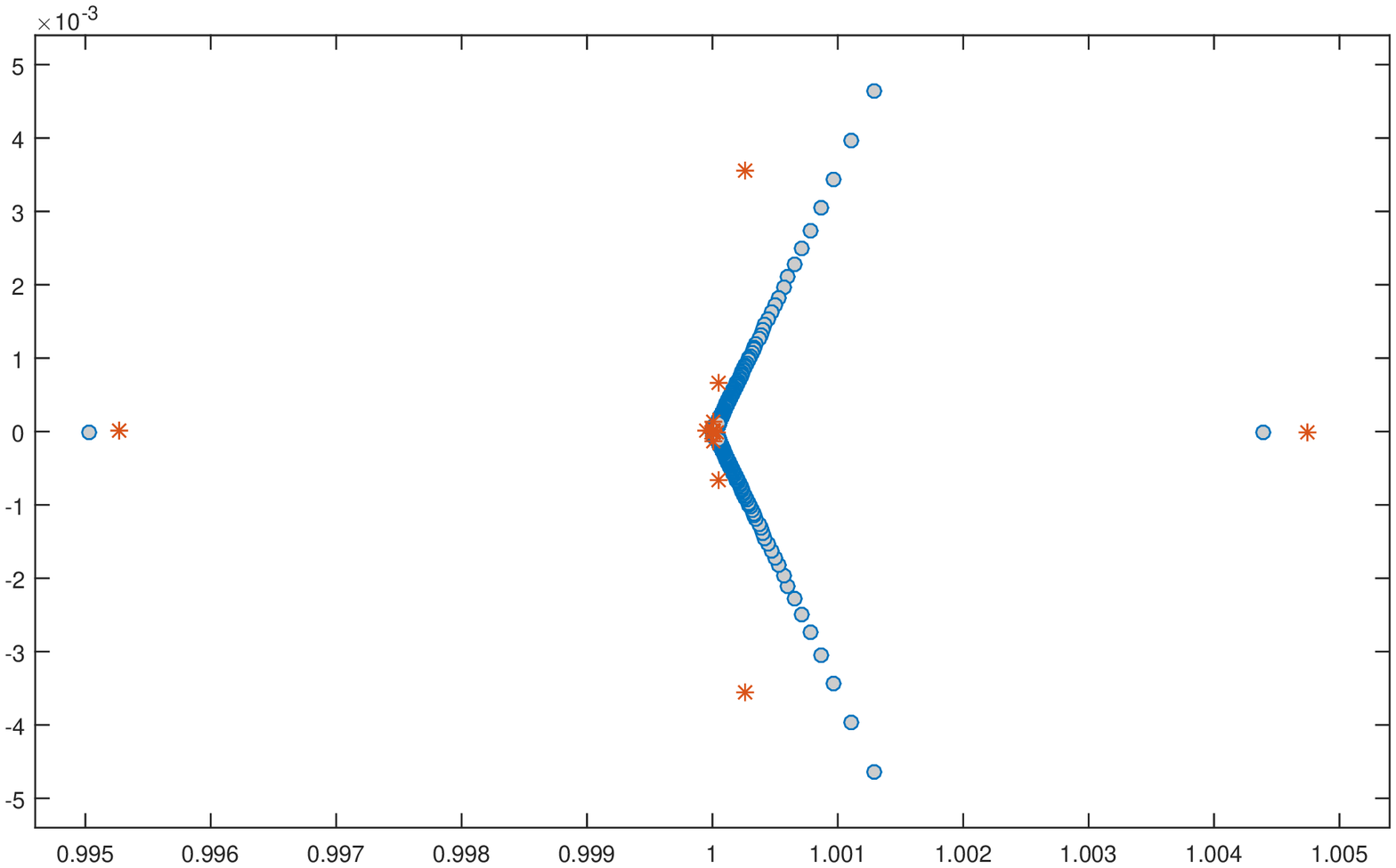}}}%
    \qquad
    \subfloat[Ιδιοτιμές.]{{\label{fig:44b}\includegraphics[width=0.45\linewidth]{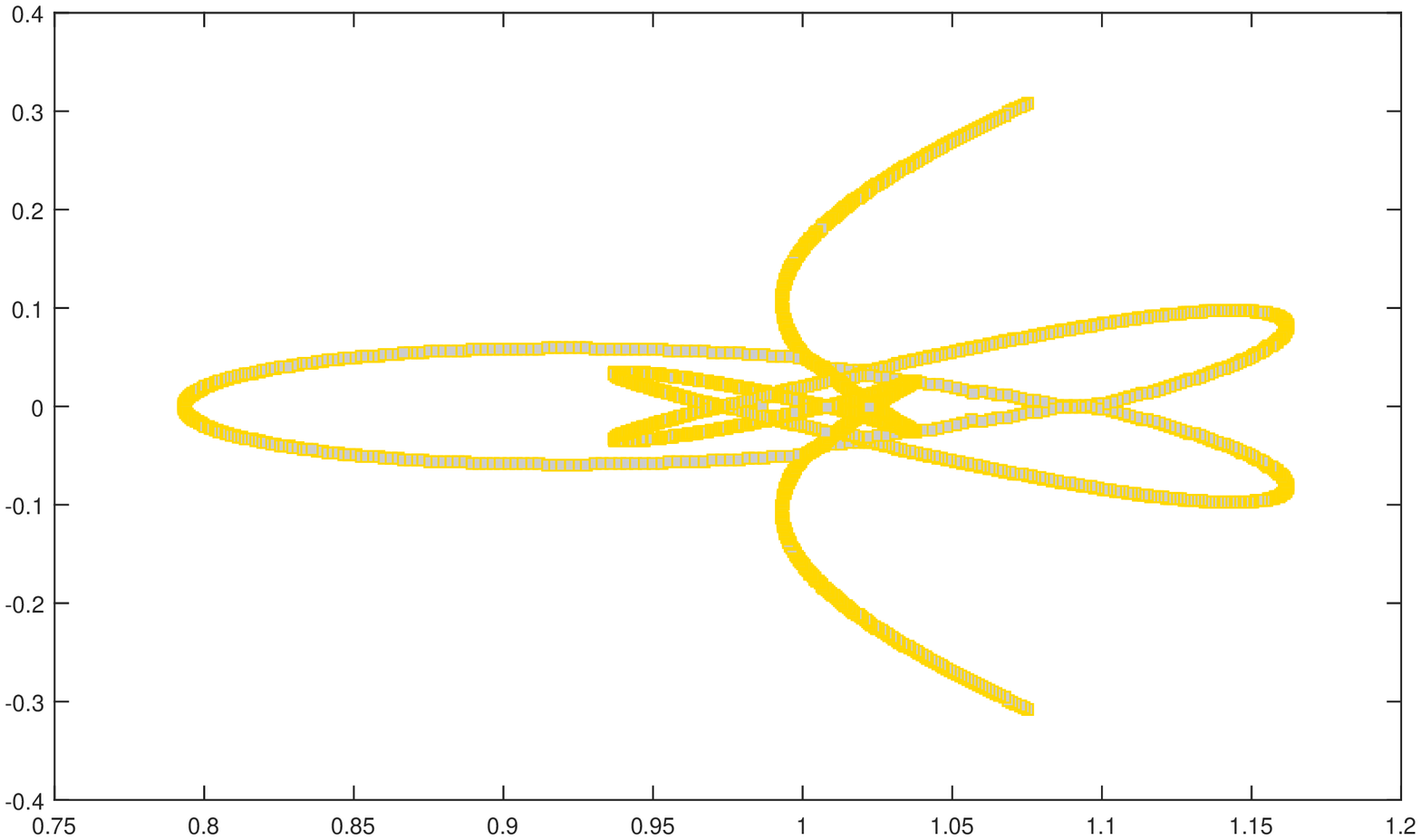}}}%
    \caption{Ιδιοτιμές ($\mathfrak{f}_{13}$).}
    \label{fig:ex2_fig2}
\end{figure}
\end{exmp}

\begin{exmp}\label{exp:ex3}\normalfont
Σε αυτό το παράδειγμα η γεννήτρια συνάρτηση του πίνακα {\en Toeplitz} είναι η $\mathfrak{f}_2(x)=x^2+\mathrm{i}x^3$, $x\in(-\pi,\pi]$, η οποία έχει ρίζα στο 0 και προσεγγίζεται με ακρίβεια χρησιμοποιώντας την προτεινόμενη τεχνική προρρύθμισης. Λεπτομέρειες για την προσέγγιση της πολλαπλότητας που έχει η ρίζα δίνονται στον Πίνακα \ref{tab:est_ex3}. Σημειώνουμε ότι αν και στο Παράδειγμα \ref{ex:1} μελετήσαμε το ίδιο σύστημα, η πολλαπλότητα των ριζών εκτιμήθηκε σωστά, αλλά ελαφρώς διαφορετικά, όπως φαίνεται και στον Πίνακα \ref{tab:1}. Υπενθυμίζουμε ότι το πλέγμα διαφέρει σε σχέση με την προηγούμενη ενότητα. Για την εκτίμηση των πολλαπλοτήτων, τρέξαμε 4 επαναλήψεις της μεθόδου Αντίστροφων Δυνάμεων, με $\Theta_{i,k}=\frac{1}{\sqrt{k}}\left(1,\mathrm{e}^{\mathrm{i}x_0},\mathrm{e}^{2\mathrm{i}x_0},\dots,\mathrm{e}^{(k-1)\mathrm{i}x_0}\right)^T$ και $x_0=0$ ως αρχικό διάνυσμα. Καταλήξαμε στο ότι η πολλαπλότητα της ρίζας του πραγματικού μέρους είναι ίση με 2 και αυτή του φανταστικού με 3. Αυτό συνεπάγεται ότι το τριγωνομετρικό πολυώνυμο που είναι κατάλληλο για την άρση των ριζών δίνεται ως $g(x)=2-2\cos{x}$, αφού η πολλαπλότητα της ρίζας του πραγματικού μέρους είναι μικρότερη από αυτή του φανταστικού.

\begin{table}
\centering
\begin{tabular}{c|cc|cc}
\toprule
$k$ & $\widetilde{\lambda}^1_{0,k}$ & $\log_2{(\widetilde{s_0}^1)}$ & $\widetilde{\lambda}^2_{0,k}$ & $\log_2{(\widetilde{s_0}^2)}$\\\midrule
16 & 0.0351 & 1.9224 & 0.0118 & 2.9337\\
32 & 0.0092 & & 0.0015 &\\
64 & 0.0024 & & 0.0002 &\\\bottomrule
\end{tabular}
\caption{Πολλαπλότητα των ριζών ($\mathfrak{f}_2$).}
\label{tab:est_ex3}
\end{table}

Όπως καταλαβαίνει κανείς, στους Πίνακες \ref{tab:ex3} και \ref{tab:ex3i}, δίνουμε τις επαναλήψεις και τους χρόνους {\en CPU}, με χρήση των {\en PGMRES} και {\en PCGN}. Εκεί, τα πλεονεκτήματα της προρρύθμισης γίνονται πλέον ξεκάθαρα. Σχολιάζουμε ότι χωρίς προρρύθμιση η λύση του $1024\times 1024$ συστήματος λαμβάνεται σε 547.8306 δευτερόλεπτα (δ), με τη μέθοδο {\en GMRES}. Σχολιάζουμε επίσης ότι ο χρόνος κατασκευής των προρρυθμιστών αυξάνεται με αργό ρυθμό, όσο η διάσταση του συστήματος μεγαλώνει, ενώ ο χρόνος εκτέλεσης της μεθόδου επίλυσης είναι της τάξεως $\mathcal{O}(n\log{n})$, όπως αναμένονταν από τη θεωρία. Για παράδειγμα, όταν $n=4096$ η κατασκευή του $\mathcal{BC}_n$ γίνεται σε 0.2003 δ και η λύση λαμβάνεται σε 0.1635 δ, ενώ οι αντίστοιχες τιμές όταν $n=8192$ είναι 0.2211 δ και 0.3533 δ. Η υπεροχή του $\mathcal{BC}_n$ επί του $R_{4,4}$ είναι προφανής, αφού η λύση λαμβάνεται σε λιγότερές από τις μισές επαναλήψεις.

\begin{table}[htbp]
\centering
\begin{tabular}{c|c|cc|cc|cc}
\toprule
$n$ & $I_n$ & $\mathcal{BC}_n(\mathfrak{f}_2)$ & {\en CPU} & $\mathcal{BC}_n$ & {\en CPU} & $R_{4,4}$ & {\en CPU}\\\midrule
1024 & $>$500 & 7 & 0.0942 & 11 & 0.2890 & 28 & 0.4834 \\
2048 & $>$500 & 7 & 0.1312 & 11 & 0.3036 & 28 & 0.6360 \\
4096 & $>$500 & 8 & 0.1660 & 12 & 0.3636 & 28 & 0.9496 \\
8192 & $>$500 & 8 & 0.2491 & 12 & 0.5744 & 27 & 1.4868 \\\bottomrule
\end{tabular}
\caption{{\en PGMRES:} Επαναλήψεις και χρόνοι {\en CPU} ($\mathfrak{f}_2$).}
\label{tab:ex3}
\end{table}

\begin{table}[htbp]
\centering
\begin{tabular}{c|c|cc|cc|cc}
\toprule
$n$ & $I_n$ & $\mathcal{BC}_n(\mathfrak{f}_2)$ & {\en CPU} & $\mathcal{BC}_n$ & {\en CPU} & $R_{4,4}$ & {\en CPU}\\\midrule
1024 & - & 15 & 0.0626 & 19 & 0.2658 & 45 & 0.5775 \\
2048 & - & 16 & 0.1022 & 22 & 0.2949 & 49 & 0.8101 \\
4096 & - & 21 & 0.1750 & 26 & 0.3737 & 54 & 1.4418 \\
8192 & - & 23 & 0.3139 & 29 & 0.5115 & 57 & 2.6980 \\\bottomrule
\end{tabular}
\caption{{\en PCGN:} Επαναλήψεις και χρόνοι {\en CPU} ($\mathfrak{f}_2$).}
\label{tab:ex3i}
\end{table}

Το Σχήμα \ref{fig:ex3} δείχνει τη συσσώρευση των ιδιοτιμών και ιδιαζουσών τιμών, όταν $n=2048$. Πιο συγκεκριμένα, το Σχήμα \ref{fig:5a} δείχνει τη συσσώρευση των ιδιοτιμών του $\mathcal{BC}_n^{-1}T_n(\mathfrak{f}_2)$ και το Σχήμα \ref{fig:5b}, αυτή των ιδιαζουσών τιμών για τον ίδιο πίνακα. Ομοίως, τα Σχήματα \ref{fig:5c} και \ref{fig:5d} δείχνουν τη συσσώρευση των ιδιοτιμών και ιδιαζουσών τιμών του $R_{4,4}^{-1}T_n(\mathfrak{f}_2)$. Οι κόκκινες γραμμές χαρακτηρίζουν τα άκρα του διαστήματος $[0.7602,1.1925]$. Η συσσώρευση των ιδιοτιμών στο Σχήμα \ref{fig:5a} μοιάζει να είναι γενική, σε μια περιοχή του $(1,0)$ με ακτίνα $0.179$ και $\mathcal{O}(\log{n})$ ιδιοτιμές εκτός του διαστήματος συσσώρευσης, όπως αποδείχθηκε στο Θεώρημα \ref{thm:ill_cond}, ενώ στο Σχήμα \ref{fig:5c} η συσσώρευση είναι σε ένα μεγαλύτερο ορθογώνιο, όπως αποδείχθηκε στο Θεώρησα \ref{thm:2} (βλ. επίσης Θεώρημα \ref{thm:eig_clustering}).

Θα θέλαμε να σχολιάσουμε τη συμπεριφορά των ιδιαζουσών τιμών μεταξύ των προαναφερθέντων πινάκων. Στο Σχήμα \ref{fig:5b} παρατηρούμε ότι, αν και η συσσώρευση είναι στο διάστημα $[0.7602,1.1925]$, μοιάζει να έχουμε συσσώρευση γύρω από το σημείο 1 και μόνο λίγες ιδιάζουσες τιμές να είναι μακριά από αυτό. Αυτό οφείλεται στο φαινόμενο {\en Gibbs}, το οποίο εμφανίζεται στις περιοχές ασυνέχειας. Από την άλλη, στο Σχήμα \ref{fig:5d}, παρατηρούμε ότι οι ιδιάζουσες τιμές δεν έχουν τόσο αυστηρή δομή κι εξαπλώνονται σε μια περιοχή του 1, αφού μέσω του αλγορίθμου {\en Remez} παρουσιάζεται διακύμανση της καμπύλης γύρω από το 1, σε όλο το διάστημα ορισμού της συνάρτησης.

\begin{figure}[htbp]%
    \centering
    \subfloat[Ιδιοτιμές.]{{\label{fig:5a}\includegraphics[width=0.45\linewidth]{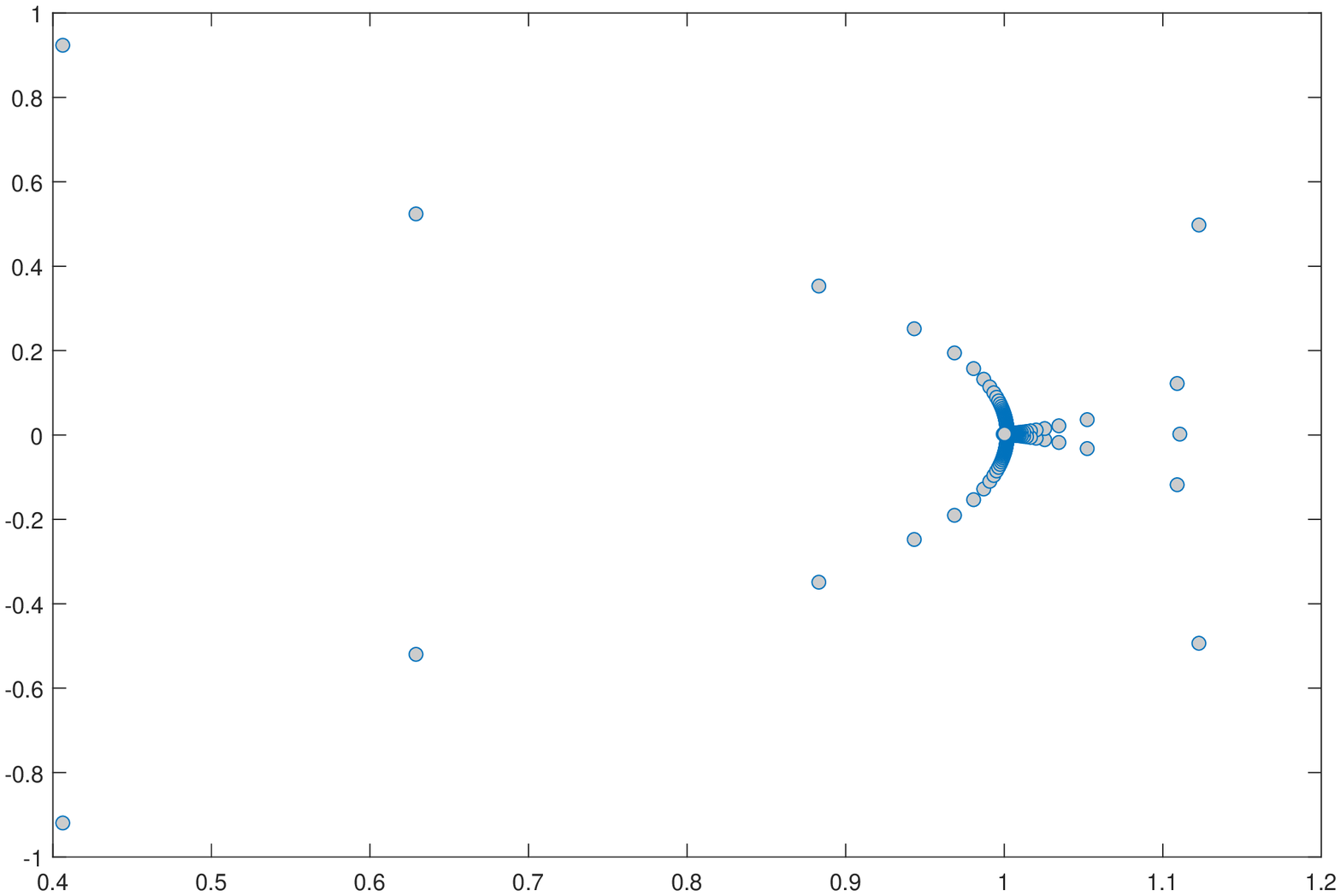}}}%
    \qquad
    \subfloat[Ιδιάζουσες τιμές.]{{\label{fig:5b}\includegraphics[width=0.45\linewidth]{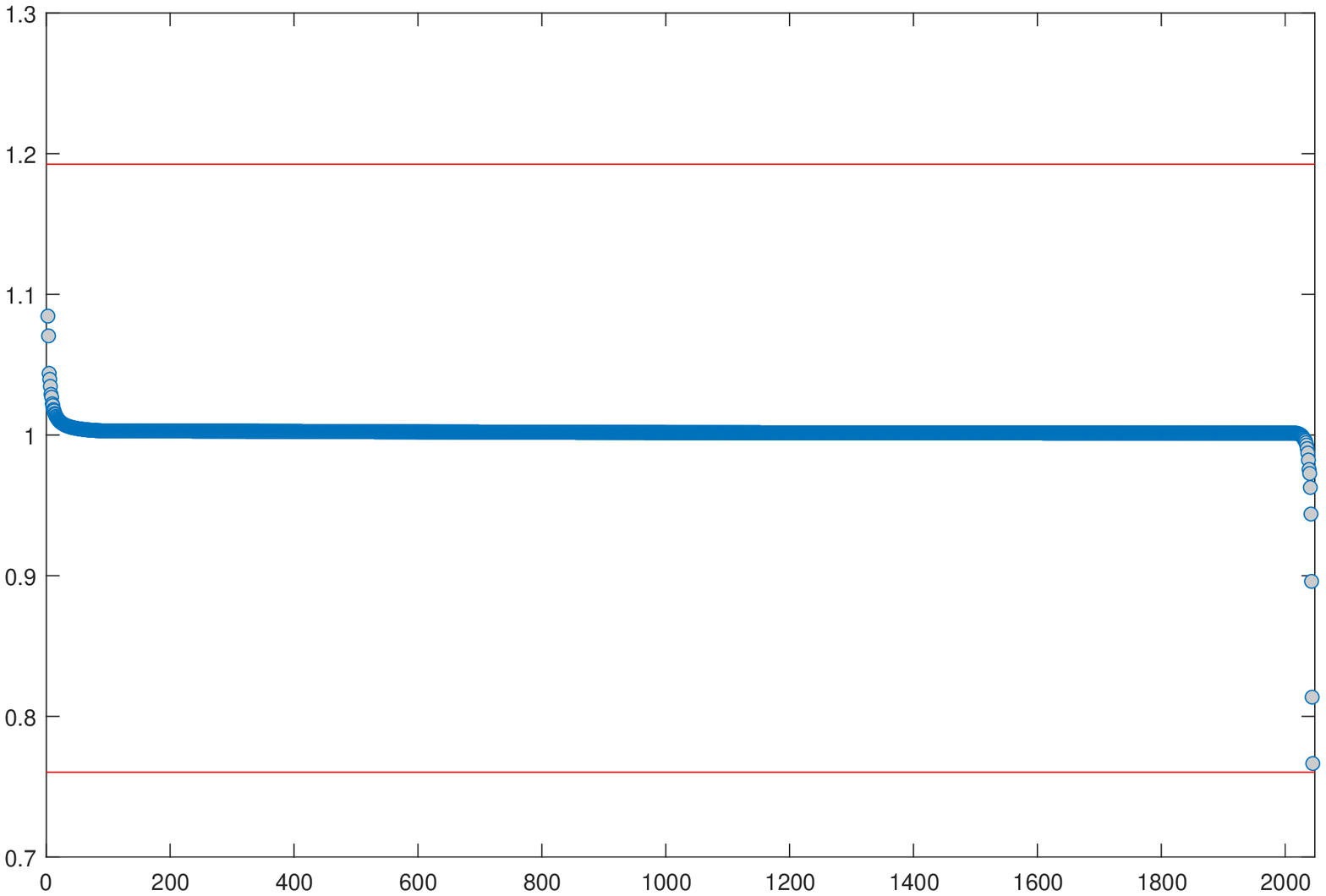}}}

    \subfloat[Ιδιοτιμές.]{{\label{fig:5c}\includegraphics[width=0.45\linewidth]{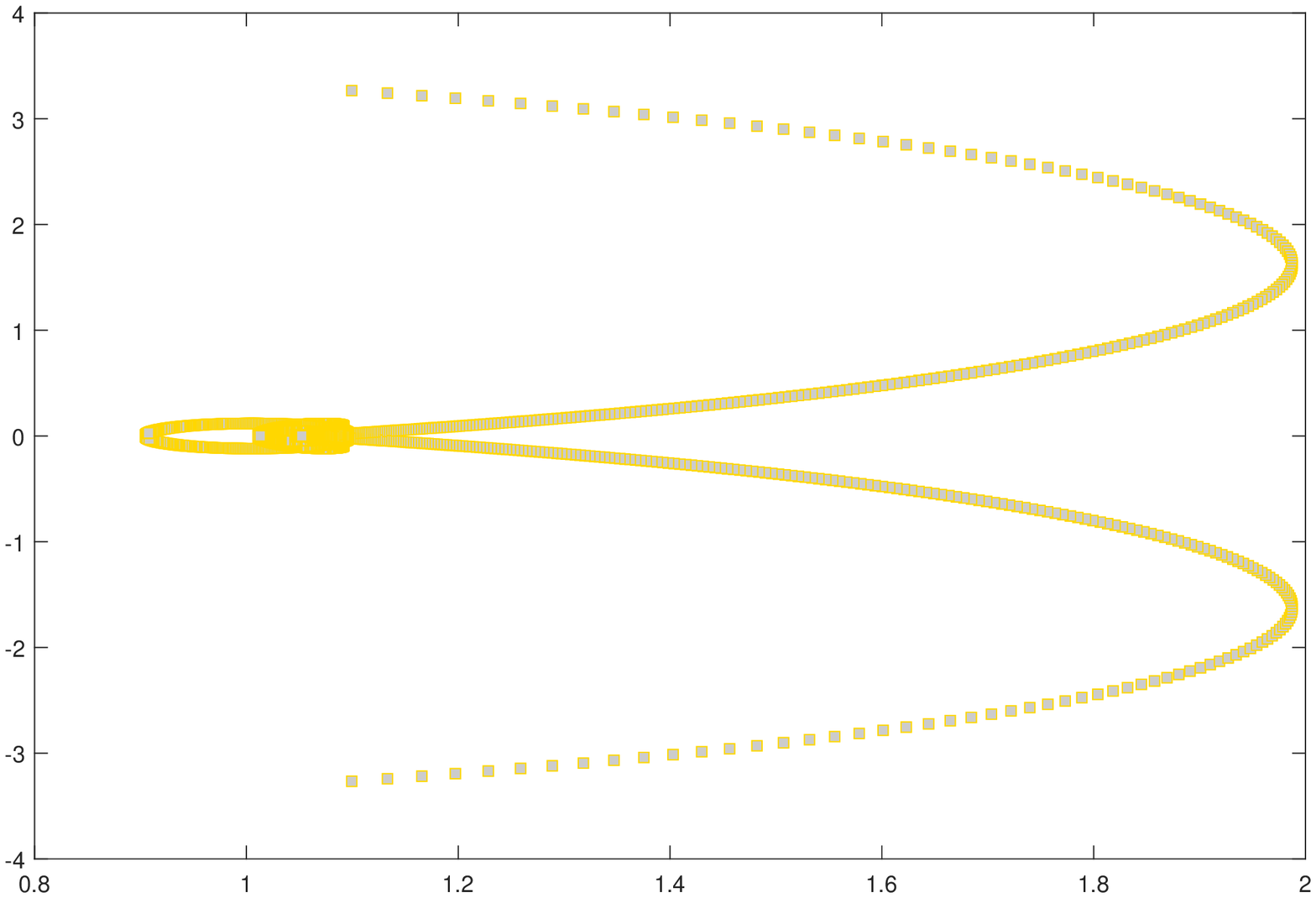}}}%
    \qquad
    \subfloat[Ιδιάζουσες τιμές.]{{\label{fig:5d}\includegraphics[width=0.45\linewidth]{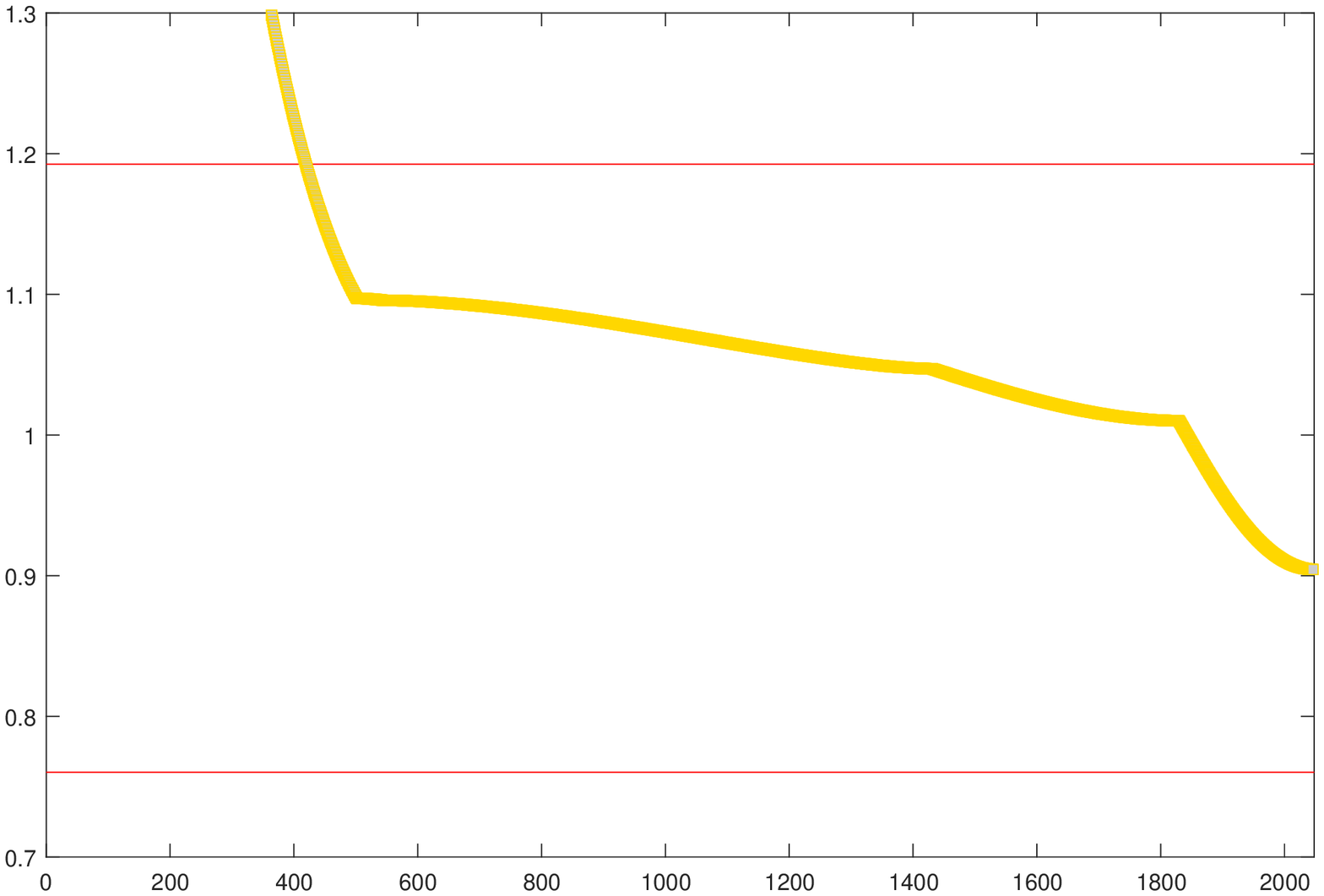}}}%
    \caption{Ιδιοτιμές και ιδιάζουσες τιμές ($\mathfrak{f}_2$).}
    \label{fig:ex3}
\end{figure}

\begin{figure}[htbp]%
    \centering
    \includegraphics[width=0.9\linewidth]{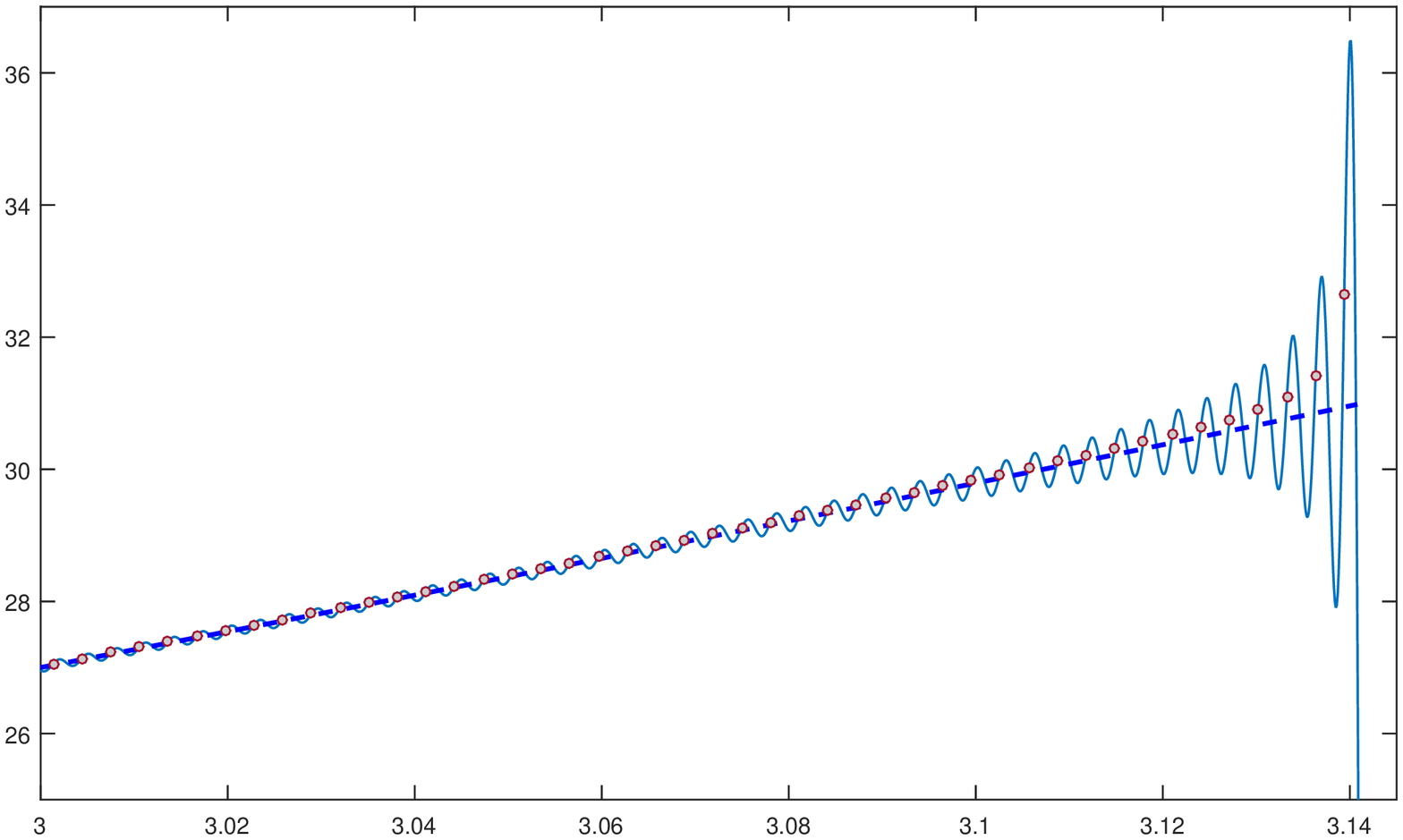}%
    \caption{Φαινόμενο {\en Gibbs} κοντά στο $\pi$ ($\operatorname{Im}(\mathfrak{f}_2)$).}
    \label{fig:ex3_fig2}
\end{figure}

Στο Σχήμα \ref{fig:ex3_fig2} δίνουμε το ανάπτυγμα {\en Fourier} $F_{n-1}^2$, του φανταστικού μέρους της $\mathfrak{f}_2$ (μπλε γραμμή) κοντά στο $\pi$, όπου εμφανίζεται το φαινόμενο {\en Gibbs}. Εκεί δίνουμε και τις τιμές του $F_{n-1}^2$ στα σημεία του πλέγματος $G_n$ (κόκκινοι κύκλοι). Η διακεκομμένη γραμμή είναι η γραφική παράσταση της $\operatorname{Im}(\mathfrak{f}_2)$. Παρατηρούμε ότι οι τιμές στο $G_n$ δε λαμβάνουν τις μέγιστες δυνατές τιμές της ταλάντωσης που σχετίζεται με το φαινόμενο {\en Gibbs}. Αυτός είναι ο λόγος που μπορεί να επιτύχουμε συσσώρευση σε μια μικρότερη περιοχή, όπως είδαμε στο Παράδειγμα \ref{exp:ex2}.
\end{exmp}

\begin{exmp}\label{exp:ex4}\normalfont
Σε αυτό το παράδειγμα, ο πίνακας του συστήματος {\en Toeplitz} έχει κακή κατάσταση, διότι η γεννήτρια συνάρτηση αυτού είναι η $\mathfrak{f}_9(x)=(x^2-1)^2+\mathrm{i}x(x^2-1)$, $x\in(-\pi,\pi]$, την οποία μελετήσαμε προγενέστερα στο Παράδειγμα \ref{exmp:414}. Βλέπουμε ότι η $\mathfrak{f}_9$ έχει ρίζες στο $\pm 1$ (1 και $2\pi-1$, στο διάστημα $[0,2\pi)$), που δεν είναι στοιχεία του $G_n$. Παρατηρούμε επίσης ότι το φανταστικό μέρος της γεννήτριας συνάρτησης έχει ασυνέχεια στο $\pi$, που σημαίνει ότι αναμένουμε μια πιο ελαστική συσσώρευση, λόγω του φαινομένου {\en Gibbs}.

Η ρίζα στο 1, εκτιμήθηκε στο πλέγμα $G_n$, χρησιμοποιώντας το ανάπτυγμα {\en Fourier}, ως $\widetilde{x}_1=1.0002$ (ακριβέστερα 1.000155), όταν $n=2048$. Περισσότερες λεπτομέρειες για την εκτίμηση της ρίζας δίνεται, για διάφορες διαστάσεις του συστήματος, στον Πίνακα \ref{tab:r_est_ex4}. Σημειώνουμε ότι σε αυτό το παράδειγμα οι εκτιμήσεις δόθηκαν στο ίδιο σημείο για διαφορετικές διαστάσεις του $G_n$. Οι πολλαπλότητες των ριζών, όπως φαίνεται στον Πίνακα \ref{tab:est_ex4}, εκτιμήθηκαν μέσω της διαδικασίας που περιγράψαμε ως $m_1^1=2$, $m_0^2=1$ και $m_1^2=1$. Επομένως, $g_n(x)=(\cos{\widetilde{x}_1}-\cos{x})^2+\mathrm{i}\sin{x}(\cos{\widetilde{x}_1}-\cos{x})$. 
 
\begin{table}[H]
\centering
\begin{tabular}{c|cc|cc}
\toprule
\multirow{2}{*}{$n$} & \multicolumn{2}{c|} {$\operatorname{Re}(F_{n-1})$} & \multicolumn{2}{c} {$\operatorname{Im}(F_{n-1})$} \\
& $\widetilde{x}_1$ & $F_{n-1}^1(\widetilde{x}_1)$ & $\widetilde{x}_1$ & $F_{n-1}^2(\widetilde{x}_1)$\\\midrule
1024 & $\theta_{164\phantom{1}}$ & $-3.378*10^{-5}$ & $\theta_{164\phantom{1}}$ & $-4.422*10^{-3}$\\
2048 & $\theta_{327\phantom{1}}$ & $-8.367*10^{-6}$ & $\theta_{327\phantom{1}}$ & $-2.055*10^{-3}$\\
4096 & $\theta_{653\phantom{1}}$ & $-2.019*10^{-6}$ & $\theta_{653\phantom{1}}$ & $-8.722*10^{-4}$\\
8192 & $\theta_{1305}$ & $-4.320*10^{-7}$ & $\theta_{1305}$ & $-2.806*10^{-4}$\\\bottomrule
\end{tabular}
\caption{Εκτίμηση της $\widetilde{x}_1$ στο $G_n$ ($\mathfrak{f}_9$).}
\label{tab:r_est_ex4}
\end{table}

Από την \cite{Serra_1999} γνωρίζουμε ότι το σφάλμα στην εκτίμηση της ρίζας είναι $\varepsilon_1=\mathcal{O}\left(\frac{\log{n}}{n}\right)$. Από τον Πίνακα \ref{tab:r_est_ex4}, έχουμε ότι $\varepsilon_1\sim\frac{1}{n}$, το οποίο έρχεται σε συμφωνία με τη θεωρία, αλλά είναι μια καλύτερη προσέγγιση από $\frac{\log{n}}{n}$. `Οπως αποδείχθηκε στο Θεώρημα \ref{thm:ill_cond}, ο τελευταίος παράγοντας $T_n\left(\frac{\mathfrak{f}_9}{g}\right)C_n^{-1}\left(\frac{\mathfrak{f}_9}{g}\right)$ έχει γενική συσσώρευση των ιδιοτιμών στο σημείο $(1,0)$, με $\mathcal{O}(\log{n})$ ιδιοτιμές εκτός του διαστήματος συσσώρευσης, εξαιτίας της ασυνέχειας της $\mathfrak{f}_9$. Για τον δεύτερο παράγοντα, $T_n^{-1}(g_n)T_n(g)$, θεωρούμε ως $s_n\sim\frac{\log{n}}{n}$, για να λάβουμε γενική συσσώρευση των ιδιοτιμών σε μια περιοχή του $(1,0)$, με ακτίνα $r_n=\mathcal{O}\left(\frac{1}{\log{n}}\right)$ και $\mathcal{O}(\log{n})$ ιδιοτιμές εκτός του διαστήματος συσσώρευσης. Λόγω του φαινομένου {\en Gibbs}, οι ιδιοτιμές του πρώτου παράγοντα, $C_n\left(\frac{\mathfrak{f}_9g_n}{F_{n-1}g}\right)$, έχουν κύρια συσσώρευση σε μια περιοχή του $(1,0)$, με ακτίνα το πολύ $0.179$. Επομένως, από το Θεώρημα $\ref{thm:ill_cond}$ καταλήγουμε σε γενική συσσώρευση των ιδιοτιμών του προρρυθμισμένου πίνακα, σε μια περιοχή του $(1,0)$, με ακτίνα το πολύ ίση με $0.179$ και $\mathcal{O}(\log{n})$ ιδιοτιμές εκτός του διαστήματος συσσώρευσης.

\begin{table}[H]
\centering
\begin{tabular}{c|cc|cccc}
\toprule
$k$ & $\widetilde{\lambda}^1_{1,k}$ & $\log_2{(\widetilde{s_0}^1)}$ & $\widetilde{\lambda}^2_{0,k}$ & $\log_2{(\widetilde{s_0}^2)}$ & $\widetilde{\lambda}^2_{1,k}$ & $\log_2{(\widetilde{s_1}^2)}$\\\midrule
16 & 0.1128 & 1.6781 & 0.1307 & 0.8825 & 0.1836 & 0.8956\\
32 & 0.0338 & & 0.0689 &  & 0.1083 &\\
64 & 0.0091 & & 0.0355 &  & 0.0678 &\\\bottomrule
\end{tabular}
\caption{Πολλαπλότητα των ριζών ($\mathfrak{f}_9$).}
\label{tab:est_ex4}
\end{table}

Στο Σχήμα \ref{fig:ex4} δίνουμε τη συσσώρευση των ιδιοτιμών. Στο Σχήμα \ref{fig:7a}, ο κόκκινος κύκλος είναι η περιοχή με ακτίνα $0.179$, που αναφέραμε παραπάνω. Θα θέλαμε να σημειώσουμε ότι για $n=2048$, υπάρχουν 8 επιπλέον ιδιοτιμές, εκτός του διαστήματος συσσώρευσης, που δε φαίνονται στο Σχήμα \ref{fig:7a}, το οποίο κάνει τις ιδιοτιμές που κυμαίνονται εκτός του διαστήματος συσσώρευσης να είναι 10. Στο Σχήμα \ref{fig:7b} δίνεται μια μεγέθυνση πολύ κοντά στο $(1,0)$. 

\begin{rem}
Στο Σχήμα \ref{fig:7a} η συσσώρευση παρατηρείται σε μια μικρότερη περιοχή και φαίνεται ότι το ίδιο φαινόμενο, που περιγράψαμε στο Σχήμα \ref{fig:ex3_fig2}, του Παραδείγματος \ref{exp:ex3}, εμφανίζεται και σε αυτό το παράδειγμα.
\end{rem}

\begin{figure}[htbp]%
    \centering
    \subfloat[Ιδιοτιμές.]{{\label{fig:7a}\includegraphics[width=0.45\linewidth]{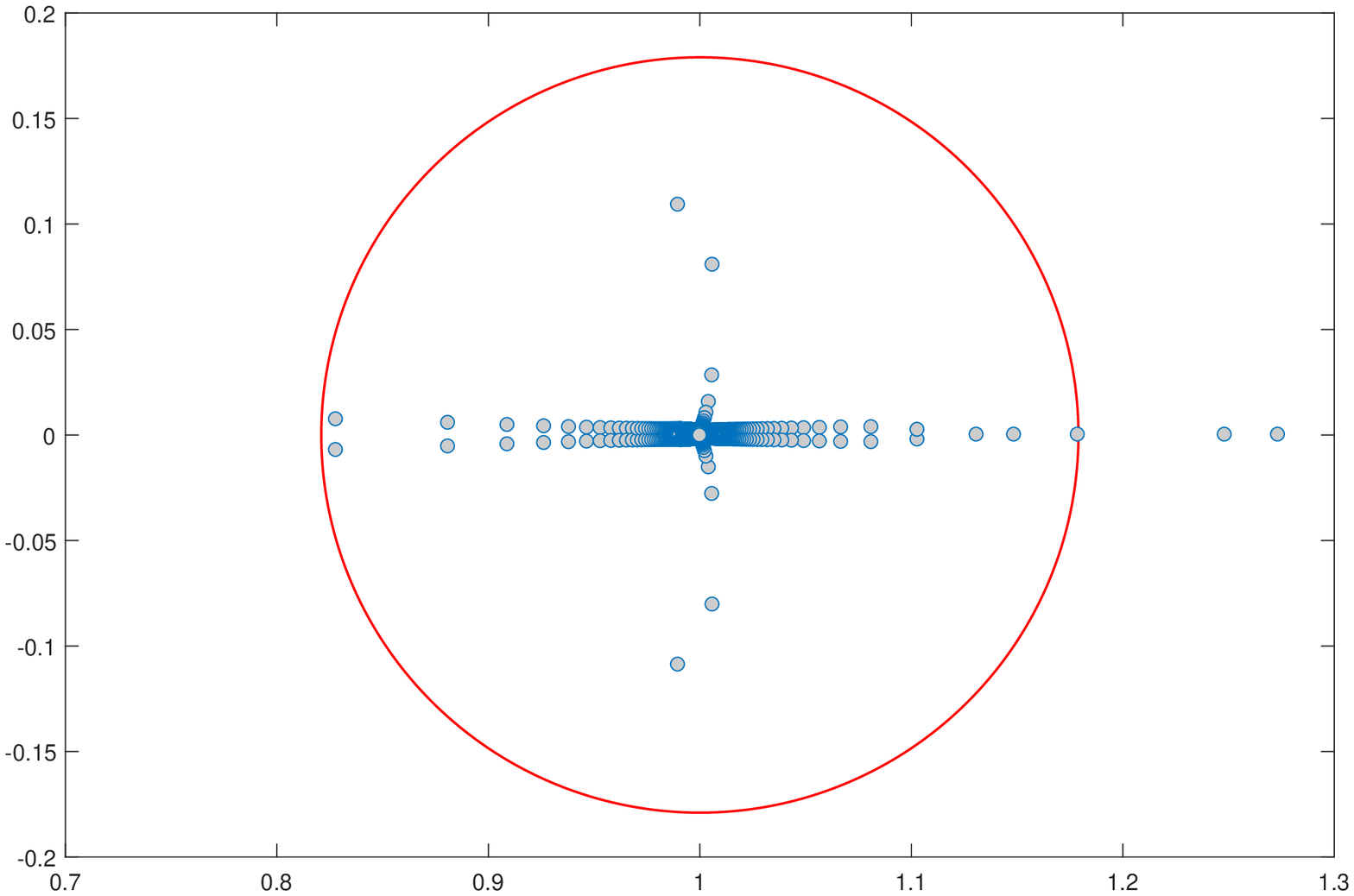}}}%
    \qquad
    \subfloat[Ιδιοτιμές κοντά στο $(1,0)$.]{{\label{fig:7b}\includegraphics[width=0.45\linewidth]{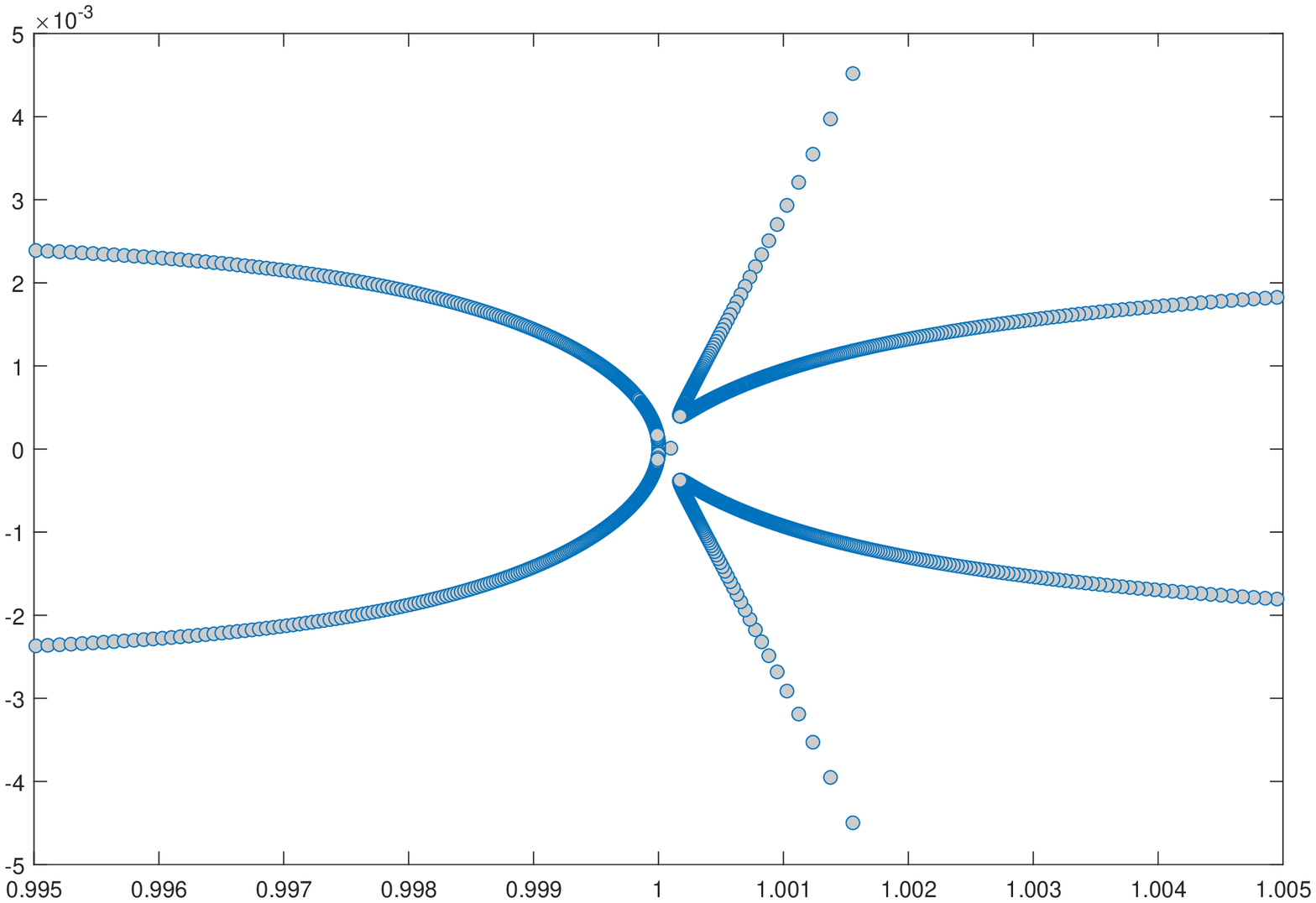}}}%
    \caption{Ιδιοτιμές ($\mathfrak{f}_9$).}
    \label{fig:ex4}
\end{figure}

Στους Πίνακες \ref{tab:ex4} και \ref{tab:ex4i} παρουσιάζουμε τις επαναλήψεις των μεθόδων {\en PGMRES} και {\en PCGN}, αντίστοιχα, καθώς επίσης και τους αντίστοιχους χρόνους. Υπενθυμίζουμε ότι το κριτήριο τερματισμού άλλαξε σε $\frac{\Vert r^{(k)}\Vert_2}{\Vert r^{(0)}\Vert_2}\leq 10^{-7}$.

\begin{table}[htbp]
\centering
\begin{tabular}{c|c|cc|cc|cc}
\toprule
$n$ & $I_n$ & $\mathcal{BC}_n(\mathfrak{f}_9)$ & {\en CPU} & $\mathcal{BC}_n$ & {\en CPU} & $R_{4,4}$ & {\en CPU}\\\midrule
1024 & $>$500 & 7 & 0.1227 & 11 & 0.3080 & 21 & 1.3286 \\
2048 & $>$500 & 6 & 0.1373 & 11 & 0.3252 & 22 & 2.4072 \\
4096 & $>$500 & 6 & 0.1780 & 12 & 0.3837 & 22 & 4.5923 \\
8192 & $>$500 & 6 & 0.2287 & 13 & 0.5177 & 22 & 9.6205 \\\bottomrule
\end{tabular}
\caption{{\en PGMRES:} Επαναλήψεις και χρόνοι {\en CPU} ($\mathfrak{f}_9$).}
\label{tab:ex4}
\end{table}

\begin{table}[htbp]
\centering
\begin{tabular}{c|c|cc|cc|cc}
\toprule
$n$ & $I_n$ & $\mathcal{BC}_n(\mathfrak{f}_9)$ & {\en CPU} & $\mathcal{BC}_n$ & {\en CPU} & $R_{4,4}$ & {\en CPU}\\\midrule
1024 & - & 11 & 0.1398 & 19 & 0.3922 & 45 & 2.3847 \\
2048 & - & 11 & 0.1928 & 20 & 0.4966 & 49 & 4.8036 \\
4096 & - & 11 & 0.3162 & 19 & 0.7050 & 53 & 10.277 \\
8192 & - & 11 & 0.5281 & 19 & 1.1097 & 57 & 23.194 \\\bottomrule
\end{tabular}
\caption{{\en PCGN:} Επαναλήψεις και χρόνοι {\en CPU} ($\mathfrak{f}_9$).}
\label{tab:ex4i}
\end{table}
\end{exmp}

\begin{exmp}\normalfont
Ως τελευταίο παράδειγμα, χρησιμοποιούμε την προτεινόμενη τεχνική προρρύθμισης για ένα σύστημα, του οποίου ο πίνακας {\en Toeplitz} έχει μια συνεχή γεννήτρια συνάρτηση με ρίζες στο 0 και στο $\pm 1$, δηλαδή στο 1 και στο $2\pi-1$, (στο διάστημα που μελετάμε). Το πραγματικό μέρος της συνάρτησης που επιλέξαμε, παίρνει τιμές κοντά στο 0 για $x\in[0,1.2]$, όπως φαίνεται στο Σχήμα \ref{fig:ex5_real_part}, όπου δίνουμε τις πρώτες 450 τιμές του $F_{n-1}^1$ στο $G_n$, $n=2048$. Μετά από αυτή τη μικρή εισαγωγή, σημειώνουμε ότι η γεννήτρια συνάρτηση δίνεται ως $\mathfrak{f}_{14}(x)=x^2(x^2-1)^2+\mathrm{i}\mathfrak{h}_3^\prime(x)$, όπου:

$$\mathfrak{h}_3^\prime(x)=\begin{cases}
x+\pi,~&x\in\big(-\pi,-\pi+\frac{1}{2}\big]\\
\frac{x}{-2\pi+3}+\frac{1}{-2\pi+3},~&x\in\big(-\pi+\frac{1}{2},-\frac{1}{2}\big]\\
\frac{x}{2\pi-3},~&x\in\big(-\frac{1}{2},\frac{1}{2}\big]\\
\frac{x}{-2\pi+3}-\frac{1}{-2\pi+3},~&x\in\big(\frac{1}{2},\pi-\frac{1}{2}\big]\\
x-\pi,~&x\in\big(\pi-\frac{1}{2},\pi\big]
\end{cases}.$$

\begin{figure}[htbp]%
    \centering
    \includegraphics[width=0.9\linewidth]{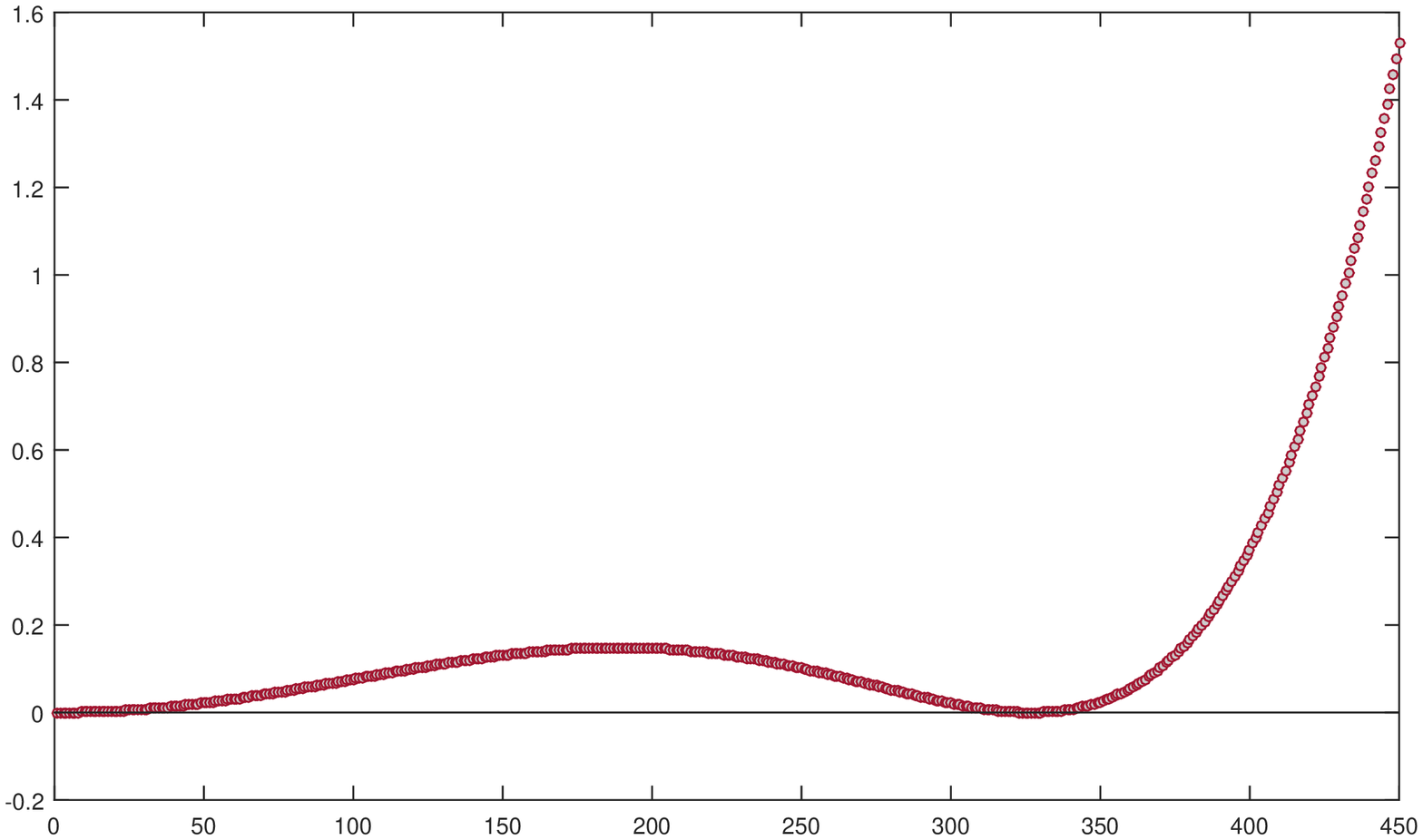}%
    \caption{Πρώτες 450 τιμές του $F_{n-1}^1$ στο $G_n$ ($\mathfrak{f}_{14}$).}
    \label{fig:ex5_real_part}
\end{figure}

`Οπως και στο προηγούμενο παράδειγμα, η ρίζα στο 1 εκτιμήθηκε ως $\widetilde{x}_1=1.000155$, για διάφορες διαστάσεις του προρρυθμισμένου συστήματος. Σχολιάζουμε μια λεπτομέρεια στην εκτίμηση της $\widetilde{x}_1$. Η απόλυτη τιμή του $F_{n-1}^1$ έχει δύο τοπικά ελάχιστα στην περιοχή της ρίζας, για παράδειγμα, όταν $n=2048$ λαμβάνουμε τα τοπικά ελάχιστα στα σημεία $\theta_{325}$ και $\theta_{329}$. Ωστόσο, θεωρούμε ότι η $f_1$ έχει μια ρίζα στο $\theta_{327}$, που είναι ο μέσος όρος των παραπάνω ποσοτήτων, διότι τα $\theta_{325}$ και $\theta_{329}$ διαφέρουν κατά $\mathcal{O}\left(\frac{1}{n}\right)$. Ακριβώς η ίδια συμπεριφορά παρατηρήθηκε σε όλες τις διαστάσεις που εξετάσαμε ($n:1024$, 2048, 4096, 8192).

Εκτιμήσαμε τις πολλαπλότητες των ριζών, τρέχοντας μόνο 2 επαναλήψεις της μεθόδου Αντίστροφων Δυνάμεων. `Οπως φαίνεται στους Πίνακες \ref{tab:est_ex5} και \ref{tab:est_ex5i}, οι πολλαπλότητες εκτιμήθηκαν ορθά ως $m_0^1=2$, $m_1^1=2$, $m_0^2=1$ και $m_1^2=1$. Το τριγωνομετρικό πολυώνυμο που χρησιμοποιήσαμε για την άρση των ριζών είναι το $g_n(x)=(2-2\cos{x})(\cos{\widetilde{x}_1}-\cos{x})^2+\mathrm{i}\sin{x}(\cos{x}-\cos{\widetilde{x}_1})$.

\begin{table}[H]
\centering
\begin{tabular}{c|cccc}
\toprule
$k$ & $\widetilde{\lambda}^1_{0,k}$ & $\log_2{(\widetilde{s_0}^1)}$ & $\widetilde{\lambda}^1_{1,k}$ & $\log_2{(\widetilde{s_1}^1)}$\\\midrule
16 & 0.0286 & 1.7869 & 0.0782 & 1.8854\\
32 & 0.0081 & & 0.2279 &\\
64 & 0.0021 & & 0.0078 &\\\bottomrule
\end{tabular}
\caption{Πολλαπλότητα των ριζών ($\operatorname{Re}(\mathfrak{f}_{14})$).}
\label{tab:est_ex5}
\end{table}

\begin{table}[H]
\centering
\begin{tabular}{c|cccc}
\toprule
$k$ & $\widetilde{\lambda}^2_{0,k}$ & $\log_2{(\widetilde{s_0}^2)}$ & $\widetilde{\lambda}^2_{1,k}$ & $\log_2{(\widetilde{s_1}^2)}$\\\midrule
16 & 0.0362 & 0.6224 & 0.0381 & 0.7394\\
32 & 0.0207 &  & 0.0210 &\\
64 & 0.0107 &  & 0.0108 &\\\bottomrule
\end{tabular}
\caption{Πολλαπλότητα των ριζών ($\operatorname{Im}(\mathfrak{f}_{14})$).}
\label{tab:est_ex5i}
\end{table}

Οι αριθμοί επαναλήψεων και οι χρόνοι εκτέλεσης, με χρήση της {\en PGMRES} και του προτεινόμενου προρρυθμιστή, καθώς επίσης και του ταινιωτού-επί-κυκλοειδή προρρυθμιστή του προηγούμενου κεφαλαίου, όπου η γεννήτρια συνάρτηση θεωρείται γνωστή εκ των προτέρων, δίνεται στον Πίνακα \ref{tab:ex5}. Σε αυτόν παρατηρούμε την καλή συμπεριφορά και αποτελεσματικότητα του $\mathcal{BC}_n$, αφού οι επαναλήψεις που λαμβάνουμε είναι κοντά με τις επαναλήψεις κατόπιν χρήσεως του $\mathcal{BC}_n(\mathfrak{f}_{14})$.

\begin{table}[htbp]
\centering
\begin{tabular}{c|c|cc|cc}
\toprule
$n$ & $I_n$ & $\mathcal{BC}_n(\mathfrak{f}_{14})$ & {\en CPU} & $\mathcal{BC}_n$ & {\en CPU} \\\midrule
1024 & $>$500 & 7 & 0.2286 & 9 & 0.6108 \\
2048 & $>$500 & 7 & 0.3982 & 8 & 0.7936 \\
4096 & $>$500 & 6 & 0.7771 & 8 & 1.2602 \\
8192 & $>$500 & 6 & 1.4390 & 8 & 2.7490 \\\bottomrule
\end{tabular}
\caption{{\en PGMRES:} Επαναλήψεις και χρόνοι {\en CPU} ($\mathfrak{f}_{14}$).}
\label{tab:ex5}
\end{table}

\begin{figure}[htbp]%
    \centering
    \subfloat[Ιδιοτιμές όταν $n=1024$ και $n=8192$.]{{\label{fig:10a}\includegraphics[width=0.45\linewidth]{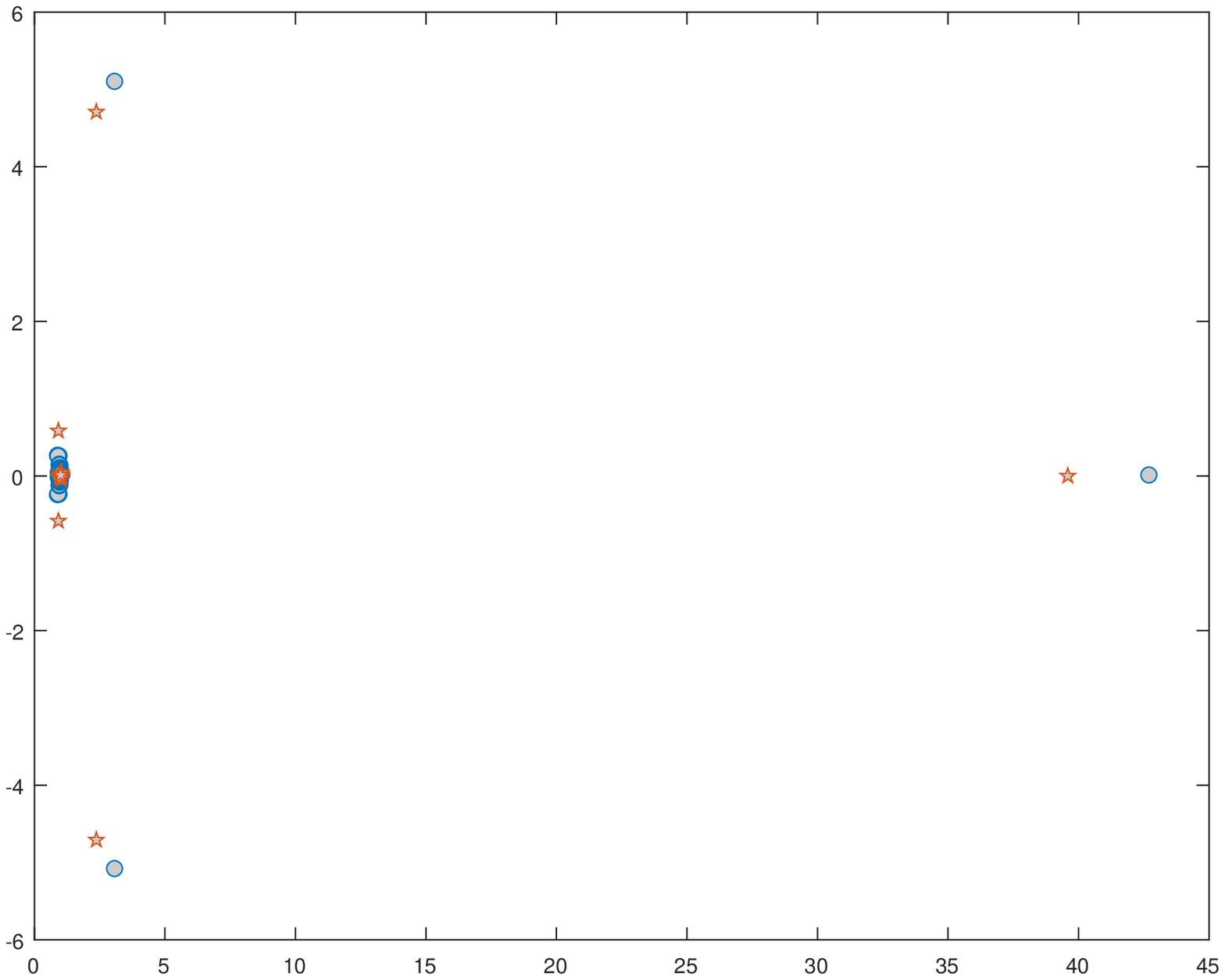}}}%
    \qquad
    \subfloat[Ιδιοτιμές κοντά στο $(1,0)$.]{{\label{fig:10b}\includegraphics[width=0.45\linewidth]{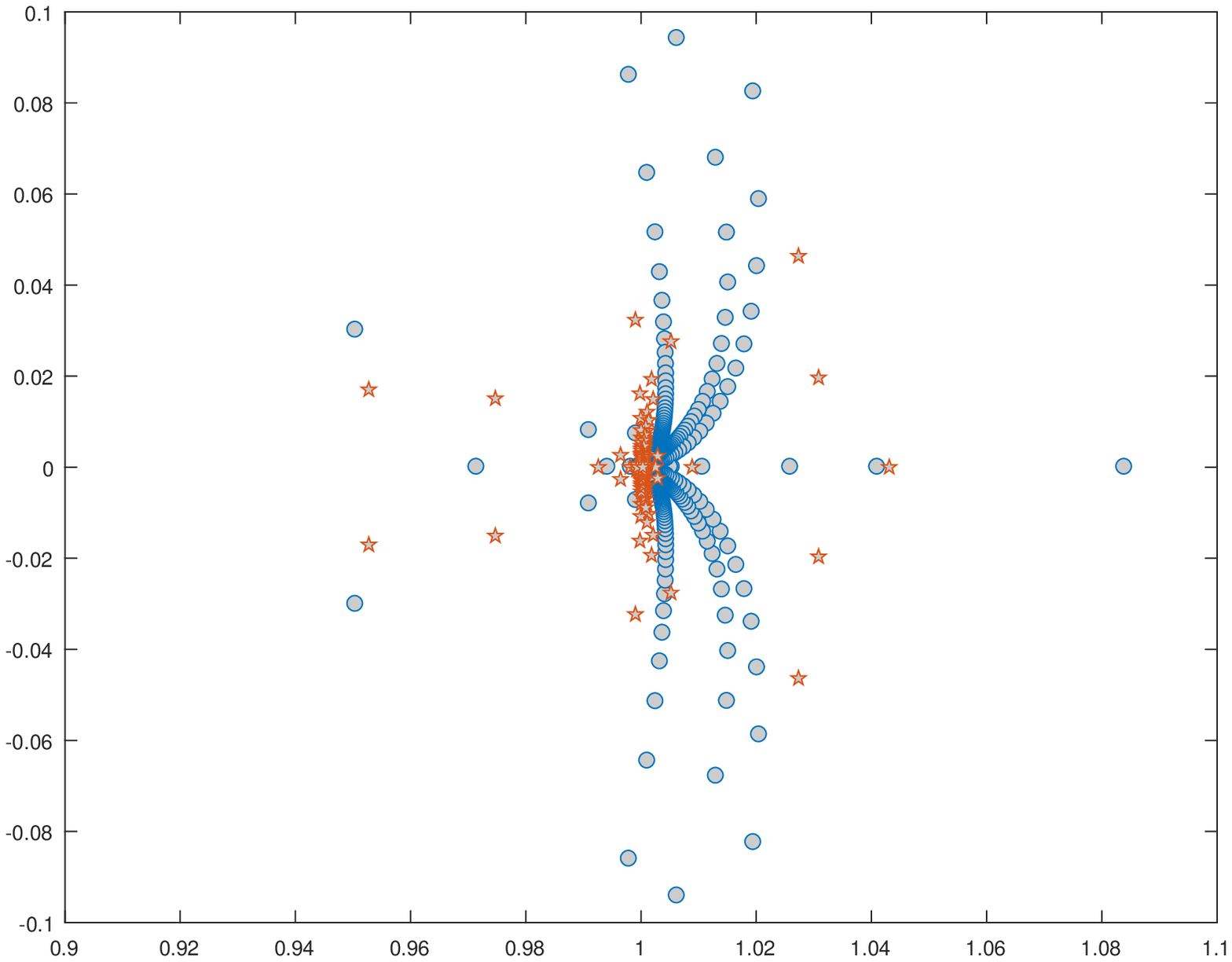}}}%
    \caption{Ιδιοτιμές ($\mathfrak{f}_{14}$).}
    \label{fig:ex5}
\end{figure}

`Οπως και στο προηγούμενο παράδειγμα, αναμένουμε ένα σφάλμα στην εκτίμηση των ριζών $\varepsilon_1=\mathcal{O}\left(\frac{\log{n}}{n}\right)$. Ωστόσο, αυτό φαίνεται να είναι $\varepsilon_1\sim\frac{1}{n}$. `Οπως αποδείχθηκε στο Θεώρημα \ref{thm:ill_cond}, οι ιδιοτιμές του τελευταίου παράγοντα, $T_n\left(\frac{\mathfrak{f}_{14}}{g}\right)C_n^{-1}\left(\frac{\mathfrak{f}_{14}}{g}\right)$, παρουσιάζουν κύρια συσσώρευση στο σημείο $(1,0)$, αφού αυτή η συνάρτηση είναι συνεχής. Για τον $T_n^{-1}(g_n)T_n(g)$, θεωρήσαμε ως $s_n\sim\frac{\log{n}}{n}$ για να λάβουμε τη γενική συσσώρευση των ιδιοτιμών, σε μια περιοχή του $(1,0)$, με ακτίνα $r_n=\mathcal{O}\left(\frac{1}{\log{n}}\right)$ και $\mathcal{O}(\log{n})$ ιδιοτιμές εκτός της συσσώρευσης. Για τον πρώτο παράγοντα $C_n\left(\frac{\mathfrak{f}_{14}g_n}{F_{n-1}g}\right)$, επειδή η $\mathfrak{f}_{14}$ είναι συνεχής, αλλά όχι επαρκώς ομαλή, μέσω του Θεωρήματος \ref{thm:ill_cond} καταλήγουμε σε κύρια συσσώρευση των ιδιοτιμών σε μια περιοχή του $(1,0)$, με ακτίνα $r_n=\mathcal{O}\left(\frac{\log{n}}{n}\right)$. Επομένως, από το ίδιο θεώρημα καταλήγουμε στη γενική συσσώρευση των ιδιοτιμών του προρρυθμισμένου πίνακα σε μια περιοχή του $(1,0)$, με ακτίνα $\mathcal{O}\left(\frac{1}{\log{n}}\right)$ και $\mathcal{O}(\log{n})$ ιδιοτιμές εκτός της συσσώρευσης. Στο Σχήμα \ref{fig:ex5} παρουσιάζουμε τη συσσώρευση των ιδιοτιμών όταν $n=1024$ (μπλε κύκλοι) και $n=8192$ (πορτοκαλί πεντάγραμμα). Στο Σχήμα \ref{fig:10a} μπορούμε εύκολα να ξεχωρίσουμε τις περιοχές συσσώρευσης και βλέπουμε ότι η περιοχή που αντιστοιχεί στη διάσταση $n=8192$ είναι πολύ μικρότερη από αυτή για $n=1024$. Με άλλα λόγια, όσο μεγαλύτερη είναι η διάσταση $n$, τόσο μικρότερη γίνεται η περιοχή συσσώρευσης. Το Σχήμα \ref{fig:10b} είναι μια μεγέθυνση της συσσώρευσης των ιδιοτιμών κοντά στο $(1,0)$, με ακτίνα $\frac{1}{\log{1024}}=0.1$.
\end{exmp}

\newpage

\pagestyle{syn}
\addcontentsline{toc}{chapter}{Σύνοψη}

\chapter*{Σύνοψη}

Στην παρούσα διατριβή μελετήσαμε και προτείναμε προρρυθμιστές για την ταχεία επίλυση μη-συμμετρικών και πραγματικών συστημάτων {\en Toeplitz}, με μεθόδους υποχώρων {\en Krylov} και πιο συγκεκριμένα με την Προρρυθμισμένη Γενικευμένη μέθοδο Ελαχίστων Υπολοίπων {\en (PGMRES)} και την Προρρυθμισμένη μέθοδο Συζυγών Κλίσεων για το σύστημα των Κανονικών Εξισώσεων {\en (PCGN)}. Οι προρρυθμιστές που προτάθηκαν και των οποίων αποδείχθηκε η αποτελεσματικότητα ανήκουν στις κατηγορίες των ταινιωτών πινάκων {\en Toeplitz} και των κυκλοειδών πινάκων.

Πιο συγκεκριμένα, μετά από μια σύντομη ιστορική αναδρομή στην προρρύθμιση συστημάτων {\en Toeplitz}, στο πρώτο κεφάλαιο δόθηκαν οι βασικοί ορισμοί και κάποια χρήσιμα θεωρητικά αποτελέσματα σχετικά με τη συσσώρευση του φάσματος. Ακολούθως, στο δεύτερο κεφάλαιο προτάθηκε ως προρρυθμιστής ένας ταινιωτός πίνακας {\en Toeplitz}, ο οποίος προκύπτει κατόπιν άρσης των ριζών της, γνωστής εκ των προτέρων, γεννήτριας συνάρτησης και βέλτιστης ομοιόμορφης προσέγγισης ή παρεμβολής, με τριγωνομετρικά πολυώνυμα. Τα αποτελέσματα που παρουσιάστηκαν σε αυτό μπορούν να βρεθούν στη \cite{NT_2019}. Στο επόμενο κεφάλαιο μελετήθηκε ένα είδος κυκλοειδή προρρυθμιστή για μη-συμμετρικά συστήματα {\en Toeplitz} με καλή κατάσταση, δηλαδή συστήματα των οποίων η γεννήτρια συνάρτηση δεν έχει ρίζες. Σε αυτό έγινε επίσης μελέτη ενός ταινιωτού-επί-κυκλοειδή προρρυθμιστή, για συστήματα με κακή κατάσταση, δηλαδή συστήματα που παράγονται από κάποια γεννήτρια συνάρτηση με ρίζες. Τα αποτελέσματα αυτού του κεφαλαίου μπορούν να βρεθούν στη \cite{noutsos2021band}. Αναφέρουμε ότι μελετήθηκε τόσο η συνεχής, όσο και η κατά τμήματα συνεχής περίπτωση και ότι η γεννήτρια συνάρτηση σε αυτό το κεφάλαιο θεωρήθηκε επίσης γνωστή εκ των προτέρων. Αυτή η θεώρηση δεν έγινε στο τελευταίο κεφάλαιο της διατριβής, όπου μελετήθηκαν κατάλληλα προσαρμοσμένοι προρρυθμιστές των προηγούμενων κεφαλαίων για συστήματα με άγνωστη γεννήτρια συνάρτηση. `Εγινε εκτίμηση των πιθανών ριζών και των πολλαπλοτήτων αυτών, από τις τιμές του πίνακα συντελεστών με χρήση του αναπτύγματος {\en Fourier}. Η προσαρμογή των ταινιωτών {\en Toeplitz} προρρυθμιστών δημοσιεύθηκε στα πρακτικά διεθνούς επιστημονικού συνεδρίου \cite{chay} και αυτή των ταινιωτών-επί-κυκλοειδών προρρυθμιστών μπορεί να βρεθεί στην εργασία \cite{eajam}. Στο τέλος του κάθε κεφαλαίου δόθηκαν διάφορα αριθμητικά παραδείγματα, στα οποία φανερώνεται η αποτελεσματικότητα των προτεινόμενων προρρυθμιστών.

Η αναγκαιότητα της παραπάνω έρευνας προέκυψε από το γεγονός ότι σε πολλές εφαρμογές, όπως στην επεξεργασία εικόνας, στην επεξεργασία σήματος και σε εφαρμογές που προκύπτουν από τη διακριτοποίηση διαφορικών εξισώσεων, εμφανίζονται μη-συμμετρικά και πραγματικά συστήματα {\en Toeplitz}. Σε ακόμη περισσότερες εφαρμογές εμφανίζονται συστήματα των οποίων ο πίνακας συντελεστών είναι σχεδόν {\en Toeplitz (quasi-Toeplitz)}. Τέτοιοι πίνακες προκύπτουν κυρίως από τη διακριτοποίηση διαφορικών και ολοκληρωτικών εξισώσεων και είναι {\en Toeplitz} πίνακες με μια διαταραχή της {\en (Toeplitz)} δομής στο άνω αριστερά και κάτω δεξιά μέρος του πίνακα ή ````{\en Toeplitz} συν διαγώνιος'''' πίνακας ή ````{\en Toeplitz} συν ταινιωτός'''' πίνακας με σχετικά μικρό πλάτος ταινίας. Εκτιμούμε ότι οι προτεινόμενοι προρρυθμιστές με κατάλληλη προσαρμογή θα είναι αποτελεσματικοί για τέτοιου είδους συστήματα και αυτό αποτελεί σκέψεις για μελλοντική έρευνα.

Ιδιαίτερο ενδιαφέρον, κυρίως στις εφαρμογές, παρουσιάζουν τα συστήματα που έχουν ως πίνακα αγνώστων δι-διάστατους (ή γενικότερα $d$-διάστατους) πίνακες {\en Toeplitz} που προκύπτουν από διακριτοποίηση μερικών διαφορικών εξισώσεων κυρίως συνοριακών προβλημάτων, καθώς και ρητών διαφορικών εξισώσεων {\en (fractional differential equations)}. Εδώ προκύπτουν ιδιαίτερες δυσκολίες ως προς την υπολογιστική πολυπλοκότητα των αλγορίθμων, κυρίως των ταινιωτών προρρυθμιστών, καθώς και στην ανάπτυξη θεωρίας σχετιζόμενης με τη συσσώρευση του φάσματος, κυρίως στους δι-διάστατους κυκλοειδείς προρρυθμιστές για τους οποίους έχουν αποδειχθεί αρνητικά αποτελέσματα \cite{NSV_2003, NSV_2004, ST_2000}. Ωστόσο ο συνδυασμός δι-διάστατων ταινιωτών {\en Toeplitz} και κυκλοειδών πινάκων φαίνεται να αντιμετωπίζει τα προβλήματα με αποτελεσματικότητα, όπως στη συμμετρική περίπτωση \cite{NV_2011}. Ιδιαίτερη δυσκολία παρουσιάζουν δι-διάστατα προβλήματα, όπου η γεννήτρια συνάρτηση είναι άγνωστη, στην εκτίμηση των ριζών και των αντίστοιχων πολλαπλοτήτων. Ωστόσο, θετικά αποτελέσματα που αφορούν στη συμμετρική περίπτωση \cite{NSV_2005, NSV_2006, NSV_2006i} είναι ενθαρρυντικά. `Ολα τα παραπάνω αποτελούν σκέψεις για περαιτέρω έρευνα.

\newpage



\bibliographystyle{abbrv}
\pagestyle{bib}
\addcontentsline{toc}{chapter}{Βιβλιογραφία}
\bibliography{biblio}
\end{document}